\documentclass[11pt]{amsart}

\usepackage{epsf,graphics,graphicx,amssymb,overpic,amscd,diagrams,psfrag,times,stmaryrd, rotating,pkmacros,xypic,mathrsfs}
\usepackage[all]{xy}
\usepackage[mathscr]{euscript}

\usepackage{xr}
\externaldocument[P2-]{part2-2017-04-06-ihes}

\newtheorem{thm}{Theorem}[section]
\newtheorem{prop}[thm]{Proposition}
\newtheorem{lemma}[thm]{Lemma}
\newtheorem{cor}[thm]{Corollary}

\newtheorem{obs}[thm]{Observation}
\newtheorem{claim}[thm]{Claim}

\numberwithin{equation}{subsection}
\numberwithin{thm}{subsection}

\theoremstyle{definition}
\newtheorem{defn}[thm]{Definition}
\newtheorem{convention}[thm]{Convention}

\theoremstyle{remark}

\newtheorem{rmk}[thm]{Remark}

\DeclareMathAlphabet{\mathpzc}{OT1}{pzc}{m}{it}
\newcommand{\hh}{\mathpzc{h}}

\newcommand{\D}{\mathbb{D}}
\renewcommand{\H}{\mathbb{H}}
\newcommand{\C}{\mathbb{C}}

\newcommand{\R}{\mathbb{R}}
\newcommand{\Z}{\mathbb{Z}}

\newcommand{\N}{\mathbb{N}}
\renewcommand{\P}{\mathbb{P}}
\newcommand{\bdry}{\partial}
\newcommand{\s}{\vskip.1in}
\newcommand{\n}{\noindent}

\newcommand{\F}{\mathbb{F}}

\newcommand{\nom}{\nomenclature}
\newcommand{\bs}{\boldsymbol}

\newcommand{\be}{\begin{enumerate}}
\newcommand{\ee}{\end{enumerate}}
\newcommand{\op}{\operatorname}

\usepackage[refpage]{nomencl}
\makenomenclature

\begin{document}

\title[HF=ECH via open book decompositions I]
{The equivalence of Heegaard Floer homology and embedded contact homology via open book decompositions I}

\author{Vincent Colin}
\address{Universit\'e de Nantes, 44322 Nantes, France}
\email{Vincent.Colin@univ-nantes.fr}

\author{Paolo Ghiggini}
\address{Universit\'e de Nantes, 44322 Nantes, France}
\email{paolo.ghiggini@univ-nantes.fr}
\urladdr{http://www.math.sciences.univ-nantes.fr/\char126 Ghiggini}

\author{Ko Honda}
\address{University of Southern California, Los Angeles, CA 90089}
\email{khonda@usc.edu} \urladdr{http://www-bcf.usc.edu/\char126 khonda}

\date{This version: June 16, 2015.}

\keywords{contact structure, Reeb dynamics, embedded contact homology, Heegaard Floer homology, open book decompositions}

\subjclass[2000]{Primary 57M50; Secondary 53D10,53D40.}

\thanks{VC supported by the Institut Universitaire de France, ANR Symplexe, ANR Floer Power, and ERC Geodycon. PG supported by ANR Floer Power and ANR TCGD. KH supported by NSF Grants DMS-0805352 and DMS-1105432.}

\begin{abstract}
Given an open book decomposition $(S,\hh)$ adapted to a closed, oriented $3$-manifold $M$, we define a chain map $\Phi$ from a certain Heegaard Floer chain complex associated to $(S,\hh)$ to a certain embedded contact homology chain complex associated to $(S,\hh)$, as defined in \cite{CGH2}, and prove that it induces an isomorphism on the level of homology. This implies the isomorphism between the hat version of Heegaard Floer homology of $-M$ and the hat version of embedded contact homology of $M$.
\end{abstract}

\maketitle

\setcounter{tocdepth}{2}
\tableofcontents

\newpage
\section{Introduction and main results}
\label{section: introduction}

There is a plethora of Floer-type homology theories that can be associated to a three-manifold. This paper and its sequels are concerned with three specific Floer-type homology theories: {\em monopole Floer homology}, {\em embedded contact homology}, and {\em Heegaard Floer homology}. Monopole Floer homology, constructed by Kronheimer and Mrowka~\cite{KM}, counts solutions of the Seiberg-Witten equations. Since its definition requires an auxiliary Riemannian metric, it has strong connections with geometry (e.g., positive curvature). Embedded contact homology, abbreviated ECH, is due to Hutchings~\cite{Hu1,Hu2} and Hutchings-Taubes~\cite{HT1,HT2} and is dynamical in nature. It counts periodic orbits of a Reeb vector field associated with a contact form.  Finally, Heegaard Floer homology, due to Ozsv\'ath and Szab\'o~\cite{OSz1,OSz2}, is defined in the most topological way: from a Heegaard diagram. Of the three homologies, it is the easiest to compute and admits a combinatorial description~\cite{SW}. These theories have had spectacular applications over the last decade, including a proof of the Gordon conjecture by Kronheimer, Mrowka, Ozsv\'ath and Szab\'o~\cite{KMOS} and progress on the exceptional surgery problem due to Ghiggini~\cite{Gh} and Ni~\cite{Ni}.

In 2006, Taubes obtained a breakthrough result which extended his celebrated correspondence between solutions of the Seiberg-Witten equations and $J$-holo\-morphic curves to the relative case. This enabled him to prove the Weinstein conjecture in dimension three~\cite{T1} and to establish the equivalence between monopole Floer cohomology and ECH shortly thereafter~\cite{T2}. A byproduct of this equivalence was the proof of Arnold's chord conjecture in dimension three~\cite{HT3}.

\s
The goal of our series of papers \cite{CGH2}, \cite{CGH-I}, \cite{CGH-II}, \cite{CGH-III} is to prove the equivalence of Heegaard Floer homology and ECH. The first paper \cite{CGH2} can be viewed as a stand-alone paper.  In this paper \cite{CGH-I} and the sequel~\cite{CGH-II}, we establish an isomorphism between the hat versions of the Heegaard Floer homology and ECH groups associated to a closed, oriented $3$-manifold $M$. This isomorphism is compatible with the splitting of Heegaard Floer homology according to Spin$^c$-structures and of ECH according to first homology classes. {\em For simplicity we will use $\F=\Z/2\Z$ coefficients (or coefficients in a module $\Lambda$ over $\F[H_2(M;\Z)]$) for both Heegaard Floer homology and ECH.}  However, we expect the equivalence to hold over the integers.  The isomorphism between the plus version of Heegaard Floer homology and the usual version of ECH will be given in ~\cite{CGH-III}.

The results of this paper were announced in \cite{CGH1}. An isomorphism between monopole Floer homology and Heegaard Floer homology was proved, more or less simultaneously, by Kutluhan, Lee and Taubes~\cite{KLT1}--\cite{KLT5}.

\subsection{Some background}

Before stating the main result of this paper and describing the ideas involved in its proofs, we will give a brief summary of the definitions of Heegaard Floer homology
and ECH.

We first describe Heegaard Floer homology in its ``cylindrical reformulation'' due to Lipshitz~\cite{Li}. The starting point of its construction is a {\em pointed Heegaard diagram} $(\Sigma, \boldsymbol{\alpha}, \boldsymbol{\beta}, z)$ which describes a three-manifold $M$. Here $\Sigma$ is a genus $g>0$ Heegaard surface associated to some self-indexing Morse function with a unique maximum and a unique minimum, $\boldsymbol{\alpha}$ is the collection of the attaching circles for the index one critical points, $\boldsymbol{\beta}$  is the collection of the attaching circles for the index two critical points, and $z$ is a basepoint in the complement of $\boldsymbol{\alpha}$ and $\boldsymbol{\beta}$.
The Heegaard Floer complex $\widehat{CF}(\Sigma, \boldsymbol{\alpha}, \boldsymbol{\beta}, z)$ is generated by $g$-tuples of intersection points between the $\boldsymbol{\alpha}$-curves and the $\boldsymbol{\beta}$-curves and the differential counts certain $J$-holomorphic curves in $\R \times [0,1] \times \Sigma$ with boundary on $\R \times \{ 0 \} \times \boldsymbol{\beta}$ and $\R \times \{ 1 \} \times \boldsymbol{\alpha}$; see Section~\ref{section: variation of HF adapted to OB} or \cite{Li} for a more detailed exposition. While Heegaard Floer homology {\em a priori} depends on the choice of a pointed Heegaard diagram $(\Sigma, \boldsymbol{\alpha}, \boldsymbol{\beta}, z)$ and an almost complex structure on $\R \times [0,1] \times \Sigma$, it was shown to be independent of those choices, i.e., is a topological invariant of $M$.

The starting point for embedded contact homology is a contact form $\alpha$ on $M$. The contact form determines the {\em Reeb vector field} $R$ by
$$\iota_R d \alpha =0 \quad \text{and} \quad \alpha(R)=1.$$
The complex $ECC(M, \alpha)$ is generated by finite sets of simple Reeb orbits with finite multiplicities and its differential counts certain $J$-holomorphic
curves in the symplectization $(\R \times M, d(e^s \alpha))$; see \cite{Hu1, Hu2, Hu3}
for more details. The hat version of ECH is defined as the homology
of the mapping cone of a chain map $U: ECC(M, \alpha) \to ECC(M, \alpha)$.
ECH {\em a priori} depends on the choice of a contact form $\alpha$ on $M$ and an adapted almost complex structure $J$ on the symplectization $\R\times M$. There is
currently no direct proof of the fact that the ECH groups are
topological invariants of $M$ (or even invariants of the contact
structure $\ker\alpha$, for that matter); the only known proof is
due to Taubes~\cite{T1,T2}, and is a consequence of the equivalence
between Seiberg-Witten Floer cohomology and ECH.
A direct proof of the invariance would provide, combined with the present work and computations in Heegaard Floer homology, an alternative proof of the Weinstein conjecture.

\s
A natural setting for relating Heegaard Floer homology and ECH is that of
{\em open book decompositions} (see Definition~\ref{defn: open book decomposition}) because an open book decomposition determines both a Heegaard splitting and a contact structure. The Heegaard splitting is obtained by taking as Heegaard surface
the union of two opposite pages. The contact structure is provided by the Thurston-Winkelnkemper construction~\cite{TW}. In the foundational work
~\cite{Gi2}, Giroux proved the equivalence of contact structures up
to isomorphism and (abstract) open book decompositions modulo
positive stabilization. While we are not using its full strength, Giroux's
correspondence was an important source of inspiration for this work.

We use open book decompositions and their relationship with both
Heegaard splittings and contact structures to build symplectic cobordisms
relating the geometric setting of Heegaard Floer homology to that of
embedded contact homology. Then the isomorphisms are defined by counting
certain $J$-holomorphic maps in those cobordisms. Their definition is similar in spirit
to the definition of the open-closed and closed-open maps in \cite{Abo}. The similarity
is more direct for the open-closed map, while our closed-open map require some extra
twist, as we will see.

\subsection{Main result}

Let $M$ be a closed, connected, oriented three-manifold. We fix an open book decomposition of $M$ with connected binding, page $S$, and monodromy $\hh$, and denote by $\xi_{(S, \hh)}$ the contact structure supported by it. The contact structure $\xi_{(S, \hh)}$ determines a Spin$^c$-structure which we denote by $\mathfrak{s}_{(S, \hh)}$.

The main result of \cite{CGH-I} and \cite{CGH-II} is the following:

\begin{thm} \label{thm: main}
There is an isomorphism
$$\Phi_0: \widehat{HF}(-M,\mathfrak{s}_{(S,\hh)}+PD(A))\stackrel\sim\longrightarrow \widehat{ECH}(M,\xi_{(S,\hh)},A),$$
where $A\in H_1(M;\Z)$, defined via the open book decomposition $(S,\hh)$ of $M$. Moreover, $\Phi_0$ sends the Heegaard Floer contact invariant for $\xi_{(S,\hh)}$ to the ECH contact invariant for $\xi_{(S,\hh)}$.
\end{thm}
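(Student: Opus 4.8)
The plan is to build the chain map $\Phi$ so that, with respect to a natural filtration, it is the identity on an explicit common set of generators, and then to extract the isomorphism from a filtered-complex argument; all the real content will be a holomorphic curve analysis identifying the Heegaard Floer and ECH differentials. First I would fix, as in \cite{CGH1}, a contact form $\alpha$ for $\xi_{(S,h)}$ whose Reeb flow on the complement of a small neighborhood of the binding $K$ is a perturbation of the suspension flow of the monodromy $h$ on the mapping torus of $(S,h)$, and is modeled on a standard elliptic--hyperbolic pair near $K$; after a further $C^\infty$-small perturbation supported near the periodic point set of $h$, the Reeb orbits of action below any fixed bound are enumerated explicitly by periodic points of $h$ in the interior of $S$ together with the distinguished orbits near $K$. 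On the Heegaard Floer side I would use the open-book Heegaard diagram of Honda--Kazez--Mati\'c: the Heegaard surface is the double $\Sigma=S_{1/2}\cup(-S_0)$ of a page, with $\alpha$-curves built from a cut system of properly embedded arcs $\{a_i\}$ in $S$ and $\beta$-curves from $\{h(a_i)\}$, and with basepoint $z$ near $\bdry S$; this computes $\widehat{HF}(-M,\mathfrak{s}_{\xi_{(S,h)}}+PD(A))$. Reading a tuple of intersection points as a fixed point configuration of $h$ then yields, for each $A\in H_1(M;\Z)$, a bijection between the generators of $\widehat{CF}(-M,\mathfrak{s}_{\xi_{(S,h)}}+PD(A))$ and the ECH generators in the corresponding class, and one must check that this bijection matches the relative Maslov grading with the relative ECH index.

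With this common basis in hand, I would define $\Phi$ as the identity plus correction terms, where the off-diagonal entries of $\Phi$ (and the terms in the verification $\bdry_{ECH}\circ\Phi=\Phi\circ\bdry_{HF}$) count index-$0$, respectively index-$1$, $J$-holomorphic curves in the symplectization $\R\times M$ asymptotic to ECH orbit sets. The geometric bridge is that, under the projection induced by the open book, such curves degenerate to the holomorphic curves in $\Sigma\times[0,1]\times\R$ (equivalently, disks in $Sym^{2g}(\Sigma)$ in Lipshitz's cylindrical reformulation) that compute the Heegaard Floer differential. The analytic heart is then to prove, via Hutchings's ECH index inequality and the partition conditions at the punctures, that all curves of the relevant index are ``mostly horizontal'', cannot escape into the binding region, carry no troublesome multiply covered components, and are regular for generic admissible $J$; a gluing argument then yields the chain map identity.

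Both chain complexes carry an action (energy) filtration which, by the previous steps, $\Phi$ respects and on whose associated graded $\Phi$ is the identity; since $\Phi$ is thereby triangular with unipotent diagonal it is a filtered chain isomorphism (or, if additional care is needed, a filtered quasi-isomorphism obtained by comparing the two associated spectral sequences), so $\Phi_0$ is an isomorphism. For the last assertion, the Heegaard Floer contact invariant of $\xi_{(S,h)}$ is represented by the distinguished generator attached to the cut system $\{a_i\}$, which under the bijection corresponds to the empty orbit set representing the ECH contact invariant; this generator sits at the bottom of the action filtration, hence receives no correction terms, so $\Phi$ carries it to the empty orbit set on the nose, and $\Phi_0$ matches the two contact classes.

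The principal obstacle is precisely the holomorphic curve analysis underlying the chain map property: establishing SFT-type compactness for the ECH moduli spaces of the open-book-adapted contact form, ruling out degenerations into the binding where the Reeb dynamics themselves degenerate, controlling multiple covers, and matching the surviving curves with Heegaard Floer disks together with transversality and gluing. Isolating and proving these statements is where the bulk of the work of this paper and its sequel~\cite{CGH2} is expected to lie.
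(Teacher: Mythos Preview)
Your proposal rests on a bijection between generators that does not exist, and this is a genuine gap rather than a technicality. The Heegaard Floer generators for the open-book diagram are $2g$-tuples $\{y_1,\dots,y_{2g}\}$ with $y_i\in a_i\cap h(a_{\sigma(i)})$ for some permutation $\sigma$; these are \emph{intersection points of arcs with images of arcs}, not periodic points of $h$. The PFH/ECH generators, by contrast, are orbit sets built from periodic orbits of the Reeb flow on the mapping torus, i.e., from \emph{periodic points} of $h$. A point of $a_i\cap h(a_j)$ need not be fixed or periodic under $h$ at all, so ``reading a tuple of intersection points as a fixed point configuration'' does not make sense, and there is no identity-plus-corrections form for $\Phi$, no common action filtration making it triangular, and no unipotence argument available. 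Relatedly, the ECH contact invariant in this setup is not the empty orbit set but $e^{2g}$ (the $2g$-fold power of the elliptic binding orbit), since one works in $ECH_{2g}(N)$.

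The paper's route is structurally different. The chain map $\Phi$ is not defined by curves in the symplectization $\R\times M$; it counts $I=0$ multisections in a genuine symplectic \emph{cobordism} $W_+$ interpolating from the HF end $[0,1]\times S$ (with Lagrangian boundary on the suspension of the arcs $\mathbf{a}$) to the PFH end $N$. The isomorphism is not obtained by a filtration argument but by constructing a second cobordism map $\Psi$ in the reverse direction (counting curves in $\overline{W}_-$ through a point constraint at infinity) and proving in the sequel that $\Psi\circ\Phi$ and $\Phi\circ\Psi$ are chain homotopic to the identity; a separate stabilization argument shows the inclusion maps $ECH_j(N)\to ECH_{j+1}(N)$ are isomorphisms for $j\geq 2g$, so $ECH_{2g}(N)$ already computes $\widehat{ECH}(M)$. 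The contact-class statement follows from an explicit calculation: each component $x_i$ of the HF contact class forces a trivial-cylinder component in any $I=0$ $W_+$-curve, so $\Phi$ sends $\{x_1,\dots,x_{2g}\}$ to $e^{2g}$.
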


The definition of  $\Phi_0$ depends on many choices, and first of all on the choice of the open book decomposition. In this series of papers we will not address the
question of its naturality, but we conjecture that it only depends on the contact structure $\xi_{(S, \hh)}$. Even the dependence on the contact structure should be
very mild, and due only to the fact that ECH groups defined from different contact structures are isomorphic after a shift in their decompositions according to homology classes of orbit sets.

There is a similar map for the so-called twisted coefficients. Let $\Lambda$ be any module over the group ring $\F[H_2(M;\Z)]$.  We denote the versions of Heegaard Floer homology and ECH with twisted coefficients in $\Lambda$ by $\underline{\widehat{HF}}(-M,\mathfrak{s}; \Lambda)$ and $\underline{\widehat{ECH}}(M,\xi,A; \Lambda)$. Twisted coefficients are defined in \cite[Section~8]{OSz2} for Heegaard Floer homology and in \cite[Section~11]{HS2} for ECH.

\begin{thm}\label{thm: main twisted}
There is an $\F[H_2(M;\Z)]$-module isomorphism
$$\underline{\Phi}_0: \underline{\widehat{HF}}(-M,\mathfrak{s}_{(S,\hh)}+PD(A); \Lambda)\stackrel\sim\longrightarrow \underline{\widehat{ECH}}(M,\xi_{(S,\hh)},A; \Lambda),$$
where $A\in H_1(M;\Z)$, defined via an open book decomposition $(S,\hh)$ of $M$. Moreover, $\underline{\Phi}_0$ sends the Heegaard Floer contact invariant for $\xi_{(S,\hh)}$ to the ECH contact invariant for $\xi_{(S,\hh)}$.
\end{thm}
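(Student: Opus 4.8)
The plan is to re-run the proof of Theorem~\ref{thm: main} keeping the $H_2(M;\Z)$--bookkeeping of the two theories built into every chain complex and every map. Recall that $\underline{\widehat{HF}}(-M,\mathfrak{s};\Lambda)$ is the homology of $\underline{\widehat{CF}}(-M,\mathfrak{s})\otimes_{\F[H_2(M;\Z)]}\Lambda$, whose twisted differential weights each counted disk $\phi$ by $e^{\mathfrak{h}(\phi)}$, where $\mathfrak{h}(\phi)\in H_2(M;\Z)$ is the class of a $2$--chain obtained by capping the domain of $\phi$ off against fixed reference $1$--chains joining the generators of the Heegaard diagram; likewise $\underline{\widehat{ECH}}(M,\xi,A;\Lambda)$ weights each counted current $u\subset\R\times M$ by $e^{\mathfrak{h}(u)}$, with $\mathfrak{h}(u)\in H_2(M;\Z)$ the class of $u$ capped off against fixed reference cylinders over paths joining the orbit sets. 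Changing the reference choices conjugates each complex by an invertible diagonal matrix over $\F[H_2(M;\Z)]$, so the resulting $\F[H_2(M;\Z)]$--modules, hence also their base changes to $\Lambda$, are well defined; and reducing modulo the augmentation ideal (sending every group--ring element to $1$) recovers the untwisted complexes.

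I would build $\underline{\Phi}$ directly from the construction of $\Phi$ in \cite{CGH2}: the latter is assembled from counts of holomorphic curves in the various almost complex manifolds built from the open book that interpolate between the symplectization of $M$ and the $Sym^g$--picture, and each such curve $v$ carries a relative homology class; fixing once and for all reference paths linking the distinguished generators of the two theories, assign to $v$ a class $\mathfrak{h}(v)\in H_2(M;\Z)$ and let $\underline{\Phi}$ weight the $\Phi$--contribution of $v$ by $e^{\mathfrak{h}(v)}$, extended $\Lambda$--linearly. That $\underline{\Phi}$ is a chain map follows formally from the same statement for $\Phi$: the ends of the relevant $1$--dimensional moduli spaces are broken configurations --- a $\Phi$--curve concatenated with a differential curve of $\widehat{HF}$ or of $\widehat{ECH}$ --- and relative homology classes add under concatenation, so the twisted count of a broken configuration is the product of the twisted counts of its pieces, whence the twisted chain--map identity is the weighted form of the untwisted one. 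The same observation promotes to twisted coefficients the chain homotopies showing $\Phi$ is independent of the auxiliary data, and the statement that $\Phi_0$ carries the contact element to the contact element: both contact classes are represented by the distinguished reference generators (the empty orbit set, respectively the intersection point read off from a page), for which the chosen reference chains are trivial, so $\underline{\Phi}_0$ matches them up to the intrinsic unit ambiguity of twisted contact invariants.

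Finally, to see $\underline{\Phi}_0$ is an isomorphism I would either re-run the finite chain of chain--level identifications underlying Theorem~\ref{thm: main} (direct limits over stabilizations, neck--stretching, an intermediate sutured model, \dots), each of which is realized by a map respecting the homology--class bookkeeping and hence is an isomorphism of $\F[H_2(M;\Z)]$--modules; or, more cheaply, observe that $\underline{\Phi}$ is filtered for the energy/action filtration, that its associated graded is $\Phi_0\otimes\mathrm{id}$ --- an isomorphism by Theorem~\ref{thm: main} --- and that in each class $A$ the filtration is bounded because admissibility leaves only finitely many generators, so a comparison of spectral sequences gives that $\underline{\Phi}$ is a quasi--isomorphism over $\F[H_2(M;\Z)]$; since the complexes are free of finite rank over $\F[H_2(M;\Z)]$ one then tensors with an arbitrary module $\Lambda$. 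The main obstacle lies not in this homological--algebra step but in the middle one: making the $H_2(M;\Z)$--bookkeeping consistent across the geometry that relates the open book simultaneously to $Sym^g(\Sigma)$ and to $\R\times M$ --- that is, pinning down the precise dictionary between Heegaard Floer domains and ECH relative homology classes under $\Phi$, and checking that the neck--stretching and gluing arguments of \cite{CGH2} transport it unchanged.
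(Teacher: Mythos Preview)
Your primary approach---re-running the untwisted proof while tracking $H_2(M;\Z)$--weights on every holomorphic curve via fixed reference chains---is correct and is exactly how the paper proceeds. The paper calls your ``fixed reference $1$--chains'' a \emph{complete set of paths} (Sections~\ref{subsection: twisted coefficients ECH} and~4.10) and builds from it the maps $\mathfrak{A}$, $\mathfrak{A}'$, $\mathfrak{A}_\pm$ assigning classes in $H_2(M;\Z)$ to relative homology classes of curves in $W$, $W'$, $W_+$, $\overline{W}_-$; the twisted $\underline{\Phi}$ (Definition~\ref{defn: Phi twisted}, Theorem~\ref{thm: Phi twisted is a chain map}) and $\underline{\Psi}$ (Equation~\eqref{eqn: Psi twisted}) are defined precisely as you describe, and the chain-map and chain-homotopy identities follow because relative classes add under concatenation. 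The paper verifies one nontrivial compatibility you allude to: for the page-restriction step one must choose the complete set of paths so that the thin strips $D_i$, $D_i'$ carry trivial weight (Theorem~\ref{t:hf twisted}), and for $\underline{\Psi}$ one must project out the extra class $[\overline{S}]\in H_2(\overline{N})$ that is absent from $H_2(M)$.

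Your ``cheaper'' Route B, however, does not work as stated. The associated graded of an energy/action filtration on $\underline{\widehat{CF}}$ or $\underline{ECC}$ is not $\Phi_0\otimes\mathrm{id}$: the energy filtration kills \emph{all} nontrivial curves in the associated graded, leaving only the free module on generators with zero differential, so there is nothing to compare to Theorem~\ref{thm: main}. More fundamentally, knowing that a chain map of finite free $\F[H_2(M;\Z)]$--complexes becomes a quasi-isomorphism after applying the augmentation $\F[H_2]\to\F$ does not imply it is a quasi-isomorphism over $\F[H_2]$: for $H_2=\Z$, the map $\F[t,t^{-1}]\xrightarrow{1+t}\F[t,t^{-1}]$ (between complexes with zero differential) augments to an isomorphism over $\F=\Z/2\Z$ but is not a quasi-isomorphism. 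There is no shortcut here; one really must carry the bookkeeping through the gluing and homotopy arguments of this paper and its sequel, which is your Route A and the paper's route.
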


On the other hand, Taubes~\cite{T2} has proven that Seiberg-Witten Floer cohomology and ECH are isomorphic. Let $\widetilde{HM}(M)$ be the homology of the mapping cone of $U_\dagger: \check{C}(M) \to \check{C}(M)$, where $\check C(M)$ is the chain complex for $\Hto(M)$. Combining Taubes' theorem with Theorem~\ref{thm: main}, we obtain the following ``corollary'':

\begin{cor}
$\widehat{HF}(M,\mathfrak{s})\simeq \widetilde{HM}(M,\mathfrak{s})$ for any ${\frak s}\in \mbox{Spin}^c(M)$.
\end{cor}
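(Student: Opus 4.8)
The plan is to deduce the Corollary from Theorem~\ref{thm: main} together with Taubes' isomorphism between embedded contact homology and Seiberg--Witten Floer cohomology; essentially all the work lies in passing from the larger theories ($HF^{+}$, $ECH$, and the ``to'' version of Seiberg--Witten Floer homology) to their ``hat'' versions, and in reconciling the orientation and (co)homology conventions on the two sides --- which is why ``corollary'' is in quotation marks.

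First I would reduce to a statement that Theorem~\ref{thm: main} addresses directly. By Alexander's theorem every closed oriented $3$--manifold, and in particular $-M$, admits an open book decomposition; fix one, $(S',h')$, and set $\xi':=\xi_{(S',h')}$, a contact structure on $-M$. Since $PD\colon H_1(-M;\Z)\to H^2(-M;\Z)$ is an isomorphism and $\operatorname{Spin}^c(-M)$ is an affine space over $H^2(-M;\Z)$, the assignment $A'\mapsto\mathfrak{s}_{\xi'}+PD(A')$ is onto $\operatorname{Spin}^c(-M)$; composing with the canonical bijection $\operatorname{Spin}^c(M)\cong\operatorname{Spin}^c(-M)$, one sees that every $\mathfrak{s}\in\operatorname{Spin}^c(M)$ equals $\mathfrak{s}_{\xi'}+PD(A')$ for some $A'\in H_1(M;\Z)\cong H_1(-M;\Z)$. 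It therefore suffices to treat $\mathfrak{s}$ of this form.

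Applying Theorem~\ref{thm: main} to the manifold $-M$ with the open book $(S',h')$ gives, for each such $A'$, an isomorphism $\widehat{HF}(M,\mathfrak{s}_{\xi'}+PD(A'))\cong\widehat{ECH}(-M,\xi',A')$. I would then invoke Taubes~\cite{T2}: $ECH_*(-M,\xi',A')$ is isomorphic to the Seiberg--Witten Floer cohomology of $-M$ in the $\operatorname{Spin}^c$ structure $\mathfrak{s}_{\xi'}+PD(A')$, and --- this is the essential point --- the isomorphism respects the $\operatorname{Spin}^c$ decompositions and intertwines the $ECH$ $U$--map with the Seiberg--Witten $U_\dagger$--map. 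Feeding this through the Kronheimer--Mrowka duality isomorphisms relating the Seiberg--Witten Floer (co)homologies of $-M$ and of $M$ identifies the $U$--module $ECH(-M,\xi',A')$, up to an overall reversal of grading, with the homology of $\check{C}(M)$ in the corresponding $\operatorname{Spin}^c$ structure, with $U$ carried to $U_\dagger$. Taking the mapping cones of the two $U$--maps then turns $\widehat{ECH}(-M,\xi',A')$ --- which fits into an exact triangle with $ECH$ via the $U$--map, just as $\widehat{HF}$ does with $HF^{+}$ --- into $\widetilde{HM}(M,\mathfrak{s}_{\xi'}+PD(A'))$, where one uses the standard fact that these $\widehat{HF}$--type mapping cones of $U$ are insensitive to the passage among the ``to'', ``from'', and cohomological versions of the theory, because $U$ acts invertibly on the ``bar''/$HF^{\infty}$ part (for torsion $\operatorname{Spin}^c$ structures; the other cases are easier). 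Since everything takes place over $\F$, all of this need only be checked at the level of relatively graded vector spaces, so the dualizations cost nothing. Concatenating the two isomorphisms yields $\widehat{HF}(M,\mathfrak{s})\cong\widetilde{HM}(M,\mathfrak{s})$ for every $\mathfrak{s}$.

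The main obstacle --- and the reason the deduction is not purely formal --- is that Taubes' theorem must be invoked in its refined, $\operatorname{Spin}^c$--graded and $U$--equivariant form rather than as a bare isomorphism of groups, and one has to match carefully the orientation conventions, the homology/cohomology dichotomy, and the ``to''/``from'' versions on the Seiberg--Witten side against Hutchings' conventions on the embedded contact homology side. Once that bookkeeping is in place, the remaining ingredients --- surjectivity of $A'\mapsto\mathfrak{s}_{\xi'}+PD(A')$, one application of Theorem~\ref{thm: main}, Taubes' theorem, and the elementary behavior of the mapping cones of $U$ --- fit together at once.
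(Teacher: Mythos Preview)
The paper does not supply a proof of this corollary beyond the single sentence ``Combining Taubes' theorem with Theorem~\ref{thm: main}, we obtain the following `corollary'''; your argument is a correct and careful expansion of exactly that route, and the scare quotes around ``corollary'' acknowledge precisely the convention-matching and $U$-equivariance issues you flag. Your choice to apply Theorem~\ref{thm: main} to $-M$ and then use Kronheimer--Mrowka duality is one of two equally valid symmetrizations (the other being to apply the theorem to $M$ and dualize on the Heegaard Floer side instead); either works, and neither is singled out by the paper.
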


\subsection{Outline of proof}

Fix an open book decomposition $(S, \hh)$ for $M$. The first step in the construction of the map $\Phi_0$ is to adapt the definitions of $\widehat{HF}(-M)$ and $\widehat{ECH}(M)$ to the open book decomposition $(S,\hh)$. For Heegaard Floer homology this is achieved in Section~\ref{section: variation of HF adapted to OB}, essentially by pushing all interesting intersection points between the $\boldsymbol{\alpha}$- and $\boldsymbol{\beta}$-curves to one side of the Heegaard surface obtained from $(S, \hh)$.

On the ECH side this was achieved in the first paper~\cite{CGH2} of our series, where we introduced the group $\widehat{ECH}(N,\bdry N)$ for the mapping torus $N$ of
$(S, \hh)$. This group was defined as a direct limit
$$\widehat{ECH}(N,\bdry N)=\lim_{j\to\infty}ECH_j(N),$$
where $j$ is the number of intersections of an orbit set in $N$ with a page  of an open book
and the direct limit is taken with respect to maps
$$(\mathfrak{I}_j)_*:ECH_j(N)\to ECH_{j+1}(N),$$
defined by increasing the multiplicity of an elliptic orbit on $\partial N$ which
can be regarded intuitively as a receptacle for the $J$-holomorphic curves in
$\R \times M$ which intersect the cylinder over the binding. The following was
proved in \cite{CGH2}:

\begin{thm}\label{thm: from first paper of series}
$\widehat{ECH}(M)\simeq \widehat{ECH}(N,\bdry N).$
\end{thm}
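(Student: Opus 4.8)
The plan is to compute $\widehat{ECH}(M)$ with a contact form adapted to $(S,h)$, strip off the binding orbit, and filter what remains by intersection number with a page: the resulting subcomplexes will compute the groups $ECH_j(N)$, the inclusions will induce the maps $\mathfrak{I}_j$, and the direct limit will recover $\widehat{ECH}(M)$.

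First, following Colin--Honda, fix a contact form $\alpha$ on $M$ adapted to $(S,h)$: its Reeb vector field $R$ is positively transverse to the interior of every page $S\times\{t\}$, and in a neighborhood of the binding $K=\bdry S\times[0,1]/\!\sim$ it is put into a standard elliptic model, so that $K$ is a nondegenerate elliptic orbit of prescribed small action and prescribed rotation number. Away from $K$ the first return map to $S_0=S\times\{0\}$ is a diffeomorphism $\phi$ isotopic rel boundary to $h$, and a $C^{\infty}$-small perturbation supported in $\op{int}(S)$ makes it nondegenerate. After this perturbation the closed Reeb orbits of $\alpha$ are the iterates $K^{m}$, $m\ge 1$, together with orbits contained in $\op{int}(N)$ that are in bijection with periodic points of $\phi$ and carry a \emph{wrapping number} $n(\gamma)=[\gamma]\cdot[S_0]\ge 1$; for an orbit disjoint from $K$ this equals the linking number $\op{lk}(\gamma,K)$.

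Second, dispose of the binding. Since $\R\times K$ is $J$-holomorphic, positivity of intersections gives, for any curve $C$ counted by the ECH differential between $K$-free orbit sets $\mathbf{a}$ and $\mathbf{b}$, $\op{lk}(\mathbf{a},K)-\op{lk}(\mathbf{b},K)=C\cdot(\R\times K)\ge 0$, with equality exactly when $C\subset\R\times N$. Using the standard elliptic model near $K$, with its rotation number in an appropriate window, together with the Hutchings--Taubes asymptotic analysis (writhe bounds and partition conditions at a multiply covered elliptic orbit) and the action and Fredholm-index constraints, one shows that the $K$-free orbit sets carry a well-defined subquotient complex $ECC^{N}$ of $ECC(M)$, and that playing $K$ off against the $U$-map --- with the $U$-basepoint pushed into the binding region --- identifies the mapping cone of $U$, up to quasi-isomorphism, with $ECC^{N}$; hence $H_*(ECC^{N})\cong\widehat{ECH}(M)$. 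This passage from the full hat complex of $M$ to the $K$-free part is where I expect the real difficulty to lie: it requires a precise accounting of which holomorphic curves may enter the binding region, and it is where the Morse--Bott degeneracy of the underlying suspension flow must be traded for a nondegenerate picture in a way compatible with everything else.

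Third, install the page filtration on $ECC^{N}$. By the displayed inequality the $K$-free orbit sets $\mathbf{a}$ with $n(\mathbf{a})\le j$ span a subcomplex $ECC^{N}_{\le j}\subseteq ECC^{N}$, and $ECC^{N}=\bigcup_{j}ECC^{N}_{\le j}$ is the increasing union; set $ECH_j(N):=H_*(ECC^{N}_{\le j})$ and let $\mathfrak{I}_j$ be induced by the inclusion $ECC^{N}_{\le j}\hookrightarrow ECC^{N}_{\le j+1}$, recovering the directed system of \cite{CGH1}. Since homology commutes with direct limits,
$$\widehat{ECH}(N,\bdry N)=\varinjlim_{j}ECH_j(N)=H_*\Big(\bigcup_{j}ECC^{N}_{\le j}\Big)=H_*(ECC^{N})\cong\widehat{ECH}(M).$$
Two points still need care: the nondegenerate perturbation above is only available up to a prescribed wrapping number without destroying the adapted structure near $K$, so one really works with a compatible sequence $\alpha_1,\alpha_2,\dots$ of adapted forms and continuation maps in place of literal inclusions, and checks that these agree with the $\mathfrak{I}_j$; and one must know the left-hand side is the standard $\widehat{ECH}(M)$, i.e.\ independent of the contact form, which is imported from the invariance of ECH (for instance via Taubes' isomorphism with $\widehat{HM}$, or via ECH cobordism maps relating the adapted forms to an auxiliary nondegenerate contact form on $M$).
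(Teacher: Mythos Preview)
This theorem is not proved in the present paper: it is quoted from \cite{CGH1} (see the sentence preceding the statement, and its restatement as Theorem~\ref{thm: from first paper of series bis}). So there is no proof here to compare against, and your sketch is really a proposal for the argument of \cite{CGH1}.

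Your overall architecture --- an adapted contact form, isolating the binding contribution via the $U$-map, and organizing the remainder by intersection with a page --- is in the right spirit, but you have misidentified the objects in the direct system. As recalled in Section~\ref{subsection: defn of ECH hat}, $ECC_j(N,\alpha)$ is generated by orbit sets built from $\widehat{\mathcal P}=\mathcal P\cup\{e,h\}$ whose homology class meets a page \emph{exactly} $j$ times, not at most $j$ times; here $e$ and $h$ are two distinguished orbits in the negative Morse--Bott family $\mathcal N$ foliating $\bdry N$, not $K$-free orbits in $M$. The structure map is $\mathfrak I_j:\gamma\mapsto e\gamma$, i.e.\ tensoring by the boundary elliptic, not inclusion of a subcomplex. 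Your $ECC^{N}_{\le j}$ with its inclusion maps therefore does not recover the directed system of \cite{CGH1}, contrary to what you assert; the check ``that these agree with the $\mathfrak I_j$'' would fail as stated. What makes $\gamma\mapsto e\gamma$ a chain map, and what drives the comparison with $\widehat{ECH}(M)$, is precisely that $\mathcal N$ is a \emph{negative} Morse--Bott family (so no nontrivial curve has a member of $\mathcal N$ at a positive end); the passage from $M$ to $N$ in \cite{CGH1} is carried out in this Morse--Bott framework on $\bdry N$, with $e$ and $h$ encoding the binding data, rather than by filtering a single nondegenerate-binding complex by linking number as you propose.
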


The map $\Phi_0$ is, roughly speaking, the composition of a map
$$\Phi_*:\widehat{HF}(-M) \to ECH_{2g}(N),$$
induced by a symplectic cobordism $W_+$, followed by the natural map
$$ECH_{2g}(N)\to \lim_{j\to\infty} ECH_j(N),$$
induced by the maps $(\mathfrak{I}_j)_*$. Here $g$ is the genus of $S$.
(Strictly speaking, we need to use a ``perturbed'' version of $ECH_j(N)$, as explained in Section~\ref{elimination of elliptic orbits}, and replace the ECH groups by certain periodic Floer homology groups, as explained in Section~\ref{section: periodic Floer homology}.)

The cobordism $W_+$ is a symplectic fibration with fiber $S$ and monodromy
$\hh$ over a Riemann surface with a strip-like end and a cylindrical end, which is biholomorphic to a disk with a puncture at the center and a puncture on the boundary. On the boundary of $W_+$ there is a (disconnected) Lagrangian submanifold $\Lambda$ which, over the strip-like end, roughly speaking coincides with the Lagrangians used in the definition of Heegaard Floer homology (restricted to the nontrivial half of the Heegaard splitting). Then $\Phi_*$ is defined by counting degree $2g$ $J$-holomorphic multisections of $W_+$ with boundary on $\Lambda$ which converge to generators of the Heegaard Floer complex over the strip-like end and to generators of the ECH complex over the cylindrical end.

We also define a map $\Psi_*: ECH_{2g}(N) \to \widehat{HF}(-M)$ by counting degree
$2g$ $J$-holomorphic multisections of a symplectic cobordism $\overline{W}_-$
obtained by closing all fibers of $W_+$ with a disk and turning it upside down. The
Lagrangian boundary condition on $\overline{W}_-$ becomes singular, and this leads
to many more potential degenerations of $J$-holomorphic sections. For this reason,
the proof that $\Psi_*$ is defined is the longest and most difficult part of this paper.

In the next paper \cite{CGH-II} we prove that $\Phi_*$ and $\Psi_*$ are inverses of each other by composing the two cobordisms and degenerating them in a different way.
The proof that $\Phi_* \circ \Psi_*$ and $\Psi_* \circ \Phi_*$ are the identity is thus
reduced to a computation of some relative Gromov-Witten (or relative Gromov-Taubes) invariants.
Finally we prove that the maps $(\mathfrak{I}_j)_*$ are isomorphisms for $j\geq 2g$ by
an argument based on stabilizing the open book decomposition.

\subsection{Organization of the paper}

Papers \cite{CGH-I} and \cite{CGH-II} should be read as a single paper which has been split for practical reasons. References from \cite{CGH-II} will be written as ``II.$x$''; for example  ``Section II.$x$'' will mean ``Section $x$'' of \cite{CGH-II}.

In Section~\ref{section: adapting ECH to open book} we recall some results of \cite{CGH2}, including the definition of $\widehat{ECH}(N,\bdry N)$. In
Section~\ref{section: periodic Floer homology} we replace the ECH chain complexes $ECC_j(N)$ by the periodic Floer homology chain complexes $PFC_j(N)$, which are technically a little easier to use when defining chain maps to and from Heegaard Floer homology. Then in Section~\ref{section: variation of HF adapted to OB} we (i) review Lipshitz' reformulation of Heegaard Floer homology, (ii) restrict the Heegaard Floer chain complex to a page $S$ as in
\cite{HKM1} and obtain the chain group $\widehat{CF}(S,\mathbf{a}, \hh(\mathbf{a}))$ whose homology is isomorphic to $\widehat{HF}(-M)$, and (iii) introduce an ECH-type index $I_{HF}$ for Heegaard Floer homology. Section~\ref{section: moduli spaces of multisections} is devoted to describing the moduli spaces of multisections which are used in the definitions of the chain maps  $\Phi$ and $\Psi$ between the Heegaard Floer chain complex $\widehat{CF}(S,\mathbf{a}, \hh(\mathbf{a}))$ and the periodic Floer homology chain complex $PFC_{2g}(N)$.  Then in Sections~\ref{section: chain map phi} and \ref{section: chain map psi} we show that $\Phi$ and $\Psi$ are indeed chain maps.  The proof that $\Psi$ is a chain map is substantially more involved than the proof that $\Phi$ is a chain map.

The proofs of the chain homotopies between the chain maps $\Psi\circ\Phi$ and $id$, and between the chain maps $\Phi\circ\Psi$ and $id$, are rather involved and occupy almost all (Sections~II.\ref{P2-section: Gromov-Witten computation}--II.\ref{P2-section: homotopy of cobordisms II}) of \cite{CGH-II}.  The necessary Gromov-Witten (or Gromov-Taubes) type calculations which are used in the proof of the chain homotopy are carried out in Section~II.\ref{P2-section: Gromov-Witten computation}. Finally, in Section~II.\ref{P2-section: stabilization} we prove that (variants of)
the maps $(\mathfrak{I}_j)_*$ are isomorphisms for $j\geq 2g$.

\section{Adapting $ECH$ to an open book decomposition}
\label{section: adapting ECH to open book}

In this section we briefly recall the results of \cite{CGH2}. The reader is referred to \cite{CGH2} for a more complete discussion; the notation here is the same as that of \cite{CGH2}, unless indicated otherwise.

\subsection{The first return map}
\label{subsection: first return map}

We start by recalling the definition of an open book decomposition
in order to set the notation. Let $(S,\hh)$
\nom[SS2]{$(S, \hh)$}{Open book decomposition of $M$ with connected binding, page $S$, and monodromy $\hh$}
\nom[SS1]{$S$}{Compact oriented connected surface of genus $g$ with connected boundary}
\nom[h4]{$\hh$}{Monodromy $\hh:S\stackrel\sim\to S$}
be a pair consisting of a compact oriented surface $S$ with nonempty boundary
(sometimes called a {\em bordered surface}) and a diffeomorphism
$\hh:S\stackrel\sim\to S$ which restricts to the identity on $\bdry S$.
{\em In this paper we will always
assume that $\bdry S$ is connected, unless stated otherwise.}

We define the mapping torus
$$N_{(S,\hh)}=S\times[0,1]/\sim,$$
where $(x,1)\sim (\hh(x),0)$.
\nom[N1]{$N=N_{(S, \hh)}$}{Mapping torus of $\hh: S \stackrel\sim\to S$; from Section~\ref{section: moduli spaces of multisections} onwards, is given by
$N=S\times[0,2]/(x,2)\sim (\hh(x),0)$}
\begin{defn}\label{defn: open book decomposition}
Let $K\subset M$ be a link. Then $M$ admits an {\em open book decomposition
$(S,\hh)$ with binding $K$} if there is a diffeomorphism
$$M \cong N_{(S, \hh)} \cup V,$$
where $V \cong D^2 \times S^1$ is a tubular neighborhood of $K$
and, if we parametrize $\partial D^2$ by $t \in [0,1]$, then $\partial S \times \{ t \}
\subset \partial N_{(S, \hh)}$  is glued to $\{ t \} \times S^1$.
\end{defn}
The decomposition we give here is the standard one and is slightly different from the
one in \cite{CGH2}, where we introduced a ``no man's land'' diffeomorphic to $T^2 \times [0,1]$ between $N_{(S, \hh)}$ and $V$. Of course the two decompositions are
equivalent.

Let $S$ be a bordered surface.  Let $\nu(\bdry S)\simeq
[-\varepsilon,0]\times \R/\Z$ be a neighborhood of $\bdry S$ with
coordinates $(y,\theta)$ so that $\bdry S=\{y=0\}$. (The slight
difference with \cite{CGH2} is that, in \cite[Section~9.3]{CGH2},
$\bdry S=\{y=1\}$ instead of $\{y=0\}$.)

The following was essentially proved in \cite{CGH2}:

\begin{lemma} \label{lemma: change from h to h_0}
Given $\hh\in \op{Diff}(S,\bdry S)$, there exists $\hh_0\in \op{Diff}(S,\bdry S)$ in the same connected component as $\hh$, and which satisfies the following:
\begin{enumerate}
\item there exists a contact form $\alpha$ on $N_{(S,\hh)}$ such that the Reeb vector field $R_\alpha$ of $\alpha$ is transverse to $S\times\{t\}$, $t\in[0,1]$, and $\hh_0$ is the first return map of $R_\alpha$ on $S\times\{0\}$; and
\item the diffeomorphism $\hh_0$ restricts to $(y,\theta)\mapsto (y, \theta-y)$ on $\nu(\bdry S)$.
\end{enumerate}
Moreover, $\alpha=f_tdt+\beta_t$, where $f_t$ is a positive function on $S$ and $\beta_t$ is a Liouville $1$-form on $S$.
\end{lemma}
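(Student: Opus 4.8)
The statement is the Thurston--Winkelnkemper/Giroux construction of a contact form adapted to the open book $(S,h)$, refined so that near the binding the return map is the prescribed shear; it is essentially \cite[Section~2.3]{CGH1}, and the argument I would give runs in two steps: first replace $h$ by a well-behaved symplectic representative of its class in $\pi_0(\op{Diff}(S,\bdry S))$, then suspend it. For the first step, fix a Liouville form $\lambda$ on $S$, i.e.\ a primitive of an area form $\omega_0$, which in the collar coordinates $(y,\theta)$ on $N(\bdry S)$ equals $(1-y)\,d\theta$; then $\sigma\colon (y,\theta)\mapsto (y,\theta-y)$ is an exact symplectomorphism near $\bdry S$, with $\sigma^*\lambda-\lambda = d(\tfrac{y^2}{2}-y)$. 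After an isotopy rel $\bdry S$ (an Alexander-trick argument in a collar) I may assume $h=\op{id}$ near $\bdry S$; composing $h$ with a diffeomorphism supported in a collar and equal to $\sigma$ near $\bdry S$ gives $h_1\simeq h$ rel $\bdry S$ with $h_1=\sigma$ near $\bdry S$. I then Moser-isotope $h_1$, rel $\bdry S$, so that it preserves $\omega_0$ (note $h_1^*\omega_0$ and $\omega_0$ already agree near $\bdry S$ and have equal integral over $S$), and finally kill the class $[h_1^*\lambda-\lambda]\in H^1(S;\R)$ by a symplectic isotopy supported in the interior of $S$; this is possible because the flux of such isotopies fills out the image of $H^1(S,\bdry S;\R)\to H^1(S;\R)$, which is all of $H^1(S;\R)$ when $\bdry S$ is connected. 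The outcome is $h_0\in\op{Diff}(S,\bdry S)$, isotopic to $h$ rel $\bdry S$, equal to $\sigma$ near $\bdry S$, and with $h_0^*\lambda=\lambda+dG$ for some $G\in C^\infty(S)$; after adding a constant I arrange $G>0$.

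For the second step, pick a smooth family $f_t\in C^\infty(S,\R_{>0})$, $t\in[0,1]$, which is independent of $t$ for $t$ near $0$ and near $1$, satisfies $f_1 = f_0\circ h_0$, and has $\int_0^1 f_t\,dt = G$; such a family exists because $G$ is bounded below by a positive constant on the compact surface $S$ (for instance take $f_0=2G$ near $t=0$ and $f_1 = 2G\circ h_0$ near $t=1$, and interpolate through a suitable positive function chosen so the integral comes out exactly $G$). Set $\beta_t := \lambda + d_S\!\big(\int_0^t f_s\,ds\big)$, where $d_S$ denotes the exterior derivative along the pages, so that $d_S\beta_t=\omega_0$, $\partial_t\beta_t = d_Sf_t$, and $\beta_1=\lambda+dG=h_0^*\lambda = h_0^*\beta_0$. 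Then $\alpha:=f_t\,dt+\beta_t$ descends to a smooth $1$-form on $N_{(S,h_0)}$ (the seam conditions $f_1=f_0\circ h_0$ and $\beta_1=h_0^*\beta_0$ hold, and since $f_t$ is $t$-independent near the endpoints all higher $t$-derivatives match as well). A direct computation gives $d\alpha=\omega_0$ (the two $dt\wedge d_Sf_t$ contributions cancel), hence $\alpha\wedge d\alpha = f_t\,dt\wedge\omega_0>0$, so $\alpha$ is a contact form; moreover $\ker d\alpha$ is spanned by $\partial_t$, so $R_\alpha=f_t^{-1}\partial_t$, which is transverse to every page $S\times\{t\}$ and whose first return map on $S\times\{0\}$ is precisely the monodromy $h_0$. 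Near $\bdry S$ this return map is $\sigma$, i.e.\ $(y,\theta)\mapsto(y,\theta-y)$, and $\alpha$ visibly has the asserted form $f_t\,dt+\beta_t$ with $f_t$ a positive function on $S$ and $\beta_t$ a Liouville form ($d\beta_t=\omega_0$), which gives the ``Moreover'' clause.

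I expect the only genuine obstacle to be the first step: producing a single diffeomorphism $h_0$ that simultaneously (i) lies in the prescribed isotopy class rel $\bdry S$, (ii) realizes the model $\sigma$ near $\bdry S$, and (iii) is exact symplectic for a fixed Liouville form $\lambda$. The exactness in (iii) is a real cohomological (``flux'') constraint, and it must be arranged without disturbing (i) and (ii) — this is exactly where connectedness of $\bdry S$ enters, via surjectivity of $H^1(S,\bdry S;\R)\to H^1(S;\R)$ (equivalently, the vanishing of $[\bdry S]$ in $H_1(S)$). Once $h_0$ is fixed the suspension construction is routine and the contact condition collapses to the positivity of $f_t$. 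A variant that avoids the flux argument is to allow $d_S\beta_t$ to vary (keeping it a positive area form) and prescribe a local model for $\alpha$ near $\bdry S$ producing the shear as a first return map, at the cost of a gluing argument for the contact condition; I would expect \cite{CGH1} to do essentially one of these.
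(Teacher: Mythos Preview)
Your proof is correct and is a detailed, self-contained version of the Thurston--Winkelnkemper/Giroux construction that the paper simply cites from \cite{CGH1}. The paper's own argument is much terser: it invokes Lemmas~2.2.1 and 2.2.2 of \cite{CGH1} for part~(1) and the ``Moreover'' clause, and for part~(2) writes down the explicit $t$-independent local model $\alpha = (g(0)+y)\,d\theta + (f(0)+y^2/2)\,dt$ on $N(\partial N_{(S,h)})$ and checks that its Reeb field is parallel to $-y\,\partial_\theta + \partial_t$, so the first return map is the shear. Your treatment of~(2) is different in flavor: you build the shear into the global Liouville setup by arranging $\sigma^*\lambda-\lambda=d(y^2/2-y)$ on the collar and then run the exactness/flux argument to make $h_0$ globally exact symplectic for a fixed $\lambda$. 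This gives a uniform construction and makes the role of connectedness of $\partial S$ transparent, at the cost of the flux step; the paper's route sidesteps flux by prescribing the local model and gluing --- precisely the ``variant'' you anticipate in your last paragraph.

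One small point: your suggested endpoint values $f_0=2G$, $f_1=2G\circ h_0$ make $\int_0^1 f_t\,dt=G$ awkward to achieve while keeping $f_t>0$ (the endpoint contributions can already exceed $G$). It is cleaner to take $f_0=f_1$ equal to a small positive constant $\epsilon<\min G$ and add a bump $\rho(t)\cdot 2(G-\epsilon)$ supported in the interior of $[0,1]$ with $\int_0^1\rho=\tfrac12$; this visibly gives $f_t>0$ and $\int_0^1 f_t\,dt=G$.
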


\begin{proof}
 (1) and the last sentence of the lemma follow from combining Lemmas~9.3.2 and 9.3.3 from \cite{CGH2}. To verify (2), we refer to Section~9.3.1 and the discussion after the proof of Lemma~9.3.3 in \cite{CGH2} and take a slight modification of $\alpha$ of the form
$$\alpha=g(y)d\theta+f(y)dt$$
on a neighborhood $\nu(\bdry N_{(S,\hh)})$ of $\bdry N_{(S,\hh)}$. Here $\nu(\bdry N_{(S,\hh)})$ is the quotient of
$$\nu(\bdry S)\times[0,1]\simeq [-\varepsilon,0]\times(\R/\Z)\times[0,1]$$
with coordinates $(y,\theta,t)$,
\nom[y]{$(y,\theta,t)$}{Coordinates of a collared neighborhood of $\partial N_{(S, \hh)}$}
by the equivalence relation $(y,\theta,1)\sim (y,\theta,0)$. If we take
$$(f(y),g(y))=\left(f(0)+y^2/2, g(0)+y\right),$$
with $(f(0),g(0))$ in the interior of the first quadrant, then the
Reeb vector field $R_\alpha$ is parallel to
\begin{equation}\label{Reeb near the boundary}
-f'(y)\bdry_\theta+g'(y)\bdry_t=-y\bdry_\theta+\bdry_t.
\end{equation}
Its first return map then satisfies (2).
\end{proof}

{\em From now on, we assume that $\hh=\hh_0$ as given by Lemma~\ref{lemma: change from h to h_0}.}

\subsection{$ECH(N,\bdry N,\alpha)$ and $\widehat{ECH}(N,\bdry N,\alpha)$}

Let $N=N_{(S,\hh)}$ and $\alpha$ be as in the previous subsection. We recall the definitions of the variants $ECH(N,\bdry N,\alpha)$ and $\widehat{ECH}(N,\bdry N,\alpha)$ and the main result concerning them from \cite{CGH2}. In particular, we carry over the Morse-Bott terminology from \cite[Section~4]{CGH2}. \marginpar{binding reference updated} We will assume that the almost complex structure $J$ on $\R\times N$ is Morse-Bott regular.

The boundary $\bdry N$ is foliated by a Morse-Bott family $\mathcal{N}$
\nom[N]{$\mathcal{N}$}{Negative Morse-Bott family of Reeb orbits on $\partial N$; from Section~\ref{section: moduli spaces of multisections} onwards, is a negative Morse-Bott family of orbits of the Hamiltonian vector field $\overline{R}_0$ on $\bdry N$}
of simple orbits of $R_\alpha$ of the form $\theta=const$. We may assume without loss of generality that $\alpha$ is nondegenerate away from $\bdry N$, after a $C^k$-small perturbation for $k\gg 0$. We pick two orbits from $\mathcal{N}$ and label them $h$ and $e$.  The orbits $h$ and $e$
\nom[e]{$e$}{Elliptic orbit of Conley-Zehnder index $-1$ on $\partial N_{(S, \hh)}$}
\nom[h]{$h$}{Hyperbolic orbit of Conley-Zehnder index $0$ on $\partial N_{(S, \hh)}$;  assumed to satisfy Convention~\ref{convention for h} from Section~\ref{subsection: variant widetide psi} onwards}
are meant to become hyperbolic and elliptic after a small, controlled perturbation of $\alpha$. The Morse-Bott family $\mathcal{N}$ is {\em negative}. Since $\mathcal{N}$ is a Morse-Bott family on $\bdry N$, this means that $\mathcal{N}$ plays the role of a sink and that no holomorphic curve (besides a trivial cylinder) is asymptotic to an orbit of $\mathcal{N}$ at the positive end.

\subsubsection{$ECH(N,\bdry N,\alpha)$}

Let $\mathcal{P}$ be the set of simple Reeb orbits of $\alpha$ in $int(N)$.
\nom[P]{$\mathcal{P}$}{Set of simple Reeb orbits of $\alpha$ in $int(N)$; from Section~\ref{section: moduli spaces of multisections} onwards, is the set of simple orbits of the Hamiltonian vector field $\overline{R}_0$ in $int(N)$}
We write $ECC_j^\flat(N,\alpha)$ for the chain complex generated by orbit sets $\boldsymbol{\gamma}$ constructed from $\mathcal{P}\cup\{e\}$, whose homology class $[\boldsymbol{\gamma}]$ intersects the page $S\times\{t\}$ exactly $j$ times. The differential for $ECC_j^\flat(N,\alpha)$ counts ECH index $1$ Morse-Bott buildings in $(\R\times N,J)$ between orbit sets which are constructed from $\mathcal{P}\cup\{e\}$; for more details see \cite[Section~7.3]{CGH2}. In particular, if $\tilde u$ is an ECH index $1$ Morse-Bott building which is counted in the differential, then $e$ can appear only at a negative end of $\tilde u$ and no single end of $\tilde u$ can multiply cover $e$ with multiplicity $>1$. (It is still possible that there are many ends of $\tilde u$ which simply cover $e$.)

There are inclusions of chain complexes:
$$\mathfrak{I}_j^\flat:ECC_j^\flat(N,\alpha)\to ECC_{j+1}^\flat(N,\alpha),$$
$$\boldsymbol{\gamma}\mapsto e\boldsymbol{\gamma},$$
where we are using multiplicative notation for orbit sets. Let us
write $ECH_j^\flat(N,\alpha)$ for the homology of the chain complex
$ECC_j^\flat(N,\alpha)$.  We then define
$$ECH(N,\bdry N,\alpha)=\lim_{j\to\infty}
ECH_j^\flat(N,\alpha).$$

\subsubsection{$\widehat{ECH}(N,\bdry N,\alpha)$}
\label{subsection: defn of ECH hat}

Let $ECC_j(N,\alpha)$
\nom[ECC ]{$ECC(N,\alpha)$}{ECH chain complex for $(N,\alpha)$}
\nom[ECC ]{$ECC_j(N,\alpha)$}{Subcomplex of $ECC(N,\alpha)$
generated by orbit sets which intersect a page $j$ times} be the chain complex generated by orbit sets $\boldsymbol{\gamma}$ constructed from $\widehat{\mathcal{P}}=\mathcal{P}\cup\{e,h\}$,
\nom[P]{$\widehat{\mathcal{P}}$}{$\mathcal{P}\cup\{e,h\}$}
whose homology class $[\boldsymbol{\gamma}]$ intersects $S\times\{t\}$ exactly $j$ times. The differential for $ECC_j(N,\alpha)$ counts ECH index $1$ Morse-Bott buildings in $(\R\times N,J)$. There are inclusions:
\begin{equation}\label{eqn: new year}
\mathfrak{I}_j: ECC_j(N,\alpha)\to ECC_{j+1}(N,\alpha),
\end{equation}
$$\boldsymbol{\gamma}\mapsto e\boldsymbol{\gamma},$$
\nom[I]{$\mathfrak{I}_j$}{ECH chain maps given by Equation~\eqref{eqn: new year}}
as before.
Writing $ECH_j(N,\alpha)$ for the homology of
$ECC_j(N,\alpha)$, we define
$$\widehat{ECH}(N,\bdry N,\alpha)=\lim_{j\to\infty}ECH_j(N,\alpha).$$

The following was the main result of \cite{CGH2}:

\begin{thm}\label{thm: from first paper of series bis}
We have the isomorphisms:
$$ECH(M)\simeq ECH(N,\bdry N,\alpha),$$
$$\widehat{ECH}(M)\simeq \widehat{ECH}(N,\bdry N,\alpha).$$
\end{thm}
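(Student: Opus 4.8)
Since this is the main theorem of \cite{CGH1}, the plan is to recall the strategy used there. Both sides are defined from contact forms, so the first step is to produce a single contact form on $M$ whose ECH is manifestly computed by the complexes $ECC_j^\flat$ and $ECC_j$ of the previous subsections. One starts from a contact form on $M$ adapted to the open book $(S,h)$ in the sense of Giroux, so that the binding $K$ is an elliptic Reeb orbit and the Reeb flow is transverse to the interiors of the pages. Let $V\cong D^2\times(\R/\Z)$ be a standard tubular neighborhood of $K$, so that $M=N\cup_{\bdry N}V$ with $N=N_{(S,h)}$. The construction then \emph{blows up the binding}: one excises $V$ and glues back a standard neighborhood of a negative Morse--Bott torus $\mathcal N$, arranged so that on $N$ the resulting contact form is the $\alpha$ of Lemma~\ref{lemma: change from h to h_0} (so the first return map on $S\times\{0\}$ is $h_0$), and so that $\mathcal N\subset\bdry N$ is foliated by the $\theta=\op{const}$ orbits and is negative, i.e.\ acts as a sink for the Reeb flow on the glued-in piece, as stated above. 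Perturbing $\mathcal N$ into an elliptic orbit $e$ and a positive hyperbolic orbit $h$ by the Morse--Bott recipe of \cite[Section~5]{CGH1}, and making $\alpha$ nondegenerate in $\op{int}N$, one obtains a contact form whose ECH is, by construction, assembled from the $ECC_j$ (resp.\ $ECC_j^\flat$).

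The main work is to show that this blow-up does not change the ECH, i.e.\ that $ECH$ of the blown-up contact form agrees with $ECH(M)$. I would prove this by a neck-stretching argument along $\bdry N$. On one side sits the completion of $\R\times N$ capped by the neighborhood of $K$; on the other, the completion of $\R\times N$ capped by the neighborhood of $\mathcal N$. SFT compactness degenerates an $I=1$ holomorphic curve in either symplectization into a building with levels in $\R\times N$ and levels in the respective cap. The key point is that no nontrivial level can live in the cap: a curve there would have to be asymptotic to orbits of $\mathcal N$ (resp.\ to multiple covers of $K$) at its \emph{positive} end, which is impossible once $\mathcal N$ is negative and once one checks, via the ECH and Fredholm indices together with intersection positivity, that the only planes or low-genus curves available in the cap are trivial cylinders together with a single distinguished section common to the two models. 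This identifies the two differentials term by term, up to a filtered chain homotopy, and hence the two homologies. This curve-control step near the binding is where I expect essentially all of the difficulty to lie.

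It remains to recognize the resulting ECH as the direct limit in the definitions of $ECH(N,\bdry N,\alpha)$ and $\widehat{ECH}(N,\bdry N,\alpha)$. Since $R_\alpha$ is transverse to the pages, the symplectic action of a generator built from $\mathcal P\cup\{e,h\}$ is bounded below by a fixed multiple of its intersection number $j$ with a page $S\times\{t\}$, so the action filtration on ECH is cofinal with the filtration by $j$. Writing $ECH(M)=\varinjlim_L ECH^{\le L}$ and re-indexing along $j$, the connecting maps become the inclusions $\gamma\mapsto e\gamma$, which are exactly $\mathfrak I_j^\flat$ when $h$ is omitted and $\mathfrak I_j$ when it is kept; this gives $ECH(M)\simeq ECH(N,\bdry N,\alpha)$ and, keeping $h$, $\widehat{ECH}(M)\simeq\widehat{ECH}(N,\bdry N,\alpha)$. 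The two statements are compatible because the extra generator $h$ builds $ECC_j$ from the $\flat$-complexes through a mapping cone matching the one relating $\widehat{ECH}(M)$ to $ECH(M)$. As noted, the delicate points are the holomorphic curve analysis in the blown-up region and carrying out the Morse--Bott perturbation and gluing for the degenerate family $\mathcal N$ uniformly in the action bound $L$; the details are in \cite{CGH1}.
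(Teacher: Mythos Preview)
The paper does not prove this theorem; it simply records it as ``the main result of \cite{CGH1}'' and moves on. There is no proof in the present paper to compare your proposal against. You correctly identify this and attempt to sketch the argument of \cite{CGH1}.

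As a reconstruction of \cite{CGH1}, your outline captures the architecture---build a contact form on $M$ adapted to the open book so that the complement of the binding carries the structure described in Section~\ref{subsection: defn of ECH hat}, control holomorphic curves near the binding, and pass to a direct limit over the page-intersection filtration---but the mechanism you propose for the curve control is not the one used there. You describe a neck-stretching/SFT-degeneration argument along $\bdry N$ comparing two caps. The actual argument in \cite{CGH1} works instead with a single contact form and a fixed almost complex structure, and proves \emph{blocking} and \emph{trapping} lemmas: using the negativity of the Morse--Bott torus $\mathcal{N}$ and positivity of intersections with the foliation of $\R\times T_{\rho_0}$ by holomorphic cylinders, one shows directly that an ECH index $1$ or $2$ curve between orbit sets in $N$ cannot cross $\bdry N$, and that ends approaching $\mathcal{N}$ are forced to the negative end. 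This is closer in spirit to the intersection-number arguments you see later in this paper (e.g.\ Lemmas~\ref{intersezione 1}--\ref{intersezione 3} and the proof of Lemma~\ref{lemma: value of widetilde Phi}) than to an SFT limit. Your direct-limit step, matching the action filtration to the $j$-filtration via transversality of $R_\alpha$ to the pages and identifying the connecting maps with $\gamma\mapsto e\gamma$, is essentially what \cite{CGH1} does.
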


\subsection{Splitting of $ECH$ according to homology classes}

Given an orbit set $\boldsymbol{\gamma} = \prod_{j=1}^l\gamma_j^{m_j}$ in $M$ or $N$, its {\em total homology class} $[\boldsymbol{\gamma}]$ is defined as
\[ [\boldsymbol{\gamma}] = \sum_{j=1}^l m_j [\gamma_j], \]
where $ [\gamma_j] \in H_1(M; \Z)$ if $M$ is a closed manifold and $[\gamma_j] \in H_1(N;\Z)$ or $H_1(N, \partial N; \Z)$ (as appropriate) if $N$ has torus boundary. We then have the direct sum decomposition:
\[ ECC(M) = \bigoplus_{A \in H_1(M)} ECC(M,A), \]
where $ECC(M,A)$ is the subcomplex of $ECC(M)$ generated by orbit sets with total homology class $A$. The direct sum of chain groups descends to a direct sum of homology groups $ECH(M,A)$.

There is an analogous splitting for $ECC(N,\bdry N,\alpha)$.  In fact,
\[ ECC_j^\flat(N,\alpha) = \bigoplus_{A \in H_1(N, \partial N)}
ECC_j^\flat(N, \alpha,A), \]
and the inclusion $\boldsymbol{\gamma} \mapsto e \boldsymbol{\gamma}$ respects this splitting since $[e]=0$ in $H_1(N, \partial N)$.

\begin{lemma} \label{varpi}
If $M$ has an open book decomposition with binding $K$ and $N$ is the mapping torus of a page, then there is an isomorphism
\[ \varpi: H_1(M) \stackrel{\sim} \longrightarrow  H_1(N, \partial N). \]
\end{lemma}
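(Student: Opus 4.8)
The plan is to recognize the suspension $N=N_{(S,h)}$ as the exterior of the binding $K$ inside $M$ and then to compare $H_1(M)$ with $H_1(N,\partial N)$ via the long exact sequence of the pair $(M,\nu(K))$, where $\nu(K)\cong S^1\times D^2$ is a closed tubular neighborhood of $K$. The identification of $N$ with $M\setminus\operatorname{int}\nu(K)$ is essentially built into the definition of an open book: the collapsing relation $(y,t)\sim(y,t')$ only affects points over $\partial S$, so after collapsing it a neighborhood of $\partial N$ in $N$ (a thickened torus $[-\varepsilon,0]\times(\R/\Z)_\theta\times(\R/\Z)_t$) becomes the solid torus $\nu(K)$, while the rest of $N$ is untouched; thus $M=N\cup_{\partial N}\nu(K)$ with $\partial N$ identified with $\partial\nu(K)$.

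Granting this, I would first apply excision: a slightly smaller neighborhood $\nu'(K)\subset\operatorname{int}\nu(K)$ may be excised, and since $\nu(K)\setminus\operatorname{int}\nu'(K)$ deformation retracts onto $\partial\nu(K)$, one gets a natural isomorphism
$$H_1(M,\nu(K))\;\cong\;H_1\bigl(M\setminus\operatorname{int}\nu(K),\,\partial\nu(K)\bigr)\;=\;H_1(N,\partial N).$$
It therefore suffices to show that the map $j_*\colon H_1(M)\to H_1(M,\nu(K))$ appearing in the long exact sequence of the pair is an isomorphism, and then to take $\varpi$ to be the composite of $j_*$ with this excision isomorphism.

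To prove that $j_*$ is an isomorphism I would use the exact sequence
$$H_1(\nu(K))\xrightarrow{\ i_*\ }H_1(M)\xrightarrow{\ j_*\ }H_1(M,\nu(K))\xrightarrow{\ \partial\ }\widetilde H_0(\nu(K)).$$
Since $\nu(K)$ and $M$ are connected, $\widetilde H_0(\nu(K))=0$, so $j_*$ is surjective and $\ker j_*=\operatorname{im}i_*$. The group $H_1(\nu(K))\cong\Z$ is generated by $[K]$, and here is where I would invoke the standing hypothesis that $\partial S$ is connected: a page $S\times\{t_0\}$, $t_0\in(0,1)$, is then a compact oriented surface properly embedded in $M$ with oriented boundary exactly $K$, so $K$ bounds and $[K]=0$ in $H_1(M;\Z)$. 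Consequently $i_*=0$, hence $j_*$ is injective as well, hence an isomorphism.

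The one genuinely substantive point — and precisely where connectedness of $\partial S$ is used — is the vanishing of $[K]$ in $H_1(M)$. If $\partial S$ had several components $K_1,\dots,K_n$, the page would only force $[K_1]+\cdots+[K_n]=0$, the individual classes need not vanish and $H_1(\nu(K))$ would have rank $n$, so $j_*$ would in general have nontrivial kernel and the statement would require modification. Everything else — the identification of $N$ with the binding exterior, the excision step, and the exact sequences — is routine.
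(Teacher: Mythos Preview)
Your proof is correct and follows essentially the same approach as the paper: the paper uses the long exact sequence of the pair $(M,K)$ rather than $(M,\nu(K))$, together with excision, and invokes the same two facts ($[K]=0$ in $H_1(M)$ and $K$ connected) to conclude that $H_1(M)\to H_1(M,K)$ is an isomorphism. Since $\nu(K)$ deformation retracts onto $K$, the two versions are equivalent; your treatment of the excision step is if anything slightly more explicit.
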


\begin{proof}
We use the long exact sequence for the pair $(M, K)$:
\[ H_1(K) \to H_1(M) \stackrel{i}\to H_1(M, K) \to H_0(K). \]
Since $[K]=0$ in $H_1(M)$ and $K$ is connected, the map $i$ is an
isomorphism. By excision and homotopy invariance, we obtain the
isomorphism $H_1(M, K) \stackrel{\sim} \longrightarrow H_1(N,
\partial N).$ Combining the two isomorphisms gives us $\varpi$.
\end{proof}

The isomorphism $ECH(M) \cong ECH(N,\bdry N,\alpha)$ respects the
splitting into total homology classes.  In fact
$$ECH(M, A) \cong ECH(N,\bdry N, \alpha, \varpi(A)).$$
The same also holds for the hat versions.

\subsection{Twisted coefficients in ECH}
\label{subsection: twisted coefficients ECH}

In this subsection we describe the construction of ECH with twisted coefficients. We will adapt the analogous construction in Heegaard Floer homology from \cite[Section~8]{OSz2} instead of following the original construction in \cite[Section~11]{HS2}.

Fix a homology class $A$ and a closed curve $\Gamma \subset M$ such that
$[\Gamma]=A$.  Let $\boldsymbol{\gamma}^+$ and $\boldsymbol{\gamma}^-$ be orbit sets with $[\boldsymbol{\gamma}^+]=[\boldsymbol{\gamma}^-]= A$. \nom[H]{$H_2(M, \boldsymbol{\gamma}^+, \boldsymbol{\gamma}^-)$}{Homology classes of chains in $N$ with boundary in $\boldsymbol{\gamma}^-$ and $\boldsymbol{\gamma}^-$} We denote by $H_2(M, \boldsymbol{\gamma}^+, \boldsymbol{\gamma}^-)$ the set of relative homology classes of $2$-chains $C$ in $M$ with $\partial C= \boldsymbol{\gamma}^+ - \boldsymbol{\gamma}^-$ and by ${\mathcal M}^{I=1}(\boldsymbol{\gamma}^+, \boldsymbol{\gamma}^-, C)$ the moduli spaces of $I=1$ holomorphic curves in $\R \times M$ from $\boldsymbol{\gamma}^+$ to
$\boldsymbol{\gamma}^-$ representing the homology class $C$.

A {\em complete set of paths for $A$ based at $\Gamma$} is the choice, for every orbit set $\boldsymbol{\gamma}$ such that $[\boldsymbol{\gamma}]=A$, of a surface $C_{\boldsymbol{\gamma}} \subset M$ such that
$\partial C_{\boldsymbol{\gamma}} = \boldsymbol{\gamma} - \Gamma$. A complete set of paths for $A$ induces maps
$${\mathfrak A}': H_2(M, \boldsymbol{\gamma}^+, \boldsymbol{\gamma}^-) \to H_2(M)$$
for all $\boldsymbol{\gamma}^+$ and $\boldsymbol{\gamma}^-$ in $A$ by $\mathfrak{A}'(C)= [C_{\boldsymbol{\gamma}^-} \cup C \cup - C_{\boldsymbol{\gamma}^+}]$. This map is compatible with the action of $H_2(M)$ on $H_2(M, \boldsymbol{\gamma}^+, \boldsymbol{\gamma}^-)$ and with the concatenation of chains with matching ends.

We denote  the group ring of $H_2(M; \Z)$ by $\F[H_2(M; \Z)]$ and the generator corresponding to $c \in H_2(M; \Z)$ by $e^c$. We define
$$\underline{ECC}(M, \alpha, A) = ECC(M, \alpha, A) \otimes_{\F} \F[H_2(M; \Z)]$$
as an $\F[H_2(M; \Z)]$-module, with differential
$$ \partial \boldsymbol{\gamma}^+ = \sum_{\boldsymbol{\gamma}^-} \sum_{C \in H_2(M, \boldsymbol{\gamma}^+, \boldsymbol{\gamma}^-)} \# \left ( {\mathcal M}^{I=1}(\boldsymbol{\gamma}^+, \boldsymbol{\gamma}^-, C) / \R \right ) e^{\mathfrak{A}'(C)} \boldsymbol{\gamma}^-.$$

The homology of this complex is ECH with twisted coefficients $\underline{ECH}(M, A)$. The $U$-map can be defined in a similar manner and
$\widehat{\underline{ECH}}(M, A)$ is the homology of its mapping cone.
The construction of $\underline{ECH}(N, \partial N, \varpi(A))$ with coefficient ring
$\F[H_2(N, e; \Z)] \cong \F[H_2(M; \Z)]$ is similar, and there are isomorphisms
\begin{equation} \label{twisted binding 1}
\underline{ECH}(M, A) \simeq \underline{ECH}(N,\bdry N, \varpi(A)).
\end{equation}
\begin{equation} \label{twisted binding 2}
\underline{\widehat{ECH}}(M,A)\simeq \underline{\widehat{ECH}}(N,\bdry N, \varpi(A)).
\end{equation}
Moreover, by considering only orbit sets that intersect a page $j$ times we can define
$\underline{ECH}_j(N)$ and we have
$$\underline{\widehat{ECH}}(N,\bdry N, \alpha, \varpi(A)) = \lim \limits_{j \to \infty}\underline{ECH}_j(N, \varpi(A)).$$

\subsection{Elimination of elliptic orbits} \label{elimination of elliptic orbits}

The goal of this subsection is to show how to locally replace elliptic orbits by hyperbolic orbits with the same parity (i.e., with negative
eigenvalues). The main result, which is used in \cite{CGH-II} but is also of independent interest, is Theorem~\ref{thm: elimination} below.  Let us first give the following definition:

\begin{defn}[Filtration $\mathcal{F}$]
If $N=N_{(S,\hh)}$ is the mapping torus of $(S,\hh)$ and $\gamma\subset N$ is a link which is everywhere transverse to $S\times\{t\}$, $t\in[0,1]$, then we define $\mathcal{F}(\gamma)=\langle\gamma,S\times\{0\}\rangle$.
\end{defn}

\begin{thm}[Elimination of elliptic orbits]\label{thm: elimination}
Let $\alpha$ be a contact form on the mapping torus $N=N_{(S,\hh)}$, $\hh\in \op{Diff}(S,\bdry S)$, such that the Reeb vector field $R_\alpha$ is transverse to $S\times\{t\}$, $t\in[0,1]$, and $\hh$ is the first return map of $R_\alpha$ on $S\times \{ 0\}$. Then, given $m\in \Z^+$ and $\varepsilon>0$, there exists a smooth function $f :N\rightarrow (0,+\infty)$ which is $\varepsilon$-close to $1$ with respect to a fixed $C^1$-norm and whose Reeb vector field $R_{f \alpha}$ has no elliptic orbits $\gamma$ in $int(N)$ satisfying $\mathcal{F}(\gamma)\leq m$.
\end{thm}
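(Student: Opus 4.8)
The plan is to make the replacement of elliptic orbits purely local, working one orbit at a time in order of increasing intersection number $\mathcal F$, and to control the process so that no new short elliptic orbits are created along the way. First I would observe that, since $R_\alpha$ is transverse to the pages $S\times\{t\}$, every closed orbit $\gamma$ in $\mathrm{int}(N)$ with $\mathcal F(\gamma)\le m$ corresponds to a periodic point of period $\le m$ of the first return map $h$ on $S\times\{0\}$; by compactness of $S$ and the transversality hypothesis there are only finitely many such orbits (after a preliminary $C^\infty$-small perturbation making the relevant periodic points of $h^k$, $1\le k\le m$, nondegenerate), and only finitely many of them are elliptic. So it suffices to produce, for a single elliptic orbit $\gamma_0$, a $C^1$-small multiplier $f$ supported in an arbitrarily small tubular neighborhood $U$ of $\gamma_0$ that turns $\gamma_0$ into a (negative) hyperbolic orbit while creating no new orbits of $\mathcal F\le m$ inside $U$; then one iterates, shrinking $\varepsilon$ geometrically at each of the finitely many steps.

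The local model is the standard one: near $\gamma_0$ we have coordinates $(s,\zeta)\in (\R/\ell\Z)\times D^2$ with $\alpha = ds + \tfrac12 r^2 d\phi + \dots$ and $R_\alpha = \partial_s + \dots$, and multiplying $\alpha$ by $f = 1 + \epsilon\, \chi(r)\, g(s,\zeta)$ for a suitable cutoff $\chi$ changes the linearized return map at $\gamma_0$ from a rotation to a hyperbolic matrix with negative eigenvalues; this is exactly the local surgery used to pass between Morse–Bott and nondegenerate data, and it can be arranged to keep the rotation number negative, i.e. to produce a \emph{negative} hyperbolic orbit, as required. The key points to check are: (a) the only orbit of $R_{f\alpha}$ that meets $U$ and has small $\mathcal F$ is the deformed $\gamma_0$ itself — this follows because $R_{f\alpha}$ remains $C^1$-close to $R_\alpha$, hence still transverse to the pages, so an orbit inside $U$ of period $\le m$ would be a period-$\le m$ point of the perturbed return map near the period-$\ell$ point of $h$, and for $\epsilon$ and $U$ small enough the implicit function theorem shows the only such point is the continuation of $\gamma_0$; and (b) the multiplier can indeed be chosen $C^1$-close to $1$ while effecting the needed change of eigenvalues, which is where the precise dependence of the first-return map on the $1$-jet of $f$ along $\gamma_0$ enters.

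I expect the main obstacle to be bookkeeping the interaction between successive local modifications: after turning $\gamma_0$ hyperbolic, the \emph{other} elliptic orbits $\gamma_1,\dots$ have moved slightly (and their return maps changed), so one must argue that the finite list of elliptic orbits with $\mathcal F\le m$ does not grow, and that the neighborhoods $U_i$ can be taken pairwise disjoint and disjoint from the (finitely many, already-hyperbolic) hyperbolic orbits with $\mathcal F\le m$. Both issues are handled by first fixing disjoint tubular neighborhoods of \emph{all} the relevant short orbits of $R_\alpha$, then choosing the total perturbation small enough that (i) no short orbit escapes its neighborhood, (ii) no short orbit is created outside the union of neighborhoods — here one uses that away from this union $R_\alpha$ has no short orbits, an open condition preserved under $C^1$-small perturbation — and (iii) the construction of step (b) in each $U_i$ is unaffected, since it depends only on the $1$-jet of $f$ along $\gamma_i$ and the $U_i$ are disjoint. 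Finally, choosing $\varepsilon_i$ at step $i$ so that $\sum_i \varepsilon_i < \varepsilon$ yields the desired $f$ with $\|f-1\|_{C^1}<\varepsilon$ and no elliptic orbits of $\mathcal F\le m$ in $\mathrm{int}(N)$.
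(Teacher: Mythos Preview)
There is a genuine gap in your argument, and it lies precisely in the tension between your claims (a) and (b). To change an elliptic orbit with irrational rotation angle $\phi_0\not\equiv 0,\pi$ into a (negative) hyperbolic one, the eigenvalues of the linearized return map must travel along the unit circle from $e^{\pm i\phi_0}$ to $-1$ and then off onto the real axis. This is an $O(1)$ change of the linearized return map, and it cannot be achieved by a perturbation for which the return map stays $C^1$-close to the original. Concretely: if $f$ is $C^1$-close to $1$, then $R_{f\alpha}$ depends on $df$ and is only $C^0$-close to $R_\alpha$, so the first return map $\Phi_{f\alpha}$ is only $C^0$-close to $\Phi_\alpha$. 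That is not enough for the implicit function theorem to guarantee that the only period-$\le m$ point in $U$ is the continuation of $\gamma_0$. Put differently, your model $f=1+\epsilon\,\chi(r)\,g(s,\zeta)$ with $\epsilon$ small simply will not move the eigenvalues off the unit circle; and any modification that \emph{does} move them necessarily creates new short periodic orbits along the way, exactly at the radii where the rotation angle passes through rationals $2\pi p/q$ with $q\le m$. (Your analogy with Morse--Bott perturbations is also misleading: there one starts with eigenvalues already at $\pm 1$, not at a generic point of the circle.)

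The paper confronts this head-on. Its Modification Lemma produces an $f_2$ that is $C^1$-close to $1$ by concentrating the change of rotation angle (from $\phi_0$ to $\pi$) on an arbitrarily small disk, and then making a further $C^k$-small tweak to split the parabolic return map into a hyperbolic one. The price, explicitly acknowledged in the remark following that lemma, is that \emph{many new elliptic orbits of higher $\mathcal F$ are created} in the interpolation region. The proof of the theorem then proceeds by induction on the filtration level: first kill all $\mathcal F=1$ elliptic orbits (accepting the new $\mathcal F>1$ elliptic orbits this produces), then kill all $\mathcal F=2$ elliptic orbits including the newly created ones, and so on up to $\mathcal F=m$. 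Your scheme of fixing disjoint neighborhoods of the original short orbits and perturbing once cannot work, because the set of short elliptic orbits genuinely grows at each step; the induction on $\mathcal F$ is not mere bookkeeping but the essential mechanism.
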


\subsubsection{Model situation on the solid torus}\label{modelsituation}

Fix a constant $\delta>0$. Consider the solid torus
$$V=D^2 \times S^1=D^2\times (\R /\Z) =\{ (r,\theta ,z) ~|~r\leq \delta \}$$ with the contact structure $\xi_0=\ker \alpha_0$, where $\alpha_0=dz+r^2 d\theta$. We write $D_{z_0} =\{z=z_0\}\subset V$ and $T_{r_0}=\{r=r_0\}\subset V$.

Given a function $f: [0,\delta] \rightarrow (0,+\infty )$, the Reeb vector field $R_{f(r)\alpha_0}$ is given by:
\begin{equation} \label{reeb}
R_{f\alpha_0} = \frac{1}{2rf^2} ((r^2 f'+2rf) \partial_z -f'\partial_\theta ).
\end{equation}
In particular, $R_{f \alpha_0}$ is transverse to each $D_z$, provided $r^2f'+2rf>0$, and $R_{f\alpha_0}$ is tangent to each $T_r$. The first return map $\Phi_{f\alpha_0}:D_0\stackrel\sim\to D_0$ is a rotation on each circle $\{r=r_0\}$ and can be written as $(r,\theta)\mapsto (r, \theta+\phi_{f\alpha_0}(r))$.

\begin{claim} \label{explicit formula}
Let $C:[0,\delta]\to (0,+\infty)$ be a smooth function. If
\begin{equation}\label{formula}
f(r)=A\exp \left(-\int_0^r \frac{2sC(s)}{1+C(s)s^2} ds\right),
\end{equation}
where $A$ is a positive constant,
then the first return map of $R_{f\alpha_0}$ satisfies $\phi_{f\alpha_0}(r)=C(r)$ for each $r\in(0,\delta]$.
\end{claim}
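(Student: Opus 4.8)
The plan is to verify Claim~\ref{explicit formula} by a direct computation using formula~\eqref{reeb} for the Reeb vector field, reducing the statement to an elementary ODE. First I would extract from~\eqref{reeb} the rotation number of the first return map in terms of $f$. Since $R_{f\alpha_0}$ is transverse to each disk $D_z$ and tangent to each torus $T_r$, on the circle $\{r=r_0\}$ the flow moves in the $\partial_z$- and $\partial_\theta$-directions with speeds proportional to $r^2f'+2rf$ and $-f'$ respectively; the time needed to traverse the $z$-period $1$ is $t_0 = \frac{2rf}{r^2f'+2rf}$, and during that time $\theta$ advances by $\phi_{f\alpha_0}(r) = -f' \cdot \frac{1}{r^2f'+2rf} \cdot \text{(period normalization)}$. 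Carrying out this bookkeeping carefully gives a formula of the shape
$$\phi_{f\alpha_0}(r) = \frac{-f'(r)}{r^2 f'(r) + 2 r f(r)} = \frac{-f'(r)}{r\,(r f'(r) + 2 f(r))}.$$
Then I would impose $\phi_{f\alpha_0}(r) = C(r)$ and solve the resulting first-order ODE for $f$.

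Setting the above equal to $C(r)$ yields $-f'(r) = r C(r) (r f'(r) + 2 f(r))$, i.e. $-f'(r)(1 + r^2 C(r)) = 2 r C(r) f(r)$, hence
$$\frac{f'(r)}{f(r)} = -\,\frac{2 r C(r)}{1 + r^2 C(r)}.$$
Integrating from $0$ to $r$ and exponentiating gives exactly $f(r) = A \exp\!\left(-\int_0^r \frac{2 s C(s)}{1 + s^2 C(s)}\, ds\right)$, which is~\eqref{formula}, with $A = f(0)$; and since the integrand is smooth and bounded, $f$ is smooth and strictly positive on $[0,\delta]$. Conversely, one checks that the $f$ defined by~\eqref{formula} satisfies the ODE and is smooth and positive, so by~\eqref{reeb} its first return map has rotation number $C(r)$ on each circle $\{r = r_0\}$, for every $r \in (0,\delta]$. (At $r=0$ one notes that $\partial_r$-to-$0$ cancellation in~\eqref{reeb} makes the core a closed Reeb orbit, so the claim is only asserted for $r>0$.)

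The main obstacle is purely computational rather than conceptual: getting the constants and signs right when converting~\eqref{reeb} into the rotation-number formula, in particular tracking how the $z$-period of $D^2\times(\R/\Z)$ normalizes the first return time and making sure the orientation conventions for $\theta$ match those used implicitly in~\eqref{reeb}. Once the intermediate identity $\phi_{f\alpha_0}(r) = -f'/(r^2 f' + 2rf)$ is established, the rest is a one-line separation of variables. I would present the proof by first recording this intermediate formula as the key step, then solving the ODE, and finally remarking that smoothness and positivity of $f$ are immediate from the boundedness of the integrand $\frac{2sC(s)}{1+s^2C(s)}$ near $s=0$.
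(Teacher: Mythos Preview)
Your proposal is correct and follows essentially the same approach as the paper: both reduce the claim to the ODE $C(r)(r^2 f' + 2rf) = -f'$ obtained from Equation~\eqref{reeb} by requiring $R_{f\alpha_0}$ to be parallel to $\partial_z + C(r)\partial_\theta$, and both observe that the given $f$ satisfies it. The paper's proof is terser (it simply records the ODE and stops), while you additionally spell out the derivation of the rotation number, carry out the separation of variables explicitly, and comment on smoothness and positivity.
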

 Here we are ignoring the differentiability at $r=0$.

\begin{proof}
In view of Equation~\eqref{reeb}, $R_{f\alpha_0}$ is parallel to $\bdry_z +C(r) \bdry_\theta$ if and only if
\begin{equation} \label{diff eq}
C(r)(r^2f'+2rf)=-f'
\end{equation}
is satisfied.
\end{proof}

Let $f_0: [0,\delta] \rightarrow (0,+\infty )$ be a function such that $\phi_{f_0\alpha_0}(r)=\phi_0$, where $\phi_0 \in (0,2\pi )$. By Claim~\ref{explicit formula}, we may take $$f_0(r)=\exp \left(-\int_\delta^r \frac{2s\phi_0}{1+\phi_0 s^2} ds \right)= {1+\phi_0\delta^2\over 1+\phi_0 r^2}.$$ In particular, $\gamma_0=\{ 0\} \times S^1$ is the only orbit $\gamma$ of $R_{f_0\alpha_0}$ satisfying $\mathcal{F}(\gamma)=1$, where $\mathcal{F}(\gamma)=\langle \gamma, D_0\rangle$, and the orbit $\gamma_0$ is elliptic and nondegenerate.

\begin{lemma}[Modification lemma]\label{lemma: elimination}
There exists a function $f_2: V\rightarrow (0,+\infty )$ such that $f_2\alpha_0$ is arbitrarily $C^1$-close to $f_0\alpha_0$ and the Reeb vector field $R_{f_2\alpha_0}$ is equal to $R_{f_0\alpha_0}$ near $\partial V$, is transverse to $D_z$ for all $z\in S^1$, and has only one orbit $\gamma$ satisfying $\mathcal{F}(\gamma)=1$. Moreover the orbit $\gamma$ is hyperbolic.
\end{lemma}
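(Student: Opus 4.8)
The plan is to work entirely inside the solid torus $V$ and to modify $f_0$ only in a neighborhood of the core orbit $\gamma_0=\{0\}\times S^1$, keeping the Reeb vector field transverse to every disk $D_z$ so that the first-return-map description persists. The first step is to pass from $f_0$ to a function $f_1$ supported near $\gamma_0$ so that the first return map $\Phi_{f_1\alpha_0}$, written as $(r,\theta)\mapsto(r,\theta+\phi_1(r))$, has $\phi_1(r)=\phi_0$ near $r=\delta$ (unchanged) but $\phi_1(0)=\phi_0-\pi$, say, with $\phi_1$ strictly monotone on a small interval $[0,\delta_1]$; by Claim~\ref{explicit formula} such an $f_1$ exists, obtained by choosing a suitable monotone profile $C(r)$ interpolating between $\phi_0-\pi$ at $r=0$ and $\phi_0$ for $r\geq\delta_1$, and setting $f_1(r)$ by Equation~\eqref{formula}. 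At this stage the only orbit with $\mathcal{F}=1$ is still the core $\gamma_0$, but by arranging $\phi_1(0)=\phi_0-\pi$ we have set up the return map to be conjugate near the core to a rotation by $\phi_0-\pi$, which we will exploit to install a hyperbolic orbit.

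The second, and main, step is the actual creation of the hyperbolic orbit. Here $f_1$ still only depends on $r$, so the return map is rotationally symmetric and $\gamma_0$ is a degenerate-in-family (Morse--Bott) situation for $\mathcal{F}=1$; I would break this symmetry by a further $C^1$-small perturbation $f_2$ which depends on $(r,\theta)$ (still independent of $z$, so transversality to the $D_z$ is automatic from Equation~\eqref{reeb}'s structure — more precisely one checks the $\partial_z$-component stays positive for $C^1$-small perturbations). Concretely, I would perturb the return map $\Phi_{f_1\alpha_0}$ near the core by composing with the time-one map of a small Hamiltonian on $D^2$ whose Hamiltonian has a single nondegenerate critical point at the origin of saddle type, so that the new return map has a unique fixed point near $r=0$ which is hyperbolic with negative eigenvalues (the ``$-\pi$'' shift built into step one is what forces the linearized return map to be $-\mathrm{Id}$ composed with the Hamiltonian flow, hence the eigenvalues can be made negative). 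One must then realize this perturbed area-preserving return map as the genuine first return map of a contact form $f_2\alpha_0$: this is possible because any $C^1$-small, compactly-supported (near the core, away from $\partial V$) perturbation of the return map that remains a positive first return map is induced by a $C^1$-small perturbation of the contact form of the type $f_2\alpha_0$ with $f_2$ independent of the flow direction — this is the standard dictionary between Reeb first return maps transverse to a disk and area-preserving disk maps, and it is essentially the content of Lemma~\ref{lemma: change from h to h_0} and \cite[Section~2.3]{CGH1}. One also checks that no \emph{new} orbits with $\mathcal{F}(\gamma)=1$ are created outside the small support, since there $f_2=f_1$ and $\phi_1(r)\in(0,2\pi)$ is never a multiple of $2\pi$.

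The third step is bookkeeping: verify that $f_2\alpha_0$ agrees with $f_0\alpha_0$ near $\partial V$ (immediate, since both $f_1$ and the perturbation are supported in $r\le\delta_1<\delta$), verify $R_{f_2\alpha_0}$ is transverse to all $D_z$ (from the positivity of the $\partial_z$-coefficient in Equation~\eqref{reeb}, preserved under $C^1$-small perturbation), and verify that the $C^1$-distance between $f_2\alpha_0$ and $f_0\alpha_0$ is as small as desired (the step-one profile $C(r)$ can be chosen so that $f_1$ is $C^1$-close to $f_0$ provided $\delta_1$ is small — one should check that the bound in Equation~\eqref{formula} degrades controllably, which it does since the integrand in \eqref{formula} is bounded — and the step-two Hamiltonian perturbation is taken $C^2$-small so its effect on $f_2$ is $C^1$-small).

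I expect the main obstacle to be step two: carrying out the symmetry-breaking so that the \emph{unique} surviving $\mathcal{F}=1$ orbit is hyperbolic \emph{with negative eigenvalues} (same parity as the eliminated elliptic orbit, as required by the application in Theorem~\ref{thm: elimination}), rather than positive-hyperbolic, and simultaneously ensuring no spurious short orbits appear. The ``shift by $\pi$'' trick in step one is precisely the device that forces negative eigenvalues, so the delicate point is to make steps one and two compatible — i.e., to show the profile $C(r)$ can be chosen monotone, with the prescribed endpoint values, and $C^1$-small in the resulting $f_1$, all at once.
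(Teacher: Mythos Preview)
Your overall architecture matches the paper's, but Step~1 contains a genuine error that breaks the argument. You set $\phi_1(0)=\phi_0-\pi$ and then claim this forces the linearized return map at the core to be ``$-\mathrm{Id}$ composed with the Hamiltonian flow.'' It does not: the linearized first return map of $(r,\theta)\mapsto(r,\theta+\phi_1(r))$ at $r=0$ is rotation by $\phi_1(0)=\phi_0-\pi$, with eigenvalues $e^{\pm i(\phi_0-\pi)}$. This equals $-\mathrm{Id}$ only when $\phi_0\equiv 0\pmod{2\pi}$, which is excluded. Consequently, for generic $\phi_0$ the core orbit of $R_{f_1\alpha_0}$ is still elliptic, and no $C^1$-small saddle-type perturbation in Step~2 will convert it to a hyperbolic one.

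There is a second, related problem: if $\phi_0\in(0,\pi)$ then a monotone profile from $\phi_0-\pi$ to $\phi_0$ passes through $0$, so there is a radius $r_*\in(0,\delta_1)$ with $\phi_1(r_*)=0$, producing an entire torus of $\mathcal{F}=1$ orbits and falsifying your claim that ``the only orbit with $\mathcal{F}=1$ is still the core.'' (Note also that Claim~\ref{explicit formula} is stated for $C:[0,\delta]\to(0,+\infty)$, so a profile taking the value $\phi_0-\pi<0$ is not directly covered.)

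The fix is exactly what the paper does: set the rotation angle at the core to $\pi$ itself, i.e., choose $C_{\delta'}(r)$ interpolating from $\pi$ at $r=0$ to $\phi_0$ for $r\ge\delta'$ (assuming WLOG $\phi_0\in(0,\pi]$, so the profile stays in $(0,\pi]\subset(0,2\pi)$ and no spurious fixed tori appear). Then $d\Phi_{f_1\alpha_0}(0)$ genuinely has both eigenvalues equal to $-1$, and a $C^k$-small grafting of a quadratic Hamiltonian $h(x,y)=x^2-y^2$ near the core pushes these to $-\lambda,-\lambda^{-1}$ with $\lambda\neq 1$. Your Steps~2 and~3 then go through essentially as written.
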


\begin{proof}
Without loss of generality we take $\phi_0\in (0,\pi]$; the case $\phi_0\in[\pi,2\pi)$ is similar.

Let $0< \delta'\ll \delta$ and let $C_{\delta'}:[0,\delta]\to [\phi_0,\pi]$ be a smooth function such that:
\begin{itemize}
\item $C_{\delta'}(r)=\pi$ on $[0,{\delta'\over 2}]$; and
\item $C_{\delta'}(r)=\phi_0$ on $[\delta',\delta]$.
\end{itemize}
Let $f_1: [0,\delta] \rightarrow (0,+\infty )$ be the smooth function
$$f_1(r) = \exp\left( -\int_\delta^r {2sC_{\delta'}(s)\over 1+C_{\delta'}(s) s^2} ds  \right).$$
The function $f_1$ satisfies the following:
\begin{enumerate}
\item $f_1=f_0$ and $R_{f_1\alpha_0} =R_{f_0\alpha_0}$ for $r\in[\delta',\delta]$;
\item $R_{f_1\alpha_0}\pitchfork D_z$ for all $z\in S^1$;
\item $\phi_{f_1\alpha_0}(r)=C_{\delta'}(r)$ for all $r\in[0,\delta]$.
\end{enumerate}

We compute that
\begin{equation}  \label{exp integral}
{f_1(r)\over f_0(r)}= \exp \left(\int_\delta^r \left( {2s\phi_0\over 1+\phi_0 s^2}-{2sC_{\delta'}(s)\over 1+C_{\delta'}(s)s^2} \right) ds\right).
\end{equation}
By taking $\delta'>0$ to be arbitrarily small, the absolute value of the integrand of Equation~\eqref{exp integral} can be made arbitrarily small. Hence $f_1(r)\approx f_0(r)$ for all $r\in[0,\delta]$, in view of (1). (Here $f\approx g$ means $|f-g|$ is bounded above by a continuous function of $\delta'$ which approaches $0$ as $\delta'\to 0$.)

Next we consider $\bdry_x f_i ={xf_i'(r)\over r}$ and $\bdry_y f_i={yf'_i(r)\over r}$, where $r=\sqrt{x^2+y^2}$. We appeal to Equation~\eqref{diff eq} and write
$$f_1'(r)={2rf_1(r)\over 1+C_{\delta'}(r) r^2}, \quad f_0'(r)={2rf_0(r)\over 1+\phi_0 r^2}.$$
For $\delta'>0$ sufficiently small, $f_1'(r)$ and $f_0'(r)$ are both arbitrarily close to $0$. Moreover $f_1'(r)=f_0'(r)$ for $r\in[\delta',\delta]$. Hence $f_1'\approx f_0'$ for all $r\in[0,\delta]$. This implies that $\bdry_* f_1\approx\bdry_* f_0$ and $\bdry_* (f_1/ f_0) \approx 0$ for $*=x,y$.

The Reeb vector field $R_{f_1\alpha_0}$ has exactly one orbit $\gamma_0$ satisfying $\mathcal{F}(\gamma_0)=1$. The linearized first return map $d\Phi_{f_1\alpha_0}(0)$ of $\gamma_0$ has eigenvalues $-1$.

We now claim there exists a $C^k$-small perturbation $f_2$ of $f_1$ for any $k\gg 0$ such that the linearized first return map $d\Phi_{f_2\alpha_0}(0)$ of the corresponding orbit has eigenvalues $-\lambda$ and $-{1\over \lambda}$ with $\lambda\in \R^{>0}-\{1\}$. This follows from a local model on $D^2\times[0,1]$ with coordinates $(x,y,z)$:
Suppose the Reeb vector field is $R=\bdry_z$. Then the contact $1$-form can be written as $\alpha=dz+\beta$, where $\beta$ is a $1$-form on $D^2$. We consider $R_{\mathscr{G}\alpha}$, where $\mathscr{G}(x,y)=1+\varepsilon(x^2-y^2)$ and $\varepsilon>0$ is sufficiently small. The component of $R_{\mathscr{G}\alpha}$ in the $xy$-direction is parallel to $y\bdry_x-x\bdry_y$. Hence the derivative at zero of the holonomy map $D^2\times\{0\}\dashrightarrow D^2\times\{1\}$,\footnote{The dashed arrow indicates that the map is only partially defined.} obtained by flowing along $R_{\mathscr{G}\alpha}$, has eigenvalues $\lambda_0$ and ${1\over \lambda_0}$ with $\lambda_0\in \R^{>0}-\{1\}$. By appropriately damping  $\mathscr{G}$ out to $1$ outside a small neighborhood of $(0,0,0)\in D^2\times[0,1]$, the above model can be grafted into $V$ to give $f_2$. This procedure does not introduce any extra $\mathcal{F}=1$ orbits, since the graph of $\Phi_{f_1\alpha_0}$ in $D_0\times D_0$ intersects the diagonal transversely in one point and this property is stable under a $C^k$-small perturbation of $\Phi_{f_1\alpha_0}$ for any $k\gg 0$.
\end{proof}

\begin{rmk}
The modification in Lemma~\ref{lemma: elimination} introduces many elliptic orbits satisfying $\mathcal{F}>1$.
\end{rmk}

\subsubsection{Proof of Theorem~\ref{thm: elimination}}

We now prove Theorem~\ref{thm: elimination}. Starting with the contact form $\alpha$ on $N$, we make a $C^2$-small perturbation of $\alpha$ relative to $\bdry N$ such that the resulting Reeb vector field --- also called $R_\alpha$ --- satisfies the following:
\begin{enumerate}
\item $R_\alpha$ is nondegenerate away from $\bdry N$;
\item for each $\mathcal{F}=1$ elliptic orbit $\gamma\subset int(N)$, the first return map is a rotation by an irrational angle and $\alpha$ is of the form $C_0f_0\alpha_0$ on a tubular neighborhood $V_\gamma$ of $\gamma$.
\end{enumerate}
Here $f_0$ and $\alpha_0$ are as in Section~\ref{modelsituation} and $C_0$ is some constant.
We then use Lemma~\ref{lemma: elimination} on the tubular neighborhoods $V_\gamma$ to replace the $\mathcal{F}=1$ elliptic orbits by hyperbolic orbits
plus $\mathcal{F}>1$ orbits.  Next, we perturb the form so that the $\mathcal{F}=1$ orbits and the $\mathcal{F}=2$ hyperbolic orbits are left unchanged and the $\mathcal{F}=2$ elliptic orbits satisfy (2), with $\mathcal{F}=1$ replaced by $\mathcal{F}=2$.
Using Lemma~\ref{lemma: elimination} again, we replace the $\mathcal{F}=2$ elliptic orbits by hyperbolic orbits and $\mathcal{F}>2$ orbits.  Continuing in this manner, we obtain $\alpha$ without any $\mathcal{F}\leq m$ elliptic orbits in $int(N)$.

\subsubsection{Direct limits} \label{direct limits}

Starting with $(S,\hh)$ and $\alpha$ from Section~\ref{subsection: first return map}, we define $f_j:N\to (0,+\infty)$, $j\in \N$, inductively as follows.  Let $f_0=1$.  Suppose we have chosen up to $f_j$ so that $R_{f_j\alpha}$ has no elliptic orbits with $\mathcal{F}\leq j$.  Let $V_{j+1}$ be a small tubular neighborhood of the elliptic orbits of $R_{f_j\alpha}$ with $\mathcal{F}= j+1$. Then we choose $f_{j+1}$ such that the following hold:
\begin{enumerate}
\item $f_{j+1}$ is $C^1$-close to $f_j$;
\item $f_{j+1}=f_j$ on $N-V_{j+1}$;
\item $R_{f_{j+1}\alpha}$ has no elliptic orbits with $\mathcal{F}\leq j$.
\end{enumerate}
The existence of $f_{j+1}$ is given by Theorem~\ref{thm: elimination}.

Next we consider the ECH chain maps
\begin{equation} \label{eqn: frak J}
{\frak K}_j: ECC_j(N,f_j\alpha)\to ECC_{j+1}(N,f_{j+1}\alpha),
\end{equation}
\nom[K ]{${\frak K}_j$}{ECH chain maps given by Equation~\eqref{eqn: frak J}}
given by composing two maps:
$${\frak K}'_j: ECC_j(N,f_j\alpha)\to ECC_{j}(N,f_{j+1}\alpha)$$
and $\mathfrak{I}_j: ECC_j(N,f_{j+1}\alpha)\to ECC_{j+1}(N,f_{j+1}\alpha)$ given by $\boldsymbol{\gamma}\mapsto e\boldsymbol{\gamma}$.  The map ${\frak K}'_j$ is defined by suitably completing $f_j\alpha$ and $f_{j+1}\alpha$ to $M$ and applying the ECH cobordism map given by \cite[Theorem~2.4]{HT3}.
It is important to remember that the ECH cobordism map is defined by using cobordism maps in Seiberg-Witten Floer cohomology.

Then we have:
\begin{thm} \label{thm: direct limit}
$\displaystyle\widehat{ECH}(M)\simeq \lim_{j\to\infty} ECH_j(N,f_j\alpha),$ where the direct limit is taken with respect to the maps ${\frak K}_j$.
\end{thm}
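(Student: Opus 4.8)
The plan is to exhibit the direct system $\{ECH_j(N,f_j\alpha),\ (\mathfrak{J}_j)_*\}$ as a cofinal subsystem of a commuting system of abelian groups indexed by $\N\times\N$, all of whose "rows" have direct limit $\widehat{ECH}(M)$. For $k,J\in\N$ put $G(k,J):=ECH_k(N,f_J\alpha)$, the ECH group of orbit sets with $\mathcal F=k$ for the form $f_J\alpha$. Here $f_J\alpha$ still has Reeb vector field transverse to the pages, and its first return map is isotopic rel $\bdry S$ to $h$ (the $f_j$ being $C^1$-small, supported in $int(N)$), so the completion of $f_J\alpha$ to $M$ is a contact form for a contact structure isotopic to $\xi_{(S,h)}$. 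Equip $G$ with horizontal maps $\iota_k\colon G(k,J)\to G(k+1,J)$ induced by $\gamma\mapsto e\gamma$ and vertical maps $\mathfrak{J}'_{k,J}\colon G(k,J)\to G(k,J+1)$ given by the ECH cobordism map of \cite[Theorem~2.4]{HT3} for the completions of $f_J\alpha$ and $f_{J+1}\alpha$, exactly as in the definition of $\mathfrak{J}'_j$ above (so $\mathfrak{J}'_{j,j}=\mathfrak{J}'_j$ and $\mathfrak{J}_j=\iota_j\circ\mathfrak{J}'_{j,j}$).

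The first thing to check is that $G$ is an honest functor on $\N\times\N$, i.e.\ that the square with corners $G(k,J),\,G(k+1,J),\,G(k,J+1),\,G(k+1,J+1)$ commutes on homology. That $\iota_k$ and $\mathfrak{J}'_{k,J}$ are chain maps is given by \cite{CGH1} and \cite{HT3} respectively; commutativity of the square amounts to $\mathfrak{J}'_{k+1,J}\circ\iota_k\simeq\iota_k\circ\mathfrak{J}'_{k,J}$, i.e.\ that the cobordism map intertwines multiplication by $e$. Since all the $f_J$ agree near $\bdry N$, the completed cobordism is a trivial product there, so a curve contributing to $\mathfrak{J}'(e\gamma^+,e\gamma^-)$ splits off a trivial cylinder over $e$; the Morse--Bott analysis of \cite{CGH1} (the same used to show that $\mathfrak{I}_j$ is a chain map) then yields the required homotopy.

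Next I would observe that the diagonal staircase $G(0,0)\xrightarrow{\iota}G(1,0)\xrightarrow{\mathfrak{J}'}G(1,1)\xrightarrow{\iota}G(2,1)\xrightarrow{\mathfrak{J}'}G(2,2)\to\cdots$ is cofinal in $\N\times\N$ (every $(a,b)$ lies below $(c,c)$ for $c=\max(a,b)$), so $\lim_{(k,J)}G\cong\lim_{j\to\infty}G(j,j)$; and by the commuting square the composite $G(j,j)\to G(j+1,j+1)$ through $G(j+1,j)$ equals the one through $G(j,j+1)$, namely $\iota_j\circ\mathfrak{J}'_{j,j}=(\mathfrak{J}_j)_*$. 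Thus $\lim_{(k,J)}G$ is exactly the direct limit in the statement. On the other hand, iterating the limit in the other order, $\lim_{(k,J)}G\cong\lim_{J\to\infty}\bigl(\lim_{k\to\infty}ECH_k(N,f_J\alpha)\bigr)=\lim_{J\to\infty}\widehat{ECH}(N,\bdry N,f_J\alpha)$, the inner limit being the definition of $\widehat{ECH}(N,\bdry N,f_J\alpha)$ and the transition maps being the unfiltered ECH cobordism maps induced by the $\mathfrak{J}'$.

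It then remains to see that $\lim_{J\to\infty}\widehat{ECH}(N,\bdry N,f_J\alpha)\cong\widehat{ECH}(M)$. This is where one crucially uses that the ECH cobordism maps are defined through Seiberg--Witten Floer cohomology: since the completions of the $f_J\alpha$ all support contact structures isotopic to $\xi_{(S,h)}$, Taubes' isomorphism $ECH\cong\widehat{HM}$ together with the naturality of the cobordism maps under composition shows that each transition map $\widehat{ECH}(N,\bdry N,f_J\alpha)\to\widehat{ECH}(N,\bdry N,f_{J+1}\alpha)$ is an isomorphism (the canonical continuation isomorphism of $\widehat{HM}(M)$). Hence the direct limit over $J$ is isomorphic to its $J=0$ term $\widehat{ECH}(N,\bdry N,f_0\alpha)=\widehat{ECH}(N,\bdry N,\alpha)$, which equals $\widehat{ECH}(M)$ by Theorem~\ref{thm: from first paper of series bis}. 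Combined with the previous paragraph this gives $\lim_{j\to\infty}ECH_j(N,f_j\alpha)\cong\widehat{ECH}(M)$. The main obstacle is controlling the Seiberg--Witten-defined maps $\mathfrak{J}'$ well enough to run this argument: that they are $\mathcal F$-filtered, so that $\mathfrak{J}'_{k,J}$ makes sense level by level (which is why the completions must be chosen "suitably"), that they commute up to chain homotopy with $\gamma\mapsto e\gamma$, and that they become isomorphisms after passing to the direct limit over the page-intersection number — the first two being ECH-type arguments as in \cite{CGH1}, the last resting on Taubes' theorem.
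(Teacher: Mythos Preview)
The paper does not actually prove this theorem; it says only that the argument is similar to that of Theorem~\ref{thm: from first paper of series bis} (established in \cite{CGH1}). Your bi-indexed system $G(k,J)=ECH_k(N,f_J\alpha)$ with the cofinality of the diagonal and the iterated-limit identity is a correct and clean way to make that analogy precise, and it lands exactly where it should: $\lim_J\widehat{ECH}(N,\bdry N,f_J\alpha)$, which is constant in $J$ once one applies Theorem~\ref{thm: from first paper of series bis} to each $f_J\alpha$ and invokes the invariance of $\widehat{ECH}(M)$ under change of contact form (via Taubes).

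The only place your write-up is not quite self-consistent is the commuting square $\mathfrak J'_{k+1,J}\circ\iota_k\simeq\iota_k\circ\mathfrak J'_{k,J}$. Your in-line justification --- ``a curve contributing to $\mathfrak J'(e\gamma^+,e\gamma^-)$ splits off a trivial cylinder over $e$'' --- is a holomorphic-curve statement, whereas the paper is explicit that the $\mathfrak J'$ of \cite{HT3} are defined through Seiberg--Witten Floer cohomology and are not given by a curve count. You correctly isolate this as the main obstacle in your closing paragraph, and the resolution is indeed of the type you indicate there: the cobordism from $f_J\alpha$ to $f_{J+1}\alpha$ is supported in the solid tori $V_{J+1}\subset int(N)$ and is a product near $\bdry N$ after completion, so the compatibility of $\mathfrak J'$ with multiplication by $e$ follows from the same filtration and naturality arguments in \cite{CGH1} that underlie Theorem~\ref{thm: from first paper of series bis}. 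But the curve-splitting sentence in the body should be read as a heuristic for why this holds on the Seiberg--Witten side, not as the proof itself; otherwise your final paragraph is flagging as an ``obstacle'' something you have already claimed to dispatch.
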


This theorem will be used in \cite{CGH-II}. Its proof is omitted, since it is similar to that of Theorem~\ref{thm: from first paper of series}.

\section{Periodic Floer homology}
\label{section: periodic Floer homology}

In order to simplify some technicalities, we would like to replace
the ECH groups by the {\em periodic Floer homology groups} of
Hutchings~\cite{Hu1,Hu2}, abbreviated PFH in this paper. The PFH
groups are defined in a manner completely analogous to the ECH
groups, with stable Hamiltonian vector fields replacing the Reeb
vector fields.

If $M$ is a closed manifold which fibers over the circle, then the
PFH groups of $M$ are equivalent to the Seiberg-Witten Floer
cohomology groups of $M$ by the work of Lee-Taubes~\cite{LT}.

\subsection{Interpolating between Reeb and stable Hamiltonian
vector fields} \label{subsection: interpolation}

Consider the contact form $\alpha=f_tdt+\beta_t$ on $S\times[0,1]$,
as defined in Section~\ref{subsection: first return map}. We may
assume that $R_\alpha$ is parallel to $\bdry_t$ on $S\times[0,1]$.
Since
$$d\alpha= d_Sf_t\wedge dt+ d_S\beta_t+dt\wedge{d\beta_t\over dt},$$
where $d_S$ is the exterior derivative in the $S$-direction, it
follows that ${d\beta_t\over dt}=-d_Sf_t$. (Hence $d_S\beta_t$ is an
area form which does not depend on $t$.) Also, the form $\alpha$ is
a contact form as long as $d_S\beta_t>0$. Hence, for $C\gg 0$, the
form $(C+f_t)dt+\beta_t$ is a contact form with Reeb vector field
parallel to $\bdry_t$.

Now consider the $1$-parameter family of $1$-forms
\begin{equation} \label{eqn: defn of alpha s}
\alpha_\varsigma=Cdt+\varsigma(f_tdt+\beta_t),
\end{equation}
$\varsigma\in[0,1]$, on $N$. It interpolates between the contact form
$\alpha_1=Cdt+ (f_tdt+\beta_t)$ and the stable Hamiltonian form
$\alpha_0=Cdt$. The Reeb vector fields $R_{\varsigma}=R_{\alpha_\varsigma}$
are directed by $\bdry_t$ and hence are parallel for all $\varsigma>0$.

The pair $(\alpha_\varsigma,\omega=d_S\beta_t)$
\nom[1a$\alpha$]{$(\alpha_\varsigma, \omega)$}{Stable Hamiltonian structures on $N_{(S, \hh)}$}
is a {\em stable Hamiltonian structure} on $N$, i.e., $\omega$ is closed, $\alpha_\varsigma\wedge \omega>0$, and $\ker d\alpha_\varsigma\supset \ker \omega$; see for example \cite{CV}.
When $\varsigma=0$, the Hamiltonian vector
field $R_0$ equals ${1\over C} \bdry_t$ and hence is parallel to all
the $R_\varsigma$, $\varsigma>0$. Also let $\xi_\varsigma$ be the $2$-plane field
on $N$ given by the kernel of $\alpha_\varsigma$. The closed $2$-form
$\omega$ can either be viewed as an area form on $S$ or as a
(maximally nondegenerate) $2$-form on $N$.

\subsection{Definitions}
Consider the infinite cylinder $\R\times N$ with coordinates $(s,x)$. We will also
use the notation $W'=\R \times N$.

\begin{defn}\label{defn: almost complex structures J_tau}
An almost complex structure $J_\varsigma$
\nom[J10]{$J_\varsigma$}{Almost complex structure on $\R \times N$ adapted to $(\alpha_\varsigma, \omega)$}
\nom[J11]{$J'=J_0$}{Another name for $J_\varsigma$ when $\varsigma=0$}
on $\R\times N$ is {\em adapted to the stable Hamiltonian structure} $(\alpha_\varsigma,\omega)$ if $J_\varsigma$ satisfies the following:
\begin{itemize}
\item $J_\varsigma$ is $s$-invariant;
\item $J_\varsigma(\bdry_s)=R_{\varsigma}$ and $J_\varsigma(\xi_\varsigma)=\xi_\varsigma$;
\item $J_\varsigma$ is tamed by the symplectic form $\Omega'=ds\wedge dt+ \omega$.
\end{itemize}
\end{defn}

Our goal is to replace the ECH chain complexes
$$ECC_j(N,\alpha_\varsigma,J_\varsigma),\quad ECC_j^\flat(N,\alpha_\varsigma,J_\varsigma)$$
for $\varsigma>0$, by the analogously defined PFH chain complexes
$$PFC_j(N,\alpha_0,\omega,J_0),\quad PFC_j^\flat(N,\alpha_0,\omega,J_0).$$
The orbit sets of the ECH chain groups are constructed using the Reeb vector fields $R_\varsigma$ and the orbit sets of the PFH chain groups are constructed using the Hamiltonian vector field $R_{0}$.
\nom[PFC]{$PFC_j(N,\alpha_0,\omega)$}{Periodic Floer homology chain complex on $N$ analogous to $ECC_j(N, \alpha)$}

We introduce some notation. Let $J_\varsigma$, $\varsigma\in[0,1]$, be a
smooth family of $(\alpha_\varsigma,\omega)$-adapted almost complex
structures, let $\boldsymbol{\gamma},\boldsymbol{\gamma}'$ be orbit sets of
$PFC_j(N,\alpha_0,\omega)$ or $ECC_j(N,\alpha_\varsigma)$, and let $Z\in
H_2(N,\boldsymbol{\gamma},\boldsymbol{\gamma}')$.  We write:
\begin{itemize}
\item $\mathcal{M}_{J_\varsigma}(\boldsymbol{\gamma},\boldsymbol{\gamma}')$ for the moduli space of
$J_\varsigma$-holomorphic curves from $\boldsymbol{\gamma}$ to $\boldsymbol{\gamma}'$;
\item $\mathcal{M}_{J_\varsigma}(\boldsymbol{\gamma},\boldsymbol{\gamma}',Z)\subset
\mathcal{M}_{J_\varsigma}(\boldsymbol{\gamma},\boldsymbol{\gamma}')$ for the subset of curves in
the class $Z$;
\item $\mathcal{M}_{J_\varsigma }^s(\boldsymbol{\gamma},\boldsymbol{\gamma}')\subset
\mathcal{M}_{J_\varsigma}(\boldsymbol{\gamma},\boldsymbol{\gamma}')$ for the subset of
simply-covered curves; and
\item $\mathcal{M}^{nc}_{J_\varsigma}(\boldsymbol{\gamma},\boldsymbol{\gamma}',Z)\subset
\mathcal{M}_{J_\varsigma}(\boldsymbol{\gamma},\boldsymbol{\gamma}',Z)$ for the subset of curves
without connector components.
\nom[2nc]{$*=nc$}{Modifier ``without connector components''}
\end{itemize}
Here a {\em connector over an orbit set $\delta=\prod_i
\delta_i^{m_i}$} is a collection of branched covers of trivial
cylinders where the branching is optional and the total covering
degree over $\R\times \delta_i$ is $m_i$.

Given two orbit sets $\delta=\prod_i \delta_i^{m_i}$ and
$\delta'=\prod_i \delta_i^{m_i'}$, we set $\delta/\delta'= \prod_i
\delta_i^{m_i-m_i'}$ if $m_i'\leq m_i$ for all $i$; otherwise we set
$\delta/\delta'=0$. Here some $m_i$ and $m_i'$ may be zero. We then
write $u\in \mathcal{M}_{J_\varsigma}(\boldsymbol{\gamma},\boldsymbol{\gamma}',Z)$ as $u^0\cup
u^1$, where $u^0$ is a connector over the orbit set $\boldsymbol{\gamma}_0$ and
$u^1\in \mathcal{M}^{nc}_{J_\varsigma}(\boldsymbol{\gamma}/\boldsymbol{\gamma}_0,\boldsymbol{\gamma}'/\boldsymbol{\gamma}_0)$. By \cite[Proposition~7.5]{HT1}, if $I_{ECH}(\boldsymbol{\gamma},\boldsymbol{\gamma}',Z)=1$ (resp.\ $2$), then $u^1\in \mathcal{M}_{J_\varsigma}^s(\boldsymbol{\gamma}/\boldsymbol{\gamma}_0,\boldsymbol{\gamma}'/\boldsymbol{\gamma}_0)$ and has Fredholm index $1$ (resp.\ $2$).
\nom[IECH]{$I_{ECH}$}{ECH index}

\subsection{The flux} \label{subsection: flux}

For more details, see for example~\cite[Section~2]{CHL}.  Let
$$\hh_*: H_1(S;\Z)\to H_1(S;\Z)$$ be the map on homology induced by
$\hh:(S,\omega)\stackrel\sim\to (S,\omega)$ and let $K$ be the kernel
of $\hh_*-id$. Then the {\em flux} $F_{\hh}: K\to \R$ is defined as
follows: Let $[\zeta]\in K$. Since $[\zeta]=[\hh(\zeta)]\in
H_1(S;\Z)$, there is a $2$-chain $\mathcal{S}\subset S$ such that
$\bdry \mathcal{S}=\zeta-\hh(\zeta)$.  We then define:
$$F_{\hh}(\zeta)=\int_{\mathcal{S}}\omega.$$

Since the map $\hh$ is the first return map of a Reeb vector field
$R_\varsigma$, it has zero flux (cf.\ \cite[Lemma~2.2]{CHL}). This
implies that $\int_{[C]}\omega=0$ for every $[C]\in H_2(N)$, where
$\omega$ is now viewed as a closed $2$-form on $N$. Indeed, $[C]$
can be represented by a surface of the form $(\zeta\times[0,1])\cup
(\mathcal{S}\times\{0\})$, where the relevant boundary components
are glued. Hence, if $\boldsymbol{\gamma}$ and $\boldsymbol{\gamma}'$ are orbit sets of
$PFC_j(N,\alpha_0,\omega_0,J_0)$, then the $\omega$-area of any
$Z\in H_2(N,\boldsymbol{\gamma},\boldsymbol{\gamma}')$ only depends on $\boldsymbol{\gamma}$ and $\boldsymbol{\gamma}'$.

\subsection{Compactness}
\label{subsection: compactness PFH case}

The vanishing of the flux is an important ingredient in establishing
that $\mathcal{M}_{J_0}(\boldsymbol{\gamma},\boldsymbol{\gamma}')$ admits a compactification
in the sense of \cite{BEHWZ}.

We briefly outline the argument from \cite[Section~9]{Hu1}:  (1) There is a bound on the $\omega$-area for all elements of $\mathcal{M}_{J_0}(\boldsymbol{\gamma},\boldsymbol{\gamma}')$. This was done above. (2) Given
a sequence of holomorphic curves $u_i\in
\mathcal{M}_{J_0}(\boldsymbol{\gamma},\boldsymbol{\gamma}')$, $i\in \N$, there is a
subsequence which converges weakly as currents to a holomorphic
building $u_\infty$. This is due to the Gromov-Taubes compactness
theorem~\cite{T3}, which works in dimension four and does not
require any a priori bound on the genus of $u_i$. The extraction of
the holomorphic building is treated in some detail in
\cite[Lemma~9.8]{Hu1}. Hence we may assume that the homology classes
$[u_i]\in H_2(N,\boldsymbol{\gamma},\boldsymbol{\gamma}')$ are fixed. (3) There is a bound on
the genus of the curve, provided the homology classes $[u_i]$ are
fixed. This follows from the adjunction inequality and will be
discussed below. (4) Once there is a genus bound, apply the SFT
compactness theorem of \cite{BEHWZ}.

We now explain in some detail how to obtain genus bounds from bounds
on the homology classes $[u_i]$, especially since similar arguments
will appear in later sections. But first let us introduce some
notation.

Let $(F,j)$ be a closed Riemann surface and ${\bf p}^+$ and ${\bf p}^-$ be disjoint finite sets of punctures of $F$.  Then let
$$u: \dot F= F-\mathbf{p}^+-\mathbf{p}^- \to \R\times N,$$
be a $(j,J_0)$-holomorphic map from $\boldsymbol{\gamma}=\prod_i \gamma_i^{m_i}$ to $\boldsymbol{\gamma}'=\prod_i \gamma_i^{n_i}$. Here the punctures of ${\bf p}^\pm$ are asymptotic to the $\pm$ ends of $u$.  The positive ends of $u$
partition $m_i$ into $(m_{i1},m_{i2},\dots)$ and the negative ends
of $u$ partition $n_i$ into $(n_{i1},n_{i2},\dots)$.  (We ignore the
partition terms that are zero.)  Pick a trivialization $\tau$
of $TS$ in a neighborhood of all the $\gamma_i$, and let
$\mu_\tau(\gamma_i^k)$ be the usual Conley-Zehnder index of the
$k$-fold cover of $\gamma_i$ with respect to $\tau$. Then we define
the {\em total Conley-Zehnder indices} at the positive and negative
ends of $u$ as follows:
$$\mu^+_\tau(u)= \sum_i \sum_r \mu_\tau(\gamma_i^{m_{ir}}),$$
$$\mu^-_\tau(u)=\sum_i\sum_r \mu_\tau(\gamma_i^{n_{ir}}),$$
and also write $\mu_\tau(u)= \mu^+_\tau(u)-\mu^-_\tau(u)$. The {\em
symmetric Conley-Zehnder index} of Hutchings~\cite{Hu1}, so called
because of its motivation from studying the ``symplectomorphism'' $Sym^k(\hh)$ of
$Sym^k(S)$ induced by $\hh$,\footnote{$Sym^k(\hh)$ is a symplectomorphism away from the (multi)-diagonal.} is defined as:
$$\widetilde\mu_\tau(\boldsymbol{\gamma})=\sum_i\sum_{r=1}^{m_i} \mu_\tau(\gamma_i^r),$$
and does not depend on the choice of $u$ from $\boldsymbol{\gamma}$ to $\boldsymbol{\gamma}'$.
\nom[1l$\mu$]{$\widetilde\mu_\tau(\boldsymbol{\gamma})$}{Symmetric Conley-Zehnder index}
We write
$\widetilde\mu_\tau(u)=\widetilde\mu_\tau(\boldsymbol{\gamma})-\widetilde\mu_\tau(\boldsymbol{\gamma}')$.
We also recall the {\em writhe}
$$w_\tau(u)=w_\tau^+(u)-w_\tau^-(u),$$ where $w^+_\tau(u)$ is the total
writhe of braids $u(\dot F)\cap ( \{s\}\times N)$, $s\gg 0$, viewed
in the union of solid torus neighborhoods of $\gamma_i$ and computed with
respect to the framing $\tau$; and $w^-_\tau(u)$ is defined similarly.

The key ingredient in establishing genus bounds is the relative
adjunction formula from \cite[Equation~(18)]{Hu1} for simple curves
$u$ with a finite number of singularities and no connector
components:
\begin{equation} \label{eqn: relative adjunction formula for ECH}
c_1(u^*TS,\tau) = \chi(\dot F) + w_\tau(u) +Q_\tau(u) -2\delta(u),
\end{equation}
where $Q_\tau(u)$ is the relative intersection pairing with respect to $\tau$ and
$\delta(u)$ is a nonnegative integer which is a count of the
singularities.  In particular, $\delta(u)=0$ if and only if $u$ is
an embedding (see \cite{M1,MW}). Together with the writhe bounds
\begin{align} \label{eqn: writhe bound}
w_\tau^+(u) &\leq  \widetilde\mu_\tau(\boldsymbol{\gamma})-\mu_\tau^+(u),\\
\notag w_\tau^-(u) &\geq  \widetilde\mu_\tau(\boldsymbol{\gamma}')-\mu_\tau^-(u),
\end{align}
from \cite[Lemma~4.20]{Hu2}, we obtain:
\begin{equation}
\chi(\dot F)\geq c_1(u^*TS,\tau)+ \mu_\tau(u)-\widetilde\mu_\tau(u) -Q_\tau(u)+2\delta(u).
\end{equation}
(See \cite[Theorem~10.1]{Hu1}.) Since all of the terms on the
right-hand side are either homological quantities or depend on the
data near $\boldsymbol{\gamma}$ and $\boldsymbol{\gamma}'$, we have a lower bound on
$\chi(\dot F)$, which implies an upper bound on the genus of $\dot
F$.

\subsection{Transversality}

Let $\mathcal{J}_{W', \varsigma}$ be the space of almost complex structures $J_\varsigma$ on $W'= \R\times N$ in the class $C^\infty$ which are adapted to $(\alpha_\varsigma,\omega)$.

\begin{defn}
An almost complex structure $J_\varsigma\in \mathcal{J}_{W', \varsigma}$ is {\em $j$-regular} if, for all orbit sets $\boldsymbol{\gamma}$ and $\boldsymbol{\gamma}'$ of $R_\varsigma$ which intersect $S\times\{0\}$ at most $j$ times, the moduli space $\mathcal{M}^s_{J_\varsigma}(\boldsymbol{\gamma},\boldsymbol{\gamma}')$ is transversely cut out.
\end{defn}

Let $\mathcal{J}_{W', \varsigma}^{reg,j}\subset \mathcal{J}_{W', \varsigma}$
be the subset of $j$-regular $J_\varsigma$.
In order to simplify the notation, from now on we will write $\mathcal{J}_{W'}$ for $\mathcal{J}_{W', 0}$ and $\mathcal{J}^{reg,j}_{W'}$ for $\mathcal{J}^{reg,j}_{W', 0}$. The following lemma states that $\mathcal{J}^{reg,j}_{W'}\subset\mathcal{J}_{W'}$ is dense.

\begin{lemma}[Transversality] \label{lemma: PFH transversality}
A generic $J_0 \in \mathcal{J}_{W'}$ is $j$-regular.
\end{lemma}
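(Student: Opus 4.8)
The plan is to prove Lemma~\ref{lemma: PFH transversality} by the standard Sard-Smale argument, adapting Hutchings-Taubes' transversality results (e.g.\ \cite[Lemma~7.12]{HT1} in the ECH setting, or \cite[Section~3]{Hu1}) to the stable Hamiltonian setting of $(\alpha_0,\omega)$. First I would fix orbit sets $\gamma,\gamma'$ of $R_0$ intersecting $S\times\{0\}$ at most $j$ times, and a relative homology class $Z\in H_2(N,\gamma,\gamma')$; since there are only countably many such data, and a countable intersection of dense (indeed residual) sets is residual hence dense, it suffices to produce, for each fixed $(\gamma,\gamma',Z)$, a residual set of $J_0\in\mathcal{J}_0$ for which $\mathcal{M}^s_{J_0}(\gamma,\gamma',Z)$ is transversely cut out. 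I would then set up the usual universal moduli space: enlarge $\mathcal{J}_0$ to a Banach manifold of $C^\ell$ almost complex structures adapted to $(\alpha_0,\omega)$ (those satisfying $J_0(\partial_s)=R_0$, $J_0(\xi_0)=\xi_0$, tamed by $\Omega$), consider the Banach bundle over the product of this space with an appropriate Banach manifold of maps $u$ (in a Sobolev completion with exponential weights at the punctures), whose fiber is the space of $(0,1)$-forms, and let $\overline\partial_{J_0}$ be the section. The key point is that the vertical differential $D$ of $\overline\partial_{J_0}$, together with the derivative in the $J_0$-direction, is surjective at any solution $(u,J_0)$ with $u$ simply covered.

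The main obstacle — and the one place real work is needed — is establishing this surjectivity, i.e.\ that the cokernel of the linearized operator $D_u$ can be killed by varying $J_0$. The standard argument: if $\eta$ is a nonzero element of the cokernel (a solution of the formal adjoint equation), then by unique continuation / Aronszajn's theorem $\eta$ is nonzero on an open dense subset of $\dot F$, and since $u$ is simply covered there is an open set of injective points $z\in\dot F$ where $du(z)\neq 0$, $u(z)\notin\bigcup_i(\R\times\gamma_i)$, and $u^{-1}(u(z))=\{z\}$; one then constructs a variation $Y$ of $J_0$ supported near $u(z)$ (and away from the trivial cylinders over the orbits, so that the constraints defining $\mathcal{J}_0$ are respected — this is where one must be slightly careful, exploiting that adapted $J_0$'s still have enough freedom on $\xi_0$ away from the symplectization directions) so that $\langle Y\,du\circ j,\eta\rangle$ has nonzero integral, contradicting that $\eta$ annihilates the $J_0$-variations. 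The fact that the Gromov-Taubes/SFT compactness of Section~\ref{subsection: compactness PFH case} already controls the relevant moduli spaces, and the genus bounds there, means we do not need any a priori genus restriction for this step.

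Once surjectivity of the universal linearization is in hand, the implicit function theorem makes the universal moduli space $\mathcal{M}^{s,\text{univ}}(\gamma,\gamma',Z)$ a $C^{\ell-1}$ Banach manifold, the projection to the space of $C^\ell$ almost complex structures is Fredholm (of index equal to the expected dimension), and the Sard-Smale theorem gives a residual set of $C^\ell$ almost complex structures $J_0$ that are regular values. A standard Taubes-style argument (intersecting over an exhausting sequence of compact parameter families, or a direct $C^\infty$-approximation argument as in \cite[Section~3]{Hu1}) upgrades this from $C^\ell$ to $C^\infty$. Intersecting over the countably many triples $(\gamma,\gamma',Z)$ with $\langle\gamma,S\times\{0\}\rangle\le j$ then yields a residual, hence dense, set $\mathcal{J}^{reg,j}_0\subset\mathcal{J}_0$, which is the assertion of the lemma. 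I would remark that the only structural difference from the ECH case treated in \cite{HT1,Hu1} is the replacement of the Reeb vector field by the Hamiltonian vector field $R_0$, which changes none of the analysis since $(\alpha_0,\omega)$ is still a stable Hamiltonian structure and the relevant asymptotic operators at the (nondegenerate, after perturbation) orbits have the same form.
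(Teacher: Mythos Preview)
Your argument is the standard Sard--Smale scheme and is essentially what underlies the reference the paper cites, so it is correct in outline. But you have glossed over the one point that actually distinguishes the PFH setting and that the paper's proof singles out.

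The paper's proof is three lines: it invokes \cite[Lemma~9.12(b)]{Hu1}, which says that a generic $J_0\in\mathcal{J}_0$ is regular \emph{away from curves having a fiber $\{(s,t)\}\times S$ as an irreducible component}, and then observes that in the present setting the fibers are not closed (since $\partial S\neq\varnothing$) and hence cannot occur as components of curves in $\mathcal{M}^s_{J_0}(\gamma,\gamma')$. Your write-up reproduces the machinery behind \cite[Lemma~9.12(b)]{Hu1} but never isolates this fiber issue. The difficulty is genuine: for every $J_0\in\mathcal{J}_0$ the fibers are $J_0$-holomorphic, and since admissible variations $Y$ act only on $\xi_0=TS$, the term $Y\,du\circ j$ lands in $u^*\xi_0$ and cannot kill a cokernel element living in the normal ($\mathbb{R}\partial_s\oplus\mathbb{R}R_0$) summand of a fiber component. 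So your surjectivity step, as written (``away from the trivial cylinders over the orbits''), does not cover this case; you must also rule out fiber components, which is exactly the observation the paper supplies.

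In short: your approach is correct once supplemented by the remark that $S$ has nonempty boundary, so no irreducible component of a curve between orbit sets can be a fiber. The paper's proof is more economical because it cites the general PFH transversality lemma and isolates precisely this one extra sentence as the content specific to the situation at hand.
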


\begin{proof}
This follows from \cite[Lemma~9.12(b)]{Hu1}, which states that a generic $J_0\in \mathcal{J}_{W'}$ is regular away from holomorphic curves which have a fiber $\{(s,t)\}\times S$ as an irreducible component.  (Observe that the fibers are holomorphic for any $J_0 \in \mathcal{J}_{W'}$.) In our case, the fibers are not closed and cannot occur as irreducible components of curves in $\mathcal{M}^s_{J_0}(\boldsymbol{\gamma},\boldsymbol{\gamma}')$.
\end{proof}

\subsection{The equivalence of certain ECH and PFH groups}

In this section we prove the following theorem:

\begin{thm}
\label{thm: equiv of ECH and PFH} Given $j>0$, there exist $J_0\in \mathcal{J}_{W'}^{reg,j}$ and $\varsigma_0=\varsigma_0(j,J_0)>0$ such that there are isomorphisms of chain complexes
$$PFC_j(N,\alpha_0,\omega,J_0)\simeq ECC_j(N,\alpha_\varsigma,J_\varsigma),$$
$$PFC_j^\flat(N,\alpha_0,\omega,J_0)\simeq ECC_j^\flat(N,\alpha_\varsigma,J_\varsigma),$$
for all $0< \varsigma\leq \varsigma_0$. Here $J_\varsigma\in \mathcal{J}_{W', \varsigma}^{reg,j}$ is sufficiently close to $J_0$. Similar isomorphisms hold with twisted coefficients.
\end{thm}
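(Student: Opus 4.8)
The plan is to produce the isomorphism by exploiting that, for $\tau>0$ small, the Reeb vector field $R_\tau$ and the Hamiltonian vector field $R_0$ are all parallel to $\bdry_t$, so their closed orbits live on the same invariant tori and correspond to periodic points of the same first return map $h$ on $S$. First I would recall from Section~\ref{subsection: interpolation} that $R_\tau = \lambda_\tau \bdry_t$ for positive functions $\lambda_\tau$ (with $\lambda_0 = 1/C$), so the orbit sets generating $ECC_j(N,\alpha_\tau)$ and those generating $PFC_j(N,\alpha_0,\omega)$ are literally the same combinatorial objects: products of orbits coming from periodic orbits of $h$ on $S$, with total intersection number $j$ with a page. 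Thus there is a tautological bijection on the level of generators, and this bijection respects the $\mathcal{F}$-filtration and the splitting by homology classes. The tautological map is the candidate chain isomorphism; one also checks it intertwines the inclusion maps $\gamma \mapsto e\gamma$, so it will also induce the $\flat$-version isomorphism.

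The substance is showing that this bijection is a chain map for suitable $(J_0,\tau)$, i.e.\ that the $I_{ECH}=1$ moduli spaces counted in the two differentials agree. Here I would argue in two steps. First, fix $j$ and choose $J_0 \in \mathcal{J}_0^{reg,j}$ using Lemma~\ref{lemma: PFH transversality}; since $\mathcal{J}_\tau$ depends continuously on $\tau$, pick $J_\tau \in \mathcal{J}_\tau^{reg,j}$ (exists for generic $J_\tau$) converging to $J_0$ as $\tau \to 0$. Second, I would run a Gromov-type compactness/gluing argument: if no $\tau_0$ worked, there would be a sequence $\tau_i \to 0$ and $I_{ECH}=1$ curves $u_i \in \mathcal{M}_{J_{\tau_i}}(\gamma,\gamma')$ (between fixed generators and in a fixed relative class $Z$, using that the action and homology data are uniformly controlled — the action bound coming from the $\omega$-area being determined by $\gamma,\gamma'$ via the vanishing-flux discussion of Section~\ref{subsection: flux}, and the genus bound from the relative adjunction formula \eqref{eqn: relative adjunction formula for ECH} and the writhe bounds \eqref{eqn: writhe bound}, all of which are $\tau$-independent). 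By the compactness discussion of Section~\ref{subsection: compactness PFH case} a subsequence converges to a $J_0$-holomorphic building; by the index formula and $J_0$-regularity this limit is itself an honest $I_{ECH}=1$ curve (no breaking, no connector degeneration beyond what \cite[Proposition~7.5]{HT1} already accounts for), and conversely every $I_{ECH}=1$ $J_0$-curve is the limit of a unique $I_{ECH}=1$ $J_{\tau}$-curve for small $\tau$ by the implicit function theorem applied to the linearized $\bdry$-operator, which is surjective by regularity. This gives, for each pair $(\gamma,\gamma')$ and class $Z$, a bijection of moduli spaces for $\tau \leq \tau_0(\gamma,\gamma',Z)$; since there are only finitely many generators with $\mathcal{F} \leq j$ and, for each pair, finitely many relative classes $Z$ carrying $I_{ECH}=1$ curves within the fixed action window, one may take a single $\tau_0 = \tau_0(j,J_0)$.

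The main obstacle I expect is the uniformity of these estimates in $\tau$ as $\tau \to 0$: one is degenerating a genuine contact structure $\alpha_\tau$ to the stable Hamiltonian structure $\alpha_0$, and although the Reeb/Hamiltonian directions all coincide, the $2$-plane fields $\xi_\tau$ and the tamings vary, so the a priori genus and energy bounds, and especially the absence of ``new'' buildings appearing in the limit (e.g.\ components that become trivial cylinders or pieces escaping to the $\mathcal{F}>j$ part), must be shown to be controlled uniformly. The vanishing-flux property is exactly what rules out the troublesome area accumulation, so I would lean heavily on Section~\ref{subsection: flux}; the rest is a careful bookkeeping of the correspondence between the ECH index $I_{ECH}$ computed for $J_\tau$ and for $J_0$, using that $I_{ECH}$ is a homological-plus-Conley-Zehnder quantity and that the Conley-Zehnder indices of the relevant orbits are locally constant in $\tau$ for $\tau$ small (the orbits $e,h$ near $\bdry N$ and the interior orbits all persist with the same linearized return maps up to a small perturbation). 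Finally, the twisted-coefficient statement follows formally: the tautological bijection also matches the relative homology classes $Z$ and hence the weights $e^{\mathfrak{A}'(Z)}$, so tensoring the untwisted chain isomorphism with $\F[H_2(N,e;\Z)] \cong \F[H_2(M;\Z)]$ yields the twisted isomorphism.
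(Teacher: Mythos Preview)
Your proposal is correct and follows essentially the same route as the paper's proof: the tautological bijection on generators coming from $R_\tau \parallel \bdry_t$, followed by a compactness argument as $\tau\to 0$ to show that the $I_{ECH}=1$ moduli spaces for $J_\tau$ are in bijection with those for $J_0$. The only minor difference is organizational: the paper simply extends $J_0$ to an arbitrary smooth family $J_\tau$ and deduces regularity of $J_\tau$ for small $\tau$ directly from the fact that every $J_\tau$-curve is close to the transversely cut out $J_0$-moduli space, rather than separately choosing $J_\tau\in\mathcal{J}_\tau^{reg,j}$ as you do; this avoids having to argue that regular $J_\tau$ can be chosen converging to $J_0$.
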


\begin{proof}
We will prove the first equivalence, leaving the second to the reader.

Since there is a one-to-one correspondence between the generators of the chain groups $PFC_j(N,\alpha_0,\omega)$ and $ECC_j(N,\alpha_\varsigma)$, we have
$$PFC_j(N,\alpha_0,\omega)\simeq ECC_j(N,\alpha_\varsigma)$$
as $\F$-vector spaces, but not necessarily as chain complexes. In other words, we may view any orbit set $\boldsymbol{\gamma}$ for $R_\varsigma$ as an orbit set of any other $R_{\varsigma'}$.

Let $J_0$ be an almost complex structure in $\mathcal{J}^{reg,j}_{W'}$. By Lemma~\ref{lemma: PFH transversality}, $\mathcal{J}^{reg,j}_{W'}$ is a dense subset of $\mathcal{J}_0$, and in particular is nonempty. The moduli spaces $\mathcal{M}^{nc}_{J_0}(\boldsymbol{\gamma},\boldsymbol{\gamma}',Z)$ of ECH index $1$ and $2$ are transversely cut out since they are simple by the ECH index inequality. Now let $J_\varsigma$, $\varsigma\in[0,1]$, be a smooth family of $(\alpha_\varsigma,\omega)$-adapted almost complex structures which extend $J_0$.

We claim that the ECH index $1$ moduli spaces $\mathcal{M}^{nc}_{J_\varsigma}(\boldsymbol{\gamma},\boldsymbol{\gamma}',Z)$ are transversely cut out and diffeomorphic to $\mathcal{M}^{nc}_{J_0}(\boldsymbol{\gamma},\boldsymbol{\gamma}',Z)$, when $\varsigma>0$ is sufficiently small. Indeed, if $u_\varsigma\in \mathcal{M}^{nc}_{J_\varsigma}(\boldsymbol{\gamma},\boldsymbol{\gamma}',Z)$ is sufficiently close to $\mathcal{M}^{nc}_{J_0}(\boldsymbol{\gamma},\boldsymbol{\gamma}',Z)$, then the moduli space is regular at $u_\varsigma$. Hence it suffices to prove that, if $\varsigma>0$ is sufficiently small, then every $u_\varsigma\in \mathcal{M}^{nc}_{J_\varsigma}(\boldsymbol{\gamma},\boldsymbol{\gamma}',Z)$ is sufficiently close to $\mathcal{M}^{nc}_{J_0}(\boldsymbol{\gamma},\boldsymbol{\gamma}',Z)$. Indeed, this follows from the compactness argument from Section~\ref{subsection: compactness PFH case}. Let $u_i \in \mathcal{M}^{nc}_{J_{\varsigma_i}}(\boldsymbol{\gamma},\boldsymbol{\gamma}',Z)$ be a sequence of ECH index $1$ holomorphic curves with $\varsigma_i\to 0$. By the compactness theorem and incoming/outgoing partition considerations, $u_i$ converges to $u\in \mathcal{M}_{J_0}(\boldsymbol{\gamma},\boldsymbol{\gamma}',Z)$ with $I_{ECH}(u)=1$, after possibly taking a subsequence. In particular, the limit $u$ is not a holomorphic building with multiple levels. If $u$ has connector components $u^0$ over $\boldsymbol{\gamma}_0$, then $u_i$ must also have connector components $u^0_i$ over $\boldsymbol{\gamma}_0$, a contradiction. Hence $u\in \mathcal{M}^{nc}_{J_0}(\boldsymbol{\gamma},\boldsymbol{\gamma}',Z)$, which proves the claim.

Since the chain groups are isomorphic as vector spaces and the differentials agree for sufficiently small $\varsigma>0$, the theorem follows.
\end{proof}

\subsection{Remarks about Morse-Bott theory}\label{subsection: Morse-Bott}

There is an equivalent approach to the definition of the groups $PFC_j(N, \alpha_0, \omega)$  which does not use the Morse-Bott language. A similar construction is possible also for the groups  $PFC^\flat_j(N, \alpha_0, \omega)$, but will be left to the reader.

We add a small collar $[0, \eta] \times \partial S$ to $S$ and extend the monodromy
such that $\hh$ restricts to $(y, \theta) \mapsto (y, \theta -y)$ on $[0, \eta] \times \partial S$.
(See Lemma~\ref{lemma: change from h to h_0}). We denote by $(S^+, \hh^+)$ the extensions. The mapping torus of $(S^+, \hh^+)$ is
a manifold $N^+$ such that $N \subset N^+$. The Hamiltonian structure $(\alpha_0, \omega)$ can
be extended in an obvious way from $N$ to $N^+$, and the extension will still be denoted by
$(\alpha_0, \omega)$. The groups $PFC_j(N^+, \alpha_0, \omega)$ are defined because $\partial N^+$ is foliated by orbits of the Hamiltonian vector field. If $\eta$ is small enough, all Hamiltonian orbits in $N^+-N$ intersect a fibre more than $j$ times, and therefore $PFC_j(N^+, \alpha_0, \omega) \cong PFC_j(N, \alpha_0, \omega)$ and $PFC^\flat_j(N^+, \alpha_0, \omega) \cong PFC^\flat_j(N, \alpha_0, \omega)$.

\begin{rmk}
In the above construction, $\eta$ depends on $j$, and therefore the extension
is not compatible with direct limits. However this is not a problem, because in this
article we will always work with $PFC_{2g}(N, \alpha_0, \omega)$, where $g$ is the genus of $S$.
\end{rmk}

Given a function $f : N^+ \to \R$ and a real number $\lambda>0$, we define a new stable Hamiltonian structure
$$\alpha_0' = \alpha_0, \quad \omega' = \omega + \lambda df \wedge \alpha_0.$$
Note that $(\alpha_0', \omega')$ is stable Hamiltonian since $d \alpha_0=0$. If $R_0$ is the Hamiltonian vector field of $(\alpha_0, \omega)$ and $R_0'$ of $ (\alpha_0', \omega')$, then $R_0'= R_0 + \lambda X$, where $X$ is a vector field satisfying the equations
$$\left \{ \begin{array}{l}
\alpha_0(X)=0, \\
\iota_X \omega = df - df(R_0)\alpha_0.
\end{array} \right. $$
We construct a function $f$ on $N^+$ as follows: We fix a Morse function on $\partial S$ with a unique minimum and a unique maximum, pull it back to $\partial N$, and extend it to a nonpositive function $f$ supported on a small neighborhood of $\partial N$ using a bump function centered at $\partial N$.  We choose $f$ to be supported away from the (finitely many and nondegenerate) orbits of $R_0$ in $int(N)$ with $\mathcal{F}\leq j$, so that those orbits will also be orbits of $R_0'$.

On $\partial N$, the vector field $R_0'$ has a pair of nondegenerate orbits $e$ and $h$, where $e$ corresponds to the minimum of the Morse function on $\partial S$, and $h$ to the maximum. We will choose $\lambda$ small enough so that the perturbation creates no new orbits with $\mathcal{F}\leq j$. Since $\partial N^+$ is foliated by the flow of  $R_0'$, we can define the periodic Floer homology group $PFC_j(N^+, \alpha_0', \omega')$ and, as explained in \cite[Section~4]{CGH2}, there is an isomorphism $PFC_j(N^+, \alpha_0, \omega) \cong PFC_{2g}(N^+, \alpha_0', \omega')$. (\cite[Section~4]{CGH2} is mostly concerned with contact structures, but there is no problem in applying those arguments here.) The isomorphism holds already on the level of complexes because there is no direct limit involved in its definition. This is a consequence of the topological constraint $\mathcal{F}\leq j$ on the Hamiltonian orbits, which implies that only finitely many of them enter the definition of the groups $PFH_j$.

\section{A variation of $\widehat{HF}(M)$ adapted to open book decompositions}
\label{section: variation of HF adapted to OB}

In this section we recall the cylindrical reformulation of Heegaard Floer homology. This reformulation was suggested by Eliashberg and worked out in detail by Lipshitz~\cite{Li}.  The discussion is slightly different from that of \cite{Li} in that we introduce an ECH-type index $I_{HF}$ and define the Heegaard Floer groups in terms of $I_{HF}$.  We then use the work of \cite{HKM1} to restrict the Heegaard Floer data to the page $S$ of an open book decomposition.

\subsection{Heegaard data}

A {\em pointed Heegaard diagram} is a quadruple $(\Sigma,\boldsymbol{\alpha}, \boldsymbol{\beta}, z)$ which consists of the following:
\begin{itemize}
\item a closed oriented surface $\Sigma$ of genus $k$;
\item two collections $\boldsymbol{\alpha} =\{\alpha_1 ,\dots, \alpha_k \}$ and
$\boldsymbol{\beta} =\{\beta_1 ,\dots,\beta_k \}$ of $k$ pairwise disjoint simple closed curves in $\Sigma$; and
\item a point $z\in \Sigma  -\boldsymbol{\alpha} -\boldsymbol{\beta}$;
\end{itemize}
where each of $\boldsymbol{\alpha}$ and $\boldsymbol{\beta}$ forms a basis of $H_1(\Sigma;\Z)$ and $\boldsymbol{\alpha}$ and $\boldsymbol{\beta}$ intersect transversely in $\Sigma$.

If $M$ is a closed oriented $3$-manifold, then $(\Sigma,\boldsymbol{\alpha},\boldsymbol{\beta},z)$
\nom[1r$\sigma$]{$(\Sigma,\boldsymbol{\alpha},\boldsymbol{\beta},z)$}{Pointed Heegaard diagram for $M$}
is a {\em pointed Heegaard diagram for $M$} if $\Sigma$ decomposes $M$ into two handlebodies, i.e., $M=H_{\boldsymbol{\alpha}} \cup_\Sigma H_{\boldsymbol{\beta}}$ and $\Sigma=\bdry H_{\boldsymbol{\alpha}}=-\bdry H_{\boldsymbol{\beta}}$, where the $\boldsymbol{\alpha}$-curves (resp.\ $\boldsymbol{\beta}$-curves) bound compression disks in the handlebody $H_{\boldsymbol{\alpha}}$ (resp.\ $H_{\boldsymbol{\beta}}$).

Let $\omega$ be an area form on $\Sigma$. We consider $[0,1] \times \Sigma$ with the stable Hamiltonian structure $(dt,\omega)$, where $t$ is the $[0,1]$-coordinate. The Hamiltonian vector field is $\bdry_t$, and the $2$-plane field $\ker dt$ will be written as $T\Sigma$, at the risk of some confusion.  (If $Y$ is a topological space and $f:Y\to \Sigma$ is a continuous map, then we write $T\Sigma_{Y}$ or $T\Sigma$ for the pullback bundle $f^*T\Sigma$.)

Let $\mathcal{S}_{\boldsymbol{\alpha},\boldsymbol{\beta}}$
\nom[S ]{$\mathcal{S}_{\boldsymbol{\alpha},\boldsymbol{\beta}}$}{Set of generators of $\widehat{CF}(\Sigma,\boldsymbol{\alpha},\boldsymbol{\beta},z)$}
be the set of $k$-tuples of chords $\{[0,1] \times \{y_1\},\dots, [0,1] \times
\{y_k\}\}$ in $[0,1] \times \Sigma$, where there exists a
permutation $\sigma\in \mathfrak{S}_k$ for which $y_i \in \alpha_i
\cap \beta_{\sigma (i)}$, $i=1,\dots,k$. Here the chords
$[0,1]\times\{y_i\}$ are orbits of the Hamiltonian vector field
$\bdry_t$ which connect from $\{0\}\times\beta$ to $\{1\}\times
\alpha$.

\s\n {\em Terminology.} We will often write elements of $\mathcal{S}_{\boldsymbol{\alpha},\boldsymbol{\beta}}$ as ${\bf y}=\{y_1,\dots,y_k\}$ and refer to ${\bf y}$ as a {\em $k$-tuple of intersection points}.  Also, if $l\leq k$, then an {\em $l$-tuple of chords/intersection points} ${\bf y}=\{y_1,\dots,y_l\}$ is a collection of points in $\boldsymbol{\alpha} \cap \boldsymbol{\beta}$ where each $\alpha_i$ is used at most once and each $\beta_i$ is used at most once.

\subsection{Almost complex structures}

Consider the natural projection $\pi_B: X \to B$,
\nom[1p$\pi$1]{$\pi_B: X \to B$ or $W\to B$}{Symplectic fibration with fiber $\Sigma$ or $S$ used in the definition the Heegaard Floer complex}
where $X =\R \times [0,1] \times \Sigma$ and $B=\R\times[0,1]$.
\nom[B]{$B$}{Alternate notation for $\R\times[0,1]$}
\nom[X ]{$X$}{$\R\times[0,1]\times\Sigma$}
We also write
$\pi_{\R}$, $\pi_{[0,1]\times\Sigma}$ and $\pi_\Sigma$ for the
natural projections of $X$ onto $\R$, $[0,1]\times\Sigma$ and
$\Sigma$. Let $(s,t)$ be the coordinates on the base $B=\R \times
[0,1]$. We then define the symplectic form
$$\Omega_X =ds\wedge dt+\omega$$ on $X$.
\nom[1yy$\omega$1]{$\Omega_X$}{Symplectic form on $X$}
The
submanifolds $L_{\boldsymbol{\alpha}} =\R \times \{ 1\} \times \boldsymbol{\alpha}$
and $L_{\boldsymbol{\beta}} =\R \times \{ 0\} \times \boldsymbol{\beta}$
\nom[L0]{$L_{\boldsymbol{\alpha}}$, $L_{\boldsymbol{\beta}}$}{Lagrangian submanifolds used in the definition of the Heegaard Floer complex $\widehat{CF}(\Sigma,\bs{\alpha},\bs{\beta},z)$}
are Lagrangian submanifolds of the symplectic manifold $(X,\Omega_X)$. We denote
by $L_{\alpha_i}$ or $L_{\beta_i}$ their connected components.

\begin{defn}[$\Omega_X$-admissibility]\label{admissible}
An almost complex structure $J$ on $X$ is {\em $\Omega_X$-admissible}
(or simply {\em admissible}) if it satisfies the following:
\begin{enumerate}
\item $J$ is $s$-invariant;
\item $J(\bdry_s)=\bdry_t$ and $J(T\Sigma)=T\Sigma$;
\item $J$ is tamed by the symplectic form $\Omega_X$;
\item there is a point $z_i$ in each component of $\Sigma -\boldsymbol{\alpha} -
\boldsymbol{\beta}$ such that $J$ is a product complex structure $j_B \times j_\Sigma $
in a small neighborhood of $\R \times [0,1] \times \{ z_i \}$ in
$X$; some $z_i$ coincides with the basepoint $z$.
\end{enumerate}
\end{defn}

\begin{rmk}
The fibers $\{(s,t)\}\times \Sigma$ of an admissible $J$ on $X$ are
holomorphic and the projection $\pi_B$ is $(J,j_B)$-holomorphic,
where $j_B$ is the standard complex structure on $B=\R\times[0,1]$.
\end{rmk}

We write $\mathcal{J}_X$
\nom[Jset]{$\mathcal{J}_X$}{Space of $C^\infty$-smooth admissible almost complex structures on $X$}
for the space of $C^\infty$-smooth $\Omega_X$-admissible almost complex structures $J$ on $X$.

\subsection{Holomorphic curves and moduli spaces}
\label{subsection: HF holomorphic curves and moduli spaces}

Let $(F,j)$ be a compact Riemann surface, possibly disconnected, with two sets of punctures ${\bf q}^+=\{q_1^+,\dots,q_k^+ \}$ and ${\bf q}^-=\{q_1^-,\dots,q_k^-\}$ on $\bdry F$, such that (i) each component of $F$ has nonempty boundary, (ii) on each boundary component there is at least one puncture from each of $\mathbf{q}^+$ and $\mathbf{q}^-$, and (iii) the punctures on $\mathbf{q}^+$ and $\mathbf{q}^-$ alternate around each boundary component. We write $\dot{F}= F-{\bf q}^+-{\bf q}^-$ and $\bdry \dot F= \bdry F- {\bf q}^+-{\bf q}^-$.

\begin{defn}\label{HF-curve}
Let $J\in \mathcal{J}_X$. A {\em degree $l\leq k$ multisection of $(W,J)$} is a holomorphic map
$$u :(\dot{F} ,j) \rightarrow (X,J)$$
which is a degree $l$ multisection of $\pi_B: X\to B=\R\times[0,1]$ and which {\em additionally satisfies the following:}
\begin{itemize}
\item[(1)] $u(\partial \dot{F} )\subset L_{\boldsymbol{\alpha}} \cup L_{\boldsymbol{\beta}}$;
\item[(2)] for each $i\in \{ 1,...,k\}$, $u^{-1} (L_{\alpha_i} )$ (resp.\ $u^{-1} (L_{\beta_i} )$) consists of at most one component of $\partial \dot{F}$, which we call $\alpha_i^*$ (resp.\ $\beta_i^*$);
\item[(3)] $\displaystyle {\lim_{w\rightarrow q_i^-} \pi_{\R}\circ  u(w)=-\infty}$ and $\displaystyle{\lim_{w\rightarrow q_i^+} \pi_{\R}\circ  u(w)=+\infty}$;
\item[(4)] the energy of $u$ (see Definition~\ref{defn: energy of Lipshitz curve} below) is finite.
\end{itemize}
A {\em Heegaard Floer curve} (or {\em HF curve}) is a degree $k$ multisection of $(X,J)$.
\end{defn}

By the compactness theorem of \cite{BEHWZ} (adapted to the Lagrangian case), a holomorphic curve $u$ satisfying (1), (2) and (4) converges to cylinders over Reeb chords as $s\to \pm\infty$. By the work of Abbas~\cite{Abb}, an HF curve $u$ converges exponentially to cylinders over Reeb chords near the ends. Components of $u$ may map to $\R\times [0,1]\times \{y_i\}$; such components will be called {\em trivial strips}.

\begin{defn} \label{defn: energy of Lipshitz curve}
The {\em energy} of $u$ is the quantity
\nom[E1]{$E(u)$}{Hofer energy of the holomorphic curve $u$}
\begin{equation}
\label{eqn: energy of Lipshitz curve}
E(u )=\int_{\dot F}  u^* \omega + \sup_{\phi \in \mathcal{C}} \int_{\dot F} u^* d(\phi(s)dt),
\end{equation}
where $\mathcal{C}$ is the set of nondecreasing smooth functions $\phi :\R \rightarrow [0,1]$.
\end{defn}

We now define some moduli spaces of HF curves with respect to $J\in \mathcal{J}_X$. Let ${\bf y} =\{y_1,\dots,y_k\}$ and ${\bf y'}=\{y'_1,\dots,y'_k\}$ be $k$-tuples of $\boldsymbol{\alpha}\cap\boldsymbol{\beta}$. Let $\mathcal{M}^X_J ({\bf y},{\bf y'})$
\nom[M1]{$\mathcal{M}^{X}_J ({\bf y},{\bf y'})$}{Moduli space of Heegaard Floer curves in $(X,J)$ from ${\bf y}$ to ${\bf y'}$}
be the moduli space of HF curves $u$ which are asymptotic to $\R\times [0,1] \times \{y_i\}$ near $q_i^+$ and to $\R\times [0,1] \times \{ y_i' \}$ near $q_i^-$.\footnote{In \cite{Li}, $W$ has the
opposite orientation, $\mathbf{y}$ is at $-\infty$, and $\mathbf{y'}$ is at $+ \infty$. The moduli spaces, however, are diffeomorphic.} Such a curve $u$ is said to be an {\em HF curve from $\mathbf{y}$ to $\mathbf{y'}$}. Also let $\widehat{\mathcal{M}}^X_J (\mathbf{y},\mathbf{y'})\subset
\mathcal{M}^X_J(\mathbf{y},\mathbf{y'})$
\nom[M2]{$\widehat{\mathcal{M}}^{X}_J (\mathbf{y},\mathbf{y'})$}{Moduli space of Heegaard Floer curves in $(X,J)$ from ${\bf y}$ to ${\bf y'}$ that do not cross the basepoint}
be the subset consisting of curves $u$ which additionally satisfy $\pi_{\Sigma}\circ u (\dot{F} ) \cap \{ z \} = \varnothing$.

Let $\check{X}=[-1,1]\times [0,1]\times \Sigma$ be the compactification of $W=\R\times[0,1]\times \Sigma$, obtained by attaching $[0,1]\times \Sigma$ at the positive and negative ends, and let $\check{L}_{\boldsymbol{\alpha}}=[-1,1]\times\{1\}\times \boldsymbol{\alpha}$ and $\check{L}_{\boldsymbol{\beta}}=[-1,1]\times\{0\}\times \boldsymbol{\beta}$ be the compactifications of $L_{\boldsymbol{\alpha}}$ and $L_{\boldsymbol{\beta}}$. We then define $Z_{{\bf y},{\bf y'}} \subset \check{X}$ as the subset
\begin{equation}
Z_{\mathbf{y}, \mathbf{y'}} = \check{L}_{\boldsymbol{\alpha}}\cup\check{L}_{\boldsymbol{\beta}} \cup (\{ 1 \} \times [0,1] \times \mathbf{y}) \cup (\{ -1 \} \times [0,1] \times \mathbf{y'}).
\end{equation}
Similarly define
\begin{equation} \label{eqn: Z alpha beta}
Z_{\boldsymbol{\alpha},\boldsymbol{\beta}} = \check{L}_{\boldsymbol{\alpha}}\cup \check{L}_{\boldsymbol{\beta}}\cup (\{-1,1\}\times[0,1]\times(\boldsymbol{\alpha}\cap\boldsymbol{\beta})).
\end{equation}
The exponential decay of HF curves in $ \R \times [0,1] \times \Sigma$ implies that an HF curve $u: \dot{F} \to W$ from $\mathbf{y}$ to $\mathbf{y'}$  can be compactified to a continuous map
$$\check{u}: (\check{F}, \partial \check{F}) \to (\check{X}, Z_{\mathbf{y}, \mathbf{y'}}).$$
Here $\check{F}$ is obtained from $\dot F$ by performing a real blow-up at its boundary punctures.

We define $H_2(X,\mathbf{y},\mathbf{y'})$ as
\nom[H1]{$H_2(\mathbf{y},\mathbf{y'})$}{Homology classes of maps in HF (denoted $\pi_2(\mathbf{y},\mathbf{y'})$ elsewhere in the literature)}
the set of {\em homology} classes of continuous maps $u: \dot F\to X$ which satisfy (1), (2) and (3) of Definition~\ref{HF-curve} and are positively asymptotic to $[0,1]\times {\bf y}$ and negatively asymptotic to $[0,1]\times{\bf y'}$; here two maps $u_1$ and $u_2$ are equivalent in $H_2(X,\mathbf{y},\mathbf{y'})$ if their compactifications $\check u_1$ and $\check u_2$ are homologous in $H_2(\check{X},Z_{{\bf y},{\bf y'}})$. To any HF curve from $\mathbf{y}$ to $\mathbf{y'}$ we can then associate a class in $H_2(X,\mathbf{y},\mathbf{y'})$. If we consider moduli spaces of HF curves $u$ in the homology class $A \in H_2(X,{\bf y},{\bf y'})$, we will write $\mathcal{M}^X_J({\bf y},{\bf y'},A)$ or $\widehat{\mathcal{M}}^X_J({\bf y},{\bf y'},A)$.

\begin{rmk}
The sets $H_2(X,{\bf y},{\bf y'})$ are denoted $\pi_2({\bf y},{\bf y'})$ in the Heegaard Floer literature. We opted for this change of notation because to be consistent with the notation we introduced for ECH and for the maps to the open-closed cobordisms introduced in Section \ref{section: moduli spaces of multisections}.
\end{rmk}

\subsection{The Fredholm index}
\label{subsection: the Fredholm index W}

In this subsection and the next, we fix $J\in \mathcal{J}_X$.
Now we discuss the Fredholm index of an HF curve $u:\dot{F}\to X$, which is the expected dimension of a neighborhood $\mathcal{U}$ of $u\in \mathcal{M}^X_J({\bf y},{\bf y'})$, modulo reparametrizations of the domain. 
The Fredholm index of $u$ will be denoted by $\op{ind}(u)=\op{ind}_{HF}(u)$.
\nom[ind]{$\op{ind}$}{Fredholm index}

\subsubsection{The Fredholm index, first version}
\label{subsubsection: Fredholm index first version}

We start with Lipshitz's formula \cite[Equation~5]{Li} for the Fredholm index of $u$:
\begin{equation} \label{index-lipshitz}
{\rm ind} (u)= -\chi (F)+k+ \sum_{i=1}^{k} \mu (\alpha_i^*) - \sum_{i=1}^{k} \mu(\beta_i^*),
\end{equation}
where $k$ is the genus of $\Sigma$, $\chi(F)=\chi(\dot F)$ is the Euler characteristic of $F$ or $\dot F$, and $\alpha_i^*$ and $\beta_i^*$ are as in Definition~\ref{HF-curve}.

We now define the Maslov indices $\mu (\alpha_i^*)$ and $\mu (\beta_i^*)$ which appear in Equation~\eqref{index-lipshitz}: Choose a trivialization $\tau_0'$ of $T\Sigma \simeq \C$ on a neighborhood of the points $w\in \boldsymbol{\alpha} \cap \boldsymbol{\beta}$ so that $\R$ corresponds to $T_w \boldsymbol{\beta}$ and $i \R$ corresponds to $T_w \boldsymbol{\alpha}$. Then let $\tau_0$ be a trivialization of $u^* T\Sigma_X$ which coincides with the one already given near ${\bf p}$ and ${\bf q}$ by pulling back $\tau_0'$. Along each component of $\partial \dot{F}$ --- called $\alpha_i^*$ or $\beta_i^*$, depending on whether it is mapped to $\alpha_i$ or to $\beta_i$, and  oriented in the same way as $\bdry F$ for $\alpha_i$ and in the opposite way for $\beta_i$ --- we have a loop of unoriented real lines in $\C$, given by the pullback of $T\alpha_i$ or $T\beta_i$. The Maslov index $\mu(\alpha_i^*)$ (resp.\ $\mu(\beta_i^*)$) is the degree of the loop along $\alpha_i^*$ (resp.\ $\beta_i^*$) with respect to $\tau_0$.

\subsubsection{The Fredholm index, second version}
\label{subsubsection: Fredholm index second version}

For the purposes of computing indices, we replace $X= \R \times
[0,1] \times \Sigma$ by the compactification $\check{X}= [-1,1]
\times [0,1] \times \Sigma$ from Section~\ref{subsection: HF
holomorphic curves and moduli spaces}.

Recall $Z_{\boldsymbol{\alpha}, \boldsymbol{\beta}}\subset \check X$ given by Equation~\eqref{eqn: Z alpha beta}. We define a
trivialization $\tau$ of $T \Sigma_{\check X}$ along
$Z_{\boldsymbol{\alpha},\boldsymbol{\beta}}\subset \check X$ as
follows: First choose a nonsingular tangent vector field along each
component of $\boldsymbol{\alpha}$ and $\boldsymbol{\beta}$. This
induces a trivialization $\tau'$ of $T\Sigma_{[0,1]\times\Sigma}$ on
$(\{ 0 \} \times \boldsymbol{\beta}) \cup (\{ 1 \} \times
\boldsymbol{\alpha})$. We then extend $\tau'$ arbitrarily to $[0,1]
\times (\boldsymbol{\alpha} \cap \boldsymbol{\beta})$. Finally we pull
$\tau'$ back to $Z_{\boldsymbol{\alpha},\boldsymbol{\beta}} \subset
\check{X}$ using the projection $\pi_{[0,1]\times\Sigma}:\check X \to
[0,1]\times\Sigma$ to obtain $\tau$.

Given an HF curve $u: \dot F\to X$, we define its {\em Maslov index}
$\mu_{\tau}(u)$ as follows: Let
$$\check{u}: (\check{F}, \partial \check{F}) \to
(\check{X}, Z_{\boldsymbol{\alpha},\boldsymbol{\beta}})$$
be the compactification of $u$. We then construct a (not necessarily
oriented) real rank one subbundle $\mathcal{L}$ of $\check u^*T\Sigma$
on $\bdry \check F$. The bundle $\mathcal{L}$ is given by
$\check u^*T\boldsymbol{\alpha}$ and $\check u^*T\boldsymbol{\beta}$
along $\bdry \dot{F}$. We extend $\mathcal{L}$ to $\bdry \check{F} -
\bdry \dot F$ by rotating in the counterclockwise direction from $
\check u^*T\boldsymbol{\beta}$ to $\check u^*T\boldsymbol{\alpha}$
by the minimum amount possible. (Assuming orthogonal intersections,
this is a ${\pi\over  2}$-rotation.) Then $\mu_\tau(u)$ is the sum of the Maslov indices
of $\mathcal{L}$ with respect to the trivialization $\tau$, where the
sum is over all the connected components of $\bdry \check F$.

\begin{lemma}
If $u: \dot F\to X$ is an HF curve, then
\begin{equation}
\mu_{\tau}(u) + 2 c_1(u^*T\Sigma, \tau)=\sum_{i=1}^k \mu(\alpha_i^*) - \sum_{i=1}^k \mu(\beta_1^*).
\end{equation}
\end{lemma}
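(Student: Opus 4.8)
The plan is to compare two ways of packaging the boundary Maslov data of an HF curve: the ``local'' version $\sum_i \mu(\alpha_i^*) - \sum_i \mu(\beta_i^*)$ coming from Lipshitz's formula, which uses the trivialization $\tau_0$ adapted to the intersection points, and the ``global'' version $\mu_\tau(u)$, which uses the trivialization $\tau$ extended along $Z_{\alpha,\beta}$. The discrepancy between any two trivializations of a complex line bundle along a loop is a winding number, and summing these winding numbers over all boundary components of $\check F$ will produce exactly the relative Chern number $c_1(u^*T\Sigma,\tau)$ of the \emph{interior} relative to the two boundary trivializations --- this is the standard ``change of trivialization'' formula for the relative first Chern class. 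The factor of $2$ reflects that Maslov indices of real line subbundles double the underlying winding, whereas $c_1$ counts it once.

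First I would set up both trivializations carefully on $\bdry\check F$. Note that $\tau_0$ is defined on all of $u^*T\Sigma_W$ (it is a genuine trivialization of the pulled-back bundle over $\dot F$, agreeing near the punctures with the pullback of $\tau_0'$), whereas $\tau$ is only defined along $Z_{\alpha,\beta}$, hence along $\bdry\check F$ after pulling back by $\check u$. On each boundary arc mapped to $\alpha_i$ (resp.\ $\beta_i$), $\mathcal{L}$ is $\check u^*T\alpha_i$ (resp.\ $\check u^*T\beta_i$), and on the blown-up corner arcs over the intersection points $\mathcal{L}$ rotates by $\pi/2$ from $T\beta$ to $T\alpha$. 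Relative to $\tau_0$ (for which $T\beta\leftrightarrow\R$, $T\alpha\leftrightarrow i\R$ near the intersection points), the Maslov index of $\mathcal{L}$ along $\alpha_i^*$ is by definition $\mu(\alpha_i^*)$ and along $\beta_i^*$ is $\mu(\beta_i^*)$ --- with the sign conventions matching Lipshitz's, the corner contributions cancel in pairs around each component. So $\sum_i\mu(\alpha_i^*) - \sum_i\mu(\beta_i^*)$ is precisely the total Maslov index of $\mathcal{L}$ computed with respect to $\tau_0$.

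The next step is the change-of-trivialization identity. For a real line subbundle $\mathcal{L}$ of a trivialized complex line bundle along a closed $1$-manifold, changing the trivialization by a map of winding number $n$ changes the Maslov index by $2n$. Applying this on each component of $\bdry\check F$, the difference
$$\Bigl(\sum_i\mu(\alpha_i^*) - \sum_i\mu(\beta_i^*)\Bigr) - \mu_\tau(u) = 2\sum_{\text{components}} (\text{winding of }\tau_0\text{ rel }\tau),$$
and the sum of these windings over $\bdry\check F$ is, by the definition of the relative first Chern number of $\check u^*T\Sigma$ (extend $\tau_0$ over the interior, which is possible since $u^*T\Sigma_W$ is globally trivialized, then $\tau$ differs from it on $\bdry\check F$), exactly $c_1(u^*T\Sigma,\tau)$. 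Rearranging gives the asserted equation $\mu_\tau(u) + 2c_1(u^*T\Sigma,\tau) = \sum_i\mu(\alpha_i^*) - \sum_i\mu(\beta_i^*)$.

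The main obstacle I expect is bookkeeping of orientations and corner contributions: making sure the $\pi/2$ counterclockwise rotations at the blown-up punctures of $\check F$ are consistently oriented with $\bdry F$, that the corner terms telescope away from the total correctly, and that the sign in the passage from ``winding of $\mathcal{L}$'' to ``Maslov index'' matches both Lipshitz's convention in Equation~\eqref{index-lipshitz} and the convention implicit in the definition of $\mu_\tau$. Once those conventions are pinned down, the identity is a formal consequence of the additivity and change-of-trivialization properties of the Maslov and relative Chern classes; no analysis is needed.
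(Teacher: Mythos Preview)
Your approach is essentially identical to the paper's. The paper invokes the standard Maslov--Chern invariance $\mu_\tau(u) + 2c_1(u^*T\Sigma,\tau) = \mu_{\tau_0}(u) + 2c_1(u^*T\Sigma,\tau_0)$, observes $c_1(u^*T\Sigma,\tau_0)=0$ since $\tau_0$ is global, and then checks $\mu_{\tau_0}(u) = \sum_i\mu(\alpha_i^*) - \sum_i\mu(\beta_i^*)$ by noting that the corner rotations contribute $+\tfrac{\pi}{2}$ when traversed $\beta\to\alpha$ and $-\tfrac{\pi}{2}$ when traversed $\alpha\to\beta$, hence cancel in total --- exactly your ``corner contributions cancel in pairs'' and your identification of the trivialization discrepancy with $c_1$.
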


\begin{proof}
By standard Maslov index theory, we have
$$\mu_{\tau} (u) + 2 c_1(u^*T\Sigma, \tau)=\mu_{\tau_0}(u)
+2c_1(u^*T\Sigma,\tau_0),$$ where $\tau_0$ denotes the
trivialization of $u^*T\Sigma$ from Section~\ref{subsubsection:
Fredholm index first version}. We immediately obtain
$c_1(u^*T\Sigma,\tau_0)=0$ since $\tau_0$ is a trivialization on all
of $\dot F$. Hence it suffices to prove that:
\begin{equation} \label{pipi}
\mu_{\tau_0}(u) = \sum_{i=1}^k \mu (\alpha_i^*) - \sum_{i=1}^k\mu(\beta_1^*).
\end{equation}

The difference between the two sides of Equation~\eqref{pipi} is the
total amount of rotation of the real lines introduced at $\boldsymbol{\alpha}
\cap \boldsymbol{\beta}$ in the definition of $\mu_{\tau_0}$: if we go from
$\boldsymbol{\beta}$ to $\boldsymbol{\alpha}$ we rotate by $\frac{\pi}{2}$,
while if we go from $\boldsymbol{\alpha}$ to $\boldsymbol{\beta}$ we rotate
by $- \frac{\pi}{2}$; hence the total amount of rotation is $0$.
\end{proof}

We can now rephrase the Fredholm index as follows:
\begin{equation}\label{eqn: Fredholm index for HF, version 2}
\op{ind}(u)=-\chi(F)+k+\mu_\tau(u)+2c_1(u^*T\Sigma,\tau).
\end{equation}

\begin{rmk}
Since the Maslov index of $\mathcal{L}$ with respect to $\tau$ is an integer along each chord $[0,1]\times \{y_i\}$, it makes sense to write $\mu_\tau(y_i)\in \Z$. If we let
$$\mu_\tau({\bf y})= \sum_{i=1}^k \mu_\tau(y_i),$$
then $\mu_{\tau}(u)=\mu_{\tau}({\bf y})-\mu_{\tau}({\bf y'})$. In particular, $\mu_\tau(u)$ only depends on ${\bf y}$, ${\bf y'}$, and the choice of $\tau$.
\end{rmk}

\subsection{The ECH-type index}

In this subsection we define an ECH-type index $I_{HF}(u)$ and prove an index inequality which is analogous to the ECH index inequality of \cite{Hu1}.

\subsubsection{The relative intersection form}

Let $\tau$ be a trivialization of $T\Sigma_{\check X}$ along
$Z_{\boldsymbol{\alpha},\boldsymbol{\beta}}$ as defined in Section~\ref{subsubsection:
Fredholm index second version}. We will define the notions of a {\em
representative} and a {\em $\tau$-trivial representative} of a
homology class $A\in H_2(X,{\bf y},{\bf y'})$, where ${\bf
y}=\{y_1,\dots,y_k\}$ and ${\bf y'}=\{y_1',\dots,y_k'\}$ are
$k$-tuples in $\mathcal{S}_{\boldsymbol{\alpha},\boldsymbol{\beta}}$.

\begin{defn} \label{defn: representative of A}
An oriented immersed compact surface $$\check C\subset \check
X=[-1,1]\times[0,1]\times\Sigma$$ in the homology class $A \in
H_2(X,\mathbf{y}, \mathbf{y'})$ is an {\em immersed representative
of $A$} if:
\begin{enumerate}
\item $\check C$ is positively transverse to the fibers $\{(s,t)\}\times
\Sigma$ along all of $\bdry \check C$.
\item $(\check\pi_{[0,1]\times\Sigma})|_{\check C}$ is an embedding near
$\bdry \check C\cap (\{-1,1\}\times[0,1]\times\Sigma)$, where
$\check \pi_{[0,1]\times\Sigma}$ is the projection $\check X \to
[0,1]\times\Sigma$.
\end{enumerate}
If $\check C$ is embedded in addition, then $\check C$ is a {\em
representative} of $A$.
\end{defn}

\begin{defn}[$\tau$-trivial representative]
\label{defn: tau-trivial representative HF} A representative $\check
C$ of $A$ is {\em $\tau$-trivial} if, for all sufficiently small
$\varepsilon>0$, $\check C\cap \{s=\pm (1-\varepsilon)\}$ is the
union of single-stranded braids $\zeta_i^\pm$, $i=1,\dots,k$, where
$\zeta_i^+$ (resp.\ $\zeta_i^-$) lies in a tubular neighborhood of
$[0,1]\times \{y_i\}$ (resp.\ $[0,1]\times \{y_i'\}$), is disjoint
from $[0,1]\times \{y_i\}$ (resp.\ $[0,1]\times \{y_i'\}$), and
induces a framing which agrees with $\tau$ along $[0,1]\times
\{y_i\}$ (resp.\ $[0,1]\times \{y_i'\}$).
\end{defn}

Let $A$ be a homology class in $H_2(X,{\bf y},{\bf y'})$.  Then we
define
\begin{equation} \label{eqn: jan 6}
n_{z_j}(A) =  \langle A, [-1,1]\times[0,1]\times\{z_j\}\rangle,
\end{equation}
\nom[n]{$n_{z_j}(A)$}{Intersection number given by Equation~\eqref{eqn: jan 6}}
where $z_j\in \Sigma-\boldsymbol{\alpha}-\boldsymbol{\beta}$
are given in Definition~\ref{admissible} and $\langle\cdot,\cdot\rangle$
is the signed intersection number. We say that $A\in H_2(X,{\bf y},{\bf
y'})$ is {\em positive} if $n_{z_j}(A)\geq 0$ is nonnegative for all
$z_j$.

\begin{lemma}
A positive $A\in H_2(X,{\bf y}, {\bf y'})$ admits a $\tau$-trivial
representative $\check C$.
\end{lemma}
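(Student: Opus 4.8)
The plan is to construct $\check C$ explicitly by starting from any representative of $A$ and then modifying it near the two ends $\{s=\pm 1\}$ so as to make the braid structure single-stranded and $\tau$-framed. First I would observe that the positivity hypothesis $n_{z_j}(A)\geq 0$ is exactly what is needed to apply the analogue of the standard construction in embedded contact homology (compare \cite[Section~2]{Hu1} and \cite[Lemma~3.1]{HT1}) which produces, out of a homology class whose intersections with the distinguished fibers $[-1,1]\times[0,1]\times\{z_j\}$ are all nonnegative, an embedded surface in $\check W$ that is positively transverse to the fibers $\{(s,t)\}\times\Sigma$. The mechanism is to realize $A$ first by a smooth singular $2$-chain, tube together pieces of opposite sign using the positivity to cancel, resolve self-intersections positively, and finally isotope so that the surface is transverse to the vertical foliation by $\Sigma$-fibers. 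This yields a representative $\check C_0$ in the sense of Definition~\ref{defn: representative of A}.

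Next I would analyze the behavior of $\check C_0$ near the ends. Because $\check C_0$ is positively transverse to the fibers along all of $\bdry\check C$, for $\varepsilon>0$ small the slice $\check C_0\cap\{s=\pm(1-\varepsilon)\}$ is a disjoint union of braids, one near each chord $[0,1]\times\{y_i\}$ (resp.\ $[0,1]\times\{y_i'\}$), with total winding controlled by the local intersection data. The point where work is required is that these braids need not be single-stranded, and need not induce the framing $\tau$. To fix this I would perform an isotopy of $\check C_0$ supported in a collar neighborhood $\{s\in[\pm(1-\varepsilon),\pm 1]\}$: within each solid-torus neighborhood of a chord, braids can be homotoped (not isotoped — the point is to allow strands to merge, which changes nothing homologically since we are free to add and absorb small cancelling tubes using positivity near the $z_j$) to a single strand carrying the same total homological winding, and then that single strand can be rotated to agree with the reference framing $\tau$ that was fixed in Section~\ref{subsubsection: Fredholm index second version}. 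The key observation making this legitimate is that the condition of being $\tau$-trivial is purely a statement about the two slices $\{s=\pm(1-\varepsilon)\}$ and the collar between them, and homology classes in $\pi_2(\mathbf{y},\mathbf{y'})$ are detected rel these slices, so such collar modifications do not change the class $A$ provided we track the $n_{z_j}$.

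The main obstacle I anticipate is bookkeeping the local intersection numbers with the fibers near the ends and verifying that the homological winding of the asymptotic braid equals the amount prescribed by $\tau$, so that the single-stranded model is actually achievable by a homotopy staying in the neighborhood of the chord and staying positively transverse to the fibers; in other words, ensuring the ``straightening to $\tau$'' step is unobstructed. This is essentially the content of the analogous ECH lemma, so I would either cite \cite[Lemmas~3.1--3.3]{HT1} mutatis mutandis or reproduce the short argument: the ambient orientation conventions force the transverse braid to wind positively, its net winding is $c_1(u^*T\Sigma,\tau)$-type data which is absorbed into the definition of $\mu_\tau$, and positivity near each $z_j$ gives the freedom to perform the cancelling tube moves. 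Once $\check C$ is single-stranded and $\tau$-framed near both ends, it is a $\tau$-trivial representative of $A$ by Definition~\ref{defn: tau-trivial representative HF}.
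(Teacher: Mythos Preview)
Your approach would work but takes a different, more roundabout route than the paper's. The paper constructs $\check C$ directly by first building the projection $u_2:F\to\Sigma$ via the domain-gluing construction of Rasmussen \cite[Lemma~9.3]{Ra} and Lipshitz \cite[Lemma~4.1]{Li}: one literally assembles $F$ from copies of the closures of the components of $\Sigma-\alpha-\beta$ with multiplicities $n_{z_j}(A)$ (this is where positivity enters), pairs it with a branched cover $\check u_1:\check F\to[-1,1]\times[0,1]$ having only interior branch points, and resolves interior singularities. The result is already single-stranded at the ends because the domain construction forces each $\alpha_i$ and $\beta_i$ to be used exactly once; only the local $\tau$-framing adjustment near $\bdry\check F-\bdry\dot F$ remains, and that is indeed a local exercise.

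Your ECH-style approach (build a generic representative, then fix the ends) is viable, but your concern about multi-strandedness is misplaced in this Heegaard Floer setting: the boundary of any representative at $s=\pm 1$ is constrained to lie in $\{\pm 1\}\times[0,1]\times\mathbf{y}$ (resp.\ $\mathbf{y'}$), which is already a union of $k$ disjoint arcs, and condition~(2) of Definition~\ref{defn: representative of A} forces the nearby slices to be single-stranded. So the ``merging strands'' step and the appeal to \cite[Lemmas~3.1--3.3]{HT1} are unnecessary here; those lemmas address closed-orbit braids in solid tori, whereas the chord situation is strictly simpler. The domain-gluing construction is the natural tool on the HF side and gives the representative in one stroke.
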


\begin{proof}
Let $A$ be a positive homology class in $H_2(X,{\bf y}, {\bf y'})$.  Then we can glue closures of connected components of $\Sigma-\boldsymbol{\alpha}-\boldsymbol{\beta}$ with multiplicity $n_{z_j}(A)$ as in Rasmussen~\cite[Lemma~9.3]{Ra} (also see \cite[Lemma 4.1]{Li} and its correction \cite[Lemma~4.1']{Li2}) to construct a smooth map $u=(u_1,u_2)$, where $u_1: \dot F\to \R\times[0,1]$ is a branched cover with interior branch points and $u_2: \dot F\to \Sigma$ admits an extension $u_2: F\to\Sigma$ such that
\begin{itemize}
\item $u_2(q_i^+)=y_i$ and $u_2(q_i^-)=y_i'$;
\item each component of $\bdry \dot F$ is mapped to some $\alpha_i$ or $\beta_i$ so that each $\alpha_i,\beta_i$, $i=1,\dots,k$, is used exactly once; and
\item $u_2$ is holomorphic on a neighborhood of $\bdry F$ and is locally given by $w\mapsto w^{\ell(q_i^\pm)/2}$ for some positive odd integer $\ell(q_i^\pm)$ near $q_i^\pm$, provided $q_i^\pm$ is not the end of a trivial strip of $u_2$.
\end{itemize}
We extend $u$ to $\check u: \check F\to X$, where $\check F$ is the real blow-up of $F$ given in
Section~\ref{subsection: HF holomorphic curves and moduli spaces}. Condition (1) of
Definition~\ref{defn: representative of A} is immediately satisfied.
We can resolve all the (interior) singularities to make $\check u$
embedded. It is a local exercise to modify $\check u$ in a
neighborhood of $\bdry \check F- \bdry \dot F$ so that $\check u$
becomes $\tau$-trivial.
\end{proof}

\begin{lemma} \label{lemma: no branch points on boundary} If
$u: \dot{F}\to X$ is an HF curve and $\check{C}$ is the image of the compactification $\check{u}:\check F\to \check X$, then the following hold:
\begin{enumerate}
\item $\pi_B\circ u: \dot{F} \to B$ has no branch points along $\bdry B$.
\item $\check u$ is positively transverse to the fibers $\{(s,t)\}\times \Sigma$ along all of $\bdry \check F$ and $\check\pi_{[0,1]\times\Sigma}|_{\check C}$ is an embedding near $\bdry \check C$.
\end{enumerate}
In other words, $\check C$ satisfies all the conditions of a $\tau$-trivial representative for some $\tau$, with the exception of the embeddedness of $\check C$.
\end{lemma}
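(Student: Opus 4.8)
The plan is to deduce both assertions from the product structure $\pi_B\colon W\to B$ and the admissibility of $J$. For (1), recall that $\pi_B\circ u$ is $(J,j_B)$-holomorphic, so it is a (possibly branched) holomorphic map between Riemann surfaces with boundary, of degree $k$ onto $B=\R\times[0,1]$. The boundary $\bdry B$ consists of the two lines $\{t=0\}$ and $\{t=1\}$, and $u(\bdry\dot F)\subset L_\alpha\cup L_\beta$ forces $\pi_B\circ u$ to send $\bdry\dot F$ into $\bdry B$. A branch point of $\pi_B\circ u$ on $\bdry\dot F$ would be a boundary branch point of a holomorphic map of surfaces with boundary; locally in suitable coordinates $\pi_B\circ u$ would look like $w\mapsto w^m$ ($m\ge 2$) on the upper half-plane, which maps the real axis $2$-to-$1$ onto a ray and in particular is not a local homeomorphism of boundaries. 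The standard way to rule this out is to use the alternating condition (iii) in the definition of the domain $F$ together with the asymptotics (3) of Definition~\ref{HF-curve}: between consecutive punctures the boundary arc $\alpha_i^*$ or $\beta_i^*$ maps injectively to one of the lines $\{t=1\}$ or $\{t=0\}$, because near a positive puncture the curve is asymptotic to a cylinder over a Reeb chord on which $t$ is strictly monotone, and the $t$-coordinate of a holomorphic section over $B$ cannot have an interior-to-the-arc critical point on $\bdry B$ without violating the boundary maximum principle for the harmonic function $t\circ u$. I would write this out via the Schwarz reflection / boundary maximum principle applied to $\pi_B\circ u$ composed with a local conformal chart, concluding $d(\pi_B\circ u)\ne 0$ along $\bdry\dot F$.

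For (2), transversality of $\check u$ to the fibers $\{(s,t)\}\times\Sigma$ along $\bdry\check F$ is exactly the statement that $d(\pi_B\circ u)$ is surjective there, which is part (1) together with the fact that a nonconstant holomorphic map of surfaces is an immersion precisely away from its branch locus; at the compactified ends $\bdry\check F-\bdry\dot F$ the transversality follows from the exponential convergence to cylinders over Reeb chords (Abbas~\cite{Ab}), on which the section is manifestly transverse to the fibers. For the embedding statement, near $\bdry\dot F$ the map $\check\pi_{[0,1]\times\Sigma}\circ\check u$ takes each boundary arc into a single curve $\alpha_i$ or $\beta_i$ traversed once by condition (2) of Definition~\ref{HF-curve}, so it is an immersion of a $1$-manifold into $\Sigma$; combined with the transversality to fibers just established, a short neighborhood of $\bdry\check C$ in $\check C$ projects diffeomorphically to its image in $[0,1]\times\Sigma$. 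Near the added ends one uses the asymptotic model again: the braid $\zeta_i^\pm$ is single-stranded, so the projection is injective there.

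I expect the main obstacle to be the boundary branch point exclusion in (1): one must be careful that the alternating puncture condition and the asymptotic behaviour genuinely prevent a holomorphic map of bordered surfaces from folding the boundary, and the clean argument is the boundary maximum/monotonicity principle for $t\circ u$ rather than a naive count. The remaining points are essentially bookkeeping: once $d(\pi_B\circ u)\ne 0$ on the boundary is in hand, transversality to fibers is immediate, and the embeddedness near $\bdry\check C$ reduces to the single-traversal property from Definition~\ref{HF-curve}(2) and the single-strandedness of the asymptotic braids. The final sentence of the lemma then follows formally, since the only condition of a $\tau$-trivial representative not checked is global embeddedness of $\check C$, which may fail because $u$ itself can have interior singularities.
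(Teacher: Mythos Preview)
Your argument is correct, and for part (1) you and the paper both arrive at the local model $w\mapsto w^m$ on the closed upper half-plane via Schwarz reflection. The paper, however, extracts the contradiction more directly: once the extension is made, for $m\ge 2$ the map $z\mapsto z^m$ sends the open upper half-plane onto a set that meets both sides of the real axis, so the image cannot remain in $\R\times[0,1]$. That is the whole argument---no use of the alternating-puncture condition, the asymptotics, or the Hopf boundary lemma is needed. Your route through the boundary maximum principle for $t\circ u$ (equivalently, monotonicity of $s$ along boundary arcs via Cauchy--Riemann) is a legitimate alternative and gives a bit more, namely that $\pi_B\circ u$ restricts to a diffeomorphism on each boundary arc; but it is longer and brings in machinery that the simple ``image escapes the half-plane'' observation makes unnecessary. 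For part (2) the paper is terse (``follows from (1), together with the asymptotics''), and your expanded version is fine.
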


\begin{proof}
(1) follows from the fact that $\pi_B\circ u$ is a $k$-fold branched
cover of $B$. Let $\H=\{\mbox{Im}(z)\geq 0\}$ be the upper
half-plane and $U\subset\H$ be an open subset which contains $0$. If
$f$ is a holomorphic map $U\to \R\times[0,1]$ which maps $0$ to
$(0,0)$ and $U\cap \bdry \H$ to $\R\times\{0\}$, then it can be
extended to a holomorphic map $f: U\cup
\overline{U}\to\R\times[-1,1]$ by Schwarz reflection, where
$\overline{U}=\{\overline{z}~|~ z\in U\}$. If $df(0)=0$, then $f$ is
locally a composition of $z\mapsto z^l$ for some integer $l>1$ and a
biholomorphism. This contradicts the requirement that $f(\H)$ stay
on one side of $\R\times\{0\}$.

(2) follows from (1), together with the asymptotics of $u$ as $s\to
\pm \infty$.
\end{proof}

We now define the relative intersection form $Q_\tau(A)$, which is analogous to the relative intersection form which appears in the definition of the ECH index $I_{ECH}$, but is easier.

\begin{defn}[Relative intersection form $Q_\tau(A)$] \label{defn of Q}
Let $A\in H_2(X,\mathbf{y}, \mathbf{y'})$ be a positive homology
class and let $\check C$ be a $\tau$-trivial representative of $A$.
Let $\psi$ be a section of the normal bundle $\nu$ to $\check C$ such
that $\psi|_{\bdry \check C}=J\tau$, and let $\check C'$ be a
pushoff of $\check C$ in the direction of $\psi$. Then the {\em
relative intersection form} $Q_\tau(A)$ is given by:
\[ Q_{\tau}(A) = \langle\check C, \check C'\rangle \]
where $\langle \cdot,\cdot\rangle$ denotes the algebraic count of
intersection points.
\end{defn}

Note that, since a representative $\check C$ is positively transverse to the fibers
$\{(s,t)\}\times \Sigma$ along all of $\bdry \check C$, we may take
the normal bundle $\nu$ to $\check C$ to satisfy $\nu|_{\bdry \check
C}=T\Sigma|_{\bdry \check C}$. Also, since $J$ is
$\Omega_X$-admissible, it takes $T\Sigma$ to itself. Hence
$(\tau,J\tau)$ is a trivialization of $\nu|_{\bdry \check C}$.
Although $\tau$ and $J\tau$ are homotopic, we will often use $J\tau$
due to its appearance in the definition of $Q_\tau(A)$.

Let $\tau'$ and $\tau$ be trivializations of $T\Sigma_{\check X}$ along
$Z_{\boldsymbol{\alpha},\boldsymbol{\beta}}$ which differ only on
$\{\pm 1\}\times [0,1]\times (\boldsymbol{\alpha}\cap\boldsymbol{\beta})$.
Let $\op{deg}(\tau,\tau',y_i)$ (resp.\ $\op{deg}(\tau,\tau',y_i')$) be the degree
of $\tau$ with respect to the trivialization $\tau'$ along $[0,1]\times \{y_i\}$
(resp.\ $[0,1]\times \{y_i'\}$), oriented in the $\bdry_t$-direction. We
then have the following:

\begin{lemma}[Change of trivialization]
\label{lemma: change of trivialization}
If $A\in H_2(X,{\bf y},{\bf y'})$ is positive, then
\begin{equation} \label{eqn: change of trivialization}
Q_\tau(A)-Q_{\tau'}(A)= \sum_{i=1}^k\op{deg}(\tau,\tau',y_i) -\sum_{i=1}^k\op{deg}(\tau,\tau',y_i').
\end{equation}
\end{lemma}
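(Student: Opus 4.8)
The plan is to compare the two relative intersection forms $Q_\tau(A)$ and $Q_{\tau'}(A)$ directly by building $\tau$-trivial and $\tau'$-trivial representatives that agree in the interior of $\check W$ and differ only in controlled collar neighborhoods of the boundary chords $[0,1]\times\{y_i\}$ and $[0,1]\times\{y_i'\}$. Concretely, first I would fix an immersed representative $\check C$ of $A$ which is $\tau$-trivial. Since $\tau$ and $\tau'$ agree away from $\{\pm1\}\times[0,1]\times(\alpha\cap\beta)$, the same surface $\check C$ can be modified near $\bdry\check C$ — by an isotopy supported in a collar of the braids $\zeta_i^\pm$ — to produce a surface $\check C_{\tau'}$ that is $\tau'$-trivial and coincides with $\check C$ outside those collars. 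Because $Q_\tau(A)$ depends only on the homology class (by the standard argument that $Q_\tau$ is well-defined on $\pi_2$, analogous to \cite[Section~2]{Hu1}), I may compute $Q_\tau(A)$ using $\check C$ and $Q_{\tau'}(A)$ using $\check C_{\tau'}$.

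Next I would compute the two algebraic intersection numbers $\langle\check C,\check C'\rangle$ and $\langle\check C_{\tau'},\check C_{\tau'}'\rangle$, where the pushoffs $\check C'$ and $\check C_{\tau'}'$ are taken in the directions $J\tau$ and $J\tau'$ respectively. Away from the boundary collars the two surfaces and their pushoffs agree, so all interior intersection points contribute equally and cancel in the difference. The only contribution to $Q_\tau(A)-Q_{\tau'}(A)$ therefore comes from a neighborhood of each chord $[0,1]\times\{y_i\}$ (contributing positively) and each $[0,1]\times\{y_i'\}$ (contributing with the opposite sign, because these chords sit at the negative end where the roles of $\check C$ and its pushoff are reversed). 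Near a single chord $[0,1]\times\{y_i\}$ this becomes a purely local winding computation: the braid $\zeta_i^+$ and its $J\tau$-pushoff versus the braid induced by the $\tau'$-framing and its $J\tau'$-pushoff, and the net linking-number change is exactly the relative degree $\op{deg}(\tau,\tau',y_i)$ of the two framings along that chord, oriented in the $\bdry_t$-direction. Summing over all $i$ and tracking signs yields Equation~\eqref{eqn: change of trivialization}.

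The main obstacle I expect is the careful bookkeeping of orientations and signs in this local model — in particular, making sure that the chords $[0,1]\times\{y_i'\}$ at the $s=-1$ end enter with a minus sign while those at $s=+1$ enter with a plus sign, and that the local winding number is identified with $\op{deg}(\tau,\tau',\cdot)$ rather than its negative. This is the same sign subtlety that appears in Hutchings' change-of-trivialization formula for $Q_\tau$ in the ECH setting (\cite[Section~2 and Lemma~2.5]{Hu1}, \cite[Lemma~3.3(d)]{Hu2}), and I would model the argument on that, noting that the HF case is genuinely easier because each $y_i$ and $y_i'$ is used with multiplicity one, so every braid is single-stranded and there is no multiple-cover combinatorics to worry about. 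Once the local picture is pinned down, the global statement follows by additivity of the intersection count over the disjoint collar neighborhoods together with the fact that the interior contributions are framing-independent.
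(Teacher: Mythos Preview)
Your approach is correct and essentially the same as the paper's: both localize the difference $Q_\tau(A)-Q_{\tau'}(A)$ to collar neighborhoods of the chords $[0,1]\times\{y_i\}$ and $[0,1]\times\{y_i'\}$ and identify the local contribution with $\pm\op{deg}(\tau,\tau',\cdot)$. The paper's packaging is slightly cleaner: rather than comparing two surfaces with two pushoffs, it builds a single $\tau$-trivial representative $\check C_\tau$ by capping a truncation of a $\tau'$-trivial one with small disks $D_i,D_i'$, and takes a single section $\psi$ of the normal bundle equal to $J\tau$ on $\bdry\check C_\tau$ and to $J\tau'$ on the inner boundary; the difference $Q_\tau(A)-Q_{\tau'}(A)$ is then literally the signed count of zeros of $\psi$ on the capping disks, which a one-line local computation identifies with $\op{deg}(\tau,\tau',y_i)$ and $-\op{deg}(\tau,\tau',y_i')$.
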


\begin{proof}
Let $\check C_{\tau'}$ be a $\tau'$-trivial representative of $A$.
Let $\varepsilon>0$ be small and let
$$\check C_{\tau',0}= \check C_{\tau'}\cap ([-1+\varepsilon,1-\varepsilon]\times[0,1]\times\Sigma).$$
We can extend $\check C_{\tau',0}$ to $\check C_\tau$ in $\check X$ by
gluing disks $D_i$, $D_i'$, $i=1,\dots,k$, corresponding to $y_i$,
$y_i'$, so that $\check C_\tau$ becomes $\tau$-trivial.

Let $\psi$ be a section of the normal bundle to $\check C_\tau $
such that $\psi|_{\bdry \check C_\tau}=J\tau$ and $\psi|_{\bdry
\check C_{\tau',0}}=J\tau'$. (Here we are assuming that $\tau'$ has
been extended to a neighborhood of the $[0,1]\times \{y_i\}$.) Then
$$Q_\tau(A)-Q_{\tau'}(A)= \sum_{i=1}^k \# (\psi|_{D_i})^{-1}(0)
+\sum_{i=1}^k \# (\psi|_{D'_i})^{-1}(0),$$ where $\#$ is a signed
count. A local calculation gives
$$\# (\psi|_{D_i})^{-1}(0)= \op{deg}(\tau,\tau',y_i), \quad
\# (\psi|_{D'_i})^{-1}(0)= -\op{deg}(\tau,\tau',y_i'),$$ which
proves the lemma.
\end{proof}

The following is immediate from the definition of $Q_\tau(A)$.

\begin{lemma}[Additivity]
\label{lemma: additivity of Q} If $A_1\in H_2(X,{\bf y},{\bf y'})$
and $A_2\in H_2(X,{\bf y'},{\bf y''})$ are positive, then
$$Q_\tau(A_1\# A_2)= Q_\tau(A_1)+Q_\tau(A_2),$$ where $A_1\#A_2\in
H_2(X,{\bf y},{\bf y''})$ is obtained from stacking two copies of
$\check X$.
\end{lemma}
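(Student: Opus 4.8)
The plan is to realize $Q_\tau$ on each side by an explicit intersection count and to check that the gluing of $\tau$-trivial representatives is compatible with the stacking operation on $\pi_2$. First I would pick a $\tau$-trivial representative $\check C_1$ of $A_1$ inside the first copy $\check W_1$ of $\check W$ and a $\tau$-trivial representative $\check C_2$ of $A_2$ inside the second copy $\check W_2$. By Definition~\ref{defn: tau-trivial representative HF}, near the positive end of $\check W_1$ the surface $\check C_1$ meets $\{s=1-\varepsilon\}$ in single-stranded braids $\zeta_i^+$ lying in a tubular neighborhood of $[0,1]\times\{y_i'\}$ and inducing the framing $\tau$, while near the negative end of $\check W_2$ the surface $\check C_2$ meets $\{s=-1+\varepsilon\}$ in single-stranded braids lying in a tubular neighborhood of $[0,1]\times\{y_i'\}$ and inducing the same framing $\tau$. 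Since both families are single-stranded, lie in arbitrarily small tubular neighborhoods of the chords $[0,1]\times\{y_i'\}$, and carry the same framing, after a $C^0$-small isotopy supported in a collar of the matched ends I can arrange that the braids coincide. Gluing $\check C_1$ and $\check C_2$ along this common boundary piece produces an embedded surface $\check C$ in the stacked $\check W$ which is a $\tau$-trivial representative of $A_1\#A_2$; this uses the additivity of homology classes under stacking together with condition (2) of Definition~\ref{defn: representative of A}, which is automatic in the collar.

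Next I would choose the normal sections used to compute $Q_\tau$ compatibly. Let $\psi_i$ be a section of the normal bundle $\nu_i$ to $\check C_i$ with $\psi_i|_{\bdry \check C_i}=J\tau$, taken to equal the standard $J\tau$-push-off throughout the gluing collar, so that $\psi_1$ and $\psi_2$ glue to a section $\psi$ of the normal bundle $\nu$ to $\check C$ with $\psi|_{\bdry \check C}=J\tau$. The push-off $\check C'$ of $\check C$ in the direction of $\psi$ then decomposes as $\check C_1'\cup\check C_2'$, the push-offs used in computing $Q_\tau(A_1)$ and $Q_\tau(A_2)$. Because in the collar both $\check C$ and $\check C'$ lie in a thin tubular neighborhood of $\bigcup_i [0,1]\times\{y_i'\}$ as single-stranded braids with the same framing $\tau$, we can arrange $\check C\cap \check C'=\varnothing$ inside the collar. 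Hence every intersection point of $\check C$ with $\check C'$ lies in the interior of $\check W_1$ or of $\check W_2$, and the signed count splits:
\[
Q_\tau(A_1\# A_2)=\langle \check C,\check C'\rangle = \langle \check C_1,\check C_1'\rangle + \langle \check C_2,\check C_2'\rangle = Q_\tau(A_1)+Q_\tau(A_2).
\]

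The only point requiring care — the ``main obstacle,'' though it is a minor one — is the gluing compatibility just invoked: verifying that the $\tau$-triviality conditions at the matched ends are strong enough to isotope the boundary braids of $\check C_1,\check C_2$ (and of their push-offs) so that they agree and yield a smooth embedded surface with no spurious intersections created in the collar. This is a purely local statement about single-stranded braids in $D^2\times[0,1]$ with a fixed framing, and follows because any two single-stranded braids in a solid torus inducing the same framing are isotopic through single-stranded braids. Granting this, the decomposition of the intersection count is immediate, which is why the lemma is essentially a formal consequence of the definition of $Q_\tau$.
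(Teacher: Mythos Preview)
Your argument is correct and is exactly the expansion of what the paper leaves implicit: the paper states only that the lemma ``is immediate from the definition of $Q_\tau(A)$,'' and your gluing of $\tau$-trivial representatives and compatible normal push-offs is precisely the verification one has in mind. There is no gap, and your approach coincides with the paper's.
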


\subsubsection{The relative adjunction formula}

In this subsection we prove a relative adjunction formula for
HF curves (Lemma~\ref{lemma: adjunction formula for X}).

If $\check C \subset \check X$ is a properly immersed surface with only transverse double points in its interior, we denote by $\delta(\check C)$ the signed count of the double points of $\check C$.

\begin{lemma}\label{lemma: c1 and Q}
If $\check C$ is an immersed representative of $A \in
H_2(X,\mathbf{y}, \mathbf{y'})$ with only transverse
double points in its interior, then
\[ c_1(\nu, J\tau) = Q_{\tau}(A)  - 2 \delta(\check C). \]
\end{lemma}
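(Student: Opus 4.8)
The plan is to prove the identity $c_1(\nu, J\tau) = Q_\tau(A) - 2\delta(\check C)$ by comparing two ways of counting the zeros of a generic section of the normal bundle $\nu$ of $\check C$ with a specified boundary condition. This is the standard strategy for relative adjunction formulas, as in \cite[Section~3]{Hu1}, adapted to the simpler situation at hand where the boundary asymptotics are chords rather than closed orbits.

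First I would fix a generic section $\psi$ of $\nu$ with $\psi|_{\bdry \check C} = J\tau$, so that $\psi$ is transverse to the zero section. By definition, $c_1(\nu, J\tau)$ is the signed count of zeros of $\psi$, i.e. $c_1(\nu, J\tau) = \#\psi^{-1}(0)$. On the other hand, pushing $\check C$ off in the direction of $\psi$ produces $\check C'$, and $Q_\tau(A) = \langle \check C, \check C'\rangle$ is the algebraic count of intersection points of $\check C$ with $\check C'$. The intersections of $\check C$ with $\check C'$ come in two types: those arising from zeros of $\psi$ (where $\check C'$ meets $\check C$ because the pushoff section vanishes), contributing $\#\psi^{-1}(0) = c_1(\nu,J\tau)$; and those arising near the singularities of $\check C$ itself, where two local sheets of $\check C$, pushed off in the $\psi$-direction, still intersect each other. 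The second, I would argue, contributes exactly $2\delta(\check C)$: near each transverse double point of $\check C$ (or more generally each singularity counted with its local multiplicity in $\delta$), the two sheets of $\check C$ and the two corresponding sheets of $\check C'$ produce, after a local model computation, a contribution of $2$ times the local intersection multiplicity. Summing, $Q_\tau(A) = c_1(\nu, J\tau) + 2\delta(\check C)$, which rearranges to the claim.

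The key steps, in order, are: (i) check that $\nu|_{\bdry \check C}$ is genuinely a complex (hence oriented) line bundle, using that $\check C$ is positively transverse to the fibers $\{(s,t)\}\times\Sigma$ along $\bdry\check C$ so that $\nu|_{\bdry\check C} = T\Sigma|_{\bdry\check C}$ and that $J$ is $\Omega$-admissible so it preserves $T\Sigma$ — this makes $(\tau, J\tau)$ a legitimate trivialization and $c_1(\nu, J\tau)$ well-defined as a relative first Chern number; (ii) reduce to the case where $\check C$ is immersed with transverse double points only, by a small perturbation supported away from $\bdry \check C$ (which changes neither $Q_\tau(A)$ nor $c_1(\nu,J\tau)$ and changes $\delta(\check C)$ in the compatible way), noting the additivity/invariance already encoded in Lemma~\ref{lemma: additivity of Q} and Lemma~\ref{lemma: change of trivialization}; (iii) perform the local model computation at a double point showing the contribution is $2$; and (iv) assemble the global count. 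The main obstacle I expect is step (iii) together with the bookkeeping of orientations: one must be careful that the signs of the double-point contributions to $Q_\tau$, the sign conventions in $\delta(\check C)$, and the orientation of $\nu$ induced by $J$ are all mutually consistent, so that the factor is genuinely $+2$ and not $-2$ or $0$. A clean way to handle this is to work with a local holomorphic model $(z, w) \mapsto (z^2, w)$ or a pair of transverse holomorphic planes in $\C^2$, where all intersection numbers are manifestly positive, and then invoke positivity of intersections for $J$-holomorphic curves to conclude that the same signs persist in the actual (pseudo-holomorphic) $\check C$; since $\check C$ comes from an HF curve this positivity is available, and for the purely topological statement one reduces to this model by a homotopy through immersed representatives.
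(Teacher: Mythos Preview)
Your approach is correct but takes a genuinely different route from the paper. The paper first handles the embedded case directly from Definition~\ref{defn of Q} (where $Q_\tau(A)=\#\psi^{-1}(0)=c_1(\nu,J\tau)$ when $\check C$ is $\tau$-trivial, extended to all embedded $\check C$ via Lemma~\ref{lemma: change of trivialization}), and then passes to the immersed case by \emph{resolving} each transverse double point: the two local disks are replaced by a Hopf band to produce an embedded $\check C_{sm}$, and one computes $c_1(\nu_{sm},J\tau)=c_1(\nu,J\tau)+2$ by embedding the local model in $S^2\times S^2$ and invoking the intersection form there. You instead work directly with the immersed surface and decompose $\langle \check C,\check C'\rangle$ into the zero-set contribution $c_1(\nu,J\tau)$ plus the cross-sheet intersections near each double point; this is closer in spirit to the argument in \cite[Section~3]{Hu1}. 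Your approach avoids the resolution step and the appeal to $S^2\times S^2$, at the cost of needing to justify that $\langle \check C,\check C'\rangle$ still computes $Q_\tau(A)$ when $\check C$ is merely immersed and not $\tau$-trivial---this follows from the homological invariance of the pairing with prescribed boundary framing, but you should say so explicitly rather than only citing Lemmas~\ref{lemma: change of trivialization} and~\ref{lemma: additivity of Q}. One small simplification: your invocation of positivity of intersections in step (iii) is unnecessary, since the local contribution at a transverse double point of sign $\epsilon$ is purely algebraic---each of the two cross-sheet intersections $S_1\cap S_2'$ and $S_2\cap S_1'$ inherits sign $\epsilon$, giving $2\epsilon$, which matches the signed count $\delta(\check C)$.
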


\begin{proof}
Let us first assume that $\check C$ is embedded. If $\check C$ is
$\tau$-trivial, then there is a section $\psi$ of the normal bundle
to $\check C$ such that $\psi|_{\partial \check C} = J\tau$, and
\[ Q_{\tau}(A) = \# \psi^{-1}(0) = c_1(\nu, J\tau). \]
Next let $\tau'$ and $\tau$ be trivializations of $T\Sigma_{\check X}$ along
$Z_{\boldsymbol{\alpha},\boldsymbol{\beta}}$, which differ only on $\{\pm 1\}\times
[0,1]\times (\boldsymbol{\alpha}\cap\boldsymbol{\beta})$. By Equation~\eqref{eqn: change of trivialization}, together with an analogous equation for
$c_1(\nu,J\tau)-c_1(\nu,J\tau')$, we have
$$Q_\tau(A)-Q_{\tau'}(A)=c_1(\nu,J\tau)-c_1(\nu,J\tau'),$$
which proves the lemma for embedded $\check C$.

Suppose now that $\check C$ has a single positive transverse double
point $d$. (The case of a negative double point is similar.) We
resolve the intersection in the following way: Let ${\mathcal B} \subset
\check{X}$ be a small ball centered at $d$. Then $\check C \cap
\partial {\mathcal B}$ is a Hopf link, and $\check C \cap {\mathcal B}$
is the union of two slice disks for the components which intersect at $d$.
We can construct a new surface $\check C_{sm}$ by replacing the two disks
with a Hopf band connecting the two components of the Hopf link. By definition,
we have $Q_{\tau}(A)= \langle\check C_{sm}, \check C_{sm}'\rangle$.
On the other hand, if $\nu_{sm}$ is the normal bundle to $\check
C_{sm}$, then
\[ c_1(\nu_{sm}, J\tau) = c_1(\nu, J\tau) + 2. \]
This can be seen easily by embedding $\mathcal{B}$ into $S^2 \times S^2$ and
using the properties of the intersection product for closed
$4$-manifolds.

In general, $$c_1(\nu,J\tau)+2\delta(\check C)=
c_1(\nu_{sm},J\tau)=\langle\check C_{sm}, \check C_{sm}'\rangle=
Q_\tau(A),$$ and the lemma follows.
\end{proof}

Let $u: \dot F \to X$ be an HF curve. Then $u$ has no singular points on $\partial \dot F$ because $\pi_B \circ u$ has no branch point on the boundary. By \cite{M1,MW}
(see also \cite[Appendix E]{MS}), there exists a modification $v:\dot F\to X$ of
$u:\dot F\to X$ in a neighborhood of its finitely many singular
points so that $v$ is an immersion with only positive transverse double
points. We define $\delta(u)$ the number of positive double points of $v$, which
depends only on $u$ and not on the modification $v$ used to define it. Then
$\delta(u) \ge 0$ and $\delta(u)=0$ if and only if $u$ is an embedding.
We can now state and prove the relative adjunction formula:

\begin{lemma}[Relative adjunction formula]
\label{lemma: adjunction formula for X} If $u: \dot{F} \to X$ is an
HF curve in the homology class $A \in H_2(X,{\mathbf y}, {\mathbf
y'})$, then
\begin{align}\label{goofy}
c_1(\check u^*T\Sigma, \tau) &= c_1(T\check F, \partial_t)  + Q_{\tau}(A) - 2 \delta(u)\\
\notag &= \chi(F) - k+ Q_{\tau}(A) - 2 \delta(u).
\end{align}
Here $\bdry_t$ is the pullback to $\bdry \check F$ of the trivialization
$\bdry_t$ on $[-1,1]\times [0,1]$.
\end{lemma}

\begin{proof}
Let $v:\dot F\to X$ be the immersion with transverse double points
obtained by modifying $u:\dot F\to X$.
Since the modification is purely local and is away from $\bdry \dot
F$, it follows that $\check u$ and $\check v$ belong to the same
homology class in $H_2(X,\mathbf{y},\mathbf{y'})$ and $c_1(\check
u^*T\Sigma, \tau) = c_1(\check v^*T\Sigma, \tau)$. Hence
Equation~\eqref{goofy} for $u$ is equivalent to Equation~\eqref{goofy}
for $v$, and we may assume without loss of generality that $u$ is immersed
with positive transverse double points.

The vector field $\partial_t$ is a global trivialization of the
complex line bundle $T ([-1,1]\times [0,1])$ over
$\check{X}=[-1,1]\times [0,1]\times \Sigma$. Hence
$$c_1(\check{u}^*T\check{X}, (\tau, \partial_t))=
c_1(\check u^*T\Sigma, \tau).$$ On the other hand,
$$c_1(\check{u}^*T \check{X}, (\tau, \partial_t)) = c_1(T\check F,
\partial_t)+ c_1(\nu, J\tau).$$

The first line of the relative adjunction formula now follows from Lemma~\ref{lemma: c1 and Q}. The equivalence of the first and second lines is a consequence of Claim~\ref{claim: calculation of c_1 of TF}, proved below.
\end{proof}

\begin{claim} \label{claim: calculation of c_1 of TF}
$c_1(T\check F, \partial_t) =\chi(F) - k$.
\end{claim}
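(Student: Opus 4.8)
The claim is a purely topological computation of the relative first Chern number $c_1(T\check F,\partial_t)$, where $\check F$ is the real blow-up of the domain $\dot F$ of a degree-$k$ HF curve and $\partial_t$ is the boundary trivialization pulled back from the $[-1,1]\times[0,1]$ factor. The plan is to identify this relative Chern number with a count of zeros of a generic section of $T\check F$ that agrees with $\partial_t$ on $\partial\check F$, and then to build such a section by lifting the vector field $\partial_t$ on $B=[-1,1]\times[0,1]$ through the branched cover $\pi_B\circ\check u:\check F\to B$.

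First I would recall that for a compact surface $\check F$ with boundary and a trivialization $\sigma$ of $T\check F|_{\partial\check F}$, one has $c_1(T\check F,\sigma)=\chi(\check F)$ minus the correction coming from how $\sigma$ winds relative to the boundary tangent direction; equivalently, extend $\sigma$ to a vector field on all of $\check F$ with isolated zeros, and $c_1(T\check F,\sigma)$ is the signed count of those zeros. The natural candidate section is $V:=d(\pi_B\circ\check u)^{-1}(\partial_t)$, the horizontal lift of $\partial_t$; this is well-defined and nonvanishing away from the branch points of the $k$-fold branched cover $\pi_B\circ\check u$, and by Lemma~\ref{lemma: no branch points on boundary}(1) there are no branch points on $\partial\check F$, so $V$ restricts to a legitimate boundary trivialization. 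Moreover $V$ agrees with $\partial_t$ under the identification of $T\check F|_{\partial\check F}$ determined by $\check u$ (the curve is transverse to the $\Sigma$-fibers along the boundary, so $d(\pi_B\circ\check u)$ is an isomorphism there), i.e. $V$ is precisely the section representing the trivialization called $\partial_t$ in the statement.

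Next I would count the zeros of $V$. These occur exactly at the interior branch points of $\pi_B\circ\check u$; at a branch point of local multiplicity $m$ (i.e. where the cover looks like $z\mapsto z^m$) the lift of $\partial_t$ vanishes to order $m-1$ with a $+ (m-1)$ contribution. Hence $c_1(T\check F,\partial_t)=\#\{\text{branch points, with multiplicity}\}=:b$, the total branching order. On the other hand the Riemann–Hurwitz formula for the degree-$k$ branched cover $\pi_B\circ\check u:\check F\to B$ gives $\chi(\check F)=k\,\chi(B)-b$. Since $B=[-1,1]\times[0,1]$ is a disk, $\chi(B)=1$, so $b=k-\chi(\check F)$. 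Finally $\chi(\check F)=\chi(\dot F)=\chi(F)$ because the real blow-up at the (finitely many) boundary punctures does not change the Euler characteristic (each puncture is replaced by an arc, a contractible piece of boundary), and $\chi(F)=\chi(\dot F)$ is already noted in the paper. Combining, $c_1(T\check F,\partial_t)=b=k-\chi(F)$... wait, that has the sign backwards from the claim; I would double-check the convention. The resolution is that the claim's $\partial_t$ is the boundary trivialization and $c_1$ of a surface-with-trivialized-boundary is $\chi$ minus (count of zeros of an extension), so in fact $c_1(T\check F,\partial_t)=\chi(\check F)-b=\chi(\check F)-(k-\chi(\check F))$? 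No — the correct bookkeeping is $c_1(T\check F,\partial_t)=$ (signed zero count of an extension of the section $\partial_t$), and that extension is $V$ itself, giving $c_1=b$; but one must also account for the fact that $\partial_t$ as a boundary trivialization of $T\check F$ may differ from $V$ by a winding contribution along $\partial\check F$ — and here it does not, since $V=\partial_t$ on the nose under $\check u$. So $c_1(T\check F,\partial_t)=b$. Reconciling with Riemann–Hurwitz requires care about whether $\check F$ or its double is the relevant surface; I will carry out this sign analysis explicitly, and expect the honest statement to come out as $\chi(F)-k$ once the orientation of $\partial_t$ and the sign convention for relative $c_1$ (as used throughout the ECH literature) are pinned down.

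**Main obstacle.** The genuine content is entirely in the sign/convention bookkeeping: matching the paper's definition of the relative first Chern class $c_1(\cdot,\tau)$ (which in the ECH framework counts zeros of a section extending the trivialization, with a specific orientation convention) against the Riemann–Hurwitz count of branch-point multiplicities of $\pi_B\circ\check u$, and correctly handling the real blow-up so that $\check F$ has honest boundary arcs rather than punctures. The topology (branched covers of a disk, Riemann–Hurwitz, $\chi$ invariance under blow-up) is routine; getting every sign right so the answer is $\chi(F)-k$ rather than $k-\chi(F)$ is the part I would be most careful about, and I would verify it on the simplest example — a single trivial strip, where $\dot F$ is a strip, $k=1$, $\chi(F)=1$ (as a disk) or $0$ (as a strip, but then $\chi(\dot F)=0$), the cover is unbranched, $b=0$, and the formula reads $0=\chi(F)-1$, forcing the convention $\chi(F)=\chi(\dot F)=0$ for a strip and confirming the sign.
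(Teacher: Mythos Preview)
Your branched-cover/Riemann--Hurwitz approach is sound in outline, but the sign error you flag is real and you never resolve it. The lift $V=(d(\pi_B\circ\check u))^{-1}(\partial_t)$ does \emph{not} vanish at a branch point of local degree $m$: in the local model $w=z^m$ one has $d\pi(\partial_z)=mz^{m-1}\partial_w$, so $V=\tfrac{1}{mz^{m-1}}\partial_z$ has a \emph{pole} of order $m-1$ there. Thus $V$ is a meromorphic section of $T\check F$ extending the boundary trivialization $\partial_t$, and its signed zero count is $-b$, where $b=\sum(m_i-1)$ is the total branching. Hence $c_1(T\check F,\partial_t)=-b$, and Riemann--Hurwitz $\chi(\check F)=k\cdot\chi(B)-b=k-b$ gives $c_1(T\check F,\partial_t)=\chi(\check F)-k=\chi(F)-k$, as claimed. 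Your trivial-strip sanity check cannot see this sign, since $b=0$ there; a disk double-covering $B$ with one simple branch point ($k=2$, $\chi(F)=1$, $b=1$, so both sides equal $-1$) would have detected it. Also, for the trivial strip $F$ is a disk with $\chi(F)=1$, not $0$; the formula then reads $0=1-1$, consistent with $b=0$.

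The paper's proof is different and shorter: it compares $\partial_t$ directly with the tangent-to-boundary trivialization $\tau_{\partial F}$, uses $c_1(T\check F,\tau_{\partial F})=\chi(F)$ (Poincar\'e--Hopf for a surface with boundary), and computes $\deg(\partial_t,\tau_{\partial F})=-k$ by hand. Once your pole-versus-zero sign is corrected the two arguments are equivalent --- yours is essentially the same Poincar\'e--Hopf count, repackaged through Riemann--Hurwitz --- but the paper's route avoids the meromorphic section and the branched-cover machinery entirely.
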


\begin{proof}
Let $\tau_{\bdry F}$ be the trivialization of $TF|_{\bdry F}$ which
is given by an oriented nonsingular vector field tangent to
$\partial F$. We then have
$$c_1(T\check F,\partial_t) = \chi(F) + \deg(\partial_t,\tau_{\bdry F}),$$
where $\deg(\partial_t,\tau_{\bdry F})$ is the degree of
$\partial_t$ with respect to $\tau_{\bdry F}$. By an easy direct
calculation we obtain $\deg(\bdry_t,\tau_{\bdry F})=-k$.
\end{proof}

\subsubsection{The index $I_{HF}$ and the index inequality}

We are now ready to define the ECH-type index $I_{HF}$ and prove the ECH-type index inequality (Theorem~\ref{thm: index inequality for HF}).

\begin{defn}[ECH-type index] \label{defn: ECH-type index for HF}
Let $A \in H_2(X,\mathbf{y},\mathbf{y'})$ be a positive homology class. Then the {\em ECH-type index} $I_{HF}$ of $A$ is given as follows:
\nom[IHF]{$I_{HF}$}{ECH-type index for Heegaard Floer homology}
\begin{equation} \label{eqn: ECH-type index for HF}
I_{HF}(A)=  c_1(T\Sigma|_A, \tau) + Q_{\tau}(A)+ \mu_{\tau}({\bf y})-\mu_\tau({\bf y'}).
\end{equation}
\end{defn}

We observe that $I_{HF}(A)$ does not depend on the choice of $\tau$: Suppose $\tau$ and $\tau'$ differ only at $y_i$ and $\op{deg}(\tau,\tau',y_i)=1$. Then we compute (i) $Q_\tau(A)=Q_{\tau'}(A)+1$ by Lemma~\ref{lemma: change of trivialization}, (ii) $\mu_{\tau}({\bf y})=\mu_{\tau'}({\bf y}) -2$
and (iii) $c_1(T\Sigma|_A,\tau)= c_1(T\Sigma|_A,\tau')+1$.

The index $I_{HF}$ satisfies the following additivity property under the stacking
operation $\#: H_2(X,{\bf y},{\bf y'}) \times H_2(X,{\bf y'},{\bf y''}) \to H_2(X,{\bf y},
{\bf y''})$.

\begin{lemma}[Additivity of $I_{HF}$]
If $A_1\in H_2(X,{\bf y},{\bf y'})$ and $A_2\in H_2(X,{\bf y'},{\bf y''})$ are positive, then
$$I_{HF}(A_1\# A_2)= I_{HF}(A_1)+I_{HF}(A_2).$$
\end{lemma}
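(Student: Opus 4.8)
The plan is to prove the additivity of $I_{HF}$ by checking that each of the three terms in the defining formula \eqref{eqn: ECH-type index for HF} is additive under concatenation. Write $I_{HF}(A) = c_1(T\Sigma|_A,\tau) + Q_\tau(A) + \mu_\tau(\mathbf{y}) - \mu_\tau(\mathbf{y'})$. Since $I_{HF}$ is independent of the choice of $\tau$, I may fix a single trivialization $\tau$ of $T\Sigma_{\check W}$ along $Z_{\alpha,\beta}$ and use it for $A_1$, $A_2$, and $A_1\# A_2$ simultaneously; this is legitimate because the concatenation $A_1\# A_2$ is formed by stacking two copies of $\check W$ along the slice $\{1\}\times[0,1]\times\Sigma$ of the first with $\{-1\}\times[0,1]\times\Sigma$ of the second, so the trivialization data along the $\alpha$- and $\beta$-curves and along $[0,1]\times\{y_i'\}$ matches up.

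First I would handle the Chern-number term. The relative first Chern number $c_1(T\Sigma|_A,\tau)$ is, by its standard definition, computed by counting zeros of a generic section of $\check u^*T\Sigma$ over a representative surface which is made to agree with $\tau$ along the braids near the ends; a representative of $A_1\# A_2$ is obtained by gluing a representative of $A_1$ to one of $A_2$ along the matching ends over $[0,1]\times\{y_i'\}$, and a section agreeing with $\tau$ on each piece glues to a section with no extra zeros at the seam. Hence $c_1(T\Sigma|_{A_1\# A_2},\tau) = c_1(T\Sigma|_{A_1},\tau) + c_1(T\Sigma|_{A_2},\tau)$. Second, additivity of $Q_\tau$ is exactly Lemma~\ref{lemma: additivity of Q}, which I may quote directly. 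Third, the Conley--Zehnder-type contribution is telescoping: $\bigl(\mu_\tau(\mathbf y) - \mu_\tau(\mathbf{y'})\bigr) + \bigl(\mu_\tau(\mathbf{y'}) - \mu_\tau(\mathbf{y''})\bigr) = \mu_\tau(\mathbf y) - \mu_\tau(\mathbf{y''})$, using that $\mu_\tau(\mathbf{y'})$ computed in the first copy of $\check W$ equals $\mu_\tau(\mathbf{y'})$ computed in the second, since both use the same $\tau$ along $[0,1]\times\{y_i'\}$. Summing the three additive identities gives $I_{HF}(A_1\# A_2) = I_{HF}(A_1) + I_{HF}(A_2)$.

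The only genuine subtlety — and the step I would be most careful about — is making precise that the representatives and trivializations genuinely glue along the concatenation seam, i.e. that "stacking two copies of $\check W$" is compatible with all the auxiliary choices entering $c_1$ and $Q_\tau$. This is the same bookkeeping that underlies Lemma~\ref{lemma: additivity of Q}, so in practice I would phrase the proof as: pick a $\tau$-trivial representative $\check C_1$ of $A_1$ and $\check C_2$ of $A_2$; observe that near $\{1\}\times[0,1]\times\mathbf{y'}$ in the first copy and $\{-1\}\times[0,1]\times\mathbf{y'}$ in the second, both are single-stranded braids inducing the framing $\tau$ along $[0,1]\times\{y_i'\}$, so they glue to a $\tau$-trivial representative $\check C_1\#\check C_2$ of $A_1\# A_2$; then additivity of $\delta$ (the interior singularities of the glued surface are the disjoint union of those of the two pieces), additivity of $c_1$, and additivity of $Q_\tau$ all follow from the corresponding local-to-global count, and the $\mu_\tau$ term telescopes formally. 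Everything else is routine, and no new analysis is required beyond what has already been set up in this section.
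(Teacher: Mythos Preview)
Your proof is correct and takes essentially the same approach as the paper: verify that each of the three terms $c_1(T\Sigma|_A,\tau)$, $Q_\tau(A)$, and $\mu_\tau(\mathbf{y})-\mu_\tau(\mathbf{y'})$ is additive under stacking, citing Lemma~\ref{lemma: additivity of Q} for $Q_\tau$. The paper's proof is a single sentence to this effect; your version simply spells out the gluing of $\tau$-trivial representatives along the seam more carefully than necessary.
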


\begin{proof}
Each of the terms $c_1(T\Sigma|_A, \tau)$, $Q_\tau(A)$, and $\mu_{\tau}({\bf y})-\mu_\tau({\bf y'})$ in the definition of $I_{HF}(A)$ is additive under stacking; see Lemma~\ref{lemma: additivity of Q} for the additivity of $Q_\tau(A)$.
\end{proof}

The following index inequality is analogous to (but much easier than) the ECH index inequality, due to Hutchings~\cite[Theorem~1.7]{Hu1}. We remark that $u$ is required to be simply-covered in the statement of the usual ECH index inequality. This is automatically satisfied for HF-curves.

\begin{thm}[ECH-type index inequality] \label{thm: index inequality for HF}
Let $u: \dot{F} \to X$ be an HF curve in the class $A \in H_2(X,\mathbf{y},\mathbf{y'})$. Then
\begin{equation}\label{eqn: index equality for HF}
\op{ind}(u)+2\delta(u)=I_{HF}(A),
\end{equation}
where $\delta(u)\geq 0$ is an integer count of the singularities. Hence
\begin{equation}\label{eqn: index inequality for HF}
{\rm ind}(u) \le I_{HF}(A),
\end{equation}
with equality if and only if $u$ is an embedding.
\end{thm}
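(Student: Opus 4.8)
The plan is to prove the index \emph{equality} \eqref{eqn: index equality for HF}, from which \eqref{eqn: index inequality for HF} is immediate since $\delta(u)\geq 0$, with $\delta(u)=0$ if and only if $u$ is an embedding (this last fact being supplied by the relative adjunction formula, Lemma~\ref{lemma: adjunction formula for W}). So the entire content is to match $\op{ind}(u)+2\delta(u)$ against $I_{HF}(A)$ term by term, using the second-version Fredholm index formula \eqref{eqn: Fredholm index for HF, version 2} and the definition \eqref{eqn: ECH-type index for HF} of $I_{HF}$.

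First I would write down the two quantities side by side. From \eqref{eqn: Fredholm index for HF, version 2},
\[
\op{ind}(u)=-\chi(F)+k+\mu_\tau(u)+2c_1(u^*T\Sigma,\tau),
\]
and since $\mu_\tau(u)=\mu_\tau({\bf y})-\mu_\tau({\bf y'})$ (the remark following \eqref{eqn: Fredholm index for HF, version 2}), adding $2\delta(u)$ gives
\[
\op{ind}(u)+2\delta(u)=-\chi(F)+k+2c_1(u^*T\Sigma,\tau)+\mu_\tau({\bf y})-\mu_\tau({\bf y'})+2\delta(u).
\]
On the other side, \eqref{eqn: ECH-type index for HF} reads
\[
I_{HF}(A)=c_1(\check u^*T\Sigma,\tau)+Q_\tau(A)+\mu_\tau({\bf y})-\mu_\tau({\bf y'}),
\]
writing $c_1(T\Sigma|_A,\tau)=c_1(\check u^*T\Sigma,\tau)$ since $\check u$ represents $A$. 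The $\mu_\tau({\bf y})-\mu_\tau({\bf y'})$ terms cancel, so the theorem reduces to the claim
\[
-\chi(F)+k+2c_1(\check u^*T\Sigma,\tau)+2\delta(u)=c_1(\check u^*T\Sigma,\tau)+Q_\tau(A),
\]
i.e.
\[
c_1(\check u^*T\Sigma,\tau)=\chi(F)-k+Q_\tau(A)-2\delta(u).
\]
But this is exactly the second line of the relative adjunction formula, Lemma~\ref{lemma: adjunction formula for W}. So the proof is essentially a one-line invocation of that lemma together with the bookkeeping above.

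The steps in order: (1) recall \eqref{eqn: Fredholm index for HF, version 2} and the identity $\mu_\tau(u)=\mu_\tau({\bf y})-\mu_\tau({\bf y'})$; (2) recall \eqref{eqn: ECH-type index for HF}; (3) subtract, cancel the Maslov terms, and observe that what remains is precisely Lemma~\ref{lemma: adjunction formula for W}; (4) conclude \eqref{eqn: index equality for HF}, and then \eqref{eqn: index inequality for HF} since $\delta(u)\geq 0$ with $\delta(u)=0$ iff $u$ is embedded. I do not anticipate a genuine obstacle here — unlike the ECH index inequality of \cite{Hu1}, there are no writhe estimates or partition-asymptotics subtleties, because an HF curve is automatically simply-covered (it is a $k$-fold branched multisection of $\pi_B$, not a multiple cover) and because the relative adjunction formula holds \emph{as an equality} for these curves. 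The only mild care needed is making sure the trivialization $\tau$ on $Z_{\alpha,\beta}$ used in all three ingredients ($c_1$, $Q_\tau$, $\mu_\tau$) is consistently the one from Section~\ref{subsubsection: Fredholm index second version}, and that $c_1(T\Sigma|_A,\tau)$ in \eqref{eqn: ECH-type index for HF} is interpreted as $c_1(\check u^*T\Sigma,\tau)$ via any representative of $A$, which is legitimate since $c_1$ of the pullback bundle depends only on the homology class of the (compactified) surface and the boundary trivialization.
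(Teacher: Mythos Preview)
Your proposal is correct and follows essentially the same approach as the paper: both combine the second-version Fredholm index formula \eqref{eqn: Fredholm index for HF, version 2} with the relative adjunction formula (Lemma~\ref{lemma: adjunction formula for W}) to obtain the equality \eqref{eqn: index equality for HF}, and then deduce the inequality from $\delta(u)\geq 0$. Your presentation makes the term-by-term cancellation slightly more explicit, but the argument is the same.
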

Note that $u$ has no boundary singularities because $\pi_B \circ u$ has no branch points at
the boundary.
\begin{proof}
We calculate:
\begin{align*}
\op{ind}(u) &= -\chi(F)+k+\mu_\tau({\bf y})-\mu_\tau({\bf y'})+2c_1(\check u^*T\Sigma,\tau)\\
&= c_1(\check u^*T\Sigma,\tau) + Q_\tau(A) +\mu_\tau({\bf y})-\mu_\tau({\bf y'}) -2\delta(u).
\end{align*}
The first line is Equation~\eqref{eqn: Fredholm index for HF, version 2}. The equivalence of the first and second lines follows from the
relative adjunction formula. Hence
$${\rm ind}(u) +2\delta(u) = I_{HF}(A).$$
The index inequality~\eqref{eqn: index inequality for HF} follows immediately.
\end{proof}

\subsection{Compactness}

We now discuss the requisite compactness issues. The key notion is that of {\em weak admissibility} from \cite[Definition 4.10]{OSz1}, which is analogous to the vanishing of the flux in the PFH situation (see Section~\ref{subsection: flux}). A pointed Heegaard diagram $(\Sigma,\boldsymbol{\alpha},\boldsymbol{\beta},z)$ is {\em weakly admissible} if, for every Spin$^c$-structure $\mathfrak{s}$ and nontrivial periodic domain $\mathcal{Q}$ which satisfies $\langle c_1(\mathfrak{s}), \mathcal{Q} \rangle=0$, there exist $j_1$ and $j_2$ for which $n_{z_{j_1}}(\mathcal{Q})>0$ and $n_{z_{j_2}}(\mathcal{Q})<0$. (The points $z_{j_1}$ and $z_{j_2}$ are the points from Definition \ref{admissible}(4).) Equivalently, by \cite[Lemma~4.12]{OSz1}, $(\Sigma, \boldsymbol{\alpha}, \boldsymbol{\beta}, z)$ is weakly admissible if and only if there is an area form $\omega$ on $\Sigma$ such that each periodic domain has total signed $\omega$-area zero.

Let $N>0$ be a fixed constant. We consider the subset $H_2^N(X,{\bf
y},{\bf y'})$ consisting of homology classes of $H_2(X,{\bf y},{\bf
y'})$ which intersect $[-1,1]\times[0,1]\times \{z\}$ at most $N$
times.  (This is sufficient for $\widehat{CF}$ and $CF^+$, defined
in Section~\ref{subsection: HF def}.) The difference of two homology classes
$A_1,A_2\in H_2^0(X,{\bf y},{\bf y'})$ is a periodic domain
$\mathcal{Q}$ and has zero $\omega$-area. This implies that the
$\omega$-areas of any two $A_1,A_2\in H_2^N(X,{\bf y},{\bf y'})$
differ by $i\cdot\omega(\Sigma)$ where $0\leq i\leq N$. Let
$\phi_1,\dots,\phi_r$ be the connected components of
$\Sigma-\boldsymbol{\alpha}-\boldsymbol{\beta}$ with $z_j\in \phi_j$. If $A$ is represented by a holomorphic curve,
then the projection of $A$ to $\Sigma$ can be written as $\sum_j
n_{z_j}(A)\phi_j$ with $n_{z_j}(A)\geq 0$. Since each $\phi_i$ has finite
area, there must only be a finite number of homology classes $A\in
H_2^N(X,{\bf y}, {\bf y'})$ for which the moduli space
$\mathcal{M}_J^X({\bf y},{\bf y'},A)$ is nonempty.

We now prove the existence of a compactification of
$\mathcal{M}^X_J({\bf y},{\bf y'},A)/\R$. It suffices to show that if
$u: \dot F\to X$ is an element of $\mathcal{M}_J^X({\bf y},{\bf
y'},A)$, then the genus of $\dot F$ is bounded as long as $A$ is
fixed. This will be carried out in Lemma~\ref{lemma: HF genus bound
given fixed homology class}. Once we have a genus bound, the SFT
compactness theorem from ~\cite{BEHWZ} can be applied to give a
compactification of $\mathcal{M}_J^X({\bf y},{\bf y'},A)/\R$.

\begin{lemma}
\label{lemma: HF genus bound given fixed homology class} There is an
upper bound on the genus of a holomorphic curve $u:\dot F\to X$ in a
fixed homology class $A\in H_2(X,{\bf y},{\bf y'})$.
\end{lemma}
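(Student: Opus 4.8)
The plan is to mimic the genus bound argument from Section~\ref{subsection: compactness PFH case}, substituting the relative adjunction formula of Lemma~\ref{lemma: adjunction formula for W} for the ECH relative adjunction formula. The starting point is the second version of the Fredholm index, Equation~\eqref{eqn: Fredholm index for HF, version 2}, which gives
\[
-\chi(F) = \op{ind}(u) - k - \mu_\tau(u) - 2c_1(\check u^*T\Sigma,\tau),
\]
so an upper bound on the genus of $\dot F$ is equivalent to a lower bound on $-\chi(F)$, hence to an upper bound on $\op{ind}(u)$ together with control of the remaining terms. The key observation is that, in the homology class $A$, the quantities $\mu_\tau(u)=\mu_\tau(\mathbf{y})-\mu_\tau(\mathbf{y'})$ and $c_1(\check u^*T\Sigma,\tau)=c_1(T\Sigma|_A,\tau)$ depend only on $A$, $\mathbf{y}$, $\mathbf{y'}$ and $\tau$, not on $u$; and $k$ is the genus of $\Sigma$, a constant. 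So everything reduces to bounding $\op{ind}(u)$ from above in terms of $A$.

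First I would invoke the ECH-type index inequality, Theorem~\ref{thm: index inequality for HF}: for any HF curve $u$ in the class $A$ we have $\op{ind}(u) + 2\delta(u) = I_{HF}(A)$, and since $\delta(u)\geq 0$ this gives $\op{ind}(u)\leq I_{HF}(A)$. Because $I_{HF}(A)$ is manifestly a function of $A$ alone (as noted right after Definition~\ref{defn: ECH-type index for HF}), this is exactly the uniform bound we need. Substituting back, we get
\[
-\chi(F) \;\geq\; -I_{HF}(A) \;+\; \op{ind}(u) \;+\;\text{(constant depending only on }A,\tau)\;+\;2\delta(u)\;-\;\op{ind}(u),
\]
but more cleanly: from $\op{ind}(u)=I_{HF}(A)-2\delta(u)$ and Equation~\eqref{eqn: Fredholm index for HF, version 2} one obtains directly $-\chi(F) = I_{HF}(A) - 2\delta(u) - k - \mu_\tau(\mathbf{y}) + \mu_\tau(\mathbf{y'}) - 2c_1(T\Sigma|_A,\tau)$, and since $\delta(u)\geq 0$ every term on the right is either fixed by $A$ or bounded below. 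Thus $-\chi(F)$ is bounded above by a constant depending only on $A$. Since $\dot F$ has a fixed number $2k$ of punctures and a bounded number of connected components (each component has nonempty boundary, contributing at least one puncture, so there are at most $2k$ components), an upper bound on $-\chi(\dot F)$ yields an upper bound on the genus.

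The main subtlety — the analogue of the delicate writhe-bound step in the PFH case — is essentially already packaged inside the relative adjunction formula (Lemma~\ref{lemma: adjunction formula for W}) and the index equality (Theorem~\ref{thm: index inequality for HF}), both of which are proved earlier in the excerpt. In the PFH/ECH setting one must separately control the writhe via the bounds \eqref{eqn: writhe bound} because curves there can be multiply covered and the Fredholm and ECH indices differ; here, however, HF curves are automatically simply covered (degree $k$ multisections with distinct asymptotic chords), so the writhe contribution is trivial and the relation $\op{ind}(u)+2\delta(u)=I_{HF}(A)$ holds on the nose. Consequently the only real point to check carefully is the bookkeeping: that the number of connected components of $\dot F$ is bounded (using condition (i) in Section~\ref{subsection: HF holomorphic curves and moduli spaces} that each component has a puncture) so that a bound on $-\chi(\dot F)$ genuinely controls the genus. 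I expect this component-counting to be the only place requiring more than a one-line justification.
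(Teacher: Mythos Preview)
Your proof is correct and arrives at the same bound as the paper. The paper's argument is more direct: it applies the relative adjunction formula (Lemma~\ref{lemma: adjunction formula for W}) alone, rearranging
\[
c_1(\check u^*T\Sigma,\tau)=\chi(F)-k+Q_\tau(A)-2\delta(u)
\]
and using $\delta(u)\geq 0$ to get $\chi(\dot F)\geq c_1(\check u^*T\Sigma,\tau)+k-Q_\tau(A)$, every term on the right depending only on $A$. You instead route through the Fredholm index formula and the index equality $\op{ind}(u)+2\delta(u)=I_{HF}(A)$; unpacking $I_{HF}(A)$ shows your bound simplifies to exactly the same inequality. So your detour through Theorem~\ref{thm: index inequality for HF} is harmless but unnecessary---that theorem is itself derived from the relative adjunction formula, so you are using the latter twice over. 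Your remark about bounding the number of components of $\dot F$ (to pass from a lower bound on $\chi$ to an upper bound on genus) is a valid point that the paper's terse proof leaves implicit.
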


\begin{proof}
The proof is analogous to the proof in the PFH case. In view of the
relative adjunction formula (Lemma~\ref{lemma: adjunction formula
for X}) and the nonnegativity of $\delta(u)$, we have
\begin{equation}
\chi(\dot F)\geq c_1(\check u^*T\Sigma,\tau)+k-Q_\tau(A).
\end{equation}
The lemma follows by observing that the terms on the right-hand side
depend only on the homology class $A$.
\end{proof}

\subsection{Transversality}

\begin{defn}
An almost complex structure $J\in\mathcal{J}_X$ is {\em regular} if the moduli spaces $\mathcal{M}^X_J({\bf y}, {\bf y'})$ are transversely cut out for all ${\bf y}, {\bf y'}\in \mathcal{S}_{\alpha,\beta}$.
\end{defn}

Note that if $u$ is an HF curve, then it does not have any closed irreducible components by definition.  In particular, $u$ cannot have any fibers $\{(s,t)\}\times \Sigma$ as irreducible components.

We write $\mathcal{J}_X^{reg}\subset \mathcal{J}_X$
\nom[Jset1]{$\mathcal{J}_\star^*$}{Subspace of $\mathcal{J}_\star$ satisfying $*$}
\nom[2reg]{$*=reg$}{Modifier ``regular''}
for the subset of regular almost complex structures $J$. For $J\in \mathcal{J}_X^{reg}$, the dimension of $\mathcal{M}_J^X(\mathbf{y},\mathbf{y'})$ near $u$ is equal to the {\em Fredholm index} $\op{ind}(u)$. The moduli space $\mathcal{M}_J^X({\bf y},{\bf y'})$ carries a natural $\R$-action given by translations in the $s$-direction, and the quotient $\mathcal{M}_J^X({\bf y},{\bf y'})/\R$ is a manifold.

\begin{lemma} \label{lemma: HF regularity}
A generic $J\in\mathcal{J}_X$ is regular.
\end{lemma}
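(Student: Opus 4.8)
The plan is to apply the standard Sard--Smale transversality machinery (as developed by McDuff--Salamon and adapted to the cylindrical/symplectic-field-theory setting by Dragnev and by Lipshitz~\cite{Li}), using the fact that HF curves, being degree $k$ multisections of $\pi_B$, are automatically somewhere injective. First I would fix $\mathbf{y},\mathbf{y'}\in\mathcal{S}_{\alpha,\beta}$ and a homology class $A\in\pi_2(\mathbf{y},\mathbf{y'})$, and set up the universal moduli space $\mathcal{M}(\mathbf{y},\mathbf{y'},A;\mathcal{J}_\Sigma)$ consisting of pairs $(u,J)$ with $J\in\mathcal{J}_\Sigma$ and $u$ a $J$-holomorphic HF curve in the class $A$. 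The key analytic input is that the linearized Cauchy--Riemann operator $D_u$, when allowed to vary over the Banach manifold of admissible $J$'s, has surjective \emph{universal} linearization; by the Sard--Smale theorem the projection $\mathcal{M}(\mathbf{y},\mathbf{y'},A;\mathcal{J}_\Sigma)\to\mathcal{J}_\Sigma$ then has a residual set of regular values, and intersecting over the countably many choices of $(\mathbf{y},\mathbf{y'},A)$ gives a residual, hence dense, set $\mathcal{J}_\Sigma^{reg}\subset\mathcal{J}_\Sigma$.

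The heart of the argument is the surjectivity of the universal linearized operator. For this one shows that any nonzero element $\eta$ in the cokernel of $D_u$ (an element of the appropriate dual Sobolev space, supported against the image of $u$) must vanish: given a somewhere-injective point $w\in\dot F$ where $u$ is an immersion transverse to the fibers, one constructs a variation $Y$ of the almost complex structure within $\mathcal{J}_\Sigma$, supported near $u(w)$, so that $\langle \eta, Y\circ du\circ j\rangle\neq 0$, contradicting $\eta$ being in the cokernel of the full linearization. Two points require care in verifying that such a $Y$ exists inside $\mathcal{J}_\Sigma$ rather than in the space of all almost complex structures: (i) the admissibility conditions (1)--(3) of Definition~\ref{admissible} constrain $J$ only through its action on $T\Sigma$ versus $\partial_s,\partial_t$, and a perturbation supported in the $T\Sigma$ directions away from the points $z_i$ preserves all of these, so there is enough room; (ii) condition (4) forces $J$ to be the product structure near the $\R\times[0,1]\times\{z_i\}$, but since $u$ has only positive local intersection with each fiber and the $z_i$ lie in $\Sigma-\alpha-\beta$, one can choose the somewhere-injective point $w$ with $\pi_\Sigma\circ u(w)\neq z_i$ for all $i$, so the perturbation can be localized away from the forbidden regions.

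The main obstacle is therefore ensuring the perturbation stays in $\mathcal{J}_\Sigma$: one must produce a somewhere-injective point of $u$ whose image avoids all the fibers over the basepoints $z_i$ (and over the Reeb chords, where the asymptotics pin down $J$), and confirm that the constrained variation still spans enough directions in the cokernel pairing. This is exactly the local issue handled in \cite[Proposition~3.9]{Li} and in the analogous PFH transversality Lemma~\ref{lemma: PFH transversality} above, and the same argument applies here essentially verbatim --- the only difference being the Lagrangian boundary conditions, which are handled by the Lagrangian boundary version of the unique continuation and somewhere-injectivity results (cf.\ \cite{BEHWZ}). Once surjectivity of the universal operator is in hand, the remaining steps (Fredholm property of $D_u$, Banach manifold structure on the universal moduli space, Sard--Smale, and the countable intersection) are entirely standard.
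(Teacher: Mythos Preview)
Your proposal is correct and follows essentially the same approach as the paper: the paper's proof simply cites \cite[Proposition~3.8]{Li} (together with the observation that HF curves have no fiber components and are never multiply-covered), whereas you spell out the standard Sard--Smale argument underlying that citation. The key point in both is that HF curves are automatically somewhere injective, so the universal linearization is surjective and the usual transversality machinery applies.
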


\begin{proof}
This follows from \cite[Proposition~3.8]{Li} (see also \cite[Section~2]{Li2}), by noting that an HF curve $u$ does not have any fibers as irreducible components.  Lemma~\ref{lemma: HF regularity} can also be proved in the same way as in \cite[Lemma 9.12(b)]{Hu1}.  Note that the transversality theory is relatively straightforward because HF curves are never multiply-covered, i.e., all the moduli spaces $\mathcal{M}^X_J({\bf y}, {\bf y'})$ consist of simple curves.
\end{proof}

We will use the notation $\mathcal{M}_J^{X, I=r}({\bf y},{\bf y'})$
to denote the moduli space of HF curves from ${\bf y}$ to ${\bf y'}$ with ECH index $I_{HF}=r$.

\begin{cor}[Corollary of Theorem~\ref{thm: index inequality for HF}]
If $I_{HF}(A) = 0,1$ and $J\in \mathcal{J}_X^{reg}$, then every HF curve $u$ in the class $A$ satisfies $\op{ind}(u) = I_{HF}(A)$ and is therefore embedded.
\end{cor}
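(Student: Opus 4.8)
The plan is to combine the exact index identity from Theorem~\ref{thm: index inequality for HF} with the non-negativity of the Fredholm index for a regular almost complex structure, and then exploit the fact that $\delta(u)$ is a nonnegative \emph{integer} to squeeze it to zero when $I_{HF}(A)\in\{0,1\}$.

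First I would observe that, since $J\in\mathcal{J}_\Sigma^{reg}$ and $u$ belongs to the moduli space $\mathcal{M}_J(\mathbf{y},\mathbf{y'},A)$ (so this space is in particular nonempty), Lemma~\ref{lemma: HF regularity} shows that near $u$ this moduli space is a smooth manifold of dimension $\op{ind}(u)$; a nonempty manifold has nonnegative dimension, hence $\op{ind}(u)\geq 0$. Next, Theorem~\ref{thm: index inequality for HF} gives $\op{ind}(u)+2\delta(u)=I_{HF}(A)$, with $\delta(u)$ a nonnegative integer. Combining these, $2\delta(u)=I_{HF}(A)-\op{ind}(u)\leq I_{HF}(A)\leq 1$, and since $\delta(u)\in\Z_{\geq 0}$ this forces $\delta(u)=0$. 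Therefore $\op{ind}(u)=I_{HF}(A)$, and by the final clause of Theorem~\ref{thm: index inequality for HF} (that $\delta(u)=0$ if and only if $u$ is an embedding) the curve $u$ is embedded.

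There is no serious obstacle here: all of the content sits in the already-established index inequality (Theorem~\ref{thm: index inequality for HF}) and the transversality statement (Lemma~\ref{lemma: HF regularity}), and the corollary is just the elementary remark that a nonnegative integer $\delta(u)$ with $2\delta(u)\leq 1$ must vanish. The only point deserving a moment's care is that $\op{ind}(u)\geq 0$ genuinely applies to $u$ itself; this uses that HF curves are simple (never multiply covered), so the regularity of Lemma~\ref{lemma: HF regularity} holds for $u$ directly, with no need to pass to an underlying somewhere-injective curve.
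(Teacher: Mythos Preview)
Your proof is correct and follows essentially the same approach as the paper: combine the index identity $\op{ind}(u)+2\delta(u)=I_{HF}(A)$ with $\op{ind}(u)\geq 0$ (from regularity and the fact that HF curves are simple) and the nonnegativity and integrality of $\delta(u)$ to force $\delta(u)=0$. The paper phrases the parity observation as ``$2\delta(u)$ is even and nonnegative'' rather than ``$\delta(u)$ is a nonnegative integer with $2\delta(u)\leq 1$,'' but these are the same argument.
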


\begin{proof}
This follows from Equation~\eqref{eqn: index equality for HF} by observing that the term $2\delta(u)$ is even and nonnegative and that
$\op{ind}(u)\geq 0$ since $J$ is regular and $u$ is not multiply-covered.
\end{proof}

\subsection{Definition of the Heegaard Floer homology groups} \label{subsection: HF def}

Let $(\Sigma,\boldsymbol{\alpha},\boldsymbol{\beta},z)$ be a weakly admissible Heegaard diagram and let $J\in \mathcal{J}_X^{reg}$.  We define the Heegaard Floer chain complexes
\nom[CF0]{$\widehat{CF}(\Sigma,\boldsymbol{\alpha},\boldsymbol{\beta},z)$}{Heegaard Floer hat complex for pointed Heegaard diagram $(\Sigma,\boldsymbol{\alpha},\boldsymbol{\beta},z)$}
$$(\widehat{CF}(\Sigma,\boldsymbol{\alpha},\boldsymbol{\beta},z,J),\widehat\bdry) \ \mbox{ and } \ (CF^+(\Sigma,\boldsymbol{\alpha},\boldsymbol{\beta},z,J),\bdry^+),$$ whose corresponding homology groups are
$$\widehat{HF}(\Sigma,\boldsymbol{\alpha},\boldsymbol{\beta},z,J) \ \mbox{ and } \ HF^+(\Sigma,\boldsymbol{\alpha},\boldsymbol{\beta},z,J).$$

The hat group $\widehat{CF}(\Sigma,\boldsymbol{\alpha},\boldsymbol{\beta},z,J)$ is the $\F$-vector space generated by $\mathcal{S}_{\boldsymbol{\alpha},\boldsymbol{\beta}}$ and the plus group $CF^+(\Sigma,\boldsymbol{\alpha},\boldsymbol{\beta},z,J)$ is the $\F$-vector space generated by $\mathcal{S}_{\boldsymbol{\alpha},\boldsymbol{\beta}} \times\Z^{\geq 0}$. Elements of $\mathcal{S}_{\boldsymbol{\alpha},\boldsymbol{\beta}}$ will be written as ${\bf y}$ and elements of $\mathcal{S}_{\boldsymbol{\alpha},\boldsymbol{\beta}} \times \Z^{\geq 0}$ will be written as $[{\bf y},i]$.

We now define the differentials $\widehat\bdry$ and $\bdry^+$. The differential $\widehat\bdry$ is given by
$$\widehat\bdry {\bf y}= \sum_{{\bf y'} \in \mathcal{S}_{\boldsymbol{\alpha},\boldsymbol{\beta}}} \langle \widehat\bdry \mathbf{y},\mathbf{y'}\rangle \cdot {\bf y'},$$
where $\langle \widehat\bdry \mathbf{y},\mathbf{y'}\rangle$ is the count of $\widehat{\mathcal{M}}_J^{X, I=1} ({\bf y} ,{\bf y'})/\R$.  The differential $\bdry^+$ is given by
$$\bdry^+ ([{\bf y},i]) = \sum_{[{\bf y'},j] \in \mathcal{S}_{\boldsymbol{\alpha},\boldsymbol{\beta}}\times \Z^{\geq 0}} \langle \bdry^+ ([\mathbf{y},i]),[\mathbf{y'},j]\rangle \cdot [{\bf y'},j],$$
where $\langle \bdry^+( [\mathbf{y},i]),[\mathbf{y'},j]\rangle$ is the count of $\mathcal{M}_J^{X, I=1} ({\bf y} ,{\bf y'})/\R$ whose representatives
have intersection number $i-j$ with $\R\times [0,1]\times\{z\}$. By Theorem~\ref{thm: index inequality for HF}, the count of $I_{HF}(u)=1$ curves is equivalent to the count of embedded $\op{ind}(u)=1$ curves.  Hence our definition is the same as that of Lipshitz.

The differentials $\widehat\bdry$ and $\bdry^+$ indeed satisfy $\widehat\bdry^2=0$ and $(\bdry^+)^2=0$ by \cite{Li}. A tricky issue which arises for $\bdry^+$ (but not for $\widehat\bdry$) is that an element $u$ of the boundary of $\mathcal{M}_J^{X, I=2}({\bf y},{\bf y'})/\R$ might a priori have a fiber $\{(s,t)\}\times \Sigma$ as an irreducible component. (In that case, $u$ consists of one copy of $\Sigma$, together with $k$ trivial strips.)  This possibility is eliminated in \cite[Lemma 8.2]{Li}.

Both $\widehat{HF}(\Sigma,\boldsymbol{\alpha},\boldsymbol{\beta},z,J)$ and
$HF^+(\Sigma,\boldsymbol{\alpha},\boldsymbol{\beta},z,J)$ are independent of the choices and can be written as $\widehat{HF}(M)$ and $HF^+(M)$. In this paper, we
are interested in $\widehat{HF}(-M)$, where $-M$ is the manifold $M$
with the opposite orientation. The group $\widehat{HF}(-M)$ is the
homology of the chain complex $\widehat{CF}(\Sigma, \boldsymbol{\beta}, \boldsymbol{\alpha}, z,J)$, i.e., the complex for which the roles of the $\boldsymbol{\alpha}$- and $\boldsymbol{\beta}$-curves are exchanged.

\subsection{Restricting the complex to a page}

In this subsection we describe a pointed Heegaard diagram for $-M$ which is adapted to an open book and which has the property that $\widehat{HF}(-M)$ can be computed from a single page. The Heegaard diagram constructed here is a slight modification of the Heegaard diagram constructed by Honda, Kazez and Mati\'c in \cite{HKM1}.

Let $S$ be a bordered surface of genus $g$ and connected boundary, and let $(S,\hh)$ be an open book decomposition of $M$ with binding $K$. Then there is a homeomorphism
$$((S \times[0,1]) /\sim,\- (\partial S \times[0,1])/\sim)\simeq (M,K),$$
where $(x,1) \sim (\hh(x),0)$ for $x\in S$ and $(y,t) \sim (y,t')$ for $y \in \partial S$ and $t,t'\in[0,1]$.
\nom[g]{$g$}{Genus of page $S$}

\subsubsection{A Heegaard diagram compatible with $(S,\hh)$}
\label{subsubsection: Heegaard diagram compatible with S h}

We define a pointed Heegaard diagram $(\Sigma,\boldsymbol{\beta}, \boldsymbol{\alpha}, z)$ for $-M$ which is compatible with $(S,\hh)$. (In particular, this means that $L_{\boldsymbol{\beta}}=\R\times\{1\}\times \boldsymbol{\beta}$ and $L_{\boldsymbol{\alpha}}=\R\times\{0\}\times \boldsymbol{\alpha}$.)

The open book decomposition $(S,\hh)$ gives a natural Heegaard decomposition of $M$ into two handlebodies $H_1 =(S\times [0,\frac{1}{2} ])/\sim$ and $H_2 =(S\times [\frac{1}{2} , 1 ])/\sim$. The Heegaard surface $\Sigma$ is $S_{1/2} \cup -S_0$ and has genus $2g$. Here we abbreviate $S\times \{ t\}$ by $S_t$.

A {\it basis} of $S$ is a collection of properly embedded pairwise
disjoint arcs ${\bf a}= \{ a_1 ,...,a_{2g} \}$
\nom[a00 ]{${\bf a}={\bf a}(m)$}{Basis $\{ a_1 ,...,a_{2g} \}$ of arcs for $S$; depends on $m$ starting from Section~\ref{section: moduli spaces of multisections}}
\nom[a01 ]{$a_i=a_i(m)$}{Basis arc in $S$; depends on $m$ starting from Section~\ref{section: moduli spaces of multisections}}
of $S$ such that
$S-{\bf a}$ is a connected $8g$-gon.  Given a basis ${\bf a}$ of
$S$, there is a natural collection of compression curves $\alpha_i
=\partial (a_i \times [0,\frac{1}{2} ])$ for $H_1$. We write
$\alpha_i=a_i^\dagger\cup a_i$, where the presence of $\dagger$
indicates a copy of an arc in $S_{1/2}$ and the absence indicates a
copy of the arc in $S_0$. Recall the monodromy $\hh$ maps
$(y,\theta)\mapsto (y,\theta-y)$ near $\bdry S$. We then construct a
collection of compression curves $\beta_i= b_i^\dagger \cup \hh(a_i)$
for $H_2$, where $b_i$ is the simplest arc (i.e., with the fewest number of
intersections with the $a_j$) in $S_{1/2}$ which is parallel to
$a_i$ and extends $\hh(a_i)$ to smooth curve in $\Sigma$. See
Figure~\ref{darkside}.

\begin{figure}[ht]
\begin{overpic}[width=7cm]{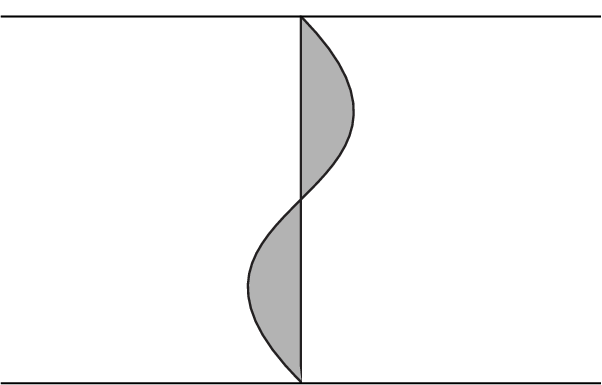}
\put(44.5,45){\tiny $a_i^\dagger$} \put(61,45) {\tiny $b_i^\dagger$} \put(52,27)
{\tiny $x_i''$} \put(52,2.1){\tiny $x_i$} \put(44.3,57.3){\tiny
$x_i'$}
\end{overpic}
\caption{A portion of $S_{1/2}$ whose normal orientation points out of the page.  The shaded regions are the disks
$D_i$ and $D_i'$.} \label{darkside}
\end{figure}

The arcs $a_i$ and $\hh(a_i)$ intersect at their endpoints $x_i$ and
$x_i'$
\nom[x ]{$x_i$, $x_i'$}{Intersection points on $\partial S$ forming the contact class}
by the definition of $\hh$ near $\bdry S$, and the arcs $a_i^\dagger$ and $b_i^\dagger$ intersect at a unique point $x_i''$
in $int(S_{1/2})$. This means that all the intersection points of $\alpha_i \cap
\beta_j$ lie in $S_0$, except for one intersection point $x_i''$ of
$\alpha_i \cap \beta_i$ for each $i$. We then place the basepoint
$z$ on the binding, away from all the intersection points
$x_i,x_i'$. The regions of $\Sigma-\boldsymbol{\alpha}-\boldsymbol{\beta}$
which nontrivially intersect $S_{1/2}$ are the following:
\begin{itemize}
\item the ``forbidden region'' containing the basepoint $z$;
\item for each $i=1,\dots,2g$, a bigon $D_i$ from
$x_i''$ to $x_i$ and a bigon $D_i'$ from $x_i''$ to $x_i'$.
\end{itemize}
By the placement of the basepoint $z$, it is clear that any periodic
domain must have terms of the form $k (D_i - D_i')$, where $k$ is an
integer. This implies the weak admissibility of the Heegaard diagram $(\Sigma,\boldsymbol{\beta},\boldsymbol{\alpha},z)$.

\begin{rmk}
The point $x_i$ or $x_i'$ (either one) is a component of the contact
class $c(\xi_{(S,\hh)})\in \widehat{HF}(\Sigma,\boldsymbol{\beta}, \boldsymbol{\alpha}, z)$, where $\xi_{(S,\hh)}$ is the contact structure
which corresponds to $(S,\hh)$.
\end{rmk}

\subsubsection{Holomorphic curves in the region $\R\times[0,1]\times S_{1/2}$}
\label{subsubsection: description of some holomorphic curves}

Let $J\in \mathcal{J}_X$ with the additional property:
\begin{itemize}
\item[(\&)] $J$ is a product complex structure on $\R \times [0,1] \times S_{1/2}$.
\end{itemize}

All the holomorphic curves and moduli spaces in this subsection are for the Heegaard diagram $(\Sigma, \boldsymbol{\beta}, \boldsymbol{\alpha}, z)$.

\begin{claim} \label{claim: on S one half}
Let $u\in \widehat{\mathcal{M}}_J^X({\bf y},{\bf y'})$ for some ${\bf y},{\bf y'}\in \mathcal{S}_{\boldsymbol{\beta}, \boldsymbol{\alpha}}$. Then the following hold:
\begin{enumerate}
\item If $u$ is not asymptotic to any $x_i''$, then its image is contained in $\R \times [0,1] \times S_0$.
\item If $u$ is asymptotic to some $x_i''$, then $u$ has $x_i''$ at the positive end and a component of $u$ is either (i) a trivial strip over $x_i''$ or (ii) a ``thin strip'' from $x_i''$ to $x_i$ or $x_i'$, whose projection to $\Sigma$ is $D_i$ or $D_i'$.
\item If $u$ is asymptotic to $x_i$ or $x_i'$ at the positive end, then a component of $u$ is a trivial strip over $x_i$ or $x_i'$.
\end{enumerate}
\end{claim}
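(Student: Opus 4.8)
The plan is to use hypothesis (\&) to reduce the statement to an essentially two-dimensional picture on $S_{1/2}$.

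\textbf{Step 1 (confining the $\Sigma$-shadow).} Since $J$ is a product on $\R\times[0,1]\times S_{1/2}$, the composite $\pi_\Sigma\circ u$ is $(j,j_\Sigma)$-holomorphic over that region; the fibers $\{(s,t)\}\times\Sigma$ are $J$-holomorphic, so by positivity of intersections the multiplicity $n_R(u)$ of $u$ over each region $R$ of $\Sigma-\alpha-\beta$ is a well-defined nonnegative integer. Because $u\in\widehat{\mathcal{M}}_J$ misses $\R\times[0,1]\times\{z\}$, $u$ has multiplicity zero over the forbidden region $R_0$ (the one containing $z$); as the only regions whose interiors meet $S_{1/2}$ are $R_0$ and the bigons $D_i,D_i'$, this gives $\pi_\Sigma(u)\cap int(S_{1/2})\subseteq\bigcup_i(int(D_i)\cup int(D_i'))$. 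Since the bigons are pairwise disjoint, separated from one another by the multiplicity-zero region $R_0$, and meet only at the corners $x_i''$, I would conclude that each component of $u$ meeting the region over $int(S_{1/2})$ has its $\Sigma$-shadow inside a single $\overline{D_i}$ or a single $\overline{D_i'}$ (near $x_i''$ the shadow of an embedded HF curve is a single sector).

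\textbf{Step 2 (analysis over one bigon; parts (1) and (2)).} Fix $D_i$. Its corners $x_i''$ and $x_i$ are transverse points of $\alpha_i\cap\beta_i$, and the boundary of an HF curve switches between $L_\alpha$ and $L_\beta$ only across a puncture. So if $n_{D_i}(u)>0$, i.e.\ the shadow fills $int(D_i)$, then some boundary arc of $\dot F$ sweeps both edges of $\partial D_i$ and hence passes through both corners, forcing punctures of $u$ asymptotic to $x_i''$ and to $x_i$. Using (\&), over $D_i$ the map $\pi_\Sigma\circ u$ is a holomorphic branched cover of the disk $D_i$ with this boundary/corner behaviour and $\pi_B\circ u$ a branched cover of $B$; a routine local analysis of holomorphic maps into a bigon with two transverse corners (with $z$-avoidance preventing the shadow from spilling into $R_0$) should show that the component of $u$ over $int(D_i)$ is the \emph{thin strip} whose $\Sigma$-image is exactly $D_i$, and similarly for $D_i'$ with $x_i'$ in place of $x_i$. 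Part (1) then follows: if $u$ has no puncture at any $x_i''$, this forces $n_{D_i}(u)=n_{D_i'}(u)=0$ for all $i$, so the shadow avoids $int(S_{1/2})$ and $u$ maps into $\R\times[0,1]\times S_0$. Part (2) also follows: a puncture asymptotic to some $x_i''$ belongs either to a trivial strip over $x_i''$ or, if its component is nonconstant over $S_{1/2}$, to the thin strip over $D_i$ or $D_i'$; the remaining assertion that this puncture is \emph{positive} is the orientation statement for the bigons, treated in Step 3.

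\textbf{Step 3 (the sign data, part (3), and the main obstacle).} The delicate input, and what I expect to be the main obstacle, is the sign of each corner: I need that at $x_i''$ both non-forbidden quadrants $D_i,D_i'$ have $x_i''$ at their $+\infty$ corner, while $x_i$ (resp.\ $x_i'$) is the $-\infty$ corner of $D_i$ (resp.\ $D_i'$). This is a local computation — determining, at each of $x_i''$, $x_i$, $x_i'$, which quadrant the shadow of a curve near a $\pm$ puncture sweeps out, equivalently the winding number of the corresponding boundary loop against $\tau$ — carried out from the normal form $h(y,\theta)=(y,\theta-y)$ near $\partial S$ and the explicit picture of $\alpha_i=a_i^\dagger\cup a_i$, $\beta_i=b_i^\dagger\cup h(a_i)$ in Figure~\ref{darkside}. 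Granting it, (2) is complete, and for (3): if $u$ is asymptotic to $x_i$ (or $x_i'$) at a positive puncture, the component $C$ carrying it cannot cover $D_i$ near that puncture, since that would make the puncture negative; hence by Step 1 the $\Sigma$-shadow of $C$ near the puncture lies in $\overline{S_0}$, and a final local argument near the binding (again using the sign convention at $x_i$) forces $\pi_\Sigma\circ C$ to be constantly $x_i$ near the puncture, whence $\pi_\Sigma\circ C\equiv x_i$ by unique continuation, so $C$ is a trivial strip over $x_i$ (or $x_i'$). Apart from this winding-number bookkeeping, everything reduces to positivity of intersections together with the product structure (\&).
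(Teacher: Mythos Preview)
The paper states Claim~\ref{claim: on S one half} without proof, so there is no paper argument to compare against directly. Your approach --- confine the $\Sigma$-shadow to the bigons via positivity of intersections and $z$-avoidance, then analyze the local picture over each bigon, then pin down the orientations at the corners --- is the natural one and is correct in outline.

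A few comments. First, the one place in the paper where an analogous argument \emph{is} written out is Lemma~\ref{lemma: trivial cylinder when x i involved} (for the $W_+$ setting): there the authors use exactly the sector analysis you sketch in Step~3, writing polar coordinates near $x_i$ and observing that the sector swept by the projection at a positive end at $x_i$ cannot be contained in the page, hence the projection is constant and unique continuation finishes. That model transfers directly to part~(3) here, with $S$ replaced by $\Sigma-R_0$.

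Second, your Step~2 is a bit quick at one point. From $n_{D_i}(u)>0$ and $n_{R_0}(u)=0$ you correctly get a puncture of the relevant component $C$ at $x_i''$ (since at $x_i''$ the four quadrants are $D_i$, $D_i'$ and two pieces of $R_0$, so the multiplicity pattern forces a corner). But the assertion that $C$ is \emph{exactly} the thin strip over $D_i$, rather than a curve whose domain is $D_i$ glued to something on the $S_0$ side across the corner $x_i$, is not yet justified by the ``routine local analysis'' you invoke: nothing prevents the shadow from continuing into $S_0$ until you do the sign computation at $x_i$. What actually pins this down is the orientation input you defer to Step~3 --- once you know which quadrant at $x_i$ is swept by a negative (respectively positive) puncture, the corner structure at $x_i$ forces the domain of $C$ to close up at $x_i$ with a negative puncture there, and then the HF condition that each $L_{\alpha_i},L_{\beta_i}$ is used once forces the multiplicity over $D_i$ to be one. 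So Step~3 feeds back into Step~2 more than you indicate; reorganize so that the sign computation at the three corner types $x_i'',x_i,x_i'$ comes first, and then parts~(1)--(3) follow cleanly.
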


The only nontrivial components of $u$ which intersect $\R\times[0,1]\times S_{1/2}$ are the ``thin strips'' in (2) and are easily seen to satisfy automatic transversality. Hence a generic $J$ which satisfies (\&) is in $\mathcal{J}_X^{reg}$.

\subsubsection{The variant $\widehat{CF}(S,{\bf a},\hh({\bf a}))$}
\label{subsubsection: variant CF of S}

Let $J\in \mathcal{J}_X^{reg}$ which satisfies (\&). We now define the chain complex $\widehat{CF}(S,{\bf a},\hh({\bf a}),J)$, which can be defined on a page of the open book $(S,\hh)$ and whose homology is isomorphic to $\widehat{HF}(\Sigma,\boldsymbol{\beta}, \boldsymbol{\alpha}, z, J)$.  The almost complex structure $J$ will usually be suppressed from the notation.

Let $\mathcal{S}_{{\bf a}, \hh({\bf a})}$
\nom[S ]{$\mathcal{S}_{{\bf a}, \hh({\bf a})}$}{$2g$-tuples of intersection points of ${\bf a}$ and $\hh({\bf a})$}
be the set of $2g$-tuples of intersection points of ${\bf a}$ and $\hh({\bf a})$; equivalently, $\mathcal{S}_{{\bf a}, \hh({\bf a})}=
\{{\bf y}\in \mathcal{S}_{\boldsymbol{\beta}, \boldsymbol{\alpha}}~|~ {\bf y}\subset S_0\}$. Then we define $(\widehat{CF'}(S, \mathbf{a}, \hh(\mathbf{a})),\bdry')$ as the subcomplex of $(\widehat{CF}(\Sigma,\boldsymbol{\beta}, \boldsymbol{\alpha} ,z),\bdry)$ generated by $\mathcal{S}_{{\bf a}, \hh({\bf a})}$.  The differential $\bdry$ restricts to $\bdry'$ by Claim~\ref{claim: on S one half}(1).

Next define an equivalence relation $\sim$ on $\widehat{CF'}(S, \mathbf{a}, \hh(\mathbf{a}))$ as follows: if we write $\mathbf{y}= \mathbf{y}_0 \cup \mathbf{y}_1$, where $\mathbf{y}_0$ consists of chords of type $x_i,x_i'$, $i=1,\dots,2g$, and $\mathbf{y}_1$ does not contain any $x_i, x_i'$, $i=1,\dots,2g$, then $\mathbf{y}\sim \mathbf{y}'$ if and only if $\mathbf{y}=\mathbf{y}_0\cup \mathbf{y}_1$, $\mathbf{y}'=\mathbf{y}'_0\cup \mathbf{y}'_1$ and $\mathbf{y}_1=\mathbf{y}'_1$.  We then take the quotient complex
$$\widehat{CF}(S, \mathbf{a}, \hh(\mathbf{a}))=\widehat{CF'}(S, \mathbf{a}, \hh(\mathbf{a}))/\sim,$$
with the differential $\widehat{\bdry}$ induced from $\bdry'$.  The differential $\bdry'$ descends to the quotient $\widehat{\bdry}$ by Claim~\ref{claim: on S one half}(3).
\nom[CF1]{$\widehat{CF}(S, \mathbf{a}, \hh(\mathbf{a}))$}{Heegaard Floer chain complex isomorphic to $\widehat{HF}(-M)$, obtained by quotienting $\widehat{CF'}(S, \mathbf{a}, \hh(\mathbf{a}))$}
\nom[CF1]{$\widehat{CF'}(S, \mathbf{a}, \hh(\mathbf{a}))$}{Subcomplex of Heegaard Floer chain complex $\widehat{CF}(\Sigma,\boldsymbol{\beta},\boldsymbol{\alpha},z)$ obtained from the open book decomposition $(S,\hh)$}

\begin{rmk}
Since $\Sigma$ and $S=S_0$ have opposite orientations, the order $(\boldsymbol{\beta}, \boldsymbol{\alpha})$ is switched to $(\mathbf{a},\hh(\mathbf{a}))$.
\end{rmk}

The following theorem allows us to restrict from the Heegaard
surface $\Sigma$ to the page $S$:

\begin{thm}\label{t:hf}
$H_*(\widehat{CF}(S,\mathbf{a},\hh(\mathbf{a})),\widehat{\bdry})\simeq \widehat{HF}(\Sigma,\boldsymbol{\beta}, \boldsymbol{\alpha}, z)$.
\end{thm}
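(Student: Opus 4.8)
The plan is to identify $\widehat{CF}(S,\mathbf{a},h(\mathbf{a}))$ with a suitable \emph{subquotient} of $\widehat{CF}(\Sigma,\beta,\alpha,z)$ and then show that the remaining part of the Heegaard Floer complex is acyclic, so that passing to the subquotient does not change homology. Concretely, I would proceed through the following steps. First, record the generators: by construction every intersection point of $\beta_i\cap\alpha_i$ lies in $S_0$ except for the single point $x_i''\in int(S_{1/2})$, and the points $x_i,x_i'\in \bdry S$ are also in $S_0$. Thus a generator $\mathbf{y}\in\mathcal{S}_{\beta,\alpha}$ either lies entirely in $S_0$ (these form $\mathcal{S}_{\mathbf{a},h(\mathbf{a})}$) or uses at least one $x_i''$. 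The key input from Claim~\ref{claim: on S one half} is that the differential has a triangular structure with respect to the filtration ``number of $x_i''$'s appearing'': curves never \emph{increase} this count, curves out of a generator containing some $x_i''$ either do nothing to that puncture (trivial strip) or replace $x_i''$ by $x_i$ or $x_i'$ via a thin strip over $D_i$ or $D_i'$, and there are no further curves touching $S_{1/2}$.

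Second, I would organize the complex by this filtration. Let $\widehat{CF}'(S,\mathbf{a},h(\mathbf{a}))$ be the subcomplex generated by generators lying in $S_0$ (this is a genuine subcomplex by Claim~\ref{claim: on S one half}(1), as already noted in the text). The quotient complex $\widehat{CF}(\Sigma,\beta,\alpha,z)/\widehat{CF}'(S,\mathbf{a},h(\mathbf{a}))$ is generated by generators using at least one $x_i''$. I claim this quotient complex is acyclic. Indeed, for each $i$ the thin strips from $x_i''$ to $x_i$ and to $x_i'$ are the boundary degenerations of an index-one bigon, are cut out transversely (automatic transversality, as stated after Claim~\ref{claim: on S one half}), and are rigid; moreover the product structure of $J$ on $\R\times[0,1]\times S_{1/2}$ lets one show that gluing such a thin strip onto any holomorphic curve in $S_0$ produces a curve counted by $\widehat\bdry$. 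Hence on the associated graded of the $x_i''$-filtration the differential sends $\mathbf{y}_0\cup\{x_i''\}\cup(\text{rest})$ to $\mathbf{y}_0\cup\{x_i\}\cup(\text{rest}) + \mathbf{y}_0\cup\{x_i'\}\cup(\text{rest}) + \dots$, which is precisely the differential of an acyclic two-step complex $\F\langle x_i''\rangle \to \F\langle x_i,x_i'\rangle$ — except that $x_i,x_i'$ lie in $S_0$, not in the quotient. This is the point where the quotient by $\sim$ enters: after dividing $\widehat{CF}'$ by the relation $x_i\sim x_i'$ (which is exactly the relation $\sim$ in the text, with the $x_i''$-generators already absent), the thin-strip contributions $x_i + x_i'$ become zero, so $\widehat\bdry$ descends — this is Claim~\ref{claim: on S one half}(3), already invoked — and one gets $\widehat{CF}(S,\mathbf{a},h(\mathbf{a}))$.

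Third, to extract the homology statement I would run a spectral-sequence (or direct algebraic) argument: filter $\widehat{CF}(\Sigma,\beta,\alpha,z)$ by the number of occurrences of points among $\{x_i''\}$. On the $E_1$ page the only differentials are the thin-strip maps, and by the above each pair $(x_i'', \{x_i,x_i'\})$ contributes an acyclic summand; the surviving complex is generated by generators in $S_0$ with $x_i$ and $x_i'$ identified, i.e.\ exactly $\widehat{CF}(S,\mathbf{a},h(\mathbf{a}))$, and the induced higher differential is $\widehat\bdry$. Since the spectral sequence degenerates at $E_2$ for degree reasons (the $x_i''$-filtration has bounded length $2g$ and the only nontrivial page-one differentials are the thin strips), one concludes $H_*(\widehat{CF}(S,\mathbf{a},h(\mathbf{a})),\widehat\bdry)\cong \widehat{HF}(\Sigma,\beta,\alpha,z)$. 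Alternatively, avoiding spectral sequences: show directly that $\widehat{CF}'(S,\mathbf{a},h(\mathbf{a}))\hookrightarrow \widehat{CF}(\Sigma,\beta,\alpha,z)$ is a quasi-isomorphism because the quotient is acyclic (built from the contractible pieces $\F\langle x_i''\rangle\xrightarrow{\sim}\F\langle x_i\rangle\oplus\F\langle x_i'\rangle/(\dots)$, organized by a further filtration), and then that the further quotient map $\widehat{CF}'\to \widehat{CF}(S,\mathbf{a},h(\mathbf{a}))$ killing $x_i+x_i'$ is also a quasi-isomorphism.

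The main obstacle is the acyclicity claim for the ``$S_{1/2}$ part'': one must verify that the thin strips over $D_i$, $D_i'$ genuinely realize the algebraic cancellation, i.e.\ that gluing a thin strip to an arbitrary holomorphic curve in $\R\times[0,1]\times S_0$ yields an honest $I=1$ curve counted by $\widehat\bdry$ and that no other $I=1$ curves into or out of the $x_i''$-generators exist. This is where property (\&) (product complex structure on $S_{1/2}$), the automatic-transversality statement following Claim~\ref{claim: on S one half}, and a standard gluing argument à la Lipshitz must be combined; it is essentially the content of \cite{HKM1}, and the proof here should cite or adapt that gluing analysis rather than redo it from scratch.
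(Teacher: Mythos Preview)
Your approach is essentially the same as the paper's: filter $\widehat{CF}(\Sigma,\beta,\alpha,z)$ by the number of $x_i''$ components, recognize the resulting structure as a double complex with $\partial=\partial_0+\partial_1$, compute the $\partial_1$-homology (the $E_1$ page) as $\widehat{CF}(S,\mathbf{a},h(\mathbf{a}))$ concentrated in filtration degree zero, and conclude via the spectral sequence. Two points are worth sharpening.

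First, your stated reason for degeneration at $E_2$ (``the $x_i''$-filtration has bounded length $2g$'') is not the operative one; bounded filtration length does not by itself force degeneration at $E_2$. The correct reason, which you implicitly use when you say ``the surviving complex is generated by generators in $S_0$ with $x_i$ and $x_i'$ identified,'' is that the $E_1$ page is concentrated in a single filtration degree $k=0$, so all higher differentials $d_r$ for $r\geq 2$ vanish for lack of a target. The paper computes this $E_1$ page cleanly by writing $(\widehat{CF},\partial_1)$ as a direct sum over $\mathbf{y}_1$ of tensor products $\bigotimes_{j\in\mathcal{I}}(C(j),d)$ with $C(j)=\F\{x_j''\}\oplus\F\{x_j,x_j'\}$ and $d(x_j'')=x_j+x_j'$, then applying K\"unneth.

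Second, your final paragraph about gluing thin strips to curves in $S_0$ is an unnecessary worry. No gluing is required: Claim~\ref{claim: on S one half} already shows that every $I_{HF}=1$ curve in $\widehat{\mathcal M}_J(\mathbf{y},\mathbf{y}')$ either has its nontrivial part entirely in $S_0$ (contributing to $\partial_0$) or consists of a single thin strip over $D_i$ or $D_i'$ together with trivial strips (contributing to $\partial_1$). This directly gives the decomposition $\partial=\partial_0+\partial_1$ with $\partial_1:\widehat{CF}_k\to\widehat{CF}_{k-1}$, and $\partial^2=0$ then yields $\partial_0^2=\partial_1^2=\partial_0\partial_1+\partial_1\partial_0=0$. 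There is nothing further to glue or verify analytically.
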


\begin{proof}
Let us write $\widehat{CF}$ for $\widehat{CF}(\Sigma, \boldsymbol{\beta},
\boldsymbol{\alpha}, z)$. Also let $\widehat{CF}_k$ be the subgroup of $\widehat{CF}$ generated by $2g$-tuples of chords, exactly $k$ of which are of the form $x_i''$.  Using Claim~\ref{claim: on S one half}, we can write the differential $\bdry$ on $\widehat{CF}$ as $\partial = \partial_0 + \partial_1$, where  $\partial_0: \widehat{CF}_k \to \widehat{CF}_{k}$ counts $I_{HF}=1$ curves whose nontrivial part is contained in $S_0$ and $\partial_1: \widehat{CF}_k \to \widehat{CF}_{k-1}$ counts $I_{HF}=1$ curves whose nontrivial part is contained in $S_{1/2}$. In particular, $\bdry_1$ counts HF curves which correspond to the domains $D_i$ and $D_i'$. Since $\partial^2=0$, it follows that
$$\partial_0^2 = \partial_1^2 = \partial_0 \partial_1 + \partial_1 \partial_0 =0,$$
i.e., $\widehat{CF}$ becomes a double complex.

The $\partial_1$-homology of $\widehat{CF}$ is:
\begin{equation*}
H_k(\widehat{CF}, \partial_1) = \left \{
\begin{array}{cl} \widehat{CF}(S, \mathbf{a},
\hh(\mathbf{a})), & \text{ if } k=0; \\ 0, & \text{ if } k>0.
\end{array} \right.
\end{equation*}
This claim will be proved in Lemma \ref{d1-homology}. For the moment
we assume it to finish the proof of the theorem. The double complex
gives rise to a spectral sequence converging to
$\widehat{HF}(\Sigma, \boldsymbol{\beta}, \boldsymbol{\alpha}, z)$ such that:
\begin{align*}
E^1 & = H_*(\widehat{CF}, \partial_1) = \widehat{CF}(S, \mathbf{a},
\hh(\mathbf{a})) \\
E^2 & = H_*(H_*(\widehat{CF}, \partial_1), [\partial_0]) = H_*
(\widehat{CF}(S, \mathbf{a}, \hh(\mathbf{a})),\widehat\bdry).
\end{align*}
Since $E^2$ is concentrated in degree $k=0$, the spectral sequence degenerates at the second step and $\widehat{HF}(\Sigma, \boldsymbol{\beta}, \boldsymbol{\alpha}, z) \cong E^2$. This proves the theorem.
\end{proof}

Before proceeding to Lemma~\ref{d1-homology}, let us introduce some
notation. Let $\mathcal{I}$ be a $k$-element subset of $\{1,\dots, 2g \}$ and
let $\mathcal{I}^c$ be its complement. Then let ${\mathcal S}_{\mathcal{I}^c}$ be the
set of $(2g-k)$-tuples $\mathbf{y}_1$ of chords from $\hh(\mathbf{a})$
to $\mathbf{a}$ such that each $a_j$ and $\hh(a_j)$, $j\in \mathcal{I}^c$, is
used exactly once and no $x_j,x_j',x_j''$ is in $\mathbf{y}_1$. In
particular, $a_i$ and $\hh(a_i)$ remain unoccupied for all $i \in \mathcal{I}$.

\begin{lemma}\label{d1-homology}
The homology of $(\widehat{CF}, \partial_1)$ is:
\begin{equation*}
H_k(\widehat{CF}, \partial_1) = \left \{ \begin{array}{cl}
\widehat{CF}(S, \mathbf{a}, \hh(\mathbf{a})), & \text{ if } k=0;
\\ 0, & \text{ if } k>0. \end{array} \right.
\end{equation*}
\end{lemma}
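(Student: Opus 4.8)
The plan is to decompose $\widehat{CF}$ along the $2g$ independent ``directions'' indexed by $i\in\{1,\dots,2g\}$ and to recognize $(\widehat{CF},\partial_1)$ as a tensor product of $2g$ elementary two-step complexes, each of which computes a one-dimensional homology. First I would observe, using Claim~\ref{claim: on S one half}(2), that $\partial_1$ acts independently on each ``slot'' $i$: if a generator $\mathbf y$ contains the chord $x_i''$, then the thin-strip component of an $I_{HF}=1$ curve counted by $\partial_1$ replaces $x_i''$ by $x_i$ or by $x_i'$ and leaves all other chords untouched, since the thin strip projects to $D_i$ or $D_i'$ and is supported near that bigon. Automatic transversality for the thin strips (noted after Claim~\ref{claim: on S one half}) guarantees that each such moduli space is a single point, so $\partial_1 x_i'' = x_i + x_i'$ (with $\F=\Z/2$ coefficients) in the $i$-th slot, and $\partial_1$ annihilates $x_i$ and $x_i'$.

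Concretely, I would fix a $k$-element subset $\mathcal I\subset\{1,\dots,2g\}$ together with a choice, for each $i\in\mathcal I^c$, of whether slot $i$ is occupied by $x_i$, by $x_i'$, or by some chord not through the $i$-th special points; and I would organize $\widehat{CF}$ as a direct sum, over such data, of subcomplexes. Summing over which slots are ``at level $x_i''$'' realizes $(\widehat{CF},\partial_1)$, for fixed $\mathcal I^c$-pattern, as the tensor product over $i\in\mathcal I$ of the two-term complex $T_i = \big(\,\F\langle x_i''\rangle \xrightarrow{\ x_i''\mapsto x_i+x_i'\ } \F\langle x_i,x_i'\rangle\,\big)$, tensored with the trivial (differential-zero) complex on the chords indexed by $\mathcal I^c$ lying in $S_0$ (the set $\mathcal S_{\mathcal I^c}$). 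The homology of each $T_i$ is $\F$, concentrated in the bottom degree, generated by $x_i$ (equivalently $x_i'$, since $x_i+x_i' = \partial_1 x_i''$ is a boundary). By the Künneth formula over the field $\F$, the homology of a tensor product of such complexes is again concentrated in degree $0$ and is one-dimensional in each slot of $\mathcal I$. Hence $H_k(\widehat{CF},\partial_1)=0$ for $k>0$, and $H_0(\widehat{CF},\partial_1)$ has a basis indexed by pairs $(\mathcal I^c\text{-pattern},\mathbf y_1\in\mathcal S_{\mathcal I^c})$ with all $x_i''$ removed and each $x_i,x_i'$ pair collapsed to a single class — which is exactly the set of generators of the quotient complex $\widehat{CF}(S,\mathbf a,h(\mathbf a))=\widehat{CF'}(S,\mathbf a,h(\mathbf a))/\!\sim$. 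Finally I would check that this identification of generators is compatible with the residual differential, i.e.\ that the class of $\partial_0$ on $H_0(\widehat{CF},\partial_1)$ matches $\widehat\partial$; this is exactly the statement that collapsing each $x_i\sim x_i'$ and discarding chords through $x_i''$ is the passage from $\bdry'$ to $\widehat\bdry$ described in Section~\ref{subsubsection: variant CF of S}, and it follows because $\partial_0$ preserves the $\mathcal I$-pattern (it only involves curves whose nontrivial part lies in $S_0$, by Claim~\ref{claim: on S one half}(1)).

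The main obstacle I anticipate is not the algebra but the precise bookkeeping that $\partial_1$ really is ``diagonal'' in the $2g$ slots and that there are no unexpected $I_{HF}=1$ curves contributing to $\partial_1$ beyond the thin strips $D_i, D_i'$: one must invoke Claim~\ref{claim: on S one half}(2)--(3) to rule out any other nontrivial component meeting $\R\times[0,1]\times S_{1/2}$, confirm via the index formula \eqref{eqn: ECH-type index for HF} that a single thin strip already has $I_{HF}=1$ (so it appears with no other nontrivial pieces), and use automatic transversality to get the mod-$2$ count equal to $1$. Once that input is in hand, the Künneth/tensor-product argument is routine and the identification with $\widehat{CF}(S,\mathbf a,h(\mathbf a))$ is immediate from the definitions.
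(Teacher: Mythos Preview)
Your proposal is correct and follows the same approach as the paper: decompose $(\widehat{CF},\partial_1)$ as a direct sum indexed by the ``non-special'' part $\mathbf{y}_1\in\mathcal{S}_{\mathcal{I}^c}$, identify each summand with the tensor product $\bigotimes_{j\in\mathcal{I}}(C(j),d)$ of the elementary complexes $\F\{x_j''\}\to\F\{x_j,x_j'\}$, and apply K\"unneth. One small wording slip: in your direct-sum bookkeeping the slots in $\mathcal{I}^c$ should carry chords \emph{not} of the form $x_i,x_i',x_i''$ (these all belong to the $\mathcal{I}$-factors), and your final paragraph about compatibility of $[\partial_0]$ with $\widehat\partial$ is correct but belongs to the proof of the enclosing Theorem rather than to this lemma.
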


\begin{proof}
Let $(\widehat{CF}(\mathbf{y}_1),\bdry_1) \subset (\widehat{CF}, \bdry_1)$ be the subcomplex generated by $2g$-tuples of chords of the form $\mathbf{y}_0\cup \mathbf{y}_1$, where $\mathbf{y}_0$ is a $k$-tuple of chords consisting of one of $x_j,x_j',x_j''$ for each $j \in \mathcal{I}$ and $\mathbf{y}_1\in {\mathcal S}_{\mathcal{I}^c}$. Since $(\widehat{CF}, \partial_1)$ is the direct sum of chain complexes of the form $(\widehat{CF}(\mathbf{y}_1),\bdry_1)$, it suffices to treat each $(\widehat{CF}(\mathbf{y}_1),\bdry_1)$ separately.

Consider the chain complexes $(C(j),d)=(C_0(j) \oplus C_1(j),d)$, where
$$C_1(j) = \F\{ x_j''\}, \quad C_0(j)= \F\{x_j, x_j'\}, \quad d(x_j'')= x_j - x_j'.$$
The homology groups of those complexes are:
\begin{equation}
H_k(C(j),d) = \left\{
\begin{array}{cl}
\langle x_j, x_j' \rangle / \langle x_j -x_j' \rangle, & \mbox{ if } k=0;\\ 0, & \mbox{ if } k=1.
\end{array}
\right.
\end{equation}

By Claim~\ref{claim: on S one half}(2), we have
$$(\widehat{CF}(\mathbf{y}_1), \bdry_1) \cong \bigotimes_{j \in \mathcal{I}} (C(j),d).$$
By the K\"unneth formula, $H_*(\widehat{CF}(\mathbf{y}_1), \bdry_1)$ is generated by the equivalence class $\{{\bf y}'_0\cup{\bf y}_1\}$, where ${\bf y}_1$ is fixed and ${\bf y}'_0$ ranges over all $k$-tuples of chords which consist of one of $x_j$, $x_j'$ for each $j\in \mathcal{I}$. The lemma then follows.
\end{proof}

\subsection{${\rm Spin}^c$-structures}\label{spin-c revisited}

Let $\mathcal{S}_{\boldsymbol{\alpha},\boldsymbol{\beta}}$ be the set of $k$-tuples of intersection points of the pointed Heegaard diagram $(\Sigma,\boldsymbol{\alpha}, \boldsymbol{\beta},z)$ and let Spin$^c(M)$ be the set of Spin$^c$-structures on $M$. In \cite[Section~2.6]{OSz1}, Ozsv\'ath and Szab\'o defined a map
$$s_z: \mathcal{S}_{\boldsymbol{\alpha},\boldsymbol{\beta}} \to {\rm Spin}^c(M).$$
Although the precise definition of $s_z$ will not be given here, we review an important property of $s_z$ which is more or less equivalent to the definition. Given $\mathbf{y}=\{y_i\}_{i=1}^k,\mathbf{y'}=\{y'_i\}_{i=1}^k\in \mathcal{S}_{\boldsymbol{\alpha},\boldsymbol{\beta}}$, the difference between the Spin$^c$-structures corresponding to $\mathbf{y}$ and $\mathbf{y'}$ is given by:
$$\epsilon(\mathbf{y}, \mathbf{y'}) = PD(s_z(\mathbf{y})-s_z(\mathbf{y'})) \in H_1(M),$$
where a cycle representing $\epsilon(\mathbf{y}, \mathbf{y'})$ can be constructed on the Heegaard diagram as follows: For each $i=1,\dots, k$, choose an arc $\alpha_i^\star$ on $\alpha_i$ from $y_i$ to $y'_i$, where $y_i,y_i'\in \alpha_i$. Similarly, we choose arcs $\beta_i^\star$ on $\beta_i$, $i=1,\dots,k$, which connect ${\bf y'}$ to ${\bf y}$. Then $\epsilon({\bf y},{\bf y'})$ is the homology class of $\cup_{i=1}^k (\alpha^\star_i\cup\beta^\star_i)$, which is a union of closed curves; see \cite[Definition~2.11 and Lemma~2.19]{OSz1}. It is easy to verify that $\epsilon({\bf y},{\bf y'})$ does not depend on the choice of arcs $\alpha_i^\star$ and $\beta_i^\star$ and provides a topological obstruction to the existence of $HF$ curves connecting $\mathbf{y}$ and $\mathbf{y'}$.

The Heegaard Floer chain complex $\widehat{CF}(\Sigma,\boldsymbol{\alpha}, \boldsymbol{\beta},z)$ therefore splits into a direct sum
$$\widehat{CF}(\Sigma,\boldsymbol{\alpha},\boldsymbol{\beta},z)= \bigoplus_{\mathfrak{s} \in {\rm Spin}^c(M)} \widehat{CF}(\Sigma, \boldsymbol{\alpha}, \boldsymbol{\beta} ,z,\mathfrak{s}),$$
where the subgroup $\widehat{CF}(\Sigma, \boldsymbol{\alpha}, \boldsymbol{\beta} ,z,\mathfrak{s})$ is generated by $\mathbf{y}\in\mathcal{S}_{\boldsymbol{\alpha}, \boldsymbol{\beta}}$ with $s_z(\mathbf{y})=\mathfrak{s}$ and is a subcomplex.

We now interpret the above discussion in a way which relates more easily to the splitting of ECH in terms of homology classes of orbit sets. Consider the chain complex $\widehat{CF}(S,\mathbf{a},\hh(\mathbf{a}))$ which is generated by the set $\mathcal{S}_{{\bf a},\hh({\bf a})}$ of $2g$-tuples of
intersection points of ${\bf a}$ and $\hh({\bf a})$, i.e., we are restricting to a page $S$. The homology groups $H_1(M;\Z)\simeq H_1(N, \partial N;\Z)$ are identified via the isomorphism $\varpi$, given in Lemma~\ref{varpi}.

We then define the map
\[ \mathfrak{h}: \mathcal{S}_{{\bf a},\hh({\bf a})} \to H_1(M) \]
by assigning a cycle $\mathfrak{h}(\mathbf{y})$ to $\mathbf{y}=\{y_i\}_{i=1}^{2g}\in \mathcal{S}_{{\bf a},\hh({\bf a})}$ as follows: Suppose
$y_i \in a_i \cap \hh(a_{\sigma(i)})$ for some $\sigma \in
\mathfrak{S}_{2g}$.  On $[0,1]\times S$, we consider the union of
the following oriented arcs:
\begin{itemize}
\item $[0,1]\times \{y_i\}$, $i=1,\dots,2g$, where the orientation
is given by $\bdry_t$;
\item $\{0\}\times c_i$, $i=1,\dots,2g$,
where $c_i$ is a subarc of $\hh(a_i)$ which goes from
$\hh(y_{\sigma(i)})$ to $y_i$.
\end{itemize} With the identification
$(x,1)\sim (\hh(x),0)$, the arcs glue to give a cycle in $N$ which
represents $\mathfrak{h}(\mathbf{y})$.

\begin{prop}\label{s-and-h}
Let $\xi$ be the contact structure supported by the open book
decomposition $(S,\hh)$ of $M$, and let $\mathfrak{s}_{\xi}$ be the
canonical ${\rm Spin}^c$-structure determined by $\xi$. Then for any
$\mathbf{y} \in \mathcal{S}_{{\bf a},\hh({\bf a})}$ we have
\[ s_z(\mathbf{y}) = \mathfrak{s}_{\xi} + PD(\mathfrak{h}(\mathbf{y})). \]
\end{prop}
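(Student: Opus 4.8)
The plan is to compute both sides of the claimed identity modulo the action of $H_1(M)$ first, and then pin down the ambiguity by evaluating both sides on a single distinguished generator. The key observation is that the map $\mathfrak{h}$ was built so that $\mathfrak{h}(\mathbf{y}) - \mathfrak{h}(\mathbf{y'})$ agrees, under the identification $\varpi^{-1}: H_1(N,\bdry N) \to H_1(M)$ of Lemma~\ref{varpi}, with the difference cycle $\epsilon(\mathbf{y},\mathbf{y'})$ appearing in the definition of $s_z$; concretely, the arcs $\{0\}\times c_i$ composing $\mathfrak{h}(\mathbf{y})$ lie along $h(\mathbf{a})$, i.e.\ along the $\beta$-curves of the Heegaard diagram of Section~\ref{subsubsection: Heegaard diagram compatible with S h} (where the relevant $\beta_i = b_i^\dagger \cup h(a_i)$), while the chords $[0,1]\times\{y_i\}$ correspond under suspension-to-mapping-torus to the arcs $\alpha_i^\star \cup \beta_i^\star$ on the Heegaard surface. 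So the first step is to verify carefully that
$$s_z(\mathbf{y}) - s_z(\mathbf{y'}) = PD(\mathfrak{h}(\mathbf{y}) - \mathfrak{h}(\mathbf{y'}))$$
for all $\mathbf{y},\mathbf{y'}\in\mathcal{S}_{\mathbf a, h(\mathbf a)}$. This is a homological bookkeeping argument: one writes down the cycle $\epsilon(\mathbf{y},\mathbf{y'})$ on $\Sigma$ explicitly as $\cup_i(\alpha_i^\star \cup \beta_i^\star)$, pushes it into the suspension $N$ via the open-book/Heegaard correspondence, and checks it is homologous to $\mathfrak{h}(\mathbf{y}) - \mathfrak{h}(\mathbf{y'})$. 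Since both $s_z(\mathbf{y})-s_z(\mathbf{y'})$ and $\mathfrak{h}(\mathbf{y})-\mathfrak{h}(\mathbf{y'})$ are manifestly independent of the choices of connecting arcs, it suffices to make one convenient choice.

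Granting the difference formula, both $\mathbf{y}\mapsto s_z(\mathbf{y})$ and $\mathbf{y}\mapsto \mathfrak{s}_\xi + PD(\mathfrak{h}(\mathbf{y}))$ are maps $\mathcal{S}_{\mathbf a, h(\mathbf a)} \to \mathrm{Spin}^c(M)$ with the same ``derivative'', so they differ by a single global constant in $H_1(M)$; it remains to show that constant is zero, i.e.\ to exhibit one generator $\mathbf{y}_0$ with $s_z(\mathbf{y}_0) = \mathfrak{s}_\xi$ and $\mathfrak{h}(\mathbf{y}_0) = 0$. The natural candidate is the \emph{contact generator}: the remark after Section~\ref{subsubsection: Heegaard diagram compatible with S h} records that $\mathbf{x} = \{x_1,\dots,x_{2g}\}$ (with each $x_i$ or $x_i'$, either choice) represents the contact class $c(\xi_{(S,h)})$ in $\widehat{HF}(\Sigma,\beta,\alpha,z)$, and it is a standard fact (Honda--Kazez--Mati\'c \cite{HKM1}, Ozsv\'ath--Szab\'o) that $s_z$ of the EH/contact generator is exactly $\mathfrak{s}_\xi$. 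So the second step is to invoke $s_z(\mathbf{x}) = \mathfrak{s}_\xi$, and the third step is the direct computation $\mathfrak{h}(\mathbf{x}) = 0$: for $\mathbf{y}_0 = \mathbf{x}$ the permutation $\sigma$ is the identity, each $y_i \in a_i \cap h(a_i)$ is an endpoint $x_i$ (or $x_i'$) lying on $\bdry S$, and the arc $c_i \subset h(a_i)$ from $h(y_i)$ to $y_i$ together with the chord $[0,1]\times\{y_i\}$ bounds in $N$ near the binding — indeed near $\bdry S$ the monodromy is $(y,\theta)\mapsto(y,\theta-y)$, so these arcs close up to a null-homotopic loop (a small meridional-type loop that is capped off by a piece of $a_i\times[0,1]$). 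Hence $\mathfrak{h}(\mathbf{x}) = 0$, which forces the global constant to vanish and proves the proposition. As a bonus, this also verifies the second sentence of Theorem~\ref{thm: main}, namely that $\Phi_0$ matches up the contact invariants on the level of $\mathrm{Spin}^c$-structures.

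The main obstacle is the first step: carefully matching the Ozsv\'ath--Szab\'o difference cycle $\epsilon(\mathbf{y},\mathbf{y'})$ — which is defined on the abstract Heegaard surface $\Sigma = S_{1/2}\cup -S_0$ using arcs on $\alpha$- and $\beta$-curves — with the suspension cycle $\mathfrak{h}(\mathbf{y})-\mathfrak{h}(\mathbf{y'})$ living in $N = N_{(S,h)}$, keeping track of the orientation conventions (recall $\Sigma$ and $S_0$ carry opposite orientations, and the roles of $\alpha$ and $\beta$ are swapped for $-M$). One has to be careful that the $\beta$-curves are $b_i^\dagger\cup h(a_i)$, not $h(\mathbf a)$ alone, so the portion of $\epsilon$ running along $\beta_i$ must be homotoped (rel endpoints, across the bigons $D_i, D_i'$ of Section~\ref{subsubsection: Heegaard diagram compatible with S h}) onto $h(a_i)\subset S_0$ before it can be compared with $c_i$; this is exactly where the specific form of the Heegaard diagram adapted to the open book is used. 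Once the geometry is set up correctly the identification is essentially forced, but getting all the signs and the excision isomorphism $H_1(M,K)\cong H_1(N,\bdry N)$ of Lemma~\ref{varpi} to line up is the delicate part.
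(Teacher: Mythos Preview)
Your proposal is correct and follows essentially the same approach as the paper: establish the difference formula $\mathfrak{h}(\mathbf{y})-\mathfrak{h}(\mathbf{y}')=\epsilon(\mathbf{y},\mathbf{y}')$ by explicit homological bookkeeping, then anchor the constant using the contact generator $\mathbf{x}$ with $s_z(\mathbf{x})=\mathfrak{s}_\xi$ and $\mathfrak{h}(\mathbf{x})=0$. The paper's argument for $\mathfrak{h}(\mathbf{x})=0$ is slightly slicker---it just observes that the cycle for $\mathbf{x}$ is parallel to $\partial N$ and hence vanishes in $H_1(N,\partial N)$---but otherwise the two proofs are the same, with yours spelling out more of the orientation and Heegaard-diagram subtleties that the paper leaves as ``one can check''.
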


\begin{proof}
The equality holds for any $2g$-tuple $\mathbf{x}_0$ which represents the contact class.  In fact, $s_z(\mathbf{x}_0)= \mathfrak{s}_{\xi}$ by the definition of the contact class and $\mathfrak{h}(\mathbf{x}_0)=0$ since the cycle representing it is parallel to $\partial N$. Hence, in order to prove the proposition, it suffices to prove that
\[ \mathfrak{h}(\mathbf{y}) - \mathfrak{h}(\mathbf{y}') = \epsilon (\mathbf{y}, \mathbf{y}') \]
for all $\mathbf{y}, \mathbf{y}' \in \mathcal{S}_{{\bf a},\hh({\bf a})}$. One can check that $\mathfrak{h}(\mathbf{y}) - \mathfrak{h}(\mathbf{y}')$ is homologous to the union $\delta$ of the following types of arcs:
\begin{itemize}
\item $[0,1]\times \{y_i\}$ with orientation $\bdry_t$;
\item $[0,1]\times\{y'_i\}$ with orientation $-\bdry_t$;
\item subarcs of $\{1\}\times a_i$ connecting from ${\bf y}$ to ${\bf y'}$; and
\item subarcs of $\{0\}\times \hh(a_i)$ connecting from $\mathbf{y}'$ to $\mathbf{y}$.
\end{itemize}
By homotoping $\delta$ to a page $S$, we see that $[\delta]=\epsilon(\mathbf{y},\mathbf{y}')$ with respect to the Heegaard diagram $(\Sigma, \boldsymbol{\beta}, \boldsymbol{\alpha}, z)$ given in Section~\ref{subsubsection: Heegaard diagram compatible with S h}.
\end{proof}

\subsection{Twisted coefficients in Heegaard Floer homology}

In this subsection we review the definition of Heegaard Floer homology with twisted coefficients, originally defined in \cite[Section~8]{OSz2}, and prove a twisted coefficient analog of Theorem~\ref{t:hf}.  We describe the construction for $\widehat{HF}$; the construction for $HF^+$ --- which will be used in \cite{CGH-III} --- can be treated in a similar manner.

Fix a Spin$^c$-structure $\mathfrak{s}$ and a $k$-tuple of intersection points $\mathbf{y}_0$ such that $s_z(\mathbf{y}_0)= \mathfrak{s}$. A {\em complete set of paths for $\mathfrak{s}$ based at ${\bf y}_0$} is the choice, for each $k$-tuple of intersection points $\mathbf{y}$ such that $s_z(\mathbf{y}) = \mathfrak{s}$, of a surface $C_{\mathbf y}$
which is the projection to $[0,1] \times \Sigma$ of a surface representing an element of $H_2(X,\mathbf{y}, \mathbf{y}_0)$.\footnote{We are identifying relative homology classes in $\check{X}$ with relative homology classes in $[0,1] \times \Sigma$ as necessary.}

A complete set of paths determines maps
$$\mathfrak{A}: H_2(X,\mathbf{y}, \mathbf{y}') \to H_2([0,1] \times \Sigma, \{ 0 \} \times \boldsymbol{\beta} \cup \{ 1 \} \times \boldsymbol{\alpha}) \simeq H_2(M)$$
for all $\mathbf{y}$ and $\mathbf{y}'$ such that $s_z(\mathbf{y}) = s_z(\mathbf{y}')= \mathfrak{s}$ by $\mathfrak{A}(A) = [C_{\mathbf{y}'} \cup A \cup - C_{\mathbf{y}}]$. This map is compatible with the action of $H_2(M)$ on $H_2(X,\mathbf{y}, \mathbf{y}')$ and with the concatenation of chains with matching ends.

We define
$$\widehat{\underline{CF}}(\Sigma, \boldsymbol{\alpha}, \boldsymbol{\beta} ,z,\mathfrak{s}) = \widehat{CF}(\Sigma, \boldsymbol{\alpha}, \boldsymbol{\beta},z,\mathfrak{s}) \otimes_{\F} \F[H_2(M; \Z)]$$
as an $ \F[H_2(M; \Z)]$-module, with differential
$$\widehat{\partial} \mathbf{y} = \sum_{s_z(\mathbf{y}')=\mathfrak{s}} \ \sum_{A \in H_2(X,\mathbf{y}, \mathbf{y}')} \# \left(\widehat{\mathcal M}^{X,I=1}(\mathbf{y}, \mathbf{y}', A)/ \R \right ) e^{\mathfrak{A}(A)}  \mathbf{y}'.$$
The homology of this complex is the Heegaard Floer homology with twisted coefficients
$\widehat{\underline{HF}}(M, \mathfrak{s})$.

Consider the special Heegaard diagram constructed in Section~\ref{subsubsection:
Heegaard diagram compatible with S h}. For every Spin$^c$-structure $\mathfrak{s} \in
{\rm Spin}^c(M)$ we define  the complex
$$\underline{\widehat{CF}}(S, \mathbf{a}, \hh(\mathbf{a}), \mathfrak{s}) = \widehat{CF}(S, \mathbf{a}, \hh(\mathbf{a}), \mathfrak{s}) \otimes_{\F} \F[H_2(M; \Z)]$$
with the differential induced by the differential on $\underline{\widehat{CF}}(\Sigma, \boldsymbol{\beta}, \boldsymbol{\alpha}, z, \mathfrak{s})$.

\begin{thm}\label{t:hf twisted}
$H_*(\underline{\widehat{CF}}(S, \mathbf{a}, \hh(\mathbf{a}), \mathfrak{s}))$ is isomorphic to $\underline{\widehat{HF}}(\Sigma, \boldsymbol{\beta}, \boldsymbol{\alpha}, z, \mathfrak{s})$ as \newline $\F[H_2(M; \Z)]$-modules.
\end{thm}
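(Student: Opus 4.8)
The plan is to run, at the level of structure, exactly the double-complex and spectral-sequence argument used to prove Theorem~\ref{t:hf}, but now over the ground ring $R=\F[H_2(M;\Z)]$ in place of $\F$. Write $\underline{\widehat{CF}}=\underline{\widehat{CF}}(\Sigma,\beta,\alpha,z,\mathfrak{s})$ and filter it, as in that proof, by the number $k$ of chords of type $x_i''$ appearing in a generator. By Claim~\ref{claim: on S one half} the twisted differential splits as $\underline\partial=\underline\partial_0+\underline\partial_1$, where $\underline\partial_1$ counts the thin strips with domains $D_i,D_i'$ and hence lowers $k$ by one, while $\underline\partial_0$ counts $I_{HF}=1$ curves whose nontrivial part lies in $S_0$ and hence preserves $k$; nothing raises $k$. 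The weights $e^{\mathfrak{A}(A)}\in R$ are carried passively along, so $\underline\partial^2=0$ forces $\underline\partial_0^2=\underline\partial_1^2=\underline\partial_0\underline\partial_1+\underline\partial_1\underline\partial_0=0$ just as in the untwisted case, and $\underline{\widehat{CF}}$ becomes a double complex of $R$-modules.

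The one genuinely new input is the twisted analogue of Lemma~\ref{d1-homology}: that $H_0(\underline{\widehat{CF}},\underline\partial_1)\cong\underline{\widehat{CF}}(S,\mathbf{a},h(\mathbf{a}),\mathfrak{s})$ and $H_k(\underline{\widehat{CF}},\underline\partial_1)=0$ for $k>0$. As before, $(\underline{\widehat{CF}},\underline\partial_1)$ is a direct sum over $\mathbf{y}_1\in\mathcal{S}_{\mathcal{I}^c}$, and by Claim~\ref{claim: on S one half}(2) each summand is built, $R$-linearly, out of the two-step complexes $C(j)\otimes_\F R$ with $R\{x_j''\}$ in degree one, $R\{x_j,x_j'\}$ in degree zero, and $d_1(x_j'')=e^{a_j}x_j+e^{a_j'}x_j'$ for classes $a_j,a_j'\in H_2(M;\Z)$ that depend on $\mathbf{y}_1$ and on the chosen complete set of paths (their precise values will be irrelevant); one may either exhibit the summand as the $R$-linear tensor product $\bigotimes_{j\in\mathcal{I}}(C(j)\otimes_\F R)$, as in the untwisted proof, or resolve the coordinates $j\in\mathcal{I}$ one at a time by an iterated mapping-cone argument. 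Since each $e^{c}$ is a unit in $R$, the map $d_1$ is injective, so $H_1(C(j)\otimes R,d_1)=0$, while $H_0(C(j)\otimes R,d_1)=R\{x_j,x_j'\}/\langle e^{a_j}x_j+e^{a_j'}x_j'\rangle$ is free of rank one over $R$, generated by the class of $x_j$ (with $[x_j']=e^{a_j-a_j'}[x_j]$), again because $e^{a_j'}$ is a unit. These homology modules being free, hence flat, the K\"unneth formula over $R$ carries no Tor correction and gives $H_*\bigl(\bigotimes_{j\in\mathcal{I}}(C(j)\otimes R)\bigr)\cong\bigotimes_{j\in\mathcal{I}}H_*(C(j)\otimes R)\cong R$, concentrated in degree $0$. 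Summing over $\mathbf{y}_1$ identifies $H_0(\underline{\widehat{CF}},\underline\partial_1)$ with $\widehat{CF}(S,\mathbf{a},h(\mathbf{a}),\mathfrak{s})\otimes_\F R$, on which the descended differential $[\underline\partial_0]$ is precisely the one defining $\underline{\widehat{CF}}(S,\mathbf{a},h(\mathbf{a}),\mathfrak{s})$ --- the twisted counterpart of the fact, from Claim~\ref{claim: on S one half}(3), that $\bdry'$ descends.

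With this the spectral sequence of the double complex has $E^1=\underline{\widehat{CF}}(S,\mathbf{a},h(\mathbf{a}),\mathfrak{s})$ sitting in filtration degree $0$, hence $E^2=H_*(\underline{\widehat{CF}}(S,\mathbf{a},h(\mathbf{a}),\mathfrak{s}))$ is concentrated there; the sequence degenerates at $E^2$ and converges to $\underline{\widehat{HF}}(\Sigma,\beta,\alpha,z,\mathfrak{s})$, which yields the desired $\F[H_2(M;\Z)]$-module isomorphism. (The version with coefficients in an arbitrary $\F[H_2(M;\Z)]$-module $\Lambda$ then follows by applying $-\otimes_R\Lambda$.)

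The step I expect to require the most care is the $R$-linear K\"unneth (equivalently, the vanishing of $H_{>0}(\underline{\widehat{CF}},\underline\partial_1)$): $R=\F[H_2(M;\Z)]$ is neither a field nor, when $H_2(M;\Z)$ has torsion, an integral domain, so one cannot invoke the usual field- or PID-level statements and must instead use that the relevant vertical homology modules are \emph{free}, hence flat, so that all higher Tor terms vanish. Everything else is a faithful transcription of the proof of Theorem~\ref{t:hf}, the twisting coefficients $e^{\mathfrak{A}(A)}$ contributing nothing beyond being transported through the bookkeeping.
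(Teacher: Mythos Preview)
Your overall strategy is correct and matches the paper's at the level of the double complex and spectral sequence. The difference lies in how the twisted analogue of Lemma~\ref{d1-homology} is handled.

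The paper does not work with arbitrary weights on the thin strips. Instead it \emph{chooses} the complete set of paths so that whenever $\mathbf{y}=\widetilde{\mathbf{y}}\cup\{x_i\}$, $\mathbf{y}'=\widetilde{\mathbf{y}}\cup\{x_i'\}$, $\mathbf{y}''=\widetilde{\mathbf{y}}\cup\{x_i''\}$, one has $C_{\mathbf{y}}=C_{\mathbf{y}''}\cup D_i$ and $C_{\mathbf{y}'}=C_{\mathbf{y}''}\cup D_i'$. With this choice $\mathfrak{A}(D_i)=\mathfrak{A}(D_i')=0$ for all $i$ and all ambient generators, so $\underline\partial_1=\partial_1\otimes_\F\mathrm{id}_R$ on the nose and the untwisted proof of Theorem~\ref{t:hf} applies verbatim. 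This sidesteps your K\"unneth-over-$R$ and units discussion entirely.

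Your approach---keep the paths arbitrary and exploit that $e^{a_j},e^{a_j'}$ are units---also works, but one step needs more care than you indicate: for a general complete set of paths the weight on the $j$th thin strip depends not only on $\mathbf{y}_1$ but on the \emph{other} coordinates in $\mathcal{I}$ as well (since $\mathfrak{A}(D_j)=[C_{\mathbf{y}'}\cup D_j\cup -C_{\mathbf{y}}]$ involves the paths for the full generators). So the summand is not literally an $R$-linear tensor product $\bigotimes_{j\in\mathcal{I}}(C(j)\otimes R)$ with fixed differentials. Your iterated mapping-cone alternative does handle this correctly; or, more directly, a change of basis $\mathbf{y}\mapsto e^{b_{\mathbf{y}}}\mathbf{y}$ (equivalently, switching complete sets of paths) lets you arrange all thin-strip weights to be trivial---which is exactly the paper's shortcut.
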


\begin{proof}
Fix a distinguished $2g$-tuple of generators $\mathbf{y}_0$ such that $s_z(\mathbf{y}_0)= \mathfrak{s}$. We choose a complete set of paths $C_{\mathbf{y}}$ with the following property: if $\mathbf{y} = \widetilde{\mathbf{y}} \cup \{ x_i \}$, $\mathbf{y}' = \widetilde{\mathbf{y}} \cup \{ x_i' \}$ and $\mathbf{y}'' =  \widetilde{\mathbf{y}} \cup \{ x_i''\}$, then $C_{\mathbf{y}} = C_{\mathbf{y}''} \cup D_i$ and $C_{\mathbf{y}'} = C_{\mathbf{y}''} \cup D_i'$, where $D_i$ and $D_i'$ are the surfaces corresponding to the thin strips connecting $x_i''$ to $x_i$ and $x_i'$, respectively; see Figure~\ref{darkside}. With this choice of complete set of paths, we have $\mathfrak{A}(D_i)=\mathfrak{A}(D_i')=0$ for all $i$, so the proof of Theorem~\ref{t:hf} goes through unchanged.
\end{proof}

\section{Moduli spaces of multisections}
\label{section: moduli spaces of multisections}

The goal of this section is to introduce the moduli spaces which will be used to define the chain maps
$$\Phi:\widehat{CF}(S,\mathbf{a},\hh(\mathbf{a}))\to
PFC_{2g}(N,\alpha_0,\omega)$$ and
$$\Psi: PFC_{2g}(N,\alpha_0,\omega)\to
\widehat{CF}(S,\mathbf{a},\hh(\mathbf{a})).$$
The definition of these chain maps can be viewed as a melding of ideas
of Seidel~\cite{Se1,Se2} and Donaldson-Smith~\cite{DS}.

Let $\mathbf{y}=\{y_1,\dots,y_{2g}\}$ be a generator of
$\widehat{CF}(S,\mathbf{a},\hh(\mathbf{a}))$, where $y_i\in a_{i}\cap
\hh(a_{\sigma(i)})$. Intuitively, $\mathbf{y}$ is mapped to an element
of $PFC_{2g}(N)$ through an intermediary called a {\em broken closed
string}
$\gamma_{\mathbf{y}}$.  It is a union of closed curves in
$N=S\times[0,1]/\sim$, obtained by taking the union of
$y_i\times[0,1]$, $i=1,\dots,2g$, and $c_{i}\times\{0\}$,
$i=1,\dots,2g$, where $c_i$ is a subarc of $\hh(a_i)$ which connects
$\hh(y_{\sigma(i)})$ to $y_i$. Note that there is a unique homotopy
class of arcs from $\hh(y_{\sigma(i)})$ to $y_i$, since $\hh(a_i)$ is an
arc (and not a closed curve). The arcs $y_i\times[0,1]$,
$c_i\times\{0\}$, $i=1,\dots,2g$, glue up to give a union of closed
curves since $(\hh(y_{\sigma(i)}),0)\sim (y_{\sigma(i)},1)$.

\subsection{Symplectic cobordisms}
\label{subsection: symplectic cobordisms}

We recall the stable Hamiltonian structure $(\alpha_0,\omega)$ on $N$ from Section~\ref{section: periodic Floer homology}. 
The mapping torus $N$ is given by:
$$N=(S\times[0,2])/\sim,  ~~(x,2)\sim(\hh(x),0)$$
where $\hh$ is an area-preserving map of $S$ with zero flux. The projection
$S\times[0,2] \to [0,2]$ defines a projection $\pi_{S^1}: N \to S^1$, where we identify
$S^1 \cong [0,2]/ 0 \sim 2$. Here we made a slight modification to the definition of $N$: the interval $[0,1]$ in Section~\ref{subsection: first return map} is now replaced by $[0,2]$.

The area form $\omega$ on $S$, the form $dt$ on $[0,2]$, and the vector field $\partial_t$ on $S \times [0,2]$ give well-defined objects on $N$ which we still denote by $\omega$, $dt$, and $\partial_t$.
\nom[1y$\omega$2]{$\omega$}{Area form on $S$ invariant under $\hh$; also viewed as a $2$-form on $W_+$, $W_-$}
By Equation~\eqref{eqn: defn of alpha s} the form $\alpha_0$ is a positive multiple of $dt$. For simplicity we assume that $\alpha_0=dt$. Then the stable Hamiltonian vector field is $R_0=\bdry_t$ and, by construction, the monodromy $\hh$ is the first return map of $R_0$.  In view of Lemma \ref{lemma: change from h to h_0} we may assume that:
\begin{itemize}
\item all orbits of $R_0$ in $int(N)$ satisfying $\mathcal{F}\leq 2g$ are nondegenerate; and
\item $R_0$ is negative Morse-Bott along $\bdry N$.
\end{itemize}

\begin{rmk}
Indeed, the stable Hamiltonian vector field $R_0$ on $N$ has the same first return map as a Reeb vector field $R_\tau$, $\tau>0$, by construction, and we could have taken $R_\tau$ to be Morse-Bott nondegenerate.
\end{rmk}

 Let us write $W=\R\times [0,1]\times S$ and $W'=\R\times N$. Let
$$\Omega=ds\wedge dt +\omega,\quad \Omega'=ds\wedge dt+\omega$$
be the symplectic forms on $W$ and $W'$, where $s$ is the $\R$-coordinate.
\nom[W ]{$W$ and $W'$}{$\R\times[0,1]\times S$ and $\R \times N$}

\nom[1yy$\omega$2]{$\Omega$ and $\Omega'$}{Symplectic forms on $W$ and $W'$ given by $ds\wedge dt +\omega$}

In this section we introduce the symplectic cobordisms $(W_+,\Omega_+)$ and $(W_-,\Omega_-)$, as well as their ``compactifications'' $(\overline{W}_+,\overline\Omega_+)$ and $(\overline{W}_-,\overline\Omega_-)$. The cobordism $(W_+,\Omega_+)$ interpolates from the stable Hamiltonian structure $([0,1]\times S, (dt,\omega))$ at the positive end to the stable Hamiltonian structure $(N,(\alpha_0,\omega))$ at the negative end, whereas the cobordism $(W_-,\Omega_-)$ goes from $(N,(\alpha_0,\omega))$ at the positive end to $([0,1]\times S, (dt,\omega))$ at the negative end.

\subsubsection{The symplectic cobordisms $(W_+,\Omega_+)$ and $(W_-,\Omega_-)$}
\label{acorn}

Consider the infinite cylinder $\R \times S^1\simeq \R\times (\R/2\Z)$ with coordinates $(s,t)$. Let $\pi_{S^1}: N \to S^1$ be the fibration $(x,t)\mapsto t$ and let
$$\pi_{B'}: \R \times N = W' \to B'=\R\times S^1$$
be the extension $(s,x,t)\mapsto (s,\pi_{S^1}(x,t))$. Let us write $N_s = \pi^{-1}_{B'} (\{ s\} \times S^1)$ for $s\in\R$.
\nom[B]{$B'$}{Alternate notation for $\R \times S^1$}
\nom[1p$\pi$1]{$\pi_{B'} : W' \to B'$}{Symplectic fibration with fiber $S$ used in the definition of $\widehat{ECC}$}
We define $W_+ =\pi^{-1}_{B'} (B_+)$, where $B_+=(\R\times (\R/2\Z)) -B_+^c$ and $B_+^c$ is the subset $[2,\infty)\times[1,2]\subset \R\times(\R/2\Z)$ with the corners rounded.  See the left-hand side of Figure~\ref{figure: bases}. We write
$$\pi_{B_+}: W_+\to B_+$$
for the restriction of $\pi_{B'}$.
\nom[b]{$B_+$, $B_-$}{Bases of the symplectic fibrations $\pi_{B_+}: W_+\to B_+$ and $\overline{\pi}_{B_-}: \overline{W}_-\to B_-$}
Note that the boundary of $W_+$ can be decomposed into two parts that meet along a codimension two corner: the {\it vertical boundary} $\partial_v W_+ =\pi^{-1}_{B_+} ( \partial B_+)$, and the {\it horizontal boundary} $\partial_h W_+$, which is equal to the union of the boundaries of the fibers.
\nom[1p$\pi$2]{$\pi_{B_+}: W_+\to B_+$}{Symplectic fibration with fiber $S$ used in the definition of $\Phi$}
\nom[W]{$W_+$}{Symplectic fibration with fiber $S$ used in the definition of $\Phi$}
\nom[b]{$\bdry_h W_+$, $\bdry_v W_+$}{Horizontal and vertical boundaries of $W_+$}

\begin{figure}[ht]
\begin{overpic}[height=1.5in]{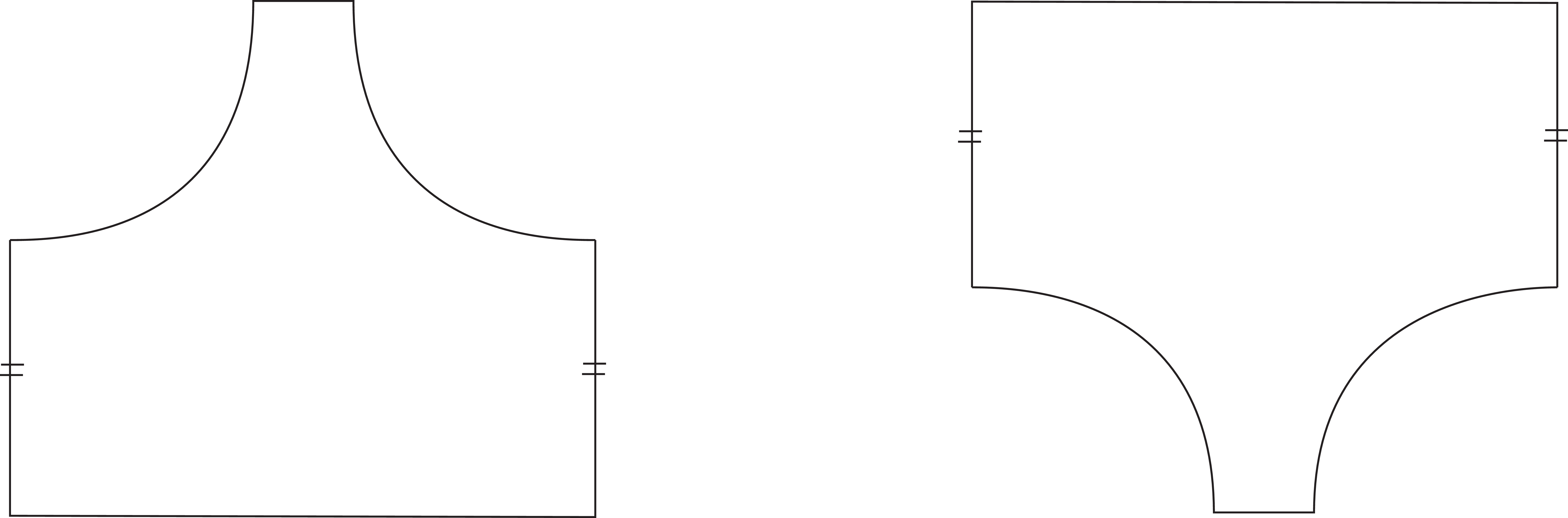}
\put(60.59,28){$\times$} \put(98,28){$\times$}
\end{overpic}
\caption{The bases $B_+$ and $B_-$.  The sides are identified. Both $B_+$ and $B_-$ are biholomorphic to a disk with an interior puncture and a boundary puncture. Here the location of $\overline{\frak m}^b$ on $B_-$ is indicated by $\times$.}
\label{figure: bases}
\end{figure}

Similarly, we define $W_-=\pi^{-1}_{B'}(B_-)$, where $B_-=(\R\times (\R/2\Z))- B_-^c$ and $B_-^c$ is $(-\infty,-2]\times[1,2]$ with the corners rounded. The projection $$\pi_{B_-}: W_-\to B_-,$$
the vertical boundary $\bdry_v W_-$, and the horizontal boundary $\bdry_h W_-$ are defined analogously.
\nom[W]{$W_-$}{Symplectic fibration with fiber $S$; $\overline{W}_-$ is used in the definition of $\Psi$}

The symplectic form $\Omega_+$ (resp.\ $\Omega_-$) is the restriction of
$$ds\wedge dt+\omega=ds\wedge dt+d_S\beta_t$$
to $W_+$ (resp.\ $W_-$).  By this we mean the following: On $\R\times S\times[0,2]$, we take the symplectic form $ds\wedge dt+\omega$.  Then the symplectic form glues under the identification $(s,x,2)\sim (s,\hh(x),0)$.
\nom[1yy$\omega$2]{$\Omega_+$ and $\Omega_-$}{Symplectic forms on $W_+$ and $W_-$}

We also write $cl(B_+)$, $cl(B_-)$ for the compactifications of $B_+$, $B_-$,  obtained by adjoining the points at infinity
\nom[cl]{$cl(B_+)$ and $cl(B_-)$}{Compactifications of $B_+$ and $B_-$}
$\mathfrak{p}_+$ corresponding to $s=+ \infty$, and $\mathfrak{p}_-$ corresponding to $s=-\infty$. Therefore $cl(B_+)$ and $cl(B_-)$ are isomorphic to the closed unit disk with one marked point on the interior and one marked point on the boundary.

\subsubsection{The extended cobordisms $(\overline{W}_+,\overline{\Omega}_+)$ and $(\overline{W}_-,\overline{\Omega}_-)$}
\label{subsubsection: overline W pm}

We now extend $(W_+,\Omega_+)$ to $(\overline{W}_+, \overline{\Omega}_+)$, which corresponds to capping off each fiber $S$ by a disk; the definition of $(\overline{W}_-,\overline{\Omega}_-)$ is analogous.

We first define the capped-off surface $\overline{S}$:
\nom[SS3]{$\overline{S}$}{Capped-off surface $S \cup D^2$}
Let $D^2=\{\rho\leq 1\}$ be a disk with polar coordinates $(\rho,\phi)$.
\nom[D]{$D^2$, $D^2_\delta$}{Unit disk $\overline{S}-int(S)$ with polar coordinates $(\rho,\phi)$, from Section~\ref{section: moduli spaces of multisections} onwards; $D^2_\delta=\{\rho\leq \delta\}$ for $\delta>0$ small}
\nom[1q$\rho$]{$(\rho,\phi)$}{Polar coordinates on $D^2$}
We write $z_\infty$
\nom[z]{$z_\infty$}{Center of the disk $D^2=\overline{S}-int(S)$}
for the origin $\rho=0$.  Let $(y,\theta)$ be the coordinates on a neighborhood $\nu(\bdry S)\simeq [-\varepsilon,\varepsilon]\times \R/\Z$ of $\bdry S=\{0\}\times \R/\Z$ (inside a slight extension of $S$), as before.  Then $\overline{S}= (S\sqcup D^2)/\sim$, where $(y,\theta)\in \nu(\bdry S)$ is identified with $({1\over y+1} ,-2\pi \theta)\in D^2$.

For every integer $m > 2g$
\nom[m]{$m$}{Positive integer $\gg 2g$; $\overline{\hh}$ and $\overline{\bf a}$ depend on $m$ starting from Section~\ref{section: moduli spaces of multisections}}
we define
$$\overline{\hh}_m:\overline{S}\stackrel\sim\to \overline{S}$$
as a smooth extension of $\hh:S\stackrel\sim\to S$, depending on $m$, such that
\nom[h6 ]{$\overline{\hh}=\overline{\hh}_m$}{Extension of $\hh$ to $\overline{S}$ which depends on $m$}
$\overline{\hh}_m|_{D^2}$ is the diffeomorphism of $D^2$ given by 
$$(\rho,\phi)\mapsto(\rho,\phi+\nu_m(\rho)),$$
where $\nu_m:[0,1]\to \R$ is a smooth function which satisfies the following:
\begin{itemize}
\item $\nu_m(\rho)={2\pi\over m}$ for $\rho\leq {1\over 2}$;
\item $\nu_m(\rho)$ is increasing for ${1\over 2}<\rho< {3\over 4}$;
\item $\nu_m(\rho)$ is decreasing and independent of $m$ for ${3\over 4}<\rho<1$;
\item $\nu_m({3\over 4})\ll {2\pi\over 2g}$ and $\nu_m(1)=0$;
\item $\nu_\infty:=\lim_{m\to \infty} \nu_m$ exists in the $C^k$-topology for $k\gg 0$.
\end{itemize}

In particular, $\nu_\infty(\rho)=0$ for $\rho\leq{1\over 2}$. Taking the limit $m\to \infty$ becomes important starting from Section~\ref{subsection: asymptotic eigenfunctions}, but until then we just need $m\gg 0$ and we simply write $\overline{\hh}=\overline{\hh}_m$ and $\nu=\nu_m$.

We then define the mapping torus
$$\overline{N}_m=(\overline{S}\times[0,2])/(x,2)\sim (\overline{\hh}(x),0).$$
When we do not need to specify $m$, we will simplify the notation by writing
$\overline{N}_m = \overline{N}$.
\nom[N2 ]{$\overline{N}=\overline{N}_m$}{Mapping torus of $\overline{\hh}: \overline{S} \to \overline{S}$}
Note that, although $\overline{N}_m$ depends on $m$ and $\nu_m$, all the $\overline{N}_m$ are diffeomorphic. The closed manifold $\overline{N}$ is obtained from $M$ by a $0$-surgery along the binding of the open book.

Let $\overline\omega$ be an area form on $\overline{S}$ which extends $\omega$ and equals $\rho d\rho\wedge d\phi$ on $D^2$.
\nom[1y$\omega$2]{$\overline{\omega}$}{Area form on $\overline{S}$ which restricts to $\omega$ on $S$ and is invariant under $\overline{\hh}$; also viewed as a $2$-form on $\overline{W}_+$, $\overline{W}_-$}
We then extend the stable Hamiltonian structure $(\alpha_0,\omega)$ on $N$ to the stable Hamiltonian structure $(\overline{\alpha}_0, \overline\omega)$ with Hamiltonian vector field $\overline{R}_0$ on $\overline{N}$, where we still have $\overline{\alpha}_0=dt$ and $\overline{R}_0= \partial_t$. Moreover, the orbit $$\delta_0=\{z_\infty\}\times[0,2] /\sim$$
is the only simple closed orbit of $\overline{R}_0$ on $\overline{N}-N$
\nom[1d$\delta$]{$\delta_0$}{Closed orbit in $\overline{N}$ over $z_\infty$}
which intersects $\overline{S}$ at most $2g$ times; it is called $\delta_0$ since it lies on the level set $\rho=0$.  The $2$-plane field of the stable Hamiltonian structure is $\ker \overline\alpha_0=T\overline{S}$.

We now define $\overline{W}_+$ and $\overline{W}_-$. First define extensions $\overline{W}=\R\times [0,1]\times\overline{S}$ of $W=\R\times[0,1]\times S$ and $\overline{W'}=\R\times\overline{N}$ of $W'=\R\times N$. Let $\overline\pi_{S^1}: \overline{N}\to S^1$ be the fibration $(x,t)\mapsto t$ and let $$\overline\pi_{B'}: \overline{W'}=\R \times \overline{N} \to B'=\R\times S^1$$
be the extension $(s,x,t)\mapsto (s,\overline\pi_{S^1}(x,t))$. For $*=+$ or $-$, we set $\overline{W}_*=\overline\pi^{-1}_{B'}(B_*)$ and write
$$\overline\pi_{B_*}:\overline{W}_*\to B_*$$
for the restriction of
$\overline\pi^{-1}_{B'}$ to $\overline{W}_*$.  We also define
$$\overline{\pi} : \overline{W} \to B.$$
\nom[1p$\pi$3]{$\overline\pi_*$ with $*=B,B',B_+,B_-$}{Symplectic fibrations with fiber $\overline S$ extending $\pi_*$}

The symplectic forms $\overline\Omega_+$, $\overline\Omega_-$, $\overline \Omega$, $\overline {\Omega'}$ for $\overline{W}_+$, $\overline{W}_-$, $\overline{W}$, $\overline{W'}$ are obtained from $ds\wedge dt +\overline{\omega}$ by gluing and restricting as necessary.
\nom[WW]{$\overline{W}$, $\overline{W'}$, $\overline{W}_+$, $\overline{W}_-$}{Extensions of $W$, $W'$, $W_+$, $W_-$ with fibers $\overline{S}$}
\nom[1yy$\omega$3]{$\overline \Omega$, $\overline {\Omega'}$, $\overline\Omega_+$, $\overline\Omega_-$}{Extensions of $\Omega$, $\Omega'$, $\Omega_+$, $\Omega_-$}

Let us write
$$V: =\overline{N} - int(N) = (D^2 \times[0,2])/(x,2)\sim (\overline{\hh}(x),0).$$
We then identify
\nom[V ]{$V$}{$\overline{N} - int(N)$ (starting from Section \ref{subsubsection: overline W pm})}
$\varphi: V\stackrel\sim\to D^2 \times (\R/2\Z)$ via $(\rho e^{i\phi},t)\mapsto (\rho e^{i(\phi+t\nu(\rho)/2)},t).$  Note that $\varphi$ relates two coordinate systems:
\begin{itemize}
\item[(i)] the ``Reeb coordinates'' $(x,t)$ on $D^2\times[0,2]$ such that $\overline{R}_0=\bdry_t$ and $(x,2)\sim (\overline{\hh}(x),0)$, and
\item[(ii)] the ``balanced coordinates'' on $D^2\times (\R/2\Z)$ such that $\overline{R}_0=\bdry_t + {\nu(\rho)\over 2}\bdry_\phi$ and $(x,2)\sim (x,0)$.
\end{itemize}

For $*=+$ or $-$, let
$$\overline\pi_{D^2}: \overline{W}_*\cap (\R\times V) \to D^2$$
be the projection
\nom[1p$\pi$5]{$\overline\pi_{D^2}:\overline{W}_*\cap (\R\times V) \to D^2$}{Projection with respect to balanced coordinates, where $*=+$ or $-$}
of $\overline{W}_*\cap (\R\times V)$ to $V$, followed by the projection of $V$ to $D^2$ via the identification $\varphi$, i.e., with respect to the balanced coordinates.

We also choose the marked point $\overline{\frak m}=(\overline{\frak m}^b, \overline{\frak m}^f) \in  \overline{W}_-$,
\nom[m0 ]{$\overline{\frak m}= (\overline{\frak m}^b, \overline{\frak m}^f)=((0,{3\over 2}),z_\infty)$}{Marked point in $\overline{W}_-$}
where $\overline{\frak m}^b=(0,{3\over 2})\in B_-$ and $\overline{\frak m}^f= z_\infty\in \overline{S}$. The marked point will play a crucial role in the definition of
the chain map $\Psi$.

\subsection{Lagrangian boundary conditions}

Let us first write
$$L_{\bf a}=\R\times\{1\}\times {\bf a} \quad \mbox{ and } \quad L_{\hh({\bf a})}=\R\times \{0\}\times \hh({\bf a})$$
for the Lagrangians in $W$ that are used in the definition of $\widehat{CF}(S,{\bf a},\hh({\bf a}))$.
\nom[L01]{$L_{\bf a}$, $L_{\hh({\bf a})}$}{Lagrangian submanifolds of $W$ used in the definition of $\widehat{CF}(S,{\bf a},\hh({\bf a}))$}

\subsubsection{Lagrangian boundary conditions for $W_\pm$} \label{subsubsection: Lagrangian}

The symplectic fibration
$$\pi_{B_+}: (W_+,\Omega_+)\to (B_+,ds\wedge dt)$$
admits a {\it symplectic connection}, defined as the $\Omega_+$-orthogonal of the tangent
plane to the fibers. The symplectic connection is spanned by
$\partial_s$ and $\partial_t$ if we consider $\Omega=ds\wedge
dt+\omega$ on $\R\times S\times[0,2]$ before the identification
$(s,x,2)\sim (s,\hh(x),0)$. (We recall that $W_+$ is defined as a subset
of $\R\times N = \R \times (S\times[0,2])/\sim$.)

We first place a copy of the basis ${\bf a}$ on the fiber $\pi_{B_+}^{-1}
(3,1)$ and take its parallel transport along $\partial B_+$ using
the symplectic connection. The parallel transport sweeps out a
Lagrangian submanifold $L^+_{\bf a}$
\nom[L1]{$L^+_{\bf a}$, $L^+_{a_i}$}{Lagrangian submanifold of $W_+$ constructed from ${\bf a}$ and a connected component of $L^+_{\bf a}$ corresponding to the arc $a_i$}
of $(W_+,\Omega_+)$. Let $L^+_{a_i}$ be the connected component of $L^+_{\bf a}$ given by parallel transport of $a_i$. Since the symplectic connection is spanned by $\partial_s$ and $\partial_t$ on $\R\times S\times[0,2]$, over the strip
$\{ s\geq 3, t\in [0,1 ] \}$ we have:
$$L^+_{\bf a} \cap \{s\geq 3, t=0\} = \{ s\geq 3\} \times
\hh({\bf a}) \times \{0\},$$
$$L^+_{\bf a} \cap \{s\geq 3, t=1\} =\{ s\geq 3\} \times
{\bf a} \times \{ 1\}.$$

Similarly, the Lagrangian submanifold $L^-_{\bf a}$ on the vertical boundary of $(W_-,\Omega_-)$ is obtained by taking the parallel transport of a copy of ${\bf a}$
--- placed on the fiber $\pi_{B_-}^{-1}(-3,1)$ --- by the symplectic connection.

\subsubsection{Extended Lagrangian boundary conditions}
\label{coconut}

\begin{figure}
\includegraphics[width=4cm]{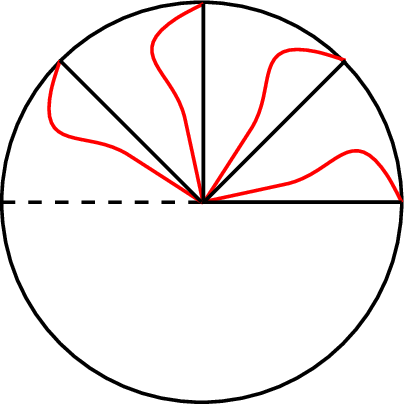}
\caption{Extended arcs in $D^2$. The black arcs are portions of the $\overline{a}_i$
and the red ones are portions of the $\overline{\hh}(\overline{a}_i)$. The dashed arc is
one of the $\vec{a}_{i,j} - \overline{a}_i$.}
\label{fig: around_z_infty}
\end{figure}

Fix $k_0 > 2g$. In what follows we assume that the basis arcs $a_i$, $i=1,\dots,2g$, depend on
an integer $m \gg 0$ (i.e., sufficiently large that the conditions below make sense) and satisfy some additional conditions. Let $E\subset \bdry D^2$ be the set of endpoints of $\cup_{i=1,\dots,2g} a_i$ and let $y_1(m),\dots,y_{4g}(m)$ be the points of $E$ in counterclockwise order. We denote by $\phi(y_i(m))$ the $\phi$-coordinate
of the endpoint $y_i(m)$. Then we assume the following:
\begin{itemize}
\item $a_i=a_i(m)$ is oriented;
\item $0<\phi(y_1(m))<{2\pi \over m}$;
\item for $1\leq j_1,j_2\leq 4g$,
$$\phi^{j_1,j_2}(m):=\phi(y_{j_1}(m))-\phi(y_{j_2}(m))$$ is an integer multiple of ${2\pi \over m}$;
\item ${\phi^{j_1,j_2}(m)\cdot m\over 2\pi}\geq k_0$ for all $j_1>j_2$, i.e., the spacing between any two $\phi(y_{j_1}(m))$ and $\phi(y_{j_2}(m))$ with $j_1>j_2$ is at least $k_0$ times $\frac{2\pi}{m}$;
\item $\lim \limits_{m\to +\infty} {\phi^{j_1,j_2}(m)\cdot m\over 2\pi}=+ \infty$ and $\lim \limits_{m\to +\infty}\phi^{j_1,j_2}(m)=0$;
\item for all quadruples $j_1>j_2$, $j_3>j_4$, $\lim \limits_{m\to +\infty} {\phi^{j_1,j_2}(m)\over \phi^{j_3,j_4}(m)}\not=1$ unless $(j_1,j_2)=(j_3,j_4)$.
\end{itemize}
Assume without loss of generality that the initial point of $a_i$ is $x_i$ and the terminal point of $a_i$ is $x_i'$. Let $\overline{a}_i$
\nom[a5 ]{$\overline{a}_i$}{Extension of $a_i$ to $\overline{S}$}
be the (oriented) extension of $a_i\subset S$ to $\overline{S}$, obtained by attaching two radial rays $\overline{a}_{i,j}=\{0\leq \rho\leq 1, \phi=\phi_{i,j}\}$, $j=0,1$, \nom[a5 ]{$\overline{a}_{i,j}$ (for $j=0,1$)}{Rays from $x_i$ and $x_i'$ to $z_\infty$ in $\overline{a}_i - a_i$}
where $\phi_{i,j}$ is a constant. Here  $\overline{a}_{i,0}$ (resp.\ $\overline{a}_{i,1}$) is the initial (resp.\ terminal) segment of $\overline{a}_i$.
We also define the extension $\vec{a}_{i,j}=\{-1< \rho\leq 1, \phi=\phi_{i,j}\}$, $j=0,1$,
\nom[a5 ]{$\vec{a}_{i,j}$ (for $j=0,1$)}{Extension of $\overline{a}_{i,j}$ past $z_\infty$}
of $\overline{a}_{i,j}=\{0\leq \rho\leq 1, \phi=\phi_{i,j}\}.$  Here $(\rho, \phi)$ with $\rho <0$ is a notation for $(- \rho, \phi + \pi)$.

We write $\overline{\mathbf{a}}=\{ \overline{a}_1,\dots,\overline{a}_{2g}\}$.
\nom[a5 ]{$\overline{\mathbf{a}}$}{Collection $\{ \overline{a}_1, \dots, \overline{a}_{2g}\}$}
Then $L^\pm_{\overline{\mathbf{a}}}$
\nom[L1]{$L^-_{\overline{\mathbf{a}}}$, $L^-_{\overline{a}_i}$}{Singular Lagrangian in $\overline{W}_-$ constructed from $\overline{{\mathbf{a}}}$ and a connected component of $L^-_{\overline{\mathbf{a}}}$ corresponding to the arc $\overline{a}_i$}
is the extension of $L^\pm_{\mathbf{a}}$, obtained by the parallel transport of a copy of $\overline{\mathbf{a}}$, placed at $\overline\pi^{-1}_{B_+}(3,1)$ or $\overline\pi^{-1}_{B_-}(-3,1)$. We similarly define  $L^\pm_{\widehat{\mathbf{a}}}$, $L^\pm_{\widehat{a}_i}$, $L^\pm_{\overline{a}_i}$, $L^\pm_{\vec{a}_{i,j}}$, and $L^\pm_{\overline{a}_i\cup\vec{a}_{i,j}}$,
\nom[L1]{$L^\pm_{\widehat{a}_i}$, $L^\pm_{\overline{a}_i}$, $L^\pm_{\vec{a}_{i,j}}$, $L^\pm_{\overline{a}_i\cup\vec{a}_{i,j}}$}{Lagrangian submanifolds of
$\overline{W}_\pm$ constructed from the arcs $\widehat{a}_i$, $\overline{a}_i$, $\vec{a}_{i,j}$ and $\overline{a}_i\cup\vec{a}_{i,j}$}
where $\widehat{a}_i= \overline{a}_i-\{z_\infty\}$
\nom[a5]{$\widehat{a}_i$}{$\overline{a}_i-\{z_\infty\}$}
and $\widehat{\mathbf{a}} = (\widehat{a}_1, \ldots , \widehat{a}_{2g})$.
\nom[a5]{$\widehat{\mathbf{a}}$}{Collection $\{ \widehat{a}_1,\dots,\widehat{a}_{2g}\}$}

\begin{defn}
A bigon (with acute angles) contained in $D^2$ and bounded by $\overline{a}_{i,j}$ and $\overline{\hh}(\overline{a}_{i,j})$ will be called a {\em thin strip}. The portion of a thin strip contained in $D_{1/2} = \{ \rho \le \frac 12 \}$ will be called a {\em thin wedge}. See Figure~\ref{fig: around_z_infty}.
\end{defn}

\subsection{Almost complex structures and moduli spaces for $W$, $\overline{W}$, $W'$, and $\overline{W'}$}
\label{subsection: acstr on W etc}

In this subsection we specify the almost complex structures and moduli spaces for
$W=\R \times [0,1] \times S$, $\overline{W} = \R \times [0,1] \times \overline{S}$, $W'=\R\times N$, and $\overline{W'}=\R\times\overline{N}$.

\begin{convention} \label{convention for gamma and bf y}
When we write ${\bf y}$ or $\boldsymbol{\gamma}$ (with possible superscripts, subscripts and other decorations), it is assumed that every intersection point in ${\bf y}$ is contained in $S$ and every orbit in $\boldsymbol{\gamma}$ is contained in $N$. (By abuse of notation, we will write ${\bf y}\subset S$ and $\boldsymbol{\gamma}\subset N$.)  In particular, ${\bf y}$ and $\boldsymbol{\gamma}$ do not contain any multiples of $z_\infty$ or $\delta_0$.
\end{convention}

\nom[y]{${\bf y}$}{A tuple of ${\bf a}\cap \hh({\bf a})$; from Section~\ref{subsection: acstr on W etc} onwards, ${\bf y}\subset S$ and does not contain multiples of $z_\infty$}
\nom[1c$\boldsymbol{\gamma}$]{$\boldsymbol{\gamma}$}{An orbit set; from Section~\ref{subsection: acstr on W etc} onwards, $\boldsymbol{\gamma}\subset N$ and does not contain multiples of $\delta_0$}

\subsubsection{Holomorphic maps to $W$ and $\overline{W}$}

Let $W=\R\times[0,1]\times S$ and $\overline{W}=\R\times[0,1]\times \overline{S}$. Also let $\Omega=ds\wedge dt+\omega$ and $\overline\Omega=ds\wedge dt+\overline{\omega}$. Then $\mathcal{J}_W$ is defined as the set of $C^\infty$-smooth $\Omega$-admissible almost complex structures on $W$.
\nom[JsetW]{$\mathcal{J}_W$}{Space of $C^\infty$-smooth $\Omega$-admissible almost complex structures on $W$}

The analogous space $\mathcal{J}_{\overline{W}}$ for $\overline{W}$ is slightly more complicated:

\begin{defn} \label{defn: almost complex structures on overline W}
Fix $\varepsilon,\delta>0$ sufficiently small and $k\gg 0$. Then $\mathcal{J}_{\overline{W}}$ is the space of $C^\infty$-smooth $\overline\Omega$-admissible almost complex structures $\overline{J}$ on $\overline{W}$ which satisfy the following:
\begin{enumerate}
\item there exists an $\overline\Omega$-admissible almost complex structure $\overline{J}_0$ which restricts to the standard complex structure on the subsurface $D^2\subset \overline{S}$ of each fiber;
\nom[J2]{$\overline{J}_0$}{Almost complex structure on $\overline{W}$ which restricts to the standard one on $D^2$ on each fiber}
\item $\overline{J}=\overline{J}_0$ on $\{\rho\leq \varepsilon\}\cup W$ and $|\overline{J}-\overline{J}_0|_k<\delta$ over $\overline{W}$.
\end{enumerate}
Here $|\cdot|_k$ is some fixed $C^k$-norm.
\end{defn}

\nom[JsetWo]{$\mathcal{J}_{\overline{W}}$}{Space of $C^\infty$-smooth $\overline\Omega$-admissible almost complex structures $\overline{J}$ on $\overline{W}$ satisfying Definition~\ref{defn: almost complex structures on overline W}}
We denote by $J$ the restriction of $\overline{J}$ to $W$.
\nom[J31]{$J$}{Almost complex structure on $W$ that is the restriction of $\overline{J}$}
\nom[J32]{$J_+$, $J_-$}{Almost complex structures on $W_+$, $W_-$ that are restrictions of $\overline{J}_+$, $\overline{J}_-$}
\nom[J33]{$\overline{J}$, $\overline{J'}$, $\overline{J}_+$, $\overline{J}_-$}{Almost complex structures on $\overline{W}$, $\overline{W'}$, $\overline{W}_+$, $\overline{W}_-$ that restrict to $J$, $J'$, $J_+$, $J_-$ on $W$, $W'$, $W_+$, $W_-$}
Let $\mathcal{S}_{{\bf a},\hh({\bf a})}$ be the set of $2g$-tuples of intersection points of ${\bf a}\cap \hh({\bf a})$.
Given ${\bf y}, {\bf y'}\in \mathcal{S}_{{\bf a},\hh({\bf a})}$ and $J\in\mathcal{J}_W$, let $\mathcal{M}_J({\bf y},{\bf y'})$ be the moduli space of HF curves from ${\bf y}$ to ${\bf y'}$ with respect to $J$ which are contained in $\R\times[0,1]\times S$.

We next discuss holomorphic curves in $\overline{W}$.
\begin{defn}
Let ${\bf y}, {\bf y'}$ be $k$-tuples of ${\bf a}\cap \hh({\bf a})$ and $\overline{J}\in\mathcal{J}_{\overline{W}}$.  Then a {\em degree $k\leq 2g$ multisection $\overline{u}$ from ${\bf y}$ to ${\bf y'}$ in $(\overline{W},\overline{J})$} is a holomorphic map $\overline{u}:\dot F\to \overline{W}$ which is a degree $k$ multisection of $\overline\pi_B: \overline{W}\to B$, satisfies the conditions of Definition~\ref{HF-curve} with $L_{\boldsymbol{\alpha}}$ and $L_{\boldsymbol{\beta}}$ replaced by $L_{\widehat{\bf a}}=\R\times \{1\}\times \widehat{\bf a}$ and $L_{\overline{\hh}(\widehat{\bf a})}=\R\times \{0\}\times\overline{\hh} (\widehat{\bf a})$ and $(k,l)$ replaced by $(2g,k)$, and is asymptotic to ${\bf y}$ and ${\bf y'}$ at the positive and negative ends.
\end{defn}

Note that we require $\bdry \dot F$ to be mapped to $L_{\widehat{\bf a}}$, $L_{\overline{\hh}(\widehat{\bf a})}$ and be disjoint from $L_{\overline{\bf a}}-L_{\widehat{\bf a}}$, $L_{\overline{\hh}(\overline{\bf a})}-L_{\overline{\hh}(\widehat{\bf a})}$.

\begin{defn}
The {\em section at infinity} $\sigma_\infty$ is the map $\sigma_{\infty}:\R \times [0,1] \to \overline{W}$ such that $\sigma_\infty (s,t) = (s,t,z_{\infty})$. By abuse of notation we will also refer to its image as the section at $\infty$.
\end{defn}
\nom[1r$\sigma$]{$\sigma_\infty$, $\sigma_\infty'$, $\sigma_\infty^+$, $\sigma_\infty^-$}{Sections at infinity of $\overline{W}$, $\overline{W'}$, $\overline{W}_+$, $\overline{W}_-$}
The section at infinity is $\overline{J}$-holomorphic for every $\overline{J} \in \mathcal{J}_{\overline{W}}$.
Let $z^\dagger_\infty \in D^2_{1/2} \subset \overline{S}$ be a point with $\rho$-coordinate less than $\frac{1}{2}$ and in the complement of all arcs $\vec{a}_{i,j}$ and $\overline{\hh}(\vec{a}_{i,j})$. The orbit $\mathcal{O}(z^\dagger_\infty)$ of $z^\dagger_\infty$ under the action of $\overline{\hh}$ consists of $m$ points, and each thin wedge in $D^2_{1/2}$ between $\overline{a}_{i,j}$ and $\overline{\hh}(\overline{a}_{i,j})$ contains exactly one point of $\mathcal{O}(z^\dagger_\infty)$.
\nom[1r$\sigma$]{$\sigma_\infty^\dagger$, $(\sigma_\infty')^\dagger$, $(\sigma_\infty^+)^\dagger$, $(\sigma_\infty^-)^\dagger$}{``Pushoff'' of $m$-fold cover of section at $\infty$ for $\overline{W}$, $\overline{W'}$, $\overline{W}_+$, $\overline{W}_-$}
Let $\sigma^\dagger_\infty=\R\times[0,1]\times\mathcal{O}(z^\dagger_\infty)$. (Note that this multisection does not have Lagrangian boundary conditions; it will only be used to impose topological constraints on the $HF$ curves.)

\begin{defn}
Given a degree $k$ multisection $\overline{u}$ of $\overline{W}$, we define $n(\overline{u}) = \langle \overline{u}, \sigma^\dagger_\infty \rangle$, where $\langle \cdot, \cdot \rangle$ denotes the algebraic intersection between the images.
\end{defn}

The intersection number $n(\overline{u})$ is a homological invariant since $z^\dagger_\infty$ was chosen so that $\mathcal{O}(z^\dagger_\infty)$ is disjoint from the Lagrangian arcs.

\begin{lemma}\label{pisapia}
The intersection number $n(\overline{u})$ satisfies the following properties:
\begin{enumerate}
\item $n(\overline{u}) \ge 0$ and $n(\overline{u}) = 0$ if and only if the image of $\overline{u}$ is disjoint from $\sigma^\dagger_\infty$;
\item if $\langle \overline{u}, \sigma_{\infty} \rangle >0$, then $n(\overline{u}) \ge m$;
\item if $n(\overline{u}) = 1$, then $\overline{u}$ projects onto a thin strip; and
\item $n(\overline{u})$ is independent of the choice of $z^\dagger_\infty$.
\end{enumerate}
\end{lemma}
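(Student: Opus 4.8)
The plan is to exploit the fact that $\sigma^\dagger_\infty$ is a $\overline{J}$-holomorphic multisection (being a disjoint union of $m$ translates of the section-at-infinity type strip $\R\times[0,1]\times\{z^\dagger_\infty\}$, none of which touches the Lagrangian arcs) and invoke positivity of intersections for holomorphic curves in an almost complex $4$-manifold. First I would verify that $\sigma^\dagger_\infty$ is genuinely holomorphic: since $z^\dagger_\infty$ has $\rho$-coordinate $<\tfrac12$, it lies in the region where $\overline{h}$ is the rotation $(\rho,\phi)\mapsto(\rho,\phi+\tfrac{2\pi}{m})$, so the orbit $\mathcal O(z^\dagger_\infty)$ consists of $m$ points, and on the region $\{\rho\le\varepsilon\}$ (after possibly shrinking, or using that $\overline J=\overline J_0$ is a product there) each strip over a point is $\overline J$-holomorphic. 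This also immediately gives (4): two choices of $z^\dagger_\infty$ in the same thin wedge (or in wedges related by the rotation) give homologous multisections, and since $n(\overline u)=\langle\overline u,\sigma^\dagger_\infty\rangle$ is a homological pairing in $H_2$, it depends only on the homology class of $\overline u$ and of $\sigma^\dagger_\infty$; one checks that all admissible choices of $z^\dagger_\infty$ yield the same class (they differ by caps contained in the complement of the support of $\overline u$ near infinity, or one argues directly that the count of intersections with any one translate is independent of which wedge it sits in by a continuation/cobordism argument within $D_{1/2}$).

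For (1), since both $\overline u$ and $\sigma^\dagger_\infty$ are $\overline J$-holomorphic and $\overline u$ is not a (multiple cover of a) component of $\sigma^\dagger_\infty$ — because $\overline u$ has Lagrangian boundary on $L_{\overline{\bf a}}\cup L_{\overline h(\overline{\bf a})}$ while $\sigma^\dagger_\infty$ is disjoint from these arcs — every geometric intersection point contributes positively (local positivity of intersections, as in McDuff, extended to the boundary-free interior intersections here since $\sigma^\dagger_\infty$ stays in $\mathrm{int}(\overline W)$ away from the Lagrangians). Hence $n(\overline u)\ge 0$, with equality iff the images are disjoint. For (3): if $n(\overline u)=1$ then $\overline u$ meets exactly one of the $m$ translates in exactly one transverse point and misses the other $m-1$; then I would use the projection $\overline\pi_{D^2}$ (in balanced coordinates) — the image of $\overline u$ restricted to the $D^2$-part must be a holomorphic curve in $D^2$ covering $z^\dagger_\infty$ exactly once, hitting the arcs $\overline{a}_{i,j}$ and $\overline h(\overline{a}_{i,j})$, and a positivity/degree count forces this piece of the image to be a single bigon with acute angles between $\overline{a}_{i,j}$ and $\overline h(\overline{a}_{i,j})$, i.e., a thin strip in the sense of the definition preceding the lemma; the complementary wedges are not entered, which pins down that the curve projects onto precisely one thin strip.

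For (2), suppose $\langle\overline u,\sigma_\infty\rangle>0$, i.e., the image of $\overline u$ meets the section at infinity $\sigma_\infty=\R\times[0,1]\times\{z_\infty\}$. The key point is that $\overline u$ has boundary on the Lagrangians $L_{\overline{\bf a}}$, which do pass through $z_\infty$ (each $\overline a_i$ is a diameter through the origin), so I cannot directly apply positivity against $\sigma_\infty$. Instead I would argue: near $\rho=0$, the almost complex structure is standard, and the local picture of the $D^2$-part of $\overline u$ near a point mapping to $z_\infty$ is a holomorphic map from a disk (or half-disk) to $D^2$ hitting the origin, whose boundary lies on the union of the $2g$ diameters $\overline a_i$ and their images under the rotation $\overline h$; by a winding-number argument around $\partial D^2$ the image must wrap around $z_\infty$, and every such wrap forces the curve to cross each of the $m$ translates forming $\mathcal O(z^\dagger_\infty)$, because these $m$ points are equidistributed in angle and each thin wedge contains exactly one of them. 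Concretely, a small loop around $z_\infty$ in the image, pushed out to radius $z^\dagger_\infty$, is a braid whose total algebraic winding is $\ge 1$, and intersecting it with the $m$ radial-wedge representatives yields at least $m$ intersection points; hence $n(\overline u)\ge m$.

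\medskip
\noindent\textbf{Main obstacle.} The delicate part is (2): controlling the behaviour of $\overline u$ near the section at infinity, where the Lagrangian boundary conditions meet $\sigma_\infty$. One must make precise the ``wrapping forces $m$ intersections'' argument, which requires a careful local model near $\{\rho=0\}$ in the balanced coordinates $(\rho e^{i\phi},t)$, using that $\overline J$ is standard there, the explicit form of the rotation $\overline h$ on $D_{1/2}$, and a writhe/winding estimate for the braid $\overline u\cap\{\rho=\text{const}\}$. Items (1), (3), (4) are comparatively routine applications of positivity of intersections and homological invariance once the holomorphicity of $\sigma^\dagger_\infty$ and its disjointness from the Lagrangians are established.
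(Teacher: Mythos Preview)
Your treatment of (1), (3), and (4) matches the paper's: positivity of intersections for (1) and (3) (the latter combined with the fact that each thin wedge contains exactly one point of $\mathcal{O}(z^\dagger_\infty)$), and for (4) the paper simply notes that different choices of $z^\dagger_\infty$ give multisections $\sigma^\dagger_\infty$ connected by a path disjoint from the Lagrangian boundary conditions, which is your homological argument in equivalent language. One small correction: the strips $\R\times[0,1]\times\{pt\}$ are $\overline J$-holomorphic for \emph{any} $pt\in\overline S$ simply because $\overline J(\partial_s)=\partial_t$ by admissibility; you do not need $pt$ to sit in the region $\{\rho\le\varepsilon\}$ for this.

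For (2) you have substantially overcomplicated the matter and misidentified it as the main obstacle. The paper's entire argument for (2) is that \emph{holomorphic maps are open}. Unpacked (as in the proof of the analogous Lemma~\ref{properties n+} for $\overline{W}_+$): if $p\in\dot F$ satisfies $\overline u(p)\in\sigma_\infty$, then $\pi_{D^2}\circ\overline u$ on a neighbourhood of $p$ is a nonconstant holomorphic map to $D^2$ sending $p\mapsto z_\infty$; by the open mapping theorem its image contains an open neighbourhood of $z_\infty$, hence all $m$ points of $\mathcal O(z^\dagger_\infty)$ once $z^\dagger_\infty$ is chosen close enough to $z_\infty$ --- and by (4) any choice yields the same $n(\overline u)$. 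Your winding/braid picture is this same fact rephrased (positive winding about $z_\infty$ is exactly what openness delivers), so it is not wrong, but no ``careful local model'' or writhe estimate is required, and what you flagged as the delicate step is in fact the shortest one. Your concern about the Lagrangians passing through $z_\infty$ is also a red herring in this context: since $\overline u$ is a multisection of $\overline W\to B$, interior points of $\dot F$ project to $\operatorname{int}(B)$, so any point contributing to $\langle\overline u,\sigma_\infty\rangle>0$ is an honest interior intersection and the open mapping theorem applies directly there.
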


\nom[n1]{$n(\overline{u})$, $n'(\overline{u})$, $n^+(\overline{u})$, $n^-(\overline{u})$}{Intersection of a curve $\overline{u}$ in $\overline{W}$, $\overline{W'}$, $\overline{W}_+$, $\overline{W}_-$ with the ``pushoff'' of the section at $\infty$}

\begin{proof}
(1) follows from the positivity of intersections of pseudo-holomorphic maps in dimension four, (2) is a consequence of the fact that holomorphic maps are open, and (3) is a consequence of the positivity of intersections and of the fact that every thin wedge contains only one point of $\mathcal{O}(z^\dagger_\infty)$. Finally, (4) follows from the fact that different choices for $\sigma^\dagger_\infty$ are connected by a path of multisections which are disjoint from the Lagrangian boundary conditions.
\end{proof}

\begin{defn}
Let ${\bf y}$ and ${\bf y'}$ be $k$-tuples of ${\bf a}\cap \hh({\bf a})$.  Then $\mathcal{M}_{\overline{J}}({\bf y},{\bf y'})$ is the moduli space of degree $k$ multisections $\overline{u}$ of $(\overline{W},\overline{J})$ from ${\bf y}$ to ${\bf y'}$.
\end{defn}

\nom[M3]{$\mathcal{M}_{\overline{J}}({\bf y},{\bf y'})$}{Moduli space of Heegaard Floer curves in $(\overline{W},\overline{J})$ from ${\bf y}$ to ${\bf y'}$}

\n
{\em Modifiers.} For any moduli space $\mathcal{M}_{\star_1}(\star_2)$ we may place modifiers $*$ as in $\mathcal{M}^*_{\star_1}(\star_2)$ to denote the subset of $\mathcal{M}_{\star_1}(\star_2)$ satisfying $*$.  Typical self-explanatory modifiers are $I=i$, $n=m$, and $\op{deg}=k$. (Note however that the degree can be inferred from $\star_2$.)
$*=s$ means ``somewhere injective''.
\nom[2s]{$*=s$}{Modifier ``somewhere injective''}

\s
The following lemma is an easy consequence of Lemma~\ref{pisapia}, in particular of points (1) and (4).

\begin{lemma}\label{pippo}
$\mathcal{M}_{\overline{J}}^{n=0}({\bf y},{\bf y'})=\mathcal{M}_J({\bf y}, {\bf y'})$ if ${\bf y}, {\bf y'}\in \mathcal{S}_{\mathbf{a}, \hh(\mathbf{a})}$.
\end{lemma}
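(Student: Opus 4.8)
The plan is to show the two moduli spaces coincide by a double inclusion, using that $n(\overline u)$ is a nonnegative homological invariant (Lemma~\ref{pisapia}(1),(4)) together with the fact that the section at infinity $\sigma_\infty$ is $\overline J$-holomorphic. First I would observe the trivial inclusion $\mathcal{M}_J({\bf y},{\bf y'})\subseteq \mathcal{M}_{\overline J}^{n=0}({\bf y},{\bf y'})$: a $J$-holomorphic HF curve $u$ contained in $W=\R\times[0,1]\times S$ is automatically a degree $\deg=2g$ multisection of $\overline W$ (its boundary lies on $L_{\overline{\bf a}}\cup L_{\overline h(\overline{\bf a})}$ since it lies on $L_{\bf a}\cup L_{h({\bf a})}\subset L_{\overline{\bf a}}\cup L_{\overline h(\overline{\bf a})}$, and it is asymptotic to ${\bf y},{\bf y'}\in\mathcal{S}$), and since its image misses $\sigma^\dagger_\infty\subset \R\times[0,1]\times D_{1/2}$, which is disjoint from $\R\times[0,1]\times S$, we get $n(u)=0$ by Lemma~\ref{pisapia}(1). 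Hence $u\in\mathcal{M}_{\overline J}^{n=0}({\bf y},{\bf y'})$, and it is clear that $\overline J$ restricts to $J$ on $W$ by Definition~\ref{defn: almost complex structures on overline W}(2).

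For the reverse inclusion, take $\overline u\in\mathcal{M}_{\overline J}^{n=0}({\bf y},{\bf y'})$; I must show its image is contained in $W=\R\times[0,1]\times S$, i.e.\ misses $\R\times[0,1]\times(D^2\setminus\{\text{a collar}\})$, so that it is an honest $J$-holomorphic HF curve. By Lemma~\ref{pisapia}(1), $n(\overline u)=0$ means the image of $\overline u$ is disjoint from $\sigma^\dagger_\infty$. Now use the freedom in the choice of $z^\dagger_\infty$ (Lemma~\ref{pisapia}(4)): the point $z^\dagger_\infty$ ranges over all of $D_{1/2}$ minus the arcs $\vec a_{i,j}$ and $\overline h(\vec a_{i,j})$, and as it moves its orbit $\mathcal{O}(z^\dagger_\infty)$ sweeps out a dense open subset of $D_{1/2}$. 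Since $n(\overline u)=0$ independently of this choice, the image of $\overline u$ must be disjoint from this dense open set in $\R\times[0,1]\times D_{1/2}$, hence (being closed) disjoint from all of $\R\times[0,1]\times D_{1/2}$ except possibly the arcs $\R\times[0,1]\times(\vec a_{i,j}\cup\overline h(\vec a_{i,j}))$; but $\overline u$ cannot lie on these arcs on an open set since it is holomorphic and its boundary conditions force transversal intersection with $D_{1/2}$, so in fact the image of $\overline u$ is disjoint from $\R\times[0,1]\times \{\rho\le \tfrac12\}$ away from a measure-zero set, hence from an open neighborhood of $z_\infty$.

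It then remains to propagate this: an $\overline u$ whose image stays out of a neighborhood of $\sigma_\infty$ cannot have $\langle\overline u,\sigma_\infty\rangle>0$ (else Lemma~\ref{pisapia}(2) would force $n(\overline u)\ge m>0$), so $\overline u$ is disjoint from $\sigma_\infty$ entirely, and more concretely its image never enters the disk region $D^2$ of the fibers: the only way a degree-$k$ multisection with Lagrangian boundary on $L_{\overline{\bf a}}\cup L_{\overline h(\overline{\bf a})}$ can have a piece in $\R\times[0,1]\times D^2$ is by projecting over a thin strip/thin wedge, and by positivity of intersections any such piece meets $\mathcal{O}(z^\dagger_\infty)$ (each thin wedge contains exactly one such point, cf.\ Lemma~\ref{pisapia}(3)), contradicting $n(\overline u)=0$. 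Therefore the image of $\overline u$ lies in $\R\times[0,1]\times S$, and since $\overline J|_W=J$, it is a $J$-holomorphic HF curve from ${\bf y}$ to ${\bf y'}$ in $W$, i.e.\ $\overline u\in\mathcal{M}_J({\bf y},{\bf y'})$. The main obstacle is the middle step — cleanly arguing that $n=0$ for \emph{all} choices of $z^\dagger_\infty$ forces the image to avoid the whole disk region, rather than just a particular orbit — which is where one must combine the density of the orbits $\mathcal{O}(z^\dagger_\infty)$ in $D_{1/2}$, positivity of intersections, and the thin-wedge structure; this is precisely the content flagged in the excerpt as "an easy consequence of Lemma~\ref{pisapia}, in particular of points (1) and (4)."
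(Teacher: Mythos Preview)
Your overall strategy is exactly the paper's: the forward inclusion is immediate, and for the reverse you invoke Lemma~\ref{pisapia}(1) and (4) to show that an $n=0$ curve must miss the disk region. The first two paragraphs are fine.

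The gap is in your third paragraph. The assertion ``the only way a degree-$k$ multisection with Lagrangian boundary on $L_{\overline{\bf a}}\cup L_{\overline h(\overline{\bf a})}$ can have a piece in $\R\times[0,1]\times D^2$ is by projecting over a thin strip/thin wedge'' is false: you are conflating the conclusion of Lemma~\ref{pisapia}(3), which is specific to the $n=1$ case, with a structural fact about arbitrary multisections. A priori a curve could enter the annulus $\{\tfrac12<\rho<1\}\subset D^2$ and project onto any of the large sectors there without ever covering a thin wedge, so your argument does not rule this out.

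The clean fix is to work with the single-strip intersection numbers $n_z(\overline u):=\langle \overline u,\R\times[0,1]\times\{z\}\rangle$ for $z\in \operatorname{int}(D^2)\setminus(\overline{\bf a}\cup\overline h(\overline{\bf a}))$. Each such strip is $\overline J$-holomorphic (because $\overline J(\partial_s)=\partial_t$), so $n_z(\overline u)\ge 0$ by positivity of intersections, and $n_z$ is locally constant in $z$ on the complement of the arcs. Since $n(\overline u)=\sum_{z'\in\mathcal O(z^\dagger_\infty)} n_{z'}(\overline u)=0$ with all summands nonnegative, $n_{z'}(\overline u)=0$ for every orbit point. Now observe that every connected component of $\operatorname{int}(D^2)\setminus(\overline{\bf a}\cup\overline h(\overline{\bf a}))$ is a sector whose closure contains $z_\infty$ (all the arcs pass through $z_\infty$), hence contains points of $D_{1/2}$ and therefore an orbit point for a suitable choice of $z^\dagger_\infty$. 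By local constancy, $n_z(\overline u)=0$ for every $z$ in the complement of the arcs; positivity of intersections then forces the image of $\overline u$ to miss $\R\times[0,1]\times(\operatorname{int}(D^2)\setminus\text{arcs})$, and a density argument (the arcs are real $1$-dimensional and $\overline u$ is nowhere locally contained in a real $3$-manifold) disposes of the remaining set. This is the homotopy-invariance content of (4) that you correctly flagged as the crux; you just need to propagate along the sectors rather than invoke thin wedges.
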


\subsubsection{Holomorphic maps to $W'$ and $\overline{W'}$}
\label{acorn2}

Recall the stable Hamiltonian structures $(\alpha_0, \omega)$ on $N$ and $(\overline{\alpha}_0, \overline{\omega})$ on $\overline{N}$. We will denote
by $R_0$ the Hamiltonian vector field of $(\alpha_0, \omega)$ on $N$ and by
$\overline{R}_0$ the Hamiltonian vector field of $(\overline{\alpha}_0, \overline{\omega})$ on $\overline{N}$. Clearly $R_0$ is the restriction of $\overline{R}_0$ to $N$.
\nom[R]{$\overline{R}_0$, $R_0$}{Hamiltonian vector field on $\overline{N}$ and its restriction to $N$}
Since $\partial N$ is a Morse-Bott torus of $\overline{R}_0$, we briefly review Morse-Bott theory in the context of periodic Floer homology. A more extensive reference is \cite[Section~4]{CGH2}, where a completely analogous discussion is given for embedded contact homology.

While the closed orbits of $\overline{R}_0$ are nondegenerate on $int(N) \cup int(V) = \overline{N} - \partial N$, the orbits on $\partial N$ form a {\em negative Morse-Bott torus}, i.e., $\partial N$ is foliated by closed orbits whose linearized first return map at any point $p \in \partial N$ is given by a matrix
$\left ( \begin{matrix} 1 & 0 \\ a & 1 \end{matrix} \right )$
with $a<0$  with respect to an oriented basis $(v_1, v_2)$ of the tangent space to the fiber of the fibration $\overline{N} \to S^1$ passing through $p$. In this basis $v_1$
is transverse to $\partial N$ and $v_2$ is tangent to $\bdry N$.

We denote the Morse-Bott orbit family associated to $\partial N$ by $\mathcal{N}$, i.e., ${\mathcal N}$ is the quotient of $\partial N$ by the flow.  (Previously we had used $\mathcal{N}$ for the Morse-Bott family for the Reeb vector field $R_\alpha$ on $\bdry N$; for the rest of the paper $R_\alpha$ is replaced by $\overline{R}_0$.)  We choose a Morse function $\overline{g}: {\mathcal N} \to \R$ with two critical points: a maximum $h$ and a minimum $e$. After perturbing the Hamiltonian vector field by $\overline{g}$, the orbits $e$ and $h$ become nondegenerate elliptic and hyperbolic, respectively. For this reason, in constructing orbit sets, $e$ will be treated as an elliptic orbit and $h$ as a hyperbolic orbit.

We define the following set of orbits. Let $\mathcal{P}$ be the set of simple orbits of $R_0$ in $int(N)$. All orbits in ${\mathcal P}$ are nondegenerate. We also define
$$\widehat{\mathcal{P}}=\mathcal{P}\cup\{e,h\} \quad \mbox{and} \quad \overline{\mathcal{P}}=\widehat{\mathcal{P}}\cup\{\delta_0\},$$
together
\nom[P2]{$\overline{\mathcal{P}}$}{$\widehat{\mathcal{P}}\cup\{\delta_0\}$}
with the sets $\widehat{\mathcal{O}}_k$,  $\overline{\mathcal{O}}_k$ of orbit sets constructed respectively from $\widehat{\mathcal{P}}$ and $\overline{\mathcal{P}}$ which intersect $\overline{S}\times\{0\}$ exactly $k$ times.
\nom[O]{$\widehat{\mathcal{O}_k}$, $\overline{\mathcal{O}}_k$}{Orbit sets constructed from $\widehat{\mathcal{P}}$ and $\overline{\mathcal{P}}$ which intersect $\overline{S}\times\{0\}$ exactly $k$ times}

Let $\mathcal{J}_{W'}$ be the space of $C^\infty$-smooth $(\alpha_0,\omega)$-adapted almost complex structures on $W'=\R\times N$.
\nom[JsetWp]{$\mathcal{J}_{W'}$}{Space of $C^\infty$-smooth $(\alpha_0,\omega)$-adapted almost complex structures on $W'$}
A $\mathcal{J}_{W'}$-holomorphic Morse-Bott building is one of the following objects:
\begin{enumerate}
\item[(i)] a union of negative gradient flow trajectories from $h$ to $e$, or
\item[(ii)] a $\mathcal{J}_{W'}$-holomorphic map $u : F \to W'$ from an orbit set $\tilde{\boldsymbol{\gamma}}_+ \in {\mathcal P} \cup {\mathcal N}$ to an orbit set $\tilde{\boldsymbol{\gamma}}_-  \in {\mathcal P} \cup {\mathcal N}$, together with half-infinite negative gradient flow trajectories connecting $h$ to positive ends of $u$ at ${\mathcal N}- \{ e,h \}$ and negative ends of $u$ at ${\mathcal N}- \{ e,h \}$ to $e$, or
\item[(iii)] a union of (i) and (ii).
\end{enumerate}
See \cite{CGH2} for more details.
\begin{defn} \label{Arianna N}
Given $\boldsymbol{\gamma},\boldsymbol{\gamma}'\in \widehat{\mathcal{O}}_k$ and $J'\in \mathcal{J}_{W'}$, let $\mathcal{M}_{J'}(\boldsymbol{\gamma}, \boldsymbol{\gamma}')$ be the moduli space of $J'$-holomorphic Morse-Bott buildings in $W'$ from $\boldsymbol{\gamma}$ to $\boldsymbol{\gamma}'$.
\end{defn}
\nom[M4]{$\mathcal{M}_{J'}(\boldsymbol{\gamma},\boldsymbol{\gamma}')$}{Moduli space of $J'$-holomorphic curves in $W'= \R \times N$ from $\boldsymbol{\gamma}$ to $\boldsymbol{\gamma}'$}

\begin{rmk}\label{rmk: Morse-Bott for W'}
The stable Hamiltonian structure $(\alpha_0, \omega)$ can be perturbed to a  stable Hamiltonian structure $(\alpha_0', \omega')$ as described in Section~\ref{subsection: Morse-Bott}; the resulting Hamiltonian vector field has two nondegenerate orbits corresponding to $e$ and $h$ (and denoted by the same names) instead of the Morse-Bott torus ${\mathcal N}$. Any almost complex structure $J' \in {\mathcal J}_{W'}$ is then perturbed to an almost complex structure adapted to $(\alpha_0', \omega')$.

Because of the simple nature of the $J'$-holomorphic Morse-Bott buildings, and since  $\widehat{\mathcal O}_k$ for $k \le 2g$ are finite sets, for a sufficiently small
perturbation $(\alpha_0', \omega')$ of $(\alpha_0, \omega)$ and for every $\boldsymbol{\gamma}, \boldsymbol{\gamma}' \in {\mathcal O}_k$, there is a bijection between the ECH index one moduli spaces  $\mathcal{M}_{J'}^{I=1}(\boldsymbol{\gamma}, \boldsymbol{\gamma}')$ and the index  one moduli spaces of holomorphic maps from $\boldsymbol{\gamma}$ to $\boldsymbol{\gamma}'$ for the perturbation of $J'$ adapted to $(\alpha_0', \omega')$. See
\cite[Lemma~4.4.5]{CGH2} and \cite[Lemma~7.1.2]{CGH2}.

Since the Morse-Bott situation and its generic perturbation are completely equivalent,
we will go from one point of view to the other as more convenient, often without explicit mention. For this reason, {\em we will still denote by $J'$ a perturbation of $J'$ adapted to $(\alpha_0', \omega')$, and by $\mathcal{M}_{J'}(\boldsymbol{\gamma}, \boldsymbol{\gamma}')$ the moduli spaces of holomorphic maps with respect to the perturbation of $J'$.}
\end{rmk}

The definition of the analogous space $\mathcal{J}_{\overline{W'}}$ for $\overline{W'}=\R\times\overline{N}$ is slightly more complicated, but completely analogous to Definition \ref{defn: almost complex structures on overline W}. We denote $D^2_\varepsilon = \{ |z| \le \varepsilon \} \subset \C$ and, for $\varepsilon \le 1$, we define $V_\varepsilon$ the mapping torus of $\hh|_{D^2_\varepsilon}$. For $\varepsilon =1$ this coincides with the already defined solid torus $V$.

\begin{defn} \label{defn: almost complex structures on R times overline N MB version}
Fix $\varepsilon,\delta>0$ sufficiently small and $k\gg 0$. Then $\mathcal{J}_{\overline{W'}}^{MB}$ is the space of $C^\infty$-smooth $(\overline\alpha_0,\overline\omega)$-adapted almost complex structures $\overline{J'}$ on $\R\times \overline{N}$ which satisfy the following:
\begin{enumerate}
\item there exists an $(\overline\alpha_0,\overline\omega)$-adapted almost complex structure $\overline{J'_0}$ which restricts to the standard complex structure on the subsurface $D^2\subset \overline{S}$ of each fiber;
\item $\overline{J'}=\overline{J'_0}$ on $\R\times (V_\varepsilon \cup N)$ and $|\overline{J'}-\overline{J'_0}|_k<\delta$ over $\R\times \overline{N}$.
\end{enumerate}
Here $|\cdot|_k$ is some fixed $C^k$-norm.
\end{defn}

The Morse-Bott theory for $\overline{W'}$ is more subtle than that of $W'$, and therefore we will discuss it in more detail. In the following definition, all $\overline{J}'$-holomorphic maps are allowed to be nodal and to have disconnected domains. Let $\boldsymbol{\gamma}_+ = \gamma_{+,1}^{k_1} \ldots  \gamma_{+,{m_+}}^{k_{m_+}}h^a$ be an orbit set such that $\gamma_{+,i}\in \mathcal{P}\cup\{e,\delta_0\}$ and $a \in \{0,1\}$. Let $\boldsymbol{\gamma}_- = \gamma_{-,1}^{l_1} \ldots  \gamma_{-,{m_-}}^{l_{m_-}}e^b$ be an orbit set such that $\gamma_{-,i}\in \mathcal{P}\cup\{h,\delta_0\}$ and $b \ge 0$.

\begin{defn}[Morse-Bott holomorphic buildings] \label{defn: Morse-Bott buildings}
Let $\overline{J'}$ be an almost complex structure in $\mathcal{J}_{\overline{W'}}^{MB}$ and let $\boldsymbol{\gamma}_+ =  \gamma_{+,1}^{k_1} \ldots  \gamma_{+,{m_+}}^{k_{m_+}} h^a$ and $\boldsymbol{\gamma}_- = \gamma_{-,1}^{l_1} \ldots  \gamma_{-,m_-}^{l_{m_-}}e^b$ be orbit sets as above. A $\overline{J'}$-holomorphic Morse-Bott building $u$ from $\boldsymbol{\gamma}$ to $\boldsymbol{\gamma}'$ consists of
\begin{enumerate}
\item a (possibly empty) set of negative gradient flow trajectories from
$h$ to $e$ in ${\mathcal N}$, and
\item a (possibly empty) set of $\overline{J'}$-holomorphic maps $u_i: F_i \to \overline{W'}$, for $i= 1, \ldots , n$, such that the following hold:
\begin{enumerate}
\item Positive ends of $u_n$ converge to multiples of $\gamma_{+,i}$ with total multiplicity $k_i$ and to a simple orbit in ${\mathcal N} - \{e \}$ if $a\ne 0$.
\item For $i=1, \ldots, n-1$, the negative ends of $u_{i+1}$ are paired with the positive ends of $u_{i}$. For each pair, the two ends either converge to the same orbit, or they converge to covers of orbits in ${\mathcal N}- \{ e,h \}$ with the same multiplicity and there is a negative gradient flow line of $\overline{g}$ in ${\mathcal N}$ from the limit of $u_{i+1}$ to the limit of $u_{i}$. 
\item Negative ends of $u_1$ converge to multiples of $\gamma_{-,i}$ with total multiplicity $l_i$ and to multiple covers of orbits in ${\mathcal N} - \{ h \}$ with total multiplicity $b$.
\end{enumerate}
\end{enumerate}
\end{defn}

The stable Hamiltonian structure $(\overline{\alpha}_0, \overline{\omega})$ can be perturbed as described in Section~\ref{subsection: Morse-Bott}
--- in fact, $(N^+, \alpha_0, \omega)$ embeds in $(\overline{N}, \overline{\alpha}_0,
\overline{\omega})$. The perturbed stable Hamiltonian structure obtained in this way will be called a {\em nondegenerate perturbation of $(\overline{\alpha}_0,
\overline{\omega})$.} After the perturbation, the Morse-Bott torus ${\mathcal N}$ is replaced by two non-degenerate orbits corresponding to $e$ and $h$ (and still denoted by the same name) and all orbits in $\overline{N} - \partial N$ which intersect a fiber at most $2g$ times remain unchanged.

We fix a nondegenerate perturbation $(\overline{\alpha}_0',
\overline{\omega}')$ of $(\overline{\alpha}_0, \overline{\omega})$ once and for all.
We define the following space of almost complex structures on $\overline{W'}$ in analogy
to Definition~\ref{defn: almost complex structures on R times overline N MB version}.
\begin{defn} \label{defn: almost complex structures on R times overline N}
Fix $\varepsilon,\delta>0$ sufficiently small and $k\gg 0$. Then $\mathcal{J}_{\overline{W'}}$ is the space of $C^\infty$-smooth $(\overline{\alpha}_0',\overline{\omega}')$-adapted almost complex structures $\overline{J'}$ on $\R\times \overline{N}$ which satisfy the following:
\begin{enumerate}
\item there exists an $(\overline{\alpha}_0',\overline{\omega}')$-adapted almost complex structure $\overline{J'_0}$ which restricts to the standard complex structure on the subsurface $D^2\subset \overline{S}$ of each fiber;
\item $\overline{J'}=\overline{J'_0}$ on $\R\times (V_\varepsilon \cup N)$ and $|\overline{J'}-\overline{J'_0}|_k<\delta$ over $\R\times \overline{N}$;
\item every $\overline{J'}$-holomorphic map $u : F \to \overline{W'}$ connecting two orbit sets in $\overline{\mathcal{O}}_k$, for $k \le 2g$, is close to breaking to a holomorphic Morse-Bott building for some almost-complex structure in $\mathcal{J}_{\overline{W'}}^{MB}$.
\end{enumerate}
Here $|\cdot|_k$ is some fixed $C^k$-norm.
\end{defn}
 \nom[JsetWpo]{$\mathcal{J}_{\overline{W'}}$}{Space of $C^\infty$-smooth $(\overline\alpha_0,\overline\omega)$-adapted almost complex structures on $\overline{W'}$ satisfying Definition~\ref{defn: almost complex structures on R times overline N}}

\begin{rmk}
Condition (3) in Definition~\ref{defn: almost complex structures on R times overline N} will
be used to constrain the topological behavior of $\overline{J'}$-holomorphic maps: to show that a map has a certain behavior, we will show that the corresponding Morse-Bott building does, which is often simpler.
\end{rmk}

\begin{defn} \label{Arianna overline N}
Given $\delta_0^p\boldsymbol{\gamma}, \delta_0^q\boldsymbol{\gamma}'\in \overline{\mathcal{O}}_k$ (with $p, q \ge 0$ and $k \le 2g$) and $\overline{J'}\in \mathcal{J}_{\overline{W'}}$, let $\mathcal{M}_{\overline{J'}}(\delta_0^p\boldsymbol{\gamma}, \delta_0^q\boldsymbol{\gamma}')$ be the moduli space of $\overline{J'}$-holomorphic maps in $\overline{W'}$ from $\delta_0^p\boldsymbol{\gamma}$ to $\delta_0^q\boldsymbol{\gamma}'$ without fiber components.
\end{defn}
\nom[M5]{$\mathcal{M}_{\overline{J'}}(\delta_0^p\boldsymbol{\gamma},\delta_0^q\boldsymbol{\gamma}')$}{Moduli space of $(\overline{W'},\overline{J'})$-holomorphic maps from $\delta_0^p\boldsymbol{\gamma}$ to $\delta_0^q\boldsymbol{\gamma}'$ without fiber components}

Maps in $\mathcal{M}_{\overline{J'}}(\delta_0^p\boldsymbol{\gamma}, \delta_0^q\boldsymbol{\gamma}')$, with $\delta_0^p\boldsymbol{\gamma}, \delta_0^q\boldsymbol{\gamma}'\in \overline{\mathcal{O}}_k$, will be called
{\em degree $k$ PFH curve} (or {\em degree $k$ $\overline{W'}$-curve}). A degree $k$ PFH curve can be viewed as a degree $k$ holomorphic multisection of the holomorphic fibration $\R \times \overline{N} \to \R \times S^1$. A degree $2g$ PFH curve will also be called a
{\em PFH curve} (or {\em $\overline{W'}$-curve}).

\begin{defn}
The {\em section at infinity} $\sigma_\infty'$ in $\R \times \overline{N}$ is $\R\times\delta_0$.
\end{defn}

Choose a point $z^\dagger_\infty \in \overline{S}$ which is sufficiently close to $z_{\infty}$. Then there is a periodic orbit $\delta_0^\dagger$ of $\overline{R}_0$ with period $m$ which passes through $z^\dagger_\infty$. We then write $(\sigma'_\infty)^\dagger=\R\times \delta_0^\dagger$. Both $\sigma_\infty'$ and $(\sigma'_\infty)^\dagger$ are $\overline{J'}$-holomorphic sections for every $\overline{J'} \in \mathcal{J}_{\overline{W'}}$.

\begin{defn}\label{defn: n for PFH}
Given a degree $k$ PFH curve $\overline{u}$ in $\overline{W'}$, we define $n'(\overline{u}) = \langle \overline{u}, (\sigma'_\infty)^\dagger \rangle.$
\end{defn}

The proof of the following is similar to that of Lemma~\ref{pisapia}.

\begin{lemma}\label{demagistris}
The intersection number $n'(\overline{u})$ satisfies the following properties:
\begin{enumerate}
\item $n'(\overline{u}) \ge 0$ and $n'(\overline{u}) = 0$ if and only if the image of $\overline{u}$ is disjoint from $(\sigma'_\infty)^\dagger$;
\item if $\langle \overline{u}, \R \times \delta_0 \rangle >0$, then $n'(\overline{u}) >0$; and
\item $n'(\overline{u})$ is independent of the choice of $z^\dagger_\infty$.
\end{enumerate}
\end{lemma}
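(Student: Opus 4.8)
The plan is to mimic the proof of Lemma~\ref{pisapia} almost verbatim, since $n'(\overline{u})$ for PFH curves plays exactly the role that $n(\overline{u})$ played for multisections of $\overline{W}$. The key geometric facts are that the section at infinity $\sigma'_\infty = \R\times\delta_0$ is $\overline{J'}$-holomorphic for every $\overline{J'}\in\overline{\mathcal{J'}}$ (because $\overline{J'}$ restricts to the standard complex structure on $D^2\subset\overline{S}$, on which $\delta_0=\{z_\infty\}\times[0,2]/\sim$ sits as the central fiberwise point), and likewise the perturbed orbit $\delta_0^\dagger$ (a degree-$m$ cover through the nearby point $z^\dagger_\infty$) sweeps out a $\overline{J'}$-holomorphic multisection $(\sigma'_\infty)^\dagger=\R\times\delta_0^\dagger$ over $\R\times S^1$. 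So both objects against which we intersect $\overline{u}$ are genuine holomorphic curves in a $4$-dimensional almost complex manifold.

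First I would prove (1). Since $\overline{u}$ and $(\sigma'_\infty)^\dagger$ are both $\overline{J'}$-holomorphic in the $4$-manifold $\R\times\overline{N}$, positivity of intersections (in the sense of McDuff, as already invoked for Lemma~\ref{pisapia}) gives that every local intersection point contributes positively, hence $n'(\overline{u})=\langle\overline{u},(\sigma'_\infty)^\dagger\rangle\geq 0$, with equality exactly when the images are disjoint. One must note that $\overline{u}$, being a PFH curve connecting orbit sets in $\overline{\mathcal{O}}_{2g}$ with no fiber components and not containing multiples of $\delta_0$ (Convention~\ref{convention for gamma and bf y}), is not equal to and does not share a component with $\sigma'_\infty$; and $(\sigma'_\infty)^\dagger$, being the degree-$m$ cover through a point $z^\dagger_\infty\neq z_\infty$ distinct from $\delta_0$, likewise cannot be a component of $\overline{u}$ — so the intersection number is well-defined and finite.

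Next, for (2): suppose $\langle\overline{u},\R\times\delta_0\rangle>0$, i.e.\ $\overline{u}$ meets the section at infinity. Since $\overline{u}$ is a nonconstant holomorphic map (on each component), its image is open in the fiber direction near such an intersection point by the open mapping property, so the image of $\overline{u}$ meets every fiber $\{(s,t)\}\times\overline{S}$ near $z_\infty$ in a small open set. In particular the image meets the $m$-point orbit $\mathcal{O}(z^\dagger_\infty)\cap(\text{nearby fiber})$, and tracking this around as $t$ runs over $\R/2\Z$ forces at least $m$ intersection points with $\delta_0^\dagger$ — the same counting argument as in Lemma~\ref{pisapia}(2), now using that $\delta_0^\dagger$ has period $m$ and winds $m$ times around in the $\phi$-direction. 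Hence $n'(\overline{u})\geq m>0$; in the statement only $n'(\overline{u})>0$ is claimed, which is the weaker conclusion. Finally, (3): any two admissible choices of $z^\dagger_\infty$ near $z_\infty$ (in the complement of the Lagrangian data, which here is vacuous since $\overline{W'}$ has no Lagrangian boundary) are joined by a path, and the corresponding orbits $\delta_0^\dagger$ sweep out a homotopy of multisections all disjoint from the asymptotic orbit sets of $\overline{u}$; since $n'(\overline{u})$ is then a homological intersection pairing, it is invariant along the path. I do not expect any genuine obstacle here — the content is entirely parallel to Lemma~\ref{pisapia} — the only mild subtlety is checking that in the Morse-Bott (unperturbed) setting $\overline{u}$ is a building rather than a genuine curve, so one should either work with a fixed small perturbation of the Morse-Bott family as indicated in the text (so that $\overline{u}$ is an honest degree-$2g$ multisection) or argue componentwise on the levels of the building and sum; either way positivity of intersections and the open mapping property apply on each piece.
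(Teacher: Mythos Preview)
Your proposal is correct and follows exactly the approach the paper indicates: the paper simply states that the proof is similar to that of Lemma~\ref{pisapia}, and you have carried out that parallel argument in detail, invoking positivity of intersections for (1), the open mapping property for (2), and a homotopy of choices of $\delta_0^\dagger$ for (3). One small remark: your appeal to Convention~\ref{convention for gamma and bf y} to rule out shared components is not quite on point (that convention governs the notation $\gamma$, not PFH curves in general); the cleaner reason $\overline{u}$ cannot share a component with $(\sigma'_\infty)^\dagger$ is that $\delta_0^\dagger$ has period $m>2g$ and lies in $\overline{N}-N$, so it is not in $\overline{\mathcal{P}}$ and hence not an asymptotic orbit of any PFH curve between orbit sets in $\overline{\mathcal{O}}_{2g}$.
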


\begin{lemma} \label{pluto}
If $\boldsymbol{\gamma}$ and $\boldsymbol{\gamma}'$ are contained in $N$, then $\mathcal{M}^{n'=0}_{\overline{J'}}(\boldsymbol{\gamma},\boldsymbol{\gamma}') = \mathcal{M}_{J'}(\boldsymbol{\gamma},\boldsymbol{\gamma}')$.
\end{lemma}

\begin{proof}
Let $\overline{u}$ be a $\overline{J'}$-holomorphic map in $\mathcal{M}_{\overline{J'}}(\boldsymbol{\gamma},\boldsymbol{\gamma}')$ satisfying the
constraint $n'(\overline{u})=0$.  By Condition (3) in Definition~\ref{defn: almost complex structures on R times overline N}, we can assume that $\overline{u}$ is a Morse-Bott building
to the purpose of finding homological constraints on it.

Let $T_{\rho_0}\subset V$ be the torus $\{\rho=\rho_0\}$, oriented as the boundary of the solid torus $\{\rho\leq \rho_0\}$. The torus
$T_{\rho_0}$ is foliated by orbits of the Hamiltonian vector field $\overline{R}_0$ and, for a
dense set of $\rho_0$, those orbits are closed. If $\delta_{\rho_0}$ is one such orbit, then
the homology class $[\delta_{\rho_0}]$ is a rational multiple of $[\delta_0^\dagger]$ in $H_2(\overline{N} - \boldsymbol{\gamma} \cup \boldsymbol{\gamma}')$. This implies that $\langle \overline{u}, \R \times \delta_{\rho_0} \rangle =0$ for $\rho_0$ in a dense set. The positivity of intersections then implies that the image of $\overline{u}$ is contained in $N$.
\end{proof}

\subsection{Almost complex structures and moduli spaces for $W_+$, $\overline{W}_+$, $W_-$, and $\overline{W}_-$}

\subsubsection{Almost complex structures}

\begin{defn} \label{defn: admissible J for W plus}
An almost complex structure $\overline{J}_+$ on $\overline{W}_+$ is {\em admissible} if it is the restriction to $\overline{W}_+$ of an almost complex structure $\overline{J'}\in \mathcal{J}_{\overline{W'}}$ on $\overline{W'}$. If $\overline{J}_+$ agrees with $\overline{J}$ (resp.\ $\overline{J'}$) at the positive (resp.\ negative) end, then $\overline{J}_+$ is {\em compatible} with $\overline{J}$ (resp.\ $\overline{J'}$).

An {\em admissible} almost complex structure $J_+$ on $W_+$ is the restriction of an admissible almost complex structure on $\overline{W}_+$. The admissibility criteria for $J_-$ and $\overline{J}_-$ on $W_-$ and $\overline{W}_-$ are analogous.

The space of $C^\infty$-smooth admissible almost complex structures $J_{\pm}$ on $W_{\pm}$ will be denoted by $\mathcal{J}_{W_\pm}$.
 The space of $C^\infty$-smooth admissible almost complex structures $\overline{J}_{\pm}$ on $\overline{W}_{\pm}$  will be denoted by $\mathcal{J}_{\overline{W}_\pm}^{MB}$ if $\overline{J}_{\pm}$ is the restriction of $\overline{J'} \in {\mathcal J}_{\overline{W'}}^{MB}$ and by $\mathcal{J}_{\overline{W}_\pm}$ if $\overline{J}_{\pm}$ is obtained from an almost complex structure in $\mathcal{J}_{\overline{W}_\pm}^{MB}$ by modifying its positive (resp. negative) end as in Remark~\ref{rmk: Morse-Bott for W'}.
\end{defn}

\nom[Jsetz]{$\mathcal{J}_{W_+}$, $\mathcal{J}_{W_-}$, $\mathcal{J}_{\overline{W}_+}$, $\mathcal{J}_{\overline{W}_-}$}{Spaces of $C^\infty$-smooth admissible almost complex structures on $W_+$, $W_-$, $\overline{W}_+$, $\overline{W}_-$}

\begin{rmk}
If $J_+\in \mathcal{J}_{W_+}$, then the projection $\pi_{B_+}: W_+\to B_+$ is holomorphic. This is due to the fact that the fibers $\{(s,t)\}\times S$ are holomorphic and $J_+$ takes $\bdry_s$ to $\bdry_t$. The same holds for $\overline{J}_+$, $J_-$, and $\overline{J}_-$.
\end{rmk}

\subsubsection{Moduli spaces for $W_+$}
\label{subsubsection: W plus curves}

Let $(F,j)$ be a compact Riemann surface, possibly disconnected, with an $l$-tuple of punctures ${\bf p}=\{p_1 ,...,p_l\}$ in the interior of $F$ and a $k$-tuple of punctures ${\bf q} =\{q_1,...,q_k\}$ on $\partial F$, such that (i) each component of $F$ has nonempty boundary and at least one interior puncture and (ii) each component of $\bdry F$ has at least one boundary puncture. We write $\dot{F} =F-{\bf p} -{\bf q}$ and $\bdry \dot F= \bdry F- \mathbf{q}$.

\begin{defn} \label{defn: W plus curve}
Let $J_+\in \mathcal{J}_{W_+}$, $\mathbf{y}$ be a $k$-tuple of ${\bf a}\cap \hh({\bf a})$, and $\boldsymbol{\gamma}=\prod_j \gamma_j^{m_j}$ be an orbit set in $\widehat{\mathcal{O}}_k$. Then a {\em degree $k\leq 2g$ multisection of $(W_+,J_+)$ from ${\bf y}$ to $\boldsymbol{\gamma}$} is either (i) a holomorphic map
$$u: (\dot{F} ,j) \rightarrow (W_+,J_+)$$
which is a degree $k$ multisection of $\pi_{B_+}: W_+\to B_+$ and which additionally satisfies the following:
\begin{enumerate}
\item $u(\partial \dot{F} )\subset L^+_{\bf a}$ and $u$ maps each connected component of $\bdry \dot F$ to a different $L^+_{a_i}$;
\item $\displaystyle{\lim_{w\rightarrow q_i}\pi_{\R} \circ u (w) =+\infty}$ and $\displaystyle{\lim_{w\rightarrow p_i} \pi_{\R}\circ u(w) =-\infty}$;
\item $u$ converges to a strip over $[0,1]\times {\bf y}$ near ${\bf q}$;
\item $u$ converges to a cylinder over a multiple of some $\gamma_{j}$ near each puncture $p_i$ so that the total multiplicity of $\gamma_j$ over all the $p_i$'s is $m_j$;
\item the energy of $u$ given by Equation~\eqref{eqn: energy of Lipshitz curve} is finite;
\end{enumerate}
or is (ii) a Morse-Bott building 
consisting in a $J_+$ holomorphic curve as above, except for some negative ends at
Reeb orbits in $\mathcal{N}-\{e,h\}$, followed by gradient flow trajectories from those orbits to $e$.
Here $\pi_\R$ is the projection $\pi_{B_+}:W_+\to B_+\subset \R\times S^1$, followed by the projection to $\R$.

A {\em $(W_+,J_+)$-curve} is a degree $2g$ multisection of $(W_+,J_+)$.
\end{defn}

The finiteness of the Hofer energy $E(u)$ implies that $u$ is a cylinder over a Reeb chord or a closed orbit in neighborhoods of punctures $p_i$ and $q_i$. Hence (5) implies (3) and (4) for some $\mathbf{y}$ and $\boldsymbol{\gamma}$. Moreover, since the orbits are nondegenerate, the convergence is exponential by the work of Abbas~\cite{Abb} for chords and Hofer-Wysocki-Zehnder~\cite{HWZ1} for closed orbits.

\begin{rmk}
For all practical purposes, it suffices to assume that the Morse-Bott family on $\bdry N$ has been perturbed into a pair $h$, $e$ of nondegenerate orbits  as in \ref{rmk: Morse-Bott for W'} and that $J_+$ is the restriction of $J'=J_0$ which satisfies Definition~\ref{defn: almost complex structures J_tau} for the perturbed Hamiltonian vector field. Since the two points of view are completely equivalent, we will switch liberally from one to another, often without explicit mention.
\end{rmk}

Let $\check{W}_+$ be $W_+$ with the ends $\{s>3\}$ and $\{s<-1\}$ removed. We can view $\check{W}_+$ as a compactification of $W_+$, obtained by attaching $[0,1]\times S$ at $s=+\infty$ and $N$ at $s=-\infty$. We define $Z_{{\bf y},\boldsymbol{\gamma}}\subset \check{W}_+$ as the subset
$$Z_{{\bf y},\boldsymbol{\gamma}}=(L^+_{\bf a}\cap \check{W}_+) \cup (\{3\}\times[0,1]\times {\bf y}) \cup(\{-1\}\times \boldsymbol{\gamma}).$$
As in Section~\ref{subsection: HF holomorphic curves and moduli spaces}, the $W_+$-curve $u:\dot F \to W_+$ can be compactified to a continuous map
$$\check u: (\check F,\bdry \check F)\to (\check W_+,Z_{{\bf y},\boldsymbol{\gamma}}).$$

We write $\mathcal{M}_{J_+}({\bf y},\boldsymbol{\gamma})$ for the moduli space of multisections of $(W_+,J_+)$ from ${\bf y}$ to $\boldsymbol{\gamma}$.  \nom[H2]{$H_2(W_+, \mathbf{y}, \boldsymbol{\gamma})$}{Homology classes of continuous multisections for $\Phi$} We denote by $H_2(W_+, \mathbf{y}, \boldsymbol{\gamma})$ the equivalence classes of continuous degree $2g$ multisections in $W_+$ satisfying Conditions (1)--(4) of Definition~\ref{defn: W plus curve}, where two multisections are equivalent if they  represent the same element in $H_2(\check W_+,Z_{{\bf y},\boldsymbol{\gamma}})$,
and by  $\mathcal{M}_{J_+}({\bf y},\boldsymbol{\gamma},A)$ the moduli space of maps $u \in \mathcal{M}_{J_+}({\bf y},\boldsymbol{\gamma})$ which represent the class $A \in H_2(W_+, \mathbf{y}, \boldsymbol{\gamma})$. Then $$\mathcal{M}_{J_+}({\bf y},\boldsymbol{\gamma})=\coprod_{A\in H_2(W_+, {\bf y},\boldsymbol{\gamma})}\mathcal{M}_{J_+}({\bf y},\boldsymbol{\gamma},A).$$

\subsubsection{Moduli spaces for $\overline{W}_+$}

Let $\overline{J}_+ \in {\mathcal J}_{\overline{W}_+}$, $\mathbf{y}$ be a $k$-tuple of ${\bf a}\cap \hh({\bf a})$ and let $\boldsymbol{\gamma} \in\widehat{\mathcal{O}}_k$.  Then a {\em degree $k\leq 2g$ multisection of $(\overline{W}_+,\overline{J}_+)$ from ${\bf y}$ to $\boldsymbol{\gamma}$} is defined as in Definition~\ref{defn: W plus curve}, where $W_+$, $J_+$, $L_{a_i}^+$ are replaced by $\overline{W}_+$, $\overline{J}_+$, $L_{\widehat{a}_i}^+$.\footnote{In particular, the image of the multisection is disjoint from the section at infinity $\sigma_{\infty}^+$, defined in Definition~\ref{def: section at infty for W plus}.}
We write $\mathcal{M}_{\overline{J}_+}({\bf y},\boldsymbol{\gamma})$ for the moduli space of multisections of $(\overline{W}_+,\overline{J}_+)$ from ${\bf y}$ to $\boldsymbol{\gamma}$.
\nom[M7]{$\mathcal{M}_{\overline{J}_+}({\bf y},\boldsymbol{\gamma})$}{Moduli space of multisections of $(\overline{W}_+,\overline{J}_+)$ from ${\bf y}$ to $\boldsymbol{\gamma}$}

\begin{rmk}
As in Section~\ref{acorn2}, we will assume that $\overline{J}_+$ is sufficiently close to a Morse-Bott almost complex structure for which all degree $k \le 2g$ multisections of $(\overline{W}_+, \overline{J}_+)$ are close to breaking into a holomorphic building. This will allow us to use the Morse-Bott formalism to prove topological constraints on the $\overline{J}_+$-holomorphic multisections. In the context of maps to $\overline{W}_+$, the definition of a Morse-Bott building is the same as in Definition~\ref{defn: Morse-Bott buildings}, with the only difference that the topmost holomorphic map $u_1$ takes values in $\overline{W}_+$ and is positively asymptotic to chords over intersection points in $\mathbf{a} \cap \hh (\mathbf{a})$.
\end{rmk}

\begin{defn} \label{def: section at infty for W plus}
The {\em section at infinity} $\sigma_{\infty}^+$ is the restriction of $\sigma_{\infty}'=\R \times \delta_0 \subset \overline{W'}$ to $\overline{W}_+$.
\end{defn}

Let $\delta_0^\dagger$ be the closed orbit of $\overline{R}_0$ used in Definition~\ref{defn: n for PFH}. We define $(\sigma_\infty^+)^\dagger$ as the restriction of $\R \times \delta_0^\dagger$ to $\overline{W}_+$. Both $\sigma_\infty^+$ and $(\sigma_\infty^+)^\dagger$ are $\overline{J}_+$-holomorphic for every almost complex
structure $\overline{J}_+ \in \mathcal{J}_{\overline{W}_+}$.

\begin{defn}
Given a multisection $\overline{u}$ of $\overline{W}_+$, we define $n^+(\overline{u}) = \langle \overline{u}, (\sigma_\infty^+)^\dagger \rangle.$
\end{defn}

\begin{lemma}\label{properties n+}
The intersection number $n^+(\overline{u})$ satisfies the following properties:
\begin{enumerate}
\item $n^+(\overline{u}) \ge 0$ and $n^+(\overline{u}) = 0$ if and only if the image of
$\overline{u}$ is disjoint from $(\sigma^+_\infty)^\dagger$;
\item if $\langle \overline{u}, \sigma_{\infty}^+ \rangle >0$, then $n^+(\overline{u}) \ge m$; moreover $n^+(\overline{u}) = m$ if and only if there is a unique transverse intersection point between the image of $\overline{u}$ and $\sigma_{\infty}^+$; and
\item $n^+(\overline{u})$ is independent of the choice of $\delta_0^\dagger$.
\end{enumerate}
\end{lemma}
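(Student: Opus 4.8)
The plan is to follow the proof of Lemma~\ref{pisapia} and of its PFH counterpart Lemma~\ref{demagistris}, the only genuinely new ingredient being a homological identity that upgrades the ``$n^{+}\ge m$'' statement to the ``moreover'' clause. First I would record the relevant structure of $(\sigma_{\infty}^{+})^{\dagger}=\R\times\delta_{0}^{\dagger}$. By Definition~\ref{defn: n for PFH} the point $z^{\dagger}_{\infty}$ is taken close to $z_{\infty}$, hence inside the region $\{\rho\le\varepsilon\}$ on which $\overline{J}_{+}$ agrees with the integrable model of Definition~\ref{defn: almost complex structures on R times overline N}; so $(\sigma_{\infty}^{+})^{\dagger}$ is $\overline{J}_{+}$-holomorphic. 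In Reeb coordinates $\delta_{0}^{\dagger}=\mathcal{O}(z^{\dagger}_{\infty})\times[0,2]/\!\sim$, where $\mathcal{O}(z^{\dagger}_{\infty})=\{\overline{h}^{j}(z^{\dagger}_{\infty})\}_{j=0}^{m-1}$ is the $m$-point $\overline{h}$-orbit of Lemma~\ref{pisapia}. Thus over the positive end of $\overline{W}_{+}$ the curve $(\sigma_{\infty}^{+})^{\dagger}$ is exactly the restriction of the multisection $\sigma_{\infty}^{\dagger}$ of Lemma~\ref{pisapia} --- an $m$-strand ``braid'' accumulating onto $\sigma_{\infty}^{+}$ as $z^{\dagger}_{\infty}\to z_{\infty}$ --- while over the negative end $\delta_{0}^{\dagger}$ wraps $m$ times around $\delta_{0}$, so that $[\delta_{0}^{\dagger}]=m[\delta_{0}]$ in the homology of the solid-torus neighborhood $V$ of $\delta_{0}$. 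Since $\overline{u}$ converges to an orbit set $\gamma\in\widehat{\mathcal{O}}_{k}$ (so $\delta_{0}$ does not occur in $\gamma$) and to a tuple $\mathbf{y}$ of ${\bf a}\cap h({\bf a})$-points disjoint from $\mathcal{O}(z^{\dagger}_{\infty})$, the image of $\overline{u}$ has no irreducible component in common with $(\sigma_{\infty}^{+})^{\dagger}$ or with $\sigma_{\infty}^{+}=\R\times\delta_{0}$, and (using that $L^{+}_{\widehat{a}_{i}}$ is disjoint from $\sigma^{+}_{\infty}$) the compactifications of all three are pairwise disjoint over the ends. Hence $n^{+}(\overline{u})$ and $\langle\overline{u},\sigma_{\infty}^{+}\rangle$ are honest homological intersection numbers, and the class identity above gives
\[
n^{+}(\overline{u})=m\,\langle\overline{u},\sigma_{\infty}^{+}\rangle .
\]

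Granting this, the lemma is a short computation. Part (1) is positivity of intersections of the $\overline{J}_{+}$-holomorphic curves $\overline{u}$ and $(\sigma_{\infty}^{+})^{\dagger}$ in the four-manifold $\overline{W}_{+}$: each intersection point of their images carries a strictly positive local multiplicity, so $n^{+}(\overline{u})\ge0$, with equality iff the images are disjoint. For part (3), two admissible choices of $z^{\dagger}_{\infty}$ (both in $\{0<\rho\le\varepsilon\}$ and off the arcs $\vec{a}_{i,j},\overline{h}(\vec{a}_{i,j})$) give orbits $\delta_{0}^{\dagger}$ isotopic through closed orbits inside $\{0<\rho\le\varepsilon\}\subset V$, a region disjoint from the asymptotics of $\overline{u}$; so the corresponding cylinders are homologous rel ends and $n^{+}(\overline{u})$ is unchanged, exactly as in Lemma~\ref{pisapia}(4). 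For part (2): $\sigma_{\infty}^{+}$ is $\overline{J}_{+}$-holomorphic and shares no component with $\overline{u}$, so $\langle\overline{u},\sigma_{\infty}^{+}\rangle\ge0$ by positivity, with $\langle\overline{u},\sigma_{\infty}^{+}\rangle\ge1$ precisely when the images meet and $\langle\overline{u},\sigma_{\infty}^{+}\rangle=1$ iff they meet in a single transverse point. Combined with the displayed identity this yields $n^{+}(\overline{u})\ge m$ whenever $\langle\overline{u},\sigma_{\infty}^{+}\rangle>0$, and $n^{+}(\overline{u})=m$ iff $\overline{u}$ meets $\sigma_{\infty}^{+}$ in exactly one, transverse, point. (The inequality $n^{+}(\overline{u})\ge m$ can also be obtained geometrically, as in Lemma~\ref{pisapia}: an intersection of $\overline{u}$ with $\sigma_{\infty}^{+}$ forces the fiber projection of $\overline{u}$, by the open mapping theorem, to cover a neighborhood of $z_{\infty}$ and hence, once $z^{\dagger}_{\infty}$ is taken close enough to $z_{\infty}$, to meet each of the $m$ strands of $(\sigma_{\infty}^{+})^{\dagger}$.)

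The step I expect to require genuine care is the displayed identity, equivalently the assertion that $[(\sigma_{\infty}^{+})^{\dagger}]=m[\sigma_{\infty}^{+}]$ in the relative homology group in which these intersection numbers live. The class identity $[\delta_{0}^{\dagger}]=m[\delta_{0}]$ is clear in $H_{1}(V)$; the work is to produce the cobounding chain inside $\overline{W}_{+}$ rel the ends and off $\partial\overline{u}$, taking account of the non-cylindrical shape of $B_{\pm}$ and of the fact that the extended Lagrangians $L^{+}_{\widehat{a}_{i}}$ penetrate $D^{2}$ along the radial rays $\widehat{a}_{i,j}$ toward $\rho=0$. It is exactly this identity that rules out ``spurious'' intersections of $\overline{u}$ with $(\sigma_{\infty}^{+})^{\dagger}$ occurring where $\overline{u}$ misses $\sigma_{\infty}^{+}$, which is the only gap in the naive geometric argument and the reason the stronger ``moreover'' in part (2) does not follow from a bare transcription of Lemma~\ref{pisapia}. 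Once this is in hand, the remainder is routine.
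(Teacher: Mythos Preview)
Your approach is correct but differs from the paper's in emphasis. You put the homological identity $n^{+}(\overline{u})=m\,\langle\overline{u},\sigma_{\infty}^{+}\rangle$ at the center and flag it as the step requiring care; the paper instead promotes your parenthetical observation to the main argument. For part~(2), at each $p\in\dot F$ with $\overline{u}(p)\in\sigma_\infty^+$, the projection $\pi_{D^2}\circ\overline u$ along the symplectic connection is holomorphic and nonconstant, hence open; its local degree at $p$ equals the intersection multiplicity $d_p$ of $\overline u$ with $\sigma_\infty^+$ at $p$, so for $z_\infty^\dagger$ sufficiently close to $z_\infty$ each of the $m$ nearby points of $\mathcal O(z_\infty^\dagger)$ is covered with multiplicity $d_p$. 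Summing over $p$ gives $n^+(\overline u)\ge d\cdot m$ with $d=\langle\overline u,\sigma_\infty^+\rangle$, which already yields the first clause of~(2) and the ``only if'' direction of the moreover.

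Your identity would give the ``if'' direction cleanly, and the concern you raise about the radial Lagrangians can be dispatched: for a multisection $\overline u$ from ${\bf y}$ to $\gamma$ (neither involving $z_\infty$ or $\delta_0$), the compactified boundary $\check u(\partial\check F)$ is compact in $L^+_{\widehat{\bf a}}\cup(\{3\}\times[0,1]\times{\bf y})\cup(\{-1\}\times\gamma)$ and hence disjoint from $\sigma_\infty^+$, so it lies in $\{\rho\ge\rho_{\min}\}$ for some $\rho_{\min}>0$. Choosing $z_\infty^\dagger$ with $\rho$-coordinate below $\rho_{\min}$, the $3$-chain cobounding $(\sigma_\infty^+)^\dagger$ and $m\,\sigma_\infty^+$ can be taken inside $\{\rho<\rho_{\min}\}$ and thus off $\partial\overline u$, which is all your displayed equality needs. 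The paper's local route is shorter and sidesteps this global bookkeeping; your route, once the identity is secured, makes the full ``iff'' in~(2) immediate.
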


\begin{proof}
The proof is similar to that of Lemma \ref{pisapia}. The only difference is in (2), which we discuss in more detail. Consider $\overline{u}:\dot F \to \overline{W}_+$  and let $p \in \dot F$ be a point such that $\overline{u}(p) \in \sigma_{\infty}^+$. If $\pi_{D^2}$ is the projection of a neighborhood of $\overline{u}(p) \in \overline W_+$ to $D^2\subset \overline{S}$ along the symplectic connection (not to be confused with the balanced projection $\overline{\pi}_{D^2}$ from Section \ref{subsubsection: overline W pm}), then $\pi_{D^2}\circ \overline u$ is holomorphic and nonconstant, and therefore maps an open neighborhood of $p$ in $\dot F$ to an open neighborhood of $z_\infty$. This implies that $n^+(\overline{u})  \ge d\cdot m$, where $d$ is the multiplicity of the intersection between the image of $\overline{u}$ and $\sigma_{\infty}^+$.
\end{proof}

\begin{lemma} \label{lemma: restriction of overline W plus curve}
If $\overline{u}\in \mathcal{M}_{\overline{J}_+}^{n^+=0}({\bf y},\boldsymbol{\gamma})$, then $\op{Im}(\overline{u})\subset W_+$.
\end{lemma}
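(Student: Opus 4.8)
The plan is to mimic, almost line for line, the proof of Lemma~\ref{pluto}, the only genuinely new point being the interaction of the auxiliary divisors with the extended Lagrangian $L^+_{\widehat{\mathbf a}}$. Recall from Section~\ref{subsubsection: overline W pm} that, in balanced coordinates on $V=\overline N-\op{int}(N)\cong D^2\times(\R/2\Z)$, the Hamiltonian vector field is $\overline R_0=\bdry_t+\tfrac{\nu(\rho)}{2}\,\bdry_\phi$, and that under the identification $\overline S=(S\sqcup D^2)/\!\sim$ the region $\overline W_+\setminus W_+$ is $\overline\pi_{D^2}^{-1}(\{\rho\le\rho_0\})\subset\overline\pi_{B_+}^{-1}(B_+)\cap(\R\times V)$ for a suitable $\rho_0<1$; moreover $\nu(\rho)\equiv\tfrac{2\pi}{m}$ and $\overline h$ acts by the rigid rotation $\phi\mapsto\phi+\tfrac{2\pi}{m}$ on $\{\rho\le\tfrac12\}$. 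Suppose $\overline u\in\mathcal M^{n^+=0}_{\overline J_+}(\mathbf y,\gamma)$ and that $\op{Im}(\overline u)$ meets $\overline W_+\setminus W_+$; we derive a contradiction via positivity of intersections of $\overline J_+$-holomorphic curves in dimension four, exactly as in Lemma~\ref{pluto}.

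For each $\rho\in(0,\rho_0]$ at which $\nu(\rho)\in 2\pi\Q$ --- a dense set of radii, since $\nu$ is continuous and piecewise monotone --- the invariant torus $T_\rho=\{\rho\}\times(\R/2\Z)$ is foliated by closed $\overline R_0$-orbits forming a connected family; for $\rho\le\tfrac12$ this family is an $(\R/\tfrac{2\pi}{m}\Z)$-worth of orbits, each meeting a fixed $t$-slice in $m$ points spaced by $\tfrac{2\pi}{m}$. Because the basis arcs $a_i$ were chosen in Section~\ref{coconut} so that the $\phi$-coordinates of all their endpoints are congruent modulo $\tfrac{2\pi}{m}$, and because $\overline h$ is rotation by $\tfrac{2\pi}{m}$ near $z_\infty$, the parallel transport of the radial pieces $\overline a_{i,j}$, $\overline h(\overline a_{i,j})$ along $\bdry B_+$ sweeps out (in balanced coordinates) these very orbits, and in fact lands on a single ``bad'' orbit $\delta^{\mathrm b}_\rho\subset T_\rho$ for each $\rho\le\tfrac12$; an analogous statement holds on the shell $\tfrac12<\rho\le\rho_0$. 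Consequently, for every \emph{good} closed orbit $\delta$ on such a $T_\rho$ (i.e.\ every orbit in the complementary, dense, connected family) the surface $D_\delta:=(\R\times\delta)\cap\overline W_+=\overline\pi_{B_+}^{-1}(B_+)\cap(\R\times\delta)$ is a properly embedded $\overline J_+$-holomorphic divisor disjoint from $L^+_{\widehat{\mathbf a}}$; since $\mathbf y\subset S$ and $\gamma\subset N$ (Convention~\ref{convention for gamma and bf y}), the ends of $\overline u$ also avoid $D_\delta$, and sliding within the good, connected family shows that $[D_\delta]=[(\sigma^+_\infty)^\dagger]$ in the homology of the complement of $L^+_{\widehat{\mathbf a}}$ and of the ends of $\overline u$ (this uses that $\delta^\dagger_0$ passes through the good region, by the choice of $z^\dagger_\infty$ in Section~\ref{acorn2}). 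Hence $\langle\overline u,D_\delta\rangle=\langle\overline u,(\sigma^+_\infty)^\dagger\rangle=n^+(\overline u)=0$, so by positivity of intersections $\op{Im}(\overline u)\cap D_\delta=\varnothing$ for every good $\delta$ on every such $T_\rho$.

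It remains to upgrade this to $\op{Im}(\overline u)\subset W_+$. The good divisors $D_\delta$ fill up a dense open subset of $\overline\pi_{B_+}^{-1}(B_+)\cap(\R\times\{0<\rho\le\rho_0\})$, whose complement is $\R\times\bigcup_\rho\delta^{\mathrm b}_\rho$ together with $\{\rho=0\}$. If an interior point $p$ of $\dot F$ had $\overline u(p)$ in this region, then $\overline u(p)$ would lie on some bad orbit $\delta^{\mathrm b}_{\rho_1}$; as the holomorphic curve $\overline u$ cannot have a component contained in $\R\times\delta^{\mathrm b}_{\rho_1}$ (such a component would be a branched cover of the trivial cylinder over $\delta^{\mathrm b}_{\rho_1}$, which is not among the ends of $\overline u$), positivity of intersections would make $\overline u(p)$ an isolated, positively counted intersection with the divisor $D_{\delta^{\mathrm b}_{\rho_1}}$, which persists under perturbing $\delta^{\mathrm b}_{\rho_1}$ to nearby good orbits $\delta$ --- contradicting $\op{Im}(\overline u)\cap D_\delta=\varnothing$. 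Thus $\op{Im}(\overline u)$ avoids the extended region except possibly along $\bdry\dot F$; but a boundary component of $\dot F$ touching $\{\rho<\rho_0\}$ would force interior points of $\dot F$ into $\{\rho<\rho_0\}$ near it, which we have just excluded, so $\op{Im}(\overline u)\subset W_+$. The main obstacle is exactly this positivity-and-homology bookkeeping near $\sigma^+_\infty$: one must verify that the divisors $D_\delta$ genuinely miss $L^+_{\widehat{\mathbf a}}$ --- which is precisely why the congruence conditions on the endpoints of the $a_i$ and the rigidity of $\overline h$ near $z_\infty$ were built into the construction --- and that $[D_\delta]=[(\sigma^+_\infty)^\dagger]$ in the appropriate relative homology; the rest is a transcription of the proof of Lemma~\ref{pluto}.
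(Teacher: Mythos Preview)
Your proposal is correct and follows essentially the same approach as the paper, which simply says the proof is similar to that of Lemma~\ref{pluto} and omits it; you have filled in the one genuinely new issue, namely arranging that the auxiliary divisors $D_\delta$ miss the Lagrangian $L^+_{\widehat{\mathbf a}}$, and this is indeed handled by the congruence conditions from Section~\ref{coconut}. Two small imprecisions worth noting: for $\rho>\tfrac12$ the closed orbits on $T_\rho$ need not have period $m$, so there may be up to $4g$ ``bad'' orbits rather than one, and $[D_\delta]$ is only a \emph{rational multiple} of $[(\sigma^+_\infty)^\dagger]$ in the relevant homology (exactly as in Lemma~\ref{pluto}) rather than equal to it; neither affects the conclusion, since finitely many bad orbits still leave a dense family of good ones and a rational multiple of zero is zero.
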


The proof is similar to that of Lemma~\ref{pluto} and will be omitted.

\subsubsection{Moduli spaces for $\overline{W}_-$}
\label{subsubsection: overline W minus curves}

The moduli space of holomorphic maps which is used to define the map
$\Psi$ from $\widehat{PFC}$ to $\widehat{CF}$ is of a slightly
different type from the moduli space which is used to define the map
$\Phi$ from $\widehat{CF}$ to $\widehat{PFC}$. In particular, the
target of the holomorphic maps is $\overline{W}_-$ instead of $W_-$.
The reason we need to consider more complicated holomorphic curves
instead of curves analogous to $W_+$-curves is that the naive
$W_-$-curves do not have the desired Fredholm index. See the index
calculations in Section~\ref{subsubsection: the Fredholm index for W
pm} and Remark~\ref{rmk: explanation of considering volcano} for an
explanation.

Let $(F,j)$ be a compact Riemann surface, possibly disconnected, with an $l$-tuple of punctures ${\bf p}=\{p_1 ,\dots,p_l\}$ in the interior of $F$ and a $k$-tuple of punctures ${\bf q} =\{q_1 ,\dots,q_k\}$ on $\partial F$, such that (i) each component of $F$ has nonempty boundary and has at least one interior puncture and (ii) each component of $\bdry F$ has at least one boundary puncture. We write $\dot{F}=F-{\bf p} -{\bf q}$ and $\bdry \dot F= \bdry F-{\bf q}$.

\begin{defn} \label{def: W bar minus curves}
Let $\overline{J}_-\in \mathcal{J}_{\overline{W}_-}$, $\mathbf{y}$ be a $k$-tuple of ${\bf a}\cap \hh({\bf a})$, and $\boldsymbol{\gamma}= \prod_j\gamma_j^{m_j} \in \widehat{\mathcal{O}}_k$.
Then a {\em degree $k\leq 2g$ multisection of $(\overline{W}_-,\overline{J}_-)$ from ${\bf y}$ to $\boldsymbol{\gamma}$} is a $\overline{J}_-$-holomorphic map
$$\overline{u}: (\dot{F} ,j) \rightarrow (\overline{W}_-,\overline{J}_-)$$
which is a degree $k$ multisection of $\overline\pi_{B_-}: \overline{W}_-\to B_-$ and which additionally satisfies the following:
\begin{enumerate}
\item $\overline{u}(\partial \dot{F} )\subset L^-_{\widehat{\bf a}}$ and $\overline{u}$ maps each connected component of $\bdry \dot F$ to a different $L^-_{\widehat{a}_i}$;
\item $\displaystyle{\lim_{w\rightarrow q_i}\overline\pi_{\R}\circ \overline{u}(w) =-\infty}$ and $\displaystyle{\lim_{w\rightarrow p_i} \overline\pi_{\R}\circ \overline{u}(w) =+\infty}$;
\item $\overline{u}$ converges to a strip over $[0,1]\times {\bf y}$ near ${\bf q}$;
\item $\overline{u}$ converges to a cylinder over a multiple of some $\gamma_{j}$ near each puncture $p_i$ so that the total multiplicity of $\gamma_j$ over all the $p_i$'s is $m_j$;
\item the energy of $\overline{u}$ given by Equation~\eqref{eqn: energy of Lipshitz curve} is finite.
\end{enumerate}
Here $\overline\pi_\R$ is the projection $\overline\pi_{B_-}:\overline{W}_-\to B_-\subset \R\times S^1$, followed by the projection to $\R$.
\end{defn}

\begin{rmk}
As in Section~\ref{acorn2}, we will assume that $\overline{J}_-$ is sufficiently close to a Morse-Bott almost complex structure for which all degree $k \le 2g$ multisections of $(\overline{W}_-, \overline{J}_-)$ are close to breaking into a holomorphic building. This will allow us to use the Morse-Bott formalism to prove topological constraints on the $\overline{J}_-$-holomorphic multisections. In the context of maps to $\overline{W}_-$, the definition of a Morse-Bott building is the same as in Definition~\ref{defn: Morse-Bott buildings}, with the only difference that the bottommost holomorphic map $u_n$ takes values in $\overline{W}_-$ and is negatively asymptotic to chords over intersection points in $\mathbf{a} \cap \hh (\mathbf{a})$.
\end{rmk}

\begin{defn}
The {\em section at infinity} $\sigma_{\infty}^-$ is the restriction of $\sigma_{\infty}'=\R \times \delta_0 \subset \overline{W'}$ to $\overline{W}_-$.
\end{defn}

Let $\delta_0^\dagger$ be the closed orbit of $\overline{R}_0$ used in
Definition~\ref{defn: n for PFH}. We define $(\sigma_\infty^-)^\dagger$ as the restriction
of $\R \times \delta_0^\dagger$ to $\overline{W}_-$. Both $\sigma_\infty^-$ and $(\sigma_\infty^-)^\dagger$ are $\overline{J}_-$-holomorphic for every almost complex
structure $\overline{J}_- \in \mathcal{J}_{\overline{W}_-}$.

\begin{defn}
Given a multisection $\overline{u}$ of $\overline{W}_-$, we define $n^-(\overline{u}) = \langle \overline{u}, (\sigma_\infty^-)^\dagger \rangle.$
\end{defn}

The proof of the following is similar to that of Lemma~\ref{properties n+}.

\begin{lemma}\label{properties n-}
The intersection number $n^-(\overline{u})$ satisfies the following properties:
\begin{enumerate}
\item $n^-(\overline{u}) \ge 0$ and $n^-(\overline{u}) = 0$ if and only if the image of $\overline{u}$ is disjoint from $(\sigma^-_\infty)^\dagger$;
\item if $\langle \overline{u}, \sigma_{\infty}^- \rangle >0$, then $n^-(\overline{u})\ge m$; moreover $n^-(\overline{u})= m$ if and only if there is a unique transverse intersection point between the image of $\overline{u}$ and $\sigma_\infty^-$; and
\item $n^-(\overline{u})$ is independent of the choice of $\delta_0^\dagger$.
\end{enumerate}
\end{lemma}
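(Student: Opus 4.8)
The plan is to run the proofs of Lemma~\ref{pisapia} and Lemma~\ref{properties n+} essentially line by line: the only features they use --- a holomorphic fibration over a disk, a holomorphic section at infinity, and an almost complex structure that is a standard product near the core --- are all available for $\overline{W}_-$. First I would dispatch (1) and (3). Part (1) is immediate from the positivity of intersections of pseudoholomorphic curves in dimension four, applied to the two $\overline{J}_-$-holomorphic curves $\overline{u}$ and $(\sigma_\infty^-)^\dagger=\R\times\delta_0^\dagger$ in the four-manifold $\overline{W}_-$. For (3) I would note that as $z_\infty^\dagger$ moves within the complement of the arcs $\vec{a}_{i,j}$ and $\overline{h}(\vec{a}_{i,j})$, the closed orbit $\delta_0^\dagger$ of $\overline{R}_0$ through $z_\infty^\dagger$ sweeps out a family of closed orbits which stay disjoint from the Lagrangian boundary conditions $L^-_{\widehat{a}_i}$; hence $(\sigma_\infty^-)^\dagger$ varies in a family of cylinders disjoint from $\bdry_v\overline{W}_-$, so that $n^-(\overline{u})=\langle\overline{u},(\sigma_\infty^-)^\dagger\rangle$ is a homological quantity and in particular independent of the choice of $\delta_0^\dagger$.

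The real content is in (2), and here I would imitate the corresponding paragraph in the proof of Lemma~\ref{properties n+}. Assume $\langle\overline{u},\sigma_\infty^-\rangle>0$ and pick $p\in\dot F$ with $\overline{u}(p)\in\sigma_\infty^-=\R\times\delta_0$. Working in the balanced coordinates near the core, I would use the projection $\overline\pi_{D^2}$ to $D^2$ along the symplectic connection; since $\overline{J}_-$ is the standard product complex structure near $\{\rho=0\}$ (Definition~\ref{defn: almost complex structures on R times overline N}), the composite $\overline\pi_{D^2}\circ\overline{u}$ is holomorphic and nonconstant on a neighborhood of $p$, hence by the open mapping theorem it maps that neighborhood onto a neighborhood $U$ of $z_\infty$ in $D^2$, with local degree $d\ge 1$ equal to the intersection multiplicity of $\overline{u}$ with $\sigma_\infty^-$ at $p$. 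Using (3), I would shrink $z_\infty^\dagger$ toward $z_\infty$ until its whole $\overline{h}$-orbit $\mathcal{O}(z_\infty^\dagger)=\delta_0^\dagger\cap(\overline{S}\times\{0\})$, which consists of exactly $m$ points, lies inside $U$; together with the $\R$-direction of the fibration, positivity of intersections then gives $n^-(\overline{u})\ge d\cdot m\ge m$. The last clause of (2) follows since $n^-(\overline{u})=m$ forces $d=1$ and $p$ to be the unique point of $\op{Im}(\overline{u})\cap\sigma_\infty^-$, which must then be a transverse intersection, and conversely a single transverse intersection yields $n^-(\overline{u})=m$.

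The only real obstacle is bookkeeping. One must check that Definition~\ref{defn: almost complex structures on R times overline N}(2) --- $\overline{J'}=\overline{J'_0}$ on $\{\rho\le\varepsilon\}$, with $\overline{J'_0}$ standard on the $D^2$-factor --- genuinely makes $\overline\pi_{D^2}\circ\overline{u}$ holomorphic on the relevant neighborhood, and that the local covering degree of $\overline\pi_{D^2}\circ\overline{u}$ at $z_\infty$ translates correctly into the count of transverse intersections of $\op{Im}(\overline{u})$ with $(\sigma_\infty^-)^\dagger$ once $z_\infty^\dagger$ is brought close to $z_\infty$. Since the analogous statements were already settled for $\overline{W}_+$ in Lemma~\ref{properties n+}, I expect no genuinely new difficulty, in keeping with the claim that the proof is ``similar''.
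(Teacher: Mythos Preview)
Your proposal is correct and follows essentially the same approach as the paper, which simply refers the reader to the proof of Lemma~\ref{properties n+}; you have accurately reconstructed that argument with the obvious changes from $\overline{W}_+$ to $\overline{W}_-$. The key step for (2)---projecting to $D^2$ along the symplectic connection near $\sigma_\infty^-$, invoking openness to get $n^-(\overline{u})\ge d\cdot m$ where $d$ is the local intersection multiplicity---is exactly what the paper does in the $\overline{W}_+$ case.
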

\begin{defn}
A {\em $(\overline{W}_-,\overline{J}_-)$-curve} is a degree $2g$ multisection
of $(\overline{W}_-,\overline{J}_-)$ satisfying $n^-(\overline{u})=m$.
\end{defn}

Let $\check{\overline{W}}_-$ be $\overline{W}_-$ with the ends $\{s>1\}$ and $\{s<-3\}$ removed. We can view
$\check{\overline{W}}_-$ as a compactification of $\overline{W}_-$, obtained by attaching $[0,1]\times \overline{S}$ at $s=-\infty$ and $\overline{N}$ at $s=+\infty$. Also let
$$Z_{\boldsymbol{\gamma},{\bf y}}= (L^-_{\widehat{\bf a}}\cap \check{\overline W})\cup (\{1\}\times
\boldsymbol{\gamma})\cup (\{-3\}\times[0,1]\times {\bf y}).$$

We write $\mathcal{M}_{\overline{J}_-} (\boldsymbol{\gamma},{\bf y})$ for the moduli space of multisections of $(\overline{W}_-, \overline{J}_-)$ from $\boldsymbol{\gamma}$ to $\mathbf{y}$.
\nom[M7]{$\mathcal{M}_{\overline{J}_-} (\boldsymbol{\gamma},{\bf y})$}{Moduli space of multisections in $(\overline{W}_-,\overline{J}_-)$ from $\boldsymbol{\gamma}$ to $\mathbf{y}$} \nom[H3]{$H_2(\overline{W}_-, \boldsymbol{\gamma}, \mathbf{y})$}{Homology classes of continuous multisections for $\Psi$}
We denote by $H_2(\overline{W}_-, \boldsymbol{\gamma}, \mathbf{y})$  the equivalence classes of continuous degree $2g$ multisections in $\overline{W}_-$ satisfying Conditions (1)--(4) of Definition~\ref{def: W bar minus curves}, where two multisections are equivalent if they represent the same element in $H_2(\check{\overline W}_-,Z_{\boldsymbol{\gamma}, {\bf y}})$.
Then
$$\mathcal{M}_{\overline{J}_-} (\boldsymbol{\gamma},{\bf y})=\coprod_{A\in H_2(\overline W_-, \boldsymbol{\gamma},{\bf y})}\mathcal{M}_{\overline{J}_-}(\boldsymbol{\gamma},{\bf y},A).$$
Also let $\mathcal{M}_{\overline{J}_-} (\boldsymbol{\gamma},{\bf y};\overline{\frak m})\subset \mathcal{M}_{\overline{J}_-} (\boldsymbol{\gamma},{\bf y})$ be the subset of curves which pass through the marked point $\overline{\frak m}$.

The following lemma is similar to Lemma~\ref{lemma: restriction of overline W plus curve}:

\begin{lemma} \label{lemma: restriction of overline W minus curve}
If $\overline{u}\in \mathcal{M}_{\overline{J}_-}^{n^-=0} (\boldsymbol{\gamma},{\bf y})$, then $\op{Im}(\overline{u})\subset W_-$.
\end{lemma}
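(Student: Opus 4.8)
The plan is to mimic the proof of Lemma~\ref{pluto}, exploiting the positivity of intersections in dimension four together with the structure of the region $V=\overline{N}-\op{int}(N)$, foliated by the tori $T_{\rho_0}=\{\rho=\rho_0\}$. Let $\overline{u}\in \mathcal{M}_{\overline{J}_-}^{n^-=0}(\gamma,{\bf y})$; by definition $\langle \overline{u},(\sigma_\infty^-)^\dagger\rangle=0$, and by Lemma~\ref{properties n-}(1) the image of $\overline{u}$ is disjoint from $(\sigma_\infty^-)^\dagger=\R\times\delta_0^\dagger$. First I would observe that each torus $T_{\rho_0}\subset \overline{N}$ is foliated by orbits of $\overline{R}_0$, which are closed for a dense set of $\rho_0\in(0,1)$ (since $\overline{R}_0=\bdry_t+\tfrac{\nu(\rho)}{2}\bdry_\phi$ in balanced coordinates, closedness of the orbit on $T_{\rho_0}$ is a rational-slope condition on $\nu(\rho_0)$, hence holds on a dense set). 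For such $\rho_0$ pick a closed orbit $\delta_{\rho_0}\subset T_{\rho_0}$; then $\R\times\delta_{\rho_0}$ is an $\overline{J}_-$-holomorphic cylinder in $\overline{W}_-$ (it is a parallel-transport torus lying over a horizontal circle in $B_-$, and $\overline{J}_-$ is adapted so the section-at-infinity–type surfaces over the fibers are holomorphic; more precisely $\delta_{\rho_0}$ lies in the region $\{\rho\le\varepsilon\}$ if $\rho_0\le\varepsilon$, where $\overline{J}_-=\overline{J'_0}$ is the standard structure, and for general $\rho_0$ one uses that $\R\times\delta_{\rho_0}$ is still $\overline{J}_-$-invariant by adaptedness of the stable Hamiltonian structure).

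Next I would show $\langle \overline{u},\R\times\delta_{\rho_0}\rangle=0$ for a dense set of $\rho_0$. The point is the homological identity: in $H_2(\overline{N}-(\gamma\cup\gamma');\Z)$ — or rather, since here one endpoint of $\overline{u}$ is the $2g$-tuple ${\bf y}$ on a page rather than an orbit set, in the appropriate relative homology group computing intersection numbers of $\overline{u}$ — the class $[\delta_{\rho_0}]$ is a positive rational multiple of $[\delta_0^\dagger]$. Indeed all the $T_{\rho_0}$ are isotopic in $V$ (the isotopy $\rho_0\mapsto\rho_0'$ through tori of this family), so $[\delta_{\rho_0}]$ and $[\delta_{\rho_0'}]$ differ by the meridian class which bounds in $V$ and is disjoint from $\gamma,{\bf y}$; pushing $\rho_0\to 0$, $[\delta_{\rho_0}]$ becomes (a multiple of) $[\delta_0]$, and $[\delta_0^\dagger]=m[\delta_0]$ in the relevant group since $\delta_0^\dagger$ is an $m$-fold multiple-covered orbit homologous to $m$ copies of $\delta_0$ in a neighborhood of $z_\infty$. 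Hence $\langle\overline{u},\R\times\delta_{\rho_0}\rangle = \lambda_{\rho_0}\langle\overline{u},\R\times\delta_0^\dagger\rangle = \lambda_{\rho_0}\cdot n^-(\overline{u})=0$ with $\lambda_{\rho_0}>0$.

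Now positivity of intersections of pseudoholomorphic maps in dimension four (applied component by component to $\overline{u}$ and to the holomorphic surface $\R\times\delta_{\rho_0}$, noting $\overline{u}$ is not a multiple cover of, nor does it share a component with, $\R\times\delta_{\rho_0}$ — its ends converge to $\gamma\subset\op{int}(N)$ and to ${\bf y}\subset S$, none of which touch $T_{\rho_0}$) forces $\op{Im}(\overline{u})\cap(\R\times\delta_{\rho_0})=\varnothing$ for every such $\rho_0$. Since the $\R\times\delta_{\rho_0}$ for $\rho_0$ in a dense subset of $(0,1)$ sweep out a dense, hence (by closedness of the image of a proper holomorphic map) all, of $\R\times V$ away from $\R\times(\{\rho=0\}\cup\{\rho=1\})$, and the end $\{\rho=1\}=\bdry\overline{N}$-part is already excluded by the Lagrangian boundary conditions being contained in $L^-_{\widehat{\bf a}}$ which lies over $S\subset\overline{S}$, we conclude $\op{Im}(\overline{u})\cap(\R\times V)\subset \R\times(\bdry N)$ and in fact $\op{Im}(\overline{u})\subset \R\times N=W_-$ (a curve meeting $\R\times\op{int}(V)$ would meet some $\R\times\delta_{\rho_0}$ by openness of holomorphic maps). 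This gives $\op{Im}(\overline{u})\subset W_-$, as claimed. The main obstacle is the bookkeeping in the previous paragraph: making precise the homology group in which the intersection numbers are computed (given the mixed orbit/page asymptotics) and checking that $[\delta_{\rho_0}]$ is genuinely a \emph{positive} rational multiple of $[\delta_0^\dagger]$ there, rather than merely proportional; once that is pinned down the positivity-of-intersections step is routine and identical to Lemma~\ref{pluto}.
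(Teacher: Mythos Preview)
Your approach is essentially the paper's: the paper omits the proof, declaring it similar to Lemma~\ref{lemma: restriction of overline W plus curve}, which in turn is declared similar to Lemma~\ref{pluto}; you have correctly reproduced the Lemma~\ref{pluto} argument (foliate $V$ by tori $T_{\rho_0}$, use that $[\delta_{\rho_0}]$ is a rational multiple of $[\delta_0^\dagger]$ in the relevant homology, and apply positivity of intersections).

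One correction: your assertion that $L^-_{\widehat{\bf a}}$ ``lies over $S\subset\overline{S}$'' is false. By Definition~\ref{def: W bar minus curves}(1) the boundary of $\overline{u}$ lands in $L^-_{\widehat{\bf a}}$, and $\widehat{a}_i=\overline{a}_i-\{z_\infty\}$ includes the radial rays $\overline{a}_{i,j}\subset D^2$; so the Lagrangian does enter $\overline{W}_--W_-$. This does not break the argument, but the fix is not the one you gave. Rather, for each $\rho_0$ with closed orbits on $T_{\rho_0}$ there is a whole circle of closed orbits, and one chooses $\delta_{\rho_0}$ so that the restriction of $\R\times\delta_{\rho_0}$ to $\overline{W}_-$ is disjoint from $L^-_{\widehat{\bf a}}$ (this is a finite set of constraints on the appropriate pages, exactly as in the choice of $\delta_0^\dagger$ itself). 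With that choice the intersection number $\langle\overline{u},\R\times\delta_{\rho_0}\rangle$ is a homological invariant and your positivity argument goes through. Also, minor notational slip: $W_-$ is the preimage of $B_-$ in $\R\times N$, not all of $\R\times N$.
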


\subsection{The Fredholm index}
\label{subsection: the Fredholm index W plus W minus}

In this subsection we compute the Fredholm index $\op{ind}_{W_+}$ of a $(W_+,J_+)$-curve from $\mathbf{y}=\{y_1,\dots,y_{2g}\}\in \mathcal{S}_{{\bf a}, \hh({\bf a})}$ to $\boldsymbol{\gamma}=\prod \gamma_j^{m_j}\in \widehat{\mathcal{O}}_{2g}$ and the Fredholm index $\op{ind}_{\overline{W}_-}$ of a $(\overline{W}_-,\overline{J}_-)$-curve from $\boldsymbol{\gamma}$ to $\mathbf{y}$.

The Fredholm indices are computed using the doubling technique of Hofer-Lizan-Sikorav \cite{HLS}, which we quickly review, referring to the original paper for the details.

\subsubsection{Doubling}

Let $(F,j)$ be a compact Riemann surface with boundary, ${\bf p}$ and ${\bf q}$ be finite sets of interior and boundary punctures of $F$, and $\dot F=F-{\bf p}-{\bf q}$. We form the {\em double} $(2\dot{F}, 2j)$ of $(\dot{F}, j)$ by gluing two copies of $\dot{F}$ with opposite complex structures $j$ and $-j$ along their boundary $\bdry \dot F=\bdry F-{\bf q}$. By the Schwarz reflection principle, the two complex structures match and the doubled surface becomes a punctured Riemann surface.

Let $E \to \dot F$ be a complex vector bundle with fiberwise complex structure $i$ and let $L \to \partial \dot{F}$ be a totally real subbundle of maximal rank. Let $\overline{E}\to \dot F$ be a complex vector bundle whose fiber $\overline{E}_p$ at $p\in\dot F$ is $E_p$ with complex structure $-i$.  We then construct the doubled complex vector bundle $2E \to 2\dot{F}$ by gluing $E\to \dot F$ and $\overline{E}\to \dot F$ along $\partial \dot{F}$ such that at each $p\in \bdry\dot{F}$ the gluing map identifies the fibers $(E_p,i)\simeq (\overline{E}_p,-i)$ via an involution which fixes $L_p$ pointwise. Let $\sigma: 2\dot F\stackrel\sim\to 2\dot F$ be the anti-holomorphic involution $\sigma$ which switches $(\dot F,j)$ and $(\dot F,-j)$ and let $\tilde\sigma: 2E\stackrel\sim \to 2E$ be the anti-$\C$-linear bundle isomorphism which projects to $\sigma$ and identifies $E_p\simeq \overline{E}_{\sigma(p)}$ by the identity map, where $p\in(\dot F,j)$.  Finally, given a linear Cauchy-Riemann type operator $D$ on $E$, we can define the doubled operator $2D$ on $2E$ with the property that $2D$ is $\tilde{\sigma}$-invariant and its restriction to $E \to \Sigma$ is $D$.

One of the results of \cite{HLS} is the following:

\begin{thm}\label{HLS}
Suppose that both $D$ and $2D$ are Fredholm operators in some suitable Sobolev spaces. Then:
\begin{itemize}
\item $\op{ind}(D) = \frac 12 \op{ind}(2D)$; and
\item if $2D$ is surjective, then $D$ is also surjective.
\end{itemize}
\end{thm}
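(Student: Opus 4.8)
The plan is to use the $\Z/2$-symmetry of $2D$ provided by $\tilde\sigma$ (which covers the anti-holomorphic involution $\sigma$): the domain and codomain of $2D$ carry commuting anti-$\C$-linear involutions, so each splits over $\R$ into $(\pm1)$-eigenspaces, $2D$ is the complexification of its restriction to the $(+1)$-eigenspaces, and that restriction is canonically identified with $D$. Both conclusions of the theorem then follow at once.

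First I would fix Sobolev completions $\mathcal D = W^{k,p}(2\dot F; 2E)$ and a completion $\mathcal R$ of the $(0,1)$-forms on $2\dot F$ with values in $2E$, chosen (with appropriate weights near the punctures) so that $2D\colon \mathcal D\to\mathcal R$ is the given Fredholm operator, and similarly fix completions $\op{dom}(D)$, $\op{cod}(D)$ over $\dot F$. Pullback by $\sigma$ combined with $\tilde\sigma$ on the bundles defines bounded involutions $c_{\mathcal D}$ on $\mathcal D$ and $c_{\mathcal R}$ on $\mathcal R$; these are anti-$\C$-linear because $\sigma$ is anti-holomorphic and $\tilde\sigma$ is anti-$\C$-linear. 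Since $2D$ was built to be $\tilde\sigma$-invariant, $2D\circ c_{\mathcal D} = c_{\mathcal R}\circ 2D$. For any anti-$\C$-linear involution $c$ on a complex Banach space $V$, the real subspace $V^c=\{v:cv=v\}$ satisfies $V = V^c\oplus iV^c$, i.e. $V\cong V^c\otimes_\R\C$; applying this to $\mathcal D$ and $\mathcal R$ gives $\mathcal D=\mathcal D^+\oplus i\mathcal D^+$ and $\mathcal R=\mathcal R^+\oplus i\mathcal R^+$. Because $2D$ is $\C$-linear and commutes with the involutions it preserves this splitting, so $2D$ is the complexification of $2D|_{\mathcal D^+}\colon\mathcal D^+\to\mathcal R^+$; in particular $\ker 2D = (\ker 2D\cap\mathcal D^+)\otimes_\R\C$ and $\op{coker}2D = \op{coker}(2D|_{\mathcal D^+})\otimes_\R\C$.

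Next I would identify $\mathcal D^+$ with $\op{dom}(D)$. A section in $\mathcal D^+$ is $\tilde\sigma$-invariant, hence determined by its restriction to one copy $\dot F\subset 2\dot F$; $\tilde\sigma$-invariance forces its boundary values along $\bdry\dot F$ to lie in the fixed-point set of the fiberwise gluing involution, which is exactly $L$, and conversely any section of $E$ over $\dot F$ with values in $L$ along $\bdry\dot F$ extends uniquely across the seam by the Schwarz reflection $x\mapsto\tilde\sigma(s(\sigma(x)))$. This correspondence is a Banach-space isomorphism $\mathcal D^+\cong\op{dom}(D)$, and analogously $\mathcal R^+\cong\op{cod}(D)$ (no boundary condition is imposed on the $(0,1)$-forms). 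Since by construction the restriction of $2D$ to the copy $\dot F$ is $D$, under these identifications $2D|_{\mathcal D^+} = D$. Combining with the previous paragraph, $\ker 2D\cong\ker D\otimes_\R\C$ and $\op{coker}2D\cong\op{coker}D\otimes_\R\C$; as $D$ and $2D$ are Fredholm all four spaces are finite-dimensional, so $\dim_\R\ker 2D = 2\dim_\R\ker D$ and $\dim_\R\op{coker}2D = 2\dim_\R\op{coker}D$, whence $\op{ind}(2D) = 2\op{ind}(D)$, i.e. $\op{ind}(D) = \tfrac12\op{ind}(2D)$. If moreover $2D$ is surjective then $\op{coker}2D = 0$, so $\op{coker}D = 0$, and since $D$ has closed range (being Fredholm) it is surjective.

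The main obstacle is the analytic content of the identification $\mathcal D^+\cong\op{dom}(D)$ (and $\mathcal R^+\cong\op{cod}(D)$): one must check that $\tilde\sigma$-invariant Sobolev sections on the double are precisely, and in a bounded two-sided way, the Sobolev sections on $\dot F$ satisfying the totally real condition $L$ on $\bdry\dot F$, and that gluing a matching pair of solutions across the (totally real) seam produces a genuine solution of the single Cauchy-Riemann type operator $2D$ on the closed surface --- the transmission/matching condition along $\bdry\dot F$ must exactly encode ``values in $L$''. This is where the hypothesis ``in suitable Sobolev spaces'' (e.g. $W^{1,p}$ domain with $p>2$, together with the right weights at the punctures, the punctures of $2\dot F$ occurring in $\tilde\sigma$-pairs) is essential. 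One must also realize $\op{coker}D$ as the kernel of a formal adjoint and verify that this adjoint is again $\tilde\sigma$-equivariant, so that the eigenspace bookkeeping applies to the cokernel as well. All of this is carried out in \cite{HLS}.
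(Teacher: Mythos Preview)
Your argument is correct and is exactly the approach of \cite{HLS}: split the domain and target of $2D$ into $(\pm 1)$-eigenspaces of the anti-$\C$-linear involution, identify the $(+1)$-part with the boundary-value problem $(D,L)$ via restriction and Schwarz reflection, and read off both conclusions. The paper does not give its own proof of this statement --- it simply quotes it from \cite{HLS} and remarks that the argument carries over in the presence of boundary punctures and exponential weights --- so you have written out precisely what the paper leaves to the reference.

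One minor correction: a Cauchy--Riemann \emph{type} operator need not be $\C$-linear (the zeroth-order term may have an anti-$\C$-linear piece), so the sentence ``$2D$ is the complexification of $2D|_{\mathcal D^+}$'' is not literally true in general. This does not damage your conclusions. Since $2D$ is $\R$-linear and intertwines $c_{\mathcal D}$ with $c_{\mathcal R}$, it still respects the eigenspace splitting; the $(+1)$-restriction is $D$, while the $(-1)$-restriction is the same differential operator with boundary condition $iL$ (equivalently, conjugate $D$ by multiplication by $i$, which flips the sign of the anti-linear part of the zeroth-order term). These two restrictions have the same index, so $\op{ind}(2D)=2\,\op{ind}(D)$ still holds, and $\op{coker}(D)$ is a direct summand of $\op{coker}(2D)$, so surjectivity of $2D$ still forces surjectivity of $D$.
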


Our situation is slightly more general than that considered by \cite{HLS}, since we are considering boundary punctures and exponential weights. The proof, however, remains largely unmodified.

\subsubsection{The $W_+$ case}
\label{subsubsection: the Fredholm index for W pm}

For the purposes of computing indices, we replace $W_+$ by $\check W_+$, as defined in Section~\ref{subsubsection: W plus curves}.  The tangent space $T\check W_+$ splits into the vertical and horizontal subbundles  via the symplectic connection.  In this section we slightly abuse notation and write $TS$ (resp. $TB_+$) for the vertical (resp. horizontal) subbundle over $\check W_+$.

We define a trivialization $\tau$ of $TS$ along $Z_{{\bf y},\boldsymbol{\gamma}}$ as follows: First define $\tau$ along $L^+_{\mathbf{a}}\cap \check W_+$ by orienting all $\mathbf{a}$-arcs arbitrarily as in Section~\ref{subsubsection: Fredholm index second version} and extending the trivialization by parallel transport along the vertical boundary $\bdry_v W_+$.  We then extend the trivialization of $TS|_{L^+_{\mathbf{a}}\cap \check W_+}$ in an arbitrary manner to a trivialization of $TS$ along $\{3\}\times[0,1]\times {\bf y}$ and along $\{-1\}\times\boldsymbol{\gamma}$.

Let $u: (\dot F,j)\to (W_+,J_+)$ be a $W_+$-curve. Suppose the negative ends of $u$ partition $m_j$ into $(m_{j1},m_{j2},\dots)$. We then write:
$$\mu_\tau(\boldsymbol{\gamma},u)=\sum_j \sum_i\mu_\tau (\gamma_{j}^{m_{ji}}),$$
where $\mu_\tau (\gamma_{j}^{m_{ji}})$ is the Conley-Zehnder index of the $m_{ji}$-fold cover of $\gamma_{j}$ with respect to $\tau$.

We now define a real rank one subbundle $\mathcal{L}_0$ of $TS$ along
$$(L^+_{\bf a}\cap \check{W}_+) \cup (\{3\}\times[0,1]\times {\bf y}).$$
Let $\mathcal{L}_0=TL^+_{\bf a}\cap TS$ on $L^+_{\bf a}\cap \check{W}_+$. In particular, $\mathcal{L}_0=T{\bf a}(y_i)$ at $\{3\}\times\{1\}\times \{y_i\}$ and $\mathcal{L}_0=T\hh({\bf a})(y_i)$ at $\{3\}\times\{0\}\times \{y_i\}$. We then extend $\mathcal{L}_0$ across $\{3\}\times[0,1]\times {\bf y}$ by rotating in the counterclockwise direction from $T\hh(\mathbf{a})$ to $T\mathbf{a}$ in $TS$ by the minimum amount possible. Assuming orthogonal intersections between $\mathbf{a}$ and $\hh({\bf a})$, the angle of rotation is ${\pi\over 2}$.

Let $\mu_\tau(y_i)$ be the Maslov index of $\mathcal{L}_0$ along $\{3\}\times[0,1]\times\{y_i\}$ with respect to $\tau$.  If  $u$ is a $W_+$-curve and $\mathcal{L}=\check{u}^*\mathcal{L}_0$, then we define $\mu_\tau({\bf y})$ to be the Maslov index of $\mathcal{L}$ with respect to $\tau$.  By the definitions of $\mathcal{L}_0$ and $\tau$, it is immediate that
$$\mu_\tau ({\bf y})=\sum_{i=1}^{2g}\mu_\tau(y_i).$$

We then have the following Fredholm index formula for $W_+$-curves:

\begin{prop}[Fredholm index formula for $W_+$-curves]
\label{prop: Fredholm index for W plus}
The Fredholm index of a $(W_+,J_+)$-curve $u: (\dot F,j)\to (W_+,J_+)$ from $\mathbf{y}$ to $\boldsymbol{\gamma}$ is given by the formula:
\begin{equation} \label{eqn: ind for W plus}
\op{ind}_{W_+}(u )=-\chi (\dot{F})-2g+\mu_\tau ({\bf y}) -\mu_\tau(\boldsymbol{\gamma},u)+2c_1(u^*TS,\tau).
\end{equation}
\end{prop}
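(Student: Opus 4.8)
\medskip

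The plan is to linearize the holomorphic curve equation, remove the Lagrangian boundary by the doubling trick of Theorem~\ref{HLS}, and then read the index off from the Riemann--Roch formula for asymptotically cylindrical Cauchy--Riemann operators on closed punctured surfaces. Since a $(W_+,J_+)$-curve is never multiply covered (it is a multisection and fiber components are excluded) and the relevant moduli spaces are cut out transversally for generic $J_+$, the Fredholm index $\op{ind}_{W_+}(u)$ is the index of the linearized operator $D_u$ acting on suitably weighted $W^{1,p}$-sections of $u^*TW_+\to\dot F$ with the totally real boundary condition $TL^+_{\mathbf a}$ along $\bdry\dot F$, and with weighted cylindrical asymptotics at the interior punctures $\mathbf p$ (over covers of the $\gamma_j$) and at the boundary punctures $\mathbf q$ (over the strips $[0,1]\times\{y_i\}$). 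Since $\pi_{B_+}\colon W_+\to B_+$ is holomorphic and the fibers are holomorphic, the symplectic connection splits $u^*TW_+\cong\phi^*TB_+\oplus u^*TS$ with $\phi=\pi_{B_+}\circ u$, and $D_u$ is block triangular modulo compact terms; this is convenient for the two Chern-number computations below, though not strictly necessary.

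Next I would apply the doubling of Theorem~\ref{HLS}. Doubling $\dot F$ across $\bdry\dot F$ gives a closed punctured Riemann surface $2\dot F$ in which each boundary puncture $q_i$ opens into one puncture and each interior puncture $p_i$ becomes two; since $\bdry\dot F$ is a disjoint union of open arcs with $\chi(\bdry\dot F)$ equal to the number $2g$ of boundary punctures, this gives $\chi(2\dot F)=2\chi(\dot F)-2g$. Doubling $D_u$ produces an asymptotically cylindrical Cauchy--Riemann operator $2D_u$ on the complex rank-$2$ bundle $2(u^*TW_+)\to 2\dot F$, and $\op{ind}_{W_+}(u)=\frac{1}{2}\op{ind}(2D_u)$. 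The index of $2D_u$ is then given by the Riemann--Roch formula for punctured holomorphic curves in terms of $-\chi(2\dot F)$, the relative first Chern number $c_1(2(u^*TW_+),\tau_2)$ with respect to a doubled trivialization $\tau_2$, and the Conley--Zehnder indices of the doubled asymptotic operators at the $q_i$- and $p_i$-ends; the horizontal summand $\phi^*TB_+$ contributes through the doubled branched cover of the double of $B_+$, which has no boundary branch points by the analogue of Lemma~\ref{lemma: no branch points on boundary}.

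The substance of the proof is the translation of these doubled quantities back into the data of Equation~\eqref{eqn: ind for W plus}. One must: (i) compute $c_1(2(u^*TW_+),\tau_2)$, using the symplectic-connection splitting and the triviality of $TB_+$, in terms of $c_1(u^*TS,\tau)$ together with the Maslov correction forced by the totally real boundary condition $TL^+_{\mathbf a}$; (ii) show that the doubled asymptotic operator at each $q_i$-end is nondegenerate and that its appropriately normalized Conley--Zehnder index is the Maslov index $\mu_\tau(y_i)$ of Section~\ref{subsubsection: the Fredholm index for W pm} --- this uses the transversality of $\mathbf a$ and $h(\mathbf a)$ and the definition of $\mathcal L_0$ as the minimal counterclockwise rotation from $Th(\mathbf a)$ to $T\mathbf a$, exactly as in the HF index computation of Section~\ref{subsubsection: Fredholm index second version}; (iii) check that the two doubled negative ends lying over a given cover $\gamma_j^{m_{ji}}$ together contribute $-2\mu_\tau(\gamma_j^{m_{ji}})$, so that the total negative-end contribution is $-2\mu_\tau(\gamma,u)$; and (iv) obtain the $-2g$ term by combining the boundary-puncture defect in $\chi(2\dot F)$ with the Riemann--Hurwitz count for the doubled branched cover. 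Collecting terms and dividing by two yields \eqref{eqn: ind for W plus}.

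The hard part is precisely this bookkeeping --- in particular pinning down the Maslov correction in (i) and the sign of the $2g$-term in (iv), where an error would perturb the coefficient of $\chi(\dot F)$ or of $g$. A good sanity check is to evaluate the formula on a ``split'' $W_+$-curve, a disjoint union of $2g$ horizontal sections, whose index can be computed by hand. As an alternative route, one can redo the index computation behind \eqref{index-lipshitz} directly for $(W_+,J_+)$-curves, treating the interior punctures over $\gamma$ exactly as the ends of ECH curves are treated in Section~\ref{subsection: compactness PFH case}; the two methods produce the same formula.
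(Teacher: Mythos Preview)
Your approach is the paper's approach: double along $\partial\dot F$ via Theorem~\ref{HLS}, apply the standard index formula to $2D_u$, and translate each term back. Your Euler-characteristic computation $\chi(2\dot F)=2\chi(\dot F)-2g$ and your description of how the punctures double are both correct. But two of your bookkeeping claims are off, and they are exactly the ones you flagged as uncertain.

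First, your (ii) is too vague. The doubled Conley--Zehnder index at a boundary puncture is not ``appropriately normalized to'' $\mu_\tau(y_i)$; the precise statement is $\mu_{2\tau}(2y_i)=2\mu_\tau(y_i)-1$, which the paper isolates and proves as Lemma~\ref{doubling chords}. The $-1$ comes from the floor function in the Conley--Zehnder formula after concatenating the linearized-flow path with its conjugate-reverse. Summing over the $2g$ boundary punctures and halving gives $\tfrac12\mu_{2\tau}(2\mathbf y)=\mu_\tau(\mathbf y)-g$, so this step produces a stray $-g$.

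Second, your (iv) misidentifies the source of the $-2g$. The $+g$ from the Euler-characteristic defect exactly cancels the $-g$ from the previous paragraph; neither survives. The genuine $-2g$ is the paper's item (d): $c_1(2u^*TB_+,2\tau')=-2g$. This is not zero despite $TB_+$ being trivial over $B_+$, because after doubling one has $2u^*TB_+=(2\phi)^*T(2B_+)$ with $2\phi:2\dot F\to 2B_+$ a degree-$2g$ branched cover, and $2B_+$ is a thrice-punctured sphere with $c_1(T(2B_+),2\tau')=\chi(2B_+)=-1$. Pulling back multiplies by the degree, giving $-2g$. There is no separate Riemann--Hurwitz contribution to collect: the branch points of $2\phi$ are already encoded in $-\chi(2\dot F)$, and the base Chern number is purely a property of $2B_+$.

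With these two corrections your outline is the paper's proof.
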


\begin{proof}
We double the surface $\dot{F}$ along $\partial \dot{F}=\bdry F-\mathbf{q}$, where $\mathbf{q}$ is the set of boundary punctures, and double the pullback bundle $u^*TW_+$ along the real subbundle $(u|_{\bdry \dot F})^*TL^+_{\bf a}$ on $\partial \dot{F}$, to obtain the doubled surface $2 \dot{F}$ with a complex vector bundle $2u^* TW_+ \to 2\dot{F}$. The boundary punctures $q_i$ of $\dot{F}$ are doubled to give positive interior punctures $2q_i$ on $2 \dot{F}$.

Since the trivialization $\tau$ of $TS|_{Z_{{\bf y},\boldsymbol{\gamma}}}$ was chosen to be tangent to $TL^+_{\mathbf{a}}\cap TS$, its pullback to $u^*TW_+$ is compatible with the doubling operation and gives a trivialization $2\tau$ of $2u^*TS$ over a  neighborhood of the punctures of $2 \dot{F}$. Also let $\tau'$ be a partially defined trivialization of $TB_+$ which is given by $\bdry_s$ at the positive and negative ends of $B_+$.  Then $\tau'$ can be doubled to $2\tau'$ on $2u^*TB_+$ which is defined over a neighborhood of the punctures of $2 \dot{F}$.

The linearized $\overline\bdry$-operator at $u$ splits as a sum $D_u= D^0_u \oplus K_u$, where $D^0_u$ is a Cauchy-Riemann type operator on $W^{1,p}$-sections of $u^*TW_+$ with exponential weights and $K_u$ is a direct sum of finite-dimensional operators, one for each puncture of $\dot{F}$, with index $2$ for interior punctures and index $1$ for boundary punctures; see \cite[Section 2.3]{Dr} for the definition of the operator $K_u$. We define $2 D_u = 2D_u^0 \oplus K_u'$, where $K_u'$ is a finite-dimensional operator and contributes by $2$ to the index for each puncture of $2\dot{F}$. Technically speaking, $K_u'$ is not the double of $K_u$; nevertheless $\op{ind} (K_u) = \frac 12 \op{ind} (K_u')$. By applying Theorem \ref{HLS} to $D_u^0$, we obtain $\op{ind} (D_u) = \frac 12 \op{ind} (2 D_u)$.

We now apply the standard Fredholm index formula (see for example Dragnev~\cite{Dr}) for holomorphic curves in symplectizations --- with slight modifications ---
to obtain:
\begin{align} \label{eqn: first ind 2u formula}
\op{ind}(2D_u)& = -\chi (2\dot F) +\mu_{2\tau} (2{\bf y}) - \mu_{2\tau} (2\boldsymbol{\gamma},2u)\\
\notag &  \quad + 2c_1(2u^* TB_+,2\tau')+ 2c_1(2 u^*TS,2\tau).
\end{align}
Here $2{\bf y}$ and $2\boldsymbol{\gamma}$ stand for the doubles of ${\bf y}$ and $\boldsymbol\gamma$. Also $\mu_{2\tau} (2{\bf y})$ is the sum of the Conley-Zehnder indices, computed with respect to the trivialization $2 \tau$, of the paths of symplectic matrices arising from the asymptotic operators at $2 y_i$.  The last term is the relative first Chern class of the double of the pullback bundle $u^*TS$ with respect to $2\tau$. The one term that is not present in the Fredholm index formula for $J$-holomorphic curves in a symplectization
is the penultimate term $2c_1(2u^*TB_+,2\tau')$.

We then compute the following:
\begin{enumerate}
\item[(a)] $\chi (2\dot F)=2\chi (\dot{F} )-2g$;
\item[(b)] $\mu_{2\tau} (2y_i)=2 \mu_\tau (y_i)-1$;
\item[(c)] $\mu_{2\tau} (2\boldsymbol{\gamma},2u)=2\mu_\tau (\boldsymbol{\gamma},u)$;
\item[(d)] $c_1(2u^*TB_+,2\tau')=-2g$;
\item[(e)] $c_1(2u^*TS,2\tau)= 2 c_1(u^*TS,\tau)$.
\end{enumerate}
(a), (c) and (e) are straightforward. (b) will be proved in Lemma~\ref{doubling chords} below. (d)  By arguing in a manner similar to that of Claim~\ref{claim: calculation of c_1 of TF} we have $c_1(2u^*TB_+,2\tau') = 2g \chi(2B_+) = -2g$.

Summarizing, we obtain:
\begin{align*}
\op{ind}_{W_+} (u)&= (-\chi(\dot F)+g)+(\mu_\tau ({\bf y})-g) -\mu_\tau(\boldsymbol{\gamma},u) -2g+2c_1(u^*TS,\tau) \\
&= -\chi(\dot F)-2g+\mu_\tau ({\bf y} ) -\mu_\tau(\boldsymbol{\gamma},u)+2c_1(u^*TS,\tau).
\end{align*}
This completes the proof of the proposition.
\end{proof}

\begin{lemma} \label{doubling chords}
$\mu_{2\tau} (2y_i)=2 \mu_\tau (y_i)-1$.
\end{lemma}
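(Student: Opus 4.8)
The identity is purely local at the boundary puncture $q_i$: both $\mu_\tau(y_i)$ and $\mu_{2\tau}(2y_i)$ are determined by the germ of the boundary value problem near $q_i$. First I would set up the local model. Near $q_i$ the curve $u$ converges exponentially to the trivial strip over $[0,1]\times\{y_i\}$, so after trivializing the complex line bundle $u^*TS$ by $\tau$ over a half-strip neighborhood $[0,\infty)\times[0,1]$ of $q_i$, the asymptotic data is recorded by the two real lines $Th(\mathbf{a})(y_i)$ (along $\{t=0\}$) and $T\mathbf{a}(y_i)$ (along $\{t=1\}$) inside $T_{y_i}S\cong\C$, together with the path of real lines $\mathcal{L}_0$ that rotates counterclockwise from the former at $t=0$ to the latter at $t=1$ by the minimal amount. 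Since $\tau$ is by construction tangent to $Th(\mathbf{a})$ at $t=0$ and to $T\mathbf{a}$ at $t=1$, the path $\mathcal{L}_0$ measured against $\tau$ is a genuine loop in the Lagrangian Grassmannian $\mathcal{L}(\C)\cong\R P^1$, and $\mu_\tau(y_i)$ is by definition its Maslov index.

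Next I would carry out the doubling at $q_i$. Doubling the half-strip along its two boundary arcs produces a half-cylinder $[0,\infty)\times(\R/2\Z)$ carrying the single interior puncture $2q_i$; correspondingly $2u^*TS$ near $2q_i$ is the complex line bundle glued from $(\C,i)$ over $\{t\in[0,1]\}$ and $(\C,-i)$ over $\{t\in[-1,0]\}$ by the anti-linear involutions fixing $Th(\mathbf{a})(y_i)$ at $t=0$ and $T\mathbf{a}(y_i)$ at $t\equiv\pm1$. Because $\tau$ takes values in those same two real lines at $t=0$ and $t=1$, the frame $\tau$ and its conjugate match across both gluing loci and assemble to exactly the trivialization $2\tau$ of $2u^*TS$ used in~\eqref{eqn: first ind 2u formula}. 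The doubled Cauchy--Riemann operator then has a translation-invariant asymptotic operator at $2q_i$ whose symplectic path runs over the circle $\R/2\Z$, and $\mu_{2\tau}(2y_i)$ is its Conley--Zehnder index with respect to $2\tau$.

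The computation I would organize in two steps. First, by homotopy invariance of the Conley--Zehnder index I reduce to the normalized situation in which $\mathcal{L}_0=\R\cdot\tau$ along the chord, i.e.\ $\mu_\tau(y_i)=0$: here, in the $2\tau$-frame, the doubled asymptotic operator is $-i\,d/dt$ with $\R$-boundary conditions at both ends of the original strip, whose double on $\R/2\Z$ has spectrum $\pi\Z$ with two-real-dimensional eigenspaces; thus the doubled orbit is degenerate, and after the standard Fredholm-compatible perturbation (pushing the zero eigenvalue negative, as dictated by the exponential weights in $D_u^0$) its Conley--Zehnder index with respect to $2\tau$ is $-1$. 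This $-1$ is precisely $-\dim_\C TS$, the ``corner correction'' produced by the two reflections. Second, a general $\mathcal{L}_0$ differs from the normalized one by a loop of real lines of Maslov index $\mu_\tau(y_i)$; under doubling this becomes a loop of symplectic matrices whose rotation number along $\R/2\Z$ equals $\mu_\tau(y_i)$, and hence raises the Conley--Zehnder index by $2\mu_\tau(y_i)$. Combining the two steps yields $\mu_{2\tau}(2y_i)=2\mu_\tau(y_i)-1$, which feeds into item (b) of Proposition~\ref{prop: Fredholm index for W plus}.

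\textbf{Main obstacle.} The only genuine content is the additive constant $-1$, and the work is entirely one of fixing conventions so that it is $-1=-\dim_\C TS$ rather than $0$ or $-2$: one must pin down the normalization of the Conley--Zehnder index for the degenerate doubled orbit (equivalently, the sign of the exponential weight used to make $2D_u$ Fredholm), the normalization of ``Maslov index of $\mathcal{L}_0$ with respect to $\tau$'' as an integer, and the identification of the doubled trivialization constructed above with the $2\tau$ occurring in~\eqref{eqn: first ind 2u formula}. Once these are made consistent with Theorem~\ref{HLS} and the standard symplectization index formula, the value $-1$ is forced.
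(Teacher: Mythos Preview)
Your two–step strategy (normalize to $\mu_\tau(y_i)=0$, then shift) is sound in outline, and Step~2 is correct: adding a loop of Maslov index $k$ to the asymptotic data changes $\mu_\tau(y_i)$ by $k$ and $\mu_{2\tau}(2y_i)$ by $2k$. The gap is in Step~1. The condition ``$\mathcal{L}_0=\R\cdot\tau$'' does \emph{not} imply that the asymptotic operator in the $\tau$-frame is $-i\,d/dt$: the asymptotic operator encodes the linearized Reeb flow, not the close-up bundle $\mathcal{L}_0$. If the operator were literally $-i\,d/dt$ with $\R$-boundary conditions at both ends, then the path $\Phi(t)$ would be the identity, $\Phi(1)\R=\R$, and the Lagrangians $\mathbf a$ and $h(\mathbf a)$ would fail to be transverse at $y_i$. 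In other words, your ``normalized'' base case is a degenerate chord. You then have to choose a direction to push the resulting zero eigenvalue, and you correctly flag in your final paragraph that the constant $-1$ lives entirely in this choice---but you do not actually pin it down. Since the Conley--Zehnder index is only homotopy-invariant through \emph{nondegenerate} operators, appealing to ``homotopy invariance'' to reach this degenerate model is not legitimate without extra argument; a nondegenerate representative with $\mu_\tau(y_i)=0$ (say the one with $\alpha(1)-\alpha(0)\in(-\pi,0)$) would give $\mu_{2\tau}(2y_i)=-1$ directly, with no perturbation needed.

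The paper's proof avoids this altogether by a direct computation. In the doubling frame $\Theta$ it writes $\Phi(t)=e^{i\alpha(t)}$, records the identity $\mu_\tau(y_i)=\llfloor(\alpha(1)-\alpha(0))/\pi\rrfloor+1$, computes the doubled path explicitly via the conjugation $C=\mathrm{diag}(1,-1)$ to obtain $\tilde\alpha(2)-\tilde\alpha(0)=2(\alpha(1)-\alpha(0))$, and reads off $\mu_{2\tau}(2y_i)=2\llfloor(\alpha(1)-\alpha(0))/\pi\rrfloor+1=2\mu_\tau(y_i)-1$. No degeneration, no weights, no conventions to reconcile: the $-1$ falls out of the floor-function arithmetic.
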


\begin{proof}
Let $q_i \in \partial F$ be a (positive) boundary puncture and let $\mathcal{E}=[0,\infty)\times [0,1]\subset \dot F$ be a strip-like end with coordinates $(s,t)$ which parametrizes a neighborhood of $q_i$. Suppose $u$ maps the end asymptotically to $y_i$. Fix a symplectic trivialization $$\Theta: u|_{\mathcal{E}}^*TS \stackrel\sim\to \R^2 \times \mathcal{E}$$ such that $u|_{\bdry\mathcal{E}}^*(TL^+_{\mathbf a}\cap TS)$ corresponds to $(\R \oplus 0) \times \bdry\mathcal{E}$. Here $\bdry \mathcal{E}=\bdry \dot F\cap \mathcal{E}$.

Let $2\mathcal{E}=([0,\infty)\times [0,2])/(s,0)\sim (s,2)$ be the end of the doubled surface $2\dot F$, which corresponds to the interior puncture $2q_i$ and is obtained by doubling $\mathcal{E}$. The involution $\sigma$ is given by $\sigma(s,t) = (s, 2-t)$ with respect to these coordinates. Similarly, we have a symplectic trivialization
$$2\Theta:2u|_{\mathcal{E}}^*TS \stackrel\sim\to \R^2 \times 2\mathcal{E}$$
and $\tilde{\sigma}$ is given by $\tilde{\sigma}((x_1, x_2),s,t) = ((x_1,-x_2),s,2-t)$.

Let $J_0 \partial_t + S_i(t)$ be the {\em asymptotic operator} of $D^0_u$ corresponding to $q_i$ with respect to the trivialization $\Theta$. Here $J_0$ is the standard complex structure on $\R^2$ and $S_i(t) $ is a symmetric $2 \times 2$ matrix with real coefficients. For the definition of the asymptotic operator and its relation with the Fredholm theory of linearized $\overline\bdry$-operators, see \cite[Section 3]{Dr} or \cite{HT2}.\footnote{What we call $S_i$ here is written as $C^i_{2 \infty}$ in \cite{Dr}.}  The solution of the Cauchy problem
$$\dot{\Phi}(t) = J_0 S_i(t)\Phi(t), \quad \Phi(0)=0$$
is a path of symplectic matrices which represents the linearized Reeb flow along the chord $y_i$, expressed with respect to the trivialization $\tau$. In our setting, we may assume that $\Phi(t)$ is a path of unitary matrices. By identifying $(\R^2, J_0) = (\C, i)$, we write $\Phi(t)=e^{i \alpha(t)}$ for some function $\alpha: [0,1] \to \R$. Then

$$\mu_{\tau}(y_i)=\mbox{\Large $\llfloor$}  \mbox{$ \frac{\alpha(1) - \alpha(0)}{\pi} $} \mbox{\Large $\rrfloor$} +1,$$ where $\llfloor x \rrfloor$ is the greatest  integer $\leq x$.

The double of the asymptotic operator, i.e., the asymptotic operator of the doubled operator $2D^0_u$ at the interior puncture $2q_i$, can be written as $J_0 \partial_t + \widetilde{S}_i(t)$, where:
$$ \widetilde{S}_i(t) = \left \{ \begin{array}{cl} S_i(t), &  t \in [0,1]; \\

CS_i(2-t)C^{-1}, &  t \in [1,2], \end{array} \right. $$
and $C=\op{diag}(1,-1)$. The solution of the corresponding Cauchy problem is
$$ \widetilde{\Phi}(t)=  \left \{ \begin{array}{cl} \Phi(t), & t \in [0,1]; \\
C \Phi (2-t) \Phi(1)^{-1}C^{-1} \Phi(1), &  t \in [1,2]. \end{array} \right. $$
Hence can write $ \widetilde{\Phi}(t)= e^{i \tilde{\alpha}(t)}$, where $\tilde{\alpha}: [0,2] \to \R$ is given by:
$$\tilde{\alpha}(t) = \left \{ \begin{array}{cl} \alpha(t), &  t \in [0,1];
\\ - \alpha(2-t) + 2 \alpha(1), & t \in [1,2].  \end{array} \right. $$

The Conley-Zehnder index of the path $\widetilde{\Phi}$ is
$$\mu_{2 \tau}(2 y_i) = 2 ~\mbox{\Large $\llfloor$}\mbox{{ $\frac{\tilde{\alpha}(2) - \tilde{\alpha}(0)}{2 \pi} $}}\mbox{\Large $\rrfloor$} +1.$$ Since
\begin{align*}
\mbox{\Large $\llfloor$} \mbox{{ $\frac{\tilde{\alpha}(2) - \tilde{\alpha}(0)}{2 \pi} $}} \mbox{\Large $\rrfloor$}
&= \mbox{\Large $\llfloor$} \mbox{{ $\frac{(\tilde{\alpha}(2) - \tilde{\alpha}(1)) + (\tilde{\alpha}(1) - \tilde{\alpha}(0))}{2 \pi} $}} \mbox{\Large $\rrfloor$}\\
&= \mbox{\Large $\llfloor$} \mbox{{ $\frac{2(\tilde{\alpha}(1) - \tilde{\alpha}(0))}{2 \pi} $}}  \mbox{\Large $\rrfloor$} = \mbox{\Large $\llfloor$} \mbox{{ $\frac{\alpha(1) - \alpha(0)}{\pi} $}}\mbox{\Large $\rrfloor$},
\end{align*}
we obtain $\mu_{2 \tau}(2 y_i) = 2 \mu_{\tau}(y_i)-1$, as desired.
\end{proof}

\subsubsection{The $\overline{W}_-$ case}

The trivialization $\tau$ of $T\overline{S}_{\overline{W}_-}$ is defined on $Z_{\boldsymbol{\gamma},{\bf y}}$ in a manner similar to that of $W_+$, except that the positive and negative ends are reversed. The definition of the real rank one subbundle $\mathcal{L}$ of $T\overline{S}$ along $(L^-_{\widehat{\bf a}}\cap \check{\overline{W}}_-) \cup (\{-3\}\times[0,1]\times {\bf y})$ is also similar and will be omitted.

If $\overline u$ is a $\overline{W}_-$-curve from $\boldsymbol{\gamma}$ to ${\bf y}$, then the
Fredholm index $\op{ind}_{\overline{W}_-}(\overline u)$ is defined as the expected
dimension of the moduli space $\mathcal{M}_{\overline{J}_-}(\boldsymbol{\gamma},{\bf y})$ near
$\overline{u}$.

\begin{rmk}
The expected dimension of $\mathcal{M}_{\overline{J}_-}(\boldsymbol{\gamma},{\bf y};\overline{\frak m})$ is $\op{ind}_{\overline{W}_-}(\overline u)-2$.
\end{rmk}

We then have the following Fredholm index formula:

\begin{prop} [Fredholm index formula for $\overline{W}_-$-curves]
\label{prop: Fredholm index for W minus}
The Fredholm index of a $(\overline{W}_-,\overline{J}_-)$-curve $\overline{u}: (\dot{F} ,j)
\rightarrow (\overline{W}_-, \overline{J}_-)$ from $\boldsymbol{\gamma}$ to $\mathbf{y}$ is given by the formula:
\begin{equation} \label{eqn: second in W minus proposition}
\op{ind}_{\overline{W}_-}(\overline u) = -\chi (\dot F)+\mu_\tau (\boldsymbol{\gamma},\overline{u}) -\mu_\tau(\mathbf{y}) +2c_1(\overline u^*T\overline{S},\tau).
\end{equation}
\end{prop}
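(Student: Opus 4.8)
The strategy is exactly parallel to the proof of Proposition~\ref{prop: Fredholm index for W plus}: apply the Hofer-Lizan-Sikorav doubling technique (Theorem~\ref{HLS}) to reduce the index of a $(\overline{W}_-,\overline{J}_-)$-curve $\overline u$ to half the index of a doubled operator on the closed-up surface, then apply the standard Dragnev-type Fredholm index formula for punctured holomorphic curves in a symplectization, bookkeeping the extra term coming from the base direction $TB_-$. First I would double the surface $\dot F$ along $\bdry\dot F = \bdry F - \mathbf{q}$ and double the pullback bundle $\overline u^*T\overline{W}_-$ along the totally real subbundle $(\overline u|_{\bdry\dot F})^*TL^-_{\widehat{\mathbf a}}$, turning the boundary punctures $q_i$ into interior punctures $2q_i$. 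As in the $W_+$ case, the tangent space $T\check{\overline{W}}_-$ splits via the symplectic connection into the vertical subbundle $T\overline S$ and the horizontal $TB_-$, the trivialization $\tau$ of $T\overline S|_{Z_{\gamma,\mathbf y}}$ is chosen tangent to $TL^-_{\widehat{\mathbf a}}\cap T\overline S$ so that it is compatible with doubling, and a partially defined trivialization $\tau'$ of $TB_-$ given by $\bdry_s$ at the ends doubles to $2\tau'$.

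The linearized $\overline\bdry$-operator $D_{\overline u}$ splits as $D^0_{\overline u}\oplus K_{\overline u}$ with the finite-dimensional corrections contributing $1$ per boundary puncture and $2$ per interior puncture; Theorem~\ref{HLS} applied to $D^0_{\overline u}$ gives $\op{ind}(D_{\overline u})=\tfrac12\op{ind}(2D_{\overline u})$. The standard index formula (with the horizontal correction term) then reads
\begin{align*}
\op{ind}(2D_{\overline u}) &= -\chi(2\dot F) + 2\mu_{2\tau}(\gamma,2\overline u) - \mu_{2\tau}(2\mathbf y)\\
&\quad + 2c_1(2\overline u^*TB_-,2\tau') + 2c_1(2\overline u^*T\overline S,2\tau),
\end{align*}
where now $\gamma$ is at the \emph{positive} end and $\mathbf y$ at the negative end, which explains the sign reversal relative to \eqref{eqn: ind for W plus}. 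I would then substitute the five computations exactly as in the $W_+$ case: $\chi(2\dot F)=2\chi(\dot F)-2g$ (since doubling $2g$ strip-like boundary pieces caps off $2g$ handles), $\mu_{2\tau}(2y_i)=2\mu_\tau(y_i)-1$ by Lemma~\ref{doubling chords} (whose proof is purely local at a boundary puncture and applies verbatim), $\mu_{2\tau}(\gamma,2\overline u)=2\mu_\tau(\gamma,\overline u)$, $c_1(2\overline u^*TB_-,2\tau')=-2g$ (the base $B_-$ being biholomorphic to $B_+$, the argument à la Claim~\ref{claim: calculation of c_1 of TF} is identical), and $c_1(2\overline u^*T\overline S,2\tau)=2c_1(\overline u^*T\overline S,\tau)$. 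Collecting terms yields
$$\op{ind}_{\overline{W}_-}(\overline u) = (-\chi(\dot F)+g) + \mu_\tau(\gamma,\overline u) - (\mu_\tau(\mathbf y)-g) - 2g + 2c_1(\overline u^*T\overline S,\tau),$$
which simplifies to Equation~\eqref{eqn: second in W minus proposition}.

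The only genuine subtlety — and the step I would flag as the main obstacle — is that $\overline{W}_-$-curves live in the capped-off total space $\overline W_-$ whose fibers are $\overline S$ rather than $S$, and they are allowed to intersect the section at infinity $\sigma_\infty^-$ (indeed a $(\overline W_-,\overline J_-)$-curve satisfies $n^-(\overline u)=m$). One must check that the doubling construction and the Dragnev index formula still apply in the presence of these interior intersection points with $\sigma_\infty^-$ and near $\delta_0$: since $\delta_0$ is a nondegenerate closed orbit of $\overline R_0$ and the relevant asymptotic operators are unchanged, the interior punctures over multiples of $\gamma_j$ are handled as usual, while the intersections with $\sigma_\infty^-$ are interior and contribute only through $c_1(\overline u^*T\overline S,\tau)$ — they do not affect the combinatorics of the doubling at the \emph{boundary}. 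One should also note that $\overline u$ need not be simply covered in general, but as in the $W_+$ case the index formula is a statement about the linearized operator and does not require simplicity. Beyond this verification the proof is a mechanical transcription of Proposition~\ref{prop: Fredholm index for W plus}.
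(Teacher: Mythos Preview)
Your proposal is correct and follows exactly the same approach as the paper, which simply states that ``by a calculation similar to that of Proposition~\ref{prop: Fredholm index for W plus}'' one obtains the intermediate formula $\op{ind}_{\overline{W}_-}(\overline u) = (-\chi(\dot F)+g) +\mu_\tau(\gamma,\overline{u})-(\mu_\tau({\bf y})-g) -2g +2c_1(\overline{u}^*T\overline{S},\tau)$, which then simplifies to \eqref{eqn: second in W minus proposition}. Your explicit spelling-out of the five doubling computations and the sign reversal at the ends is precisely what the paper leaves implicit, and your remarks about the capped-off fibers and the interior intersections with $\sigma_\infty^-$ are sensible sanity checks that the paper does not bother to mention.
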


\begin{proof}
By a calculation similar to that of Proposition~\ref{prop: Fredholm index for W plus}, we obtain
\begin{equation*}
\op{ind}_{\overline{W}_-} (\overline{u}) = (-\chi(\dot F)+g) +\mu_\tau(\boldsymbol{\gamma},\overline{u})-(\mu_\tau ({\bf y})-g) -2g +2c_1(\overline{u}^*T\overline{S},\tau),
\end{equation*}
which simplifies to the desired result.
\end{proof}

\begin{rmk}[Reason for considering $\overline{W}_-$]
\label{rmk: explanation of considering volcano}
We will give a rough explanation of the reason for considering holomorphic curves in $\overline{W}_-$ which pass through $\overline{\frak m}$. Suppose we have a $W_+$-curve $u$ from $\mathbf{y}$ to $\boldsymbol{\gamma}$ and a $W_-$-curve $v$ from $\boldsymbol{\gamma}$ to $\mathbf{y}$ (i.e., $\op{Im}(v)\subset W_-$). Then, by taking the sum of Equations~\eqref{eqn: ind for W plus} and \eqref{eqn: second in W minus proposition}, we compute the Fredholm index of the glued curve $u\# v$, corresponding to the stacking of $W_+$ at the top ($s>0$) and $W_-$ at the bottom ($s<0$), to be:
$$\op{ind}(u\# v )=-\chi (\dot{F})+2c_1((u\# v)^*TS,\tau)-2g,$$
where $\dot F$ is obtained from gluing the domains of $u$ and $v$. The stacking gives rise to a chain map $\widehat{CF}\to\widehat{CF}$, which we expect to map $\mathbf{y}\mapsto \mathbf{y}$ via restrictions of trivial cylinders (modulo chain homotopy). This would mean $\chi(\dot F)=0$ and $c_1((u\# v)^* TS,\tau)=0$. This leaves us with a deficiency of $2g$. Introducing the point constraint at $\overline{\frak m}$, from the perspective of Fredholm indices, is basically equivalent to applying a multiple connected sum to the holomorphic curve $v$ and the fiber $\overline{S}$ which passes through $\overline{\frak m}$.  The multiple connected sum is performed at the $2g$ intersection points between $v$ and $\overline{S}$.  We effectively increase the Fredholm index by $2g+2$, obtained by adding up the following contributions:
\begin{itemize}
\item[(i)] the Fredholm index of the fiber $\overline{S}$, which is $\chi(\overline{S})=2-2g$;
\item[(ii)] $2g$ intersection points, each of which contributes $+2$ to minus the Euler characteristic.
\end{itemize}
The point constraint then cuts the expected dimension of the moduli space by $-2$, for a net gain of $2g$.
\end{rmk}

\subsection{The ECH index}
\label{subsection: ECH index W plus minus}

In this section we present the ECH index $I_{W_+}$, the relative adjunction formula, and the ECH index inequality for $W_+$. The situation for $\overline{W}_-$ is analogous, and will not be discussed explicitly, except to point out some differences.

\subsubsection{Definitions}

Let ${\bf y}=\{y_1,\dots,y_{2g}\}\in \mathcal{S}_{{\bf a}, \hh({\bf a})}$ and $\boldsymbol{\gamma}=\prod_{j=1}^l \gamma_j^{m_j}\in \widehat{\mathcal{O}}_{2g}$.  Let $\tau$ be a trivialization of $TS_{\check W_+}$ along $Z_{{\bf y},\boldsymbol{\gamma}}$, as given in
Section~\ref{subsubsection: the Fredholm index for W pm}. Using $\tau$, for each simple orbit $\gamma_j$ of $\boldsymbol{\gamma}$,
we choose an identification of a sufficiently small neighborhood
$\nu(\gamma_j)$ of $\gamma_j$ with $\gamma_j\times D^2$, where $D^2$
has polar coordinates $(r,\theta)$.

\begin{defn}[$\tau$-trivial representative]
A {\em $\tau$-trivial representative} $\check C$ of $A\in H_2(W_+,{\bf y},\boldsymbol{\gamma})$
 is an oriented immersed compact surface in
the class $A$ which satisfies the following:
\begin{enumerate}
\item $\check C$ is embedded on $\check W_+-\{s=-1\}$;
\item $\check C$ is positively transverse to the fibers
$\{(s,t)\}\times \overline{S}$ along all of $\bdry \check C$;
\item $\check C$ is $\tau$-trivial in the sense of
Definition~\ref{defn: tau-trivial representative HF} at the HF end;
\item $\check C$ is {\em $\tau$-trivial} at the ECH end, i.e., for
all sufficiently small $\varepsilon>0$, $\check C\cap
\{s=-1+\varepsilon\}$ consists of $m_j$ disjoint circles
$\{r=\varepsilon,\theta=\theta_{ji}\}$, $i=1,\dots,m_j$, in $\nu(\gamma_j)$ for all $j$. (See \cite[Definition~2.3]{Hu1}.)
\end{enumerate}
\end{defn}

Let $\bdry_+\check C=\bdry\check C\cap \{s>0\}$ and $\bdry_-\check C=\bdry \check C\cap \{s<0\}$.

\begin{defn}[Relative intersection form]
Let $A\in H_2(W_+, {\bf y},\boldsymbol{\gamma})$ be a homology class which is realized by a $\tau$-trivial representative $\check C$.
Then the relative intersection form $Q_\tau(A)$ is given by $\langle \check C, \check C'\rangle$, where $\check C'$ is a pushoff of $\check C$ which satisfies the following:
\begin{enumerate}
\item $\check C$ is pushed off in the $J_+\tau$-direction along $\bdry_+ \check C$; and
\item for small $\varepsilon>0$, $\check C'\cap \{s=-1+\varepsilon\}$ consists of $m_j$ disjoint circles $\{r=\varepsilon,\theta=\theta_{ji}+\varepsilon'\}$, $i=1,\dots,m_j$, in $\nu(\gamma_j)$ for all $j$, where $\varepsilon'>0$ is a sufficiently small constant.
\end{enumerate}
(See \cite[Definition~2.4]{Hu1}.)
\end{defn}

\subsubsection{Relative adjunction formula}

Let $u: \dot F\to W_+$ be a $W_+$-curve and let $\check u: \check F\to \check W_+$ be its compactification. Then we write $w_\tau^-(u)$ for the total writhe of the braids $u(\dot F)\cap \{s=s_0\}$, $s_0\ll 0$, with respect to $\tau$. Similarly, if $\overline{u}:\dot F\to \overline{W}_-$ is a $\overline{W}_-$-curve, then we write $w_\tau^+(\overline{u})$ for the total writhe of the braids $\overline{u}(\dot F)\cap \{s=s_0\}$, $s_0\gg 0$, with respect to $\tau$.

\begin{lemma}[Relative adjunction formula] \label{lemma: relative adjunction for W plus}
Let $u: \dot F \to W_+$ be a $W_+$-curve in the homology class 
$A \in H_2(W_+, {\bf y}, \boldsymbol{\gamma})$.  Then
\begin{equation}
\label{eqn: relative adjunction formula for W plus}
c_1(u^*TW_+,(\tau,\bdry_t)) = \chi(\dot F)-w_\tau^-(u) +Q_\tau(A) -2\delta(u),
\end{equation}
where $\bdry_t$ trivializes $TB_+$.
\end{lemma}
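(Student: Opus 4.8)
The plan is to reduce the relative adjunction formula for $W_+$-curves to the absolute adjunction formula for closed (or properly punctured) holomorphic curves in a four-manifold, exactly as in Hutchings' proof of \cite[Equation~(18)]{Hu1} and in the proof of Lemma~\ref{lemma: adjunction formula for W} above. The point is that a $W_+$-curve $u$ has an HF-type positive end and an ECH-type negative end, so its compactification $\check u:\check F\to\check W_+$ has boundary on $L^+_{\mathbf a}$ and braided ends over $\gamma$; this is simultaneously in the ``HF world'' (Lagrangian boundary, chords) and the ``ECH world'' (closed orbits, braids). So the adjunction formula should be a hybrid: the $-w_\tau^-(u)$ term is the usual ECH contribution from the negative (ECH) end, while there is \emph{no} writhe term at the positive (HF) end because there the relevant contribution is absorbed into a Maslov/boundary term that, for the $\tau$-trivial normalization chosen, contributes nothing — just as in Lemma~\ref{lemma: adjunction formula for W} where only $\chi(F)-k$ and $Q_\tau$ appear. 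I should double-check that the normalization in the definition of $Q_\tau(A)$ for $W_+$ (pushoff in the $J_+\tau$-direction on $\partial_+\check C$, and the $\theta_{ji}\mapsto\theta_{ji}+\varepsilon'$ pushoff on $\partial_-\check C$) is the one that makes exactly the stated identity hold.

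Concretely, first I would reduce to the case where $u$ is immersed with only transverse double points, using the local modification results of \cite{M1,MW} as in Lemma~\ref{lemma: adjunction formula for W}; the modification is supported away from $\partial\dot F$ and away from the ends, so it changes neither $c_1(u^*TW_+,(\tau,\partial_t))$ nor $A$ nor $w_\tau^-(u)$, and changes $\delta(u)$ and the self-intersection count in a matched way. Next, split $c_1(u^*TW_+,(\tau,\partial_t)) = c_1(u^*TB_+,\partial_t) + c_1(u^*TS,\tau)$ using the symplectic connection splitting $TW_+ = TB_+\oplus TS$ over $\check W_+$; since $\partial_t$ is a global trivialization of $TB_+\cong\underline{\C}$, the first summand vanishes, so $c_1(u^*TW_+,(\tau,\partial_t)) = c_1(u^*TS,\tau)$, and the formula to prove becomes $c_1(u^*TS,\tau) = \chi(\dot F) - w_\tau^-(u) + Q_\tau(A) - 2\delta(u)$. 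This is now precisely an ECH-type relative adjunction formula: the curve looks like an ECH curve at the negative end and like an HF curve at the positive end.

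Then I would establish the identity $c_1(\nu,J_+\tau) = Q_\tau(A) - w_\tau^-(u) - 2\delta(u)$ relating the relative first Chern class of the normal bundle $\nu$ of $\check C$ (with the boundary trivialization coming from $J_+\tau$ on $\partial_+\check C$ and the $\theta$-pushoff on $\partial_-\check C$) to $Q_\tau(A)$: on $\partial_+\check C$ the pushoff $\check C'$ is obtained exactly in the direction $\psi = J_+\tau$ defining $Q_\tau$, so there is no discrepancy there (compare Lemma~\ref{lemma: c1 and Q}); on $\partial_-\check C$, the discrepancy between the section of $\nu$ used to compute $c_1(\nu,J_+\tau)$ and the pushoff used in $Q_\tau(A)$ is measured by the writhe $w_\tau^-(u)$ of the braid over $\gamma$, which is the standard ECH computation from \cite[Section~3]{Hu1}. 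The resolution-of-double-points argument (embed a ball around each double point into $S^2\times S^2$, replace slice disks by a Hopf band, gaining $2$ in $c_1(\nu_{sm})$ per point) accounts for the $-2\delta(u)$, exactly as in Lemma~\ref{lemma: c1 and Q}. Finally, combine with $c_1(u^*TS,\tau) = c_1(T\check F,\partial_t) + c_1(\nu,J_+\tau)$ and $c_1(T\check F,\partial_t) = \chi(\dot F)$ (here, unlike Claim~\ref{claim: calculation of c_1 of TF}, the boundary punctures of $\dot F$ have been doubled or, more simply, the relevant degree correction at the HF end is $0$ for the $\tau$-trivial normalization, paralleling the ``$-k$'' there being cancelled by the $\chi(F)-k$ versus $\chi(\dot F)$ bookkeeping). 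Putting these together yields Equation~\eqref{eqn: relative adjunction formula for W plus}.

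The main obstacle I anticipate is the careful bookkeeping at the two ends — making sure the writhe term $w_\tau^-(u)$ enters with the correct sign and that there is genuinely no analogous $w_\tau^+$ term at the HF end, and that the $c_1(T\check F,\partial_t)$ computation gives $\chi(\dot F)$ rather than $\chi(\dot F)$ plus a boundary correction. This is where the precise definitions of the $\tau$-trivial representative at the HF end (Definition~\ref{defn: tau-trivial representative HF}) and at the ECH end, together with the specific pushoff conventions in the definition of $Q_\tau(A)$, must be used; the risk is an off-by-a-constant error of the type that plagues relative index formulas. I would cross-check the final formula against the known cases: restricting to a curve contained in $W$ (no ECH end) it should degenerate to Lemma~\ref{lemma: adjunction formula for W}, and restricting to a curve with no Lagrangian boundary it should degenerate to \cite[Equation~(18)]{Hu1}.
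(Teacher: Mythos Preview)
Your proposal is correct and follows essentially the same approach as the paper: split $c_1(u^*TW_+,(\tau,\partial_t)) = c_1(T\check F,\partial_t) + c_1(\nu,\tau)$, handle the normal bundle term by combining the HF computation of Lemma~\ref{lemma: c1 and Q} at the positive end with Hutchings' computation \cite[Proposition~3.1]{Hu1} at the negative end (which is where the $-w_\tau^-(u)$ appears), and then compute $c_1(T\check F,\partial_t)=\chi(\dot F)$. Your intermediate reduction $c_1(u^*TW_+,(\tau,\partial_t))=c_1(u^*TS,\tau)$ is the content of the Remark the paper places immediately after the lemma rather than part of the proof proper, so the line ``$c_1(u^*TS,\tau)=c_1(T\check F,\partial_t)+c_1(\nu,J_+\tau)$'' is correct only by transitivity through $c_1(u^*TW_+)$ and not as a direct splitting of $u^*TS$; the paper avoids this detour by working with $u^*TW_+$ throughout. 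Your explanation for $c_1(T\check F,\partial_t)=\chi(\dot F)$ is a bit muddled --- the paper attributes the discrepancy with Claim~\ref{claim: calculation of c_1 of TF} simply to the different shape of the base $B_+$ versus $B$ --- but your identification of the result is correct.
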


For a $\overline{W}_-$-curve $\overline{u}$, we replace $-w_\tau^-(u)$ by $w_\tau^+(\overline{u})$.

\begin{proof}
Suppose $u$ is immersed. If $\nu$ is the normal bundle of $u$, then we have the formula:
\begin{equation}
c_1(u^*TW_+,(\tau,\bdry_t)) = c_1(T\check F,\bdry_t) + c_1(\nu,\tau).
\end{equation}

In the general case, we combine the calculations of Lemma~\ref{lemma: c1 and Q} and \cite[Proposition~3.1]{Hu1} to obtain the equation
$$c_1(u^*TW_+,(\tau,\bdry_t)) = c_1(T\check F,\bdry_t) -w_\tau^-(u) +Q_\tau(A) -2\delta(u).$$
The analog of Claim~\ref{claim: calculation of c_1 of TF} for the present situation is
\begin{equation} \label{eqn: calc of c1 TF for W plus}
c_1(T\check F,\bdry_t)= \chi(\dot F).
\end{equation}
The difference in the shape of the base (i.e., $B$ vs.\ $B_+$) accounts for the discrepancy between Equation~\eqref{eqn: calc of c1 TF for W plus} and Claim~\ref{claim: calculation of c_1 of TF}.
\end{proof}

\begin{rmk}
Since $c_1(u^*TB_+,\bdry_t)=0$, we have
$$c_1(u^*TW_+,(\tau,\bdry_t))= c_1(u^*TS,\tau)+c_1(u^*TB_+,\bdry_t)=c_1(u^*TS,\tau).$$
\end{rmk}

\subsubsection{ECH index}

We now define the ECH indices for $W_+$ and $\overline{W}_-$.

\begin{defn}[ECH index for $W_+$]
Given a class 
$A \in H_2(W_+,{\bf y},\boldsymbol{\gamma})$
which admits a $\tau$-trivial representative $\check C$, we define
\begin{equation}
\label{eqn: definition of ECH index for W+}
I_{W_+}(A)=  c_1(T\check W_+|_A, (\tau,\bdry_t)) + Q_{\tau}(A)+ \mu_{\tau}({\bf y})-\widetilde \mu_\tau(\boldsymbol{\gamma})-2g,
\end{equation}
where $\widetilde\mu_\tau(\boldsymbol{\gamma})$ is the symmetric Conley-Zehnder index at the negative (ECH) end.
\nom[IJ]{$I_{W_+}$, $I_{\overline{W}_-}$}{ECH indices for $W_+$, $\overline{W}_-$}
\end{defn}

\begin{defn}[ECH index for $\overline W_-$]
Given a class 
$A \in H_2({\overline W}_-,\boldsymbol{\gamma},{\bf y})$
which admits a $\tau$-trivial representative $\check C$, we define
\begin{equation}
\label{eqn: definition of ECH index for W-}
I_{\overline{W}_-}(A)=  c_1(T\check{\overline W}_-|_A, (\tau,\bdry_t)) + Q_{\tau}(A)+ \widetilde \mu_\tau(\boldsymbol{\gamma})- \mu_{\tau}({\bf y}).
\end{equation}
\end{defn}

As usual, the ECH indices $I_{W_+}(A)$ and $I_{\overline{W}_-}(A)$ are independent of the choice of trivialization $\tau$.

\begin{rmk}
To obtain a finite count of $\overline{W}_-$-curves which pass through the point $\overline{\frak m}$, we count curves $\overline{u}$ with ECH index $I_{\overline{W}_-}(\overline{u})=2$.
\end{rmk}

\subsubsection{Additivity of indices}

\begin{lemma}[Additivity of indices]
\label{lemma: additivity part 2}
If $u\in \mathcal{M}_{J}(\mathbf{y},\mathbf{y'})$, $v\in \mathcal{M}_{J_+}(\mathbf{y'},\boldsymbol{\gamma})$, and $u \# v$ is a pre-glued curve, then
\begin{align}
\op{ind}_{W_+}(u\# v) &= \op{ind}_{HF}(u) + \op{ind}_{W_+}(v),\\
I_{W_+}(u\# v) &= I_{HF}(u) + I_{W_+}(v).
\end{align}
Similarly, if  $u\in \mathcal{M}_{J_+}(\mathbf{y},\boldsymbol{\gamma})$, $v\in \mathcal{M}_{J'}(\boldsymbol{\gamma},\boldsymbol{\gamma}')$, and $u\# v$ is a pre-glued curve, then
\begin{align}
\op{ind}_{W_+}(u\# v) &= \op{ind}_{W_+}(u) + \op{ind}_{ECH}(v),\\
I_{W_+}(u\# v) &= I_{W_+}(u) + I_{ECH}(v).
\end{align}
\end{lemma}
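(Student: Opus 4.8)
The plan is to prove all four identities by a direct substitution into the relevant index formulas. Every quantity that appears — $\op{ind}$, $I$, the Euler characteristic $\chi(\dot F)$, the relative first Chern numbers, $Q_\tau$, and the (symmetric) Conley--Zehnder and Maslov indices — depends only on the relative homology class and the asymptotics, so the hypothesis that $u\#v$ is merely pre-glued (and not honestly holomorphic) is irrelevant: the statement is purely topological.

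First I would assemble the ingredients. For Fredholm indices these are Equation~\eqref{eqn: Fredholm index for HF, version 2} for $\op{ind}_{HF}$, Equation~\eqref{eqn: ind for W plus} for $\op{ind}_{W_+}$, and the standard Dragnev--Hutchings formula $\op{ind}_{ECH}(v) = -\chi(\dot F_v) + \mu_\tau^+(v) - \mu_\tau^-(v) + 2c_1(v^*TS,\tau)$ for curves in the symplectization $\R\times N$ (the vertical bundle of $N\to S^1$ being $TS$). For the ECH-type indices these are Equation~\eqref{eqn: ECH-type index for HF}, Equation~\eqref{eqn: definition of ECH index for W+}, and Hutchings' formula $I_{ECH}(v) = c_1(v^*TS,\tau) + Q_\tau(v) + \widetilde\mu_\tau(\gamma) - \widetilde\mu_\tau(\gamma')$; recall also that $c_1(T\check W_+|_A,(\tau,\bdry_t)) = c_1(TS|_A,\tau)$ since $\bdry_t$ trivializes $TB_+$.

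Next I would establish three additivity facts under concatenation. (i) The relative first Chern number is additive, $c_1((u\#v)^*TS,\tau) = c_1(u^*TS,\tau) + c_1(v^*TS,\tau)$, because $\tau$ is held fixed along the shared ends and the pre-glued domain is the concatenation of the two domains. (ii) $Q_\tau$ is additive; on the Heegaard side this is Lemma~\ref{lemma: additivity of Q}, and on the $W_+$ and ECH sides it is the analogous statement, proved exactly as in \cite{Hu1}. (iii) By Mayer--Vietoris, $\chi(\dot F_{u\#v}) = \chi(\dot F_u) + \chi(\dot F_v) - \chi(\mathcal O)$, where $\mathcal O$ is the overlap region along which the gluing is performed: in the first case ($u$ an HF curve, $v$ a $W_+$-curve) $\mathcal O$ is a disjoint union of $2g$ strips $[0,1]\times\R$, so $\chi(\mathcal O)=2g$, whereas in the second case ($u$ a $W_+$-curve, $v$ a PFH curve) $\mathcal O$ is a union of cylinders over the $\gamma_j$, so $\chi(\mathcal O)=0$. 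Finally, the intermediate asymptotic data cancels: $\mu_\tau(\mathbf y')$ occurs with opposite signs in $\op{ind}_{HF}(u)$ and $\op{ind}_{W_+}(v)$ (and in $I_{HF}(u)$ and $I_{W_+}(v)$), while in the second case the matching-of-partitions condition required to form $u\#v$ gives $\mu_\tau(\gamma,u)=\mu_\tau^+(v)$, so the $\gamma$-term cancels in the Fredholm version, and $\widetilde\mu_\tau(\gamma)$, depending only on $\gamma$, cancels automatically in the $I$-version.

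Substituting (i)--(iii) and these cancellations into the formulas gives the four equalities: in the $W_+$-over-HF case the surviving constants $+2g$ (from $\op{ind}_{HF}$) and $-2g$ (from $\op{ind}_{W_+}$) add to $0$, which agrees with the $-2g$ in $\op{ind}_{W_+}(u\#v)$ exactly because of the $-\chi(\mathcal O)=-2g$ contribution in (iii); in the ECH-over-$W_+$ case there is no HF constant, no $\chi$-shift, and the lone $-2g$ of $\op{ind}_{W_+}(u)$ matches the $-2g$ of $\op{ind}_{W_+}(u\#v)$; the $I$-statements work identically, using $Q_\tau$-additivity in place of the $\chi$-count. The step I expect to be the main obstacle is precisely this Euler-characteristic bookkeeping — setting up the pre-gluing carefully enough to identify $\mathcal O$ and so pin down the $\mp 2g$ correction — together with the need to keep the trivialization conventions ($\tau$ on the vertical bundle, $(\tau,\bdry_t)$ on $T\check W_+$, and a single consistent identification of the vertical tangent bundle across $W$, $W_+$, and $\R\times N$) aligned so that the additivities in (i) and (ii) literally apply. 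Once that is arranged the remainder is a one-line substitution.
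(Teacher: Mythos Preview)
Your proposal is correct and is exactly the direct verification that the paper's one-line proof (``The additivity for $\op{ind}$ is well-known and the additivity for $I$ is immediate from the definitions'') leaves implicit. Your Euler-characteristic bookkeeping together with the cancellation of the intermediate Maslov/Conley--Zehnder data is precisely what ``well-known'' and ``immediate from the definitions'' are gesturing at; the paper simply does not spell any of it out.
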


\begin{proof}
The additivity for $\op{ind}$ is well-known and the additivity for $I$ is immediate from the definitions.
\end{proof}

\subsubsection{Index inequality}

Although we will not define it here, given an integer $m_k>0$ and a
simple orbit $\gamma_k$, we can define the {\em incoming partition}
$P^{in}_{\gamma_k}(m_k)$ and the {\em outgoing partition}
$P^{out}_{\gamma_k}(m_k)$ as in \cite[Definition~4.14]{Hu2}.

We have the following index inequality, which is analogous to
~\cite[Theorem~1.7]{Hu1} (also see~\cite[Theorem~4.15]{Hu2} which is
applicable to symplectic cobordisms). Note that a $W_+$-curve is
automatically simply-covered.

\begin{thm}[Index inequality]
\label{thm: index inequality for W+ and W-} Let $u$ be a $W_+$-curve
from ${\bf y}$ to $\boldsymbol{\gamma}=\prod_{k=1}^l\gamma_k^{m_k}$.  If the
negative ends of $u$ partition $m_k$ into $(m_{k1},m_{k2},\dots)$,
then
$$\op{ind}_{W_+}(u)+2\delta(u)\leq I_{W_+}(u),$$
and equality holds only if
$P^{in}_{\gamma_k}(m_k)=(m_{k1},m_{k2},\dots)$ for all $k$.
Similarly, if $\overline{u}$ is a $\overline{W}_-$-curve from
$\boldsymbol{\gamma}=\prod_{k=1}^l\gamma_k^{m_k}$ to ${\bf y}$ and the positive
ends of $u$ partition $m_k$ into $(m_{k1},m_{k2},\dots)$, then
$$\op{ind}_{W_-}(\overline{u})+2\delta(\overline{u})\leq
I_{W_-}(\overline{u}),$$ and equality holds only if
$P^{out}_{\gamma_k}(m_k)=(m_{k1},m_{k2},\dots)$ for all $k$.
\end{thm}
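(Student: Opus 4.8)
The plan is to establish both inequalities by the argument that proves Hutchings' ECH index inequality: combine the Fredholm index formula, the relative adjunction formula, and the writhe bounds, and observe that the ``defect'' $I-\op{ind}-2\delta$ is precisely the slack in a single writhe inequality at the ECH end. First I would treat a $W_+$-curve $u:\dot F\to W_+$ from ${\bf y}$ to $\gamma=\prod_k\gamma_k^{m_k}$ in the class $A$, whose negative ends partition $m_k$ into $(m_{k1},m_{k2},\dots)$. Since $u$ represents $A$ and $\bdry_t$ trivializes $TB_+$, we have $c_1(T\check W_+|_A,(\tau,\bdry_t))=c_1(u^*TS,\tau)$. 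Using the relative adjunction formula (Lemma~\ref{lemma: relative adjunction for W plus}) to substitute
\[
c_1(u^*TS,\tau)=\chi(\dot F)-w_\tau^-(u)+Q_\tau(A)-2\delta(u)
\]
into both the Fredholm index formula~\eqref{eqn: ind for W plus} and the definition~\eqref{eqn: definition of ECH index for W+} of $I_{W_+}(A)$, the terms $\chi(\dot F)$, $Q_\tau(A)$, $\mu_\tau({\bf y})$ and $2g$ all cancel in the difference, leaving the identity
\[
I_{W_+}(A)-\op{ind}_{W_+}(u)-2\delta(u)=w_\tau^-(u)-\widetilde\mu_\tau(\gamma)+\mu_\tau(\gamma,u).
\]

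The right-hand side is nonnegative by the writhe bound at a negative end, $w_\tau^-(u)\ge\widetilde\mu_\tau(\gamma)-\mu_\tau(\gamma,u)$ (this is the second inequality of~\eqref{eqn: writhe bound}; see \cite[Lemma~4.20]{Hu2}), which applies because a $W_+$-curve is simply covered and hence $u(\dot F)\cap\{s=s_0\}$, $s_0\ll 0$, is an honest braid in a neighborhood of each $\gamma_k$. This yields $\op{ind}_{W_+}(u)+2\delta(u)\le I_{W_+}(u)$. Furthermore, Hutchings' analysis of when the writhe bound is sharp shows that equality forces, for every $k$, the partition $(m_{k1},m_{k2},\dots)$ of the negative ends over $\gamma_k$ to be the incoming partition $P^{in}_{\gamma_k}(m_k)$ of \cite[Definition~4.14]{Hu2}, which is the asserted equality condition.

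The $\overline{W}_-$-curve case is formally identical, with ``negative end'' replaced by ``positive end'' throughout. Using Proposition~\ref{prop: Fredholm index for W minus}, the $\overline{W}_-$-version of the relative adjunction formula (in which $-w_\tau^-(u)$ is replaced by $w_\tau^+(\overline u)$, as noted after Lemma~\ref{lemma: relative adjunction for W plus}), and the definition~\eqref{eqn: definition of ECH index for W-} of $I_{\overline W_-}(A)$, the same cancellations give
\[
I_{\overline W_-}(A)-\op{ind}_{\overline W_-}(\overline u)-2\delta(\overline u)=-w_\tau^+(\overline u)+\widetilde\mu_\tau(\gamma)-\mu_\tau(\gamma,\overline u),
\]
which is nonnegative by the writhe bound at a positive end, $w_\tau^+(\overline u)\le\widetilde\mu_\tau(\gamma)-\mu_\tau(\gamma,\overline u)$ (the first inequality of~\eqref{eqn: writhe bound}), with equality forcing the partition of the positive ends over each $\gamma_k$ to be the outgoing partition $P^{out}_{\gamma_k}(m_k)$.

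I expect the arithmetic above to be entirely routine once the relative adjunction formulas and the Fredholm index formulas (Propositions~\ref{prop: Fredholm index for W plus} and~\ref{prop: Fredholm index for W minus}) are in hand. The substantive input, which we import wholesale from \cite{Hu1,Hu2}, is the writhe inequality together with the identification of its equality case in terms of the incoming/outgoing partitions; the only point requiring care on our side is verifying that the ECH ends of $W_+$- and $\overline W_-$-curves are genuine braids so that those bounds apply verbatim, which is immediate from the fact that such curves are simply covered.
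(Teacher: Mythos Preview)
Your proposal is correct and follows essentially the same approach as the paper: combine the Fredholm index formula, the relative adjunction formula, and the definition of $I_{W_+}$ to obtain the identity $I_{W_+}(u)-\op{ind}_{W_+}(u)-2\delta(u)=w_\tau^-(u)+\mu_\tau(\gamma,u)-\widetilde\mu_\tau(\gamma)$, then invoke the writhe bound from \cite[Lemma~4.20]{Hu2} together with its equality characterization in terms of the incoming partition. Your treatment of the $\overline{W}_-$ case is likewise the mirror argument the paper alludes to with ``similar.''
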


Here $\delta(u)$ is the signed count of interior singularities of
$u$, where each singularity contributes positively to $\delta(u)$.
There are no boundary singularities because $\pi_{B_+} \circ u$ has no
branch point at the boundary.

The following are immediate: (i) if $\op{ind}_{W_+}(u)=I_{W_+}(u)$,
then $u$ is embedded; and (ii) if $I_{W_+}(u)=0$ or $1$, then $u$ is
embedded.

\begin{proof}
If we plug Equation~\eqref{eqn: ind for W plus}, Equation~\eqref{eqn: definition of ECH index for W+}, and the relative adjunction formula \eqref{eqn: relative adjunction formula for W plus} into $I_{W_+}(u) - \op{ind}_{W_+}(u)$, we obtain:
$$I_{W_+}(u) - \op{ind}_{W_+}(u) = \mu_{\tau}(\boldsymbol{\gamma}, u) + w^-_{\tau}(u) - \widetilde{\mu}_{\tau}(\boldsymbol{\gamma}) +2 \delta(u).$$
The statement for $W_+$ then follows from the writhe inequality
$$w^-_{\tau}(u) \ge \widetilde{\mu}_{\tau}(\boldsymbol{\gamma}) - \mu_{\tau}(\boldsymbol{\gamma}, u),$$
where equality holds if and only if the partition $(m_{k1},m_{k2},\dots)$ of the negative end of $u$ coincides with the incoming partition $P^{in}_{\gamma_k}(m_k)$. (See \cite[Lemma~4.20]{Hu2}.) The proof for $\overline{W}_-$ is similar.
\end{proof}

\subsection{Holomorphic curves with ends at $z_\infty$}
\label{subsection: modified indices at z infty}

In this subsection we explain how to extend the definitions of the Fredholm and ECH indices to holomorphic curves which have ends at multiples of $z_\infty$. The novelty is that the Lagrangian boundary condition is singular at $z_{\infty}$ and that the chord over $z_{\infty}$ can be used more than once.  We will treat in detail the case of a curve $\overline{u}:\dot F\to \overline{W}$ which is a multisection of $\overline{W}\to\R\times[0,1]$; multisections of $\overline{W}_+$ and $\overline{W}_-$ can be treated similarly.

\subsubsection{Data at $z_\infty^p$}

We define the {\em data $\overrightarrow{\mathcal{D}}$ at $z_\infty^p$} as a $p$-tuple of
matchings
\nom[D]{$\overrightarrow{\mathcal{D}}$, $\mathcal{D}^{to}$, $\mathcal{D}^{from}$, $\mathcal{D}$}{Data at $z_\infty^p$ with initial points $\mathcal{D}^{from}$ and terminal points $\mathcal{D}^{to}$. $\mathcal{D}=(\mathcal{D}^{to}, \mathcal{D}^{from})$}
$$\{(i_1',j_1')\to (i_1,j_1), \dots,(i_p',j_p')\to (i_p,j_p)\},$$
where $i_\ell,i_\ell'\in \{1,\dots,2g\}$, $j_\ell,j_\ell'\in\{0,1\}$ for $\ell=1,\dots,p$ and
$i_\ell\not=i_{\ell'}$, $i_\ell'\not=i_{\ell'}'$ for $\ell\not=\ell'$. To the data $\overrightarrow{\mathcal{D}}$
we associate its set of {\em initial points} $\mathcal{D}^{from}= \{ (i_1', j_1'), \ldots ,
(i_p', j_p') \}$ and its set of {\em terminal points} $\mathcal{D}^{to}= \{ (i_1, j_1), \ldots ,
(i_p, j_p) \}$, and define $\mathcal{D} = ( \mathcal{D}^{to}, \mathcal{D}^{from} )$.

We write ${\bf z}=\{z_\infty^p(\overrightarrow{\mathcal{D}})\}\cup {\bf y}$ for a tuple of points of
$\overline{\bf a}\cap \overline{\hh}(\overline{\bf a})$, where ${\bf y}\subset S$,
$z_\infty$ has multiplicity $p$, $\overrightarrow{\mathcal{D}}$ is the data at $z_\infty^p$, and each
arc of $\{\overline{a}_i,\overline{\hh}(\overline{a}_i)\}_{i=1}^{2g}$ is used at most once.
Here $z_\infty^p(\overrightarrow{\mathcal{D}})$ is viewed as a collection of chords $z_\infty$ from
$\overline{\hh}(\overline{a}_{i_\ell',j_\ell'})$ to $\overline{a}_{i_\ell,j_\ell}$, where $\ell=1,\dots,p$.
\nom[z]{${\bf z}= \{z_\infty^p(\overrightarrow{\mathcal{D}})\}\cup {\bf y}$}{A tuple of $\overline{\bf a}\cap \overline{\hh}(\overline{\bf a})$, where $z_\infty$ has multiplicity $p$ and ${\bf y}\subset S$}

\subsubsection{Multisections}\label{subsubsection: multisections}

In this subsection we define multisections $\overline{u}: (\dot F,j)\to (\overline{W},\overline{J})$ with irreducible components which branched cover $\sigma_\infty$.  The notation is complicated by the fact that there may be branch points of $\overline{\pi}_{B}\circ \overline{u}$ along $\bdry B$.

For the rest of Section \ref{subsection: modified indices at z infty}, $(F,j)$ will be a compact {\em nodal} Riemann surface, possibly disconnected, with two sets of punctures ${\bf q}^+=\{q_1^+,\dots,q_\ell^+ \}$ and ${\bf q}^-=\{q_1^-,\dots,q_\ell^-\}$ on $\bdry F$ and a set ${\bf r}=\{r_1,\dots,r_l\}$ of nodes on $\bdry F$, such that
\begin{itemize}
\item[(i)] the nodes ${\bf r}$ are disjoint from ${\bf q}^+$ and ${\bf q}^-$;
\item[(ii)] each connected component of $F$ has nonempty boundary;
\item[(iii)] on each oriented loop of $\bdry F$ whose orientation agrees with that of $\bdry F$, there is at least one puncture from each of ${\bf q}^+$ and ${\bf q}^-$; and
\item[(iv)] the punctures on $\mathbf{q}^+$ and $\mathbf{q}^-$ alternate along each oriented loop of $\bdry F$.
\end{itemize}
Here, by a {\em loop or path on $\bdry F$} we mean a loop or path on $\partial F$ which is a concatenation of subarcs of $\bdry F$ with endpoints on ${\bf q}^+\cup{\bf q}^-\cup {\bf r}$.  An oriented loop (or path) is a loop (or path) formed from subarcs of $\partial F$ which all point in the same direction, when oriented as the boundary of $F$.

We write $\dot F=F-{\bf q}^+-{\bf q}^-$. Given a holomorphic multisection $\overline{u}: (\dot F,j)\to (\overline{W},\overline{J})$, we decompose $F = F' \sqcup F''$ such that
\begin{itemize}
\item $\overline{u}' = \overline{u}|_{\dot F'}$ is a possibly disconnected branched cover of $\sigma_\infty$ and
\item $\overline{u}'' = \overline{u}|_{\dot F''}$  is the union of irreducible components which do not branch cover $\sigma_\infty$.
\end{itemize}
\nom[u]{$\overline{u}=\overline{u}'\cup\overline{u}''$}{Decomposition of a holomorphic multisection $\overline{u}$ in $\overline{W}$, $\overline{W'}$, $\overline{W}_+$, or $\overline{W}_-$ into a possibly disconnected branched cover $\overline{u}'$ of the section at $\infty$ and the rest $\overline{u}''$}
By abuse of notation, we write
\begin{equation}\label{decomposition of overline u}
\overline{u} = \overline{u}' \cup \overline{u}''.
\end{equation}
We additionally assume the following:
\begin{itemize}
\item[(v)] all the nodes of $\bdry F$ lie on $\partial F'$.
\end{itemize}
Observe that the nodes of $\bdry F'$ correspond to branch points of $\overline\pi_B\circ \overline{u}$ along $\bdry B$.   If we view $\dot F'$ as a branched cover of $B$ with some branch points along $\bdry B$, then let $\dot F'_{ext}$ be the branched cover of $B$ obtained by pushing the boundary branch points into $int(B)$. 
The cases of $\overline{W}_+$ and $\overline{W}_-$ are analogous.

The following is the local model near a branch point along the boundary: Consider the map $\C \to\C$, $z \mapsto z^n$. If we write $z = re^{i \theta}$, then the preimage of the upper half plane is a union of sectors $\theta \in \left [ \frac{2k \pi}{n}, \frac{(2k+1) \pi}{n} \right ]$ for $k=0, \ldots, n-1$.

\begin{defn}[Data $\mathcal{C}$] \label{defn: data C}
A {\em maximal path $d$ of $\bdry \dot F'$} is an oriented connected path from ${\bf q}^+$ to ${\bf q}^-$ along $\bdry\dot F'$, where the orientation either agrees with that of $\bdry F$ on all the subarcs of $d$ or is opposite that of $\bdry F$ on all the subarcs of $d$.
A set $\{d_1,\dots,d_r\}$ is a {\em decomposition of $\bdry \dot F'$ into maximal paths}, if $\bdry \dot F'=\cup_{i=1}^r d_i$, $d_i$ is a maximal path of $\bdry \dot F'$, and $d_i$, $d_j$, $i\not=j$, intersect only at points of ${\bf q}^+\cup{\bf q}^-\cup {\bf r}$.  The {\em canonical decomposition $\Delta=\{d_1,\dots,d_r\}$ of $\bdry\dot F'$} is the decomposition which corresponds to $\bdry \dot F'_{ext}$.  A {\em data} $\mathcal{C}$ is a map $$\Delta\to \{\overline{a}_{i,j},\overline{\hh}(\overline{a}_{i,j})\}_{i,j}.$$
\end{defn}

\nom[C]{$\mathcal{C}$}{Data $\mathcal{C}$ as in Definition~\ref{defn: data C}}
In words, $\overline{u}'$ can be viewed as mapping $\Delta$ to the set of Lagrangians $L_{\overline{a}_{i,j}}$ or $L_{\overline{\hh}(\overline{a}_{i,j})}$. 

We then make the following definition which agrees with Definition~\ref{HF-curve} when ${\bf z}_+ ={\bf y}_+$ and ${\bf z}_-={\bf y}_-$:

\begin{defn}  \label{overline W curve, version 2}
Let ${\bf z}_+=\{z_\infty^{p_+}(\overrightarrow{\mathcal{D}}_+)\}\cup {\bf y}_+$ and ${\bf z}_-=\{z_\infty^{p_-}(\overrightarrow{\mathcal{D}}_-)\}\cup {\bf y}_-$ be $k$-tuples of points of $\overline{a}\cap \overline{\hh}(\overline{a})$, where $z_\infty$ is counted with multiplicities $p_+$, $p_-$. A {\em degree $k$ multisection of $(\overline{W},\overline{J})$ from ${\bf z}_+$ to ${\bf z}_-$} is a pair $(\overline{u},\mathcal{C})$ consisting of a holomorphic map
$$\overline{u} : (\dot F,j) \to (\overline{W},\overline{J}),$$
where $\overline{u}$ is a degree $k$ multisection of $\overline\pi_B: \overline{W}\to B$ and there are decompositions $\dot F = \dot F' \sqcup \dot F''$, $\overline{u}=  \overline{u}' \cup \overline{u}''$ as above, and the data $\mathcal{C}$ for $\overline{u}'$, and which additionally satisfies the following:
\begin{enumerate}
\item $\overline{u}''(\partial \dot{F}'' )\subset L_{\widehat{\bf a}} \cup L_{\overline{\hh}(\widehat{\bf a})}$;
\item there is a canonical decomposition $\Delta$ of $\bdry \dot F'$ such that $\overline{u}$ maps each connected component of $\bdry\dot F''$ and each irreducible component of $\Delta$ to a different $L_{\overline{a}_i}$ or $L_{\overline{\hh}(\overline{a}_i)}$  (here we are using $\mathcal{C}$ to assign some $L_{\overline{a}_i}$ or $L_{\overline{\hh}(\overline{a}_i)}$ to each irreducible component of $\Delta$);
\item $\displaystyle{\lim_{w\rightarrow q_i^+}\overline\pi_{\R}\circ \overline{u}(w) =+\infty}$ and $\displaystyle{\lim_{w\rightarrow q_i^-} \overline\pi_{\R}\circ \overline{u}(w) =-\infty}$;
\item $\overline{u}$ converges to a trivial strip over $[0,1]\times {\bf z}_+$ near ${\bf q}^+$ and to a trivial strip over $[0,1]\times {\bf z}_-$ near ${\bf q}^-$;
\item the positive and negative ends of $\overline{u}$ which limit to $z_\infty$ are described by $\overrightarrow{\mathcal{D}}_+$ and $\overrightarrow{\mathcal{D}}_-$;
\item the energy of $\overline{u}$ is finite.
\end{enumerate}
\end{defn}

\begin{rmk}
It will always be assumed that a multisection $\overline{u}$ of $\overline{W}$ from $\{z_\infty^{p_+}\}\cup {\bf y}_+$ to $\{z_\infty^{p_-}\}\cup {\bf y}_-$ comes with data $\mathcal{C}$. Indeed, in this paper, such a curve $\overline{u}$ only appears as the SFT limit of curves $\overline{u}_i$ without components which branch cover $\sigma_\infty$ and hence naturally inherits $\mathcal{C}$ and $\overrightarrow{\mathcal{D}}_\pm$.
\end{rmk}

\subsubsection{Multivalued trivializations}
\label{subsubsubsection: multivalued trivialization}

We define a class of trivializations $\tau=\tau_{\overrightarrow{\mathcal{D}}_+,\overrightarrow{\mathcal{D}}_-}$ which will be used in the definition of the Fredholm index of a map $\overline{u}$ from $\{z_\infty^{p_+}(\overrightarrow{\mathcal{D}}_+)\} \cup {\bf y}_+$ to $\{z_\infty^{p_-}(\overrightarrow{\mathcal{D}}_-)\}\cup {\bf y}_-$. The trivialization $\tau=\tau_{\overrightarrow{\mathcal{D}}_+, \overrightarrow{\mathcal{D}}_-}$ is {\em multivalued} near $z_\infty$ and depends on $\overrightarrow{\mathcal{D}}_+$ and $\overrightarrow{\mathcal{D}}_-$.

Let $\tau'=\bdry_\rho$ be a trivialization of $T\overline{S}$ on $D^2_\delta-\{z_\infty\}=\{0<\rho\leq \delta\}\subset \overline{S}$, where $\delta>0$ is small. Let $\check{\overline{W}}=[-1,1]\times[0,1]\times \overline{S}$. We extend $\tau'$ arbitrarily to a trivialization of $T\overline{S}$ along $\overline{\bf a}$ and pull it back to $\tau_{\overrightarrow{\mathcal{D}}_+,\overrightarrow{\mathcal{D}}_-}$ on $T\overline{S}_{\check{\overline{W}}}$ along ${[-1,1]\times\{1\}\times \overline{\bf a}}$; similarly we extend $\tau'$ arbitrarily to a trivialization of $T\overline{S}$ along $\overline{\hh}(\overline{\bf a})$ and pull it back to $\tau_{\overrightarrow{\mathcal{D}}_+,\overrightarrow{\mathcal{D}}_-}$ on $T\overline{S}_{\check{\overline{W}}}$ along ${[-1,1]\times\{0\}\times \overline{\hh} (\overline{\bf a})}$.\footnote{If we want consistency with $\tau$ from Section~\ref{subsubsection: Fredholm index second version}, we assume that $\tau'$ corresponds to a nonsingular tangent vector field along $\{1\}\times {\bf a}$ and $\{0\}\times\hh ({\bf a)}$.} Then we extend $\tau_{\overrightarrow{\mathcal{D}}_+,\overrightarrow{\mathcal{D}}_-}$ along $(\{ 1 \} \times [0,1] \times \mathbf{y_+}) \cup (\{ -1 \} \times [0,1] \times \mathbf{y_-})$, while still denoting it by $\tau_{\overrightarrow{\mathcal{D}}_+,\overrightarrow{\mathcal{D}}_-}$. Finally, for each $(i_{\pm,\ell}',j_{\pm,\ell}')\to (i_{\pm,\ell},j_{\pm,\ell})$ in $\overrightarrow{\mathcal{D}}_\pm$, we choose an extension $\tau_{\overrightarrow{\mathcal{D}}_+,\overrightarrow{\mathcal{D}}_-}$ to $\{\pm 1\}\times [0,1]\times \{z_\infty\}$ in an arbitrary manner.

 A multivalued trivialization $\tau=\tau_{\overrightarrow{\mathcal{D}}_+,\overrightarrow{\mathcal{D}}_-}$ as constructed in the previous paragraph is said to be {\em compatible with $(\overrightarrow{\mathcal{D}}_+,\overrightarrow{\mathcal{D}}_-)$.} We also say that $\tau$ is {\em compatible with $(\mathcal{D}_+,\mathcal{D}_-)$} if it is compatible with some $(\overrightarrow{\mathcal{D}}_+,\overrightarrow{\mathcal{D}}_-)$ whose initial/terminal point sets are $(\mathcal{D}_+,\mathcal{D}_-)$.

\begin{rmk}
Note that the extensions to $\overline{\bf a}$ and to $\overline{\hh}(\overline{\bf a})$ might conflict, but it does not matter here. In the cobordism cases (i.e., for $\overline{W}_+$ and $\overline{W}_-$), when  $\overline{\bf a}$ and $\overline{\hh}(\overline{\bf a})$ are connected by the Lagrangian $L^{\pm}_{\overline{\bf a}}$, we extend the trivialization $\tau'$ to $\overline{\bf a}$ in an arbitrary manner and then sweep it around using the symplectic connection along the boundary of the cobordism.
\end{rmk}

\subsubsection{Groomed multivalued trivializations}

Let $\tau$ be a multivalued trivialization. Let
$$A_\varepsilon=[0,1]\times \bdry D^2_\varepsilon\subset [0,1]\times \overline{S},$$
where $\varepsilon>0$ is small. We use coordinates $(t,\phi)$ on $$A_\varepsilon\simeq ([0,1]\times [0,2\pi])/(t,0)\sim(t,2\pi).$$

The branches of the trivialization $\tau$ along $[0,1] \times \{ z_{\infty} \}$ can be used to push off $[0,1] \times \{ z_{\infty} \}$ into a collection of arcs $c^\pm_\ell$, $\ell=1,\dots,p_\pm$, in $A_\varepsilon$, subject to some conditions:  If we denote the initial point of $c^\pm_\ell$ by $p^\pm_{\ell,0}$ and the terminal point by $p^\pm_{\ell,1}$, then $p^\pm_{\ell,0}$ lies on $\{0\}\times\overline{\hh}(\overline{a}_{i'_{\pm, \ell}, j'_{\pm, \ell}})$ and $p^\pm_{\ell,1}$ lies on $\{1\}\times \overline{a}_{i_{\pm, \ell}, j_{\pm, \ell}}$.  We also assume that each $c_\ell^\pm$ is linear with respect to the identification of the universal cover of $A_\varepsilon$ with $[0,1]\times \R$.

We then write
$$P^{\pm}_0=\{p^\pm_{\ell,0}\}_{\ell=1}^{p_\pm} \quad \mbox{ and } \quad P^{\pm}_1=\{p^\pm_{\ell,1}\}_{\ell=1}^{p_\pm}.$$

\begin{defn} \label{defn: groomings}
The multivalued trivialization $\tau$ is {\em groomed} if the arcs $\{c^*_\ell\}_{\ell=1}^{p_*}$ are pairwise disjoint for both $*=+$ and $*=-$. The collections $\mathfrak{c}^+= \{c^+_\ell\}_{\ell=1}^{p_+}$ and $\mathfrak{c}^- = \{c^-_\ell\}_{\ell=1}^{p_-}$ are the {\em groomings} at the positive and negative ends.
\end{defn}

\nom[c]{$\mathfrak{c}^+$, $\mathfrak{c}^-$}{Groomings at the positive and negative ends as in Definition~\ref{defn: groomings}}

Note that every groomed multivalued trivialization induces data $\overrightarrow{\mathcal{D}}_\pm$, but not every $\overrightarrow{ \mathcal{D}}_\pm$ is compatible with a groomed multivalued trivialization.

\subsubsection{Index formulas} \label{lerici2}

We first give the Fredholm index of a multisection.

\begin{prop}
Let $\overline{u}:\dot F\to \overline{W}$ be a degree $k$ multisection with data $\overrightarrow{\mathcal{D}}_{\pm}$ at $z_{\infty}$, and let $\tau = \tau_{\overrightarrow{\mathcal{D}} _+, \overrightarrow{\mathcal{D}}_-}$. Then the Fredholm index of $\overline{u}$ is given by the formula:
\begin{equation} \label{Fredholm index revisited}
\op{ind}(\overline{u})=-\chi(F)+k+ \mu_\tau(\overline{u})+2c_1(\overline{u}^*T \overline{S},\tau).
\end{equation}
\end{prop}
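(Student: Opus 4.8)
The plan is to reduce this to the Fredholm index formula for honest multisections, Equation~\eqref{index-lipshitz}, by carefully analyzing the contributions of the branched-cover-of-$\sigma_\infty$ part $\overline{u}'$ and matching them against the bookkeeping built into the multivalued trivialization $\tau_{\overrightarrow{\mathcal{D}}_+,\overrightarrow{\mathcal{D}}_-}$. First I would decompose $\overline{u}=\overline{u}'\cup\overline{u}''$ as in Section~\ref{subsubsection: multisections}, with domains $\dot F'$ and $\dot F''$, and note that the linearized $\overline\bdry$-operator $D_{\overline{u}}$ splits (up to the contribution of the nodes on $\bdry F'$) as $D_{\overline{u}'}\oplus D_{\overline{u}''}$. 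Since $\overline{u}''$ is a genuine multisection with Lagrangian boundary conditions on $L_{\widehat{\bf a}}\cup L_{\overline{h}(\widehat{\bf a})}$, its Fredholm index is given by the already-established formula (the version appearing in Section~\ref{subsubsection: Fredholm index second version}, or the doubling computation of Proposition~\ref{prop: Fredholm index for W plus} adapted to the product base $B$), namely $\op{ind}(\overline{u}'') = -\chi(F'')+l''+\mu_\tau(\overline{u}'')+2c_1(\overline{u}''^*T\overline S,\tau)$, where $l''$ is the degree of $\overline{u}''$ over $B$.

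Next I would compute the index of the branched-cover piece $\overline{u}'$. Here $\overline{u}'$ is a (possibly disconnected) branched cover of $\sigma_\infty$ of total degree $l'=l-l''$, with boundary mapped to the singular Lagrangian over $z_\infty$; its domain $\dot F'$ is a branched cover of $B$ with branch points possibly on $\bdry B$, which after pushing in become the surface $\dot F'_{ext}$. The key point is that $T\overline S|_{\sigma_\infty}$ is trivial (it is the standard $D^2$ in each fiber), so $\mu_\tau(\overline{u}')$ and $c_1(\overline{u}'^*T\overline S,\tau)$ are determined purely by how the chosen branches of $\tau$ over $[0,1]\times\{z_\infty\}$ wind relative to the Lagrangian arcs $\overline{a}_{i,j},\overline{h}(\overline{a}_{i,j})$ prescribed by the data $\mathcal{C}$ and $\overrightarrow{\mathcal{D}}_\pm$. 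I would show, by a direct Riemann–Hurwitz count for branched covers of the disk with boundary branch points — exactly the computation underlying the passage from $\dot F'$ to $\dot F'_{ext}$ — that $\op{ind}(\overline{u}') = -\chi(F')+l'+\mu_\tau(\overline{u}')+2c_1(\overline{u}'^*T\overline S,\tau)$; the automatic transversality of covers of $\sigma_\infty$ (these project isomorphically via $\overline\pi_B$ away from branch points, cf.\ the automatic transversality of thin strips in Section~\ref{subsubsection: description of some holomorphic curves}) guarantees the operator is surjective and the index is the actual dimension.

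Finally I would assemble the pieces. Adding $\op{ind}(\overline{u}')+\op{ind}(\overline{u}'')$ and using additivity of $\chi$ ($\chi(F)=\chi(F')+\chi(F'')$ once the nodes are accounted for — each node on $\bdry F'$ contributes in a way that is already absorbed by the $\dot F'_{ext}$ convention), of the degree ($l=l'+l''$), of $\mu_\tau$ and of $c_1$, together with the fact that the node contributions to the linearized operator cancel (the gluing of $\overline{u}'$ and $\overline{u}''$ along the nodes is index-neutral since it is a boundary matching of the already-pushed-in picture), yields Equation~\eqref{Fredholm index revisited}. I expect the main obstacle to be the correct bookkeeping at the boundary branch points/nodes: one must verify that the multivalued, groomed trivialization $\tau_{\overrightarrow{\mathcal{D}}_+,\overrightarrow{\mathcal{D}}_-}$ has been set up so that all the ``extra'' winding introduced by reusing the chord over $z_\infty$ is exactly counted once inside $\mu_\tau(\overline{u})$, with no double-counting against $c_1$ or against the $-\chi(F)+l$ topological term — in other words, checking that the pushing-in operation $\dot F'\rightsquigarrow \dot F'_{ext}$ is compatible with the index additivity, which is where a careless sign or an off-by-one in the Riemann–Hurwitz count would enter.
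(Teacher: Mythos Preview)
Your decomposition $\overline{u}=\overline{u}'\cup\overline{u}''$ is a detour the paper does not take, and it introduces complications that the direct argument avoids. The paper's proof is essentially one line: the Fredholm index is computed in the pullback bundle $\overline{u}^*T\overline{W}$, and once the multivalued trivialization $\tau_{\overrightarrow{\mathcal{D}}_+,\overrightarrow{\mathcal{D}}_-}$ is in place, the computation yielding Equation~\eqref{eqn: Fredholm index for HF, version 2} goes through verbatim. Nothing in that derivation (Lipshitz's formula rewritten via $\mu_\tau$ and $c_1$, or equivalently the doubling argument) cares whether an end lands at a point of ${\bf a}\cap h({\bf a})$ or at $z_\infty$; it only sees the pullback bundle, the real subbundle along $\bdry\dot F$, and a trivialization near the punctures --- all of which $\tau$ supplies.

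There is also a mild circularity in your plan. You invoke the ``already-established formula'' for $\overline{u}''$, but $\overline{u}''$ itself can have ends at $z_\infty$ (that is exactly what the data $\overrightarrow{\mathcal{D}}_\pm$ encode), so the formula for $\overline{u}''$ is not a previously proved case --- it is the proposition at hand, restricted to curves with $\overline{u}'=\varnothing$. The same pullback-bundle argument that handles $\overline{u}''$ handles all of $\overline{u}$ at once, making the split superfluous. Your worry about bookkeeping at the boundary nodes on $\bdry F'$ is genuine, but it is a self-inflicted difficulty: the paper never needs to track node contributions or push in branch points for the Fredholm index (those manipulations enter only later, for the ECH index and $\tau$-trivial representatives).
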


The proof is a computation in the pullback bundle $\overline{u}^*T \overline{W}$, which is completely analogous to that yielding Equation~\eqref{eqn: Fredholm index for HF, version 2}.

Next we discuss the ECH index of a multisection $(\overline{u},\mathcal{C})$ with ends
$${\bf z}_+=\{z_\infty^{p_+}(\overrightarrow{\mathcal{D}}_+)\} \cup {\bf y}_+,\quad {\bf z}_-=\{z_\infty^{p_-}(\overrightarrow{\mathcal{D}}_-)\}\cup {\bf y}_-.$$
Let $Z_{{\bf z}_+,{\bf z}_-}$ be the subset of $\check{\overline{W}}$ given by
\begin{align*}
Z_{{\bf z}_+,{\bf z}_-} &= \check{L}_{\widehat{\bf a}}\cup \check{L}_{\overline{\hh}(\widehat{\bf a})}\cup (\{1\}\times[0,1]\times {\bf z}_+) \cup ( \{-1\}\times[0,1]\times {\bf z}_- ),
\end{align*}
where we are viewing ${\bf z}_\pm$ as sets of points (i.e., we are forgetting the data $\overrightarrow{\mathcal{D}}_\pm$),
$$\check{L}_{\widehat{\bf a}}=[-1,1]\times\{1\}\times \widehat{\bf a}, \quad \mbox{and} \quad \check{L}_{\overline{\hh}(\widehat{\bf a})}=[-1,1]\times\{0\}\times \overline{\hh}(\widehat{\bf a}).$$
We then define $H_2(\overline{W}, {\bf z}_+,{\bf z}_-) \subset H_2(\check{\overline{W}},Z_{{\bf z}_+,{\bf z}_-})$ in a manner analogous to that of $H_2(X,{\bf y}, {\bf y}')$ in Section~\ref{subsection: HF holomorphic curves and moduli spaces}. Observe that $H_2(\overline{W},{\bf z}_+,{\bf z}_-)$ and $Z_{{\bf z}_+,{\bf z}_-}$ depend on the endpoints $\mathcal{D}_\pm$ but not on the matchings in $\overrightarrow {\mathcal{D}}_\pm$.

\s\n
{\em $\tau$-trivial representative.}  Let $\tau$ be a groomed multivalued trivialization which is compatible with $(\mathcal{D}_+,\mathcal{D}_-)$ and let ${\frak c}^\pm=\{c_\ell^\pm\}$ be the groomings of $\tau$. 
{\em The matchings defined by ${\frak c}^\pm$ do not need to coincide with the matchings in $\overrightarrow {\mathcal{D}}_\pm$; only their endpoints do.}
The construction of a $\tau$-trivial representative $\check C\subset \check{\overline{W}}$ of $[\overline{u}]\in H_2(\overline{W},{\bf z}_+,{\bf z}_-)$ is as follows:

\s\n
{\em Step 1.} Replace $\overline{u}':\dot F'\to \overline{W}$ by $\overline{u}'_{ext}:\dot F'_{ext}\to \overline{W}$ so that there are no nodes along $\bdry \dot F_{ext}$ and each component of $\bdry \dot F_{ext}$ is mapped to some $L_{\overline{a}_{i,j}}$ or $L_{\overline{\hh}(\overline{a}_{i,j})}$ in a manner consistent with the data $\mathcal{C}:\Delta\to \{\overline{a}_{i,j}, \overline{\hh}(\overline{a}_{i,j})\}_{i,j}$.

\s\n
{\em Step 2.} Compactify the ends of $\overline{u}_{ext}=\overline{u}'_{ext}\cup\overline{u}''$ to $\check{\overline{u}}_{ext}$ as in Section~\ref{subsection: HF holomorphic curves and moduli spaces}.

\s\n
{\em Step 3.} Perturb $\check{\overline{u}}_{ext}$ so that the resulting representative $\check C$ is immersed, $\bdry \check C\subset Z_{{\bf z}_+,{\bf z}_-,\tau}$, each component of $\bdry \check C\cap \{t=0,1\}$ is mapped to $\check L_{\widehat{a}_{i,j}}$ or $\check L_{\overline{\hh}(\widehat{a}_{i,j})}$ as specified by $\mathcal{C}$, and $\check C$ satisfies
$$\pi(\check{C}|_{s=\pm 1})\cap ([0,1]\times int(D^2))={\frak c}^\pm\subset A_\varepsilon$$
and the conditions in Definition \ref{defn: tau-trivial representative HF} at all the other ends.  Here
$$\pi: [-1,1]\times[0,1]\times\overline{S}\to [0,1]\times\overline{S}$$
is the projection onto the second and third factors. Then resolve the self-intersections to make $\check C$ embedded.

\s\n {\em The quadratic form $Q_\tau$.} Let $\check{C}$ be a $\tau$-trivial representative of $\overline{u}$, where $\tau$ is viewed as a nonsingular vector field. Let $c^\pm_\ell(\delta)$ be the $\phi=\delta$ translate of $c^\pm_\ell$, where $\delta>0$ is small. We then take a pushoff $\check{C}'$ of $\check{C}$ such that $\bdry \check{C}$ is pushed in the direction of $J\tau+Y$, where $Y$ is $C^0$-small, and
$$\pi(\check{C}'|_{s=\pm 1})\cap ([0,1]\times int(D^2))=\cup_{\ell=1}^{p_\pm} c^\pm_\ell(\delta)\subset A_\varepsilon.$$
Then $Q_\tau([u])=\langle \check{C},\check{C}'\rangle$ as in Definition~\ref{defn of Q}.

\begin{defn}[ECH index]
Given $A \in H_2(\overline{W}, {\bf z}_+,{\bf z}_-)$ and a groomed multivalued trivialization $\tau$ compatible with $(\mathcal{D}_+,\mathcal{D}_-)$, we define
\begin{equation} \label{ECH index revisited}
I_{\tau}(A) =  Q_\tau(A) + \widetilde{\mu}_\tau(\partial A) + c_1(T \overline{S}|_{A} ,\tau),
\end{equation}
where the analog $\widetilde{\mu}_\tau(\partial A)$ of the symmetric Conley-Zehnder index will be given in Section~\ref{lerici}.
\end{defn}

Sometimes we will write $I_{\tau}(u)$ to mean $I_{\tau}(A)$, where $A$ is the relative homology class defined by $u$.

\subsubsection{Definition of $\widetilde\mu_\tau(\partial A)$} \label{lerici}

We first define the Conley-Zehnder index $\mu_{\tau}(\partial A)$. The groomed multivalued trivialization $\tau$ determines the matchings $\mathcal{D}^{from}_\pm \to \mathcal{D}^{to}_\pm$.  Pick a cycle $\zeta$ which represents $\partial A$ and respects the matchings along $\{ \pm 1 \} \times [0,1] \times \{z_{\infty}\}$. The pullback bundle $\zeta^*T \overline{S}$ is trivialized by the pullback of $\tau$. We now define a multivalued real rank one subbundle $\mathcal{L}_0$ of $T\overline{S}$ along $Z_{\overline{\bf a}, \overline{\hh}(\overline{\bf a})}$ by setting $\mathcal{L}_0=T\check{L}_{\overline{\bf a}}\cap T\overline{S}$ on $\check{L}_{\overline{\bf a}}$ and $T\check{L}_{\overline{\hh}(\overline{\bf a})}\cap T\overline{S}$ on $\check{L}_{\overline{\hh}(\overline{\bf a})}$ and extending $\mathcal{L}_0$ across $\{\pm 1\}\times[0,1]\times {\bf z}_\pm$ by rotating in the counterclockwise direction from $T\overline{\hh}(\overline{\mathbf{a}})$ to $T\overline{\mathbf{a}}$ in $T\overline{S}$ by the minimum amount possible. We then define $\mu_{\tau}(\partial A)$ as the Maslov index of $\mathcal{L}=\zeta^*\mathcal{L}_0$ with respect to $\tau$.

Now we define the corrections to add to $\mu_{\tau}(\partial A)$ to obtain the symmetric Conley-Zehnder index $\widetilde{\mu}_{\tau}(\partial A)$. Let $\tau$ be a groomed multivalued trivialization which is compatible with $(\mathcal{D}_+, \mathcal{D}_-)$ and which induces the groomings $\mathfrak{c}^+= \{c^+_\ell\}_{\ell=1}^{p_+}$ and $\mathfrak{c}^-= \{c^-_\ell\}_{\ell=1}^{p_-}$.

\begin{defn}
Given a grooming ${\frak c}=\{c_\ell\}_{\ell=1}^p$ from $P_0$ to $P_1$, its {\em winding number} is given by:
\begin{equation}\label{eq: winding number}
w(\mathfrak{c})=\sum_\ell w(c_\ell), \quad w(c_\ell)= \langle c_\ell, \{\phi=\pi\}\rangle,
\end{equation}
where $\langle\cdot,\cdot\rangle$ is the algebraic intersection number in $A_\varepsilon$, the arcs $c_\ell$ and $\{\phi=\pi\}$ are oriented from $t=0$ to $t=1$, and $A_\varepsilon$ is oriented by $(\bdry_\phi,\bdry_t)$.
\end{defn}

\begin{rmk}
A grooming $\mathfrak{c}$ is determined by its endpoints $P_0$ and $P_1$ and its winding number $w(\mathfrak{c})$.
\end{rmk}

Given $P_0$, $P_1$, and $q\in\Z$, choose a grooming $\mathfrak{c}=\{c_\ell\}_{\ell=1}^p=\mathfrak{c}(P_0,P_1,q)$ from $P_0$ to $P_1$ such that $w(\mathfrak{c})=q$. Let $c^\flat_{\ell}$ be the linear arc in $A_\varepsilon$ which is disjoint from $\{\phi=\pi\}$ and has the same endpoints as $c_{\ell}$. (Note that the collection $\{c^\flat_{\ell}\}$ is usually not groomed.)

\begin{defn}
$\alpha_{(P_0,P_1,q)}$ (or simply $\alpha_q$ if $P_0, P_1$ are understood) is the number of arcs in $\{c^\flat_{\ell}\}$ whose $\phi$-coordinate {\em decreases} as $t$ increases (in the universal cover).
\end{defn}

If $P_0, P_1$ are fixed, then the number $\alpha_q$ only depends on $q$ (modulo $p$), through the bijection $P_0 \stackrel\sim\to P_1$.

Let $q_\pm=w(\mathfrak{c}^\pm)$ and $P_0^\pm, P_1^\pm$ be the endpoints of $\mathfrak{c}^\pm$.

\begin{defn}
The {\em discrepancies} $d^\pm$ at the positive and negative ends are:
\begin{equation}
d^\pm = - (\alpha_{(P_0^\pm,P_1^\pm,q_\pm)}-\alpha_{(P_0^\pm,P_1^\pm,0)}) - q_\pm(p_\pm-1).
\end{equation}
\end{defn}

\begin{rmk}
Observe that $d^\pm=0$ if the grooming ${\frak c}^\pm$ satisfies $w({\frak c}^\pm)=q_\pm=0$; in other words, we are viewing $q_\pm=0$ as a reference point and comparing the discrepancy that arises from choosing another grooming ${\frak c}^\pm$.
\end{rmk}

\begin{defn}
The {\em symmetric Conley-Zehnder index} $\widetilde\mu_\tau(\bdry A)$ is given by:
\begin{equation}
\widetilde\mu_\tau(\partial A)= \mu_\tau(\partial A) + d^+ - d^-.
\end{equation}
\end{defn}

The discrepancy term $d^+-d^-$ was (somewhat artificially) added to make Lemma \ref{lemma: HF index of sections at infinity} hold.

\begin{rmk} \label{rmk: p zero and p one}
Let us view $P_0^*$, $P_1^*$ as points on $(-\pi,\pi)$, where $*=\pm$. In the special case where the points of $P^*_0$ and $P^*_1$ alternate along $(-\pi,\pi)$, we can write:
\begin{equation} \label{eqn: discrepancy}
d^* = \left \{
\begin{array}{ll} -p_* (q_*-\llfloor {q_*\over p_*}\rrfloor), & \mbox{ if } \min_{x\in P_0} x < \min_{x\in P_1} x;\\
-p_*(q_*-\llceil {q_*\over p_*}\rrceil), & \mbox{ if } \min_{x\in P_0} x > \min_{x\in P_1} x. \end{array} \right.
\end{equation}
Here $\llfloor x\rrfloor$ is the greatest integer $\leq x$ and $\llceil x\rrceil$ is the smallest integer $\geq x$.
\end{rmk}

\subsubsection{ECH indices of branched covers of sections at infinity}

\begin{lemma} \label{lemma: HF index of sections at infinity}
If $A \in H_2(\overline{W},\{z_\infty^p(\overrightarrow{\mathcal{D}}_+), \{z_\infty^p(\overrightarrow{\mathcal{D}}_-)\})$ is the relative homology class of a $p$-fold branched cover of $\sigma_\infty$ with possibly empty branch locus, then $I_{\tau}(A)=0$ for all groomed multivalued trivializations $\tau$ compatible with $(\mathcal{D}_+,\mathcal{D}_-)$.
\end{lemma}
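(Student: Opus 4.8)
The plan is to reduce the computation of $I_\tau(A)$ for a $p$-fold branched cover $\check C$ of $\sigma_\infty$ to three separate term-by-term evaluations, using the additivity of $I_\tau$ under stacking together with the fact that the branched cover can be deformed into a standard model. First I would observe that, by the additivity of $Q_\tau$, $\widetilde\mu_\tau$, and $c_1$ under concatenation (which is built into the definition of $I_\tau$ in Equation~\eqref{ECH index revisited} and the analog of Lemma~\ref{lemma: additivity of Q}), it suffices to treat the case where the branch locus is a single interior simple branch point, and the case of an unbranched cover; a general $p$-fold branched cover is obtained by stacking such pieces, and $I_\tau$ of a stack is the sum of the $I_\tau$'s of the pieces. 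Moreover, since all terms are homological, I may replace $\check C$ by any convenient $\tau$-trivial representative of the same class in $\pi_2(\{z_\infty^p(\overrightarrow{\mathcal{D}}_+)\},\{z_\infty^p(\overrightarrow{\mathcal{D}}_-)\},\tau)$.

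The key steps, in order:
\begin{enumerate}
\item Compute $c_1(T\overline{S}|_A,\tau)$. Since $\check C$ maps into a neighborhood of $\sigma_\infty=\R\times[0,1]\times\{z_\infty\}$, the bundle $\check u^*T\overline S$ is pulled back (up to homotopy) from $T_{z_\infty}\overline S$, so the relative first Chern class is determined purely by the winding of $\tau$ around $z_\infty$ along the boundary. With the multivalued trivialization $\tau=\tau'=\bdry_\rho$ extended as in Section~\ref{subsubsubsection: multivalued trivialization}, I would show $c_1(T\overline{S}|_A,\tau)$ equals the signed count of branch points minus a boundary-winding correction coming from the groomings $\mathfrak c^\pm$.
\item Compute $Q_\tau(A)$. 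Taking a pushoff $\check C'$ as in the definition of $Q_\tau$ in Section~\ref{lerici2}, where $\bdry\check C$ is pushed in the $J\tau$-direction and the ends $c_k^\pm$ are pushed to $c_k^\pm(\delta)$, the intersections $\langle\check C,\check C'\rangle$ localize near the branch points and near the ends at $z_\infty$. I expect $Q_\tau(A)$ to equal $-(\text{count of branch points})$ plus contributions $-d^+ + d^-$ coming from the winding numbers $q_\pm=w(\mathfrak c^\pm)$ and the reordering quantities $\alpha_{q_\pm}-\alpha_0$; this is exactly the local-model computation that the quantities $\alpha_q$ and the discrepancies $d^\pm$ of Section~\ref{lerici} were designed to capture.
\item Compute $\widetilde\mu_\tau(\partial A)$. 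Here $\partial A$ is a cycle consisting of $p$ copies of the chord $[0,1]\times\{z_\infty\}$ with matchings $\mathcal D_\pm$. The bundle $\mathcal L_0$ along $Z$ and its pullback $\mathcal L$ wind according to $\tau$, so $\mu_\tau(\partial A)$ is computed directly from the linear arcs $c_k^\pm$ relative to $\{\phi=\pi\}$. Adding the correction $d^+-d^-$ from Section~\ref{lerici} gives $\widetilde\mu_\tau(\partial A)$, and by construction of the discrepancies this is arranged precisely so that the three contributions cancel.
\end{enumerate}
Summing the three and checking that the branch-point contributions ($+1$ from $c_1$, $-1$ each from $Q_\tau$ — or rather, $-2\delta$ absorbed via the relative adjunction identity $c_1(\nu,J\tau)=Q_\tau-2\delta$ of Lemma~\ref{lemma: c1 and Q}) cancel, and that the grooming-dependent terms $d^\pm$ cancel between $Q_\tau$ and $\widetilde\mu_\tau$, yields $I_\tau(A)=0$. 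Finally I would note independence of the grooming $\tau$ follows from the general change-of-trivialization lemma (the analog of Lemma~\ref{lemma: change of trivialization}), so the vanishing for one compatible $\tau$ gives it for all.

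The main obstacle, I expect, will be Step 2: correctly bookkeeping the local intersection contributions of the pushoff $\check C'$ near the end $z_\infty$, where the trivialization is multivalued and the arcs $c_k^\pm$ are not groomed after pushing off. This is where the slightly artificial-looking definition of $\alpha_q$ and the discrepancy $d^\pm$ in Section~\ref{lerici} has to be matched exactly against a braid/writhe-type computation in the solid torus $A_\varepsilon\times(\text{interval})$; getting the signs and the floor/ceiling conventions of Remark~\ref{rmk: p zero and p one} to agree with the actual geometry is the delicate point. Once that local model is pinned down, the remaining cancellations are formal.
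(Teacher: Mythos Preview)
Your plan has the right high-level structure --- compute $Q_\tau$, $\widetilde\mu_\tau$, and $c_1$ separately and check they sum to zero --- but the specific predictions in Steps~1 and~2 reveal a confusion that would derail the computation. You predict that $c_1(T\overline S|_A,\tau)$ and $Q_\tau(A)$ each carry a term proportional to the number of branch points. This cannot be right: both quantities depend only on the relative homology class $A\in\pi_2(\{z_\infty^p(\overrightarrow{\mathcal D}_+)\},\{z_\infty^p(\overrightarrow{\mathcal D}_-)\},\tau)$, and you yourself note that ``all terms are homological.'' The branch locus of a particular representative is invisible to $I_\tau$. (You may be conflating $c_1(T\overline S|_A,\tau)$ with $c_1(T\check F,\partial_t)$ or with the adjunction quantity $c_1(\nu,J\tau)$ of Lemma~\ref{lemma: c1 and Q}; only the latter sees $\delta(u)$, and it does not appear in the definition of $I_\tau$.) Your proposed reduction to single-branch-point pieces is therefore not the right organizing principle, and it is not how the paper proceeds.

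The paper instead computes each term directly as a function of $p$ and the winding numbers $q_\pm=w(\mathfrak c^\pm)$ alone. With an explicit pair of $\tau$-trivial representatives $\check C,\check C'$ --- one a product over $s\in[0,1]$ with the grooming $\mathfrak c^+$, the other a product over $s\in[-1,0]$ with $\mathfrak c^-(\delta)$, so that all intersections occur at the single level $s=0$ --- one finds $Q_\tau(A)=\mathfrak c^+\cdot\mathfrak c^-(\delta)=p(q_+-q_-)$. A direct count of the rotation of $\mathcal L_0$ along each strand $c_k^\pm$ gives $\mu_\tau(\partial A)=(\alpha_{q_+}-2q_+)-(\alpha_{q_-}-2q_-)$, and hence, by definition of the discrepancies, $\widetilde\mu_\tau(\partial A)=-(q_+-q_-)(p+1)$. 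Finally $c_1(T\overline S|_A,\tau)=q_+-q_-$. Summing yields $p(q_+-q_-)-(q_+-q_-)(p+1)+(q_+-q_-)=0$. Note in particular that the discrepancies $d^\pm$ do not cancel between $Q_\tau$ and $\widetilde\mu_\tau$ as you expected; they live entirely inside the passage from $\mu_\tau$ to $\widetilde\mu_\tau$, and the final cancellation is a linear identity in $p$ and $q_+-q_-$.
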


\begin{proof}
Let $\mathfrak{c}^+= \{c^+_\ell\}$ and $\mathfrak{c}^-= \{c^-_\ell\}$ be the groomings of $\tau$ and let $q_\pm=w(\mathfrak{c}^\pm)$. Here $P_i=P_i^+=P_i^-$, $i=0,1$. Let us also assume that $\tau'$ which appears in the definition of $\tau_{\overrightarrow{\mathcal{D}}_+,\overrightarrow{\mathcal{D}}_-}$ in Section~\ref{subsubsubsection: multivalued trivialization} satisfies $\tau'=\bdry_\rho$ on $D^2_{2\varepsilon}-\{z_\infty\}$.

In order to compute $Q_{\tau}(A)$, we choose a $\tau$-trivial representative $\check{C}$ of $A$ such that:
\begin{itemize}
\item $\check{C} \cap ([0, 1] \times [0,1] \times \overline{S}) =  [0, 1] \times \mathfrak{c}^+$;
\item $\check{C} \cap ((-1, 0) \times [0,1] \times \overline{S}) \subset (-1,0) \times [0,1] \times int(D^2_{\varepsilon})$;
\item $\check{C} \cap (\{-1\} \times [0,1] \times \overline{S}) =  \{-1\} \times \mathfrak{c}^-$;
\end{itemize}
and a representative $\check{C}'$ such that:
\begin{itemize}
\item $\check{C}' \cap (\{ 1 \} \times [0,1] \times \overline{S}) =  \{ 1 \} \times \mathfrak{c}^+(\delta)$;
\item $\check{C}' \cap ((0,1) \times [0,1] \times \overline{S}) \subset (0,1) \times [0,1] \times int(D^2_{\varepsilon})$;
\item $\check{C}' \cap ([-1,0] \times [0,1] \times \overline{S}) =  [-1,0] \times \mathfrak{c}^-(\delta)$.
\end{itemize}
Recall that $\mathfrak{c}^\pm \subset A_\varepsilon= [0,1]\times \bdry D^2_\varepsilon$.
Since all the intersections between $\check{C}$ and $\check{C}'$ are contained in the level $s =0$,
$$Q_{\tau}(A) =  \mathfrak{c}^+\cdot \mathfrak{c}^-(\delta) = p(q_+ - q_-).$$

Next we claim that
$$\mu_{\tau}(\partial A)= (\alpha_{q_+} - 2 q_+)- (\alpha_{q_-} - 2 q_-).$$
Indeed, given the end which corresponds to the strand $c^\pm_\ell$, the Maslov index of the end is given by $-2w(c^\pm_\ell)$ if the endpoints of $c^\pm_\ell$ satisfy $0<p^\pm_{\ell,0} < p^\pm_{\ell,1}<2\pi$, and is given by $1-2w(c^\pm_\ell)$ if $0<p^\pm_{\ell,1} < p^\pm_{\ell,0}<2\pi$. On the other hand, the number of strands for which $p^\pm_{\ell,0} > p^\pm_{\ell,1}$ holds is exactly $\alpha_{q_{\pm}}$.
Since the Maslov indices of the portions of $\bdry A$ that map to $[-1,1]\times \{1\}\times \overline{\bf a}$ and $[-1,1]\times \{0\}\times \overline{\hh}(\overline{\bf a})$ are zero,  the claim follows.

Hence,
\begin{align*}
\widetilde{\mu}_{\tau}(\bdry A) ={} &  (\alpha_{q_+} - 2 q_+)-(\alpha_{q_+} - \alpha_0) - q_+ (p-1) \\
&  - (\alpha_{q_-} - 2 q_-) + (\alpha_{q_-} - \alpha_0) + q_- (p-1) \\
={} &  -(q_+ -q_-)(p+1).
\end{align*}

We also have $c_1(T\overline{S}|_{A}, \tau) = q_+ - q_-$.
Putting everything together, we obtain:
$$I_{\tau}(A) = p(q_+ - q_-) - (q_+ -q_-)(p+1) + (q_+ - q_-) =0.$$
This proves the lemma.
\end{proof}

We also state the following lemmas without proof:

\begin{lemma}\label{lemma: ECH index of sections at infinity}
If $\overline{u}: \dot F \to \overline{W}_-$ is a degree $p\leq 2g$ multisection which branch covers
$\sigma_\infty^-$ with possibly empty branch locus, then $I(\overline{u})=0$.
\end{lemma}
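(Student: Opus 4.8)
\textbf{Proof proposal for Lemma~\ref{lemma: ECH index of sections at infinity}.}

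The plan is to reduce the statement for $\overline{W}_-$ to the computation already carried out for $\overline{W}$ in Lemma~\ref{lemma: HF index of sections at infinity}. A degree $p$ multisection $\overline{u}:\dot F\to\overline{W}_-$ which branch-covers $\sigma_\infty^-$ is, after capping off at the negative (Heegaard) end, the same kind of object as a branched cover of $\sigma_\infty$ in $\overline{W}$: its image lies entirely over the section at infinity $\delta_0$ (equivalently $z_\infty$), so only the $D^2$-fiber data enters, and in that region $\overline{W}_-$ and $\overline{W}$ carry the same product picture near $z_\infty$. Concretely, first I would record the relative homology class $A\in\pi_2(\gamma,{\bf z}_-,\tau)$ of $\overline{u}$, where $\gamma$ is (a multiple of) $\delta_0$ at the positive ECH end and ${\bf z}_-=\{z_\infty^p(\overrightarrow{\mathcal{D}}_-)\}$ at the negative end, and choose a groomed multivalued trivialization $\tau$ compatible with the induced data $\mathcal{D}_\pm$, exactly as in Section~\ref{subsubsubsection: multivalued trivialization}.

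Next I would expand $I_{\overline{W}_-}(A)=Q_\tau(A)+\widetilde\mu_\tau(\bdry A)+c_1(T\overline{S}|_A,\tau)$ term by term. For $Q_\tau(A)$ I would build a $\tau$-trivial representative $\check C$ supported in $D^2_\varepsilon$ away from the level $s=0$, together with a pushoff $\check C'$, so that all intersections occur at a single level and $Q_\tau(A)=p(q_+-q_-)$ where $q_\pm=w(\mathfrak c^\pm)$ are the winding numbers of the groomings, with the convention that a trivial cylinder over $\delta_0$ at the ECH end contributes $q_+$ in the normalization fixed by $\tau$. The Maslov term $\mu_\tau(\bdry A)$ is computed strand-by-strand just as in Lemma~\ref{lemma: HF index of sections at infinity}: each end over $z_\infty$ contributes $-2w(c^\pm_k)$ or $1-2w(c^\pm_k)$ according to the cyclic order of its endpoints, giving $\mu_\tau(\bdry A)=(\alpha_{q_+}-2q_+)-(\alpha_{q_-}-2q_-)$; then the discrepancy corrections $d^\pm=-(\alpha_{q_\pm}-\alpha_0)-q_\pm(p-1)$ from Section~\ref{lerici} yield $\widetilde\mu_\tau(\bdry A)=-(q_+-q_-)(p+1)$. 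Finally $c_1(T\overline{S}|_A,\tau)=q_+-q_-$ since $T\overline{S}$ restricted to a branched cover of the section at infinity is trivial away from the winding of $\tau$. Summing, $I_{\overline{W}_-}(A)=p(q_+-q_-)-(q_+-q_-)(p+1)+(q_+-q_-)=0$, as desired; the answer is manifestly independent of the choice of $\tau$, as it must be.

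The main obstacle, and the only place real care is needed, is bookkeeping at the ECH (positive) end: for $\overline{W}$ both ends were ``Heegaard-type'' ends over $z_\infty$, whereas here the positive end is an ECH end asymptotic to a multiple of $\delta_0$. I would handle this by noting that a neighborhood of $\delta_0$ in $\overline{N}$ is the standard model $D^2\times S^1$ with $\overline R_0$ tangent to the $D^2\times\{0\}$-foliation, so the incoming partition of a branch-covered $\sigma_\infty^-$ is $(1,\dots,1)$ and the symmetric Conley-Zehnder index $\widetilde\mu_\tau(\gamma)$ at the ECH end is computed from exactly the same unitary-path data that governs the $q_+$ term in the $\overline{W}$ case. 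In other words, under the identification of the neighborhood of $\delta_0$ with the neighborhood of $z_\infty\times[0,1]$ used to define $\tau$, the ECH end and the Heegaard end become formally interchangeable, and the contributions to $Q_\tau$, $\widetilde\mu_\tau(\bdry A)$ and $c_1$ from that end match those in Lemma~\ref{lemma: HF index of sections at infinity} term for term. Once this identification is made precise, the computation is a verbatim repeat of the proof of Lemma~\ref{lemma: HF index of sections at infinity}, which is why the authors omit it.
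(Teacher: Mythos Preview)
The paper explicitly states this lemma without proof, so there is no paper argument to compare against; your approach---carrying over the term-by-term computation of Lemma~\ref{lemma: HF index of sections at infinity} from $\overline{W}$ to $\overline{W}_-$---is exactly the intended one, and the reason the authors felt safe omitting it.

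Two small points of bookkeeping are worth flagging. First, the formula you invoke for $I_{\overline{W}_-}(A)$ is the $\overline{W}$ formula~\eqref{ECH index revisited}; strictly speaking, for $\overline{W}_-$ one combines the standard ECH contribution $\widetilde\mu_\tau(\delta_0^p)$ at the positive end (Equation~\eqref{eqn: definition of ECH index for W-}) with the groomed/discrepancy machinery of Section~\ref{lerici} at the negative $z_\infty$ end. Your identification of the ECH end with a ``$q_+$'' grooming amounts to observing that, with $\tau=\partial_\rho$, the $\tau$-trivial circles at $\delta_0^p$ play the same combinatorial role as a winding-zero grooming, and a change of $\tau$ along $\delta_0$ shifts $Q_\tau$, $c_1$ and $\widetilde\mu_\tau(\delta_0^p)$ in exactly the same pattern as a change of $q_+$ in the $\overline{W}$ computation; once you say this clearly the substitution is legitimate. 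Second, your remark that ``the incoming partition of a branch-covered $\sigma_\infty^-$ is $(1,\dots,1)$'' is misplaced: the positive end uses the outgoing partition (which for $\delta_0$ with rotation $\tfrac{1}{m}$ and $p<m$ is $(p)$, not $(1,\dots,1)$), and in any case the ECH index uses the symmetric Conley--Zehnder index $\widetilde\mu_\tau(\delta_0^p)=\sum_{r=1}^p\mu_\tau(\delta_0^r)=p$ and is insensitive to the actual partition of the branched cover. Neither point affects the outcome $I(\overline{u})=0$.
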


\begin{lemma}\label{lemma: ECH index of thin wedges}
If $\overline{u}: \dot F \to \overline{W}_--int(W_-)$ is a degree $p\leq 2g$ multisection with positive ends at $\delta_0$ with total multiplicity $p$ and negative ends at a $p$-element subset of $\{x_1, \ldots, x_{2g}$, $x_1', \ldots, x_{2g}' \}$, then $I(\overline{u})=p$.
\end{lemma}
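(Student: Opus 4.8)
The plan is to reduce the computation to a direct application of the ECH index formula \eqref{ECH index revisited} together with the analysis of groomed multivalued trivializations from Section~\ref{lerici}, in the same spirit as the proof of Lemma~\ref{lemma: HF index of sections at infinity}. First I would set up the relative homology class: a degree $p$ multisection $\overline u$ of $\overline W_- - int(W_-)$ with a positive end of total multiplicity $p$ at $\delta_0$ and negative ends at a $p$-element subset of $\{x_i, x_i'\}$ has a well-defined data $\overrightarrow{\mathcal D}_-$ at the negative $z_\infty$-end determined by the matchings recorded by the thin wedges it covers, while at the positive end the curve limits to $\delta_0$ rather than to $z_\infty$. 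So strictly one should first excise a neighborhood of $\delta_0$ and re-express the positive end in the ``balanced coordinates'' $D^2\times(\R/2\Z)$, where $\delta_0$ becomes the chord over $z_\infty$ of multiplicity $p$ with a data $\overrightarrow{\mathcal D}_+$ given by the $p$ cyclically-ordered sheets; equivalently, as in Lemma~\ref{lemma: ECH index of sections at infinity}, one realizes $\overline u$ inside $\overline W_-$ and uses the index conventions already set up for multisections with ends at $z_\infty^p$.

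The key steps, in order, are: (1) choose a groomed multivalued trivialization $\tau=\tau_{\overrightarrow{\mathcal D}_+,\overrightarrow{\mathcal D}_-}$ compatible with $\mathcal D_\pm$, noting that $\mathcal D_+^{to}=\mathcal D_+^{from}$ consists of the $p$ sheets of $\delta_0$ (so the positive matching is essentially trivial) while $\mathcal D_-$ encodes which wedge each negative strand sits in; (2) build an explicit $\tau$-trivial representative $\check C$ of $[\overline u]$ supported near $z_\infty$ — concretely $\check C$ is a union of $p$ ``thin-wedge slabs'' $[0,1]\times(\text{thin wedge})$ glued to the $p$ strands of the grooming $\mathfrak c^-$ at $s=-1$ and to the grooming $\mathfrak c^+$ at $s=+1$, with all interior intersections pushed to the level $s=0$ exactly as in the previous lemma; (3) compute $Q_\tau(A)=\mathfrak c^+\cdot\mathfrak c^-(\delta)$, then $\mu_\tau(\partial A)$ from the winding numbers of the strands $c^\pm_k$ (using the $-2w(c_k)$ vs.\ $1-2w(c_k)$ dichotomy depending on whether the strand's $\phi$-coordinate decreases), then the discrepancies $d^\pm$ via the formula for $\alpha_q$ in Section~\ref{lerici}, and finally $c_1(T\overline S|_A,\tau)$ from the total winding; (4) add everything up. The arithmetic should collapse, as in Lemma~\ref{lemma: HF index of sections at infinity}, to leave a residual contribution of exactly $p$: the $p$ thin wedges each contribute $+1$ because, unlike a full branched cover of $\sigma_\infty$, each strand of $\check C$ near $z_\infty$ runs from $T\overline h(\overline a_{i,j})$ to $T\overline a_{i,j}$ with a genuine $\tfrac\pi2$-turn inserted at its negative endpoint $x_i$ or $x_i'$ — these $2g$-style turns are precisely the terms that cancel the $-2g$ and leave the count $p$, mirroring the appearance of $p$ (rather than $0$) in Lemma~\ref{lemma: ECH index of thin wedges} versus Lemma~\ref{lemma: ECH index of sections at infinity}.

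I expect the main obstacle to be bookkeeping rather than conceptual: one must carefully track the matching data $\overrightarrow{\mathcal D}_\pm$ and verify that the thin-wedge representative can be chosen $\tau$-trivial simultaneously at both ends (so that all intersections concentrate on $s=0$), and then keep the signs in $\mu_\tau(\partial A)$, $d^\pm$, and the $\tfrac\pi2$-rotations straight. A secondary subtlety is ensuring the computation is genuinely independent of which $p$-element subset of $\{x_i,x_i'\}$ and which wedges occur — but this follows because changing the subset only permutes endpoints $P_0\leftrightarrow P_1$ and relabels strands, leaving $Q_\tau$, $\widetilde\mu_\tau$, and $c_1$ individually unchanged by the $\tau$-independence already noted for $I_\tau$. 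Once the explicit model is in place the final identity $I(\overline u)=p$ drops out of a one-line sum, exactly as in the proof of Lemma~\ref{lemma: HF index of sections at infinity}.
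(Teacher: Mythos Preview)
The paper does not prove this lemma; it is stated alongside Lemma~\ref{lemma: ECH index of sections at infinity} under ``we also state the following lemmas without proof,'' the implication being that it is a routine index computation in the spirit of Lemma~\ref{lemma: HF index of sections at infinity}.

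Your proposal, however, misidentifies the ends of $\overline u$. The negative ends lie at chords over $x_i, x_i'\in\mathbf{a}\cap h(\mathbf{a})$, which sit on $\partial S=\partial D^2$, \emph{not} at $z_\infty$: there is no data $\overrightarrow{\mathcal D}_-$ to define at the negative end, and the groomed multivalued trivialization machinery of Sections~\ref{subsubsubsection: multivalued trivialization}--\ref{lerici} is simply not relevant there. Likewise the positive end at $\delta_0$ is a genuine closed Reeb orbit at the ECH end of $\overline W_-$ and cannot be ``re-expressed as the chord over $z_\infty$''; your excision-and-reinterpretation step has no content. Geometrically, $\overline u$ runs from the center of the $D^2$ fiber (where $\delta_0$ lives) radially outward to $\partial D^2$ (where the $x_i,x_i'$ live); it is not assembled from thin wedges at $z_\infty$, and the representative you describe --- ``$p$ thin-wedge slabs glued to strands of groomings $\mathfrak c^\pm$'' --- does not represent the class of $\overline u$.

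Once the ends are identified correctly the computation is short. One route is to apply the $\overline W_-$ ECH index formula~\eqref{eqn: definition of ECH index for W-} directly, taking an explicit trivialization and a representative consisting of $p$ disjoint radial disks from $\delta_0$ to the respective $x_i$ or $x_i'$, and evaluating $c_1$, $Q_\tau$, $\widetilde\mu_\tau(\delta_0^p)$, and $\mu_\tau(\{x_{i_k}\})$ term by term. A cleaner alternative, closer in spirit to your decomposition instinct, is to observe that the relative homology class of $\overline u$ is the concatenation of a degree-$p$ cover of $\sigma_\infty^-$ (from $\delta_0^p$ to $z_\infty^p$) with $p$ thin strips in $\overline W$ (each from $z_\infty$ to some $x_i$ or $x_i'$); additivity of $I$ together with Lemma~\ref{lemma: ECH index of sections at infinity} and the fact that each thin strip has $I=1$ then gives $I(\overline u)=0+p=p$.
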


\subsubsection{Additivity of indices and independence of the trivialization}

The Fredholm and the ECH index are additive with respect to concatenation. The proofs of the following lemmas are straightforward.

\begin{lemma}
Let  $\overline{u}_1$ be a multisection with positive end
$\{z_\infty^p(\overrightarrow{\mathcal{D}})\}\cup {\bf y}$, and let $\overline{u}_2$
be a multisection with negative end $\{z_\infty^p(\overrightarrow{\mathcal{D}})\}
\cup{\bf y}$. If $\overline{u}_1 \# \overline{u}_2$ is the multisection
obtained by gluing $\overline{u}_1$ and $\overline{u}_2$ along their common end, then
$$\op{ind}(\overline{u}_1\# \overline{u}_2)=\op{ind}(\overline{u}_1)+
\op{ind}(\overline{u}_2).$$
\end{lemma}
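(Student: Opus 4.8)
The statement is the additivity of the Fredholm index under concatenation (gluing) of two multisections along a common asymptotic end $\{z_\infty^p(\overrightarrow{\mathcal{D}})\}\cup\mathbf{y}$. The plan is to reduce it, as usual, to the additivity of the Fredholm index of the linearized $\overline\partial$-operator under gluing of Cauchy–Riemann operators over surfaces with cylindrical/strip-like ends, together with a bookkeeping check that the index formula \eqref{Fredholm index revisited} is additive term by term. First I would recall the standard gluing/excision principle for Fredholm operators: if $D_1$ and $D_2$ are Cauchy–Riemann type operators on bundles with asymptotic operators agreeing (up to sign of the end) along the matching end, then the pre-glued operator $D_{1}\#D_{2}$ has $\op{ind}(D_1\#D_2)=\op{ind}(D_1)+\op{ind}(D_2)$; this is the linear analog of SFT-type gluing and holds for boundary punctures and exponential weights exactly as in the doubling discussion preceding Theorem~\ref{HLS}. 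Since $\op{ind}(\overline u_i)$ is by definition the index of the linearized operator at $\overline u_i$ (the expected dimension of the moduli space, modulo reparametrization, which is the same as the operator index because HF-type curves are never multiply covered and the relevant $K_u$-corrections are accounted for as in the proof of Proposition~\ref{prop: Fredholm index for W plus}), this immediately gives $\op{ind}(\overline u_1\#\overline u_2)=\op{ind}(\overline u_1)+\op{ind}(\overline u_2)$.

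Alternatively — and this is the route I would actually write out, since it is cleaner and avoids re-deriving linear gluing — I would verify additivity directly from Equation~\eqref{Fredholm index revisited}. Write $\dot F=\dot F_1\#\dot F_2$ for the glued domain; then $\chi(\dot F)=\chi(\dot F_1)+\chi(\dot F_2)$ because gluing along $p$ ends (punctures) of each piece creates $p$ cylinders/strips, each of Euler characteristic $0$. The degree is constant, $l_1=l_2=l$, so we must check that the relevant combination of the remaining terms works out; the point is that in \eqref{Fredholm index revisited} the term "$+l$" corresponds to the $l$ ends, so one should think of the formula as "$-\chi + (\text{contribution of ends}) + \mu_\tau + 2c_1$", and when two pieces are glued along a common end the end contribution is counted once on each side, matching the way $\mu_\tau(\overline u)=\mu_\tau(\overline u_1)+\mu_\tau(\overline u_2)$ works: $\mu_\tau$ of each piece is a difference $\widetilde\mu_\tau^+-\widetilde\mu_\tau^-$ of total Conley–Zehnder contributions at its two ends, and the contribution at the glued end cancels in the sum (it appears with opposite signs as the negative end of $\overline u_1$ and the positive end of $\overline u_2$). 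Likewise $c_1(\overline u^*T\overline S,\tau)$ is additive under gluing of relative homology classes, using that the trivialization $\tau$ is the same along the matching orbit set (here one uses that both pieces carry compatible multivalued trivializations $\tau_{\overrightarrow{\mathcal{D}}_+,\overrightarrow{\mathcal{D}}_-}$ agreeing at the common end, by the compatibility condition in Section~\ref{subsubsubsection: multivalued trivialization}). Collecting the pieces gives $\op{ind}(\overline u_1\#\overline u_2)=\op{ind}(\overline u_1)+\op{ind}(\overline u_2)$.

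The one genuine subtlety — which I expect to be the main (though minor) obstacle — is the treatment of the $z_\infty$ end: because the Lagrangian boundary condition is singular at $z_\infty$ and the multivalued trivialization $\tau$ is only defined after fixing the data $\overrightarrow{\mathcal{D}}$, one must check that the grooming/winding-number bookkeeping ($\mu_\tau$ vs.\ $\widetilde\mu_\tau$, the discrepancy terms $d^\pm$) does not obstruct additivity. Here the key observation is that $\op{ind}$ as in \eqref{Fredholm index revisited} uses $\mu_\tau$ and not $\widetilde\mu_\tau$, so no discrepancy correction enters the Fredholm index at all, and $\mu_\tau$ is genuinely local at each end and hence additive — the matching condition on $\overrightarrow{\mathcal{D}}$ along the glued end guarantees that the multivalued trivialization glues and the end contributions cancel exactly as in the nondegenerate case. (Were this the ECH-index statement instead, one would have to be more careful with $\widetilde\mu_\tau(\partial A)$ and the discrepancies; but for the Fredholm index this does not arise.) So the proof is: invoke linear gluing for $\op{ind}$, or equivalently verify termwise additivity of \eqref{Fredholm index revisited} using $\chi$, $\mu_\tau$, and $c_1$ additivity and the compatibility of the glued trivialization at the common end.
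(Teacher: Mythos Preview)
Your proposal is essentially correct and matches the paper's treatment: the paper simply declares the proof ``straightforward'' and gives no details, and your first route (linear gluing/excision for the Cauchy--Riemann operators, exactly as for the additivity statement in Lemma~\ref{lemma: additivity part 2}) is precisely the intended argument.

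There is, however, a small but genuine bookkeeping error in your second (term-by-term) route. You claim $\chi(\dot F)=\chi(\dot F_1)+\chi(\dot F_2)$ on the grounds that each glued neck is a strip of Euler characteristic $0$. That is false: a strip $[0,1]\times\R$ has $\chi=1$ (it is contractible), so gluing along the $l$ strip-like ends gives $\chi(\dot F)=\chi(\dot F_1)+\chi(\dot F_2)-l$. This is exactly what makes the formula~\eqref{Fredholm index revisited} additive: the extra $-l$ in $-\chi(F)$ cancels against the surplus $+l$ coming from adding the two ``$+l$'' terms versus the single ``$+l$'' in the glued index. In other words, the correct identity is
\[
(-\chi(F)+l)=(-\chi(F_1)+l)+(-\chi(F_2)+l),
\]
since $-\chi(F)=-\chi(F_1)-\chi(F_2)+l$. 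Once this is fixed, the rest of your term-by-term verification (additivity of $\mu_\tau$ via cancellation at the common end, additivity of $c_1$, compatibility of the multivalued trivialization at $z_\infty$) is correct as written. Your observation that no discrepancy correction $d^\pm$ enters because the Fredholm formula uses $\mu_\tau$ rather than $\widetilde\mu_\tau$ is exactly the right point to make about the $z_\infty$ end.
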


\begin{lemma}
Let $\tau_2$ and $\tau_1$ be groomed multivalued trivializations compatible with $(\mathcal{D}_2, \mathcal{D}_1)$ and $(\mathcal{D}_1, \mathcal{D}_0)$, respectively, and let $\tau$ be obtained by concatenating $\tau_2$ and $\tau_1$. Given relative homology classes
$$A_2 \in H_2(\overline{W}, \{z_\infty^{p_2}(\overrightarrow{\mathcal{D}}_2)\}\cup {\bf y}_2, \{z_\infty^{p_1}(\overrightarrow{\mathcal{D}}_1)\}\cup {\bf y}_1),$$
$$A_1 \in H_2(\overline{W}, \{z_\infty^{p_1}(\overrightarrow{\mathcal{D}}_1)\}\cup {\bf y}_1, \{z_\infty^{p_0}(\overrightarrow{\mathcal{D}}_0)\}\cup {\bf y}_0),$$
we can form the concatenation
$$A_2 \# A_1 \in  H_2(\overline{W},\{z_\infty^{p_2}(\overrightarrow{\mathcal{D}}_2)\}\cup {\bf y}_2,\{z_\infty^{p_0}(\overrightarrow{\mathcal{D}}_0)\}\cup {\bf y}_0).$$ Then we have:
$$I_{\tau}(A_2 \# A_1)=I_{\tau_2}(A_2)+I_{\tau_1}(A_1).$$
\end{lemma}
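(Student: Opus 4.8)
The plan is to prove additivity of the ECH index $I_\tau$ under concatenation by reducing it to the additivity of each of the three terms $Q_\tau$, $\widetilde\mu_\tau(\partial A)$, and $c_1(T\overline S|_A,\tau)$ that appear in the formula \eqref{ECH index revisited}. First I would fix $\tau$-trivial representatives $\check C_2$ of $A_2$ in $\check{\overline W}$ and $\check C_1$ of $A_1$ in a second copy of $\check{\overline W}$, arranged so that each agrees with the grooming ${\frak c}^{(1)}$ associated to $\mathcal D_1$ along the matching end --- this is possible because $\tau_2$ and $\tau_1$ are both compatible with $\mathcal D_1$ and $\tau$ is their concatenation, so the groomings match up after stacking. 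Stacking the two copies of $\check{\overline W}$ along the common slice $\{s=\pm 1\}$ gives a $\tau$-trivial representative $\check C = \check C_2 \# \check C_1$ of $A_2\#A_1$, with matching boundary data ${\frak c}^{(2)}$ at the top and ${\frak c}^{(0)}$ at the bottom.

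Next I would treat the three terms in turn. For $c_1(T\overline S|_A,\tau)$: the first Chern class of $\overline u^*T\overline S$ relative to a trivialization over the boundary is additive under gluing of surfaces with matching trivializations over the glued region, exactly as in the additivity statements for $I_{HF}$ and $I_{W_+}$ (Lemma on additivity of $I_{HF}$, Lemma~\ref{lemma: additivity part 2}). For $Q_\tau$: I would push off $\check C_2$ and $\check C_1$ in the $J\tau$-direction along all their boundaries, with the pushoffs chosen to respect the $\phi=\delta$ translates of the groomings at every end including the matching end; since the pushoff of the stacked surface restricts to the pushoffs of the pieces and no new intersection points are created in a neighborhood of the gluing slice (the representatives are $\tau$-trivial there, so they are disjoint from their pushoffs near $s=\pm 1$), we get $Q_\tau(A_2\#A_1)=Q_{\tau_2}(A_2)+Q_{\tau_1}(A_1)$, in the same spirit as Lemma~\ref{lemma: additivity of Q}. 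For $\mu_\tau(\partial A)$: choosing the cycle $\zeta$ representing $\partial(A_2\#A_1)$ to be the concatenation of the cycles $\zeta_2$ and $\zeta_1$ used for $A_2$ and $A_1$, the Maslov index of $\mathcal L_0$ pulled back along $\zeta$ is the sum, because the rotation contributions at the matching end cancel (the subbundle $\mathcal L_0$ is continuous across the gluing, so its winding over $\zeta_2$ plus its winding over $\zeta_1$ equals its winding over $\zeta$, with the intermediate chord contributing no net rotation to either side after cancellation).

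The one genuinely delicate point --- and the step I expect to be the main obstacle --- is the additivity of the \emph{discrepancy} correction $d^+-d^-$, i.e.\ passing from $\mu_\tau(\partial A)$ to $\widetilde\mu_\tau(\partial A)$. The discrepancies $d^\pm$ are defined end-by-end from the groomings $\mathfrak c^\pm$ and the reference grooming $q_-=0$, so $\widetilde\mu_{\tau_2}(\partial A_2)$ uses reference $q^{(1)}$ at its bottom end while $\widetilde\mu_{\tau_1}(\partial A_1)$ uses reference $q^{(0)}$ at its bottom end; for the sum to telescope correctly one must verify that the ``bottom of the top piece'' discrepancy $d^{(1)}$ computed for $\overline u_2$ and the ``top of the bottom piece'' discrepancy $d^{(1)}$ computed for $\overline u_1$ are \emph{equal}, so they cancel in the sum, leaving exactly $d^{(2)}-d^{(0)}=\widetilde\mu_\tau(\partial(A_2\#A_1))-\mu_\tau(\partial(A_2\#A_1))$. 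This is a matter of checking that the quantity $-(\alpha_{q^{(1)}}-\alpha_0)-q^{(1)}(p^{(1)}-1)$ is intrinsic to the grooming along the common end (it depends only on $\mathcal D_1$, $p^{(1)}$, and the winding number $q^{(1)}$, all of which are shared data), so the two computations of $d^{(1)}$ literally coincide; once this is observed, the telescoping is immediate. Assembling the three additivities and the telescoping discrepancy then yields $I_\tau(A_2\#A_1)=I_{\tau_2}(A_2)+I_{\tau_1}(A_1)$, as claimed.
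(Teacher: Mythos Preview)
Your proposal is correct and in fact supplies considerably more detail than the paper itself, which simply declares the proofs of this lemma and its companion ``straightforward'' and gives no argument. The term-by-term decomposition you use --- additivity of $c_1$, of $Q_\tau$ (via stacking $\tau$-trivial representatives with matching groomings at the common end), and of $\mu_\tau(\partial A)$, followed by the telescoping of the discrepancies $d^\pm$ --- is exactly the natural route, and your identification of the discrepancy cancellation as the one point requiring a moment's thought is apt: the observation that $d^{(1)}$ depends only on the shared grooming data $(\mathcal D_1, p_1, q^{(1)})$ is precisely what makes the sum collapse to $d^{(2)}-d^{(0)}$.
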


In view of the following lemma, we can suppress $\tau $ from $I_\tau$.

\begin{lemma}
\label{lemma: groomed independence}
$I_\tau(A)$ is independent of the choice of groomed multivalued trivialization $\tau$.
\end{lemma}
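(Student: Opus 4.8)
The plan is to reduce the statement to a local computation near $[-1,1]\times[0,1]\times\{z_\infty\}$, since away from $z_\infty$ the trivialization $\tau$ is an honest (single-valued) trivialization of $T\overline S$ and the independence of $I_\tau$ there is the same classical argument already carried out for $I_{HF}$ (the remark after Definition~\ref{defn: ECH-type index for HF}), for $I_{ECH}$ in \cite{Hu1,Hu2}, and for $I_{W_+}$: a change of single-valued trivialization by degree $+1$ at one end shifts $Q_\tau$ by $+1$, shifts $\widetilde\mu_\tau$ by $-2$, and shifts $c_1(T\overline S|_A,\tau)$ by $+1$, so the three changes cancel. Thus the only real content is to check that $I_\tau(A)$ does not change when we replace $\tau$ by another groomed multivalued trivialization $\tau'$ compatible with the \emph{same} $\mathcal{D}_+$ and $\mathcal{D}_-$, i.e., when we change the windings $q_\pm=w(\mathfrak c^\pm)$ (and possibly rechoose the groomings $\mathfrak c^\pm$ with the same endpoints, which by the Remark after the definition of winding number is entirely determined by $q_\pm$).

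First I would fix the data $\mathcal{D}_+,\mathcal{D}_-$ and two compatible groomed multivalued trivializations $\tau,\tau'$ that agree away from a small neighborhood of $[-1,1]\times\{0,1\}\times\{z_\infty\}$, differing only by their groomings: $w(\mathfrak c^\pm)=q_\pm$ for $\tau$ and $w(\mathfrak c'^\pm)=q'_\pm$ for $\tau'$. Because the reference point in the definition of $\widetilde\mu_\tau(\partial A)$ is pinned at $q_-=0$, it suffices to treat the positive and negative ends separately, and by symmetry to treat a single end, say the positive one, changing $q_+$ to $q'_+=q_++1$ while fixing everything else. I would then compute the three terms of \eqref{ECH index revisited} under this change, exactly as in the proof of Lemma~\ref{lemma: HF index of sections at infinity} but localized to the model annulus $A_\varepsilon$: the $\tau$-trivial representative $\check C$ can be taken to agree outside a neighborhood of $\{s=\pm1\}\times\{z_\infty\}$, so $Q_\tau(A)-Q_{\tau'}(A)$, $\widetilde\mu_\tau(\partial A)-\widetilde\mu_{\tau'}(\partial A)$, and $c_1(T\overline S|_A,\tau)-c_1(T\overline S|_A,\tau')$ are all computed from the groomings alone. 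Concretely: pushing $\mathfrak c^+$ to $\mathfrak c'^+$ with one extra unit of winding changes the self-intersection count by $p_+$ (all the strands over $z_\infty^{p_+}$ rotate once more relative to the pushoff), so $Q_\tau-Q_{\tau'}=p_+$; the relative first Chern term changes by $+1$ (one extra full turn of the trivialization of $T\overline S$ along $[0,1]\times\{z_\infty\}$ at $s=1$); and $\mu_\tau(\partial A)$ changes by $-2$ for each of the $p_+$ strands, i.e.\ by $-2p_+$, while the discrepancy $d^+=-(\alpha_{q_+}-\alpha_0)-q_+(p_+-1)$ changes by $+ (p_+-1)-(\alpha_{q_++1}-\alpha_{q_+})$, and one checks $\alpha_{q_++1}-\alpha_{q_+}=1$ modulo the accounting that already appears in Lemma~\ref{lemma: HF index of sections at infinity}, giving $\widetilde\mu_\tau-\widetilde\mu_{\tau'}=-2p_+ + (p_+-1)-1 = -p_+-2$. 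Summing: $(p_+) + (-p_+-2) + 1 = -1$, which is wrong by sign, so I would instead carefully reconcile orientations (the pushoff direction $J\tau$ vs.\ $\tau$, and the sign convention in $w(c_k)=\langle c_k,\{\phi=\pi\}\rangle$) — the point being that the three contributions $p_+$, $-p_+-2$, and $+1$ are exactly the three contributions $p$, $-(p+1)$, $+1$ (up to the constant shift absorbed into $d^\pm$) that already cancel in Lemma~\ref{lemma: HF index of sections at infinity}, because that lemma is precisely the statement that a branched cover of $\sigma_\infty$ — which realizes the ``difference class'' between $\tau$ and $\tau'$ — has $I_\tau=0$.

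In fact the cleanest route, which I would adopt to avoid the arithmetic pitfall above, is the following: given $A\in\pi_2({\bf z}_+,{\bf z}_-,\tau)$ and $A'\in\pi_2({\bf z}_+,{\bf z}_-,\tau')$ representing ``the same'' curve but measured against $\tau$ and $\tau'$, the class $A\#(-A')$ is (after the obvious capping) the relative homology class of a branched cover of $\sigma_\infty$ with ends governed by $\mathcal{D}_\pm$ — the extra winding $q'_\pm-q_\pm$ is exactly the branching/winding data of such a cover. By the additivity lemma for $I_\tau$ under concatenation (the penultimate lemma of the excerpt) together with Lemma~\ref{lemma: HF index of sections at infinity}, which gives $I=0$ for any branched cover of $\sigma_\infty$, we get $I_\tau(A)=I_{\tau'}(A')+I(\text{branched cover})=I_{\tau'}(A')$. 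The main obstacle is bookkeeping: making precise the sense in which a change of groomed multivalued trivialization is ``the same as'' concatenating with a branched cover of $\sigma_\infty$, i.e.\ matching up the winding numbers $w(\mathfrak c^\pm)$, the initial/terminal point sets $P_0^\pm,P_1^\pm$, and the discrepancies $d^\pm$ on both sides; once that dictionary is set up, the independence is a one-line consequence of Lemma~\ref{lemma: HF index of sections at infinity} and additivity. I expect no analytic difficulty whatsoever — this is purely a matter of organizing the combinatorial/topological data correctly.
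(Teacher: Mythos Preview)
Your proposal is correct and, in its final ``cleanest route'' paragraph, takes exactly the same approach as the paper: split into the case where $\tau,\tau'$ differ at some $y_i$ (handled by the classical change-of-trivialization argument) and the case where they differ at $z_\infty$, then for the latter observe that switching groomings is realized by concatenating with a branched cover of $\sigma_\infty$ and invoke additivity together with Lemma~\ref{lemma: HF index of sections at infinity}. Your first attempt at a direct term-by-term cancellation is not how the paper argues, and your own arithmetic trouble there confirms the wisdom of the second route; the paper likewise does not spell out the bookkeeping dictionary you flag, so your level of detail matches theirs.
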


\begin{proof}
Let $\tau$ and $\tau'$ be two groomed multivalued trivializations adapted to the same data $(\mathcal{D}_+, \mathcal{D}_-)$. It suffices to consider the particular cases when $\tau$ and $\tau'$ differ only either at some $y_i$ or at $z_{\infty}$. In the first case, the argument is analogous to the proof of Lemma \ref{lemma: change of trivialization}. In the second case, we can glue branched covers of $\sigma_{\infty}$ to switch groomings. Then the statement follows from Lemma \ref{lemma: HF index of sections at infinity} and the additivity of the ECH index.
\end{proof}

\subsubsection{The ECH index inequality}

Let $\overline{u}$ be a degree $k$ multisection of $\overline{W}$ from ${\bf z}_+=\{z_\infty^{p_+}(\overrightarrow{\mathcal{D}}_+)\}\cup {\bf y}_+$ to ${\bf z}_-=\{z_\infty^{p_-}(\overrightarrow{\mathcal{D}}_-)\}\cup {\bf y}_-$ such that $\overline{u}=\overline{u}''$. Let $c_\ell^\pm$, $\ell=1,\dots,p_\pm$ be the intersections of $A_\varepsilon$ with the $\pi$-projections of the $\pm$ ends of $\overline{u}$ that limit to $z_\infty$. Here $\varepsilon>0$ is small and depends on $\overline{u}$, and the map $\pi$ projects out the $s$-direction. Let $\mathfrak{c}^\pm = \{ c_\ell^\pm \}_{\ell=1,\dots,p_\pm}$ and let $P_0^\pm$ (resp.\ $P_1^\pm$) be the set of the initial (resp.\ terminal) points of the arcs $c_\ell^\pm$.

\begin{lemma}\label{index inequality for z infinity case}
If ${\frak c}^\pm$ are groomings and $A \in H_2(\overline{W},{\bf z}_+,{\bf z}_-)$ is the relative homology class of $\overline{u}$, then
\begin{equation} \label{eq z infty}
I(A) \geq \op{ind}(\overline{u})+ (d^+-d^-).
\end{equation}
In particular, if the points of $P_0^*$ and $P_1^*$ alternate along $(0,2\pi)$ for both $*=+$ and $-$, then $I(A)\geq \op{ind}(\overline{u})$.
\end{lemma}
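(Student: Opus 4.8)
The inequality \eqref{eq z infty} is the analogue, in the presence of ends at $z_\infty$, of the ECH-type index inequality of Theorem~\ref{thm: index inequality for HF}, and I would prove it by the same mechanism: compute $I(A)-\op{ind}(\overline{u})$ by plugging in the index formula \eqref{Fredholm index revisited}, the ECH index formula \eqref{ECH index revisited}, and a relative adjunction formula, then bound the resulting difference by nonnegative quantities (a $\delta(\overline{u})$-term and writhe-type terms) coming from the genuine ends at the $y_i$, while the contribution of the $z_\infty$-ends is exactly the correction $d^+-d^-$ by design. Concretely, the difference $I(A)-\op{ind}(\overline u)$ should reduce, after cancelling $-\chi(F)+l$ against $\chi$-terms and the $c_1$-terms against each other via a relative adjunction formula for $\overline{W}$ (the analogue of Lemma~\ref{lemma: adjunction formula for W}, proved by the same doubling/resolution argument since $\overline u''$ is simply covered), to a sum of two kinds of terms: a $2\delta(\overline u)\ge 0$ term, and a difference $\widetilde\mu_\tau(\partial A)-\mu_\tau(\overline u)$ which splits into a contribution of the $y_i$-ends and a contribution of the $z_\infty$-ends.

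First I would set up the relative adjunction formula for a degree $l$ multisection $\overline u=\overline u''$ of $\overline W$ with ends at $z_\infty^{p_\pm}(\overrightarrow{\mathcal D}_\pm)$: the point is that away from $z_\infty$ everything is as in Section~\ref{section: variation of HF adapted to OB}, while near $z_\infty$ the braids $\overline u(\dot F)\cap\{s=\pm(1-\varepsilon)\}$ have writhe computed with respect to the multivalued $\tau$, and the single-strand braids at the $y_i$-ends contribute $0$ writhe by $\tau$-triviality. This gives an equation of the shape
\[
c_1(\overline u^*T\overline S,\tau)=\chi(\dot F)-l + w_\tau^{z_\infty}(\overline u) + Q_\tau(A)-2\delta(\overline u),
\]
where $w_\tau^{z_\infty}$ collects the writhes of the $z_\infty$-ends only. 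Then, combining with \eqref{Fredholm index revisited} and \eqref{ECH index revisited}, I get
\[
I(A)-\op{ind}(\overline u)=2\delta(\overline u) + \big(\widetilde\mu_\tau(\partial A)-\mu_\tau(\overline u)\big) - w_\tau^{z_\infty}(\overline u).
\]

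The heart of the argument is then to show that $\widetilde\mu_\tau(\partial A)-\mu_\tau(\overline u) - w_\tau^{z_\infty}(\overline u)\ge d^+-d^-$, with equality-type control. I would split this by ends. At the $y_i$-ends, $\mu_\tau(\partial A)$ and $\mu_\tau(\overline u)$ agree and the writhe is $0$, so there is no contribution. At the $z_\infty$-ends, the key is the combinatorics of Section~\ref{lerici}: the actual arcs $c_k^\pm$ determined by $\overline u$ need not be groomed, but $\tau$ is groomed with the same endpoints $P_0^\pm,P_1^\pm$, so by the Remark after the winding-number definition the groomed model is fixed once $w(\mathfrak c^\pm)=q_\pm$ is fixed; the writhe of a (possibly tangled) braid with prescribed endpoints and prescribed total winding is bounded below by that of the groomed model, and the correction term $\alpha_{q_\pm}-\alpha_0$ together with the $q_\pm(p_\pm-1)$ term is exactly what converts the groomed-model Maslov/writhe count into $\mu_\tau$, which is how $d^\pm$ was defined ``to make Lemma~\ref{lemma: HF index of sections at infinity} hold.'' So the inequality at the $z_\infty^{\pm}$-end is precisely a writhe inequality for braids with fixed endpoints, and its deficit is $\pm d^\pm$. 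I expect this writhe-combinatorics step — matching the bookkeeping of $d^\pm$ against the actual braid writhe bound, keeping signs straight between the positive and negative ends — to be the main obstacle; everything else is a direct transcription of the corresponding $I_{HF}$ arguments. For the final sentence, when $P_0^*$ and $P_1^*$ alternate along $(0,2\pi)$, Remark~\ref{rmk: p zero and p one} gives the closed formula $d^*=-p_*(q_*-\llfloor q_*/p_*\rrfloor)$ or $-p_*(q_*-\llceil q_*/p_*\rrceil)$; in the alternating case these satisfy $d^+\le 0\le -d^-$ after normalizing the reference grooming $q_-=0$, hence $d^+-d^-\ge 0$ and the inequality upgrades to $I(A)\ge\op{ind}(\overline u)$. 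I would verify this last sign claim by a short direct check using the two cases of \eqref{eqn: discrepancy}.
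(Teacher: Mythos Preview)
Your plan overcomplicates the first inequality and misses the key geometric input for the second.

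For the main inequality: you write that ``the actual arcs $c_k^\pm$ determined by $\overline u$ need not be groomed, but $\tau$ is groomed with the same endpoints,'' and then set up a writhe term $w_\tau^{z_\infty}(\overline u)$ and a writhe inequality that you flag as the main obstacle. But reread the hypothesis: the $\mathfrak c^\pm$ are \emph{defined} to be the projections of the ends of $\overline u$, and $\tau$ is assumed compatible with \emph{those} $\mathfrak c^\pm$, not merely with their endpoints. So the grooming of $\tau$ \emph{is} the end behavior of $\overline u$. Consequently the relative adjunction formula (the analogue of Lemma~\ref{lemma: adjunction formula for W}) goes through with no writhe term at all, and one has the equality $\mu_\tau(\overline u)=\mu_\tau(\partial A)$ directly. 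The computation then collapses to
\[
I(A)=\op{ind}(\overline u)+\bigl(\widetilde\mu_\tau(\partial A)-\mu_\tau(\partial A)\bigr)+2\delta(\overline u)
=\op{ind}(\overline u)+(d^+-d^-)+2\delta(\overline u),
\]
and $\delta(\overline u)\ge 0$ finishes it. Your anticipated ``main obstacle'' is a phantom.

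For the ``in particular'' clause: your sign claim ``$d^+\le 0\le -d^-$'' is wrong (and would not give $d^+-d^-\ge 0$ anyway). What is actually needed is $d^+\ge 0$ and $d^-\le 0$, and the crucial input you omit is geometric: the projection $\pi(\overline u)$ is positively transverse to the Hamiltonian vector field $\partial_t$ by positivity of intersections, which forces $q_+\le 0$ at the positive end and $q_-\ge 0$ at the negative end. Plugging these sign constraints on $q_\pm$ into the alternating formula \eqref{eqn: discrepancy} then gives $d^+\ge 0$ and $d^-\le 0$. A ``short direct check'' of \eqref{eqn: discrepancy} alone, with no constraint on $q_\pm$, cannot produce this.
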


\begin{proof}
Let $\tau$ be a groomed multivalued trivialization which is compatible with $(\mathcal{D}_+,\mathcal{D}_-)$ and ${\frak c}^\pm$. If we use $\tau$ to compute both the Fredholm and ECH indices, then the proof of the relative adjunction formula (Lemma \ref{lemma: adjunction formula for X}) goes through unmodified. Hence Equation~\eqref{Fredholm index revisited} and Lemma~\ref{lemma: adjunction formula for X} imply:
\begin{align*}
\op{ind}(\overline{u})&= -\chi(F)+k+ \mu_\tau(\overline{u})+2c_1(\overline{u}^*T \overline{S},\tau) \\
\label{second eq} &= Q_{\tau}(A) + \mu_\tau(\overline{u})+ c_1(\overline{u}^*T \overline{S},\tau) - 2 \delta(\overline{u}).
\end{align*}
Since $\tau$ is compatible with ${\frak c}^\pm$, we have $\mu_\tau(\overline{u}) = \mu_{\tau}(\partial A)$. Comparing with the definition of $I(\overline{u})$, we have:
\begin{align}
I(\overline{u}) &= \op{ind}(\overline{u})+ (\widetilde{\mu}_\tau(\partial A)- \mu_\tau(\partial A))+2\delta(\overline{u})\\
\notag &= \op{ind}(\overline{u})+ (d^+-d^-)+2\delta(\overline{u}).
\end{align}
(Note that $\overline{u}$ has no boundary singularities because $\overline{\pi}_B \circ \overline{u}$ has no branch point at the boundary: in fact
$\overline{u}$ has boundary values in $L_{\hat{\mathbf{a}}} \cup L_{\overline{\hh}(\hat{\mathbf{a}})}$, and therefore the images of different boundary components of $\dot F$ are mutually disjoint.)
Equation~\eqref{eq z infty} follows from observing that $\delta(\overline{u}) \ge 0$ since $\overline{u}$ is $J$-holomorphic.

Next suppose that the points of $P_0^*$ and $P_1^*$ alternate along $(0,2\pi)$ for both $*=+$ and $-$. We claim that $d^+\geq 0$.  Let $\pi:\R\times[0,1]\times \overline{S}\to [0,1]\times \overline{S}$ be the projection onto the second and third factors.  By the positivity of intersections, $\pi(\overline{u})$ is positively transverse to the Hamiltonian vector field $\partial_t$; hence $q_+\leq 0$. By Equation~\eqref{eqn: discrepancy}, if $q_+\leq 0$, then $d^+\geq 0$ (in both cases).  $d^-\leq 0$ is proved similarly.
\end{proof}

\subsubsection{ECH index calculation} \label{example of I calc}

In this subsection we compute the ECH index of a multisection of $\overline{W}$ which is the union of a branched cover of the section at infinity $\sigma_{\infty}$ and a curve which limits to a multiple of $z_\infty$. This calculation will be used in Section~II.\ref{P2-snowman}. For simplicity we consider the cases of
positive ends and negative ends at $z_\infty$ separately. The general statement can easily be deduced from the two separate cases.

Let $\overline{u}=\overline{u}'\cup\overline{u}''$ be a degree $p_1+p_2+l$ multisection of $\overline{W}$, where $\deg\overline{u}'=p_1$, $\overline{u}''$ is a multisection from ${\bf y}$ to $\{z_\infty^{p_2} (\overrightarrow{\mathcal D}_2)\} \cup {\bf y}'$, and $l$ is the cardinality of ${\bf y}'$. Suppose that the data $\overrightarrow{\mathcal{D}}_1$ and $\overrightarrow{\mathcal{D}}_2$ for $\overline{u}'$ and $\overline{u}''$ at the negative ends $z_\infty^{p_1}$ and $z_\infty^{p_2}$ satisfy:
\begin{enumerate}
\item[(D$_1$)] $\overrightarrow{\mathcal{D}}_1=\{(i_1,j_1)\to (i_1,j_1),\dots, (i_{p_1},j_{p_1})\to (i_{p_1},j_{p_1})\}$;
\item[(D$_2$)] $\overrightarrow{\mathcal{D}}_2=\{(i'_1,j'_1)\to (i''_1,j''_1),\dots,(i'_{p_2},j'_{p_2})\to (i''_{p_2},j''_{p_2})\}$;
\item[(D$_3$)] $\mathcal{D}_1^{from}=\mathcal{D}_1^{to}$ and the sets $\mathcal{D}_2^{from}$ and $\mathcal{D}_2^{to}$ are disjoint from $\mathcal{D}_1^{from}=\mathcal{D}_1^{to}$.
\end{enumerate}
Let $\mathcal{E}_{-,\ell}$ be the end of $\overline{u}''$ at $z_{\infty}$ corresponding to $(i_\ell', j_\ell') \to (i_\ell'', j_\ell'')$.

Suppose there exist groomings ${\frak c}_1^-\subset A_{\varepsilon/2}=\bdry D^2_{\varepsilon/2}\times[0,1]$\footnote{Here we are writing $A_{\varepsilon/2}=\bdry D^2_{\varepsilon/2}\times[0,1]$ instead of $[0,1]\times \bdry D^2_{\varepsilon/2}$ to indicate the orientation of $A_{\varepsilon/2}$.} and ${\frak c}_2^-\subset A_{\varepsilon}=\bdry D^2_{\varepsilon}\times[0,1]$ which correspond to $\overrightarrow{\mathcal{D}}_1$ and $\overrightarrow{\mathcal{D}}_2$ and satisfy the following:
\begin{itemize}
\item[(G$_1$)] ${\frak c}_1^-$ has winding number $q_1:=w({\frak c}_1^-) =0$;
\item[(G$_2$)] ${\frak c}_2^-=\pi(\cup_{i=1}^{p_2} \mathcal{E}_{-,\ell})\cap A_\varepsilon$ and $q_2:=w({\frak c}_2^-)=0$ or $1$;
\item[(G$_3$)] the points of $P_0^2$ and $P_1^2$ alternate along $(0,2\pi)$, i.e., the projection of ${\frak c}^-_2$ to $\bdry D^2_\varepsilon$ is injective.
\end{itemize}
Here $P_0^i$ (resp.\ $P_1^i$) is the set of initial (resp.\ terminal) points of ${\frak c}_i^-$.

Let $\pi$ be the projection of $\check{\overline{W}}=[-1,1]\times[0,1]\times\overline{S}$ to $[0,1]\times\overline{S}$ and let $\check C'$ and $\check C''$ be representatives of $\overline{u}'$ and $\overline{u}''$ in $\check{\overline{W}}$ such that $\pi(\check C'|_{s=-1})= {\frak c}_1^-$ and $\pi(\check C''|_{s=-1})= {\frak c}_2^-$.

Let $\pi_\rho:[{\varepsilon\over 2},\varepsilon]\times \bdry D^2 \times[0,1]\to \bdry D^2\times[0,1]$ be the projection along the $\rho$-direction.  Let ${\frak w}$ be the signed number of crossings of $\pi_\rho({\frak c}_1^-\cup {\frak c}_2^-)$, i.e., the writhe. Observe that all the crossings of $\pi_\rho({\frak c}_1^-\cup {\frak c}_2^-)$ are positive as a consequence of (G$1$) and (G$_2$), and $p_1\geq {\frak w}$ as a consequence of (G$_3$).

\begin{lemma}  \label{calc of almost sum}
Let $\overline{u}=\overline{u}'\cup\overline{u}''$ be a degree $p_1+p_2+l$ multisection of $\overline{W}$, where $\deg\overline{u}'=p_1$, $\overline{u}''$ is a multisection from ${\bf y}$ to $\{z_\infty^{p_2} (\overrightarrow{\mathcal D}_2)\} \cup {\bf y}'$, and $l$ is the cardinality of ${\bf y}'$. If the data $\overrightarrow{\mathcal{D}}_1$, $\overrightarrow{\mathcal{D}}_2$ at the negative end satisfy (D$_1$)--(D$_3$) and (G$_1$)--(G$_3$), then
\begin{align} \label{almost additive}
I(\overline{u}) &\geq I(\overline{u}')+I(\overline{u}'') + \left\{
\begin{array}{cl}
2{\frak w}, & \mbox{if $q_2=0$};\\
p_1+{\frak w}, & \mbox{if $q_2=1$}.
\end{array}
\right.
\end{align}
If $\overline{u}'\cap\overline{u}''=\varnothing$ in addition, then equality holds.
\end{lemma}

\begin{proof}
It suffices to prove the equality, assuming $\overline{u}'\cap\overline{u}''=\varnothing$, since the extra intersections contribute positively towards the ECH index. The representatives $\check C'$ and $\check C''$ can be taken to be disjoint; in fact we can assume that $\check{C}''$ is disjoint from the ${\varepsilon\over 2}$-neighborhood $\R \times [0,1] \times D^2_{\varepsilon / 2}$ of $\sigma_{\infty}$.

If $q_2=0$, then we resolve the positive crossings of $\pi_\rho({\frak c}_1^-\cup {\frak c}_2^-)$. This is equivalent to appending a union of disks $\check D\subset [-2,-1]\times[0,1]\times \overline{S}$ to $\check{C}'\cup\check{C}''$ such that $\check D|_{s=-1}=(\check C'\cup \check C'')|_{s=-1}$ and $\pi(\check D|_{s=-2})$\footnote{Here we are taking $\pi$ to be the projection of the appropriate space to $[0,1]\times\overline{S}$.} is a grooming on $A_\varepsilon$ which satisfies $w(\pi(\check D|_{s=-2}))=0$. A quick calculation shows that $\check D$ contributes ${\frak w}$ to $Q$, $0$ to $c_1$, and ${\frak w}$ to $\mu$. Since the discrepancy for $\pi(\check D|_{s=-2})$ is zero, Equation~\eqref{almost additive} follows for $q_2=0$.

If $q_2=1$, then we first switch the crossings of $\pi_\rho({\frak c}_1^-\cup {\frak c}_2^-)$ by appending a union of disks $\check D_{-1}\subset [-2,-1]\times[0,1]\times \overline{S}$ such that $\pi(\check D_{-1}|_{s=-2})$ consists of ${\frak c}_1^-$ on $A_\varepsilon$ and ${\frak c}_2^-$ on $A_{\varepsilon/2}$. This contributes $2{\frak w}$ to $Q$ and $0$ to $c_1$ and $\mu$.  Next append $\check D_{-2}\subset [-3,-2]\times[0,1]\times \overline{S}$ so that $\pi(\check D_{-2}|_{s=-3})$ consists of ${\frak c}_1^-$ on $A_\varepsilon$ and ${\frak d}_2^-$ on $A_{\varepsilon/2}$, where ${\frak d}_2^-$ is groomed and $w({\frak d}_2^-) =0$. The surface $\check D_{-2}$ is similar to that used in the proof of Lemma~\ref{lemma: HF index of sections at infinity} and has zero ECH index. Finally, we resolve the positive crossings of $\pi_\rho({\frak c}_1^-\cup {\frak d}_2^-)$ by appending $\check D_{-3}\subset [-4,-3]\times[0,1]\times \overline{S}$.  Since there are $p_1-{\frak w}$ crossings, $\check{D}_{-3}$ contributes $p_1-{\frak w}$ to $Q$, $0$ to $c_1$, and $0$ to $\mu$.  Equation~\eqref{almost additive} then follows for $q_2=1$.
\end{proof}

Next we consider the variant where the multiple of $z_\infty$ is at the positive end. Let $\overline{u}=\overline{u}'\cup\overline{u}''$ be a degree $p_1+p_2+l$ multisection of $\overline{W}$, where $\deg \overline{u}'=p_1$, $\overline{u}''$ is a multisection from $\{z_\infty^{p_2}(\overrightarrow{\mathcal D}_2)\}\cup {\bf y}'$ to ${\bf y}$, and $l$ is the cardinality of ${\bf y}'$.  We use the same notation as above, with $-$ replaced by $+$.

We make the following assumptions:
\begin{itemize}
\item[(G$_1'$)] ${\frak c}_1^+$ has winding number $q_1:=w({\frak c}_1^+) =0$;
\item[(G$_2'$)] ${\frak c}_2^+=\pi(\cup_{i=1}^{p_2} \mathcal{E}_{+,\ell})\cap A_\varepsilon$ and $q_2:=w({\frak c}_2^+)=0$ or $-1$; and
\item[(G$_3'$)] the projection of ${\frak c}_2^+$ to $\bdry D^2_\varepsilon$ is injective except on $\kappa\geq 0$ short intervals of $\bdry D^2_\varepsilon$ which correspond to thin sectors between $\overline{a}_{i,j}$ and $\overline{\hh}(\overline{a}_{i,j})$.
\end{itemize}
Let ${\frak w}$ be the signed number of crossings of $\pi_\rho({\frak c}_1^+\cup {\frak c}_2^+)$. In this case, all the crossings of $\pi_\rho({\frak c}_1^+\cup {\frak c}_2^+)$ are negative.  The analog of Lemma~\ref{calc of almost sum}, stated without proof, is:

\begin{lemma} \label{calc of almost sum 2}
Let $\overline{u}=\overline{u}'\cup\overline{u}''$ be a degree $p_1+p_2+l$ multisection of $\overline{W}$, where $\deg \overline{u}'=p_1$, $\overline{u}''$ is a multisection from $\{z_\infty^{p_2}(\overrightarrow{\mathcal D}_2)\}\cup {\bf y}'$ to ${\bf y}$, and $l$ is the cardinality of ${\bf y}'$. If the data $\overrightarrow{\mathcal{D}}_1$, $\overrightarrow{\mathcal{D}}_2$ at the positive end satisfy (D$_1$)--(D$_3$) and (G$_1'$)--(G$_3'$), then
\begin{align} \label{almost additive 2}
I(\overline{u}) &\geq  I(\overline{u}')+I(\overline{u}'') + \left\{
\begin{array}{cl}
-{\frak w}, & \mbox{if $q_2=0$};\\
-2{\frak w}, & \mbox{if $q_2=-1$}.
\end{array}
\right.
\end{align}
If $\overline{u}'\cap\overline{u}''=\varnothing$ in addition, then equality holds.
\end{lemma}

We also compute the discrepancy $d_+$ of ${\frak c}_2^+$ when $q_2=-1$. Since $\alpha_{-1}=p_2-1$ and $\alpha_0=\kappa$,
\begin{align}
d_+ & = -(\alpha_{-1}-\alpha_0)-q_2 (p_2-1) \\
\notag &= -((p_2-1)-\kappa) -(-1) (p_2-1) = \kappa.
\end{align}
By Lemma~\ref{index inequality for z infinity case},
\begin{equation} \label{kappa}
I(\overline{u}'') \geq \op{ind}(\overline{u}'') +\kappa.
\end{equation}

\subsubsection{Extended moduli spaces} \label{subsubsection: more moduli spaces}
We now describe the extended moduli spaces which involve multiples of $z_\infty$ at the ends.  Details will be given for $\overline{W}$; the $\overline{W}_+$ and $\overline{W}_-$ cases are analogous.

Let $\mathcal{M}=\mathcal{M}_{\overline{J}}({\bf z}_+,{\bf z}_-)$ be the moduli space of multisections of $(\overline{W},\overline{J})$ from ${\bf z}_+=\{z_\infty^{p_+}(\overrightarrow{\mathcal{D}}_+)\}\cup {\bf y}_+$ to ${\bf z}_-=\{z_\infty^{p_-}(\overrightarrow{\mathcal{D}}_-)\}\cup {\bf y}_-$.  A map $\overline{u} \in \mathcal{M}$ has boundary conditions on $L_{\overline{\mathbf{a}}} \cup L_{\overline{\hh}(\overline{\mathbf{a}})}$. Let $\dagger$ be the modifier ``$\overline{u}'=\varnothing$''. We now describe an enlargement of the moduli space $\mathcal{M}^\dagger$.
\nom[2dagger]{$*=\dagger$}{Modifier ``$\overline{u}'=\varnothing$''}

Recall that $\vec{a}_{i,j}\subset D^2$ is of the form $\{-1<\rho\leq 1,\phi=\phi_{i,j}\}$ for some constant $\phi_{i,j}$.

\begin{defn}\label{defn: extended curves}
An {\em extended $\overline{W}$-curve} $\overline{u}$ from ${\bf z}_+$ to ${\bf z}_-$ is a multisection of $(\overline{W},\overline{J})$ which satisfies the conditions of Definition~\ref{overline W curve, version 2} with $\dot F'=\varnothing$ and (1) and (2) replaced by the following:
\begin{enumerate}
\item[(1$'$)] There exist either positive or negative ends (or both) $\mathcal{E}_{+,i}$ and $\mathcal{E}_{-,i}$ of $\dot F$ that limit to $z_\infty$ such that:
\begin{itemize}
\item $\overline{u}(\bdry \dot F -\cup_i\mathcal{E}_{+,i}-\cup_i\mathcal{E}_{-,i})\subset L_{\widehat{\bf a}}\cup L_{\overline{\hh}(\widehat{\bf a})}$;
\item $\overline{u}$ maps one component of $\bdry\dot F \cap\mathcal{E}_{+,i}$ (resp. $\bdry \dot F\cap\mathcal{E}_{-,i}$) to $L_{\vec{a}_{i_\ell,j_\ell}}$ and the other to $L_{\overline{\hh}(\vec{a}_{i_\ell',j_\ell'})}$, where $(i_\ell',j_\ell')\to(i_\ell,j_\ell)\in \overrightarrow{\mathcal{D}}_+$ (resp.\ $\in \overrightarrow{\mathcal{D}}_-$).
\end{itemize}
\item[(2$'$)] $\overline{u}$ maps each connected component of $\bdry\dot F-\cup_i\mathcal{E}_{+,i}-\cup_i\mathcal{E}_{-,i}$ to a different $L_{\widehat{a}_i}$ or $L_{\overline{\hh}(\widehat{a}_i)}$.
\end{enumerate}
The moduli space of extended $\overline{W}$-curves from ${\bf z}_+$ to ${\bf z}_-$ is denoted by $\mathcal{M}^{\dagger,ext}_{\overline{J}}({\bf z}_+,{\bf z}_-)$.
\nom[2ext]{$*=ext$}{Modifier ``extended moduli space''; cf.\ Definition~\ref{defn: extended curves}}
\end{defn}

The extended moduli spaces $\mathcal{M}^{\dagger,ext}_{\overline{J}_+}({\bf z}_+, \delta_0^r \boldsymbol{\gamma}')$ and $\mathcal{M}^{\dagger,ext}_{\overline{J}_-}(\delta_0^r \boldsymbol{\gamma}',{\bf z}_-)$ are defined similarly.

\s\n
{\em Terminology.} A {\em sector ${\frak S}$ of $D^2_{\rho_0}$, $0<\rho_0\leq 1$, from $\phi_0$ to $\phi_1$} is the map
$$[0,\rho_0]\times[\phi_0,\phi_1]\to D^2_{\rho_0},\quad (\rho,\phi)\mapsto \rho e^{i\phi}.$$
By abuse of notation, we also refer to the image of ${\frak S}$ as a sector.

If $R,R'\subset D^2_{\rho_0}$ are distinct radial rays and $I\subset \R$ is an interval, then ${\frak S}(R,R';I)$ is a sector of $D^2_{\rho_0}$ from $\phi_0$ to $\phi_1$, where $R$ and $R'$ can be written as $\{\phi=\phi_0,\rho\geq 0\}$ and $R'=\{\phi=\phi_1,\rho\geq 0\}$ and $[\phi_0,\phi_1]\subset I$.  If we do not specify $I$, then we write ${\frak S}(R,R')$ and assume that $\phi_1-\phi_0$ is the smallest positive angle.

A sector is {\em large} if it has angle $\phi_1-\phi_0>\pi$ and {\em small} if it has angle $0<\phi_1-\phi_0<\pi$.
\nom[Sfrak]{${\frak S}(R,R')$}{Counterclockwise sector in $D^2_{\rho_0}$ from radial ray $R$ to $R'$}

\s
Let $\mathcal{M}^*=\mathcal{M}^*({\bf z}_+,{\bf z}_-)$, where $*$ denotes any set of modifiers, and let
$$\pi_{D^2}: D^2\times[0,1]\to D^2$$
be the projection onto the first factor. If $\overline{u}\in \mathcal{M}^{\dagger}$, then  $(i_\ell',j_\ell')\to (i_\ell,j_\ell)$ in $\overrightarrow{\mathcal{D}}_+$ corresponds to an end $\mathcal{E}_{+,\ell}$ of $\overline{u}$ such that the restriction of $\pi_{D^2}\circ\overline{u}(\mathcal{E}_{+,\ell})$ to $D^2_\varepsilon$ for $\varepsilon>0$ small is a sector ${\frak S}={\frak S} (\overline{a}_{i_\ell,j_\ell}, \overline{\hh}(\overline{a}_{i_\ell',j_\ell'}))$. If ${\frak S}$ is large, then $\pi_{D^2}\circ \overline{u}(\mathcal{E}_{+,\ell})$ has  a slit of length $0$ by the definition of $\overline{u}\in \mathcal{M}^\dagger$ (i.e., $\pi_{D^2}\circ\overline{u}$ maps no boundary point of $\mathcal{E}_{+,i}$ to $\vec{a}_{i_\ell,j_\ell} - \overline{a}_{i_\ell,j_\ell}$ or $\overline{\hh}(\vec{a}_{i_\ell',j_\ell'}-\overline{a}_{i_\ell',j_\ell'})$).  The
ends of $\overline{u}$ can cover at most one large sector because $n(\overline{u})=m$,
and therefore $\overline{u}$ is an extended curve. The neighborhood of $\overline{u}$ in $\mathcal{M}^\dagger$ is generically a codimension $\geq 1$ submanifold of an extended moduli space $\mathcal{M}^{\dagger,ext}$.

\subsection{Transversality}

We first discuss the regularity of almost complex structures on $\overline{W}$, $\overline{W'}$, $\overline{W}_+$ and $\overline{W}_-$.

\subsubsection{Transversality for $\overline{W}$ and $\overline{W'}$}

\begin{defn}
The almost complex structure $J\in \mathcal{J}_W$ is {\em regular} if $\mathcal{M}_J({\bf y},{\bf y'})$ is transversely cut out for all tuples ${\bf y}$ and ${\bf y'}$ in ${\bf a}\cap \hh({\bf a})$. The almost complex structure $\overline{J}\in \mathcal{J}_{\overline{W}}$ is {\em regular} if $\mathcal{M}^{\dagger,ext}_{\overline{J}}({\bf z},{\bf z'})$ is transversely cut out for all tuples ${\bf z}=\{z_\infty^p(\mathcal{D})\}\cup {\bf y}$ and ${\bf z'}=\{z_\infty^q(\mathcal{D'})\}\cup {\bf y'}$.
\end{defn}

Note that $\mathcal{M}_J({\bf y},{\bf y'})=\mathcal{M}^s_J({\bf y},{\bf y'})$ and $\mathcal{M}^{\dagger,ext}_{\overline{J}}({\bf z},{\bf z'})= \mathcal{M}^{\dagger,ext,s}_{\overline{J}}({\bf z},{\bf z'})$  because the positive ends have boundaries on distinct Lagrangian submanifolds.
This implies that a generic $J\in \mathcal{J}_W$ is regular; see Lemma~\ref{lemma: HF regularity}. The same proof also gives:

\begin{lemma} \label{lemma: HF regularity for overline J}
A generic $\overline{J}\in \mathcal{J}_{\overline{W}}$ is regular.
\end{lemma}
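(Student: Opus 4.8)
The statement to prove is Lemma~\ref{lemma: HF regularity for overline J}: a generic $\overline{J}\in\overline{\mathcal{J}}$ is regular, meaning the moduli spaces $\mathcal{M}^{\dagger,ext}_{\overline{J}}({\bf z},{\bf z'})$ of extended $\overline{W}$-curves (those with no branched-cover-of-$\sigma_\infty$ components) are transversely cut out for all tuples ${\bf z}=\{z_\infty^p(\mathcal{D})\}\cup{\bf y}$ and ${\bf z'}=\{z_\infty^q(\mathcal{D'})\}\cup{\bf y'}$.

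\medskip

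\textbf{Plan of proof.} The proof proceeds exactly as for Lemma~\ref{lemma: HF regularity} (which cites \cite[Proposition~3.8]{Li} and the argument of \cite[Lemma~9.12(b)]{Hu1}), with the only new feature being the presence of ends limiting to multiples of $z_\infty$ and the constraint, built into Definition~\ref{defn: almost complex structures on overline W}, that $\overline{J}=\overline{J}_0$ near $\rho=0$ and on $W$, so that the freedom to perturb $\overline{J}$ is confined to the region $\{\varepsilon\le\rho\}$ with $\overline{W}$ replaced away from the section at infinity. First I would recall that, as noted right after the definition, $\mathcal{M}^{\dagger,ext}_{\overline{J}}({\bf z},{\bf z'})=\mathcal{M}^{\dagger,ext,s}_{\overline{J}}({\bf z},{\bf z'})$: every extended $\overline{W}$-curve is somewhere injective. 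This is because $\overline{u}$ is a degree $\le 2g$ multisection of $\overline{\pi}_B:\overline{W}\to B$, so the generic fiber of $\overline{\pi}_B\circ\overline{u}$ is a single point; a multiply-covered curve would have to be a nontrivial cover of another multisection, which forces each irreducible component to be embedded or to have constant branch behavior over $B$ — and since extended curves have $\dot F'=\varnothing$ (no branched covers of $\sigma_\infty$), this cannot happen. Somewhere-injectivity is what makes the standard Sard–Smale/Floer–Hofer–Salamon scheme apply.

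\medskip

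\textbf{Key steps.} (1) Set up the universal moduli space $\mathcal{M}^{\dagger,ext}_{univ}=\{(\overline{u},\overline{J}):\overline{J}\in\overline{\mathcal{J}},\ \overline{u}\in\mathcal{M}^{\dagger,ext}_{\overline{J}}({\bf z},{\bf z'})\}$ using a Banach manifold of $C^k$ (or $C^\varepsilon$ Floer-norm) perturbations of a fixed $\overline{J}_0$ supported in $\{\rho\ge\varepsilon\}\cap(\overline{W}-W)$, as allowed by Definition~\ref{defn: almost complex structures on overline W}. (2) Show the linearized section $\bar\partial$ is transverse along $\mathcal{M}^{\dagger,ext}_{univ}$: given a nonzero cokernel element $\eta$ of the linearized operator $D_{\overline{u}}$, one must produce an admissible variation $Y$ of $\overline{J}$ with $\int\langle\eta, Y\circ d\overline{u}\circ j\rangle\ne 0$. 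By somewhere-injectivity, there is an injective point $p$ of $\overline{u}$ with $\overline{u}(p)$ lying off the Lagrangians and off the forbidden regions where $\overline{J}$ is pinned; one must further arrange $\overline{u}(p)$ to lie in $\{\rho\ge\varepsilon\}\cap(\overline{W}-W)$. This last point is where one uses that an extended curve genuinely meets the region where perturbations are allowed — if the image of $\overline{u}$ were entirely contained in $W\cup\{\rho\le\varepsilon\}$ it would in fact be an honest $\mathcal{M}_J$-curve and regularity there follows from Lemma~\ref{lemma: HF regularity}; otherwise by unique continuation / openness of holomorphic maps an injective point can be pushed into the perturbation region. (3) Invoke the Sard–Smale theorem to conclude that the set of regular $\overline{J}$ is of the second category, hence dense; a standard intersection-over-countably-many-tuples argument (finitely many relevant ${\bf z},{\bf z'},$ homology classes for fixed bounds, cf.\ the compactness discussion) upgrades this to a residual set of genuinely regular $\overline{J}$.

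\medskip

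\textbf{Main obstacle.} The one genuinely new difficulty compared with Lemma~\ref{lemma: HF regularity} is handling the ends at $z_\infty$ together with the asymmetry of the perturbation class: $\overline{J}$ cannot be perturbed near $\rho=0$, precisely where the $z_\infty$-ends of $\overline{u}$ accumulate, nor over $W$. So the crux is the claim that \emph{any} extended $\overline{W}$-curve $\overline{u}$ which is not already covered by Lemma~\ref{lemma: HF regularity} possesses an injective point in the open region $\{\rho>\varepsilon\}\cap(\overline{W}-W)$ where perturbations live. I expect this to follow from: (a) an extended curve with image in $\{\rho\le\varepsilon\}\cup W$ is, after the identification of $\overline{J}$ with $\overline{J}_0$ there, a holomorphic curve for a \emph{fixed} integrable-near-$z_\infty$ structure, whose regularity must be argued separately — but here automatic transversality applies to the relevant low-index strips (the ``thin strips''/``thin wedges'' near $z_\infty$ satisfy automatic transversality as already observed in the excerpt after Claim~\ref{claim: on S one half}), and the remaining components lie in $W$ where Lemma~\ref{lemma: HF regularity} governs; (b) otherwise, some component of $\overline{u}$ has image meeting $\{\rho>\varepsilon\}\cap(\overline{W}-W)$ in an open set, and since that component is somewhere injective, a generic point of this open set is an injective point of $\overline{u}$ avoiding all pinned loci. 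Assembling (a) and (b) — i.e., decomposing $\overline{u}$ according to whether it ``escapes'' the pinned region and treating the escaping part by the standard perturbation argument and the non-escaping part by automatic transversality plus the $W$-regularity already proved — is the substantive content; everything else is the routine Sard–Smale machinery, which is why the excerpt says ``the same proof also gives'' the result.
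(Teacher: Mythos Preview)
Your proposal is correct and follows essentially the same route as the paper, which literally says ``the same proof'' as Lemma~\ref{lemma: HF regularity} applies: extended $\overline{W}$-curves are somewhere injective (the paper records $\mathcal{M}^{\dagger,ext}_{\overline{J}}({\bf z},{\bf z'})=\mathcal{M}^{\dagger,ext,s}_{\overline{J}}({\bf z},{\bf z'})$ since each $L_{\widehat{a}_i}$, $L_{\overline{h}(\widehat{a}_i)}$ is used at most once), so the standard Sard--Smale machinery from \cite[Proposition~3.8]{Li} and \cite[Lemma~9.12(b)]{Hu1} goes through.

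One small correction to your ``main obstacle'': you assert that $\overline{J}$ cannot be perturbed over $W$, but this misreads Definition~\ref{defn: almost complex structures on overline W}. There the reference structure $\overline{J}_0$ is existentially quantified and is constrained only to be standard on the $D^2$-part of each fiber; its restriction to $W=\R\times[0,1]\times S$ is an arbitrary admissible structure. Hence perturbations of $\overline{J}$ over $W$ \emph{are} allowed --- the only genuinely pinned region is $\{\rho\le\varepsilon\}$. This is exactly how the paper argues the parallel statement for $\overline{W}_+$ in the proof of Proposition~\ref{prop: J+ and J- regular}: perturbations are permitted on $\{\rho>\varepsilon\}$, every curve in the extended moduli space passes through that region, and one chooses an injective point there. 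Your case split (a)/(b) is therefore unnecessary and your invocation of automatic transversality for hypothetical curves trapped in $W\cup\{\rho\le\varepsilon\}$ is not needed, though it does no harm.
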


We write $\mathcal{J}_W^{reg}\subset \mathcal{J}_W$ for the subset of all regular $J$ and  $\mathcal{J}_{\overline{W}}^{reg}\subset\mathcal{J}_{\overline{W}}$ for the subset of all regular $\overline{J}$.

\begin{defn}
The almost complex structure $J'\in \mathcal{J}_{W'}$ is {\em regular} if $\mathcal{M}_{J'}^s(\boldsymbol{\gamma},\boldsymbol{\gamma}')$ is transversely cut out (in the Morse-Bott sense) for all $\boldsymbol{\gamma}$ and $\boldsymbol{\gamma}'$ in $\widehat{\mathcal{O}}_k$ and all $k\leq 2g$. The almost complex structure $\overline{J'}\in \mathcal{J}_{\overline{W'}}$ is {\em regular} if $\mathcal{M}_{\overline{J'}}^s (\delta^p\boldsymbol{\gamma},\delta^q\boldsymbol{\gamma}')$ is transversely cut out for all $\delta^p\boldsymbol{\gamma}$, $\delta^p\boldsymbol{\gamma}'\in \overline{\mathcal{O}}_k$ and for all $k\leq 2g$.
\end{defn}

Recall that a generic $J'\in \mathcal{J}_{W'}$ is regular by Lemma~\ref{lemma: PFH transversality}.  The same proof also gives:

\begin{lemma}
A generic $\overline{J'}\in \mathcal{J}_{\overline{W'}}$ is regular.
\end{lemma}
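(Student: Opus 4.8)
\textbf{Proof proposal for ``A generic $\overline{J'}\in \overline{\mathcal{J'}}$ is regular''.}

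The plan is to mimic the proof of Lemma~\ref{lemma: PFH transversality} exactly, adapting it to the setting of $\overline{W'}=\R\times\overline{N}$ and the constrained space $\overline{\mathcal{J'}}$ of Definition~\ref{defn: almost complex structures on R times overline N}. The statement of Lemma~\ref{lemma: PFH transversality} rested on \cite[Lemma~9.12(b)]{Hu1}, which gives that a generic $(\alpha_0,\omega)$-adapted almost complex structure on a fibered three-manifold's symplectization is regular for all somewhere-injective curves \emph{away from} holomorphic curves having a fiber $\{(s,t)\}\times \overline{S}$ as an irreducible component, together with the observation that PFH curves (being Morse--Bott buildings without fiber components by construction) never contain such fiber components. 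So the first step is to invoke the same Hutchings-type argument: the universal moduli space of somewhere-injective curves cut out by the $\overline\partial_{\overline{J'}}$-operator, where $\overline{J'}$ ranges over a Banach completion of $\overline{\mathcal{J'}}$, is a Banach manifold, because the standard Floer--Hofer--Salamon argument (unique continuation plus the existence of an injective point where one can perturb $\overline{J'}$ freely) goes through unchanged for curves that are not fibers. The Sard--Smale theorem then yields a residual set of regular $\overline{J'}$.

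The key point that must be checked — and where the argument differs slightly from the $\mathcal{J}'$ case — is that the perturbations of $\overline{J'}$ allowed within $\overline{\mathcal{J'}}$ are rich enough to achieve transversality. By Definition~\ref{defn: almost complex structures on R times overline N}, $\overline{J'}$ is required to equal the fixed $\overline{J'_0}$ on $\{\rho\le\varepsilon\}\cup(\R\times N)$, so we have no freedom to perturb in that region. Hence I would argue: for $\gamma,\gamma'\in\overline{\mathcal{O}}_k$, a somewhere-injective $\overline{J'}$-holomorphic curve $u$ in $\mathcal{M}^s_{\overline{J'}}(\delta^p\gamma,\delta^q\gamma')$ that is not a branched cover of $\sigma_\infty'=\R\times\delta_0$ must have an injective point lying in the region where perturbations are permitted, namely in $\R\times(\overline{N}-N-\{\rho\le\varepsilon\}) = \R\times\{\varepsilon<\rho\}\cap(\R\times V)$ or, alternatively, one can use the freedom already present in choosing $\overline{J'_0}$ over $\{\varepsilon < \rho\}$ together with the $\delta$-small perturbations. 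Concretely: if every injective point of $u$ lay in the frozen region $\R\times N\cup\{\rho\le\varepsilon\}$, then by positivity of intersections and the analysis in Lemma~\ref{pluto}/Lemma~\ref{demagistris} the image of $u$ would be forced to lie in that region; but the curves contributing to the relevant moduli spaces either lie in $\R\times N$ (handled already by Lemma~\ref{lemma: PFH transversality}, since there $\overline{J'}=\overline{J'_0}$ can be taken generic) or genuinely enter $\{\varepsilon<\rho<1\}$, where perturbations are free. The fiber components $\{(s,t)\}\times\overline{S}$ are excluded by definition of PFH/Morse--Bott buildings (``without fiber components''), and branched covers of $\sigma_\infty'$ are not somewhere injective, so the modifier $s$ already removes them.

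The main obstacle, and the step deserving the most care, is precisely this localization of the injective point into the unfrozen region $\{\varepsilon < \rho < 1\}$, because the constraint $\overline{J'}=\overline{J'_0}$ on $\{\rho\le\varepsilon\}\cup(\R\times N)$ is a genuine restriction not present in the unconstrained Hutchings setup. The resolution is to split into cases: a somewhere-injective curve $u$ either (i) has image contained in $\R\times N$, in which case regularity follows from Lemma~\ref{lemma: PFH transversality} applied to $J'=\overline{J'_0}|_{\R\times N}$ generic, or (ii) has image meeting $\R\times(\overline{N}-N)$ nontrivially; in case (ii), since $u$ is not a cover of $\sigma_\infty'$ and $\overline{J'_0}$ is standard (hence integrable) on $\{\rho\le\varepsilon\}$ so that $\pi_{D^2}\circ u$ is holomorphic and nonconstant there, an injective point can be pushed into $\{\varepsilon<\rho\}$ where the perturbation $\overline{J'}-\overline{J'_0}$ (of $C^k$-size $<\delta$) is unconstrained, and the standard argument closes. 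Having dealt with both cases, Sard--Smale and a Taubes-style argument (intersecting the residual sets over an exhausting sequence of Sobolev completions, as in the proof of \cite[Lemma~9.12]{Hu1}) produce a residual set of smooth regular $\overline{J'}\in\overline{\mathcal{J'}}$, completing the proof.
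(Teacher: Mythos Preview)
Your proposal is correct and follows the paper's approach: the paper's entire proof is the preamble ``The same proof also gives,'' referring back to Lemma~\ref{lemma: PFH transversality} and hence to \cite[Lemma~9.12(b)]{Hu1}. Your explicit case split and treatment of the constrained-perturbation region (locating an injective point outside $\{\rho\le\varepsilon\}$) goes beyond what the paper says for this particular lemma, though the paper addresses the analogous issue later in the proof of Proposition~\ref{prop: J+ and J- regular}; note, however, that $\overline{J'_0}$ is not a single fixed structure but part of the data (``there exists''), so on $\R\times N$ you have full freedom through the choice of $\overline{J'_0}$, which simplifies your case (i).
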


We write $\mathcal{J}_{W'}^{reg}\subset \mathcal{J}_{W'}$ for the subset of all regular $J'$ and $\mathcal{J}_{\overline{W'}}^{reg}\subset \mathcal{J}_{\overline{W'}}$ for the subset of all regular $\overline{J'}$.

\subsubsection{Transversality for $W_+$, $\overline{W}_+$ and $\overline{W}_-$}

\begin{defn} \label{regular pear}
The almost complex structure $J_+\in \mathcal{J}_{W_+}$ is {\em regular} if the following hold:
\begin{enumerate}
\item all moduli spaces $\mathcal{M}_{J_+}(\mathbf{y},\boldsymbol{\gamma})$ with ${\bf y}$ a $k$-tuple of ${\bf a}\cap \hh({\bf a})$ and $\boldsymbol{\gamma}\in \widehat{\mathcal{O}}_k$, for all $k\leq 2g$, are transversely cut out (in the Morse-Bott sense in the case of a Morse-Bott building); and
\item the restrictions $J$ and $J'$ of $J_+$ to the positive and negative ends belong to $\mathcal{J}_W^{reg}$ and $\mathcal{J}_{W'}^{reg}$, respectively.
\end{enumerate}
\end{defn}

Here every $u\in \mathcal{M}_{J_+}(\mathbf{y},\boldsymbol{\gamma})$ is somewhere injective due to the presence of the HF end. In other words, $\mathcal{M}_{J_+}(\mathbf{y},\boldsymbol{\gamma})=\mathcal{M}_{J_+}^s(\mathbf{y},\boldsymbol{\gamma})$.

\begin{defn}\label{regular apple}
The almost complex structure $\overline{J}_+\in \mathcal{J}_{\overline{W}_+}$ is {\em regular} if the following hold:
\begin{enumerate}
\item all moduli spaces $\mathcal{M}^{\dagger,ext}_{\overline{J}_+}({\bf z},\delta_0^r\boldsymbol{\gamma}')$ with ${\bf z}=\{z_\infty^p(\mathcal{D})\}\cup {\bf y}$, $\boldsymbol{\gamma}'\in \widehat{\mathcal{O}}_k$ for some $k \le 2g$, and ${\bf y}$ a tuple of ${\bf a}\cap \hh({\bf a})$, are transversely cut out; and
\item the restrictions $\overline{J}$ and $\overline{J'}$ of $\overline{J}_+$ to the positive and negative ends belong to  $\mathcal{J}_{\overline{W}}^{reg}$ and $\mathcal{J}_{\overline{W'}}^{reg}$, respectively.
\end{enumerate}
\end{defn}

Note that $\mathcal{M}^{\dagger,ext}_{\overline{J}_+}({\bf z},\delta_0^r\boldsymbol{\gamma}')=\mathcal{M}^{\dagger,ext,s}_{\overline{J}_+}({\bf z},\delta_0^r\boldsymbol{\gamma}')$. The regularity of $\overline{J}_-\in \mathcal{J}_{\overline{W}_-}$ is defined similarly.

We write $\mathcal{J}_{W_+}^{reg}$ for the space of regular $J_+\in\mathcal{J}_{W_+}$ and $\mathcal{J}_{\overline{W}_\pm}^{reg}$ for the space of regular $\overline{J}_\pm\in\mathcal{J}_{\overline{W}_\pm}$. Without loss of generality we may assume that the regular $J_+$ of interest are the restrictions of regular $\overline{J}_+$.

\begin{rmk}
The vertical fibers $\{(s,t)\}\times S$ and $\{(s,t)\}\times \overline S$ are holomorphic, but are not transversely cut out.
\end{rmk}

\begin{prop}
\label{prop: J+ and J- regular}
A generic admissible $J_+$ (resp.\ $\overline{J}_\pm$) is regular.
\end{prop}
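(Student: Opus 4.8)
\textbf{Proof proposal for Proposition~\ref{prop: J+ and J- regular}.}

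The plan is to run a standard Sard-Smale argument for the universal moduli space, exactly as in the proofs of Lemma~\ref{lemma: HF regularity}, Lemma~\ref{lemma: HF regularity for overline J}, and Lemma~\ref{lemma: PFH transversality}, but being careful about the new features: the cobordism geometry (the almost complex structure is not $s$-invariant, so we cannot quotient by translation), the interaction with the already-fixed regular data at the two ends, and the ends at multiples of $z_\infty$ in the $\overline{W}_\pm$ case. The key point that makes everything work is the one already used repeatedly in the excerpt: every curve in the relevant moduli spaces is \emph{somewhere injective}. For $J_+$-curves and $\overline{J}_\pm$-curves this is forced by the presence of the HF end (see the remarks following Definitions~\ref{regular pear} and \ref{regular apple}), so a generic somewhere-injective point exists and the usual Floer-Hofer-Salamon / McDuff-Salamon perturbation scheme applies.

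First I would set up the universal moduli space. Fix the restrictions $\overline{J}$ at the positive end and $\overline{J'}$ at the negative end to be regular elements of $\overline{\mathcal{J}}^{reg}$ and $(\overline{\mathcal{J}'})^{reg}$ (these exist by the lemmas cited above); this is legitimate because regularity of $\overline{J}_+$ is defined relative to fixed regular ends in Definition~\ref{regular apple}. Then let $\overline{\mathcal{J}}_+^\ell$ be a Banach manifold of $C^\ell$-admissible almost complex structures on $\overline{W}_+$ agreeing with the fixed data near the ends and with $\overline{J'_0}$ on $\{\rho\le\varepsilon\}$, as in Definition~\ref{defn: almost complex structures on overline W}. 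One forms the universal moduli space $\mathcal{M}^{\dagger,ext}(\overline{\mathcal{J}}_+^\ell) = \{(\overline{u},\overline{J}_+)\}$ and shows the linearization of the $\overline\partial$-section, together with the variation in $\overline{J}_+$, is surjective at every $(\overline{u},\overline{J}_+)$. Surjectivity reduces, via the standard argument, to showing that if a nonzero element of the cokernel of $D_{\overline{u}}$ annihilates all perturbations $Y\circ d\overline{u}\circ j$ supported near an injective point of $\overline{u}$ lying in the region where $\overline{J}_+$ is free to vary, then it vanishes identically — contradiction. Here one uses the Carleman-type unique continuation and the positivity-of-intersections facts (which guarantee injective points exist in the free region and not just over $\sigma_\infty^+$ or the binding region $\{\rho\le\varepsilon\}$). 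The $W_+$ case is identical but simpler (no $z_\infty$ ends, Lagrangian $L^+_{\mathbf a}$ is smooth). Then one applies the Sard-Smale theorem to the projection $\mathcal{M}^{\dagger,ext}(\overline{\mathcal{J}}_+^\ell)\to \overline{\mathcal{J}}_+^\ell$ to get a comeager set of regular $C^\ell$ structures, and passes to $C^\infty$ by the Taubes trick (intersecting over an exhaustion by finite-energy/index strata, using the genus and homology-class finiteness established in the compactness subsections).

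The main obstacle I anticipate is the presence of branched multiple covers of $\sigma_\infty$ in the SFT-compactified picture and, relatedly, the extended moduli spaces $\mathcal{M}^{\dagger,ext}$ with ends limiting to $z_\infty^p$: one must ensure that demanding transversality of $\mathcal{M}^{\dagger,ext}$ (whose elements satisfy $\overline{u}'=\varnothing$ by the modifier $\dagger$) is the right notion and that the somewhere-injectivity still holds when some ends are asymptotic to the singular chord over $z_\infty$. This is why the definition of regularity in Definition~\ref{regular apple} is phrased in terms of $\mathcal{M}^{\dagger,ext}$ rather than all multisections: the components branch-covering $\sigma_\infty$ are never perturbed to be transverse (the vertical fibers are not transversely cut out, as noted in the Remark), and the index bookkeeping of Section~\ref{subsection: modified indices at z infty}, especially Lemma~\ref{lemma: HF index of sections at infinity} and Lemma~\ref{index inequality for z infinity case}, is precisely what lets the $\overline{u}'=\varnothing$ piece carry all the transversality. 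I would therefore emphasize, in the proof, that for $\overline{u}\in\mathcal{M}^{\dagger,ext}$ every component is somewhere injective and meets the free region $\{\rho>\varepsilon\}\cap W$ in a nonempty open set (using that an end at $z_\infty$ still has its projection $\pi_{D^2}\circ\overline u$ open and nonconstant, hence the curve is not contained in $\{\rho\le\varepsilon\}$), and invoke the standard perturbation argument there. The rest is routine and can be cited from Lipshitz~\cite{Li}, Hutchings~\cite{Hu1,Hu2}, and Dragnev~\cite{Dr}.
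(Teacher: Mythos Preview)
Your overall strategy (universal moduli space, Sard--Smale, somewhere injectivity forced by the HF end, Taubes trick) matches the paper's, but there is one genuine error that creates a gap in the perturbation step. You write that ``the almost complex structure is not $s$-invariant.'' In fact it is: by Definition~\ref{defn: admissible J for W plus}, an admissible $J_+$ (resp.\ $\overline{J}_+$) is the \emph{restriction to $W_+$} of some $J'\in\mathcal{J}'$ (resp.\ $\overline{J'}\in\overline{\mathcal{J}'}$), and the latter are $s$-invariant on $\R\times N$ (resp.\ $\R\times\overline N$). Consequently the allowed perturbations are $s$-invariant as well, and perturbing near a single somewhere-injective point of $\overline u$ in $\overline W_+$ is not enough: the perturbation propagates along the entire $\R$-fiber, so you must find a point $p\in\dot F$ that is injective for the composition $\pi_N\circ u$ (or $\pi_{\overline N}\circ\overline u$), i.e., $d(\pi_N\circ u)(p)$ has rank $2$ and $(\pi_N\circ u)^{-1}(\pi_N\circ u(p))=\{p\}$. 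This is exactly the modification in \cite[Lemma~9.12(b)]{Hu1} that the paper invokes.

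The paper's proof makes precisely this point: it observes that each irreducible component of a $W_+$-curve is somewhere injective because each chord $[0,1]\times\{y_i\}$ is used once, deduces that $\pi_N$-injective points are dense and open in $\dot F$, and then perturbs $J'$ (hence $J_+$) near the $\pi_N$-image of such a point. For the $\overline W_\pm$ case it additionally checks that one can choose the $\pi_N$-injective point to land in the region $U=\overline W_\pm\cap((\R\times\overline N)-\{\rho\le\varepsilon\})$ where $\overline{J}_\pm$ is free to vary, which is the observation you also make. Once you replace ``somewhere injective point in the free region'' by ``$\pi_N$-injective point whose image lies in $U$,'' your argument goes through and coincides with the paper's.
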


\begin{proof}
We first treat the $W_+$ case. The proposition follows from a standard transversality argument along the lines of \cite[Theorem 3.1.5]{MS}, with some modifications. The necessary modifications for almost complex structures $J'\in \mathcal{J}_{W'}$, defined on $\R\times N$, were described in \cite[Lemma 9.12(b)]{Hu1}, and our situation is almost identical since $J_+\in \mathcal{J}_{W_+}$ is the restriction to $W_+$ of some $J'\in \mathcal{J}_{W'}$.

The key observation is that each irreducible component of a $W_+$-curve $u:\dot F\to W_+$ is somewhere injective, since each $[0,1]\times \{y_i\}$, $y_i\in \mathbf{y}$, is used exactly once as a positive asymptotic limit.  Let $\pi_N: W_+\to N$ be the restriction of the projection $\pi_N:\R\times N\to N$ onto the second factor. We then claim that there is a dense open set of points $p\in \dot F$ which are {\em $\pi_N$-injective}, i.e.,
\begin{enumerate}
\item[(i)] $d(\pi_N\circ u)(p)$ has rank $2$; and
\item[(ii)] $(\pi_N\circ u)(p)=(\pi_N\circ u)(q)$ implies $p=q$.
\end{enumerate}
 First note that if the claim does not hold, then there exist open sets $U, U'\subset \dot F$ such that $u(U)$ and $u(U')$ differ by some translation $T_{s_0}$ by $s_0$ in the $s$-direction. By repeated application of $T_{s_0}$ or $T_{-s_0}$, there is an infinite sequence $U, U'', U''', \dots \subset \dot F$ that have the same $\pi_N\circ u$ image.  This contradicts the finite energy condition, and hence proves the claim.
The perturbations to $J_+$ can then be carried out in a neighborhood of a $\pi_N$-injective point $p\in \dot F$ as in \cite[Lemma 9.12(b)]{Hu1}.

The regularity of the almost complex structures $J$ and $J'$ at the ends was already treated, i.e., $\mathcal{J}_W^{reg}\subset \mathcal{J}_W$ and $\mathcal{J}_{W'}^{reg}\subset \mathcal{J}_{W'}$ are dense by Lemmas~\ref{lemma: PFH transversality} and \ref{lemma: HF regularity}.

In the $\overline{W}_+$ case, the perturbations of $\overline{J}_+$ are allowed on the subset $U=\overline{W}_+\cap ((\R\times\overline{N})-\{\rho\leq \varepsilon\})$ for some small $\varepsilon>0$.  We simply observe that all the curves $\overline{u}$ in the moduli spaces $\mathcal{M}^{\dagger,ext}_{\overline{J}_+}({\bf z},\delta_0^r\boldsymbol{\gamma}')$ in Definition~\ref{regular apple} pass through $U$, and pick a $\pi_N$-injective point $p\in\dot F$ such that $\overline{u}(p)\in U$. The $\overline{W}_-$ case is similar.
\end{proof}

\subsubsection{Some automatic transversality results}

We collect some automatic transversality results.

\begin{lemma} \label{lemma: regularity of curve at infinity}
The curve $\sigma_\infty^-\subset \overline{W}_-$, viewed as having Lagrangian boundary $L^-_{\vec{a}_{i,j}}$, is a regular holomorphic curve with $\op{ind}(\sigma_\infty^-)=0$.
\end{lemma}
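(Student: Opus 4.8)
The plan is to identify the curve $\sigma_\infty^-$ with a curve whose linearized $\overline\bdry$-operator becomes, after the Hofer--Lizan--Sikorav doubling, an operator to which a known automatic transversality criterion applies, and to compute the Fredholm index directly from the index formula for $\overline W_-$-curves specialized to a branched cover of the section at infinity with empty branch locus. First I would record that $\sigma_\infty^-$ is a degree $1$ multisection of $\overline W_-\to B_-$; its domain is a disk (the base $B_-$ itself, which is biholomorphic to a disk with one interior and one boundary puncture), so $\chi(\dot F)=0$. The asymptotics are a trivial strip over the chord $z_\infty$ at the negative (HF) end and a trivial cylinder over $\delta_0$ at the positive (ECH) end; since $\delta_0$ lies on the level $\rho=0$ and $\sigma_\infty^-$ carries the singular Lagrangian condition $L^-_{\vec a_{i,j}}$ (with data $\overrightarrow{\mathcal D}$ of the form $(i,j)\to(i,j)$, i.e.\ $p=1$), this is exactly the situation covered by the index formulas of Section~\ref{subsection: modified indices at z infty}.

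For the index computation I would apply Lemma~\ref{lemma: ECH index of sections at infinity}, which gives $I(\sigma_\infty^-)=0$, together with the index inequality of Lemma~\ref{index inequality for z infinity case}. Since $p=1$ the sets $P_0^+$ and $P_1^+$ each consist of a single point, so they trivially ``alternate'', the discrepancies $d^\pm$ vanish, and Lemma~\ref{index inequality for z infinity case} degenerates to the statement $\op{ind}(\sigma_\infty^-)\le I(\sigma_\infty^-)=0$ with equality iff $\sigma_\infty^-$ is embedded --- which it is, being a section over its image base. Hence $\op{ind}(\sigma_\infty^-)=0$. Alternatively one can verify $\op{ind}=0$ by hand from Equation~\eqref{Fredholm index revisited}: $\op{ind}=-\chi(F)+l+\mu_\tau(\overline u)+2c_1(\overline u^*T\overline S,\tau)$ with $\chi(F)=0$ after accounting for punctures, $l=1$, $c_1=0$ since the normal bundle of a section at infinity restricted to a disk with the radial trivialization $\tau'=\bdry_\rho$ is trivial, and the Conley--Zehnder/Maslov contribution $\mu_\tau(\overline u)=-1$ from the single end at $z_\infty$ with matching $(i,j)\to(i,j)$ and winding number $0$ (this is the same computation that appears in the proof of Lemma~\ref{lemma: HF index of sections at infinity} for $p=1$).

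The substantive point --- and the step I expect to be the main obstacle --- is the automatic transversality (regularity) claim. Here I would double the domain disk of $\sigma_\infty^-$ along its boundary arc, obtaining a sphere with one interior puncture, and double the linearized operator $D^0$ on the normal bundle $\nu(\sigma_\infty^-)$, which after doubling is a complex line bundle over the punctured sphere of relative first Chern number $2c_1(\nu,\tau)=0$ with a single nondegenerate negative-hyperbolic-type asymptotic at the doubled puncture (coming from $\delta_0$, which becomes hyperbolic after the Morse--Bott perturbation, and from $z_\infty$ via the $\vec a_{i,j}$ boundary condition). One then invokes the automatic transversality criterion for genus-zero curves in dimension four (in the form used by Hutchings--Taubes and by Wendl): for a curve $u$ with normal Chern number and asymptotic data satisfying $\op{ind}(u)\ge 2g(u)-2+\#\Gamma_0 + \ldots$, equivalently here an index-vs.-writhe inequality that is saturated, the operator is automatically surjective. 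By Theorem~\ref{HLS} (Hofer--Lizan--Sikorav), surjectivity of the doubled operator $2D^0$ implies surjectivity of $D^0$, hence of $D_{\sigma_\infty^-}$, so $\sigma_\infty^-$ is regular. Care is needed because our setting involves a boundary puncture at $z_\infty$ with a \emph{singular} (multivalued) Lagrangian condition and exponential weights, so one must check that the doubling procedure and the automatic transversality index bound are unaffected by these features --- precisely the ``largely unmodified'' extension of \cite{HLS} remarked on after Theorem~\ref{HLS}. I would phrase the final argument as: $\op{ind}(\sigma_\infty^-)=0$ by the index computation, and since the doubled normal operator has index $0$ and meets the automatic transversality threshold (the doubled curve is an immersed genus-zero curve with one puncture and vanishing normal Chern number), it is surjective, whence $\sigma_\infty^-$ is a regular point of $\mathcal M_{\overline J_-}$.
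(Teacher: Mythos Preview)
Your overall strategy matches the paper's: compute the Fredholm index directly and then establish regularity by doubling the linearized operator and invoking Wendl's automatic transversality criterion on the double. However, your execution of the doubling step contains a concrete error. The domain $\dot F$ of $\sigma_\infty^-$ is a disk with one \emph{interior} puncture (the positive end limiting to $\delta_0$) and one \emph{boundary} puncture (the negative end limiting to $z_\infty$). When you double along $\partial\dot F$, the boundary puncture becomes a single interior puncture of $2\dot F$, but the interior puncture is duplicated: the double $2\dot F$ is a \emph{sphere with three punctures}, not one as you assert. This matters because the criterion you must check is precisely Wendl's inequality $\op{ind}(2\overline u)\ge 2g+\#\Gamma_0-1$, and $\#\Gamma_0$ counts punctures with even Conley--Zehnder index among all three. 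The paper records $\mu_\tau(\delta_0,\overline u)=1$ and $\mu_\tau(z_\infty)=1$; the two interior punctures over $\delta_0$ retain odd index, and by Lemma~\ref{doubling chords} the doubled boundary puncture has $\mu_{2\tau}(2z_\infty)=2\cdot 1-1=1$. Hence $\#\Gamma_0=0$ and the criterion reads $0\ge -1$, which is satisfied. Your single-puncture description, together with the phrase ``a single nondegenerate negative-hyperbolic-type asymptotic coming from $\delta_0$ \ldots\ and from $z_\infty$'', conflates two different ends and does not verify the Wendl inequality.

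A smaller issue concerns your alternative index computation. Equation~\eqref{Fredholm index revisited} is the formula for degree-$l$ multisections of $\overline W$ (both ends of HF type); the curve $\sigma_\infty^-$ lives in $\overline W_-$ and has a closed-orbit end at $\delta_0$, so the applicable formula is Equation~\eqref{eqn: second in W minus proposition}. The paper uses that directly: with $-\chi(\dot F)=0$, $\mu_\tau(\delta_0,\overline u)=1$, $\mu_\tau(z_\infty)=1$, and $c_1(\overline u^*T\overline S,\tau)=0$, one gets $\op{ind}=0+1-1+0=0$. Your first route via $I(\sigma_\infty^-)=0$ and the equality case of the index inequality (for $p=1$ the discrepancies vanish and $I=\op{ind}+2\delta$) is legitimate but more circuitous than this two-line computation.
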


\begin{proof}
We first calculate the Fredholm index of the holomorphic embedding $\overline{u}: \dot F \to \overline{W}_-$ with image $\sigma_\infty^-\subset \overline{W}_-$. Here $\dot F$ is a disk with a boundary puncture and an interior puncture. Let $\tau$ be a groomed trivialization whose grooming ${\frak c}$ corresponds to the matching $(i,j)\to (i,j)$ and satisfies $w({\frak c})=0$. Then we compute that $-\chi(\dot F)=0$, $\mu_\tau(\delta_0,\overline{u})=1$, $\mu_\tau(z_\infty)=1$, $c_1(\overline{u}^*T\overline{S},\tau)=0$. Hence $\op{ind}(\overline{u})=0$ by Equation~\eqref{eqn: second in W minus proposition}, which is still valid in the current situation.

We now use the doubling technique from Theorem~\ref{HLS}. The double of $\dot F$ --- a sphere with three punctures --- is denoted by $2\dot F$ and the double of $\overline{u}$ is denoted by $2\overline{u}$. The index of the doubled operator $2D_{\overline{u}}$ is $\op{ind}(2\overline{u})=2\op{ind}(\overline{u})=0$ and $D_{\overline{u}}$ is surjective if and only if $2D_{\overline{u}}$ is surjective.

Now, by Wendl's automatic transversality theorem~\cite[Theorem 1]{We3}, $2D_{\overline{u}}$ is surjective if
\begin{equation} \label{eqn: Wendl automatic transversality}
\op{ind}(2\overline{u})\geq 2g + \#\Gamma_0-1,
\end{equation}
where $g$ is the genus of $2\dot F$ and $\#\Gamma_0$ is the count of punctures with even Conley-Zehnder index. In the present situation, $g=0$ and $\#\Gamma_0=0$, so Equation~\eqref{eqn: Wendl automatic transversality} becomes $\op{ind}(2\overline{u})\geq -1$, which is satisfied.
\end{proof}

The following is easier, and is stated without proof:

\begin{lemma}\label{lemma: transversality thin strips}
Let $\overline{J}$ and $\overline{J}_0$ be almost complex structures as in Definition \ref{defn: almost complex structures on overline W}.
If $\overline{J}$ is sufficiently close to $\overline{J}_0$, then $\mathcal{M}^{\dagger,n^*=1}_{\overline{J}}(\{z_\infty\}\cup {\bf y}, \{y_0\}\cup{\bf y})/\R$ is transversely cut out and consists of a unique curve which is represented by a thin strip in $D^2$ from $z_\infty$ to $y_0=x_i$ or $x_i'$.
\end{lemma}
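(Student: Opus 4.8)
\textbf{Proof proposal for Lemma~\ref{lemma: transversality thin strips}.}

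The plan is to establish three things: existence and uniqueness of such a curve on the level of the projection to $D^2$, then lift to an honest curve in $\overline{W}$, and finally verify regularity by an automatic transversality argument analogous to the one in Lemma~\ref{lemma: regularity of curve at infinity}. First I would recall the structure of a curve $\overline{u}\in \mathcal{M}^{\dagger,n^*=1}_{\overline{J}}(\{z_\infty\}\cup {\bf y}, \{y_0\}\cup{\bf y})$: it is a degree $k$ multisection (here $k=|{\bf y}|+1$) from $\{z_\infty\}\cup{\bf y}$ to $\{y_0\}\cup{\bf y}$, so exactly one strand goes from $z_\infty$ at the positive end to some $x_i$ or $x_i'$ at the negative end, while all the other strands are forced to be trivial strips over the points of ${\bf y}$. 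The condition $n^*=1$ means, by Lemma~\ref{pisapia}(3) (or its analog), that $\overline{u}$ projects onto a thin strip; since $\overline{J}$ is close to $\overline{J}_0$ and equals $\overline{J}_0$ on $\{\rho\leq\varepsilon\}$, the projection $\pi_{D^2}\circ\overline{u}$ of the nontrivial component is $\overline{J}_0$-holomorphic near $z_\infty$, and by openness of holomorphic maps its image is a neighborhood of $z_\infty$ inside a single thin wedge. Thus the nontrivial strand is forced: it is (a reparametrization of) the standard biholomorphism from a half-disk onto the thin strip in $D^2$ bounded by $\overline{a}_{i,j}$ and $\overline{h}(\overline{a}_{i,j})$ running from $z_\infty$ to $y_0\in\{x_i,x_i'\}$. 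Here I would use the explicit geometry set up in Section~\ref{coconut}: $\overline{h}$ is the rotation $(\rho,\phi)\mapsto(\rho,\phi+\nu_m(\rho))$ on $D^2$, $\nu_m$ is constant $=2\pi/m$ for $\rho\leq 1/2$, so the thin wedges in $D_{1/2}$ are genuine sectors with acute opening angle, and each contains a unique point of the relevant orbit; this rigidity forces the domain to be a disk with one boundary puncture and one interior puncture (the latter at $z_\infty$).

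Next I would address existence. For $\overline{J}=\overline{J}_0$ exactly, the standard complex structure on $D^2$ makes the thin strip itself a genuine holomorphic curve (after capping off trivially in the $B$-direction and attaching trivial strips over the ${\bf y}$-points), giving an element $\overline{u}_0$. For general $\overline{J}$ close to $\overline{J}_0$, I would either (a) invoke the implicit function theorem using the regularity established below, or (b) observe that since $\overline{J}=\overline{J}_0$ on $\{\rho\leq\varepsilon\}$ and the thin wedge's relevant part lies in a small neighborhood of $z_\infty$, a thin strip for $\overline{J}$ is obtained by perturbing $\overline{u}_0$; uniqueness then follows from the projection argument above, since any curve in the moduli space must project onto a thin wedge containing $z_\infty$, and there is only one such wedge between $\overline{a}_{i,j}$ and $\overline{h}(\overline{a}_{i,j})$, and at most one value of $i$ and choice of $x_i$ vs $x_i'$ compatible with ${\bf y}$ not already occupying $a_i$ or $h(a_i)$. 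The remaining freedom is the $\R$-translation in the $s$-direction, which is quotiented out, so $\mathcal{M}^{\dagger,n^*=1}_{\overline{J}}(\{z_\infty\}\cup{\bf y},\{y_0\}\cup{\bf y})/\R$ is a single point.

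For regularity, I would compute the Fredholm index of the nontrivial component exactly as in the proof of Lemma~\ref{lemma: regularity of curve at infinity}: the domain is a disk with one boundary puncture and one interior puncture, $-\chi(\dot F)=0$, and using the trivialization $\tau$ from Section~\ref{subsection: modified indices at z infty} together with the analog of Equation~\eqref{Fredholm index revisited} one gets $\op{ind}(\overline{u})=0$ after accounting for the Maslov contributions at $z_\infty$ and at $y_0$. Then I would apply Wendl's automatic transversality theorem \cite{We3} via the doubling technique of Theorem~\ref{HLS}: double the disk to a thrice-punctured sphere, so $g=0$ and $\#\Gamma_0=0$ (all Conley--Zehnder indices are odd, as in Lemma~\ref{lemma: regularity of curve at infinity}), and the inequality $\op{ind}(2\overline{u})\geq 2g+\#\Gamma_0-1=-1$ holds since $\op{ind}(2\overline{u})=2\op{ind}(\overline{u})=0$; hence $D_{\overline{u}}$ is surjective. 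The trivial strips over the ${\bf y}$-points are automatically transversely cut out (they satisfy automatic transversality as strips, cf.\ the remark after Claim~\ref{claim: on S one half}), and the splitting of the linearized operator into the direct sum over irreducible components shows the whole curve is regular. The main obstacle I expect is the bookkeeping of the index computation at the $z_\infty$ end — one must use the correct groomed multivalued trivialization and the correction terms $d^\pm$ from Section~\ref{lerici} — but this is exactly parallel to the computation already carried out in Lemma~\ref{lemma: regularity of curve at infinity}, so it should go through without essential difficulty.
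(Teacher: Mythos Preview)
The paper states this lemma without proof, remarking only that it is ``easier'' than Lemma~\ref{lemma: regularity of curve at infinity}. Your overall strategy---identify the curve via its projection to $D^2$ using Lemma~\ref{pisapia}(3), then verify regularity by automatic transversality---is sound and is presumably what the authors have in mind.

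However, there is a concrete error in your identification of the domain. The nontrivial component is a curve in $\overline{W}=\R\times[0,1]\times\overline{S}$ from the chord at $z_\infty$ to the chord at $y_0$; both $z_\infty$ and $y_0=x_i$ or $x_i'$ are intersection points of the Lagrangians $\overline{a}_i$ and $\overline{h}(\overline{a}_i)$, so both correspond to \emph{boundary} punctures. The domain is therefore a strip (a disk with two boundary punctures), not a disk with one boundary and one interior puncture. You have conflated this with the situation of Lemma~\ref{lemma: regularity of curve at infinity}, where $\sigma_\infty^-\subset\overline{W}_-$ has an interior puncture at the orbit $\delta_0$. Consequently your index computation is off: the Fredholm index of the thin strip is $1$, not $0$ (this is consistent with $I=1$ via Theorem~\ref{thm: index inequality for HF}, and is what is needed for the quotient by the $\R$-translation to be zero-dimensional), and doubling yields a cylinder rather than a thrice-punctured sphere.

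With these corrections the argument goes through. Indeed, once the domain is recognized as a bigon, the regularity reduces to the standard automatic transversality for embedded index-$1$ strips in dimension four---exactly the situation already invoked after Claim~\ref{claim: on S one half} for the thin strips on $S_{1/2}$---which is why the paper deems this case easier.
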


\subsubsection{Marked points and transversality} \label{subsubsection: marked points and transversality}

In the definition of the $\Psi$-map in Section~\ref{section: chain map psi}, we consider multisections of $\overline{W}_-$ which pass through the marked point $\overline{\frak m}=((0,{3\over 2}),z_\infty)$. The marked point $\overline{\frak m}$, however, is {\em nongeneric}.  In order to ensure the regularity of such moduli spaces with respect to $\overline{\frak m}$, we need to enlarge the class of $\overline{J}_-\in \mathcal{J}_{\overline{W}_-}^{reg}$ to the class of $\overline{J}_-^\Diamond$, which we now define.

\begin{defn} \label{defn: diamond}
Let $\varepsilon>0$ and let $U\not\ni \overline{\frak m}$ be an open set of $\overline{W}_-$.  Then an almost complex structure $\overline{J}_-^\Diamond$ on $\overline{W}_-$ is {\em $(\varepsilon,U)$-close} to $\overline{J}_-$ if:
\begin{itemize}
\item $\overline{J}_-^\Diamond=\overline{J}_-$ on $\overline{W}_--U$;
\item $\overline{J}_-^\Diamond$ is $\varepsilon$-close to $\overline{J}_-$ on $U$; and
\item $\nabla \overline{J}_-^\Diamond$ is $\varepsilon$-close to $\nabla \overline{J}_-$ on $U$.
\end{itemize}
\end{defn}

\nom[J4]{$\overline{J}_-^\Diamond=\overline{J}_-^\Diamond(\varepsilon,U)$}{Almost complex structure on $\overline{W}$ that is $(\varepsilon,U)$-close to $\overline{J}_-$ as in Definition~\ref{defn: diamond} and Convention~\ref{convention}}
\nom[1e$\epsilon$]{$(\varepsilon,U)$ or $(\varepsilon,\delta,p)$}{Perturbation data for $\overline{J}_-^\Diamond$; cf.\ Convention~\ref{convention}}

Here the $\varepsilon$-closeness is measured with respect to a metric $g$ on $\overline{W}_-$ which is the restriction of an $s$-invariant metric on $\R\times \overline{N}$ and $\nabla$ is the Levi-Civita connection of $g$.

\begin{convention} \label{convention}
Unless stated otherwise:
\begin{itemize}
\item $U=U_{p,2\delta}=\overline{\pi}^{-1}_{B_-}(B_\delta(p))-\{\rho\leq \delta\}$ is an open neighborhood of $K_{p,2\delta}= \overline{\pi}^{-1}_{B_-}(p)-\{\rho< 2\delta\},$ where $\delta>0$ is arbitrarily small, $\overline{\frak m}^b\not=p\in B_-$, and $B_\delta(p)\subset B_-$ is an open ball of radius $\delta$ about $p$.
\item $\overline{J}_-^\Diamond$ is $(\varepsilon,U)$-close to $\overline{J}_-\in \mathcal{J}_{\overline{W}_-}^{reg}$, where $\varepsilon>0$ is arbitrarily small.
\end{itemize}
Observe that $U$ is disjoint from the section at infinity. When we want to emphasize $(\varepsilon,U)$ or $(\varepsilon,\delta,p)$, we write $\overline{J}_-^\Diamond(\varepsilon,U)$ or $\overline{J}_-^\Diamond(\varepsilon,\delta,p)$.
\nom[K]{$K_{p,2\delta}$}{$\overline{\pi}^{-1}_{B_-}(p)-\{\rho< 2\delta\}$ with $\delta>0$ small}
\end{convention}

\begin{defn} \label{defn: almost multisection}
A {\em degree $k$ almost multisection of $(\overline{W}_-,\overline{J}_-^\Diamond)$ from $\delta_0^r\boldsymbol{\gamma}$ to ${\bf z'}=\{z_\infty^q(\mathcal{D}')\}\cup {\bf y'}$} is a pair $(\overline{u},\mathcal{C})$ which is defined in the same way as a degree $k$ multisection of $(\overline{W}_-,\overline{J}_-)$, except that $\overline{u}$ is a degree $k$ multisection of
$$\overline\pi_{B_-}: \overline{W}_- - \overline\pi_{B_-}^{-1}(B_\delta(p))\to B_- - B_\delta(p).$$
\end{defn}

Let $\mathcal{M}_{\overline{J}_-^\Diamond} (\delta_0^r\boldsymbol{\gamma},{\bf z'})$ be the moduli space of almost multisections of  $(\overline{W}_-,\overline{J}_-^\Diamond)$ from $\delta_0^r\boldsymbol{\gamma}$ to ${\bf z'}$. The regularity of $\overline{J}_-$ and the closeness of $\overline{J}_-^\Diamond$ to $\overline{J}_-$ imply:
\begin{enumerate}
\item[(i)] $\mathcal{M}_{\overline{J}_-^\Diamond}^\dagger (\delta_0^r\boldsymbol{\gamma},{\bf z'})$ is regular;
\item[(ii)] $\mathcal{M}_{\overline{J}_-^\Diamond}^\dagger (\delta_0^r\boldsymbol{\gamma},{\bf z'})$ is close to $\mathcal{M}_{\overline{J}_-}^\dagger (\delta_0^r\boldsymbol{\gamma},{\bf z'})$; and
\item[(iii)] all the boundary strata of $\mathcal{M}_{\overline{J}_-^\Diamond}^\dagger (\delta_0^r\boldsymbol{\gamma},{\bf z'})$ are close to the corresponding boundary strata of $\mathcal{M}_{\overline{J}_-}^\dagger (\delta_0^r\boldsymbol{\gamma},{\bf z'})$.
\end{enumerate}
Note that we can still refer to $n^*(\overline{u})$ since it is a homological quantity.

Let $K\not\ni \overline{\frak m}$ be a compact set of $\overline{W}_-$. We define the modifier $K$ to mean that $\overline{u}$ passes through $K$.
\nom[2k]{$*=K$}{Modifier ``passes through the compact set $K$''}
\nom[M8]{$\mathcal{M}^*_{\overline{J}_-^\Diamond}(\star_1,\star_2;\overline{\frak m})$}{Moduli space of $\overline{J}_-^\Diamond$-holomorphic multisections that pass through $\overline{\frak m}$}
\begin{defn}
The almost complex structure $\overline{J}_-^\Diamond$ on $\overline{W}_-$ is {\em $K$-regular with respect to $\overline{\frak m}$} if all the moduli spaces $\mathcal{M}^{\dagger,ext,K}_{\overline{J}_-^\Diamond} (\delta_0^r\boldsymbol{\gamma},\mathbf{z'};\overline{\frak m})$ are transversely cut out.
\end{defn}

\begin{lemma} \label{lemma: regularity of W minus diamond}
A generic $\overline{J}_-^\Diamond$ is $K_{p,2\delta}$-regular with respect to $\overline{\frak m}$.
\end{lemma}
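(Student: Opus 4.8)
The plan is to run the standard Sard--Smale transversality argument in the universal moduli space, but with the extra care needed because the marked point $\overline{\frak m}$ and the section at infinity $\sigma_\infty^-$ are forbidden loci where perturbation of the almost complex structure is not allowed. First I would recall the setup of Convention~\ref{convention}: we are perturbing $\overline{J}_-$ only on the open set $U=\overline\pi_{B_-}^{-1}(B_\delta(p))-\{\rho\le\delta\}$, which by construction is disjoint from both the section at infinity and from $\overline{\frak m}$. The key geometric input, exactly as in the proof of Proposition~\ref{prop: J+ and J- regular}, is that every curve $\overline u$ in $\mathcal{M}^{\dagger,ext,K}_{\overline{J}_-^\Diamond}(\delta_0^r\gamma,\mathbf{z}';\overline{\frak m})$ has its image meeting $U$: because $\overline u$ passes through the compact set $K=K_{p,2\delta}\subset \overline\pi_{B_-}^{-1}(p)$, and $K$ projects under $\overline\pi_{B_-}$ to the point $p$ which lies in $V=\overline\pi_{B_-}(U)$, the portion of the curve near that fiber is contained in $\overline\pi_{B_-}^{-1}(V)$, and after removing the at-infinity part (which is harmless since $\overline u=\overline u''$ by the $\dagger$ modifier, or is handled componentwise) we obtain a point of $\dot F$ mapping into $U$.

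Next I would verify that near such a point the curve is somewhere injective in the strong sense used before, i.e.\ admits a $\pi_N$-injective point $p_0\in\dot F$ with $\overline u(p_0)\in U$: here $\pi_N:\overline W_-\to\overline N$ is the restriction of the projection. This uses that each boundary component of $\dot F$ is mapped to a distinct Lagrangian $L^-_{\widehat a_i}$ (or the extended arcs in the $\mathcal{M}^{\dagger,ext}$ case), so no nontrivial component can be multiply covered, combined with the openness of the map-germ and the fact that the set of non-injective points is closed and nowhere dense in $\dot F$ (standard, e.g.\ \cite[Lemma~9.12(b)]{Hu1} or \cite[Proposition~2.5.1]{MS}); since the injective points are dense and $U\cap\operatorname{Im}(\overline u)$ is nonempty open, we can choose $p_0$ with $\overline u(p_0)\in U$. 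With this in hand, the universal moduli space $\{(\overline u,\overline{J}_-^\Diamond)\}$ cut out by the $\overline\partial$-equation together with the point constraint $\overline u(w_0)=\overline{\frak m}$ for some interior marked point $w_0\in\dot F$ is a Banach manifold: the linearized operator $D\overline\partial\oplus(\text{evaluation at }w_0)$ is surjective because infinitesimal variations of $\overline{J}_-^\Diamond$ supported in $U$ already surject onto the cokernel of $D\overline\partial$ (the classical argument, using the nonvanishing of the relevant matrix element of $D\overline\partial$ near $\overline u(p_0)$), and the evaluation constraint is transverse since $\overline{\frak m}\notin\overline u(U)$ means the constraint and the perturbation act on disjoint parts of the domain, hence independently. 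Then the Sard--Smale theorem applied to the projection to the space of $(\varepsilon,U)$-close almost complex structures gives that a generic $\overline{J}_-^\Diamond$ is $K_{p,2\delta}$-regular with respect to $\overline{\frak m}$; finiteness of the countably many moduli spaces indexed by $(\delta_0^r\gamma,\mathbf{z}')$ and by the index allows taking a countable intersection of residual sets.

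The main obstacle I expect is the interaction between the point constraint at $\overline{\frak m}$ (which lies on $\sigma_\infty^-$, at $z_\infty$, where the Lagrangian boundary condition and the trivialization are singular and where no perturbation is permitted) and the requirement that the perturbation of $\overline J_-^\Diamond$ in $U$ be enough to achieve transversality. The resolution is precisely the disjointness $\overline{\frak m}\notin U$ and $K_{p,2\delta}\not\ni\overline{\frak m}$ built into Convention~\ref{convention} (note $K_{p,2\delta}$ excludes $\{\rho<2\delta\}$, hence excludes a neighborhood of $z_\infty$, while $\overline{\frak m}^f=z_\infty$; and $p\ne\overline{\frak m}^b$): this guarantees that the curve genuinely enters $U$ \emph{away from} the constraint locus, so the two transversality inputs decouple. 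A secondary technical point is that in the extended moduli space $\mathcal{M}^{\dagger,ext}$ the domain boundary may have pieces mapping to the auxiliary arcs $\vec a_{i,j}$ rather than the $\widehat a_i$; but this changes neither the somewhere-injectivity of the components nor the location of $U$ relative to the constraint, so the argument is unaffected, exactly as the proof of Lemma~\ref{lemma: HF regularity for overline J} adapts the proof of Lemma~\ref{lemma: HF regularity}.
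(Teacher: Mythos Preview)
Your proposal is correct and follows essentially the same approach as the paper: the paper's proof is a one-line reference to \cite[Theorem~3.1.7]{MS} (transversality for holomorphic curves with point constraints) with modifications as in Proposition~\ref{prop: J+ and J- regular}, and your write-up is precisely a fleshed-out version of this, including the key observation that curves in the moduli space pass through $K_{p,2\delta}\subset U$ and hence admit a $\pi_N$-injective point in the region where perturbation is allowed. One minor comment: your phrasing that the evaluation constraint and the perturbation ``act on disjoint parts of the domain, hence independently'' is slightly nonstandard --- the cleaner way (as in \cite[Theorem~3.1.7]{MS}) is to show the universal evaluation map is a submersion by moving the marked point along the curve --- but the conclusion is the same.
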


\begin{proof}
The proof is similar to that of \cite[Theorem~3.1.7]{MS}, with modifications as in Proposition~\ref{prop: J+ and J- regular}.
\end{proof}

\section{The chain map from $\widehat{HF}$ to $PFH$}
\label{section: chain map phi}

\subsection{Compactness for $W_+$-curves}

In this subsection we treat the compactness of holomorphic curves in $W_+$ which will be used to establish the chain map $\Phi$ in Section~\ref{subsection: defn of chain maps}.

Suppose $J_+\in \mathcal{J}_{W_+}$ and $J$, $J'$ are the restrictions of $J_+$ to the positive and negative ends. Let $\mathbf{y}=\{y_1,\dots,y_{2g}\}\in \mathcal{S}_{{\bf a},\hh({\bf a})}$ and $\boldsymbol{\gamma}=\prod_{k=1}^l \gamma_k^{m_k}\in \widehat{\mathcal{O}}_{2g}$.

\s {\em In this section, we may pass to a subsequence of a sequence of holomorphic curves without specific mention.}

\subsubsection{Euler characteristic bounds}

We first state a preliminary lemma:

\begin{lemma} \label{lemma: bound on F for W plus}
Let $u_i: (\dot F_i,j_i)\to (W_+,J_+)$, $i\in \N$, be a sequence of $W_+$-curves from $\mathbf{y}$ to $\boldsymbol{\gamma}$ with index $I_{W_+}(u_i) =n$ for some integer $n$. Then there is a subsequence such that all the $\dot F_i$ are diffeomorphic to a fixed $\dot F$.
\end{lemma}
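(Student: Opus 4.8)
The plan is to bound the topology of the domains $\dot F_i$ in a fixed relative homology class, and then invoke the finiteness of relative homology classes in the relevant range together with the genus bound already established. The key point is that $-\chi(\dot F_i)$ is controlled by the Euler characteristic of $F_i$, the number of punctures (which is fixed: $2g$ boundary punctures coming from $\mathbf{y}$, plus interior punctures over the simple orbits $\gamma_k$, whose number is bounded since the total multiplicities $m_k$ are fixed and the partitions have at most $m_k$ parts), and the genus of $F_i$.

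\textbf{Step 1: Reduce to a fixed homology class.} First I would observe that the $\omega$-areas of the $W_+$-curves $u_i$ from $\mathbf{y}$ to $\gamma$ are uniformly bounded: the vanishing of the flux (Section~\ref{subsection: flux}) together with the fact that $\alpha_0 = dt$ on $N$ and the analogous control on the HF end means the $\omega$-energy depends only on $\mathbf{y}$ and $\gamma$, while the $ds\wedge dt$ part of the energy is bounded by the area of $B_+$. Hence, as in the compactness discussion of Section~\ref{subsection: compactness PFH case}, only finitely many relative homology classes $A \in H_2(\check W_+, \mathbf{y}, \gamma)$ can be represented by the $u_i$; passing to a subsequence, we may assume all $u_i$ lie in a single fixed class $A$.

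\textbf{Step 2: Bound the genus, hence $\chi(\dot F_i)$.} With $A$ fixed, the relative adjunction formula for $W_+$-curves (Lemma~\ref{lemma: relative adjunction for W plus}) and the nonnegativity of $\delta(u_i)$ give
\[
\chi(\dot F_i) = c_1(u_i^*TW_+,(\tau,\bdry_t)) + w_\tau^-(u_i) - Q_\tau(A) + 2\delta(u_i) \geq c_1(u_i^*TS,\tau) + w_\tau^-(u_i) - Q_\tau(A),
\]
and the writhe bound $w_\tau^-(u_i) \geq \widetilde\mu_\tau(\gamma) - \mu_\tau(\gamma,u_i)$ from \cite[Lemma~4.20]{Hu2} bounds $w_\tau^-(u_i)$ below in terms of data depending only on $\gamma$ (the partition terms being finite in number and their Conley-Zehnder indices bounded). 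Since $c_1(u_i^*TS,\tau)$ and $Q_\tau(A)$ depend only on $A$, we get a lower bound on $\chi(\dot F_i)$, i.e.\ an upper bound on the genus of $F_i$. As the number of punctures of $\dot F_i$ and the genus of $F_i$ are now both bounded, and the number of connected components is bounded (each component must have nonempty boundary and at least one interior puncture), there are only finitely many diffeomorphism types of $\dot F_i$; passing to a further subsequence, all $\dot F_i$ are diffeomorphic to a fixed $\dot F$.

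\textbf{Main obstacle.} The principal subtlety is Step~1: making precise that the energy is uniformly bounded in this cobordism setting, since $W_+$ is noncompact with both a symplectization-type ECH end and a Lagrangian HF end. One must combine the $\omega$-area control (from zero flux and the product structure of $\omega$ on $N$, plus the corresponding statement on $[0,1]\times S$) with the Hofer-energy control coming from the compact base $B_+$; the bound on the $d(\phi(s)dt)$ term is uniform because $\phi$ ranges over $[0,1]$-valued functions and $B_+$ has finite area. Once energy is bounded, the finiteness of homology classes follows as in \cite[Section~9]{Hu1}, and the rest is the adjunction/writhe estimate, which is routine given the machinery already set up in Sections~\ref{section: periodic Floer homology} and \ref{subsection: ECH index W plus minus}.
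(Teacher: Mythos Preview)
Your overall strategy matches the paper's: bound the $\omega$-area, pin down the homology class, then use the relative adjunction formula and writhe bound to control $\chi(\dot F_i)$. Step~2 is essentially identical to what the paper does.

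There is, however, a genuine gap in Step~1. You claim that the $\omega$-area bound implies that \emph{only finitely many} relative homology classes in $H_2(\check W_+,\mathbf{y},\gamma)$ can be represented by the $u_i$, citing Section~\ref{subsection: compactness PFH case}. But that is not what the $\omega$-area bound gives you, and it is not what Section~\ref{subsection: compactness PFH case} proves. The set $H_2(\check W_+,\mathbf{y},\gamma)$ is an affine space over $H_2(W_+)\simeq H_2(N)$, which can be infinite; by the zero-flux condition \emph{every} class in $H_2(N)$ has $\omega$-area zero, so all classes in $H_2(\check W_+,\mathbf{y},\gamma)$ have the \emph{same} $\omega$-area. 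The area bound alone does not cut this down to finitely many. (The finiteness argument in Section~4.7 for the HF setting works because classes correspond to positive domains in a closed surface; there is no analogous domain argument in the $W_+$ cobordism.)

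What Section~\ref{subsection: compactness PFH case} actually does --- and what the paper's proof of this lemma does --- is to invoke the Gromov--Taubes compactness theorem~\cite{T3}: given the $\omega$-area bound, a subsequence of $u_i$ converges weakly as currents to a holomorphic building, and in particular the homology classes $[u_i]$ converge, hence stabilize. This is how one fixes the class $A$ after passing to a subsequence. Your ``Main obstacle'' paragraph focuses on the energy bound, but the more delicate point is this passage from bounded area to fixed homology class, which requires the Gromov--Taubes step rather than a finiteness claim.
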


\begin{proof}
The proof is given in two steps.

\s\n {\bf Step 1} ($\omega$-area bounds). We define the {\em $\omega$-area} of $u_i$ as
\nom[E2]{$E_\omega(u)$}{$E_\omega$-area of the holomorphic curve $u$}
$$E_\omega(u_i)= \int_{\dot F_i} u_i^* \omega,$$
where $\omega$ is the $2$-form as in Section~\ref{subsection: symplectic cobordisms}.
The boundedness of $E_\omega(u_i)$ is a consequence of the vanishing of the flux $F_{\hh}$ of $\hh$ (cf.\ Section~\ref{subsection: flux}). View the broken closed
 string $\gamma_{\bf y}$ corresponding to ${\bf y}$ as a collection of curves in
$$(L_{\bf a}^+\cap \check W_+)\cup (\{3\}\times[0,1]\times{\bf y})\subset \check{W}_+.$$
Then $\gamma_{\bf y}$ is uniquely determined up to a homotopy which is supported on $L_{\bf a}^+\cap \check W_+$. Let $u_i: \dot F_i\to W_+$, $i=1,2$, be two $W_+$-curves from $\mathbf{y}$ to $\boldsymbol{\gamma}$ and let $\check u_i: \check F_i \to \check W_+$ be their compactifications. Then $\check u_i(\bdry_+\check F_i)$ is homotopic to $\gamma_{\bf y}$, $i=1,2$, where $\bdry_+ \check F_i$ is the union of boundary components of $\check F_i$ which map to the positive ($s>0$) part of $\check W_+$. Hence $\check u_1-\check u_2$ can be viewed as a closed surface $Z\in H_2(W_+)\simeq H_2(\check{W}_+)\simeq H_2(N)$. Since the flux $F_\hh$ vanishes, the integral of $\omega$ over $Z$ vanishes, and therefore the $\omega$-area of a $W_+$-curve $u$ only depends on ${\bf y}$ and $\boldsymbol{\gamma}$.

\s\n {\bf Step 2} (Genus bounds).  The $\omega$-area bound on the sequence $\{u_i\}$ and the fact that all $u_i$ have the same asymptotics imply an energy bound on the $u_i$ (for the energy defined as in Equation~\eqref{eqn: energy of Lipshitz curve}). The energy bound implies a local $\Omega_+$-area bound, and therefore we can apply the Gromov-Taubes compactness
theorem~\cite[Proposition~3.3]{T3}, which is a local result and carries over to the symplectic cobordism $(W_+,\Omega_+)$ without difficulty. As explained in \cite[Lemma~9.8]{Hu1}, the Gromov-Taubes compactness theorem implies the weak convergence of $u_i$ as currents to a holomorphic building $u_\infty$. In particular, we may assume that 
$[u_i]\in H_2(W_+,{\bf y},\boldsymbol{\gamma})$  is fixed for all $i$.

We now use the fact that $[u_i]$ is fixed to bound the genus of $\dot F_i$. The relative adjunction formula (Lemma~\ref{lemma: relative adjunction for W plus}) gives:
$$c_1(\check u_i^*TW_+,(\tau,\bdry_t)) = \chi(\dot F_i) -w_\tau^-(u_i) +Q_\tau(u_i) - 2 \delta(u_i).$$
In view of the writhe bound
$$ w_\tau^-(u_i)\geq \widetilde\mu_\tau(\boldsymbol{\gamma})-\mu_\tau^-(u_i)$$
from \cite[Lemma~4.20]{Hu2} and the nonnegativity of $\delta(u_i)$, we obtain:
\begin{equation}
\chi(\dot F_i)\geq c_1(\check u_i^*TW_+,(\tau,\bdry_t))+\widetilde\mu_\tau(\boldsymbol{\gamma})-\mu_\tau^-(u_i) -Q_\tau(u_i).
\end{equation}
This bounds $\chi(\dot F_i)$ from below, since all the terms on the
right-hand side either depend on the homology class of $u_i$ or the
data of the ends. Hence we may assume that all the $\dot F_i$ are
diffeomorphic to a fixed $\dot F$.
\end{proof}

\subsubsection{SFT compactness}
\begin{defn} \label{defn of holom building}
A {\em holomorphic $W_+$-building}
$$u_\infty=v_{-b}\cup\dots \cup v_a,\quad a,b\in \Z^{\geq 0}$$
consists of the following data:
\begin{enumerate}
\item[(B1)]  For each $j=-b,\dots,a$, a compact {\em nodal} Riemann surface $G_j$ (possibly with boundary), a nodal set $\mathfrak{n}_j$, disjoint sets of interior punctures  ${\bf p}_j^+$ and ${\bf p}_j^-$ if $j \le 0$, and disjoint sets of boundary punctures ${\bf q}_j^+$ and ${\bf q}_j^-$ if $j \ge 0$.  The nodes may be interior or boundary nodes and are disjoint from ${\bf p}_j^\pm$, ${\bf q}_j^\pm$, and the nodal set $\mathfrak{n}_j$ may be empty.
\item[(B2)] For each $j=-b,\dots,a$, a holomorphic map $v_j: \dot G_j\to W_j$, where $W_j=W$ for $0<j\leq a$, $W_0=W_+$ for $j=0$, $W_j=W'$ for $-b\leq j<0$, and $\dot G_j=G_j-{\bf p}_j^+-{\bf p}_j^--{\bf q}_j^+-{\bf q}_j^-$ for all $j$. (Here some sets of punctures may be empty.)
\item[(B3)] For each $j=0,\dots,a$, $\bdry \dot G_j$ is mapped to the appropriate Lagrangian submanifold $L_{\bf a}\sqcup L_{\hh({\bf a})}$ or $L^+_{\bf a}$.
\item[(B4)] For each $j=-b,\dots,a$, $v_j$ converges to a strip over a ``Reeb chord'' at the positive (resp.\ negative) end near each boundary puncture of ${\bf q}_j^+$ (resp.\ ${\bf q}_j^-$) and to a cylinder over a closed orbit at the positive (resp.\ negative) end near each interior puncture of ${\bf p}_j^+$ (resp.\ ${\bf p}_j^-$).
\item[(B5)] for each $j=-b,\dots,a-1$, there is an identification between ${\bf p}_j^+$ and ${\bf p}_{j+1}^-$ and an identification between ${\bf q}_j^+$ and ${\bf q}_{j+1}^-$ such that the pairs that are identified are asymptotic to the same Reeb chord or closed orbit.
\item[(B6)] No level $v_j$ is a union of trivial cylinders or trivial strips.
\end{enumerate}
The levels $v_j$ are arranged in order from lowest to highest.
\end{defn}

\begin{prop} \label{prop: SFT compactness for W plus}
Let $u_i: (\dot F_i,j_i)\to (W_+,J_+)$, $i\in \N$, be a sequence of $W_+$-curves from $\mathbf{y}$ to $\boldsymbol{\gamma}$ with index $I_{W_+}(u_i) =n$ for some integer $n$. Then there is a subsequence which converges in the sense of SFT to a level $a+b+1$ holomorphic $W_+$-building $u_\infty=v_{-b}\cup\dots \cup v_a$.
\end{prop}

Here ``convergence in the SFT sense'' means convergence with respect to the topology described in \cite{BEHWZ}.

\begin{proof}
By Lemma~\ref{lemma: bound on F for W plus}, we may assume that $\dot F_i=\dot F$
as smooth surfaces.  We can then apply the SFT compactness theorem from \cite{BEHWZ}; note that SFT compactness in the presence of Lagrangian boundary conditions is sketched in \cite[Section 11.3]{BEHWZ} and \cite[Theorem 3.20]{Abbook}.  More details can be extracted from the proof of the less standard compactness theorem for $\overline{W}_-$-curves; see Proposition~\ref{prop: SFT compactness for W minus}.
\end{proof}

The limiting curve $u_\infty$ can be written as a level $a+b+1$ holomorphic building $v_{-b}\cup\dots \cup v_{a}$, where each $v_j$ is not necessarily irreducible and may have nodes. Here (i) $a,b$ are nonnegative integers, (ii) $v_j$ is a holomorphic map to $W_j$, and (iii) the levels $v_j$ are ordered from the negative end to the positive end as $j$ increases.  As usual, if the level $v_j$ is just a union of trivial cylinders, then it will be elided.

\subsubsection{Main theorem}

Suppose $J_+\in \mathcal{J}_{W_+}^{reg}$ and $J$, $J'$ are the restrictions of $J_+$ to the positive and negative ends.

The following is the main theorem of this subsection:

\begin{thm} \label{thm: compactness for W plus}
Let $u_i: (\dot F_i,j_i)\to (W_+,J_+)$, $i\in \N$, be a sequence of $W_+$-curves from
$\mathbf{y}$ to $\boldsymbol{\gamma}$. If $I_{W_+}(u_i)=1$ for all $i$, then a subsequence of $u_i$ converges
in the sense of SFT to one of the following:
\begin{enumerate}
\item an $I_{W_+}=1$ curve;
\item a building with two levels consisting of an $I_{HF}=1$ curve and an $I_{W_+}=0$ curve; or
\item a building with multiple levels consisting of an $I_{W_+}=0$ curve, an $I_{ECH}=1$ curve, and possible $I_{ECH}=0$ connectors in between.
\end{enumerate}
Similarly, if $I_{W_+}(u_i)=0$ for all $i$, then a subsequence of $u_i$ converges to an $I_{W_+}=0$ curve.
\end{thm}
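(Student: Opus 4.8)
The plan is to combine the three compactness results already established in this subsection --- the Euler characteristic bounds (Lemma~\ref{lemma: bound on F for W plus}), the SFT compactness statement (Proposition~\ref{prop: SFT compactness for W plus}), and the additivity and index inequalities (Lemmas~\ref{lemma: additivity part 2}, Theorem~\ref{thm: index inequality for W+ and W-}, together with the ECH index inequality of Hutchings at the positive end) --- to rule out all degenerations of an $I_{W_+}=1$ sequence except those listed, and then to handle the $I_{W_+}=0$ case as an immediate corollary. First I would apply Proposition~\ref{prop: SFT compactness for W plus} to extract an SFT limit $u_\infty = v_{-b}\cup\cdots\cup v_a$, where $v_j$ maps to $W=\R\times[0,1]\times S$ for $j>0$, to $W_+$ for $j=0$, and to $\R\times N$ for $j<0$. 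The key structural observation is that there is exactly one level mapping to $W_+$, namely $v_0$, since the $\R$-invariant levels do not occur at $W_+$; the levels $j>0$ are HF-type levels in symplectizations of $[0,1]\times S$ and the levels $j<0$ are ECH-type (PFH-type) levels in $\R\times N$.

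The heart of the argument is the index bookkeeping. By the additivity of the ECH index (Lemma~\ref{lemma: additivity part 2}, applied iteratively across all levels, and its analog for stacking HF levels and ECH levels), we have
\begin{equation}
I_{W_+}(u_i) = \sum_{j>0} I_{HF}(v_j) + I_{W_+}(v_0) + \sum_{j<0} I_{ECH}(v_j) = 1.
\end{equation}
Here each $I_{HF}(v_j)\geq 1$ for any nontrivial (non-elided) HF level, because a nonconstant HF curve in a symplectization of $[0,1]\times S$ that is not a union of trivial strips has positive $I_{HF}$ by Theorem~\ref{thm: index inequality for HF} and the fact that $\op{ind}(v_j)\geq 1$ after quotienting by the $\R$-action (for a regular $J$ that appears as the restriction of $J_+\in\mathcal{J}_+^{reg}$); likewise each nontrivial ECH level in $\R\times N$ contributes $I_{ECH}(v_j)\geq 1$ unless it is a union of connectors, in which case it contributes $I_{ECH}(v_j)\geq 0$ with equality only for $I_{ECH}=0$ connectors (by \cite[Proposition~7.5]{HT1} and the ECH index inequality). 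Also $I_{W_+}(v_0)\geq 0$. Since the total is $1$, at most one level among the HF levels, the ECH levels, and $v_0$ carries positive index, and the arithmetic forces exactly the three listed possibilities: (1) $v_0$ has $I_{W_+}=1$, all other levels elided --- i.e.\ the limit is a single $I_{W_+}=1$ curve; (2) exactly one HF level with $I_{HF}=1$ sits above an $I_{W_+}=0$ curve $v_0$, with all ECH levels elided; (3) $v_0$ has $I_{W_+}=0$ and below it sits one ECH level with $I_{ECH}=1$, possibly with $I_{ECH}=0$ connector levels interspersed, with all HF levels elided. One then notes that a building with two consecutive HF levels cannot occur by a gluing/genericity argument as in the usual ECH compactness, and similarly a two-level splitting of the ECH part into $I_{ECH}=1$ plus $I_{ECH}=0$ requires the intermediate pieces to be connectors; these are standard consequences of the index inequality and the regularity hypotheses. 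The $I_{W_+}=0$ case then follows: the same sum now equals $0$, so every HF level and every ECH level is elided and $v_0$ has $I_{W_+}=0$, i.e.\ the limit is a single $I_{W_+}=0$ curve.

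The main obstacle I expect is the careful treatment of \emph{connector components and fiber components} in the limit. A priori $v_0$, or a level below it, could acquire a branched cover of a trivial cylinder over some $\gamma_k$ (a connector) or a closed fiber component $\{(s,t)\}\times S$; the fiber components are excluded because a $W_+$-curve is a multisection of $\pi_{B_+}$ of fixed degree $2g$ and degree is preserved under SFT convergence (closed fiber components would increase the degree of some level), while connectors over the ECH end must be accounted for using the decomposition $u=u^0\cup u^1$ from Section~\ref{section: periodic Floer homology} together with the incoming-partition condition in Theorem~\ref{thm: index inequality for W+ and W-}; writing $v_0 = v_0^0\cup v_0^1$ and showing $I_{ECH}(v_0^0)=0$ and $I_{W_+}(v_0^1)=I_{W_+}(v_0)$ is the delicate point. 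Once the connectors are isolated and shown to have ECH index $0$, the index arithmetic above goes through cleanly.
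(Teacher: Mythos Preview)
Your overall strategy---SFT compactness followed by ECH-index bookkeeping across levels---is exactly the paper's approach, and your arithmetic correctly identifies the three possible distributions of the single unit of index. However, there are two genuine gaps.

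First, you do not address \emph{ghost components}. The SFT limit $u_\infty$ may a priori contain constant maps on stable Riemann surfaces; these carry zero ECH index (so your additivity equation is unaffected) but they do appear in the building and must be excluded to conclude that the limit is of one of the three listed types. The paper handles this in a separate lemma (Lemma~\ref{lemma: no ghosts}) by a Fredholm index count: a ghost domain with $k_{int}$ interior nodes and $k_{bdr}$ boundary nodes satisfies $-\chi(F^g)+2k_{int}+k_{bdr}\geq 2$ by stability, and since nodes are codimension-$2$ (interior) or codimension-$1$ (boundary) phenomena, this forces $\op{ind}(u_\infty^{ng})\leq -1$, contradicting regularity of the non-ghost levels.

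Second, in Case~(3) the theorem asserts the $I_{ECH}=1$ level is the \emph{bottom} level $v_{-b}$, with the $I_{ECH}=0$ connectors strictly in between. Your arithmetic only shows that exactly one ECH level has $I_{ECH}=1$ and the rest have $I_{ECH}=0$; it does not locate the index-$1$ level at the bottom. The paper proves this via incoming/outgoing partition considerations: the incoming partition $P^{in}_{\gamma_k}(m_k)$ is maximal in the partial order $\geq_{\theta_k}$ of \cite[Definition~1.8]{HT1}, so if $I_{ECH}(v_{-b})=0$ then $v_{-b}$ would be a union of trivial cylinders and hence elided---a contradiction.

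Finally, your last paragraph conflates two separate issues: connectors (branched covers of trivial cylinders) live only in the symplectization levels $j<0$, not in $v_0$; the decomposition $v_0=v_0^0\cup v_0^1$ you describe is not the relevant one here. The fiber-component exclusion you sketch (via degree preservation) is correct and matches the paper's Lemma~\ref{lemma: levels v sub j W plus}.
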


We write ``an $I_{\#}=i$ curve'' as shorthand for ``a $\#$-curve with ECH index $I_{\#}=i$''.

We postpone the proof of the main theorem until after a more detailed discussion of the structure of the SFT limit.
Let $u_\infty$ be the SFT limit of the sequence $\{u_i\}$, given by Proposition~\ref{prop: SFT compactness for W plus}.
A {\em ghost component} of $u_\infty$ is an irreducible component of $u_\infty$ which
maps to a point. By the SFT compactness theorem, the domain of a ghost component
is necessarily a stable Riemann surface. We recall that a Riemann surface is stable if the following holds for each component $F$ with
$k_{int}$ interior marked points and $k_{bdr}$ boundary marked points:
\begin{itemize}
\item $- \chi(F) + k_{int} \ge 1$ when $\partial F = \varnothing$, or
\item $- 2 \chi(F) + 2 k_{int}+k_{bdr} \ge 1$ if $\partial F \neq \varnothing$.
\end{itemize}

Let us write $u_\infty=v_{-b}\cup\dots \cup v_a\cup u_\infty^g$, where $v_j$ has no ghost
components and $u_\infty^g$ is the union of ghost components. We will also write
$u_\infty^{ng}=v_{-b}\cup\dots \cup v_a$.

\begin{rmk}
The ECH index of a curve depends only on its relative homology class and therefore ghost components do not contribute to it. Hence, by the additivity of ECH indices (Lemma~\ref{lemma: additivity part 2}), if $u_i$ is a sequence of $J_+$-holomorphic maps with constant ECH index, then:
\begin{equation} \label{eqn: sum of ECH indices}
\sum_{j=1}^a I_{HF}(v_j) + I_{W_+}(v_0)+ \sum_{j=1}^b I_{ECH}(v_{-j})=I_{W_+}(u_i).
\end{equation}
\end{rmk}

\begin{lemma}
\label{lemma: levels v sub j W plus}
Each level $v_j$, $j=-b,\dots,a$, is a degree $2g$ multisection of $\pi_j:W_j\to B_j$ with
no branch points along $\bdry B_j$.
\end{lemma}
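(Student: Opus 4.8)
The plan is to prove the lemma in three stages: first that each non-ghost level $v_j$ (together with its attached ghost components) has total degree $2g$ as a multisection of $\pi_j\colon W_j\to B_j$; second that no degree is ``lost to a ghost bubble'' so that $v_j$ itself has degree $2g$; and third that $\pi_j\circ v_j$ has no branch points along $\bdry B_j$. For the degree count, recall that each $u_i$ is a degree $2g$ multisection of $\pi_{B_+}\colon W_+\to B_+$, and that $\pi_{B_+}\circ u_i$ is a genuine (unbranched-on-the-boundary, by Lemma~\ref{lemma: no branch points on boundary}-type reasoning) $2g$-fold branched cover of $B_+$. Under SFT convergence the composite $\pi_{B_+}\circ u_i$ converges to a holomorphic building whose levels are $2g$-fold branched covers of the respective bases $B_j=B,\ B_+,\ B'$; since $B$ and $B'$ are each (biholomorphic to) a half-open cylinder and the projection of each $u_i$ sends the $\mathbf{q}$-punctures to $+\infty$ and the $\mathbf{p}$-punctures to $-\infty$, the degree is the same at the positive and negative ends of every level, and hence equals $2g$ for each $v_j\cup(\text{ghosts over }v_j)$. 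Concretely, I would argue this by intersecting with a generic fiber $\{(s,t)\}\times S$ (or $\{(s,t)\}\times\overline S$ in the capped setting): the algebraic intersection number is $2g$, all intersections are positive because the fibers are holomorphic and so are the $v_j$, and positivity is preserved in the limit.

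Next I would rule out the possibility that part of this degree sits on a ghost component. A ghost component maps to a point, so it contributes $0$ to the fiberwise intersection number; if a node attaches a ghost component to $v_j$, then the degree carried across that node is recovered on the non-ghost side. More precisely: the fiber-intersection count of the \emph{whole} building over $W_j$ is $2g$, ghosts contribute $0$, hence $v_j$ alone has fiberwise degree $2g$, i.e.\ $v_j$ is a degree $2g$ multisection of $\pi_j$. (Here one uses, as in Proposition~\ref{prop: SFT compactness for W plus} and the surrounding discussion, that $v_j$ has no fiber components — this is where the regularity hypothesis $J_+\in\mathcal J_+^{\mathrm{reg}}$ and the fact that fibers $\{(s,t)\}\times S$ are not closed, hence cannot appear as irreducible components of an HF-type curve, are invoked, exactly as in Lemma~\ref{lemma: PFH transversality} and the remark following Lemma~\ref{lemma: HF regularity}.)

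For the statement about branch points along $\bdry B_j$, I would run the Schwarz-reflection argument of Lemma~\ref{lemma: no branch points on boundary}(1) level by level. Each $v_j$ is a holomorphic multisection of $\pi_j\colon W_j\to B_j$ whose boundary $\bdry\dot F_j$ maps into a Lagrangian of the form (parallel transport of) $L_{\bf a}$ or $L_{\bf a}^{\pm}$, and the boundary of $\dot F_j$ is sent by $\pi_j\circ v_j$ into $\bdry B_j$. If $\pi_j\circ v_j$ had a critical point $w_0\in\bdry\dot F_j$, then in local half-plane coordinates $\pi_j\circ v_j$ would be, by Schwarz reflection, a composition $z\mapsto z^\ell$ ($\ell\ge 2$) with a biholomorphism, which forces the image to cross $\bdry B_j$ — contradicting that the image stays on one side of $\bdry B_j$. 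The hypothesis that $\dot F_j$ has no boundary \emph{nodes} (guaranteed because we are in the $W_+$-setting of Definition~\ref{defn: W plus curve}, not the $\overline W$-setting of Section~\ref{subsubsection: multisections}) is what makes ``$\bdry\dot F_j$'' an honest manifold-with-corners along which this local analysis applies.

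The main obstacle, I expect, is the bookkeeping in the middle step: making sure that when several ghost components and non-ghost components are glued along a nodal configuration, the total fiber-degree is distributed so that \emph{each} $v_j$ (and not merely $v_j\cup u_\infty^g$) is a degree $2g$ multisection — in particular that degree does not migrate between levels through ghost chains. This is essentially the claim that the limiting branched cover $\lim(\pi_{B_+}\circ u_i)$ is itself a ``building of branched covers'' of constant degree $2g$, which follows from the fact that $\pi_{B_+}$ is $(J_+,j_{B_+})$-holomorphic and from SFT compactness applied to the composite, but it requires care to state cleanly given the presence of boundary punctures and weights. Once that is in hand, positivity of intersections with fibers and the Schwarz-reflection argument are routine.
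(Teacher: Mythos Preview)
Your overall three-step outline is sound and Step~1 matches the paper: the degree of each level is read off from intersection with a generic fiber, and positivity of intersections makes this $2g$.

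There is, however, a genuine gap in Step~2. You correctly observe that ghost components contribute nothing to the fiber degree, but your argument for why $v_j$ has no \emph{non-ghost} fiber components is off. You invoke the regularity of $J_+$ and the fact that fibers ``are not closed, hence cannot appear as irreducible components of an HF-type curve.'' Neither of these is the operative reason in the SFT limit. Regularity plays no role here (the lemma holds for any $J_+\in\mathcal{J}_+$), and the definitional fact that HF curves are multisections does not a~priori survive degeneration. The actual reason is elementary complex analysis applied to the target: a nonconstant holomorphic map from a closed Riemann surface to the bordered surface $S$ is impossible, and---the case you omit---a nonconstant holomorphic map from a compact bordered surface with boundary in $\mathbf{a}$ to $S$ is also impossible, because $S-\mathbf{a}$ is a single connected polygon that meets $\partial S$. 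Without this second observation you have not excluded a component of $v_j$ collapsing onto a fiber over a boundary point $p\in\partial B_j$ with boundary on $\mathbf{a}$.

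For Step~3 your Schwarz-reflection argument is correct and is a legitimate alternative to what the paper does. The paper instead argues combinatorially: since each of the $2g$ Lagrangian components over a given boundary component of $B_j$ is used by exactly one boundary arc of $\dot F_j$ (this is inherited from the definition of $W_+$- and HF-curves and persists in the limit), the degree-$2g$ cover of $\partial B_j$ decomposes into $2g$ degree-one pieces, hence has no branch points. Your analytic argument proves the same thing without appealing to this combinatorics; the paper's route has the small advantage of not needing to discuss boundary nodes at all. Note also that for $j<0$ the base $B_j=B'=\R\times S^1$ has empty boundary, so the branch-point assertion is vacuous there; your write-up should say this explicitly rather than treating all $j$ uniformly.

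Finally, the ``main obstacle'' you flag---degree migrating between levels through ghost chains---is a non-issue once Step~2 is fixed: the fiber intersection number is purely homological on each level, ghosts contribute zero, and there is nothing further to track.
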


\begin{proof}
Since $u_i$ is a degree $2g$ multisection of $\pi_{B_+}: W_+\to B_+$ for all $i$, it follows that, with the exception of finitely many $p\in B_j$, every level $v_j$ intersects a fiber $\pi^{-1}_j(p)$ exactly $2g$ times.

We show that on any level $v_j$ there are no irreducible components which lie in a fiber $\pi_j^{-1}(p)$. Arguing by contradiction, suppose $\widetilde v: \widetilde F\to W_j$ is an irreducible component which maps to a fiber $\pi_j^{-1}(p)$. If $p\in int(B_j)$, then $\widetilde v$ is a holomorphic map from a closed Riemann surface $\widetilde F$ to $\pi_j^{-1}(p)$. Since $\pi_j^{-1}(p)$ is a Riemann surface with nonempty boundary, $\widetilde v$ must be constant. On the other hand, if $p\in \bdry B_j$, it is also possible that $\widetilde F$ is a compact Riemann surface with nonempty boundary and $\widetilde v(\bdry \widetilde F)\subset \mathbf{a}$. However, since $S-\mathbf{a}$ is connected and nontrivially intersects $\bdry S$, $\widetilde{v}$ must also be constant.  Since ghost components are excluded from $v_j$ by definition, we have a contradiction.

Finally, if $j\geq 0$, then we claim that $\pi_j\circ v_j$ has no branch points along $\bdry B_j$.  This is due to the fact that $v_0$ uses each component of $L_{\mathbf{a}}^+$ exactly once and $v_j$, $j>0$, uses each component of $\R\times\{1\}\times \mathbf{a}$ and each component of $\R\times \{0\}\times \hh(\mathbf{a})$ exactly once.
\end{proof}

\begin{lemma}\label{lemma: all simply covered}
Let $u_{\infty}$ be the SFT limit of a sequence of $J_+$-holomorphic multisections
$u_i$ with constant ECH index. If $J_+$ is regular, then:
\begin{itemize}
\item $I_{HF}(v_j)>0$ for $j>0$,
\item $I_{W_+}(v_0) \ge 0$, and
\item $I_{ECH}(v_j)\geq 0$ for $j<0$.
\end{itemize}
Moreover, all the $v_j$, $j\geq 0$, are somewhere injective and satisfy $\op{ind}(v_j) \ge 0$. If $I_{W_+}(u_i)\leq 1$ in addition, then $v_j$, $j<0$, is somewhere injective and satisfies $\op{ind}(v_j) \ge 0$.
\end{lemma}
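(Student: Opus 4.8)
The plan is to establish the lemma in four stages, treating the three index inequalities first and then the somewhere-injectivity and Fredholm statements, exploiting the fact that each $v_j$, $j\geq 0$, is already known to be somewhere injective (by the presence of the HF end, as in Definition~\ref{regular pear} and the discussion after it).

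\textbf{Step 1 (the $v_j$, $j\geq 0$, are somewhere injective with $\op{ind}(v_j)\geq 0$).} By Lemma~\ref{lemma: levels v sub j W plus}, each $v_j$ is a degree $2g$ multisection with no branch points along $\bdry B_j$. For $j>0$, $v_j$ is an HF curve (a degree $2g$ multisection of $\pi_j:W\to B$); HF curves are never multiply covered, so $v_j$ is simply covered. For $j=0$, $v_0$ uses each component of $L^+_{\mathbf a}$ exactly once, hence is somewhere injective. Since $J_+$ (and its restrictions $J$, $J'$) is regular, the moduli spaces containing the $v_j$ are transversely cut out, so $\op{ind}(v_j)\geq 0$ for $j\geq 0$; for $j>0$ we may in fact say $\op{ind}(v_j)\geq 1$ once we rule out the constant levels, which was done in Lemma~\ref{lemma: levels v sub j W plus}. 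Then, since $I_{HF}(v_j)=\op{ind}(v_j)+2\delta(v_j)$ by Theorem~\ref{thm: index inequality for HF} and $\delta(v_j)\geq 0$, we get $I_{HF}(v_j)>0$ for $j>0$; similarly $I_{W_+}(v_0)=\op{ind}_{W_+}(v_0)+2\delta(v_0)\geq 0$ by the $W_+$ index inequality, Theorem~\ref{thm: index inequality for W+ and W-} (using that a $W_+$-curve is automatically simply covered, and that equality there forces embeddedness but we only need the inequality).

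\textbf{Step 2 (the negative levels: $I_{ECH}(v_j)\geq 0$ for $j<0$).} This is the only place where multiple covers can genuinely occur. Decompose $v_j$ ($j<0$) into its non-connector part and connectors over trivial cylinders as in Section~\ref{section: periodic Floer homology}, writing $v_j=v_j^0\cup v_j^1$ with $v_j^0$ a connector and $v_j^1\in\mathcal{M}'_{J'}(\cdot,\cdot)$. Connectors have $I_{ECH}=0$ (they are branched covers of trivial cylinders in the relative homology class of a union of trivial cylinders, cf.\ the computation in Lemma~\ref{lemma: HF index of sections at infinity}-style arguments and \cite{HT1, HT2}), so it suffices to bound $I_{ECH}(v_j^1)$. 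The key point is that $v_j^1$, being the SFT limit of simply-covered curves with controlled index, decomposes into somewhere-injective curves each of which satisfies the ECH index inequality $\op{ind}\leq I_{ECH}$ (for somewhere-injective curves this is \cite[Theorem~1.7]{Hu1}); combined with the regularity of $J'\in(\mathcal{J}')^{reg}$, which forces $\op{ind}\geq 0$ on each somewhere-injective component that is not a trivial cylinder, we conclude $I_{ECH}(v_j^1)\geq 0$, hence $I_{ECH}(v_j)\geq 0$.

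\textbf{Step 3 (under the extra hypothesis $I_{W_+}(u_i)\leq 1$, the $v_j$ with $j<0$ are somewhere injective with $\op{ind}\geq 0$).} Feed Steps 1--2 into the additivity identity \eqref{eqn: sum of ECH indices}: $\sum_{j>0}I_{HF}(v_j)+I_{W_+}(v_0)+\sum_{j<0}I_{ECH}(v_{-j})=I(u_i)\leq 1$. Since each $I_{HF}(v_j)>0$ ($j>0$), each $I_{ECH}(v_{-j})\geq 0$, and $I_{W_+}(v_0)\geq 0$, the only possibility is that there are no positive levels ($a=0$), $I_{W_+}(v_0)\in\{0,1\}$, and at most one negative level carries $I_{ECH}=1$ while all others carry $I_{ECH}=0$. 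An $I_{ECH}=0$ curve in $\R\times N$ (away from trivial cylinders/connectors) is ruled out by transversality of $J'$ unless it is a union of trivial cylinders plus connectors, which are simply covered on their non-connector part; the single $I_{ECH}=1$ curve is likewise somewhere injective and embedded by Theorem~\ref{thm: index inequality for W+ and W-}(ii). In all cases the non-connector part of each $v_j$, $j<0$, is somewhere injective with $\op{ind}\geq 0$, and connectors have $\op{ind}\geq 0$ by \cite[Proposition~7.5]{HT1}.

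\textbf{Main obstacle.} The delicate point is Step 2: an a priori SFT limit $v_j$ with $j<0$ need not itself be somewhere injective, so one cannot directly invoke the ECH index inequality, which requires simple curves. The resolution is the standard ECH trick of peeling off connector components over trivial cylinders (the ``incoming/outgoing partition'' analysis, as invoked just before Lemma~\ref{lemma: bound on F for W plus} and in Section~\ref{section: periodic Floer homology}): after removing connectors, what remains is simply covered by \cite[Proposition~7.5]{HT1}, and then $I_{ECH}\geq \op{ind}\geq 0$ follows from Hutchings' index inequality together with regularity of $J'$. One must also be careful that ghost components contribute nothing to the ECH index (they lie in a single fiber, hence in the trivial relative homology class), which is exactly Remark preceding Lemma~\ref{lemma: levels v sub j W plus} and justifies using \eqref{eqn: sum of ECH indices} with $u_\infty^{ng}$ only.
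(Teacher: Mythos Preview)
Your Step 1 is essentially the paper's argument and is correct (the one quibble: the elision of trivial-strip levels is an SFT convention, not a consequence of Lemma~\ref{lemma: levels v sub j W plus}, but the conclusion $\op{ind}(v_j)>0$ for $j>0$ is right).

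The genuine gap is in Step 2. You claim that after peeling off connectors the remainder $v_j^1$ ``decomposes into somewhere-injective curves'', citing \cite[Proposition~7.5]{HT1}. But that proposition has a hypothesis you have not verified: it asserts that $u^1$ is somewhere injective \emph{provided} $I_{ECH}(u)\in\{1,2\}$. In Step 2 you do not yet know $I_{ECH}(v_j)\leq 1$; a priori a level $v_j$ with $j<0$ could contain a genuine multiple cover of a non-trivial curve, and for such a component neither the ECH index inequality $I\geq \op{ind}$ nor regularity gives you $I\geq 0$ directly. The paper handles this by invoking \cite[Proposition~7.15(a)]{HT1}, a separate and nontrivial result which asserts that for generic $J'$ \emph{every} (possibly multiply-covered) $J'$-holomorphic curve in $\R\times N$ has $I_{ECH}\geq 0$. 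This is not a repackaging of the simple-curve index inequality; its proof requires comparing $I$ of a $d$-fold cover with $I$ of the underlying simple curve via Hutchings' partition combinatorics. Your Step 2 as written does not supply this.

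Step 3 then inherits the problem and adds two smaller errors. First, the conclusion $a=0$ is false: the additivity identity allows, e.g., one HF level with $I_{HF}=1$ and $I_{W_+}(v_0)=0$. (This does not matter for the statement you are proving, but it is wrong.) Second, your citation of Theorem~\ref{thm: index inequality for W+ and W-}(ii) for the somewhere-injectivity of an $I_{ECH}=1$ curve in $\R\times N$ is misplaced; that theorem is about $W_+$- and $\overline{W}_-$-curves. The correct references are \cite[Lemma~9.5]{Hu1} (or \cite[Proposition~7.15]{HT1}): once Step 2 gives $I_{ECH}(v_j)\geq 0$ and additivity gives $I_{ECH}(v_j)\leq 1$, these results yield somewhere-injectivity of $v_j$ for $j<0$, and then regularity of $J'$ gives $\op{ind}(v_j)\geq 0$. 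That is exactly the paper's two-line argument for the last assertion.
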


\begin{proof}
Since $v_j$, $j\geq 0$, is a degree $2g$ multisection by Lemma~\ref{lemma: levels v sub j W plus} and uses each connected component of the boundary Lagrangian exactly once, it is somewhere injective. Also since $J$ and $J_+$ are regular, it follows that the curves $v_j$, $j\geq 0$, are regular. Hence $\op{ind}_W(v_j)\geq 0$ for $j>0$ and $\op{ind}_{W_+}(v_0)\geq 0$. Moreover, since $\op{ind}_W(v_j)=0$, $j>0$, if and only if $v_j$ is a union of trivial strips, we may assume that $\op{ind}_W(v_j)>0$ for all $j>0$. By the index inequality (Theorems~\ref{thm: index inequality for HF} and \ref{thm: index inequality for W+ and W-}) we have $I_{HF}(v_j)> 0$ for $j>0$ and $I_{W_+}(v_0)\geq 0$. On the other hand, if $j<0$, then $I_{ECH}(v_j)\geq 0$ by \cite[Proposition~7.15(a)]{HT1}.

If $I_{W_+}(u_i)\leq 1$ in addition, then $I_{ECH}(v_j)\leq 1$ by the additivity of the ECH index. Hence \cite[Lemma 9.5]{Hu1} implies that $v_j$ is somewhere injective and $\op{ind}(v_j) \geq 0$ since $J'$ is regular.
\end{proof}

\begin{lemma} \label{lemma: no ghosts}
Let $u_{\infty}$ be the SFT limit of a sequence of $J_+$-holomorphic multisections
$u_i$ with $I_{W_+} (u_i)\le 1$ for all $i$.
If $J_+$ is regular, then $u_\infty^g=\varnothing$.
\end{lemma}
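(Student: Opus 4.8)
The plan is to show that a ghost component would force a violation of the index inequalities established above, using the combination of stability of the ghost domain, the additivity of ECH indices, and the non-negativity facts from Lemma~\ref{lemma: all simply covered}. First I would set up notation: write the SFT limit as $u_\infty = v_{-b}\cup\dots\cup v_a\cup u_\infty^g$, and suppose for contradiction that $u_\infty^g$ is nonempty. A ghost component $g_0$ is a constant map whose domain $F_{g_0}$ is a stable Riemann surface, so it satisfies the stability inequality recalled just before the lemma: $-\chi(F_{g_0})+k_{int}\ge 1$ if $F_{g_0}$ is closed, or $-2\chi(F_{g_0})+2k_{int}+k_{bdr}\ge 1$ if $F_{g_0}$ has boundary, where $k_{int},k_{bdr}$ count the nodes (marked points) attaching $g_0$ to the rest of $u_\infty$. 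The key topological input is that each node attaching a ghost component to a non-ghost component must be matched by the non-ghost components, and when one removes the ghost components and glues the remaining non-ghost pieces directly, the resulting domain $\dot F^{ng}$ obtained from $\dot F_i$ has $\chi(\dot F^{ng}) = \chi(\dot F_i) + (\text{positive correction from stability})$, i.e.\ collapsing stable ghost domains strictly increases the Euler characteristic of the total non-ghost domain relative to $\dot F_i$.

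Next I would exploit this via the index formulas. By Equation~\eqref{eqn: sum of ECH indices} and the fact that ghost components contribute $0$ to the ECH index (they carry trivial relative homology class), we have $\sum_{j>0} I_{HF}(v_j) + I_{W_+}(v_0) + \sum_{j<0} I_{ECH}(v_{-j}) = I_{W_+}(u_i) \le 1$. By Lemma~\ref{lemma: all simply covered}, each term is non-negative, with $I_{HF}(v_j)>0$ strictly for $j>0$; hence there is at most one level with positive ECH index and it has index exactly $1$, while all other levels have ECH index $0$. A level $v_0$ with $I_{W_+}(v_0)=0$ must be embedded by Theorem~\ref{thm: index inequality for W+ and W-}(ii) and, since it is a degree $2g$ multisection with $\op{ind}_{W_+}(v_0)=0$, it is (modulo connectors) a union of trivial strips; similarly $I_{HF}=0$ levels at the top and $I_{ECH}=0$ levels at the bottom are trivial cylinders/strips plus connectors, which carry no ghost components by construction. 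So the ghost components, if any, must be attached at nodes lying on some $v_j$; but then the relative homology class of the configuration, together with the relative adjunction formula of Lemma~\ref{lemma: relative adjunction for W plus} (or Lemma~\ref{lemma: adjunction formula for W} at an interior level), applied to the non-ghost part $u_\infty^{ng}$ in the fixed homology class $[u_i]$, shows that the non-ghost curves would need $\chi(\dot F^{ng}) = \chi(\dot F_i) - (\text{amount absorbed by ghosts})$, which is strictly \emph{smaller} than what the index bookkeeping allows: the genus/Euler-characteristic budget forces $\delta(v_j)<0$ somewhere or an index becoming negative, contradicting non-negativity of $\delta$ and of the Fredholm indices under regularity.

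More precisely, I would argue as follows. Since the total ECH index of $u_\infty$ equals that of $u_i$, and ghost components cost nothing in ECH index but cost something in ``expected dimension'' through the stability of their domains, a regular configuration cannot have ghosts: each non-ghost level $v_j$ satisfies $\op{ind}(v_j)\ge 0$ by regularity (Lemma~\ref{lemma: all simply covered}), and for the glued limit to be a boundary point of a $1$-dimensional moduli space the sum $\sum_j \op{ind}(v_j)$ plus the number of nodes plus the dimension of the moduli of ghost domains must equal $\op{ind}(u_i)\le 1$. Each stable ghost domain contributes a strictly positive amount (at least $1$) to this count while each node pairing contributes, so the presence of a single ghost already forces $\sum_j\op{ind}(v_j)\le 0$, i.e.\ every non-ghost level is a trivial strip/cylinder-plus-connector and there is nothing for the ghost to attach to except along these trivial pieces — but trivial strips and cylinders are embedded and a node on them together with a ghost bubble would make the domain of the adjacent non-ghost piece unstable or would change its homology class, contradicting that $v_j$ is a trivial strip/cylinder in the fixed class. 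Hence $u_\infty^g=\varnothing$.

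The main obstacle I anticipate is the careful bookkeeping of how stable ghost domains interact with the gluing/pre-gluing combinatorics: precisely accounting for the contribution of the nodes (boundary versus interior) and the dimensions of the relevant Deligne--Mumford-type strata, and ruling out the degenerate possibility that a ghost component is attached to a chain of other ghost components rather than directly to a non-ghost level. This requires invoking the version of SFT compactness with Lagrangian boundary (as in the proof of Proposition~\ref{prop: SFT compactness for W plus}) and being careful that the stability condition recalled above is applied to the correct doubled or un-doubled domain. Once the combinatorial accounting is set up, the contradiction is forced purely by the non-negativity of $\delta$, the strict positivity of $I_{HF}$ on non-trivial $W$-levels, and the additivity in Equation~\eqref{eqn: sum of ECH indices}.
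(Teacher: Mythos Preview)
Your overall strategy is correct and matches the paper's: use the fact that ghost components carry ``expected dimension'' through their stable domains, combine this with the non-negativity of the Fredholm indices of the non-ghost levels (Lemma~\ref{lemma: all simply covered}), and derive a contradiction with $\op{ind}(u_i)\le 1$. However, your execution has two genuine problems.

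First, your detour through ECH indices in the second paragraph introduces a false step. The claim that a level $v_0$ with $I_{W_+}(v_0)=0$ ``is (modulo connectors) a union of trivial strips'' is incorrect: $W_+$ is a genuine cobordism, not a cylinder, so there are non-trivial $I_{W_+}=0$ multisections (indeed, these are exactly the curves counted by $\Phi$). The subsequent attempt to argue that ghosts have ``nothing to attach to'' therefore collapses. The paper does not use this line of reasoning at all.

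Second, in your third paragraph you obtain only $\sum_j \op{ind}(v_j)\le 0$, which by itself does not contradict $\op{ind}(v_j)\ge 0$. The paper's argument is sharper and more direct: one computes that the Fredholm index of a ghost component is $-\chi(F^g)$, and that the Fredholm index of the pre-glued curve satisfies
\[
\op{ind}(u_i)=\op{ind}(u_\infty^{ng})+\bigl(-\chi(F^g)+2k_{int}+k_{bdr}\bigr),
\]
where $k_{int},k_{bdr}$ count interior and boundary nodes. Stability of $F^g$ (together with the fact that a ghost must carry at least one node) forces the parenthesized term to be $\ge 2$, hence $\op{ind}(u_\infty^{ng})\le -1$. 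This immediately gives $\op{ind}(v_j)<0$ for some $j$, contradicting Lemma~\ref{lemma: all simply covered}. You should replace your ECH-index discussion and the ``nothing to attach to'' argument with this direct Fredholm-index count; the relative adjunction formula and the structure of trivial levels are not needed.
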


\begin{proof}
Let $\op{ind}(u_\infty^g)$ and $\op{ind}(u_\infty^{ng})$ be the sum of the Fredholm
indices of the irreducible components of $u_\infty^g$ and $u_\infty^{ng}$, respectively.
Also let $F^{g}$ and $F^{ng}$ be the domains of $u_\infty^{g}$ and $u_\infty^{ng}$,
respectively. Then $\op{ind}(u_\infty^g)=-\chi(F^g)$. If $u_\infty^{g}$ and $u_\infty^{ng}$ are attached along $k_{int}$ interior
nodes and $k_{bdr}$ boundary nodes, then
\begin{align}
\op{ind}(u_i) &=\op{ind}(u_\infty^{ng}) + \op{ind}(u_\infty^g)+2k_{int}+k_{bdr}\\
\label{eqn: sum of Fredholm indices}
&=\op{ind}(u_\infty^{ng}) -\chi(F^g) +2k_{int}+k_{bdr}\leq 1.
\end{align}
Next, if $F^g\not=\varnothing$, then the stability of the Riemann surface and the observation that each component of $\bdry F^g$ has a marked point (which in turn follows from Definition~\ref{defn: W plus curve}(1)) imply that
\begin{equation} \label{eq10}
-\chi(F^g) +2k_{int}+k_{bdr}\geq 2.
\end{equation}
Equations~\eqref{eqn: sum of Fredholm indices} and \eqref{eq10} together imply
that $\op{ind}(u_\infty^{ng})\leq -1$. Hence we have $\op{ind}(v_j)\leq -1$ for some
$v_j$, which is a contradiction of Lemma \ref{lemma: all simply covered} for a regular
$J_+$. The contradiction came from assuming that $u_\infty^g \neq \varnothing$.
\end{proof}

We now finish the proof of Theorem \ref{thm: compactness for W plus}.

\begin{proof}[Proof of Theorem \ref{thm: compactness for W plus}.]
The proof is based on a classification of the types of allowable buildings $u_\infty=v_{-b}\cup\dots\cup v_{a}\cup u^g_\infty$. We consider the situation of $I_{W_+}(u_i)=1$, leaving the easier $I_{W_+}(u_i)=0$ case to the reader.

By Lemmas \ref{lemma: levels v sub j W plus} and \ref{lemma: no ghosts}, the limit
$u_{\infty}$ consists of a building of degree $2g$ multisections $v_j$. Moreover, by
Lemma \ref{lemma: all simply covered}, $I_{W_j}(v_j) \ge 0$ for all $j$. The additivity of
ECH indices gives three possibilities
for the limit:
\begin{enumerate}
\item $u_\infty=v_0$, where $v_0$ is a multisection of  $W_+$ and $I_{W_+}(v_0)=1$;
\item $u_\infty=v_0\cup v_1$, where $v_0$ is a multisection of $W_+$ with
$I_{W_+}(v_0)=0$, and $v_1$ is a multisection of  $W$ with $I_{HF}(v_1)=1$;
\item $u_\infty= v_{-b}\cup\dots\cup v_{0},$ where $v_{-b},\dots,v_{-1}$ are multisections
of $W'$, $v_0$ is a multisection of $W_+$, all but one of $v_{-b},\dots,v_0$ have $I=0$,
and the remaining level has $I=1$.
\end{enumerate}

What is left to prove is that $I_{ECH}(v_{-b})=1$ in Case (3).
This follows from considerations of incoming partitions as in \cite[Proof of Lemma~7.23]{HT1}. Let $m_k$ be the multiplicity of the elliptic orbit $\gamma_k$ in the orbit set $\boldsymbol{\gamma}$ and let $\theta_k$ be the rotation of the first return map of $\gamma_k$. By \cite[Definition~1.8]{HT1}, there is a partial order $\geq_{\theta_k}$ on the set of partitions of $m_k$, where
$$(a_1,\dots,a_{l_1})\geq_{\theta_k} (b_1,\dots,b_{l_2})$$
if there is a Fredholm index zero branched cover of $\R\times \gamma_k$ with positive ends which partition $m_k$ into $(a_1,\dots,a_{l_1})$ and negative ends which partition $m_k$ into $(b_1,\dots, b_{l_2})$. By \cite[Lemma~7.5]{HT1}, the incoming partition $P^{\op{out}}_{\gamma_k}(m_k)$ --- the partition which corresponds to $\gamma_k^{m_k}$ at the negative end of $v_{-b}$ --- is maximal. Hence $I_{ECH}(v_{-b})=0$ implies that $v_{-b}$ is a trivial cylinder, which we have already eliminated.
\end{proof}

\subsection{Definition of $\Phi$}
\label{subsection: defn of chain maps}

Suppose $J_+\in \mathcal{J}_{W_+}^{reg}$ and $J$, $J'$ are the restrictions of $J_+$ to the positive and negative ends. In this subsection we define the chain map
\begin{equation}
\Phi_{J_+}:\widehat{CF}(S,\mathbf{a},\hh(\mathbf{a}),J)\to PFC_{2g}(N,J').
\end{equation}
We will usually suppress $J$, $J'$, and $J_+$ from the notation.

We first define an approximation of $\Phi$:

\begin{defn}\label{defn: Phi'}
Let $\widehat{CF'} (S,\mathbf{a},\hh(\mathbf{a}))$ be the chain complex generated by $\mathcal{S}_{{\bf a},\hh({\bf a})}$, before quotienting by the equivalence relation $\sim$ given in Section~\ref{subsubsection: variant CF of S}. We define the map
$$\Phi': \widehat{CF'} (S,\mathbf{a},\hh(\mathbf{a}))\to PFC_{2g}(N),$$
$$\Phi'(\mathbf{y}) = \sum_{\boldsymbol{\gamma}\in \widehat{\mathcal{O}}_{2g}}\langle\Phi'(\mathbf{y}),\boldsymbol{\gamma}\rangle \cdot  \boldsymbol{\gamma},$$
where $\langle \Phi'(\mathbf{y}),\boldsymbol{\gamma}\rangle$ is the mod $2$ count of $\mathcal{M}_{J_+}^{I=0}({\bf y},\boldsymbol{\gamma})$. The count is meaningful since $\mathcal{M}_{J_+}^{I=0}({\bf y},\boldsymbol{\gamma})$ is compact by Theorem~\ref{thm: compactness for W plus}.
\end{defn}

\nom[1u$\phi$]{$\Phi$}{Chain map from $\widehat{CF}(S,\mathbf{a},\hh(\mathbf{a}))$ to $PFC_{2g}(N)$}
\nom[1u$\phi$]{$\Phi'$}{Map from $\widehat{CF'}(S,\mathbf{a},\hh(\mathbf{a}))$ to $PFC_{2g}(N)$}

\begin{prop}\label{prop: Phi' chain map}
The map
$$\Phi': \widehat{CF'} (S,\mathbf{a},\hh(\mathbf{a}))\to PFC_{2g}(N),$$
is a chain map.
\end{prop}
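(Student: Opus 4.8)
The standard strategy is to show that $\Phi'\circ\widehat{\bdry}' = \bdry^{PFC}\circ\Phi'$ by analyzing the boundary of the one-dimensional moduli spaces $\mathcal{M}_{J_+}^{I=1}({\bf y},\gamma)/\R$ — wait, there is no $\R$-action on $W_+$-curves. Correctly: we analyze the compactified one-dimensional moduli spaces $\mathcal{M}_{J_+}^{I_{W_+}=1}({\bf y},\gamma)$ themselves, whose ends are described by Theorem~\ref{thm: compactness for W plus}. First I would fix ${\bf y}\in\mathcal{S}_{{\bf a},h({\bf a})}$ and $\gamma\in\widehat{\mathcal{O}}_{2g}$ and consider the moduli space $\mathcal{M}_{J_+}^{I_{W_+}=1}({\bf y},\gamma)$. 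Since $J_+$ is regular, the index inequality (Theorem~\ref{thm: index inequality for W+ and W-}) forces every such curve to be embedded with $\op{ind}_{W_+}=1$, so by transversality this moduli space is a compact $1$-manifold with boundary. The count of its boundary points is therefore zero mod $2$.

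Next I would identify the boundary points using Theorem~\ref{thm: compactness for W plus}: a sequence of $I_{W_+}=1$ curves degenerates into either (2) a two-level building consisting of an $I_{HF}=1$ curve $v_1$ in $W$ followed by an $I_{W_+}=0$ curve $v_0$, or (3) a multi-level building with an $I_{W_+}=0$ curve $v_0$, an $I_{ECH}=1$ curve $v_{-b}$, and $I_{ECH}=0$ connectors in between. Case (2) contributes exactly the term $\langle\Phi'\circ\widehat{\bdry}'({\bf y}),\gamma\rangle$ — here I would invoke the standard gluing theorem (as in \cite{Li,HT1}) to show that each such broken configuration is the limit of a unique end of the moduli space, so the count of Case (2) ends equals $\sum_{{\bf y}'}\langle\widehat{\bdry}'{\bf y},{\bf y}'\rangle\langle\Phi'({\bf y}'),\gamma\rangle$. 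Case (3) requires more care: by the same incoming-partition analysis as in \cite[Proof of Lemma~7.23]{HT1} used in the proof of Theorem~\ref{thm: compactness for W plus}, together with the gluing theory for ECH (the obstruction-bundle gluing of \cite{HT1,HT2}, which handles the $I_{ECH}=0$ connectors), one shows that the count of Case (3) ends equals $\sum_{\gamma'}\langle\Phi'({\bf y}),\gamma'\rangle\langle\bdry^{PFC}\gamma',\gamma\rangle$. Summing, the vanishing of the total boundary count gives $\Phi'\circ\widehat{\bdry}' + \bdry^{PFC}\circ\Phi' = 0$, i.e. $\Phi'$ is a chain map.

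There are two technical points to address along the way. First, one must check that no ghost components appear in the limit, which is exactly Lemma~\ref{lemma: no ghosts}. Second, in Case (3) one must rule out the degenerate possibility that the $I_{ECH}=1$ level is not the bottom level $v_{-b}$; this is handled inside Theorem~\ref{thm: compactness for W plus} via maximality of the outgoing partition $P^{out}_{\gamma_k}(m_k)$, so I can simply cite that theorem. I should also remark that the count $\langle\Phi'({\bf y}),\gamma\rangle$ is finite and well-defined because $\mathcal{M}_{J_+}^{I=0}({\bf y},\gamma)$ is compact (again Theorem~\ref{thm: compactness for W plus}, last sentence) and zero-dimensional (index inequality plus regularity), and that only finitely many $\gamma\in\widehat{\mathcal{O}}_{2g}$ can receive a nonzero coefficient, by an $\omega$-area/action argument analogous to Step 1 of Lemma~\ref{lemma: bound on F for W plus} together with the finiteness of orbit sets of bounded action.

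\textbf{Main obstacle.} The genuinely hard part is the gluing analysis in Case (3): producing a bijective correspondence between the Case-(3) ends of $\mathcal{M}_{J_+}^{I_{W_+}=1}({\bf y},\gamma)$ and the pairs $(v_0, v_{-b})$ appearing in $\bdry^{PFC}\circ\Phi'$, given that $I_{ECH}=0$ branched-cover connectors may sit between $v_0$ and $v_{-b}$. This is precisely the obstruction-bundle gluing phenomenon of Hutchings–Taubes, and the correct statement — that the signed (here mod $2$) count of gluings matches the PFH differential — must be imported from \cite{HT1,HT2} after checking that the connectors occurring here are of the type covered there; the asymptotics near $\bdry N$ (the orbits $e$, $h$) require the Morse–Bott setup of \cite{CGH1}, so one must confirm that the gluing theory applies in that Morse–Bott regime as well. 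In the write-up I would state this as an application of the established ECH gluing results rather than reprove it.
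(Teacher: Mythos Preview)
Your proposal is correct and follows essentially the same approach as the paper: analyze the ends of the one-dimensional moduli space $\mathcal{M}_{J_+}^{I=1}({\bf y},\gamma)$ via Theorem~\ref{thm: compactness for W plus}, then glue the two types of boundary configurations using Lipshitz's gluing on the HF side and the Hutchings--Taubes obstruction-bundle gluing (handling the $I_{ECH}=0$ connectors) on the ECH side. The paper likewise cites \cite{Li} and \cite{HT1,HT2} for the respective gluings and defers the detailed verification that the Hutchings--Taubes argument carries over to Section~\ref{subsection: gluing for Phi}, which confirms your ``main obstacle'' assessment.
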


\begin{proof}
By Theorem~\ref{thm: compactness for W plus} and Lemma~\ref{lemma: restriction of overline W plus curve}, $\bdry \mathcal{M}_{J_+}^{I=1}({\bf y},\boldsymbol{\gamma}) = A\sqcup B$, where
\begin{align*}
A & = \coprod_{{\bf y'}\in \mathcal{S}_{{\bf a}, \hh({\bf a})}}\left( \left(\mathcal{M}^{I=1}_J({\bf y},{\bf y'})/\R\right)
\times \mathcal{M}^{I=0}_{J_+}({\bf y'},\boldsymbol{\gamma}) \right),\\
B & = \ \ \coprod_{\boldsymbol{\gamma}\in \widehat{\mathcal{O}}_{2g}} \ \left( \mathcal{M}^{I=0}_{J_+}({\bf y'},\boldsymbol{\gamma}')
\times\left( \mathcal{M}^{I=1}_{J'}(\boldsymbol{\gamma}',\boldsymbol{\gamma})/\R\right) \right).
\end{align*}
Here we have omitted the connector components for simplicity.

We examine the corresponding gluings of the holomorphic buildings. The first gluing is that of $(v_1,v_0)\in A$. This type of gluing was treated by Lipshitz; see Propositions~A.1 and A.2 in ~\cite[Appendix~A]{Li}. Observe that there are no multiply-covered curves to glue, since each Reeb chord of a $2g$-tuple is used exactly once.

The second type of gluing is that of $(v_0,v_{-b})\in B$, with $I_{ECH}=0$ connectors $v_{-1},\dots,v_{-b+1}$ in between. The curve $v_0$ is simply-covered since it has an HF end and the curve $v_{-b}$ is simply-covered since $I_{ECH}=1$ (see \cite[Proposition~7.15]{HT1}). This type of gluing was treated carefully in \cite{HT1,HT2}. Although the setting there was the gluing for $\bdry^2=0$, in fact most of the work goes towards
properly counting $I_{ECH}=0$ connectors, i.e., branched covers of trivial cylinders. Their treatment of gluing/counting the $I_{ECH}=0$ connectors carries over with little modification to our case. See Section~\ref{subsection: gluing for Phi} for more details.
\end{proof}

Let $\delta_x=([0,2]\times\{x\})/(2,x)\sim (0,x)$, where $x\in \bdry S$; it is a Reeb orbit in the negative Morse-Bott family $\mathcal{N}$ which foliates $\bdry N$. Also let $(\R\times \delta_x)^+$ be the restriction of $\R\times \delta_x$ to $W_+$.

\begin{lemma}
\label{lemma: trivial cylinder when x i involved}
A $W_+$-curve $u$ which has $x_i$ or $x_i'$ at the positive end must have $(\R\times \delta_{x_i})^+$ or $(\R\times \delta_{x_i'})^+$ as an irreducible component.
\end{lemma}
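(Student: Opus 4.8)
The plan is to recognize $(\R\times\delta_{x_i})^+$ (and $(\R\times\delta_{x_i'})^+$) as a rigid, "sink-type" holomorphic curve and to force $u$ to reproduce it near the offending puncture. First I would record the elementary facts about the candidate component. Since $x_i\in\bdry S$ and $h$ restricts to the identity on $\bdry S$, we have $h(x_i)=x_i$, so $\delta_{x_i}=(\{x_i\}\times[0,2])/\sim$ is a genuine closed Reeb orbit lying on $\bdry N$ in the negative Morse--Bott family $\mathcal N$. Because $J_+\in\mathcal J_+$ satisfies $J_+(\bdry_s)=\bdry_t$ and preserves the vertical tangent bundle $TS$, the cylinder $\R\times\delta_{x_i}$ is $J_+$-holomorphic; its restriction $(\R\times\delta_{x_i})^+$ to $W_+$ has positive end exactly the trivial strip over the chord $[0,1]\times\{x_i\}$ at the HF end and negative end the trivial cylinder over $\delta_{x_i}$ at the ECH end. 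The same applies verbatim to $x_i'$.

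Next I would localize. Let $q$ be the positive boundary puncture of $u$ that is asymptotic to $[0,1]\times\{x_i\}$ and let $C$ be the irreducible component of $u$ (in the Morse--Bott building sense, if $u$ is such a building) containing $q$. By the asymptotic analysis of Abbas \cite{Ab}, $\op{Im}(C)$ enters an arbitrarily small neighborhood $\R\times V_\partial$ of $\R\times\bdry N$, where $V_\partial\simeq[-\varepsilon,0]_y\times(\R/\Z)_\theta\times(\R/2\Z)_t$ carries the contact/stable-Hamiltonian data of Lemma~\ref{lemma: change from h to h_0} ($\alpha=g(y)d\theta+f(y)dt$ with $g$ strictly increasing). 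On $\R\times V_\partial$ I would invoke the maximum principle underlying the fact that $\mathcal N$ is a sink (the same one used in \cite{CGH1}): applied to $y\circ u$ (or to a suitable combination of $y$ and the symplectization coordinate $s$), using that $y\circ u\le 0$ and that $y\circ u\to 0$ along the chord asymptotics at $q$, one concludes that $y\circ u$ cannot attain this maximum at an interior point of $C$ unless $C\subset\R\times\{y=0\}=\R\times\bdry N$. One also has to check that the HF stable Hamiltonian structure $(dt,\omega)$ on the positive end $[0,1]\times S$ is compatible near $\bdry S$ with the contact data near $\bdry N$, so that the maximum principle remains valid up to the corner $\bdry_hW_+\cap\bdry_vW_+$; this is built into the constructions of $W_+$ and $\Omega_+$ in Section~\ref{subsection: symplectic cobordisms}.

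Once $C$ is confined to $\R\times\bdry N$, it is a holomorphic curve in the three-manifold $\R\times\bdry N$ having a positive end over $\delta_{x_i}$, hence a branched cover of $(\R\times\delta_{x_i})^+$ by the standard description of holomorphic curves contained in a symplectization level. But a $W_+$-curve is automatically simply covered, since the chord $[0,1]\times\{x_i\}$ occurs exactly once in the $2g$-tuple $\mathbf y$; therefore $C=(\R\times\delta_{x_i})^+$, which is an irreducible component of $u$. The same argument applied to a puncture asymptotic to $x_i'$ produces the component $(\R\times\delta_{x_i'})^+$.

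The main obstacle I anticipate is precisely the confinement/maximum-principle step at the corner where the positive HF end meets the horizontal boundary: one must make precise the agreement of the stable Hamiltonian data at the HF end with the contact data near $\bdry N$ on the collar, and one must \emph{propagate} the confinement from a neighborhood of $q$ to all of $C$, not just verify it asymptotically. If that turns out to be awkward to phrase cleanly in $W_+$, a backup is positivity of intersections: $(\R\times\delta_{x_i})^+$ is somewhere injective, so if $u$ did not contain it then $\langle u,(\R\times\delta_{x_i})^+\rangle\ge 0$ with, in addition, a strictly positive asymptotic contribution at the shared end over $x_i$ forced by the transverse crossing of the boundary arcs $a_i$ and $h(a_i)$ there, which is incompatible with the index bookkeeping for $u$; I would use whichever of the two routes fits most smoothly with the surrounding material.
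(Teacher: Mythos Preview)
The approach you propose is conceptually reasonable but differs substantially from the paper's argument, and the key confinement step has a gap that you partly anticipate but do not resolve.

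The paper's proof does not use a maximum principle on the collar coordinate $y$ at all. Instead it projects to the page: on the positive end $\{s\ge 3\}\cap W_+\simeq[3,\infty)\times[0,1]\times S$ the projection $\pi_S$ to $S$ is holomorphic, so one studies the map $\pi_S\circ v$ near the puncture $q$ asymptotic to $x_i$. Using polar coordinates $(r,\theta)$ on a half-disk neighborhood $N(x_i)=\{-\tfrac{\pi}{2}\le\theta\le\tfrac{\pi}{2}\}$ of $x_i\in\partial S$ with $a_i=\{\theta=0\}$ and $h(a_i)=\{\theta=-\tfrac{\pi}{4}\}$, the local model for a nonconstant holomorphic strip-end forces the image to fill a sector of the form $\{0\le\theta\le k\pi-\tfrac{\pi}{4}\}$ or $\{\pi\le\theta\le (k+1)\pi-\tfrac{\pi}{4}\}$ with $k\ge1$. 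Since $x_i$ lies on $\partial S$, only the half-disk of total angle $\pi$ is available, and no such sector fits. Hence $\pi_S\circ v$ is identically $x_i$ on a neighborhood of $q$; unique continuation then gives $v=(\R\times\delta_{x_i})^+$. The whole argument is local to $N(x_i)\subset S$ and avoids any reference to the Morse--Bott sink property or a global barrier.

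Your maximum-principle route runs into the following difficulty, which is not the corner issue you flag. The function $y\circ v$ has supremum $0$, attained only asymptotically as one approaches the puncture $q$; the strong maximum principle would force $y\circ v\equiv 0$ only if this supremum were attained at an interior point of the domain. Merely knowing $y\circ v\to 0$ along an end does not yield confinement to $\{y=0\}$. The ``sink'' statement you cite (no nontrivial curve has a positive end on $\mathcal{N}$) is a statement about \emph{orbit} asymptotics at the ECH end, proved via the asymptotic eigenvalue analysis for the return map along $\delta_x$; it does not transfer automatically to chord asymptotics at the HF end, and in fact the chord analogue is precisely the content of this lemma. Your backup via positivity of intersections with $(\R\times\delta_{x_i})^+$ is also problematic: this cylinder sits in $\partial_h W_+$, the relevant intersection theory with Lagrangian boundary and a shared chord end is not set up in the paper, and ``incompatible with the index bookkeeping'' has no obvious meaning here. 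The paper's sector argument sidesteps all of this by working directly with the holomorphic projection to $S$ and exploiting nothing more than the fact that $x_i\in\partial S$.
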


Recall that the intersection points $x_i, x_i'$ are components of the Heegaard Floer contact class of $\xi_{(S,\hh)}$.

\begin{proof}
Let $v:\dot F\to W_+$ be the irreducible component of $u$ which has $x_i$ at the positive end. The component $v$ may {\em a priori} have other positive ends besides $x_i$. We will show that $v=(\R\times\delta_{x_i})^+$. Let $\pi_S:\{s\geq 3\}\cap W_+ \to S$ be defined by identifying $\{s\geq 3\}\cap W_+=[3,\infty)\times[0,1]\times S$ and projecting to the third factor.

Let $(r,\theta)$ be polar coordinates on a small neighborhood $\nu(x_i)\subset S$ of $x_i$ so that $\hh(a_i)=\{\theta=-{\pi\over 4}\}$, $a_i=\{\theta=0\}$, and $\nu(x_i)=\{-{\pi\over 2}\leq \theta \leq {\pi\over 2}, r>0\}$.  By Condition (1) in Definition~\ref{defn: almost complex structures on overline W}, the composition $\pi_S\circ v$ is holomorphic when restricted to the end that limits to $x_i$.
If $v$ is not the restriction of a trivial cylinder, then $\pi_S\circ v$ must map a neighborhood of the puncture of $\dot F$ corresponding to $x_i$ to a sector $\{0\leq \theta\leq k\pi-{\pi\over 4}, r>0\}$ or $\{\pi \leq \theta \leq (k+1)\pi-{\pi\over 4},r>0\}$, where $k\geq 1$. Since such a sector cannot be contained in $S$, the map $\pi_S\circ v|_{s\geq C}$ must map identically to $x_i$, where $C\gg 0$.  By the unique continuation property, $v=(\R\times \delta_{x_i})^+$.
\end{proof}

\begin{thm} \label{thm: Phi is a chain map}
The map $\Phi'$ descends to a chain map
$$\Phi: \widehat{CF}(S,{\bf a},\hh({\bf a}))\to PFC_{2g}(N)$$
which maps the Heegaard Floer contact invariant for $\xi_{(S,\hh)}$ to the ECH contact invariant for $\xi_{(S,\hh)}$.
\end{thm}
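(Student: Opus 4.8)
The plan is to break Theorem~\ref{thm: Phi is a chain map} into three parts: (i) that $\Phi'$ descends to a well-defined map $\Phi$ on the quotient complex $\widehat{CF}(S,\mathbf{a},h(\mathbf{a}))$; (ii) that $\Phi$ is a chain map, given that $\Phi'$ is one by Proposition~\ref{prop: Phi' chain map}; and (iii) that $\Phi$ sends the Heegaard Floer contact class to the ECH contact class.

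\emph{Descent to the quotient.} Recall that $\widehat{CF}(S,\mathbf{a},h(\mathbf{a}))$ is obtained from $\widehat{CF'}(S,\mathbf{a},h(\mathbf{a}))$ by the relation $\mathbf{y}\sim\mathbf{y}'$ whenever $\mathbf{y}$ and $\mathbf{y}'$ differ only in their $x_i,x_i'$-components (with the same ``non-binding'' part $\mathbf{y}_1$). To show $\Phi'$ descends, it suffices to check that $\Phi'(\mathbf{y})=\Phi'(\mathbf{y}')$ whenever $\mathbf{y}$ and $\mathbf{y}'$ differ in exactly one slot, say $\mathbf{y}=\widetilde{\mathbf{y}}\cup\{x_i\}$ and $\mathbf{y}'=\widetilde{\mathbf{y}}\cup\{x_i'\}$. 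Here Lemma~\ref{lemma: trivial cylinder when x i involved} does the work: any $W_+$-curve $u\in\mathcal{M}^{I=0}_{J_+}(\mathbf{y},\gamma)$ must contain $(\R\times\delta_{x_i})^+$ as an irreducible component. Since $\delta_{x_i}$ and $\delta_{x_i'}$ lie in the same Morse-Bott family $\mathcal{N}$ on $\partial N$ and the chord $\delta_{x_i}$ (resp.\ $\delta_{x_i'}$) cobounds a trivial cylinder over the binding, removing this trivial cylinder sets up a bijection between $\mathcal{M}^{I=0}_{J_+}(\widetilde{\mathbf{y}}\cup\{x_i\},\gamma)$ and $\mathcal{M}^{I=0}_{J_+}(\widetilde{\mathbf{y}}\cup\{x_i'\},\gamma)$; more precisely, both are identified with the moduli space of $W_+$-curves with one fewer boundary puncture, arising from the shared non-binding data $\widetilde{\mathbf{y}}$. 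Hence the two counts agree, so $\Phi'(\mathbf{y})=\Phi'(\mathbf{y}')$ and $\Phi$ is well-defined on the quotient.

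\emph{Chain map property.} Since the quotient map $q:\widehat{CF'}\to\widehat{CF}$ is a surjective chain map (the differential $\widehat{\bdry}$ is defined as the one induced from $\bdry'$, per Section~\ref{subsubsection: variant CF of S}), and $\Phi'$ is a chain map by Proposition~\ref{prop: Phi' chain map}, and $\Phi'$ factors through $q$ by the previous paragraph as $\Phi'=\Phi\circ q$, the identity $\Phi'\circ\bdry'=\bdry_{PFC}\circ\Phi'$ pushes down: for any $\mathbf{y}$, $\Phi(\widehat{\bdry}\,q(\mathbf{y}))=\Phi(q(\bdry'\mathbf{y}))=\Phi'(\bdry'\mathbf{y})=\bdry_{PFC}\Phi'(\mathbf{y})=\bdry_{PFC}\Phi(q(\mathbf{y}))$, and surjectivity of $q$ finishes it. (Here $\bdry_{PFC}$ denotes the PFH differential on $PFC_{2g}(N)$.)

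\emph{Contact classes.} By the Remark following Section~\ref{subsubsection: Heegaard diagram compatible with S h}, the Heegaard Floer contact class $c(\xi_{(S,h)})$ is represented by the $2g$-tuple $\mathbf{x}_0$ all of whose components are of the form $x_i$ (equivalently $x_i'$, by the $\sim$-relation). On the ECH/PFH side, the contact class is represented by the empty orbit set, or in the suspension picture by the orbit set coming from the binding---the key point being that the broken closed string $\gamma_{\mathbf{x}_0}$ associated to $\mathbf{x}_0$ (as in Section~\ref{section: moduli spaces of multisections}) is a collection of closed curves parallel to $\partial N$, hence homologically and dynamically the ``trivial'' orbit set adjacent to $\bdry N$. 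I would compute $\Phi(\mathbf{x}_0)$ by the same mechanism as in the descent argument: every $W_+$-curve from $\mathbf{x}_0$ with $I=0$ consists entirely of restrictions of trivial cylinders $(\R\times\delta_{x_i})^+$ over the binding (apply Lemma~\ref{lemma: trivial cylinder when x i involved} to all $2g$ components simultaneously), so the only $\gamma$ with $\langle\Phi'(\mathbf{x}_0),\gamma\rangle\neq 0$ is the orbit set built from $\mathcal{N}$ that is the ECH/PFH contact generator, and it is hit exactly once. Therefore $\Phi$ sends $c(\xi_{(S,h)})$ to the ECH contact invariant.

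\emph{Main obstacle.} The delicate point is the descent/contact-class step: one must verify that after deleting the trivial cylinder component $(\R\times\delta_{x_i})^+$ from a curve $u\in\mathcal{M}^{I=0}_{J_+}(\widetilde{\mathbf{y}}\cup\{x_i\},\gamma)$ the remaining piece is genuinely a $W_+$-curve of the lower degree with $I=0$ (ECH index additivity, Lemma~\ref{lemma: additivity part 2}, combined with the fact that trivial cylinders over Morse-Bott orbits contribute $0$ to $I$), and conversely that every lower-degree curve extends by the trivial cylinder---i.e., that there is no gluing obstruction and the correspondence is a genuine bijection, not merely a map. This is a standard but not entirely routine gluing/uniqueness argument near the negative Morse-Bott family; one invokes the analysis already set up in \cite{CGH1} for curves asymptotic to $\mathcal{N}$ together with the observation that $\delta_{x_i}$ plays a purely passive role. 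I expect this bijection, rather than the formal chain-map descent, to be where the real content lies.
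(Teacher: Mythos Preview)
Your proposal is correct in structure and uses the same key input (Lemma~\ref{lemma: trivial cylinder when x i involved}), but you have made the descent step harder than it needs to be, and in doing so manufactured your ``main obstacle.''

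The paper does not pass through a lower-degree moduli space. Instead, it observes that in a Morse-Bott building from $\widetilde{\mathbf{y}}\cup\{x_i\}$ to $\gamma$, the trivial piece $(\R\times\delta_{x_i})^+$ comes equipped with an augmenting gradient trajectory in the Morse-Bott family $\mathcal{N}$ from $\delta_{x_i}$ to $e$ (so $\gamma=e\gamma'$). The bijection is then the tautological one: replace $(\R\times\delta_{x_i})^+$ and its trajectory by $(\R\times\delta_{x_{i}'})^+$ and the corresponding trajectory from $\delta_{x_i'}$ to $e$, keeping everything else fixed. No gluing or additivity argument is needed---you are literally swapping one rigid Morse-Bott component for an isomorphic one. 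Your route via ``remove, identify with lower degree, re-insert'' works too, but the obstacle you flag (that removal and re-insertion are honest inverse bijections) dissolves once you see the direct swap.

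Two small points you glossed over: you never invoke the Morse-Bott building language (the augmenting trajectory to $e$ is what forces the target orbit set to contain a copy of $e$), and you are vague about the ECH contact class. In this model it is precisely the orbit set $e^{2g}$, and the paper's computation $\Phi'(\{x_1,\dots,x_{2g}\})=e^{2g}$ follows from the same swap argument applied to all $2g$ slots.
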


\begin{proof}
Let ${\bf y} =\{y_1 ,\dots , y_{2g} \}\in \mathcal{S}_{{\bf a},\hh({\bf a})}$. Assume without loss of generality that $y_1=x_1$. Then ${\bf y}$ is equivalent to ${\bf y'}=\{x_1',y_2,\dots,y_{2g}\}$. Let $u$ be an $I_{W_+}=0$ Morse-Bott building from $\mathbf{y}$ to a generator $\boldsymbol{\gamma}$ of $PFC_{2g}(N)$. Since $(\R\times \delta_{x_1})^+$ is an irreducible component of $u$ by Lemma~\ref{lemma: trivial cylinder when x i involved} and is automatically transverse in the Morse-Bott sense, $\boldsymbol{\gamma}$ can be written as $e\boldsymbol{\gamma}'$, where $e$ is the elliptic orbit of the negative Morse-Bott family $\mathcal{N}$. Now, by replacing
$(\R\times\delta_{x_1})^+$ by $(\R\times \delta_{x_1'})^+$ and the augmenting gradient trajectory from $\delta_{x_1}$ to $e$ by the augmenting trajectory from $\delta_{x_1'}$ to $e$, we obtain an $I_{W_+}=0$ Morse-Bott building from $\mathbf{y'}$ to $\boldsymbol{\gamma}=e\boldsymbol{\gamma}'$. Hence there is a one-to-one correspondence between $W_+$-curves from ${\bf y}$ to $\boldsymbol{\gamma}$ and $W_+$-curves from ${\bf y'}$ to $\boldsymbol{\gamma}$. Since $\Phi' (\mathbf{y})= \Phi' (\mathbf{y'})$, the map $\Phi'$ descends to $\widehat{CF}(S,\mathbf{a},\hh(\mathbf{a}))$.

The Heegaard Floer contact invariant is given by the equivalence class of ${\bf x}=\{x_1,\dots,x_{2g}\}$ and the ECH contact invariant is given by $e^{2g}$.  By Lemma~\ref{lemma: trivial cylinder when x i involved} and some ECH index considerations, the only $I_{W_+}=0$ Morse-Bott building from ${\bf x}$ consists of $(\R\times \delta_{x_i})^+$, $i=1,\dots,2g$, augmented by connecting gradient trajectories from $\delta_{x_i}$ to $e$ in the Morse-Bott family $\mathcal{N}$. Hence $\Phi'({\bf x})=e^{2g}$. Then $\Phi$ maps the equivalence class of ${\bf x}$ to $e^{2g}$.
\end{proof}

\subsection{${\rm Spin}^c$-structures}

Let ${\mathcal S}_{{\bf a},\hh({\bf a})}$ be the set of $2g$-tuples of ${\bf a}\cap \hh({\bf a})$ and let $\overline{\mathcal{S}}_{{\bf a},\hh({\bf a})}={\mathcal S}_{{\bf a},\hh({\bf a})}/\sim$, where $\{x_i\}\cup {\bf y'}\sim \{x_i'\}\cup{\bf y'}$ for all $(2g-1)$-tuples ${\bf y'}$.

We define a map
$$\mathfrak{h}'_+: {\mathcal S}_{{\bf a},\hh({\bf a})}\to H_1(W_+, \partial_hW_+)$$
as follows: Given $\mathbf{y} = \{y_1, \ldots, y_{2g}\} \in {\mathcal S}_{{\bf a},\hh({\bf a})}$, where $y_i \in a_i \cap \hh(a_{\sigma(i)})$ for some permutation $\sigma\in\mathfrak{S}_{2g}$, we define
$\mathfrak{h}'_+(\mathbf{y})$ as the homology class of the broken closed string obtained by concatenating the following oriented arcs:
\begin{itemize}
\item for each $i \in \{ 1, \ldots, 2g \}$, the arc $\{ 3 \} \times [0,1] \times \{ y_i \}$, with orientation given by $\partial_t$;
\item for each $i \in \{ 1, \ldots, 2g \}$, an arc from $\{ 3 \} \times \{ 1 \} \times \{ y_i \}$ to $\{ 3 \} \times \{ 0 \} \times \{ y_{\sigma^{-1}(i)} \}$ contained in $L_{a_i}^+$.
\end{itemize}
The homology class $\mathfrak{h}'_+(\mathbf{y})$ is well-defined since the Lagrangians $L_{a_i}^+$ are simply-connected.

The map $\mathfrak{h}'_+$ descends to a map
$$\mathfrak{h}_+: \overline{\mathcal{S}}_{{\bf a},\hh({\bf a})}\to H_1(W_+, \partial_hW_+),$$
since the components $x_i$ and $x_i'$ of the contact class are both converted into broken closed strings which are nullhomologous in $H_1(W_+,\bdry_h W_+)$.

The following lemma is an immediate consequence of the definition of $W_+$-curves.

\begin{lemma}\label{W_+ obstruction}
Let $\boldsymbol{\gamma}$ be an orbit set at the negative end of $W_+$ with total homology class $[\boldsymbol{\gamma}] \in H_1(W_+, \partial_hW_+)$. Then ${\mathcal M}_{J_+}(\mathbf{y}, \boldsymbol{\gamma}) \ne \varnothing$ implies that $\mathfrak{h}_+(\mathbf{y}) = [\boldsymbol{\gamma}]$.
\end{lemma}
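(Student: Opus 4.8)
\textbf{Proof proposal for Lemma~\ref{W_+ obstruction}.}

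The plan is to show that the existence of a $W_+$-curve $u\in\mathcal{M}_{J_+}(\mathbf{y},\gamma)$ forces an explicit homology in $H_1(W_+,\partial_h W_+)$ between the broken closed string representing $\mathfrak{h}_+(\mathbf{y})$ and the orbit set $\gamma$ (pushed to the negative end), by using the compactified curve $\check u$ as a $2$-chain whose boundary realizes this homology. First I would recall the setup: $u\colon\dot F\to W_+$ compactifies to $\check u\colon(\check F,\partial\check F)\to(\check W_+,Z_{\mathbf{y},\gamma})$, where $Z_{\mathbf{y},\gamma}=(L^+_{\mathbf{a}}\cap\check W_+)\cup(\{3\}\times[0,1]\times\mathbf{y})\cup(\{-1\}\times\gamma)$. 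Since $\check F$ is a compact surface with boundary, $\partial[\check u]=0$ in $H_1(Z_{\mathbf{y},\gamma})$, i.e., the $1$-cycle $\check u(\partial\check F)$ bounds in $\check W_+$ relative to nothing — but more usefully, I would separate $\partial\check F=\partial_+\check F\sqcup\partial_-\check F\sqcup(\partial\check F\cap L^+_{\mathbf{a}})$ into the pieces mapping near the HF end ($s\gg0$), near the ECH end ($s\ll0$), and onto the vertical Lagrangian boundary. Then $\check u(\partial_+\check F)$ is a collection of arcs in $\{3\}\times[0,1]\times\mathbf{y}$ (together with the Lagrangian arcs), which by the definition of a $W_+$-curve (each component of $\bdry\dot F$ maps to a distinct $L^+_{a_i}$, and $u$ converges to strips over $[0,1]\times\mathbf{y}$) assembles precisely into the broken closed string defining $\mathfrak{h}'_+(\mathbf{y})$; similarly $\check u(\partial_-\check F)$ realizes $\{-1\}\times\gamma$, i.e., a cycle representing $[\gamma]$.

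The key step is the homological bookkeeping. I would work in $H_1(W_+,\partial_h W_+)$ and use the surface $\check C=\check u(\check F)$ (or better, the chain $\check u_*[\check F]$) to exhibit the desired equality. Concretely: the portion of $\partial\check C$ lying on $L^+_{\mathbf{a}}$ is contained in $\partial_v W_+$, and each $L^+_{a_i}$ is an interval bundle swept out by parallel transport of the arc $a_i$, hence simply connected; thus modulo $\partial_h W_+$ (the boundaries of the fibers) the Lagrangian arcs can be absorbed. What remains is that $\check u_*[\check F]$, viewed as a relative $2$-chain in $(W_+,\partial_h W_+)$, has boundary equal to $\gamma_{\mathbf{y}}-\gamma$ up to pieces in $\partial_h W_+$, where $\gamma_{\mathbf{y}}$ is the broken closed string of Section~\ref{subsection: defn of chain maps}. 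Therefore $[\gamma_{\mathbf{y}}]=[\gamma]$ in $H_1(W_+,\partial_h W_+)$. Finally, $[\gamma_{\mathbf{y}}]=\mathfrak{h}'_+(\mathbf{y})$ by unwinding the definition — the arc $\{3\}\times[0,1]\times\{y_i\}$ together with the connecting arc in $L^+_{a_i}$ from $\{3\}\times\{1\}\times\{y_i\}$ to $\{3\}\times\{0\}\times\{y_{\sigma^{-1}(i)}\}$ matches the broken closed string in the statement. Passing to $\overline{\mathcal{S}}_{\mathbf{a},h(\mathbf{a})}$ and $\mathfrak{h}_+$ is then immediate since $\mathfrak{h}'_+$ already descends.

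The main obstacle I anticipate is being careful about \emph{which} part of the boundary of $\check C$ lies on the horizontal boundary $\partial_h W_+$ versus the vertical boundary, and checking that the parallel-transport description of $L^+_{\mathbf{a}}$ over the strip $\{s\geq3,\,t\in[0,1]\}$ (namely $L^+_{\mathbf{a}}\cap\{t=0\}=\{s\geq3\}\times h(\mathbf{a})\times\{0\}$ and $L^+_{\mathbf{a}}\cap\{t=1\}=\{s\geq3\}\times\mathbf{a}\times\{1\}$, from Section~\ref{subsubsection: Lagrangian}) glues correctly with the broken-closed-string arcs $c_i$ at the fiber over $(3,1)$, so that no extra homology contribution is introduced. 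This is essentially the same computation underlying Step~1 of Lemma~\ref{lemma: bound on F for W plus}, and I would cross-reference that argument; once the gluing of arcs is seen to be the identity on homology, the lemma follows formally from $\partial\check u_*[\check F]=0$ in relative homology.
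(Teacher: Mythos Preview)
Your argument is correct and is precisely the unpacking of what the paper treats as obvious: the paper does not give a proof at all, stating only that the lemma ``is an immediate consequence of the definition of $W_+$-curves.'' Your use of the compactified curve $\check u(\check F)$ as a relative $2$-chain with $\partial\check u_*[\check F]=\gamma_{\mathbf{y}}-\gamma$ in $H_1(W_+,\partial_h W_+)$ is exactly the homological content behind that sentence, and your care about the simple-connectedness of $L^+_{a_i}$ is what makes $\mathfrak{h}'_+(\mathbf{y})$ well-defined in the first place.

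One minor clarification on your anticipated obstacle: no part of $\partial\check C$ actually lies on $\partial_h W_+$; the Lagrangians $L^+_{a_i}$ sit in the vertical boundary $\partial_v W_+$, and the image of a $W_+$-curve stays in the interior fibers. The role of $\partial_h W_+$ is only to make the broken closed string well-defined up to the choice of connecting arc in $L^+_{a_i}$ (since $a_i$ has endpoints on $\partial S$, two such arcs differ by something homotopic into $\partial_h W_+$), but as you note this is already handled by the simple-connectedness of $L^+_{a_i}$. So your proof goes through without needing to push anything into $\partial_h W_+$.
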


There are natural isomorphisms
$$H_1(W_+, \partial_hW_+) \cong H_1(N, \partial N) \cong H_1(M).$$
With respect to these isomorphisms, the total homology class in $H_1(W_+, \partial_hW_+)$ of an orbit set $\boldsymbol{\gamma}$ at the negative end of $W_+$ corresponds to the usual total homology class of $\boldsymbol{\gamma}$ in $H_1(M)$. Moreover, $\mathfrak{h}_+(\mathbf{y})=\mathfrak{h}({\bf y})$, where $\mathfrak{h}$ is as defined in Section \ref{spin-c revisited}.

Combining Proposition~\ref{s-and-h} and Lemma~\ref{W_+ obstruction}, we obtain the following theorem:

\begin{thm}
The chain map $\Phi$ respects the splitting according to ${\rm
Spin}^c$-struc\-tures, i.e., $\Phi$ is the direct sum of maps
$$\Phi_A: \widehat{CF}(S,\mathbf{a},\hh(\mathbf{a}), \mathfrak{s}_{\xi}
+PD (A)) \to PFC_{2g}(N, A),$$
with $A \in H_1(M; \Z)$.
\end{thm}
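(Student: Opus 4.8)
The statement asserts that the chain map $\Phi$ decomposes as a direct sum over $\mathrm{Spin}^c$-structures, in the form $\Phi = \bigoplus_A \Phi_A$ with $\Phi_A$ as displayed. The plan is to show that $\Phi$ sends the subcomplex $\widehat{CF}(S,\mathbf{a},h(\mathbf{a}),\mathfrak{s}_\xi + PD(A))$ into the subcomplex $PFC_{2g}(N,A)$, for each $A \in H_1(M)$; once this is established, the direct sum decomposition follows immediately from the splittings $\widehat{CF}(S,\mathbf{a},h(\mathbf{a})) = \bigoplus_{\mathfrak{s}} \widehat{CF}(S,\mathbf{a},h(\mathbf{a}),\mathfrak{s})$ (Section~\ref{spin-c revisited}, via the map $s_z$ and Proposition~\ref{s-and-h}) and $PFC_{2g}(N) = \bigoplus_A PFC_{2g}(N,A)$ (the splitting of ECH/PFH according to homology classes of orbit sets, as in Section~\ref{subsection: twisted coefficients ECH} and the discussion following Lemma~\ref{varpi}).

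First I would fix a generator $\mathbf{y} \in \mathcal{S}_{\mathbf{a},h(\mathbf{a})}$ and recall from Proposition~\ref{s-and-h} that $s_z(\mathbf{y}) = \mathfrak{s}_\xi + PD(\mathfrak{h}(\mathbf{y}))$, so that $\mathbf{y}$ lies in the summand indexed by $A = \mathfrak{h}(\mathbf{y}) \in H_1(M)$. Next I would take any orbit set $\gamma \in \widehat{\mathcal{O}}_{2g}$ appearing with nonzero coefficient in $\Phi(\mathbf{y})$; by Definition~\ref{defn: Phi'} this means $\mathcal{M}^{I=0}_{J_+}(\mathbf{y},\gamma) \neq \varnothing$. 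By Lemma~\ref{W_+ obstruction} this forces $\mathfrak{h}_+(\mathbf{y}) = [\gamma]$ in $H_1(W_+,\partial_h W_+)$. Then I would invoke the identifications $H_1(W_+,\partial_h W_+) \cong H_1(N,\partial N) \cong H_1(M)$ together with the observation, recorded just before the theorem, that under these identifications the class $[\gamma] \in H_1(W_+,\partial_h W_+)$ of an orbit set corresponds to the usual total homology class of $\gamma$ in $H_1(M)$, and that $\mathfrak{h}_+(\mathbf{y}) = \mathfrak{h}(\mathbf{y})$. Combining these, the total homology class of $\gamma$ in $H_1(M)$ equals $\mathfrak{h}(\mathbf{y}) = A$, so $\gamma \in PFC_{2g}(N,A)$.

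This shows $\Phi\big(\widehat{CF}(S,\mathbf{a},h(\mathbf{a}),\mathfrak{s}_\xi + PD(A))\big) \subseteq PFC_{2g}(N,A)$, and since both sides split as direct sums indexed compatibly by $A$ (using $\varpi: H_1(M) \xrightarrow{\sim} H_1(N,\partial N)$ from Lemma~\ref{varpi} to match the indexing), the map $\Phi$ is the direct sum of its restrictions $\Phi_A$. I should also note that the descent of $\Phi'$ to $\Phi$ (Theorem~\ref{thm: Phi is a chain map}) is compatible with this splitting, because the equivalence $\{x_i\}\cup\mathbf{y}' \sim \{x_i'\}\cup\mathbf{y}'$ does not change $\mathfrak{h}$ (the cycles representing $\mathfrak{h}(\{x_i\}\cup\mathbf{y}')$ and $\mathfrak{h}(\{x_i'\}\cup\mathbf{y}')$ differ by a boundary parallel to $\partial N$, hence are homologous), so the quotient respects the $\mathrm{Spin}^c$-grading. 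I do not expect a serious obstacle here: the theorem is essentially a bookkeeping consequence of Lemma~\ref{W_+ obstruction} and Proposition~\ref{s-and-h}, and the only mild care needed is in tracking the three isomorphisms $H_1(W_+,\partial_h W_+) \cong H_1(N,\partial N) \cong H_1(M)$ and checking that $\mathfrak{h}_+$ and $\mathfrak{h}$ agree under them—both of which are asserted in the paragraph preceding the theorem statement and can be verified directly from the definitions of the broken closed strings involved.
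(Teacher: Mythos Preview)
Your proposal is correct and follows exactly the approach of the paper, which simply states that the theorem is obtained by combining Proposition~\ref{s-and-h} and Lemma~\ref{W_+ obstruction} (together with the identifications $H_1(W_+,\partial_h W_+) \cong H_1(N,\partial N) \cong H_1(M)$ and $\mathfrak{h}_+ = \mathfrak{h}$ recorded just before the theorem). Your write-up is in fact more detailed than the paper's, including the check that the quotient $\Phi' \to \Phi$ respects the grading, which the paper leaves implicit.
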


\subsection{Twisted coefficients} \label{subsection: twisted coefficients phi map}

Let $\underline{PFH}_{2g}(N, A)$ be the twisted coefficient version of $PFH_{2g}(N, A)$, defined as in Section~\ref{subsection: twisted coefficients ECH}. For any homology class
$A \in H_1(M)$ we can define a map
$$\underline{\Phi}_A: \underline{\widehat{CF}}(S,\mathbf{a},\hh(\mathbf{a}), \mathfrak{s}_{\xi} +PD (A)) \to \underline{PFC}_{2g}(N, A),$$
given in the following paragraphs. Choose a $2g$-tuple of intersection points $\mathbf{y}_0$ such that
$s_z(\mathbf{y}_0)= \mathfrak{s}_{\xi}+ PD(A)$ and a complete set of paths
$\{ C_{\mathbf{y}} \}$ for $\mathfrak{s}_{\xi}+ PD(A)$ based at $\mathbf{y}_0$.

Let $\pi_N: W_+ \to N$ be the restriction of the projection $\R \times N \to N$, $(s, x)
\mapsto x$, and let $\Gamma \subset N$ be the projection by $\pi_N$ of a broken closed string associated to $\mathbf{y}_0$. By Lemma \ref{W_+ obstruction}, $[\Gamma]=A$.
We choose a complete set of paths $\{ C_{\boldsymbol{\gamma}} \}$ for $A$ based at $\Gamma$.

The projection $\pi_N$ associates a well-defined homology class in $H_2(N) \cong H_2(M)$ to any $2$-chain in $\check{W}_+$ whose boundary consists of a broken closed string corresponding to $\mathbf{y}_0$ on one side and $\Gamma$ on the other side. Then we can define maps
$$\mathfrak{A}_+: H_2(W_+, \mathbf{y}, \boldsymbol{\gamma}) \to H_2(M)$$
for all $\mathbf{y}$ such that $s_z(\mathbf{y_0})= \mathfrak{s}_{\xi}+ PD(A)$ and
$\boldsymbol{\gamma}$ such that $[\boldsymbol{\gamma}]=A$ by
$$\mathfrak{A}_+(C) = (\pi_N)_*[C_{\boldsymbol{\gamma}} \cup C \cup - C_{\mathbf{y}}].$$

\begin{defn}\label{defn: Phi twisted}
We define the $\F[H_2(M; \Z)]$-linear map
$$\underline{\Phi}_A: \underline{\widehat{CF}}(S,\mathbf{a},\hh(\mathbf{a}),
 \mathfrak{s}_{\xi} +PD (A)) \to \underline{PFC}_{2g}(N, A)$$
by
$$\underline{\Phi}_A(\mathbf{y}) = \sum_{\boldsymbol{\gamma} \in \widehat{\mathcal O}_{2g}}
\sum_{C \in H_2(W_+, \mathbf{y}, \boldsymbol{\gamma})} \# {\mathcal M}^{I=0}_{J_+}(\mathbf{y}, \boldsymbol{\gamma}, C) e^{\mathfrak{A}_+(C)} \boldsymbol{\gamma}.$$
\end{defn}

\begin{thm} \label{thm: Phi twisted is a chain map}
The map $\underline{\Phi}_A$ is a chain map.
\end{thm}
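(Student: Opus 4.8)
The plan is to reduce the statement to the untwisted case (Theorem~\ref{thm: Phi is a chain map}) together with bookkeeping of the $H_2(M;\Z)$-weights, exactly as is done for the analogous twisted statements in Heegaard Floer homology and in ECH. First I would recall that $\underline{\Phi}_A$ is $\F[H_2(M;\Z)]$-linear by construction, so it suffices to check $\underline{\partial}\circ\underline{\Phi}_A=\underline{\Phi}_A\circ\widehat{\underline{\partial}}$ on generators $\mathbf{y}$ with $s_z(\mathbf{y})=\mathfrak{s}_\xi+PD(A)$. Fixing a generator $\gamma'\in\widehat{\mathcal{O}}_{2g}$ with $[\gamma']=A$ and a class $C'\in H_2(W_+,\mathbf{y},\gamma')$, I would compare the coefficient of $e^{\mathfrak{A}_+(C')}\gamma'$ on both sides. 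The coefficient on each side is a signed (here, mod $2$) count of broken configurations, and the configurations appearing are precisely the boundary components of the one-dimensional moduli spaces $\mathcal{M}^{I=1}_{J_+}(\mathbf{y},\gamma',C)$ as $C$ ranges over the finitely many classes with $\mathfrak{A}_+(C)=\mathfrak{A}_+(C')$ that support holomorphic curves.

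The key step is to observe that the compactness and gluing analysis underlying Proposition~\ref{prop: Phi' chain map} and Theorem~\ref{thm: Phi is a chain map} is entirely local in the relative homology class: Theorem~\ref{thm: compactness for W plus} and Lemma~\ref{lemma: restriction of overline W plus curve} give, for each fixed $C$, a description of $\bdry\mathcal{M}^{I=1}_{J_+}(\mathbf{y},\gamma',C)$ as a disjoint union of products $\bigl(\mathcal{M}^{I=1}_{J}(\mathbf{y},\mathbf{y''},C_1)/\R\bigr)\times\mathcal{M}^{I=0}_{J_+}(\mathbf{y''},\gamma',C_2)$ and $\mathcal{M}^{I=0}_{J_+}(\mathbf{y},\gamma'',C_1)\times\bigl(\mathcal{M}^{I=1}_{J'}(\gamma'',\gamma',C_2)/\R\bigr)$ with $C_1\# C_2=C$ (suppressing connectors exactly as in the cited proof). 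Summing over all $C$ with $\mathfrak{A}_+(C)=\mathfrak{A}_+(C')$ and using that $\mathfrak{A}_+$ and $\mathfrak{A}$ (on the HF side) and $\mathfrak{A}'$ (on the PFH side) are additive under concatenation of matching chains — this is the compatibility statement asserted when the complete sets of paths were chosen in Section~\ref{spin-c revisited}, Section~\ref{subsection: twisted coefficients ECH}, and the paragraph preceding Definition~\ref{defn: Phi twisted} — the two contributions $A$ and $B$ organize themselves into the terms of $\underline{\Phi}_A\circ\widehat{\underline{\partial}}$ and $\underline{\partial}\circ\underline{\Phi}_A$ respectively, each with the common weight $e^{\mathfrak{A}_+(C')}$. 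Since $\bdry\mathcal{M}^{I=1}_{J_+}(\mathbf{y},\gamma',C)$ has an even number of points for each $C$, the total (mod $2$) count vanishes, giving $\underline{\partial}\circ\underline{\Phi}_A=\underline{\Phi}_A\circ\widehat{\underline{\partial}}$.

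Two routine points must be addressed along the way. First, one must check that $\underline{\Phi}_A$ descends from $\widehat{\underline{CF'}}$ to $\widehat{\underline{CF}}$: this is the twisted analog of the first paragraph of the proof of Theorem~\ref{thm: Phi is a chain map}, and it works because the substitution of $(\R\times\delta_{x_i'})^+$ for $(\R\times\delta_{x_i})^+$ provided by Lemma~\ref{lemma: trivial cylinder when x i involved} does not change the relative homology class in $H_2(W_+,\mathbf{y},\gamma)$, hence does not change $\mathfrak{A}_+$; choosing the complete set of paths $\{C_{\mathbf{y}}\}$ compatibly with the thin strips $D_i$, $D_i'$ (as in the proof of Theorem~\ref{t:hf twisted}) makes $\mathfrak{A}_+(D_i)=\mathfrak{A}_+(D_i')=0$ and the identification $\{x_i\}\cup\mathbf{y'}\sim\{x_i'\}\cup\mathbf{y'}$ respects the weights. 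Second, one must know that only finitely many classes $C$ with a fixed value of $\mathfrak{A}_+(C)$ carry holomorphic curves, so that the sums defining $\underline{\Phi}_A$ and the composites are finite; this follows from the $\omega$-area bound of Step~1 of Lemma~\ref{lemma: bound on F for W plus} exactly as finiteness was deduced in the compactness subsection of Section~\ref{section: variation of HF adapted to OB}. The main obstacle, such as it is, is purely notational: correctly matching up the three bookkeeping maps $\mathfrak{A}$, $\mathfrak{A}_+$, $\mathfrak{A}'$ under the gluing $C_1\# C_2=C$ and under the isomorphisms $H_1(W_+,\bdry_h W_+)\cong H_1(N,\bdry N)\cong H_1(M)$ and $H_2(N,e;\Z)\cong H_2(M;\Z)$, so that the weights on the glued configurations coincide; once this is set up, the proof is word-for-word that of Proposition~\ref{prop: Phi' chain map} and Theorem~\ref{thm: Phi is a chain map} with each count of buildings replaced by the same count weighted by $e^{\mathfrak{A}_+(\cdot)}$.
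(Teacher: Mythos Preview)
Your proposal is correct and follows exactly the approach indicated by the paper: the paper's own proof is a single sentence stating that it is the same as Theorem~\ref{thm: Phi is a chain map} plus bookkeeping of the homology classes of the holomorphic maps, and your write-up is a careful expansion of precisely that bookkeeping. The points you flag (additivity of $\mathfrak{A}$, $\mathfrak{A}_+$, $\mathfrak{A}'$ under concatenation, descent to the quotient via the choice of paths compatible with $D_i$, $D_i'$, and finiteness via the $\omega$-area bound) are exactly the details the paper's one-line proof is implicitly invoking.
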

\begin{proof}
The proof is the same as that of Theorem~\ref{thm: Phi is a chain map}, plus some bookkeeping of the homology classes of the holomorphic maps involved.
\end{proof}

\subsection{Gluing} \label{subsection: gluing for Phi}

We explain here how to glue the pair $(v_0,v_{-1})$ consisting of a $W_+$-curve $v_0$ with $I(v_0)=0$ and an ECH curve $v_{-1}$ with $I(v_{-1})=1$, by inserting a branched cover of a trivial cylinder (a {\em connector}). The connector is needed because, while the negative ends of $v_0$ and the positive ends of $v_{-1}$
converge to the same orbit sets, they usually do not match; in fact the ends of $v_0$ must satisfy the outgoing partition, while the ends of $v_{-1}$ must satisfy the incoming partition.  The same situation appears in the definition of ECH and the procedure of gluing two $I_{ECH}=1$ curves in the symplectization, as explained in Hutchings-Taubes~ \cite{HT1,HT2}, applies with very little modification.

\subsubsection{Close to breaking}

We first make precise what we mean by a holomorphic curve which is ``close to breaking''.  We rephrase the definition for $W'$ (which appears in \cite{HT1}), leaving the analogous definitions for $W$, $W_+$, and $W_-$ to the reader.

Choose an $s$-invariant Riemannian metric $g$ on $W'$ and denote the induced distance by $d_g$.

\begin{defn} \label{defn: kappa nu close curves}
Given $\kappa,\nu>0$, two curves $u_i: (F_i,j_i)\to W'$, $i=1,2$, are {\em $(\kappa,\nu)$-close} if there exists a $(1+\nu)$-quasiconformal homeomorphism $\phi: F_1\stackrel\sim\to F_2$ with respect to $(j_1,j_2)$ such that
$$\sup_{x\in F_1} d_g( u_1(x), u_2\circ \phi(x)) \leq \kappa.$$
\end{defn}

Given a $J$-holomorphic curve $u$ in $\R \times Y$, by an abuse of notation we will denote the restriction of $u$ to the preimage of $[C, + \infty) \times M$ by $u|_{s\leq C}$. (Self-explanatory variations of this notations will also be used).
The following is similar to \cite[Definition 1.10]{HT1}, although the phrasing is slightly different.

\begin{defn}\label{defn: kappa nu close buildings}
Let $\kappa>0$ and $\nu\geq 0$. A curve $u$ in $W'$ is {\em $(\kappa,\nu)$-close} to an $(a+b+1)$-level building $u_\infty = v_{-b}\cup\dots\cup v_a$ if there exist $R^{(-2b+1)},\dots, R^{(2a)} >{1\over \kappa}$ such that, after a suitable translation of $u$ in the $s$-direction which we still call $u$, each of the pairs below is $(\kappa,\nu)$-close:
\begin{itemize}
\item ${u}|_{-R^{(0)}\leq s\leq R^{(1)}}$ and ${v}_0|_{-R^{(0)}\leq s\leq R^{(1)}}$;
\item ${u}|_{R^{(1)}\leq s\leq R^{(1)}+R^{(2)}}$ and the restriction of a collection of branched covers of trivial cylinders to $R^{(1)}\leq s \leq R^{(1)}+R^{(2)}$;
\item ${u}|_{R^{(1)}+R^{(2)}\leq s\leq R^{(1)}+2R^{(2)}+R^{(3)}}$ and an $s\mapsto s+R^{(1)}+2R^{(2)}$ translate of ${v}_1|_{-R^{(2)} \leq s\leq R^{(3)}}$;
\item ${u}|_{R^{(1)}+2R^{(2)}+R^{(3)}\leq s\leq R^{(1)}+2R^{(2)}+R^{(3)}+R^{(4)}}$ and the restriction of a collection of branched covers of trivial cylinders to $R^{(1)}+2R^{(2)}+R^{(3)}\leq s\leq R^{(1)}+2R^{(2)}+R^{(3)}+R^{(4)}$;
\end{itemize}
and so on.
\end{defn}

Note that, in the case of ${u}$ in ${W}_+$ or ${W}_-$, we do not need to (and indeed we cannot) translate ${u}$ in the $s$-direction.

\subsubsection{Review of \cite{HT1,HT2}} \label{subsubsection: reviw of HT}

We now summarize the Hutchings-Taubes proof of $\bdry^2=0$ and discuss the small changes that need to be carried out.

Let $(Y,\xi =\ker \lambda )$ be a closed $3$-manifold with a nondegenerate contact form $\lambda$ and corresponding Reeb vector field $R=R_\lambda$. Let $(\R \times Y ,d(e^s \lambda ))$ be the symplectization of $Y$, where $s$ denotes the $\R$-coordinate, and let $J$ be an adapted almost complex structure on $\R\times Y$.

Let $\alpha=(\alpha_1,\dots,\alpha_k)$ and $\beta=(\beta_1,\dots,\beta_l)$ be ordered sets of Reeb orbits of $R$. {\em This notation is confined to Section \ref{subsection: gluing for Phi} and therefore should not create confusion with the similar notation denoting the Heegaard Floer multicurve.} Here the Reeb orbits may not be embedded and may also be repeated. Then let $\mathcal{M}_J (\alpha,\beta)$ be the moduli space of finite energy $J$-holomorphic curves in $(\R\times Y,J)$ from $\alpha$ to $\beta$. Let $\{\gamma_1,\gamma_2,\dots\}$ be the list of simple orbits of $R$. For each simple orbit $\gamma_i$, we tally the total multiplicity $m_i(\alpha)$ of $\gamma_i$ in $\alpha$. In this way we can assign an orbit set $\boldsymbol{\gamma}(\alpha)=\prod_i \gamma_i^{m_i(\alpha)}$ to $\alpha$.

We want to glue the pair $(u_+,u_-)$, where $u_+ \in \mathcal{M}_J^{I=1} (\alpha_+, \beta_+ )$ and $u_- \in \mathcal{M}_J^{I=1} (\beta_-,\alpha_-)$, provided $\boldsymbol{\gamma}(\beta_+)=\boldsymbol{\gamma}(\beta_-)$.  Since $I(u_+)=I(u_-)=1$, the curves $u_+$ and $u_-$ satisfy the {\it partition condition} at $\beta_+$ and $\beta_-$ for a generic $J$ (cf.\ Definition 7.11 and Proposition 7.14 of \cite{HT1}). In particular, for each simple orbit $\gamma_i$, the total multiplicity $m_i(\beta_+)=m_i(\beta_-)$ completely determines the number of ends of $u_+$ or $u_-$ going to a cover of $\gamma_i$, together with their individual multiplicities. For $u_+$ (resp.\ $u_-$), this is encoded by the {\em incoming partition} (resp.\ {\em outgoing partition}) and is denoted by $P^{\op{in}}_{\gamma_i}(m_i(\beta_+))$ (resp.\ $P^{\op{out}}_{\gamma_i}(m_i(\beta_-))$).  If $P^{\op{in}}_{\gamma_i}(m_i(\beta_+))=(a_1,\dots,a_r)$, then $u_+$ has $r$ negative ends which go to a cover of $\gamma_i$ with covering multiplicities $a_1,\dots,a_r$. A similar statement holds for the positive ends of $u_-$.

Since $P^{\op{in}}_{\gamma_i}(m_i(\beta_+))\not=P^{\op{out}}_{\gamma_i}(m_i(\beta_-))$ in general, we need to insert branched covers of cylinders $\R\times \gamma_i$ in order to be able to glue $u_+$ to $u_-$.  Such branched covers $\pi:\Sigma\to \R\times \gamma_i$ must have $\beta_+$ at the positive end and $\beta_-$ at the negative end. A Fredholm index count (\cite[Lemma 1.7]{HT1}) implies that $\Sigma$ must have genus zero;
moreover, the partition condition is equivalent to saying that the Fredholm index of the composition  $u_\Sigma = \iota \circ \pi :\Sigma\to \R\times Y$ is zero, where $\iota:\R\times \gamma_i \to \R\times Y$ is the inclusion of the trivial cylinder $\R\times \gamma_i$.

Given $u_+ \in \mathcal{M}_J^{I=1} (\alpha_+ ,\beta_+ )$ and $u_- \in \mathcal{M}_J^{I=1} (\beta_-,\alpha_-)$ with $\boldsymbol{\gamma}(\beta_+)=\boldsymbol{\gamma}(\beta_-)$,
we define ${\mathcal G}_{\kappa, \nu}(u_+, u_-)$ as the set of curves in $\mathcal{M}_J^{I=2}(\alpha_+, \alpha_-)$ which are $(\kappa, \nu)$-close to breaking to $(u_+, u_-)$. This is a rephrasing of \cite[Definition 1.10]{HT1}. We define $G(u_+, u_-)$ as the modulo $2$ count of the boundary points of ${\mathcal G}_{\kappa, \nu}(u_+, u_-)/ \R$ for $\kappa$ and $\nu$ sufficiently small.

The main result of \cite{HT1, HT2} implies the following.
\begin{thm} \label{thm: ECH gluing}
Let $u_+ \in \mathcal{M}_J^{I=1} (\alpha_+ ,\beta_+ )$ and $u_- \in \mathcal{M}_J^{I=1} (\beta_-,\alpha_-)$ be $J$-holomorphic curves with $\boldsymbol{\gamma}(\beta_+) = \boldsymbol{\gamma}(\beta_-)$. Then $G(u_+, u_-)=1$.
\end{thm}
It is clear that Theorem \ref{thm: ECH gluing} is sufficient to define ECH; the original Hutchings-Taubes gluing theorem is more general, but for simplicity we have stated only the relevant part.

\s
We now give the steps of the Hutchings-Taubes gluing construction. For simplicity we assume that $\boldsymbol{\gamma}(\beta_+)=\boldsymbol{\gamma}(\beta_-)=\gamma^{m}$ and $\gamma$ is elliptic.  The case of more than one orbit $\gamma_i$ only differs in notation and the case of hyperbolic orbits is simpler since it can be treated with standard techniques; see \cite[Section 1.5]{HT1}.

\s\n {\em Step 1: Form a preglued curve}  (cf.\ \cite[Section 5.2]{HT2}). First we fix some notation. Let $\mathcal{M}$ be the moduli space of connected, genus zero branched covers $\pi: \Sigma\to \R\times \gamma$ which have positive and negative ends determined by $P^{\op{in}}_{\gamma}(m)$ and $P^{\op{out}}_{\gamma}(m)$. Abusing the notation, we will often write only $\Sigma$.
The preglued curve is constructed as follows:

\s\n
(1) Choose gluing constants $0<h<1$ and $r\gg 1/h$.

\s\n
(2) With $r,h$ fixed, choose the ``gluing parameters'' which consist of $\Sigma \in \mathcal{M}$ and real numbers $T_+ , T_- \geq 5r$.

\s\n
(3) Given $\pi:\Sigma\to \R\times \gamma$, let ${\frak b}\subset \R\times \gamma$ be the set of branch points of $\pi$. Then let $s_+=\max_{b\in {\frak b}} s(b) +1$ and $s_-=\min_{b\in {\frak b}} s(b)-1$. Let $\Sigma'$ be the preimage of  $[s_--T_-, s_+ +T_+] \times \gamma$. The preimages of $[ s_+, s_+ +T_+] \times \gamma$ are cylinders of ``height'' $T_+$  and the preimages of $[s_--T_-,s_-]$ are cylinders of ``height'' $T_-$.

\s\n
(4) Fix $\kappa>0$ sufficiently small.  We choose a representative $u_+$ in $[u_+]\in \mathcal{M}_J^{I=1} (\alpha_+ ,\beta_+ )/\R$ such that each component of $u_+|_{s\leq 0}$ is $(\kappa,0)$-close to a cylinder over some multiple cover of $\gamma$; here the multiplicities are given by $P^{\op{in}}_{\gamma}(m)$. Similarly, choose $u_-$ so that each component of $u_-|_{s\geq 0}$ is $(\kappa,0)$-close to a cylinder over some multiple cover of $\gamma$.

\s\n
(5) Let $u_{+T}$ be the $(s_+ +T_+)$-translate of $u_+$ in the $\R$-direction and let $u_{+T}' = u_{+T}|_{s\geq s_+ + T_+}$. Similarly, let $u_{-T}$ be the $(s_--T_-)$-translate of $u_-$ in the $\R$-direction and let $u_{-T}'= u_{-T}|_{s\leq s_-  -T_-}$. The domain $C_*$ of the preglued curve is $C'_{+T} \cup \Sigma' \cup C'_{-T}$, where $C'_{\pm T}$ are domains of $u'_{\pm T}$, modulo the identifications along their boundary components.  (To do the identifications correctly, we need asymptotic markers at the ends.)

\s\n
(6)  The preglued map $u_*$ is defined explicitly (see \cite[Equations (5.5) and (5.6)]{HT2}) via a cutoff functions which allow us to interpolate between $u_{\pm T}$ and $\Sigma$ in the preimages of the regions $s_+ \leq s\leq s_+ +T_+$ and $s_--T_-\leq s \leq s_-$. These cutoff functions involve the constants $h$ and $r$.

\s\n {\em Step 2: Deform the preglued curve} (cf.\ \cite[Section 5.3]{HT2}).
We choose ``exponential maps'' $e_-,e,e_+$ which are obtained by flowing in the directions ``normal'' to $u_-,u_\Sigma,u_+$ in $\R\times Y$.
The exponential maps can be glued to give the exponential map $e_*$ which maps to a small tubular neighborhood of $u_*$ in $\R\times Y$.  Also choose the cutoff function $\widetilde\beta_+: C_*\to [0,1]$\footnote{The notation in \cite{HT2} is $\beta_+$. Here write $\widetilde\beta_+$ to distinguish it from the sets of orbits in Section~\ref{subsubsection: reviw of HT}.} which equals $1$ on $C'_{+T}$, interpolates between $0$ and $1$ on the cylinders $s_+ \leq s\leq s_+ +T_+$ in $\Sigma'$ and equals $0$ elsewhere. Similarly define $\widetilde\beta_\Sigma$ and $\widetilde\beta_-$.

Let $(\psi_+,\psi_\Sigma,\psi_-)$ be a triple, where $\psi_\pm$ is a section of the normal bundle of $u_{\pm T}$ and $\psi_\Sigma$ is a section of the normal bundle of $u_\Sigma$ (which is trivial, so that $\psi_\Sigma$ can be identified with a function $\Sigma\to \C$). The deformation of $u_*$ with respect to $(\psi_+,\psi_\Sigma,\psi_-)$ is a map $C_*\to \R\times Y$ given by
$$x\mapsto e_* (x, \widetilde\beta_- \psi_- +\widetilde\beta_\Sigma \psi_\Sigma + \widetilde\beta_+ \psi_+ ).$$

\s\n {\em Step 3:} We now consider the equation for the deformation to be $J$-holomorphic.  This equation has the form:
\begin{equation} \label{eqn: for deformation to be J holom}
\widetilde\beta_- \Theta_- (\psi_- ,\psi_\Sigma )+\widetilde\beta_\Sigma \Theta_\Sigma (\psi_- ,\psi_\Sigma ,\psi_+ ) +\widetilde\beta_+ \Theta_+ (\psi_\Sigma ,\psi_+ )=0.
\end{equation}
See Equations (5.11), (5.12) and (5.13) of \cite{HT2} for explicit expressions of $\Theta_-$, $\Theta_+$ and $\Theta_\Sigma$.

The strategy is to solve three equations separately:
\begin{equation} \label{theta minus}
\Theta_- (\psi_-, \psi_\Sigma )=0 \quad\mbox{ on all of $u_{-T}$};
\end{equation}
\begin{equation} \label{theta plus}
\Theta_+ (\psi_\Sigma ,\psi_+ )=0 \quad\mbox{ on all of $u_{+T}$};
\end{equation}
\begin{equation} \label{theta zero}
\Theta_\Sigma (\psi_- ,\psi_\Sigma ,\psi_+ )= 0 \quad \mbox{ on all of $\Sigma$}.
\end{equation}

In \cite[Proposition 5.6]{HT2} it is shown that for sufficiently small $\psi_\Sigma$ (in a suitable Banach space) there exist maps $\psi_\pm$ such that $\psi_\pm=\psi_\pm(\psi_\Sigma)$ solves Equations~\eqref{theta minus} and \eqref{theta plus}.

We can then view Equation~\eqref{theta zero} as an equation in the variable $\psi_\Sigma$ on all of $\Sigma$:
\begin{equation} \label{theta zero new}
\Theta_\Sigma (\psi_- (\psi_\Sigma ), \psi_\Sigma ,\psi_+ (\psi_\Sigma ))=0.
\end{equation}
Equation~\eqref{theta zero new} can be written as
\begin{equation}\label{theta zero again}
D_\Sigma\psi_\Sigma+ \mathcal{F}_\Sigma(\psi_\Sigma)=0,
\end{equation}
where $D_\Sigma$ is the normal component of the linearized Cauchy-Riemann operator at $u_{\Sigma}$ and $\mathcal{F}_\Sigma$ is a nonlinear term which is small in some suitable sense.

\s\n {\em Step 4:} If $N$ denotes the normal bundle to $\R \times \gamma$ in $\R \times M$, then
$$D_\Sigma : L^2_1(\pi^* N) \to L^2(\pi^*N \otimes T^{0,1} \Sigma).$$
The operator $D_\Sigma$ is injective and moreover $\dim \op{coker} (D_\Sigma)= \dim{\mathcal M}$; see \cite[Lemma 2.14]{HT1} and \cite[Lemma 2.15]{HT1}. The explicit form of $D_\Sigma$ is given in \cite[Definition 2.3]{HT1}.

We introduce the orthogonal projection
$$\Pi : L^2(\pi^*N \otimes T^{0,1} \Sigma) \to \ker (D_\Sigma^*) \cong \op{coker}(D_\Sigma)$$
and decompose Equation~\eqref{theta zero again} into two equations (cf.\ Equations~(5.37) and (5.38) in \cite{HT2}):
\begin{equation}\label{eq one}
D_\Sigma\psi_\Sigma+(1-\Pi)\mathcal{F}_{\Sigma}(\psi_\Sigma)=0,
\end{equation}
\begin{equation} \label{eq two}
\Pi \mathcal{F}_{\Sigma}(\psi_\Sigma)=0
\end{equation}
Equation~\eqref{eq one} has a unique solution by \cite[Proposition 5.7]{HT2}. Hence the problem reduces to solving Equation~\eqref{eq two}.

This is shown to be equivalent to finding the zeros of a section of the associated obstruction bundle, defined in \cite[Section 2]{HT1}. Briefly, the obstruction bundle \footnote{In \cite{HT2} the obstruction bundle is denoted by ${\mathcal O}$. However, in \cite{HT1}, $\mathcal{O}$ denotes a related, but different, bundle and this causes confusion. We reserve the notation $\mathcal{O}$ for the latter bundle.} $\mathcal{O}' \to [5r,\infty)^2\times\mathcal{M}$ has fiber
$$\mathcal{O}'_{(T_+,T_-,\Sigma)}=\op{Hom}(\op{Coker}(D_\Sigma),\R),$$
and the section ${\frak s}: [5r,\infty)^2\times\mathcal{M}\to \mathcal{O}'$ is given by:
$${\frak s}(T_+,T_-,\Sigma)(\sigma)= \langle\sigma,\mathcal{F}_{\Sigma}(\psi_\Sigma)\rangle,$$
where $\sigma\in \op{Coker}(D_\Sigma)$ and $\psi_\Sigma$ is the solution to Equation~\eqref{eq one} corresponding to the gluing parameters $(T_+,T_-,\Sigma)$.

\s\n {\em Step 5:} Recall that $\mathcal{G}_{\kappa,\nu}(u_+,u_-)$ is the set of $J$-holomorphic maps of index $I=2$ which are $(\kappa,\nu)$-close to breaking into $(u_+,u_-)$ (see~\cite[Section 7.1]{HT2}).  Let $\mathcal{U}_{\kappa,\nu}\subset [5r,\infty)^2\times \mathcal{M}$ be the set of $(T_+,T_-,\Sigma)$ such that the corresponding pre-glued curve $u_*(T_+,T_-,\Sigma)$ (as given in Step 1) is $(\kappa,\nu)$-close to breaking.
It remains to show that $\mathcal{G}_{\kappa,\nu}(u_+,u_-)$ is homeomorphic to ${\frak s}^{-1}(0)\cap \mathcal{U}_{\kappa,\nu}$ for $r>0$ sufficiently large and $\kappa,\nu$ sufficiently small.  This is the content of \cite[Theorem 7.3]{HT2}.

\s\n {\em Step 6:} We define the linearized section ${\frak s}_0$ which depends  only  on the asymptotic eigenfunctions of the negative ends of $u_+$ and the
positive ends of $u_-$ 
but not on the actual choices of $u_+$ and $u_-$. The explicit formula for
$\mathfrak{s}_0$ is given in \cite[Definition 8.1]{HT1}.
We consider the linear deformation ${\frak s}_t = t  {\frak s} + (1-t)  {\frak s}_0$.

\s\n  {\em Step 7:}
Consider the action of $\R$ on $[5r, + \infty)^2 \times {\mathcal M}$ that fixes the $[5r, + \infty)$ factors and translates the $s$-coordinate on ${\mathcal M}$. This action extends to the obstruction bundle and the sections $\mathfrak{s}_t$ are invariant under this action. Hence the obstruction bundle can be viewed as a bundle over $[5r, + \infty)^2 \times {\mathcal M}/ \R$ which we still denote by ${\mathcal O}'$ by abuse of notation, and each $\mathfrak{s}_t$ can be viewed as a section of it.

Given $R \ge 10r$, we define the ``slice'' ${\mathcal V}_R$ which consists of triples
$$(T_-, T_+, \Sigma) \in [5r, + \infty)^2 \times {\mathcal M}, \quad T_+ + s_+ - s_- + T_- = R.$$
The action of $\R$ on $[5r, + \infty)^2 \times {\mathcal M}$ preserves ${\mathcal V}_R$ and the boundary of $\mathcal{G}_{\kappa,\nu}(u_+,u_-)$ is in bijection with $\mathfrak{s}^{-1}(0) \cap {\mathcal V}_R/ \R$; in other words
$$G(u_+, u_-) = \# \left ( \mathfrak{s}^{-1}(0) \cap {\mathcal V}_R/ \R \right ).$$

It turns out that the section $\mathfrak{s}_0$ has the same number of zeroes as $\mathfrak{s}$ on ${\mathcal V}_R/ \R$.
The key point to check is that, during the deformation $({\frak s}_t )_{t\in [0,1] }$ from ${\frak s}_1={\frak s}$ to ${\frak s}_0$, the zeros of ${\frak s}_t$ do not cross the boundary of  ${\mathcal V}_R/ \R$. This is guaranteed by \cite[Proposition 8.2]{HT2}.

\s\n {\em Step 8:} Define
$${\mathcal M}_R = \{ \Sigma \in {\mathcal M} | -\tfrac{R}{2} + 5r \le s_-, s_+ \le \tfrac{R}{2} -5r \}.$$
Then ${\mathcal V}_R / \R\simeq {\mathcal M}_R$ and the bundle ${\mathcal O} \to {\mathcal M}_R$ with fiber ${\mathcal O}_\Sigma = \op{Hom}(\op{Coker}(D_\Sigma),\R)$ is isomorphic to the bundle ${\mathcal O}' \to {\mathcal V}_R / \R$. The explicit formula for $\mathfrak{s}_0$, regarded as a section
of ${\mathcal O} \to {\mathcal M}_R$, is given in \cite[Definition 3.2]{HT1}. See \cite[Remark 8.5]{HT2} for more details about the correspondence between ${\mathcal O}'$ and ${\mathcal O}$. Finally, the combinatorial formula for the algebraic count of zeros of $\mathfrak{s}_0$ (as a section of ${\mathcal O}$) is given by \cite[Theorem 1.13]{HT1}.

\subsubsection{The $\Phi$-map}

We now turn to gluing the pair $(v_0,v_{-1})$, where $v_0$ is a $W_+$-curve with $I(v_0)=0$ and $v_{-1}$ is an ECH curve with $I(v_{-1})=1$. Suppose the negative end of $v_0$ is given by $\beta_+$, the positive end of $v_{-1}$ is given by $\beta_-$, and $\boldsymbol{\gamma}(\beta_+)=\boldsymbol{\gamma}(\beta_-)=\gamma^{m}$
for some elliptic orbit $\gamma$. Again,  the case of more than one orbit $\gamma_i$ only differs in notation and the case of hyperbolic orbits can be treated by standard techniques. In our case, there are a few things to check:

\s\n
(1) The curves $v_0$ and $v_{-1}$ must satisfy the partition conditions at their negative and positive ends, respectively.  This is a consequence of the ECH index inequality, i.e., Theorem~\ref{thm: index inequality for W+ and W-}.

\s\n
(2) Since the $W_+$-curve $v_0$ is not $s$-translation invariant, we pick $s_0$ so that each component of $v_0|_{s\leq s_0}$ is $(\kappa,0)$-close to a cylinder over some multiple cover of $\gamma$. (We may still assume that $v_{-1}$ satisfies the condition that $v_{-1}|_{s\geq 0}$ is $(\kappa,0)$-close to a cylinder.)  Given $(T_+,T_-,\Sigma)\in [5r,+\infty)^2\times \mathcal{M}$, we take
\begin{itemize}
\item $v_0|_{s\geq s_0}$;
\item $\Sigma'$ shifted by $s_0-(s_++T_+)$; and
\item $v_{-1}|_{s\leq 0}$ shifted by $(s_--T_-)+s_0-(s_++T_+)$;
\end{itemize}
and preglue. The preglued curve only depends on $(T_+, T_-,[\Sigma])$, where $[\Sigma] \in {\mathcal M}/\R$.

\s\n
(3) \cite[Proposition 5.6]{HT2} allows us to solve Equations~\eqref{theta minus} and \eqref{theta plus} in terms of $\psi_\Sigma$. The inputs for \cite[Proposition 5.6]{HT2} are \cite[Lemmas 5.3 and 5.4]{HT2}, which are consequences of the fact that the linearized $\overline\bdry$-operators corresponding to $v_0$ and $v_{-1}$ are Fredholm and surjective; this also holds in our case with Lagrangian boundary conditions.  Hence Step 3 extends easily to our setting,  yielding a section $\mathfrak{s}$ of the obstruction bundle ${\mathcal O}' \to [5r, + \infty)^2 \times {\mathcal M}/ \R$.
The linearized section $\mathfrak{s}_0$ and the ``slice'' $\mathcal{V}_R$ are defined similarly and, after identifying ${\mathcal O}'\to \mathcal{V}_R/\R$ with the bundle ${\mathcal O} \to {\mathcal M}_R$, the remaining steps carry over without change.

\subsection{The variant $\widetilde\Phi$} \label{subsection: variant widetide psi}

In this subsection we define a complex $\widetilde{CF}(S,{\bf a},\hh({\bf a}))$ which is
closely related to $\widehat{CF}(S,{\bf a},\hh({\bf a}))$ and a variant
$$\widetilde \Phi: \widetilde{CF}(S,{\bf a},\hh({\bf a})) \to PFC_{2g}(N)$$
of $\Phi$.  This will be useful in the proof of Theorem~II.\ref{P2-thm: chain homotopy part i}. As a vector space, $\widetilde{CF}(S,{\bf a},\hh({\bf a}))$ will
also be used in Section \ref{section: chain map psi}.
\nom[1u$\phi$]{$\widetilde\Phi$}{Variant of $\Phi$ from $\widetilde{CF}(S,\mathbf{a},\hh(\mathbf{a}))$ to $PFC_{2g}(N)$}
\nom[CF1]{$\widetilde{CF}(S,\mathbf{a},\hh(\mathbf{a}))$}{Variant of $\widehat{CF}(S,\mathbf{a},\hh(\mathbf{a}))$ given by Definition~\ref{defn: variant chain cx}}

\subsubsection{The chain complex $\widetilde{CF}(S,\mathbf{a},\hh(\mathbf{a}))$}

Let $\overline{J}\in \mathcal{J}_{\overline{W}}^{reg}$. We recall some notation introduced in Section \ref{subsubsection: variant CF of S}: Let $\mathcal{I} \subset \{1, \ldots ,2g \}$ be a subset, $\mathcal{I}^c$ its complement, and $\mathfrak{S}_{\mathcal{I}^c}$ the group of permutations of $\mathcal{I}^c$.

\begin{defn} \label{defn: variant chain cx}
We define the chain complex $(\widetilde{CF}(S,\mathbf{a},\hh(\mathbf{a}),\overline{J}),
\widetilde\bdry)$ generated by the $2g$-tuples $\{z_{\infty, i}\}_{i \in \mathcal{I}}\cup
{\bf y'},$ where $\mathcal{I} \subset \{1, \ldots ,2g \}$, ${\bf y'}=\{y'_i \}_{i
\in \mathcal{I}^c}$, and the following hold:
\begin{itemize}
\item $z_{\infty,i}$ is viewed as an intersection point of $\overline{a}_i$ and $\overline{\hh}(\overline{a}_i)$ and
\item $y'_i\in a_i \cap \hh(a_{\sigma(i)})$ for some $\sigma\in {\frak S}_{\mathcal{I}^c}$.
\end{itemize}
\nom[Sbar]{$\mathcal{S}_{\overline{\bf a}, \overline{\hh}(\overline{\bf a})}$}{$2g$-tuples of intersection points $\{z_{\infty, i}\}_{i \in \mathcal{I}}\cup {\bf y'}$ of $\overline{\bf a}$ and $\overline{\hh}(\overline{\bf a})$}
We denote the set of intersection points $\{z_{\infty, i}\}_{i \in \mathcal{I}}\cup {\bf y'}$ as above by $\mathcal{S}_{\overline{\mathbf{a}}, \overline{\hh}(\overline{\mathbf{a}})}$.

The datum $z_{\infty, i}$ is equivalent to ``$z_\infty$ with either of the two matchings $(i,0)\to (i,0)$ or $(i,1)\to (i,1)$''
and therefore, using the notation of Section~\ref{subsection: modified indices at z infty}, $\{z_{\infty, i}\}_{i \in \mathcal{I}}\cup {\bf y'}$ is equivalent to the equivalence class of elements $z_{\infty}^p (\overrightarrow{\mathcal D}) \cup {\bf y'}$, where ${\mathcal I} = \{i_1, \ldots , i_p \}$ and $\overrightarrow{\mathcal D}=
\{ (i_1, j_1) \to (i_1, j_1), \dots, (i_1, j_1) \to (i_p, j_p) \}$ for any choice of $(j_1, \ldots,
j_p) \in \{0, 1 \}^p$.

The differential $\widetilde{\bdry}$ counts $I=1$ multisections $\overline{u}$ in
$\overline{W}$ with $n(\overline{u})\leq 1$, which satisfy one extra condition, i.e., if
we write $\overline{u}=\overline{u}'\cup\overline{u}''$ (according to the notation
introduced in Section \ref{subsubsection: multisections}), then $\overline{u}'$ has
empty branch locus. The homology of $(\widetilde{CF}(S,\mathbf{a},\hh(\mathbf{a})),\widetilde\bdry)$ is denoted by $\widetilde{HF}(S,\mathbf{a},\hh(\mathbf{a}))$.
\end{defn}

In the differential $\widetilde{\bdry}$, with the exception of trivial strips, we are counting the following curves:
\begin{enumerate}
\item thin strips from $z_{\infty,i}$ to either $x_{i}$ or $x'_{i}$; and
\item $I=1$ curves whose projections to $\overline{S}$ have image in $S$.
\end{enumerate}

\begin{lemma}
There is an isomorphism of chain complexes:
$$\kappa:(\widehat{CF}(\Sigma,\boldsymbol\beta,\boldsymbol\alpha,z),\widehat\bdry)\stackrel\sim\to (\widetilde{CF}(S,\mathbf{a},\hh(\mathbf{a})),\widetilde\bdry),$$
$$\{ x''_i \}_{i \in \mathcal{I}}\cup {\bf y'} \mapsto \{ z_{\infty,i}\}_{i \in \mathcal{I}}
\cup {\bf y'},$$
where $(\Sigma,\boldsymbol\beta,\boldsymbol\alpha,z)$ is as given in Section~\ref{subsubsection: Heegaard diagram compatible with S h}.
\end{lemma}

\begin{proof}
This is immediate from the definitions.
\end{proof}

Recall from Section~\ref{subsubsection: variant CF of S} that
$$(\widehat{CF}(S,{\bf a},\hh({\bf a})),\widehat{\bdry'})= (\widehat{CF'}(S,{\bf a},\hh({\bf a})),\bdry')/\sim$$ is the $E^1$-term of the spectral sequence for $(\widehat{CF}(\Sigma,\boldsymbol\beta,\boldsymbol\alpha,z),\widehat\bdry)$ (viewed as a double complex) in Theorem~\ref{t:hf}. (Here we are writing $\widehat{\bdry'}$ for the differential of $\widehat{CF}(S,{\bf a},\hh({\bf a}))$ to distinguish it from the differential $\widehat{\bdry}$ of $\widehat{CF}(\Sigma,\boldsymbol\beta,\boldsymbol\alpha,z)$.) 

In this paragraph ${\bf y}$ and ${\bf y}'_i$ denote linear combinations of tuples.
By tracing the zigzags in the double complex we obtain the isomorphism
$$\nu:\widehat{HF}(S,{\bf a},\hh({\bf a}))\stackrel\sim\to
\widehat{HF}(\Sigma,\boldsymbol\beta,\boldsymbol\alpha,z),$$
which is defined as follows:  Let ${\bf y}\in \widehat{CF'}(S,{\bf a},\hh({\bf a}))$ be a cycle in $\widehat{CF}(S,{\bf a},\hh({\bf a}))$, i.e.,
$$\bdry'{\bf y}=\sum_{i}(\{x_i\}\cup {\bf y}'_i+\{x_i'\}\cup {\bf y}'_i).$$
Then $\nu$ maps the equivalence class $[{\bf y}]$ to the equivalence class of
$${\bf y} + \sum_i \{x_i''\}\cup {\bf y}'_i+\mbox{ h.o.},$$
where ``h.o.'' means terms with more $x_i''$ components. Composing with the map
induced by $\kappa$ in homology, we obtain an isomorphism
$$\tilde{\nu} = \kappa_* \circ \nu: \widehat{HF}(S,{\bf a},\hh({\bf a}))\stackrel\sim\to \widetilde{HF}(S,\mathbf{a},\hh(\mathbf{a})).$$

\subsubsection{The map $\widetilde\Phi$}
 Let
$$\mathcal{M}_{\overline{J}_+} (\{z_{\infty,i}\}_{i \in \mathcal{I}}\cup {\bf y'},\boldsymbol{\gamma})=\coprod_{\overrightarrow{\mathcal{D}}}\mathcal{M}_{\overline{J}_+}(\{z_\infty^{\# \mathcal{I}}(\overrightarrow{\mathcal{D}})\}\cup {\bf y'},\boldsymbol{\gamma}),$$
where the moduli spaces on the right-hand side are defined in a manner analogous to Definition~\ref{overline W curve, version 2} and $\overrightarrow{\mathcal{D}}$ ranges over all matchings $\{(i,j_i)\to (i,j_i)\}_{i\in\mathcal{I}, j_i\in\{0,1\}}$.

\begin{defn}
Let $\overline{J}_+\in \mathcal{J}_{\overline{W}_+}^{reg}$ which restricts to $\overline{J} \in \mathcal{J}_{\overline{W}}^{reg}$ and $\overline{J}'
\in \mathcal{J}_{\overline{W'}}^{reg}$. We define the map $\widetilde\Phi : \widetilde{CF}(S, \mathbf{a}, \hh(\mathbf{a})) \to PFC_{2g}(N)$ as follows:
$$\langle \widetilde\Phi(\{z_{\infty,i}\}_{i \in \mathcal{I}}\cup {\bf y'}),\boldsymbol{\gamma}\rangle=
\#\mathcal{M}^{I=0,n^*\leq | \mathcal{I} |}_{\overline{J}_+} (\{z_{\infty,i}\}_{i \in \mathcal{I}}
\cup {\bf y'},\boldsymbol{\gamma}).$$
\end{defn}

In our analysis of $\widetilde\Phi$ we will use balanced coordinates
 (cf.\ Section~\ref{subsubsection: overline W pm}) for $\overline{N}-int(N)=
D^2\times (\R/2\Z)$.  The Morse-Bott family $\mathcal{N}$ can be identified with
$\bdry D^2$ and we write $\gamma_\phi$ for the orbit in $\mathcal{N}$ corresponding
to $e^{i\phi}$. So far in this paper $h\in \mathcal{N}$ was an arbitrary point.

\begin{convention} \label{convention for h}
From now on we specialize $h$ so that $h=\gamma_{\phi_h}$ is generic and $\phi_h$ is close to $-{2\pi \over m}$, where $m$ is as defined in Section~\ref{coconut}. In particular, the radial ray corresponding to $\phi_h$ does not lie on the thin wedges from $\overline{a}_i$ to $\overline{\hh}(\overline{a}_i)$ for all $i$. There are no restrictions on the orbit $e$ except that $e\not=h$.
\end{convention}

\begin{lemma} \label{lemma: value of widetilde Phi}
$\widetilde\Phi(\{z_{\infty,i}\}_{i \in \mathcal{I}}\cup {\bf y'})=0$ if $\mathcal{I} \neq
\varnothing$ and $\widetilde\Phi({\bf y'})= \Phi(\mathbf{y})$ if $\mathcal{I} = \varnothing$.
\end{lemma}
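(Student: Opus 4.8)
The plan is to show that any $I=0$ multisection $\overline{u}:\dot F\to \overline{W}_+$ from $\{z_{\infty,i}\}_{i\in\mathcal{I}}\cup\mathbf{y'}$ to an orbit set $\gamma\in\widehat{\mathcal{O}}_{2g}$, with $n^+(\overline{u})\leq|\mathcal{I}|$, simply cannot exist when $\mathcal{I}\neq\varnothing$. The key tension is between the behaviour of $\overline u$ near the ends asymptotic to $z_\infty$ and the constraint $n^+(\overline u)\le|\mathcal I|$, together with the ECH index inequality for multisections with ends at $z_\infty$ (Lemma~\ref{index inequality for z infinity case} and the calculations of Section~\ref{example of I calc}). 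First I would set up notation: write $\overline u=\overline u'\cup\overline u''$ as in Section~\ref{subsubsection: multisections}, where $\overline u'$ is the part branch-covering $\sigma_\infty^+$ and $\overline u''$ the rest. Since $\gamma\in\widehat{\mathcal O}_{2g}$ is contained in $N$ (Convention~\ref{convention for gamma and bf y}), no negative end of $\overline u$ limits to $\delta_0$, so $\overline u'$ is a branched cover of $\sigma_\infty^+$ with only a positive end at a multiple of $z_\infty$; by Lemma~\ref{lemma: HF index of sections at infinity} (and its $\overline W_+$-analog) $I(\overline u')=0$. By the additivity of the ECH index, $I(\overline u'')=0$ as well.

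Next I would analyze $\overline u''$. Each positive end of $\overline u''$ limiting to $z_\infty$ has, near the puncture, a projection $\pi_{D^2}\circ\overline u''$ which is holomorphic and either constant (impossible, as it is a nonconstant end) or an open map onto a neighborhood of $z_\infty$. This is exactly the mechanism in the proof of Lemma~\ref{properties n+}(2): each such end forces the image of $\overline u$ to meet $(\sigma_\infty^+)^\dagger$ with multiplicity $\ge m$, hence contributes $\ge m$ to $n^+(\overline u)$. Wait — more carefully: the positive end of $\overline u''$ at $z_\infty$ projects to a sector $\mathfrak{S}(\overline a_{i_k,j_k},\overline h(\overline a_{i_k',j_k'}))$ under $\pi_{D^2}$, and by Convention~\ref{convention for h} the radial ray $\phi_h$ avoids the thin wedges, so this sector has angle bounded below by a constant independent of $m$ unless $(i_k',j_k')\to(i_k,j_k)$ is a "diagonal" thin-wedge matching. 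Combining: if $\overline u''$ has any positive end at $z_\infty$ whose associated sector is \emph{not} a thin wedge, then $n^+(\overline u)\ge m\gg 2g\ge|\mathcal I|$, contradicting $n^+(\overline u)\le|\mathcal I|$. So every $z_\infty$-end of $\overline u''$ is a thin wedge, and by Lemma~\ref{lemma: transversality thin strips} each such end belongs to a thin-strip component from $z_{\infty,i}$ to $x_i$ or $x_i'$, which does not reach $\gamma$. The remaining components of $\overline u''$ form a multisection of strictly smaller degree from a sub-tuple of $\mathbf{y'}$ to $\gamma$.

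Then the counting argument closes the proof: the thin-strip components carry positive ECH index (by Lemma~\ref{lemma: ECH index of thin wedges}, each thin wedge contributes $+1$ to $I$), so if $\mathcal I\ne\varnothing$ we would have $I(\overline u)\ge |\mathcal I|\cdot 1 + I(\text{remaining components})$. Since the remaining part is an honest degree-$(2g-|\mathcal I|)$ $W_+$-type curve, Theorem~\ref{thm: index inequality for W+ and W-} together with regularity of $\overline J_+$ gives that its ECH index is $\ge 0$, and $>0$ unless it is a union of trivial cylinders — but trivial cylinders cannot reach an orbit set in $\widehat{\mathcal O}_{2g}$ from chords $y_i'\subset S_0$. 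Hence $I(\overline u)\ge|\mathcal I|>0$, contradicting $I(\overline u)=0$. Therefore $\mathcal M^{I=0,n^+\le|\mathcal I|}_{\overline J_+}(\{z_{\infty,i}\}_{i\in\mathcal I}\cup\mathbf{y'},\gamma)=\varnothing$ and $\widetilde\Phi(\{z_{\infty,i}\}_{i\in\mathcal I}\cup\mathbf{y'})=0$.

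The main obstacle I anticipate is the bookkeeping around the ECH-index additivity for multisections with $z_\infty$-ends and branch points pushed to the boundary: one must be careful that the decomposition $\overline u=\overline u'\cup\overline u''$, the groomed multivalued trivializations, and the discrepancy terms $d^\pm$ of Section~\ref{lerici} all interact so that $I(\overline u)=I(\overline u')+I(\overline u'')$ genuinely holds and that Lemma~\ref{lemma: ECH index of thin wedges} applies to isolate the $+1$ contribution of each thin wedge. Establishing that every non-thin-wedge $z_\infty$-end forces $n^+\ge m$ — i.e., ruling out the possibility that a single end could contribute less than $m$ — relies on the openness of holomorphic maps exactly as in Lemma~\ref{properties n+}(2), and transcribing that argument to ends rather than interior intersection points is the one genuinely delicate point.
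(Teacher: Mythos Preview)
Your argument has a genuine gap at the step where you invoke Lemma~\ref{lemma: transversality thin strips} and Lemma~\ref{lemma: ECH index of thin wedges}. Both of these lemmas are about $\overline{W}$ and $\overline{W}_-$ respectively, where the relevant components go from $z_\infty$ to \emph{chords} $x_i$ or $x_i'$. In $\overline{W}_+$ the negative end is the ECH end, so a component with a positive end at $z_{\infty,i}$ must terminate at \emph{orbits} in $\widehat{\mathcal{O}}_*$, not at chords. There is no ``thin strip from $z_{\infty,i}$ to $x_i$'' in $\overline{W}_+$; the candidate component instead lives in $\overline{W}_+-int(W_+)$ and limits at the negative end to $e$ or $h$ in the Morse--Bott family on $\partial N$. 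The crucial point is that such a curve from $z_\infty$ to $h$ has ECH index \emph{zero}, not positive, so your inequality $I(\overline u)\ge|\mathcal I|\cdot 1$ fails and no index contradiction arises.

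The paper's proof is not an index argument at all. It first uses $n^+(\overline u)\le|\mathcal I|$ to decompose $\overline u=\overline u''\cup\overline u'''$ with $\overline u''\subset\overline{W}_+-int(W_+)$ and $\overline u'''\subset W_+$, both of ECH index $0$; the index constraint then forces $\overline u''$ to go from $z_\infty$ to $h$. To rule this out, the paper uses a positivity-of-intersections argument against Wendl's foliation of $\R\times(\overline N-N-\delta_0)$ by finite-energy cylinders $Z_{s_0,\phi_0}$ from $\delta_0$ to $\gamma_{\phi_0}$: for a suitable $\phi$ near $\phi_h$ (this is exactly where Convention~\ref{convention for h} enters) the restriction $Z^+_{s,\phi}$ intersects $\overline u''$ for some $s$ but not for $s$ large, contradicting homotopy invariance of the intersection number. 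This geometric blocking argument, modeled on \cite[Lemma~10.2.2]{CGH1}, is what is missing from your proposal.
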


\begin{proof}
Let us fix $\{z_{\infty,i}\}_{i \in \mathcal{I}} \cup {\bf y'}$ with $|\mathcal{I}| \geq 1$ and write
$$\mathcal{M}^0(\boldsymbol{\gamma}):=\mathcal{M}_{\overline{J}_+}^{I=0,n^*\leq |\mathcal{I}|}
(\{z_{\infty,i}\}_{i \in \mathcal{I}}\cup {\bf y'}, \boldsymbol{\gamma}),$$
where $\boldsymbol\gamma\in\widehat{\mathcal{O}}_{2g}$.

We claim that $\mathcal{M}^0(\boldsymbol{\gamma})=\varnothing$. Arguing by contradiction, suppose there exists $\overline{u}\in \mathcal{M}^0(\boldsymbol{\gamma})$. By our assumptions on $\overline{J}_+$, $\overline{u}$ is close to breaking into a Morse-Bott building. We will show that such a Morse-Bott building cannot exist using the positivity of intersections, thus establishing the contradiction. For the rest of the proof, $\overline{u}$ will denote the holomorphic part of the Morse-Bott building instead of the original curve.

We consider the projection $\pi_{\overline{N}} : \overline{W}_+ \to \overline{N}$, obtained by restricting the projection $\overline{W'}= \R \times \overline{N} \to \overline{N}$ to $\overline{W}_+$. Let $T_\rho$ be the boundary of the neighborhood $V_\rho = S^1 \times D^2_\rho$ of the orbit $\delta_0$. We choose $\rho >1$, but arbitrarily close to $1$, so that $T_\rho$ is contained in $N$ and parallel to $\partial N$. Then $T_\rho$ is a pre-Lagrangian torus and we can assume without loss of generality that it is foliated by closed orbits.

We identify $H_1(T_\rho) \cong \Z^2$ such that --- writing vectors as rows --- $(1,0)$ corresponds to the homology class of the meridian (i.e., the closed curve which bounds a disk in $V_\rho$ intersecting $\delta_0$ once positively) and $(0,1)$ corresponds to the class of the orbits in $\partial N$ (i.e., those which are perturbed into $e$ and $h$) under the obvious identification $H_1(T_\rho) \cong H_1(\partial N)$. Then the Hamiltonian orbits on $T_\rho$ represent a homology class $(-p, q)$ with $p,q \ge 1$.

By abuse of notation, we do not distinguish $\overline{u}$ from its image, e.g., we write $\pi_{\overline{N}}(\overline{u})$ to denote the image of the composition $\pi_{\overline{N}} \circ \overline{u}$. The intersection  $\pi_{\overline{N}} (\overline{u}) \cap T_\rho$, oriented as the boundary of $\pi_{\overline{N}}(\overline{u}) \cap (\overline{N} - V_\rho)$, represents a class  $C \in H_1(T_\rho, \pi_{\overline{N}}(L_{\overline{\bf a}}^+) \cap T_\rho$). We recall that $\pi_{\overline{N}}(L_{\overline{\bf a}}^+) \cap T_\rho$ consists of $2g$ pairwise disjoint segments parallel to the Hamiltonian flow. The condition $n^*(\overline{u}) \le |{\mathcal I}|$ implies that $C$ consists of at most $|{\mathcal I}|$ arcs; moreover each arc has both endpoints on the same connected component of $\pi_{\overline{N}}(L_{\overline{\bf a}}^+) \cap T_\rho$ and represents the image of the class $(0,1) \in H_1(T_\rho)$ under the map $H_1(T_\rho)\stackrel\sim \to H_1(T_\rho, \pi_{\overline{N}}(L_{\overline{\bf a}}^+))$. This can be seen by considering the relation of the asymptotic eigenfunctions of $\overline{u}$ at $z_{\infty,i}$ with $n^*(\overline{u})$ and the possible ends of $\overline{u}$ in $V_\rho$.

The algebraic intersection between a Hamiltonian orbit of $T_\rho$ and the image of $\overline{u}$ is the same as the algebraic intersection in $T_\rho$ of a Hamiltonian orbit with $C$, which is negative. This contradicts the positivity of intersections, unless $\pi_{\overline{N}} (\overline{u})$ is disjoint from $T_\rho$. Since this holds for $\rho >1$ and arbitrarily close to $1$, there is a decomposition
$\overline{u}=\overline{u}''\cup\overline{u}'''$, where $I(\overline{u}'')=
I(\overline{u}''')=0$, $\overline{u}''$ has image in $\overline{W}_+-int(W_+)$, and
$\overline{u}'''$ has image in $W_+$.

Then $\overline{u}''$ is a curve from $z_\infty$
to $h$ since $I(\overline{u}'')=0$. The proof of the nonexistence of $\overline{u}''$ is
modeled on the proof of \cite[Lemma~8.4.8]{CGH2}.
By \cite[Section 4.2]{We1},
$\R\times (\overline{N}-N-\delta_0)$ is foliated by finite energy cylinders $Z_{s_0,\phi_0}$,
$(s_0,\phi_0)\in \R\times(\R/2\pi\Z)$, from $\delta_0$ to $\gamma_{\phi_0}$ such that:
\begin{itemize}
\item the image of $Z_{s_0,\phi_0}$ under the projection $\pi_{\overline{N}}: \R\times
\overline{N}\to \overline{N}$ is the open annulus $\{\phi=\phi_0, 0<\rho<1\}$;
\item $Z_{s_0+s_1,\phi_0}$ is the $s_1$-translate of $Z_{s_0,\phi_0}$.
\end{itemize}
We then set $Z^+_{s,\phi}=Z_{s,\phi}\cap \overline{W}_+$ and examine the
intersections $Z^+_{s,\phi}\cap \overline{u}''$. Observe that
$K=\pi_{\overline{N}}(Z^+_{s,\phi})\cap \pi_{\overline{N}}(\overline{u}'')\not=\varnothing$
for a suitable choice of $\phi$ which is close to but not equal to $\phi_h$; this is possible
by the positioning of $h$ given by Convention~\ref{convention for h}. Hence
$Z^+_{s_0,\phi}\cap \overline{u}''\not =\varnothing$ for some $s_0$. On the other hand,
since $K$ is compact, $Z^+_{s_0+s_1,\phi}\cap \overline{u}'' =\varnothing$ for a
sufficiently large $s_1$.  Finally, since $Z^+_{s,\phi}$ and $\overline{u}''$ have no
boundary intersections and no intersections near their ends for all $s\in \R$, we have
a contradiction.
\end{proof}

\begin{thm} \label{thm: phi tilde is a chain map}
$\widetilde\Phi$ is a chain map.
\end{thm}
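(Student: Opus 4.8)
The plan is to follow the template of the proof that $\Phi'$ (equivalently $\Phi$) is a chain map, i.e., Proposition~\ref{prop: Phi' chain map} and Theorem~\ref{thm: Phi is a chain map}, but now working in $\overline{W}_+$ and keeping track of the constraint $n^*\leq|\mathcal{I}|$ together with the constraint that the section-at-infinity component $\overline{u}'$ has empty branch locus. So first I would fix generators $\mathbf{z}=\{z_{\infty,i}\}_{i\in\mathcal{I}}\cup\mathbf{y}'$ and $\gamma$, and examine the compactification of the one-dimensional moduli space $\mathcal{M}^{I=1,\,n^*\leq|\mathcal{I}|}_{\overline{J}_+}(\mathbf{z},\gamma)$ whose count (together with thin strips) defines $\widetilde{\bdry}$ paired against $\widetilde\Phi$. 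By the SFT compactness package of Section~\ref{section: chain map phi} (adapted to $\overline{W}_+$ exactly as Theorem~\ref{thm: compactness for W plus} was proved), a sequence of such curves converges to a two-level building; the possible breakings are (a) an $I=1$ curve in $\overline{W}$ at the top followed by an $I=0$ curve in $\overline{W}_+$ at the bottom, (b) an $I=0$ curve in $\overline{W}_+$ at the top followed by an $I=1$ PFH/ECH curve in $\R\times\overline{N}$ at the bottom, possibly with $I=0$ connectors inserted. Since all generators involved have $[\gamma]$ and $\mathbf{y}'$ with no $\delta_0$ components and $J_+$ is regular (Proposition~\ref{prop: J+ and J- regular}), the ghost-component and index-positivity arguments of Lemmas~\ref{lemma: all simply covered}, \ref{lemma: no ghosts}, and the incoming-partition argument at the bottom ECH level all carry over verbatim to show the breaking is genuinely of type (a) or (b) with the stated indices. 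Summing over the boundary points of $\mathcal{M}^{I=1,\,n^*\leq|\mathcal{I}|}_{\overline{J}_+}(\mathbf{z},\gamma)/\R$ then gives the chain-map identity $\widetilde\Phi\circ\widetilde{\bdry}=\bdry^{PFH}\circ\widetilde\Phi$, provided three points are checked.

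The three points to check, in order, are the following. First, the constraint $n^*\leq|\mathcal{I}|$ must be compatible with taking boundaries: $n^*$ is a homological quantity (Lemma~\ref{properties n+}, with $n^*=n^+$ here) additive under gluing, so if $u_\infty=v_{\text{top}}\cup v_{\text{bot}}$ then $n^*(v_{\text{top}})+n^*(v_{\text{bot}})=n^*(u_\infty)\leq|\mathcal{I}|$. By Lemma~\ref{properties n+}(1), $n^*$ of each level is a nonnegative integer, so each level separately satisfies $n^*\leq|\mathcal{I}|$; hence each factor appearing in the boundary genuinely contributes to $\widetilde{\bdry}$ (for top levels in $\overline{W}$, by definition of $\widetilde{\bdry}$) or to $\bdry^{PFH}$ (for bottom ECH levels, which have $n'=0$ automatically since they have no $\delta_0$ ends, so they live in $\R\times N$ by Lemma~\ref{pluto}). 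Second, the "empty branch locus of $\overline{u}'$" condition in the definition of $\widetilde{\bdry}$ must be inherited by the top level of a type-(a) breaking; this follows because in a type-(a) breaking with $n^*\leq |\mathcal I|$ the section-at-infinity part of the top-level $\overline{W}$-curve is the only place $\sigma_\infty$-covers can appear, and an extra branch point would, by the index formulas of Section~\ref{lerici2} (in particular Lemma~\ref{lemma: HF index of sections at infinity} and the additivity lemmas), either raise the ECH index of that level above $1$ or force it to combine with the bottom level in a way that violates the index budget $\sum I=1$. Third, one must verify that \emph{both} the generators $\mathbf{y}'$ at the HF end, with $n^*\leq|\mathcal{I}|$, occur with the correct multiplicity; concretely, the thin-strip components of $\widetilde{\bdry}$ (i.e.\ curves from $z_{\infty,i}$ to $x_i$ or $x_i'$ with $n^*=1$, which exist and are regular by Lemma~\ref{lemma: transversality thin strips}) must exactly account for boundary strata of $\mathcal{M}^{I=1}$ where $n^*$ jumps by one as a thin-strip level breaks off, and this is a direct bookkeeping of $n^*$ along the boundary.

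The main obstacle I expect is the interplay between the breaking analysis and the \emph{a priori} bound $n^*\leq|\mathcal{I}|$: unlike the unconstrained $W_+$ case, one has to rule out (or correctly account for) limits in which a component branch-covering $\sigma_\infty$ bubbles off or in which $n^*$ of the limiting building exceeds $|\mathcal{I}|$ because of how multisection degrees recombine near $z_\infty$. This is precisely where Lemma~\ref{lemma: value of widetilde Phi} does the heavy lifting in the analogue it was stated for, and the proof of that lemma (the foliation-by-finite-energy-cylinders argument from \cite{We1}, modeled on \cite[Lemma~10.2.2]{CGH1}) is the technical core; here I would invoke it together with the additivity of $n^*$ to conclude that no bad limits arise, so that $\mathcal{M}^{I=1,\,n^*\leq|\mathcal{I}|}_{\overline{J}_+}(\mathbf{z},\gamma)$ has a genuine compactification whose codimension-one strata are exactly those enumerated above. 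Once that is in place, the remaining gluing statements — that each boundary stratum of type (a) or (b) is the limit of a unique end of the moduli space — follow from Lipshitz's gluing (Propositions~A.1--A.2 of \cite{Li}) at the HF end and the Hutchings--Taubes obstruction-bundle gluing (Section~\ref{subsection: gluing for Phi}) at the ECH end, exactly as in the proof of Proposition~\ref{prop: Phi' chain map}, with no new analytic input. Assembling these gives $\widetilde\Phi\circ\widetilde{\bdry}=\bdry^{PFH}\circ\widetilde\Phi$, which is the assertion.
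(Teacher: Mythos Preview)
Your approach is genuinely different from the paper's, and considerably harder. The paper does not analyze any $I=1$ moduli spaces in $\overline{W}_+$ at all. Instead it gives a short algebraic argument: on generators with $\mathcal{I}=\varnothing$ the map $\widetilde\Phi$ coincides with $\Phi'$, so the chain map identity is already Theorem~\ref{thm: Phi is a chain map}. On a generator $\{z_{\infty,i}\}_{i\in\mathcal{I}}\cup\mathbf{y}'$ with $\mathcal{I}\neq\varnothing$, Lemma~\ref{lemma: value of widetilde Phi} gives $\widetilde\Phi(\{z_{\infty,i}\}_{i\in\mathcal{I}}\cup\mathbf{y}')=0$, so $\bdry_{PFH}\circ\widetilde\Phi$ vanishes on it. For the other side, every term of $\widetilde\bdry(\{z_{\infty,i}\}_{i\in\mathcal{I}}\cup\mathbf{y}')$ either still contains some $z_{\infty,i'}$ (and is killed by $\widetilde\Phi$ via the same lemma) or, when $|\mathcal{I}|=1$, equals $\{x_{i_1}\}\cup\mathbf{y}'+\{x'_{i_1}\}\cup\mathbf{y}'$, on which $\widetilde\Phi=\Phi'$ takes the same value by Lemma~\ref{lemma: trivial cylinder when x i involved}. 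Both sides are therefore zero and the identity holds. The entire proof is three lines once Lemma~\ref{lemma: value of widetilde Phi} is available.

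Your direct route would require a full compactness and gluing analysis for $I=1$ curves in $\overline{W}_+$ with ends at $z_\infty$ under the constraint $n^*\leq|\mathcal{I}|$, which the paper never develops (the analogous analysis for $\overline{W}_-$ is what occupies most of Section~\ref{section: chain map psi}). Two concrete gaps in your sketch: your ``second point'' asserts that an extra branch point on the $\sigma_\infty$-cover raises the ECH index, but Lemma~\ref{lemma: HF index of sections at infinity} says the ECH index of \emph{any} branched cover of $\sigma_\infty$ is zero regardless of the branch locus, so that argument does not work as stated (it is the Fredholm index that changes); and the claim that Lemmas~\ref{lemma: all simply covered} and~\ref{lemma: no ghosts} ``carry over verbatim'' is optimistic, since those are proved for $W_+$-curves with no $z_\infty$ ends and no section-at-infinity components, and the index bookkeeping with such components present is exactly the delicate part (cf.\ Section~\ref{subsection: modified indices at z infty}). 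The paper's shortcut avoids all of this by noting that the relevant $I=0$ moduli spaces are already empty.
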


\begin{proof}
Since $\Phi$ is a chain map by Theorem~\ref{thm: Phi is a chain map}, it suffices to
verify that
$$\bdry_{PFH}\circ \widetilde\Phi(\{z_{\infty,i}\}_{i \in \mathcal{I}}\cup {\bf y'}) =
\widetilde\Phi\circ \widetilde\bdry(\{z_{\infty,i}\}_{\mathcal{I}}\cup {\bf y'}),$$
whenever $\mathcal{I}\not=\varnothing$. On the one hand, $\widetilde\Phi (\{z_{\infty,i}\}_{\mathcal{I}}\cup {\bf y'})=0$ by
Lemma~\ref{lemma: value of widetilde Phi}. On the other hand, if $|\mathcal{I}|=1$,
then
\begin{align*}
\widetilde\Phi\circ \widetilde\bdry (\{z_{\infty,i_1}\}\cup {\bf y'}) &
= \widetilde\Phi(\{x_{i_1}\}\cup {\bf y'}+\{x'_{i_1}\}\cup {\bf y'})=0;
\end{align*}
and if $|\mathcal{I}|>1$, then each term of
$\widetilde\bdry (\{z_{\infty,i}\}_{i \in \mathcal{I}}\cup {\bf y'})$ contains some copy of
$z_\infty$, and $\widetilde\Phi$ maps the term to zero by Lemma~\ref{lemma: value of
widetilde Phi}. This proves the theorem.
\end{proof}

A corollary of Lemma~\ref{lemma: value of widetilde Phi} is the following:

\begin{cor} \label{tigger}
$\Phi=\widetilde\Phi\circ\kappa\circ \nu$ on the level of homology.
\end{cor}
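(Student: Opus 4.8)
The plan is to compare the three maps $\Phi$, $\widetilde\Phi$, and $\kappa,\nu$ directly on chain level, up to the zigzag bookkeeping inherent in the spectral sequence, and then invoke Lemma~\ref{lemma: value of widetilde Phi} to kill all the ``higher'' terms. Recall that $\nu$ sends a cycle $[{\bf y}]\in \widehat{HF}(S,{\bf a},h({\bf a}))$ to the class of ${\bf y}+\sum_i\{x_i''\}\cup{\bf y}_i'+\text{h.o.}$, and that $\kappa$ identifies $\{x_i''\}_{i\in\mathcal{I}}\cup{\bf y}'\mapsto\{z_{\infty,i}\}_{i\in\mathcal{I}}\cup{\bf y}'$. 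So $\kappa\circ\nu([{\bf y}])$ is represented by ${\bf y}+\sum_i\{z_{\infty,i}\}\cup{\bf y}_i'+\text{h.o.}$, where every term other than ${\bf y}$ contains at least one copy of $z_\infty$.

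First I would apply $\widetilde\Phi$ to this representative. By Lemma~\ref{lemma: value of widetilde Phi}, $\widetilde\Phi$ annihilates every generator $\{z_{\infty,i}\}_{i\in\mathcal{I}}\cup{\bf y}'$ with $\mathcal{I}\neq\varnothing$; hence all the ``correction'' terms $\sum_i\{z_{\infty,i}\}\cup{\bf y}_i'+\text{h.o.}$ are sent to zero, and $\widetilde\Phi\circ\kappa\circ\nu([{\bf y}])=[\widetilde\Phi({\bf y})]$. Next I would observe that on a generator ${\bf y}\in\mathcal{S}_{{\bf a},h({\bf a})}$ — i.e.\ a $2g$-tuple with $\mathcal{I}=\varnothing$, all $y_i$ lying in $S$ — the defining count for $\widetilde\Phi$ is $\#\mathcal{M}^{I=0,n^*\leq 0}_{\overline{J}_+}({\bf y},\gamma)$, and $n^*\leq 0$ forces $n^+(\overline u)=0$, so by Lemma~\ref{lemma: restriction of overline W plus curve} every such $\overline u$ has image in $W_+$. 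Thus $\mathcal{M}^{I=0,n^*\leq 0}_{\overline{J}_+}({\bf y},\gamma)=\mathcal{M}^{I=0}_{J_+}({\bf y},\gamma)$, which is precisely the count defining $\Phi'({\bf y})$, hence $\Phi({\bf y})$ after passing to the quotient $\widehat{CF}(S,{\bf a},h({\bf a}))$. Therefore $\widetilde\Phi({\bf y})=\Phi({\bf y})$ for genuine generators, and combining with the previous step gives $\widetilde\Phi\circ\kappa\circ\nu=\Phi$ on homology.

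There is one point of care: the representative ${\bf y}$ of a class in $\widehat{HF}(S,{\bf a},h({\bf a}))$ is a linear combination of $2g$-tuples in $\mathcal{S}_{{\bf a},h({\bf a})}$, well-defined only up to the equivalence $\{x_i\}\cup{\bf y}'\sim\{x_i'\}\cup{\bf y}'$ and up to $\bdry'$-boundaries; I would note that $\Phi$ and $\widetilde\Phi$ both already descend through this equivalence (the former by Theorem~\ref{thm: Phi is a chain map}, the latter because the chain map property of $\widetilde\Phi$ from Theorem~\ref{thm: phi tilde is a chain map} means it is well-defined on the homology $\widetilde{HF}(S,{\bf a},h({\bf a}))\cong\widehat{HF}(S,{\bf a},h({\bf a}))$), so the chain-level identity $\widetilde\Phi({\bf y})=\Phi({\bf y})$ on $\mathcal{S}_{{\bf a},h({\bf a})}$ is enough to conclude. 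I expect the main (minor) obstacle to be keeping the zigzag/higher-order terms organized and making sure that the identification $\widetilde\Phi\circ\kappa\circ\nu([{\bf y}])=[\widetilde\Phi({\bf y})]$ is legitimate at the level of homology — i.e.\ that no hidden $z_\infty$-free term is produced by the ``h.o.'' part — but this is immediate since by construction every term of $\nu([{\bf y}])$ beyond ${\bf y}$ itself carries a positive power of $z_\infty$, and such a term is in the image of $\kappa$ applied to a generator with $\mathcal{I}\neq\varnothing$.
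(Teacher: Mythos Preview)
Your proposal is correct and follows exactly the approach the paper intends: the corollary is stated as an immediate consequence of Lemma~\ref{lemma: value of widetilde Phi}, and you have correctly spelled out how that lemma kills the higher-order (i.e., $z_\infty$-containing) terms in $\kappa\circ\nu([\mathbf{y}])$, together with the observation that on $\mathcal{I}=\varnothing$ generators the constraint $n^*\leq 0$ forces $n^+=0$ so that $\widetilde\Phi$ and $\Phi'$ agree. The paper gives no further argument beyond labeling it a corollary, so your write-up simply makes explicit what is left implicit.
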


\section{The chain map from $PFH$ to $\widehat{HF}$}
\label{section: chain map psi}

\subsection{Definition of $\Psi$} \label{subsection: defn of psi}
Fix the integer $m\gg 0$ which appears in the definition of the monodromy map $\overline{\hh}=\overline{\hh}_m$ from Section~\ref{subsubsection: overline W pm}. The condition $m\gg 0$ will be useful when applying limiting arguments in Sections~\ref{subsection: rescaling} and \ref{subsection: theorem complement}.

Suppose $\overline{J}_-\in \mathcal{J}_{\overline{W}_-}^{reg}$, $\overline{J'}$ and $\overline{J}$ are restrictions of $\overline{J}_-$ to the positive and negative ends, and $\overline{J}_-^\Diamond$ is $K_{p,2\delta}$-regular with respect to $\overline{\frak m}= ((0,{3\over 2}),z_\infty)$. We assume that the constants $\varepsilon,\delta>0$ that go into the definition of $\overline{J}_-^\Diamond$ (cf.\ Section~\ref{subsubsection: marked points and transversality}) are arbitrarily small.

\begin{defn}[Definitions of $\Psi'$ and $\Psi$] \label{defn: Psi map} $\mbox{}$
\begin{enumerate}
\item We define the map
$$\Psi'=\Psi'_{\overline{J}_-^\Diamond}(m,\overline{\frak m}):PFC_{2g}(N)\to \widehat{CF'}(S,{\bf a}, \hh({\bf a}))$$
$$\boldsymbol{\gamma}\mapsto \sum_{{\bf y}\in \mathcal{S}_{{\bf a},\hh({\bf a})}} \langle \Psi'(\boldsymbol{\gamma}),{\bf y}\rangle \cdot {\bf y},$$
where $\langle \Psi'(\boldsymbol{\gamma}),{\bf y}\rangle$ is the mod $2$ count of $\mathcal{M}^{I=2, n^*=m}_{\overline{J}_-^\Diamond}(\boldsymbol{\gamma},{\bf y};\overline{\frak m})$.

\item We define the map $\Psi=\Psi_{\overline{J}_-^\Diamond}(m,\overline{\frak m})$ as the composition
$$PFC_{2g}(N)\stackrel{\Psi'}\to \widehat{CF'}(S,\mathbf{a},\hh(\mathbf{a}))\stackrel{q}\to \widehat{CF}(S,\mathbf{a},\hh(\mathbf{a})),$$
where $q$ is the quotient map of chain complexes.
\end{enumerate}
\end{defn}
\nom[1x$\psi$]{$\Psi=\Psi_{\overline{J}_-^\Diamond}(m,\overline{\frak m})$}{Chain map from $PFC_{2g}(N)$ to $\widehat{CF}(S,\mathbf{a},\hh(\mathbf{a}))$}
\nom[1x$\psi$]{$\Psi'=\Psi'_{\overline{J}_-^\Diamond}(m,\overline{\frak m})$}{Map from $PFC_{2g}(N)$ to $\widehat{CF'}(S,\mathbf{a},\hh(\mathbf{a}))$}

The count $\langle \Psi'(\boldsymbol{\gamma}),{\bf y}\rangle$ is meaningful because of the following theorem:

\begin{thm}
$\mathcal{M}^{I=2, n^*=m}_{\overline{J}_-^\Diamond}(\boldsymbol{\gamma},{\bf y};\overline{\frak m})$ is compact.
\end{thm}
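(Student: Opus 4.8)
The compactness of $\mathcal{M}^{I=2, n^*=m}_{\overline{J}_-^\Diamond}(\gamma,{\bf y};\overline{\frak m})$ will follow the same two-part scheme used throughout the paper (cf.\ the proof of Theorem~\ref{thm: compactness for W plus}): first an $\omega$-area bound together with a genus bound, yielding a fixed smooth type for the domain; then the SFT compactness theorem of \cite{BEHWZ}, followed by a ruling out of all degenerate limits that could escape the moduli space. Since $\overline{u}$ has an ECH end at $\gamma$ and an HF end at ${\bf y}$, the area bound comes as before from the vanishing of the flux $F_h$ (Section~\ref{subsection: flux}): two $\overline{W}_-$-curves from $\gamma$ to ${\bf y}$ differ by a class in $H_2(\overline{N})$, on which $\overline\omega$ integrates to a quantity depending only on $\gamma$ and ${\bf y}$ since the flux vanishes and $\overline\omega$ extends $\omega$. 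The genus bound is then obtained exactly as in Lemma~\ref{lemma: bound on F for W plus}, by plugging the writhe bound of \cite[Lemma~4.20]{Hu2} into the relative adjunction formula (Lemma~\ref{lemma: relative adjunction for W plus}, in the $\overline{W}_-$ form), all terms on the right being homological or end-data. Note that the constraint $n^*=m$ is a homological (hence closed) condition by Lemma~\ref{properties n-}, so it passes to limits, and the marked-point condition at $\overline{\frak m}$ is also closed.

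\textbf{The main step} will be showing that no strictly lower-level building can arise as the SFT limit while still having $I_{\overline{W}_-}=2$, $n^*=m$, and passing through $\overline{\frak m}$. After applying \cite{BEHWZ} to the sequence (with domain now of fixed smooth type), a limit is a multi-level building $u_\infty=v_{-b}\cup\dots\cup v_a$ with $v_j$ a map to $\R\times\overline{N}$ for $j>0$, to $\overline{W}_-$ for $j=0$, and to $\R\times S=W$ for $j<0$. Since $\overline{J}_-^\Diamond$ is $K_{p,2\delta}$-regular with respect to $\overline{\frak m}$ and $\overline{J}_-\in\overline{\mathcal{J}}_-^{reg}$, a curve-by-curve argument analogous to Lemmas~\ref{lemma: levels v sub j W plus}--\ref{lemma: no ghosts} gives: each $v_j$ is a degree $2g$ multisection with no fiber or ghost components, each has nonnegative ECH index ($I_{HF}\ge 0$ on the HF side by Theorem~\ref{thm: index inequality for HF}, $I_{W_-}\ge 0$ for the cobordism level, $I_{ECH}\ge 0$ on the ECH side by \cite[Proposition~7.15]{HT1}), and the ECH indices sum to $2$ by additivity (Lemma~\ref{lemma: additivity part 2}). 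The subtlety absent from the $W_+$ case is the point constraint: the marked point $\overline{\frak m}^b=(0,\tfrac32)$ lies in the cobordism region, so $\overline{\frak m}$ must be hit by the level $v_0$. This costs $2$ from the index budget of $v_0$ (the expected dimension of $\mathcal{M}_{\overline{J}_-}(\gamma,{\bf y};\overline{\frak m})$ is $\op{ind}_{\overline{W}_-}-2$, cf.\ the Remark after Proposition~\ref{prop: Fredholm index for W minus}). Combining $I(v_0)\ge 2$ (forced by the marked point plus regularity) with $\sum I(v_j)=2$ and $I(v_j)\ge 0$ forces $I(v_0)=2$ and $I(v_j)=0$ for all $j\ne 0$; the $j>0$ and $j<0$ levels with $I=0$ are then trivial cylinders/strips (using, on the ECH side, the incoming/outgoing partition argument of \cite[Lemma~7.23]{HT1} and, on the HF side, that $\op{ind}=0$ forces trivial strips), hence elided. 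Some care is needed with possible $I_{ECH}=0$ connector components over $\gamma$ and with ends limiting to $z_\infty$; for the latter one invokes the index inequality of Lemma~\ref{index inequality for z infinity case} to see that extra multiples of $z_\infty$ only increase the index, and so cannot occur.

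\textbf{The remaining point} is that the limiting $v_0$ genuinely lies in the moduli space under consideration, i.e.\ still satisfies $n^*=m$ and the marked-point condition, and that no sequence can "leak" ECH index $2$ into a single lower level without passing through $\overline{\frak m}$. The first is immediate from closedness of both conditions noted above; the second is exactly what the index arithmetic rules out. Finally, since a limiting building with all side levels trivial is just a single curve in $\mathcal{M}^{I=2,n^*=m}_{\overline{J}_-^\Diamond}(\gamma,{\bf y};\overline{\frak m})$, the moduli space is sequentially compact, hence compact. I expect the genuinely laborious part to be the bookkeeping around $z_\infty$-ends and connectors — tracking the discrepancies $d^\pm$ and verifying that none of the degenerate configurations with multiples of $z_\infty$ survive the $I=2$ constraint — rather than the area/genus bounds, which are routine adaptations of the arguments already in place.
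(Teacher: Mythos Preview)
Your index arithmetic works only in the case $\overline{v}_0'=\varnothing$ (i.e., when no component of the $\overline{W}_-$ level branch-covers the section at infinity $\sigma_\infty^-$). The genuine obstacle, and the reason the paper devotes Sections~\ref{subsection: novelty}--\ref{subsection: theorem complement} to this moduli space, is that $\overline{\frak m}$ lies \emph{on} $\sigma_\infty^-$. Hence when $\overline{v}_0'=\sigma_\infty^-$ appears in the limit building, it passes through $\overline{\frak m}$ automatically, with $I(\overline{v}_0')=0$ by Lemma~\ref{lemma: ECH index of sections at infinity}, and your claim ``the marked point forces $I(v_0)\ge 2$'' fails. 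Concretely, Theorem~\ref{thm: compactness for W minus I=2, first version} exhibits an $I=2$ building consistent with all your constraints: $\overline{v}_1$ with $I=1$ from $\gamma$ to $\delta_0\gamma'$; $\overline{v}_0'=\sigma_\infty^-$ with $I=0$ (carrying the marked point); $\overline{v}_0^\flat$ with $I=0$; and a thin strip $\overline{v}_{-1}^\sharp$ with $I=1$. The intersection budget $n^*=m$ is met exactly ($m-1$ from the negative $\delta_0$-end of $\overline{v}_1$ plus $1$ from the thin strip), so the closedness of $n^*=m$ does not help either.

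Ruling this configuration out is the content of Theorem~\ref{thm: complement}(i), and it is not index bookkeeping: one rescales the curves near $\sigma_\infty^-$ (Section~\ref{subsection: rescaling}) to extract a limiting holomorphic map $w_\infty:B_-\to\C$ whose asymptotic eigenfunction at the positive end is that of an $I=1$ curve at $\delta_0$, and then applies the involution lemmas (Section~\ref{subsection: involutions}) together with the symmetric placement of $\overline{\frak m}^b$ to force $w_\infty$ to hit a specific radial ray, contradicting the genericity arranged in Section~\ref{subsubsection: radial rays}. Your proposal does not anticipate this mechanism; the sentence about ``extra multiples of $z_\infty$ only increase the index'' via Lemma~\ref{index inequality for z infinity case} addresses a different issue and does not touch the $\overline{v}_0'\neq\varnothing$ case. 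Once that case is excluded, one still needs Corollary~\ref{alternative} to guarantee that curves in $\mathcal{M}^2_{\overline{\frak m}}$ actually meet $K_{p,2\delta}$, so that passing through the nongeneric point $\overline{\frak m}$ is a genuine codimension-$2$ condition under $\overline{J}_-^\Diamond$; only then does your ``$I(v_0)\ge 2$'' argument go through for the $\overline{v}_0'=\varnothing$ case.
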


\begin{proof}
This follows from Theorems~\ref{thm: compactness for W minus I=2, first version} and \ref{thm: complement}(i) and Corollary~\ref{alternative}.
\end{proof}

Now we define the maps $\widetilde{U}_{m-1}$, $\widetilde\Psi_0$, and $\widetilde\bdry_1$ which measure the failure of $\Psi'$ to be a chain map. The subscripts in $\widetilde{U}_{m-1}$, $\widetilde\Psi_0$, and $\widetilde\bdry_1$ indicate that we are counting curves $\overline{u}$ satisfying $n^*(\overline{u})=m-1$, $0$, and $1$.

First we define $\widetilde{U}_{m-1} : ECC_{2g}(N) \to ECC_{2g}(\overline{N})$: Let $\boldsymbol{\gamma}\in \widehat{\mathcal{O}}_{2g}$ and $\boldsymbol{\gamma}'\in \widehat{\mathcal{O}}_{2g-p}$, $p=0,\dots,2g$. Then
$$\langle\widetilde{U}_{m-1} (\boldsymbol{\gamma}),\delta_0^p\boldsymbol{\gamma}'\rangle = \left\{
\begin{array}{cl}
A, & \mbox{ if } p=1;\\
0, & \mbox{ if } p\not=1,
\end{array}
\right.$$
where $A$ is the count of $I=2$ multisections $\overline{u}$ of $\overline{W'}$ from $\boldsymbol{\gamma}$ to $\delta_0\boldsymbol{\gamma}'$ which satisfy $n^*(\overline{u})=m-1$ and a certain asymptotic condition near $\delta_0$ (the precise definition will be given in Section~\ref{subsubsection: melon d'eau}).

Next we define $\widetilde\Psi_0 : ECC_{2g}(\overline{N}) \to \widetilde{CF}(S,{\bf a},\hh({\bf a}))$: Let $\boldsymbol{\gamma}'\in \widehat{\mathcal{O}}_{2g-p}$, $p=0,\dots,2g$, and $\{z_{\infty, i}\}_{i \in \mathcal{I}}\cup {\bf y'} \in {\mathcal S}_{\overline{\bf a}, \overline{\hh}(\overline{\bf a})}$.
Then
$$\langle\widetilde\Psi_0(\delta_0^p\boldsymbol{\gamma}'),\{z_{\infty,i}\}_{\mathcal{I}}
\cup {\bf y'}\rangle=\left\{
\begin{array}{cl}
B, & \mbox{ if } p=1 \mbox{ and } |\mathcal{I}|=1; \\
0, & \mbox{ if } p\not=1 \mbox{ or } |\mathcal{I}| \not =1,
\end{array} \right.$$
where $B$ is the count of degree $2g-1$, $I=0$ almost multisections $\overline{u}$ of $\overline{W}_-$ from $\boldsymbol{\gamma}'$ to ${\bf y'}$ which satisfy $n^*(\overline{u})=0$, together with the section at infinity $\sigma_\infty^-$ from $\delta_0$ to $z_\infty$.

Finally we define $\widetilde\bdry_1 : \widetilde{CF}(S,{\bf a},\hh({\bf a})) \to \widehat{CF}(S,{\bf a},\hh({\bf a}))$. If $\{z_{\infty,i}\}_{\mathcal{I}} \cup {\bf y'} \in {\mathcal S}_{\overline{\bf a}, \overline{\hh}(\overline{\bf a})}$,
then we define
\begin{equation} \label{eqn: bdry sub 1}
\widetilde\bdry_1(\{z_{\infty,i}\}_{\mathcal{I}} \cup {\bf y'}) = \left\{
\begin{array}{cl} \{x_i\}\cup{\bf y'} + \{x_i'\}\cup {\bf y'}, & \mbox{ if } \mathcal{I}= \{ i \}; \\ 0, & \mbox{ if } |\mathcal{I}| \ne 1.
\end{array} \right.
\end{equation}
We can see $\widetilde\bdry_1$ as the composition of two maps: a map from $\widetilde{CF}(S,{\bf a},\hh({\bf a}))$ to itself induced by the count of $n^*(\overline{u})=1$ curves in $\overline{W}$ (which are necessarily thin wedges), followed by the projection $\widetilde{CF}(S,{\bf a},\hh({\bf a})) \to \widehat{CF}(S,{\bf a},\hh({\bf a}))$ sending every $2g$-tuple of chords involving $z_\infty$ to zero.

Let $\bdry'_{HF}$ be the differential for $\widehat{CF'}(S,{\bf a},\hh({\bf a}))$ and let $\bdry_{PFH}$ be the differential for $PFC_{2g}(N)$.
The proof of the following theorem will occupy the rest of Section~\ref{section: chain map psi}.

\begin{thm} \label{thm: almost chain map for psi}
If $m\gg 0$, then
\begin{equation}
\label{eqn: chain map for Psi prime}
\bdry'_{HF}\circ \Psi' +\Psi'\circ \bdry_{PFH} = \widetilde\bdry_1\circ\widetilde\Psi_0\circ \widetilde{U}_{m-1}.
\end{equation}
\end{thm}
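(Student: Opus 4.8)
The plan is to analyze the boundary of the one-dimensional moduli spaces $\mathcal{M}^{I=1,n^*=m}_{\overline{J}_-^\Diamond}(\gamma,{\bf y};\overline{\frak m})/\R$ as usual, show that the ``expected'' codimension-one degenerations reproduce the left-hand side $\bdry'_{HF}\circ \Psi' +\Psi'\circ \bdry_{PFH}$, and then account for the extra boundary strata that do \emph{not} cancel in pairs. The key new phenomenon, forced by the point constraint at $\overline{\frak m}=((0,\tfrac32),z_\infty)$, is that an $I=1$ curve through $\overline{\frak m}$ can break off a component which branch covers $\sigma_\infty^-$ (carrying the constraint point), while the remaining piece acquires an end at a multiple of $z_\infty$. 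First I would invoke the SFT compactness package for $\overline{W}_-$-curves (the analog of Theorem~\ref{thm: compactness for W plus}, together with Lemmas~\ref{lemma: restriction of overline W minus curve}, \ref{lemma: regularity of W minus diamond}, and the index/writhe inequalities of Section~\ref{subsection: ECH index W plus minus}) to classify the limits: a degeneration at the positive (PFH) end contributes to $\Psi'\circ \bdry_{PFH}$, a degeneration at the negative (HF) end contributes to $\bdry'_{HF}\circ \Psi'$, and the remaining degenerations must be those where $\sigma_\infty^-$ (or a branched cover thereof) splits off.

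Second, I would use the ECH-index bookkeeping from Section~\ref{subsection: modified indices at z infty} — especially Lemma~\ref{lemma: ECH index of sections at infinity} ($I(\overline{u}')=0$ for branched covers of $\sigma_\infty$), Lemma~\ref{lemma: ECH index of thin wedges} ($I=p$ for the ``volcano'' piece with $p$ ends at $\delta_0$ and negative ends at $p$ of the $x_i,x_i'$), and Lemma~\ref{calc of almost sum}/\ref{calc of almost sum 2} with the constraint (G$_1'$)--(G$_4'$) — to pin down exactly which configurations of total ECH index $1$ survive. The point-constraint drop of $2$ in the expected dimension (see the Remark after Proposition~\ref{prop: Fredholm index for W minus} and Remark~\ref{rmk: explanation of considering volcano}) means the relevant broken configuration is: a curve $\overline{u}''$ from $\gamma$ down to $\{z_\infty^{p}(\overrightarrow{\mathcal D})\}\cup {\bf y'}$ of ECH index $0$ (a ``$\widetilde\Psi_0$-type'' curve carrying the $\overline{\frak m}$ constraint, with $n^*=m-1$ after removing one sheet of $z_\infty$), followed by a thin-strip piece in $\overline{W}-\op{int}(W)$ taking each $z_{\infty,i}$ to $x_i$ or $x_i'$ (an $I=1$ piece by Lemma~\ref{lemma: ECH index of thin wedges} / Lemma~\ref{lemma: transversality thin strips}), i.e., exactly the operator $\widetilde\bdry_1\circ\widetilde\Psi_0\circ \widetilde U_{m-1}$ on the right-hand side. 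Here the $U$-map drop $m-1\mapsto$ (one less $z_\infty$-sheet) accounts for the $\widetilde U_{m-1}$, the constrained index-$0$ count gives $\widetilde\Psi_0$, and the thin-strip factor gives $\widetilde\bdry_1$, matching \eqref{eqn: bdry sub 1}.

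Third, I would verify the gluing statement in the reverse direction: every such broken configuration on the right is the limit of a unique end of $\mathcal{M}^{I=1,n^*=m}_{\overline{J}_-^\Diamond}(\gamma,{\bf y};\overline{\frak m})/\R$. This uses the automatic-transversality input of Lemma~\ref{lemma: regularity of curve at infinity} for $\sigma_\infty^-$ (so the neck-stretching near $z_\infty$ has no obstruction), Lemma~\ref{lemma: transversality thin strips} for the thin strips, and the Hutchings--Taubes-style obstruction-bundle gluing reviewed in Section~\ref{subsection: gluing for Phi} adapted to the $z_\infty$ ends; the condition $m\gg 0$ guarantees (via the positioning of $h$ in Convention~\ref{convention for h} and the limiting arguments referenced for Sections~\ref{subsection: rescaling}, \ref{subsection: theorem complement}) that no other exotic degenerations at $z_\infty$ or along $\delta_0$ intrude, so the count is exact mod $2$.

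\textbf{Main obstacle.} The hard part will be the third step: proving that the curves near the $z_\infty$ ends behave as claimed and that the gluing is a bijection. Because the Lagrangian boundary condition is singular at $z_\infty$ and the chord over $z_\infty$ may be used with multiplicity, one must rule out ``partial'' degenerations in which only some sheets pinch off onto $\sigma_\infty^-$, control the winding/grooming data so that the ECH-index accounting of Lemma~\ref{calc of almost sum 2} applies with equality, and carry out the neck-stretching analysis near $\sigma_\infty^-$ uniformly as $m\to\infty$. This is precisely where $m\gg 0$ and the delicate positioning conventions (Convention~\ref{convention for h}, the arc conditions of Section~\ref{coconut}) are used, and it is the technical heart of the argument; the rest is a fairly standard $\partial$-of-a-$1$-manifold count combined with the index lemmas already proved.
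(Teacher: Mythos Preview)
Your overall strategy (analyze the boundary of a $1$-dimensional constrained moduli space and identify the ``exotic'' strata coming from $\sigma_\infty^-$) is the right one, but the execution has two genuine errors that would derail the argument. First, the relevant moduli space is $\mathcal{M}^{I=3,n^*=m}_{\overline{J}_-^\Diamond}(\gamma,{\bf y};\overline{\frak m})$, not $I=1$, and there is no $\R$-quotient: $\overline{W}_-$ is a cobordism, not a symplectization. Recall that $\Psi'$ already counts $I=2$ curves through $\overline{\frak m}$ (the point constraint cuts the expected dimension by $2$), so the one-dimensional space whose ends you must classify is the $I=3$ constrained space. Second, the exotic breaking you describe is not the one that actually occurs. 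In the paper the relevant limit is a genuine $3$-level building: a curve $\overline{v}_1$ in $\R\times\overline{N}$ with $I=2$, $n'=m-1$ from $\gamma$ to $\delta_0\gamma'$ (this is the $\widetilde U_{m-1}$ factor and is where the asymptotic-eigenfunction condition $f_{\delta_0}$ enters); a middle level $\overline{v}_0$ in $\overline{W}_-$ with $I=0$, $n^-=0$ consisting of $\sigma_\infty^-$ together with a degree $2g-1$ multisection from $\gamma'$ to ${\bf y'}$ (this is $\widetilde\Psi_0$); and a bottom level $\overline{v}_{-1}$ in $\overline{W}$ with $I=1$, $n=1$ which is a single thin strip from $z_\infty$ to $x_i$ or $x_i'$ (this is $\widetilde\bdry_1$). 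Your two-level picture, with a single $\overline{W}_-$-curve from $\gamma$ to $\{z_\infty^p\}\cup{\bf y'}$ carrying both the constraint and $n^*=m-1$, does not arise and misattributes both the intersection number $m-1$ and the location of the $\widetilde U_{m-1}$ piece.

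There is also a substantial missing ingredient. The preliminary compactness analysis produces not one but several candidate exotic buildings (Cases (2)--(6) of Theorem~\ref{thm: compactness for W minus I=3, first version}: double covers of $\sigma_\infty^-$, an extra $I=1$ level in $\R\times\overline{N}$, etc.). Ruling these out is not a matter of index bookkeeping or standard gluing; it requires the rescaling construction of Section~\ref{subsection: rescaling} (extracting a normalized limit $w_\infty:B_-\to\C$ from the curves near $\sigma_\infty^-$) together with the involution lemmas of Section~\ref{subsection: involutions}, which force $w_\infty$ to send the good radial ray $\mathcal{R}_\pi$ to a bad one, contradicting the choice made in Remark~\ref{rmk: good ray is real}. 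This is where $m\gg 0$ is really used. Finally, the gluing of the surviving $3$-level buildings subject to the constraint $\overline{\frak m}$ is not a direct application of Hutchings--Taubes: one introduces the auxiliary moduli space $\mathcal{N}$ of holomorphic maps $B_-\to\C$ (Section~\ref{moduli space N}), proves it is parametrized by the location of its zero (Lemma~\ref{F is a diffeo}), and compares the evaluation map $\Upsilon'$ on the truncated gluing parameter space with the model map $\Upsilon''$ built from $\mathcal{N}$ to establish the degree identity in Theorem~\ref{thm: transversality of ev map}. Without these two analytic components your outline cannot be completed.
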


Assuming Theorem~\ref{thm: almost chain map for psi} for the moment, we have the following:

\begin{cor} \label{cor: psi is a chain map}
The map $\Psi $ is a chain map.
\end{cor}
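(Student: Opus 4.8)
\textbf{Proof of Corollary~\ref{cor: psi is a chain map}.}

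The plan is to deduce the chain-map property of $\Psi$ from the relation in Theorem~\ref{thm: almost chain map for psi} by showing that the error term $\widetilde\bdry_1\circ\widetilde\Psi_0\circ \widetilde{U}_{m-1}$ vanishes after passing to the quotient complex $\widehat{CF}(S,\mathbf{a},h(\mathbf{a}))$. Recall that $\Psi = q\circ \Psi'$, where $q$ is the quotient map by the equivalence relation $\sim$ of Section~\ref{subsubsection: variant CF of S}. Applying $q$ to both sides of Equation~\eqref{eqn: chain map for Psi prime} and using the fact that $q$ intertwines $\bdry'_{HF}$ with $\widehat\bdry$ (this is precisely how $\widehat\bdry$ was defined on the quotient), we obtain
\begin{equation*}
\widehat\bdry\circ \Psi + \Psi\circ \bdry_{PFH} = q\circ\widetilde\bdry_1\circ\widetilde\Psi_0\circ \widetilde{U}_{m-1}.
\end{equation*}
So it suffices to prove that $q\circ \widetilde\bdry_1 = 0$.

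This last point is immediate from the definition of $\widetilde\bdry_1$ in Equation~\eqref{eqn: bdry sub 1}: for a $(2g-1)$-tuple ${\bf y'}$, one has $\widetilde\bdry_1(\{z_\infty\}\cup {\bf y'}) = \{x_i\}\cup{\bf y'} + \{x_i'\}\cup {\bf y'}$, and the two generators $\{x_i\}\cup{\bf y'}$ and $\{x_i'\}\cup{\bf y'}$ are identified under the equivalence relation $\sim$ (they differ only in the component of type $x_i$ versus $x_i'$, with the remaining $(2g-1)$-tuple ${\bf y'}$ unchanged, which is exactly the relation defining $\widehat{CF}$ from $\widehat{CF'}$). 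Since we work over $\F=\Z/2\Z$, their sum maps to $0$ under $q$. As this holds on each generator in the image of $\widetilde{U}_{m-1}$ that contributes (only the $p=1$ terms $\delta_0\gamma'$ are nonzero, and $\widetilde\Psi_0$ sends these to combinations of generators of the form $\{z_\infty\}\cup{\bf y'}$), we conclude $q\circ\widetilde\bdry_1\circ\widetilde\Psi_0\circ\widetilde{U}_{m-1} = 0$, hence $\widehat\bdry\circ\Psi + \Psi\circ\bdry_{PFH} = 0$, which is the assertion that $\Psi$ is a chain map.

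The only genuine content here is Theorem~\ref{thm: almost chain map for psi} itself, whose proof occupies the remainder of Section~\ref{section: chain map psi}; granting that, the corollary is a formal consequence of the algebra of the quotient complex and the explicit shape of $\widetilde\bdry_1$. There is no real obstacle in the deduction: the one thing to be careful about is simply that $q$ is a chain map from $(\widehat{CF'},\bdry'_{HF})$ to $(\widehat{CF},\widehat\bdry)$, which is built into the construction of $\widehat{CF}$, and that the mod $2$ coefficients make the two-term expression $\{x_i\}\cup{\bf y'}+\{x_i'\}\cup{\bf y'}$ collapse. \qed
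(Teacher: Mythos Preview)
Your argument is correct and follows essentially the same approach as the paper: compose Equation~\eqref{eqn: chain map for Psi prime} with the quotient map $q$, observe that $q$ intertwines $\bdry'_{HF}$ with $\widehat\bdry_{HF}$, and note that the error term vanishes because $\{x_i\}\cup{\bf y'} + \{x_i'\}\cup{\bf y'}\in\ker q$. The paper's proof is terser but makes exactly the same point.
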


\begin{proof}
Similar to the proof of Theorem~\ref{thm: Phi is a chain map} and based on
Equation~\eqref{eqn: chain map for Psi prime}.  There is one major difference:
{\em $\Psi'$ is not a chain map}.  However, since 
the image of $\widetilde{\partial}_i$ is contained in $\ker q$,
by composing Equation~\eqref{eqn: chain map for Psi prime}
with $q$ we obtain:
\begin{equation}
\widehat\bdry_{HF}\circ \Psi+\Psi\circ \bdry_{PFH} =0,
\end{equation}
where $\widehat\bdry_{HF}$ is the differential for $\widehat{CF}(S,{\bf a},\hh({\bf a}))$.
\end{proof}

We can also define a map
\begin{equation} \label{eqn: Psi twisted}
\underline{\Psi} : \underline{PFH}_{2g}(N) \to \underline{\widehat{CF}}(S,{\bf a},\hh({\bf a}))
\end{equation}
which is $\F[H_2(M; \Z)]$-linear. As for $\underline{\Phi}$ in Section~\ref{subsection: twisted coefficients phi map}, the key point is the definition of maps
$$\mathfrak{A}_- : H_2(\overline{W}_-, \boldsymbol{\gamma}, {\bf y}) \to H_2(M).$$
The main difference is that $H_2(\overline{W}_-) \cong H_2(\overline{N}) \cong H_2(M) \oplus \langle \overline{S} \rangle$. Hence we define
$$\mathfrak{A}_-(C) = p((\pi_{\overline{N}})_*[C_{\bf y} \cup C \cup -C_{\boldsymbol{\gamma}}]),$$
where $\pi_{\overline{N}}$ is the projection $\overline{W}_- \to \overline{N}$ and $p : H_2(\overline{N}) \to H_2(M)$ is the projection given by $p = id - \langle \delta_0, \cdot \rangle[\overline{S}]$.

\subsection{Outline of proof of Theorem~\ref{thm: almost chain map for psi}}
\label{subsection: outline of proof}

In this subsection we outline the proof of Theorem~\ref{thm: almost chain map for psi}.

Let $\overline{J}_-\in \mathcal{J}_{\overline{W}_-}^{reg}$, with restrictions $\overline{J'}$ and $\overline{J}$ to the positive and negative ends, and let $\overline{J}_-^\Diamond$ be $K_{p,2\delta}$-regular with respect to $\overline{\frak m}$. Suppose that the constants $\varepsilon,\delta>0$ that go into the definition of $\overline{J}_-^\Diamond$ are arbitrarily small.

We abbreviate:
$$\mathcal{M}^i:=\mathcal{M}^{I=i,n^*=m}_{\overline{J}_-^\Diamond}(\boldsymbol{\gamma},{\bf y}), \quad \mathcal{M}^i_{\overline{\frak m}}:=\mathcal{M}^{I=i,n^*=m}_{\overline{J}_-^\Diamond}(\boldsymbol{\gamma},{\bf y};\overline{\frak m}).$$
Let $\overline{\mathcal{M}^i_{\overline{\frak m}}}$ be the SFT compactification of $\mathcal{M}^i_{\overline{\frak m}}$, obtained by adding holomorphic
$\overline{W}_-$-buildings as in Definition~\ref{defn of holom building 2}, and let $\bdry \mathcal{M}^i_{\overline{\frak m}}= \overline{\mathcal{M}^i_{\overline{\frak m}}}-\mathcal{M}^i_{\overline{\frak m}}.$

\s\n {\bf Step 1.} By SFT compactness for $\overline{W}_-$-curves (Proposition~\ref{prop: SFT compactness for W minus}), a sequence $\overline{u}_i\in\mathcal{M}_{\overline{\frak m}}^3$ admits a subsequence which converges to a holomorphic $\overline{W}_-$-building
$$\overline u_\infty=\overline{v}_{-b}\cup\dots \cup \overline{v}_a.$$
As before, we write $\overline{v}_j=\overline{v}'_j\cup \overline{v}''_j$, where $\overline{v}'_j$ branch covers the section at infinity $\sigma_\infty^*$ and $\overline{v}''_j$ is the union of irreducible components which do not branch cover $\sigma_\infty^*$. Here $*=\varnothing$, $'$, or $-$.
\nom[u]{$\overline{u}_\infty=\overline{v}_{-b}\cup\dots\cup \overline{v}_a$}{Limit of $\overline{W}_-$-curves in Section~\ref{section: chain map psi}}

There are two cases: $\overline{v}'_0=\varnothing$ or $\overline{v}'_0\not=\varnothing$. The latter case is harder, and will be treated first.  The former one will be treated in Step 3. By analyzing the two constraints $n^-(\overline{u}_i)=m$ and $I(\overline{u}_i)=3$, we obtain Theorem~\ref{thm: compactness for W minus I=3, first version}, which gives a preliminary list of possibilities for $\overline{u}_\infty$ when $\overline{v}'_0\not=\varnothing$. Many of the possibilities in Theorem~\ref{thm: compactness for W minus I=3, first version} actually do not occur.   Theorem~\ref{thm: complement} eliminates Cases (2)--(6) from the list. This is done by a finer analysis of the behavior of $\overline{u}_i$ in the vicinity of the component $\sigma_\infty^-$ and is similar in spirit to the {\em layer structures} of Ionel-Parker~\cite[Section 7]{IP1}.

Summarizing, we have:

\begin{lemma} \label{lemma alt}
If $\overline{u}_\infty\in \bdry \mathcal{M}_{\overline{\frak m}}^3$ and $\overline{v}'_0\not=\varnothing$, then $\overline{u}_\infty\in A_1$, where
\begin{align*}
A_1 ={} & \coprod_{\delta_0\boldsymbol{\gamma}',\{z_\infty\}\cup {\bf y'}} \left(\mathcal{M}^{I=2,n^*=m-1}_{\overline{J'}}(\boldsymbol{\gamma},\delta_0\boldsymbol{\gamma}')\times \ \mathcal{M}^{I=0,n^*=0}_{\overline{J}_-^\Diamond} (\delta_0\boldsymbol{\gamma}',\{z_\infty\}\cup {\bf y}')\right.\\
&\qquad \qquad \left.\times \left(\mathcal{M}^{I=1,n^*=1}_{\overline{J}}(\{z_\infty\}\cup{\bf y}',{\bf y})/\R\right)\right),
\end{align*}
if ${\bf y}=\{x_i\}\times {\bf y'}$ or $\{x_i'\}\times{\bf y}'$ and $A_1=\varnothing$ otherwise.
Here we have omitted the potential contributions of connector components for simplicity.
\end{lemma}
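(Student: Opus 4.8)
The statement is a compactness result that packages together the SFT-limit analysis carried out in Theorems~\ref{thm: compactness for W minus I=3, first version} and \ref{thm: complement}. The plan is to start from an arbitrary boundary element $\overline{u}_\infty\in\bdry\mathcal{M}^3_{\overline{\frak m}}$ with $\overline{v}'_0\neq\varnothing$, invoke the SFT compactness theorem (Proposition~\ref{prop: SFT compactness for W minus}) to write it as a multi-level building $\overline{v}_{-b}\cup\dots\cup\overline{v}_a$ with the level structure described in Step~1 above, and then cut down the list of possibilities using two inputs in sequence: first the preliminary classification of Theorem~\ref{thm: compactness for W minus I=3, first version}, which enumerates Cases (1)--(6) under the hypotheses $n^-(\overline{u}_i)=m$ and $I(\overline{u}_i)=3$ together with $\overline{v}'_0\neq\varnothing$; then Theorem~\ref{thm: complement}, which eliminates Cases (2)--(6) by the finer ``layer-structure'' analysis near $\sigma_\infty^-$. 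What remains is exactly Case (1), which is the three-level configuration displayed in $A_1$.

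\textbf{Key steps, in order.} First I would fix $\overline{u}_\infty\in\bdry\mathcal{M}^3_{\overline{\frak m}}$ with $\overline{v}'_0\neq\varnothing$ and apply Proposition~\ref{prop: SFT compactness for W minus} to obtain the level decomposition $\overline{u}_\infty=\overline{v}_{-b}\cup\dots\cup\overline{v}_a$, recording the target manifolds $\overline{W}_j$ at each level and the splitting $\overline{v}_j=\overline{v}'_j\cup\overline{v}''_j$. Second, I would quote Theorem~\ref{thm: compactness for W minus I=3, first version} verbatim to reduce to its Cases (1)--(6); the role of the two constraints $n^-=m$ (which by Lemma~\ref{properties n-}(2) forces a unique transverse intersection between $\sigma_\infty^-$ and the remaining curve) and $I=3$ (additivity of the ECH index, together with the nonnegativity results of the form of Lemma~\ref{lemma: all simply covered} and Lemma~\ref{lemma: HF index of sections at infinity} / Lemma~\ref{lemma: ECH index of sections at infinity}) is to limit how the index budget of $3$ is distributed among the levels once the section at infinity is peeled off. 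Third, I would invoke Theorem~\ref{thm: complement} to discard Cases (2)--(6), leaving Case (1). Fourth, I would identify Case~(1) with the stratum $A_1$: the bottom level $\overline{v}_{-b}$ is a $\overline{W'}$-curve of index $2$ with $n^*=m-1$ from $\gamma$ to some $\delta_0\gamma'$; the level $\overline{v}_0$ over $\overline{W}_-$ is a degree $2g-1$, $I=0$, $n^*=0$ almost multisection from $\gamma'$ to $\{z_\infty\}\cup{\bf y}'$ together with the section at infinity $\sigma_\infty^-$; and the top level $\overline{v}_a$ over $\overline{W}$ is an $I=1$, $n^*=1$ curve from $\{z_\infty\}\cup{\bf y}'$ to ${\bf y}$, which by Lemma~\ref{lemma: transversality thin strips} is a thin strip and hence forces ${\bf y}=\{x_i\}\cup{\bf y}'$ or $\{x_i'\}\cup{\bf y}'$. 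When ${\bf y}$ is not of this form there are no such buildings, so $A_1=\varnothing$. Finally, I would note the standing convention that connector components (branched covers of trivial cylinders with $I=0$) are suppressed, as stated in the lemma.

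\textbf{Main obstacle.} The substantive content is entirely absorbed into Theorems~\ref{thm: compactness for W minus I=3, first version} and \ref{thm: complement}, which are proved separately; given those, the present lemma is essentially a bookkeeping corollary, and the only real care needed is in checking that the index and intersection-number constraints propagate correctly to each level (so that one really lands in $A_1$ and not in some near-miss configuration with, e.g., a wrong value of $n^*$ on an intermediate level or an extra nontrivial level hidden among the connectors). The trickiest point is ensuring that the constraint $n^-=m$, via the uniqueness of the transverse intersection point in Lemma~\ref{properties n-}(2), distributes as $n^*=m-1$ on the $\overline{W'}$ level and $n^*=1$ on the thin-strip level with $n^*=0$ in between — i.e.\ that the single unit of intersection with $\sigma^\dagger_\infty$ lands where claimed — and that no index is lost to ghost components; both are handled by the nonnegativity and additivity lemmas cited above, but they must be invoked explicitly.
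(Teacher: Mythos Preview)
Your approach is exactly the paper's: the lemma is stated there as a summary of Theorems~\ref{thm: compactness for W minus I=3, first version} and~\ref{thm: complement}, with no further argument beyond noting that Case~(1) is precisely $A_1$. One correction to your fourth step: you have the level ordering reversed---in the conventions of Proposition~\ref{prop: SFT compactness for W minus} the levels $j>0$ are the \emph{top} (highest) levels and map to $\overline{W'}=\R\times\overline{N}$, while the levels $j<0$ are at the \emph{bottom} and map to $\overline{W}$, so the $I=2$, $n^*=m-1$ curve from $\gamma$ to $\delta_0\gamma'$ is the top level $\overline{v}_1$ and the thin strip is the bottom level $\overline{v}_{-1}$ (also, the relevant nonnegativity lemma here is Lemma~\ref{claim in proof}, not Lemma~\ref{lemma: all simply covered}, which is for $W_+$).
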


 Case (1) in Theorem~\ref{thm: compactness for W minus I=3, first version} corresponds to $A_1$.

\s\n {\bf Step 2.}
We now glue the triples $(\overline{v}_1,\overline{v}_0,\overline{v}_{-1})$ in $A_1$, subject to the constraint $\overline{\frak m}$. This gluing accounts for the term $\widetilde\bdry_1\circ \widetilde\Psi_0\circ \widetilde{U}_{m-1}$ and is a bit involved. Let us abbreviate
\begin{align*}
&\mathcal{M}_1: =\mathcal{M}^{I=2,n^*=m-1}_{\overline{J'}}(\boldsymbol{\gamma},\delta_0\boldsymbol{\gamma}');\\
&\mathcal{M}_1':= \mathcal{M}^{I=2,n^*=m-1,f_{\delta_0}}_{\overline{J'}}(\boldsymbol{\gamma},\delta_0\boldsymbol{\gamma}');\\
&\mathcal{M}_0 := \mathcal{M}^{I=0,n^*=0}_{\overline{J}_-^\Diamond} (\delta_0\boldsymbol{\gamma}',\{z_\infty\}\cup {\bf y'});\\
&\mathcal{M}_{-1} := \mathcal{M}^{I=1,n^*=1}_{\overline{J}}(\{z_\infty\}\cup{\bf y'},{\bf y}),
\end{align*}
where ${\bf y}= \{x_i\}\cup {\bf y''}$ or $\{x_i'\}\cup{\bf y''}$. Here $f_{\delta_0}$ is a nonzero normalized asymptotic eigenfunction of $\delta_0$ at the negative end which, used as a modifier, stands for ``the normalized asymptotic eigenfunction at the negative end $\delta_0$ is $f_{\delta_0}$.''  See Section~\ref{subsection: asymptotic eigenfunctions} for more details on asymptotic eigenfunctions.
\nom[2f]{$*=f_{\delta_0}$}{Modifier ``the normalized asymptotic eigenfunction at the negative end $\delta_0$ is $f_{\delta_0}$''}

We first observe that $\overline{v}_0=\overline{v}_0'\cup \overline{v}_0''\in \mathcal{M}_0$ is regular: $\overline{v}_0'$ is regular by Lemma~\ref{lemma: regularity of curve at infinity} and $\overline{v}_0''$ is regular since $\overline{J}_-\in \mathcal{J}_{\overline{W}_-}^{reg}$ and $\overline{J}_-^\Diamond$ is $(\varepsilon,U)$-close to $\overline{J}_-$. The moduli spaces $\mathcal{M}_1$ and $\mathcal{M}_{-1}$ are also regular since $\overline{J}_-$ is regular. Hence we can glue triples $([\overline{v}_1],\overline{v}_0,[\overline{v}_{-1}])$ in $(\mathcal{M}_1/\R)\times \mathcal{M}_0\times(\mathcal{M}_{-1}/\R)$.  More precisely, consider the gluing parameter space
\begin{equation}
{\frak P}:=\coprod_{\delta_0\boldsymbol{\gamma}',\{z_\infty\}\cup {\bf y'}} {\frak P}_{\delta_0\boldsymbol{\gamma}',\{z_\infty\}\cup {\bf y'}},
\end{equation}
where
\begin{equation} \label{gluing parameter space}
{\frak P}_{\delta_0\boldsymbol{\gamma}',\{z_\infty\}\cup {\bf y'}}= (5r,\infty)^2 \times (\mathcal{M}_1/\R)\times \mathcal{M}_0\times (\mathcal{M}_{-1}/\R),
\end{equation}
$0<h<1$ and $r\gg 1/h$ are gluing constants, and $\mathcal{M}_1,\mathcal{M}_0,\mathcal{M}_{-1}$ correspond to the pair $(\delta_0\boldsymbol{\gamma}',\{z_\infty\}\cup {\bf y'})$. We may assume that all the multiplicities of $\boldsymbol{\gamma}'$ are $1$, since the Hutchings-Taubes gluing of branched covers is essentially independent of the present gluing problem.

We recall the extended moduli space
$$\mathcal{M}^{ext}:= \mathcal{M}_{\overline{J}_-}^{I=3,n^*=m,ext}(\boldsymbol{\gamma},{\bf y})$$
from Definition~\ref{defn: extended curves}, where ${\bf y}= \{x_i\}\cup {\bf y''}$ or $\{x_i'\}\cup{\bf y''}$. Here the modifier $ext$ means that $\overline{u}:(\dot F,j)\to (\overline{W}_-,\overline{J}_-)$ is a multisection which maps all the connected components of $\bdry \dot F$ but one to a different $L^-_{\overline{a}_i}$ and the last connected component to some $L^-_{\overline{a}_i\cup\vec{a}_{i,j}}$.

There is a gluing map
$$G: {\frak P}\to \mathcal{M}^{ext},\quad {\frak d}=(T_\pm,\overline{v}_1,\overline{v}_0,\overline{v}_{-1})\mapsto\overline{u}({\frak d}),$$
which is a diffeomorphism onto its image for $r\gg 0$. Here $T_\pm$ is shorthand for the pair $T_+,T_-$. The map $G$ is defined in a manner similar to that of Section~\ref{subsubsection: reviw of HT} and is described in more detail in Section~\ref{subsection: proof of gluing}. Let $r_0\gg 0$, let ${\frak P}_{(r_0)}\subset {\frak P}$ be the subset $\{T_+ \geq r_0\}$ and let
$$\mathcal{M}_{\overline{\frak m}}^{3,(r_0)}:=\mathcal{M}_{\overline{\frak m}}^3- G({\frak P}_{(r_0)})$$
be the truncated moduli space. For generic $r_0\gg 0$,
$$\bdry_0\mathcal{M}_{\overline{\frak m}}^{3,(r_0)}:=G(\bdry{\frak P}_{(r_0)})\cap\mathcal{M}_{\overline{\frak m}}^{3}$$
is a transverse intersection.

The following theorem is proved in Section~\ref{subsection: gluing for psi}:

\begin{thm} \label{thm: transversality of ev map}
For generic $r_0\gg 0$,
\begin{equation} \label{two degrees}
\#(\bdry_0 \mathcal{M}_{\overline{\frak m}}^{3,(r_0)}\cap G({\frak P}_{\delta_0\boldsymbol{\gamma}',\{z_\infty\}\cup {\bf y'}}))\equiv \# (\mathcal{M}'_1/\R)\cdot \# \mathcal{M}_0\cdot \# (\mathcal{M}_{-1}/\R) \mbox{ mod $2$}.
\end{equation}
\end{thm}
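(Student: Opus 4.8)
\textbf{Proof proposal for Theorem~\ref{thm: transversality of ev map}.}
The plan is to analyze the truncated moduli space $\mathcal{M}_{\overline{\frak m}}^{3,(r_0)}$ near its new boundary $\bdry_0\mathcal{M}_{\overline{\frak m}}^{3,(r_0)}=G(\bdry{\frak P}_{(r_0)})\cap\mathcal{M}_{\overline{\frak m}}^{3}$, which is the image under the gluing map $G$ of the slice $\{T_+ = r_0\}$ of the gluing parameter space ${\frak P}$. The key point is that on this slice one of the two gluing length parameters is fixed at a large generic value, so that exactly one gluing parameter remains free; this is what cuts down by one dimension and produces a count rather than a moduli space. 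First I would recall from Theorem~II.\ref{P2-thm: chain homotopy part i} (or the gluing analysis of Section~\ref{subsection: gluing for psi}) that the gluing map $G:{\frak P}\to\mathcal{M}^{ext}$ is a diffeomorphism onto its image for $r\gg 0$; in particular $\bdry_0\mathcal{M}_{\overline{\frak m}}^{3,(r_0)}$ is a manifold diffeomorphic to the slice $\{T_+=r_0\}\cap\bigl({\frak P}\cap \op{ev}_{\overline{\frak m}}^{-1}(\overline{\frak m})\bigr)$, where $\op{ev}_{\overline{\frak m}}$ is the evaluation at the marked point. So the computation reduces to counting points of this slice, intersected with the codimension-two constraint of passing through $\overline{\frak m}$.

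Next I would set up the evaluation map carefully. On the gluing region ${\frak P}_{\delta_0\gamma',\{z_\infty\}\cup {\bf y'}}= (5r,\infty)^2\times (\mathcal{M}_1/\R)\times \mathcal{M}_0\times (\mathcal{M}_{-1}/\R)$, the glued curve $\overline{u}({\frak d})$ is, up to exponentially small error, a concatenation of (translates of) $\overline{v}_1\in\mathcal{M}_1/\R$, $\overline{v}_0\in\mathcal{M}_0$, and $\overline{v}_{-1}\in\mathcal{M}_{-1}/\R$ along the neck region of length $T_-$ (below) and $T_+$ (above). The marked point $\overline{\frak m}=((0,\tfrac32),z_\infty)$ lives at finite base height in $\overline{W}_-=\overline{W}_0$, hence it is hit only on the middle level $\overline{v}_0$ (for $r_0$ large the other levels have been translated away from any fixed compact set, or live in $\overline{W}$ or $\R\times\overline{N}$). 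Since $\overline{v}_0=\overline{v}_0'\cup\overline{v}_0''$ with $\overline{v}_0'=\sigma_\infty^-$ the section at infinity — which passes through $z_\infty$ over \emph{every} base point, in particular over $(0,\tfrac32)$ — the constraint $\op{ev}_{\overline{\frak m}}=\overline{\frak m}$ is \emph{automatically satisfied} by the section-at-infinity component. Thus on the slice $\{T_+=r_0\}$ the marked-point constraint imposes no further condition beyond being in the image of $G$, and $\bdry_0\mathcal{M}_{\overline{\frak m}}^{3,(r_0)}\cap G({\frak P}_{\delta_0\gamma',\{z_\infty\}\cup {\bf y'}})$ is identified, via $G$, with $\{T_+=r_0\}\times(\mathcal{M}_1/\R)\times\mathcal{M}_0\times(\mathcal{M}_{-1}/\R)$ modulo the one remaining free parameter $T_-$. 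The count is then governed by the obstruction-bundle section of the branched-cover gluing (Hutchings–Taubes, cf.\ Section~\ref{subsubsection: reviw of HT}): fixing $T_+=r_0$ generic, the number of $T_-$ (together with branched cover data) for which the obstruction section vanishes is, mod $2$, the Hutchings–Taubes count attached to gluing the $I_{ECH}$-pieces. Crucially, this branched-cover count matches the one already built into the definition of $\widetilde{U}_{m-1}$ via the asymptotic-eigenfunction modifier $f_{\delta_0}$ — this is where $\mathcal{M}_1'$ (the $f_{\delta_0}$-restricted version of $\mathcal{M}_1$) appears in place of $\mathcal{M}_1$. Hence the mod-$2$ count equals $\#(\mathcal{M}_1'/\R)\cdot\#\mathcal{M}_0\cdot\#(\mathcal{M}_{-1}/\R)$.

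The main obstacle, and the step I would spend the most care on, is making rigorous the claim that fixing $T_+=r_0$ cuts the count down to exactly $\#(\mathcal{M}_1'/\R)$ rather than $\#(\mathcal{M}_1/\R)$ on the $\overline{v}_1$ factor — i.e.\ tracking precisely how the asymptotic eigenfunction $f_{\delta_0}$ of $\overline{v}_1$ at its negative end $\delta_0$ interacts with the neck-stretching parameter $T_-$ and the branched-cover obstruction bundle over $\delta_0$. This requires the rescaling analysis of Section~\ref{subsection: rescaling} and the asymptotic-eigenfunction formalism of Section~\ref{subsection: asymptotic eigenfunctions}: as $T_-\to\infty$ the leading asymptotics of the glued curve along the $\delta_0$-neck are pinned down by $f_{\delta_0}$, and the Hutchings–Taubes obstruction section, linearized as in \cite[Proposition~8.2]{HT2}, picks out exactly those $\overline{v}_1$ whose eigenfunction is compatible with a zero of the section. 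A secondary technical point is to verify transversality of $\bdry_0\mathcal{M}_{\overline{\frak m}}^{3,(r_0)}$ for generic $r_0$ — this follows from the fact that $G$ is a diffeomorphism onto its image together with the regularity of $\overline{J}_-$ and the $K_{p,2\delta}$-regularity of $\overline{J}_-^\Diamond$ with respect to $\overline{\frak m}$, applied as in the proof of Lemma~\ref{lemma: regularity of W minus diamond} — so I would state it and defer the routine Sard–Smale argument. Summing over all pairs $(\delta_0\gamma',\{z_\infty\}\cup{\bf y'})$ then gives Equation~\eqref{two degrees}.
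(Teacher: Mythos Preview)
Your proposal contains a fundamental error in the second paragraph. You claim that because $\overline{v}_0' = \sigma_\infty^-$ passes through $z_\infty$ over every base point, the constraint $\overline{u}(\mathfrak{d}) \ni \overline{\mathfrak{m}}$ is automatically satisfied on the glued curves. This confuses the broken building with the glued curve. The curve $G(\mathfrak{d}) \in \mathcal{M}^{ext}$ is an honest multisection obtained by deforming the pregluing; after gluing $\overline{v}_1$ and $\overline{v}_{-1}$ onto the ends of $\sigma_\infty^-$, it no longer contains $\sigma_\infty^-$ as a component. Since $n^-(G(\mathfrak{d})) = m$, Lemma~\ref{properties n-}(2) shows that $G(\mathfrak{d})$ meets $\sigma_\infty^-$ at exactly one transverse point, and this point moves in $B_-$ as $\mathfrak{d}$ varies. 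Passing through $\overline{\mathfrak{m}}$ is therefore a genuine codimension-$2$ condition on the $2$-dimensional slice $\partial\mathfrak{P}_{(r_0)}$ (recall $\mathcal{M}_1/\R$ is $1$-dimensional since $I=\op{ind}=2$), and the left-hand side of \eqref{two degrees} is the count of preimages of $\overline{\mathfrak{m}}^b$ under the map $\Upsilon' : \partial\mathfrak{P}_{(r_0)} \to B_-$, $\mathfrak{d} \mapsto G(\mathfrak{d})^{-1}(\sigma_\infty^-)$.

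Your invocation of the Hutchings--Taubes obstruction bundle is also misplaced: as noted in Section~\ref{subsection: proof of gluing}, there is no obstruction bundle here because $\overline{v}_0$ is regular (Lemma~\ref{lemma: regularity of curve at infinity} together with the regularity of $\overline{J}_-$), so $G$ is simply a diffeomorphism onto its image and there is no section to count zeros of. The actual argument is quite different. One introduces a model moduli space $\mathcal{N}$ of holomorphic maps $w : B_- \to \C$ (Section~\ref{moduli space N}), shows via a Riemann--Roch and winding-number computation that the ``zero locus'' map $\mathfrak{F} : \P\mathcal{N} \to \D - \{0\}$ is a diffeomorphism (Lemma~\ref{F is a diffeo}), and then defines a comparison map $\Upsilon'' = \widehat{\mathfrak{F}} \circ \mathfrak{E}^{-1} \circ \Pi$ built purely from the asymptotic eigenfunctions of $\overline{v}_1$ at $\delta_0$ and of $\overline{v}_{-1}$ at $z_\infty$. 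The rescaling analysis of Section~\ref{subsection: rescaling} (specifically Lemma~\ref{rescaling last}) shows that $\Upsilon'$ and $\Upsilon''$ are $C^0$-close for $r_0 \gg 0$, hence have the same local degree at $\overline{\mathfrak{m}}^b$. It is the degree of $\Upsilon''$ that equals the right-hand side of \eqref{two degrees}, and this is precisely where the restriction to a fixed normalized eigenfunction $f_{\delta_0}$ --- hence $\mathcal{M}_1'$ rather than $\mathcal{M}_1$ --- enters, since $\Upsilon''$ factors through the eigenfunction data via $\Pi$.
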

Hence the contributions from $A_1$ account for the term $\widetilde\bdry_1\circ \widetilde\Psi_0\circ \widetilde{U}_{m-1}$.

\s\n {\bf Step 3.} Let $\bdry_1 \mathcal{M}_{\overline{\frak m}}^{3,(r_0)}= \bdry \mathcal{M}_{\overline{\frak m}}^{3,(r_0)} - \bdry_0\mathcal{M}_{\overline{\frak m}}^{3,(r_0)}$.  The following lemma is proved in Section~\ref{proof of lemma}.

\begin{lemma} \label{claim 1}
$\bdry_1 \mathcal{M}_{\overline{\frak m}}^{3,(r_0)}\subset A_2\sqcup A_3$, where
\begin{align*}
A_2 ={} &  \coprod_{{\bf y''}\in \mathcal{S}_{{\bf a},\hh({\bf a})}} \left(\mathcal{M}^{I=2,n^*=m}_{\overline{J}_-^\Diamond}(\boldsymbol{\gamma},{\bf y''};\overline{\frak m})\times \left(\mathcal{M}^{I=1,n^*=0}_{\overline{J}}({\bf y''},{\bf y})/\R\right)\right); \\
A_3 ={} & \quad\coprod_{\boldsymbol{\gamma}'\in \widehat{\mathcal{O}}_{2g}}\quad\left(\left(\mathcal{M}^{I=1,n^*=0}_{\overline{J'}}(\boldsymbol{\gamma},\boldsymbol{\gamma}')/\R\right)\times \mathcal{M}^{I=2,n^*=m}_{\overline{J}_-^\Diamond}(\boldsymbol{\gamma}',{\bf y};\overline{\frak m})\right).
\end{align*}
Here we have omitted the potential contributions of connector components for simplicity.
\end{lemma}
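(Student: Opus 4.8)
The statement to be proved is Lemma~\ref{claim 1}: that the part $\bdry_1\mathcal{M}_{\overline{\frak m}}^{3,(r_0)}$ of the boundary of the truncated moduli space which was \emph{not} already accounted for by the gluing region $G(\mathfrak P_{(r_0)})$ in Step~2 is a subset of the two product pieces $A_2\sqcup A_3$. The strategy is to run the SFT compactness machinery of Step~1 (Proposition~\ref{prop: SFT compactness for W minus}, Theorem~\ref{thm: compactness for W minus I=3, first version}, and Theorem~\ref{thm: complement}) but now for the case $\overline v_0'=\varnothing$, and then to show, using the index and intersection-number bookkeeping, that every degeneration either lives in $G(\mathfrak P_{(r_0)})$ (hence is already excised) or is one of the two-level breakings listed in $A_2$ and $A_3$.

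First I would recall that $\bdry\mathcal{M}_{\overline{\frak m}}^{3,(r_0)}$ decomposes as $\bdry_0\mathcal{M}_{\overline{\frak m}}^{3,(r_0)}\sqcup \bdry_1\mathcal{M}_{\overline{\frak m}}^{3,(r_0)}$, where $\bdry_0$ is the gluing-region boundary $G(\bdry\mathfrak P_{(r_0)})\cap\mathcal{M}_{\overline{\frak m}}^3$ and $\bdry_1$ is everything else. A point of $\bdry_1$ is the SFT limit $\overline u_\infty=\overline v_{-b}\cup\dots\cup\overline v_a$ of a sequence $\overline u_i\in\mathcal{M}_{\overline{\frak m}}^3$ which is \emph{not} asymptotically of the $A_1$-form glued with a long neck. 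By Lemma~\ref{lemma alt}, if $\overline v_0'\neq\varnothing$ then $\overline u_\infty\in A_1$, and such limits are exactly the ones living in the image of $G$; since we have truncated away $G(\mathfrak P_{(r_0)})$ for generic $r_0\gg 0$, these contribute to $\bdry_0$, not $\bdry_1$. Hence on $\bdry_1$ we may assume $\overline v_0'=\varnothing$. Then I would invoke the structural results already established (the somewhere-injectivity and index-nonnegativity lemmas analogous to Lemmas~\ref{lemma: all simply covered} and \ref{lemma: no ghosts}, and the index inequalities of Theorems~\ref{thm: index inequality for HF}, \ref{thm: index inequality for W+ and W-}, and Lemma~\ref{index inequality for z infinity case}): the total ECH index splits additively as $3 = \sum_{j>0}I_{HF}(\overline v_j) + I_{\overline W_-}(\overline v_0) + \sum_{j<0}I(\overline v_j)$ (with the $\overline{\frak m}$-constraint reducing the expected dimension of the $\overline W_-$-level by $2$), and each summand is $\ge 0$, with $I_{HF}>0$ on genuine (non-trivial-strip) upper levels. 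Combined with the constraint $n^*=m$ being concentrated on a single level (Lemmas~\ref{properties n-}, \ref{lemma: restriction of overline W minus curve}, \ref{pluto}, and the analog of Lemma~\ref{lemma: trivial cylinder when x i involved} forcing the marked point and hence the full $n^*=m$ onto the $\overline W_-$-curve carrying $\overline{\frak m}$), this forces exactly a two-level configuration: either an $\overline W_-$-curve $\overline u'\in\mathcal{M}^{I=2,n^*=m}_{\overline J_-^\Diamond}(\gamma,{\bf y''};\overline{\frak m})$ sitting below an $I_{HF}=1$ curve $\mathcal{M}^{I=1,n^*=0}_{\overline J}({\bf y''},{\bf y})/\R$ in $\overline W$ (this is $A_2$), or an $I_{ECH}=1$ curve $\mathcal{M}^{I=1,n^*=0}_{\overline{J'}}(\gamma,\gamma')/\R$ in $\R\times N$ sitting above the $\overline W_-$-curve $\mathcal{M}^{I=2,n^*=m}_{\overline J_-^\Diamond}(\gamma',{\bf y};\overline{\frak m})$ (this is $A_3$); possible $I_{ECH}=0$ connectors are absorbed as usual. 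The remaining arithmetic possibilities — e.g.\ $I_{HF}=1$ on two upper levels, or an upper $I_{ECH}=1$ curve together with a lower $I_{HF}=1$ curve, or the $\overline W_-$-level having $I\neq 2$ — are excluded either because they violate the index sum, or because they would force the $n^*=m$ bookkeeping to fail, or because such breakings, after reattaching the relevant neck, land back in $G(\mathfrak P_{(r_0)})$ and are therefore already part of $\bdry_0$.

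The main obstacle I anticipate is the careful case analysis around the $\overline W_-$-level carrying the marked point $\overline{\frak m}$: one must verify that the $\overline{\frak m}$-constraint together with $n^*=m$ genuinely pins the $\overline W_-$-curve to have ECH index exactly $2$ (and not, say, index $3$ with a trivial-strip level elsewhere, or index $0$ with the full index $3$ squeezed onto a single non-$\overline W_-$ level that cannot carry $\overline{\frak m}$), and that no degeneration of the form ``an $I_{\overline W_-}=3$ curve through $\overline{\frak m}$'' or ``breaking at $z_\infty$ into an extended $\overline W$-curve'' survives — these last are where the layer-structure analysis of Theorem~\ref{thm: complement} and Lemma~\ref{index inequality for z infinity case} (with the discrepancy term $d^+-d^-$) must be brought to bear to rule out spurious pieces near $\sigma_\infty^-$. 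Once those exclusions are in place, the containment $\bdry_1\mathcal{M}_{\overline{\frak m}}^{3,(r_0)}\subset A_2\sqcup A_3$ follows, completing the proof; the reverse containment and the gluing-theoretic verification that these strata are genuinely attained are handled separately (and are not part of this lemma's statement).
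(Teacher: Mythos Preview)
Your plan is essentially correct and follows the paper's approach: reduce to $\overline v_0'=\varnothing$ via the truncation, then use index additivity together with the $\overline{\frak m}$-constraint to force a two-level breaking. Two points need sharpening, and one worry you raise is unnecessary.

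First, once $\overline v_0'=\varnothing$, Lemma~\ref{lemma: preliminary restrictions part 1} already gives the full structural picture: all $\overline v_j$ with $j\neq 0$ have image in $W'$ or $W$ (so no $z_\infty$-breakings or $\delta_0$-asymptotics survive on those levels), and $\overline v_0$ is either a genuine $\overline W_-$-curve or a \emph{degenerate} one (a fiber $\overline\pi_{B_-}^{-1}(p)$ through $\overline{\frak m}$ together with a $W_-$-curve). You omit the degenerate case, but it must be ruled out explicitly: the computation in Equation~\eqref{persimmon} gives $I(\overline v_0)\geq 2g+2\geq 4$ for such a configuration, violating $\sum_j I(\overline v_j)=3$. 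Conversely, your discussion of ``breaking at $z_\infty$ into an extended $\overline W$-curve'' and the discrepancy terms of Lemma~\ref{index inequality for z infinity case} is not needed here --- those complications only arise when $\overline v_0'\neq\varnothing$, and Lemma~\ref{lemma: preliminary restrictions part 1} has already excluded them.

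Second, your ``main obstacle'' --- why the $\overline{\frak m}$-constraint genuinely cuts dimension by $2$ despite $\overline{\frak m}$ lying on the nongeneric section $\sigma_\infty^-$ --- is resolved not by invoking Theorem~\ref{thm: complement} directly, but by its consequence Corollary~\ref{alternative}. That corollary shows every curve in the truncated space $\mathcal{M}_{\overline{\frak m}}^{3,(r_0)}$ meets $K_{p,2\delta}$, which is exactly where $\overline J_-^\Diamond$ differs from $\overline J_-$; hence the component of $\overline v_0$ through $\overline{\frak m}$ is regular with respect to the generic perturbation and the point constraint is transverse, forcing $\operatorname{ind}(\overline v_0)\geq 2$ and so $I(\overline v_0)\geq 2$. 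With this in hand the index arithmetic (using Lemma~\ref{claim in proof} rather than the $W_+$-lemmas you cite) immediately yields the two options $A_2$ and $A_3$, and ghosts are excluded as in Lemma~\ref{lemma: no ghosts}.
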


On the other hand, gluing the pairs in $A_2$ (resp.\ $A_3$) using the usual (resp.\ Hutchings-Taubes) gluing theorem implies that $A_2\cup A_3\subset \bdry_1 \mathcal{M}_{\overline{\frak m}}^{3,(r_0)}$ and accounts for the term $\bdry'_{HF}\circ \Psi'$  (resp.\ $\Psi'\circ \bdry_{PFH}$).  This proves Theorem~\ref{thm: almost chain map for psi}.

\s\n {\em Organization of Section~\ref{section: chain map psi}.} In Sections~\ref{subsection: SFT compactness for W minus} we describe the necessary modifications to apply the proof of SFT compactness to the nonstandard setting of $\overline{W}_-$-curves. The signed intersection numbers $n^*(\overline{u})$ are analyzed in Sections~\ref{subsection: intersection numbers} and ~\ref{some restrictions, first version}. Theorem~\ref{thm: compactness for W minus I=3, first version} is then proved in Section~\ref{subsection: compactness theorem, first version}.  We then discuss asymptotic eigenfunctions in Section~\ref{subsection: asymptotic eigenfunctions}, rescaling in Section~\ref{subsection: rescaling}, and the ``involution lemmas'' in Section~\ref{subsection: involutions}, on our way to proving Theorem~\ref{thm: complement} in Section~\ref{subsection: theorem complement}. Lemma~\ref{claim 1} is proved in Section~\ref{proof of lemma}. Finally, the gluing is discussed in Section~\ref{subsection: gluing for psi}.

\subsection{SFT compactness}\label{subsection: SFT compactness for W minus}

\subsubsection{Statement of the compactness theorem}
\begin{defn} \label{defn of holom building 2}
A {\em holomorphic $\overline W_-$-building}
$$\overline{u}_\infty=\overline{v}_{-b}\cup\dots \cup \overline{v}_a,\quad a,b\in \Z^{\geq 0},$$
consists of the following data:
\begin{enumerate}
\item[(B1$'$)]  For each $j=-b,\dots,a$, a compact {\em nodal} Riemann surface $G_j$ (possibly with boundary), a nodal set $\mathfrak{n}_j$, disjoint sets of interior punctures ${\bf p}_j^+$ and ${\bf p}_j^-$ if $j \ge 0$, and disjoint sets of boundary punctures ${\bf q}_j^+$ and ${\bf q}_j^-$ if $j \le 0$. 
The nodes may be interior or boundary nodes and are disjoint from ${\bf p}_j^\pm$, ${\bf q}_j^\pm$, and the nodal set $\mathfrak{n}_j$ may be empty.
\item[(B2$'$)] For each $j=-b,\dots,a$,  a holomorphic map $\overline{v}_j: \dot G_j\to \overline{W}_j$, where $\overline{W}_j=\overline{W'}$ for $0<j\leq a$,    $\overline{W}_0=\overline{W}_-$ for $j=0$, $\overline{W}_j= \overline{W}$ for $-b\leq j<0$, and $\dot G_j=G_j-{\bf p}_j^+-{\bf p}_j^--{\bf q}_j^+-{\bf q}_j^-$ for all $j$. (Here some sets of punctures may be empty.)
\item[(B3$'$)] For each  $j=-b,\dots,0$, $\bdry \dot G_j$ is mapped to the appropriate Lagrangian submanifold $L_{\overline{\bf a}}\sqcup L_{\overline{\hh}(\overline{{\bf a}})}$ or $L^-_{\overline{\bf a}}$.
\item[(B4$'$)] For each $j=-b,\dots,a$, $\overline{v}_j$ converges to a trivial strip over a ``Reeb chord'' at the positive (resp.\ negative) end near each boundary puncture of ${\bf q}_j^+$ (resp.\ ${\bf q}_j^-$) and to a trivial cylinder over a closed orbit at the positive (resp.\ negative) end near each interior puncture of ${\bf p}_j^+$ (resp.\ ${\bf p}_j^-$).
\item[(B5$'$)] For each $j=-b,\dots,a-1$, there is an identification between ${\bf p}_j^+$ and ${\bf p}_{j+1}^-$ and an identification between ${\bf q}_j^+$ and ${\bf q}_{j+1}^-$ such that the pairs that are identified are asymptotic to the same Reeb chord or closed orbit.
\item[(B6$'$)] No level $\overline{v}_j$ is a union of trivial cylinders or trivial strips.
\end{enumerate}
The levels $\overline{v}_j$ are arranged in order from lowest to highest.
\end{defn}

\begin{prop} \label{prop: SFT compactness for W minus}
Let $\overline{J}_-\in \mathcal{J}_{\overline W_-}$ and let $\overline{u}_i: (\dot F_i,j_i)\to (\overline{W}_-,\overline{J}_-)$, $i\in \N$, be a sequence of $\overline{W}_-$-curves from $\boldsymbol{\gamma}$ to ${\bf y}$. Then there is a subsequence which converges in the sense of SFT to a level $a+b+1$ holomorphic $\overline{W}_-$-building
$$\overline u_\infty=\overline{v}_{-b}\cup\dots \cup \overline{v}_a.$$
The same also holds when $\{\overline{u}_i\}$ is a sequence of $(\overline{W}_-,\overline{J}_-^\Diamond)$-curves.
\end{prop}

\subsubsection{Preliminary lemmas}

Here we prove some lemmas which will be used in the proof of Proposition~\ref{prop: SFT compactness for W minus}. First we recall some terminology. Let $\overline{W}_*$ denote one of $\overline{W}$, $\overline{W'}$, $\overline{W}_\pm$ and $\overline{\omega}$ the $2$-form on $\overline{W}_*$ defined in Section~\ref{subsubsection: overline W pm}. We recall that the $\overline{\omega}$-area of a map $w : \dot F \to \overline{W}_*$ is defined as
$$E_{\overline{\omega}}(w) = \int_{\dot F} w^* \overline{\omega}.$$

We also introduce the {\em energy} $E(w)$ in the spirit of Definition~\ref{defn: energy of Lipshitz curve}. More precisely, let $\mathcal{C}$ be the set of nondecreasing smooth functions $\phi :\R \rightarrow [0,1]$; for all $\phi \in {\mathcal C}$ we define the $1$-form
$\lambda_\phi = \overline{\pi}_{B_*}^* (\phi(s)dt)$, where as usual $B_*$ can stand for
$B$, $B'$ or $B_\pm$. Then the energy of a map $w : \dot F \to \overline{W}_*$ is
\begin{equation} \label{ene}
E(w)=\int_{\dot F}  w^* \overline{\omega} + \sup_{\phi \in \mathcal{C}} \int_{\dot F} w^* d \lambda_\phi= E_{\overline{\omega}}(w) + \sup_{\phi \in \mathcal{C}} \int_{\dot F} w^* d \lambda_\phi.
\end{equation}

If $w$ is a holomorphic map, then $E_{\overline{\omega}}(w) \ge 0$ and
moreover a finite energy holomorphic cylinder $w:[-R,R]\times S^1\to \overline{W}_*$ (or strip
$w:[-R,R]\times [0,1]\to \overline{W}_*$) with $E_{\overline{\omega}}(w) =0$ is either a constant map or is part of a trivial cylinder (or strip) over a Hamiltonian orbit (or chord). We recall also that the
{\em action} of a Hamiltonian orbit or chord is the integral, over that orbit or chord, of the $1$-form $\overline{\alpha}_0$ defined in Section~\ref{subsubsection: overline W pm}. By Equation~\eqref{ene}, the energy of $w$ is determined by its $\overline{\omega}$-area and the actions of the orbits and chords at its ends.

The following lemma is a by-now standard observation originally due to Hofer.

\begin{lemma}\label{lemma: simple L-infty bound}
 Let $F$ be a fixed compact Riemann surface, possibly with boundary, and let $w_n : F \to \overline{W}_*$ be a sequence of holomorphic maps such that:
\begin{itemize}
\item $E(w_n) \le C_0$ for some constant $C_0$, and
\item $\lim \limits_{n \to + \infty} E_{\overline{\omega}}(w_n) =0$.
\end{itemize}
Then there is a constant $C$ such that $\| \nabla w_n \|_{C^0} \le C$ for all $n$.
\end{lemma}

\begin{proof}[Sketch of proof]
The lemma follows from the standard bubbling argument: if there exists a sequence of points $p_n \in F$ such that $\lim \limits_{n \to + \infty} |\nabla w_n(p_n)| = + \infty$, then, by the usual rescaling argument (we may apply a translation in the target if the sequence $w_n$ is not bounded), we have bubbling of a {\em nonconstant}
holomorphic plane or half-plane $\tilde{w}$.

The function $f  = \overline{\pi}_* \circ \tilde{w}$ is holomorphic and has finite energy. Hence it can be extended to a holomorphic function $\tilde{f} : S^2 \to B_*$ or
$\tilde{f} : (\D, \partial \D) \to (B_*, \partial B_*)$. For topological reasons $\tilde{f}$ must be
constant and hence the image of $\tilde{w}$ must be contained in a fiber. However $E_{\overline{\omega}}(\tilde{w})=0$, and therefore $\tilde{w}$ is constant. This is a contradiction.
\end{proof}

The following ``Long Cylinder'' and ``Long Strip'' Lemmas, morally speaking, say that a cylinder or strip which is sufficiently long and has sufficiently small $\overline{\omega}$-area is either almost constant or close to a piece of a trivial cylinder or strip (cf.~\cite[Proposition 5.7]{BEHWZ} and \cite{HWZ3} for related results).

\begin{lemma}[Long Cylinder Lemma, Version 1]\label{long cylinder lemma v1}
Let $\gamma_+$ and $\gamma_-$ be closed Hamiltonian orbits in $\overline{N}$.  Fix $E_0$ larger than the action of both $\gamma_+$ and $\gamma_-$, and $\eta_0>0$ such that every closed Hamiltonian orbit $\gamma$ of action at most $E_0$ is at distance at least $2 \eta_0$ from $\gamma_+$, if $\gamma \ne \gamma_+$, and from $\gamma_-$, if $\gamma \ne \gamma_-$. Then for all $\eta \in (0, \eta_0)$ there
exist $c_0 >0$, $R_0 >0$, $\eta' >0$ and $\delta >0$ such that, for all $c \ge c_0$ and all $R>R_0$, the following holds: If
$$w : [-R-c, R+c] \times S^1 \to \overline{W'}$$
is a $\overline{J}'$-holomorphic map which satisfies:
\begin{enumerate}
\item $w|_{[-R-c, -R] \times S^1}$ and $w|_{[R, R+c] \times S^1}$ are $\eta'$-close in the $C^1$-metric to portions of trivial cylinders over $\gamma_-$ and $\gamma_+$, respectively, and
\item $E_{\overline{\omega}}(w) < \delta$,
\end{enumerate}
then $\gamma_- = \gamma_+$ and $w$ is $\eta$-close in the $C^1$-metric to a portion of the trivial cylinder over $\gamma_-$.
\end{lemma}

\begin{proof}
First we claim that, if the hypotheses of the lemma hold, then the image of $w$ is contained in an $\eta$-neighborhood of the cylinder over $\gamma_-$.  This implies that $\gamma_-= \gamma_+$ because $\gamma_-$ is the only closed Hamiltonian orbit of action less than $E_0$ which is contained in an $\eta$-neighborhood of $\gamma_-$.

Arguing by contradiction, assume there exist  $\eta < \eta_0$,
sequences $R_n$, $c_n$, $\eta_n'$, $\delta_n$, $p_n \in  [-R, R] \times S^1$ and $\overline{J}'$-holomorphic maps $w_n : [-R_n-c_n, R_n+c_n] \times S^1 \to \overline{W'}$ such that:
\begin{itemize}
\item $R_n \to + \infty$, $c_n \to + \infty$, $\eta_n' \to 0$ and $\delta_n \to 0$,
\item $w_n$ satisfy (1) and (2) with $R_n$, $c_n$, $\eta'_n$ and $\delta_n$ in place of $R$, $c$, $\eta'$  and $\delta$, and
\item $w_n(p_n)$ is at a distance of $\eta$ from the trivial cylinder over $\gamma_-$. (The proof is the same for $\gamma_+$ instead of $\gamma_-$.)
\end{itemize}
Conditions (1) and (2) imply that $E_{\overline{\omega}}(w_n) \to 0$ and $E(w_n) < E_0$ for $n$ sufficiently large. Hence by Lemma~\ref{lemma: simple L-infty bound} there is a constant $C$ such that $\| \nabla w_n\|_{C^0}\leq C$ for all $n$.  Then, after translating the maps $w_n$ so that the points $w_n(p_n) \in \overline{W'} = \R \times \overline{N}$ all have the same $\R$-coordinate and taking a subsequence, the maps $w_n$ converge in $C^\infty_{loc}$ to a $\overline{J}'$-holomorphic map $w_\infty : \R \times S^1 \to \overline{W'}$ and the points $p_n$ converge to a point  $p_\infty =(s_\infty, t_\infty) \in \R \times S^1$ such that $E(w_\infty) < E_0$, $E_{\overline{\omega}}(w_\infty)=0$, and $w_\infty(p_\infty)$ is at $\eta$-distance from the trivial cylinder over $\gamma_-$.

Since there is no closed Hamiltonian orbit with action less than $E_0$ at $\eta$-distance from $\gamma_-$, the map $w_\infty$ is constant.
However this is a contradiction because it implies that, for $n$ sufficiently large,
the loop $t \mapsto w_n(s_\infty,t)$ is contractible, while the loop $t \mapsto w_n(-R_n-c_n, t)$, being close to $\gamma_-$, is not. (We made use here of the fact that $N$ has no contractible Hamiltonian orbits.)  This proves the claim.

To pass from the claim to ``$w$ is $\eta$-close in the $C^0$-metric to a trivial cylinder over $\gamma_-$'', we consider the projection $\overline{\pi}_{B'} : \overline{W'} \to \R \times S^1$. The composition $\overline{\pi}_{B'}\circ w:  [-R-c, R+c] \times S^1 \to \R \times S^1$ is holomorphic. Let $k$ be the
 multiplicity of $\gamma_-$ and identify $\R \times S^1$ with $\C^*$. Then (1) and the maximum principle imply that the derivatives of $f(s,t) = e^{-k(s+it)}(\overline{\pi}_{B'} \circ w)(s,t)$ are small (depending only on $\eta'$). Therefore, after possibly reducing $\eta'$ in the statement of the theorem, $w$ is close to a trivial cylinder.
This concludes the proof of the lemma.
\end{proof}

A similar lemma holds for holomorphic strips. Its proof is left to the reader.

\begin{lemma}[Long Strip Lemma, Version 1]\label{long strip lemma v1}
Let $y_+$ and $y_-$ be intersection points in $\overline{a}_i \cap \overline{\hh}(\overline{a}_{i'})$.  Then for all $\eta \in (0, \eta_0)$ there exist $c_0 >0$, $R_0 >0$, $\eta' >0$ and $\delta>0$ such that, for all $c \ge c_0$ and all $R>R_0$, the following holds: If
$$w : [-R-c, R+c] \times [0,1] \to \overline{W}$$
is a $\overline{J}$-holomorphic map which satisfies:
\begin{enumerate}
\item $w$ maps $[-R,R]\times \{1\}$ to $L_{\overline{a}_i}$ and $[-R,R]\times\{0\}$ to $L_{\overline a_{i'}}$;
\item $w|_{[-R-c, -R] \times S^1}$ and $w|_{[R, R+c] \times S^1}$ are $\eta'$-close in the $C^1$-metric to portions of trivial strips over $y_-$ and $y_+$, respectively, and
\item $E_{\overline{\omega}}(w)< \delta$,
\end{enumerate}
then $y_- = y_+$ and $w$ is $\eta$-close in the $C^1$-metric to a portion of the trivial strip over $y_-$.
\end{lemma}

Now we consider long cylinders and strips whose extremities are close to constant maps. First we develop a bound on their $\overline{\omega}$-area in terms of the
behavior of the extremities, which is a consequence of the topology of $\overline{W}_-$.
\begin{lemma}\label{uppsala}
For every $\delta>0$ there is $\eta'>0$ such that, for every $R>0$, if either
\begin{itemize}
\item $w: [-R, R] \times S^1 \to \overline{W}_-$ is a $\overline{J}_-$-holomorphic map, or
\item $w : [-R, R] \times [0,1] \to \overline{W}_-$ is a  $\overline{J}_-$-holomorphic map mapping $[-R,R] \times \{0,1 \}$ to $L^-_{\overline{\mathbf{a}}}$,
\end{itemize}
and $w|_{\{ \pm R \} \times S^1}$ (or $w|_{\{ \pm R \} \times [0,1]}$) are $\eta'$-close to constant maps in the $C^1$-metric, then $E_{\overline{\omega}}(w) < \delta$.
\end{lemma}

\begin{proof}
We consider strips first since cylinders are easier. We have two cases: either the boundary of $[- R,  R] \times [0,1]$ is mapped to the same component $L^-_{\overline{a}_i}$ or the two boundary components are mapped to different components $L^-_{\overline{a}_i}$ and $L^-_{\overline{a}_j}$.

We embed $[- R, R] \times [0,1]$ holomorphically in $D^2=\{|z|\leq 1~|~ z\in \C\}$ as the complement of an open neighborhood of $\{1, -1 \}$ and extend
$w$ to a map $\widetilde{w} : D^2 \to \overline{W}_-$ with boundary on  $L^-_{\overline{a}_i}$ or $L^-_{\overline{a}_i} \cup L^-_{\overline{a}_j}$, depending on the situation. We can make the extension so that, in the neighborhood of $\{ 1, -1\}$, $\widetilde{w}$ remains $\eta'$-close in the $C^1$-metric to constant maps, 
and therefore there is a positive constant $C$ depending only on $\overline{\omega}$ such that
\begin{equation} \label{area bound}
|E_{\overline{\omega}}(w) - E_{\overline{\omega}}(\widetilde{w})| \le C \eta'.
\end{equation}

In the first case, $\widetilde{w}$ defines an element in $\pi_2(\overline{W}_-, L^-_{\overline{a}_i})$, and, since the latter group is trivial, $E_{\overline{\omega}}(\widetilde{w})=0$ and then we can chose $\eta'$ so that Equation~\eqref{area bound} implies that $E_{\overline{\omega}}(w) < \delta$.

The second case is similar, but the map $\widetilde{w}$ defines an element in the relative homotopy group $\pi_2(\overline{W}_-, L^-_{\overline{a}_i} \cap  L^-_{\overline{a}_j})$. If $g(\overline S)>1$, then the group is trivial and we can conclude as before.

If $\overline{S}$ is a torus, the first observation is that $\widetilde{w}$ is homotopic to a map (denoted by the same symbol) $\widetilde{w} : (D^2, \partial D^2) \to (\overline{S}, \overline{a}_i \cup \overline{a}_j)$ such that the preimage of an $\eta'$-neighborhood of $z_\infty$ consists of the two fixed neighborhoods of $+1$ and $-1$, respectively. The homotopy is obtained by taking the universal cover $\widetilde{B}_-$ of $B_-$, identifying the pullback of the fibration $\overline{W}_- \to B_-$ with $\widetilde{B}_- \times \overline{S}$, lifting $\widetilde{w}$ to  $\widetilde{B}_- \times \overline{S}$, and composing the lift with the projection onto
$\overline{S}$.

The universal cover of $\overline{S}$ is diffeomorphic to $\R^2$ and we can assume that the preimage of $\overline{a}_i \cup \overline{a}_j$ is a grid with vertices in the integer points of $\R^2$. If we lift $\widetilde{w}$ to the universal cover of $\overline{S}$, we obtain a map from $D^2$ to $\R^2$ with boundary on the grid.
If $\widetilde{w}$ represents a non-trivial element in $\pi_2(\overline{S}, \overline{a}_i \cup \overline{a}_j)$, then its lift maps $\partial D^2$ surjectively to
some square of the grid, and therefore the preimage of an $\eta'$-neighborhood of the grid points consists of at least four disjoint open sets in $D^2$, which is a contradiction.

The case of cylinders is proved in the same way, using the fact that $\pi_2(\overline{W}_-)=0$.
\end{proof}

\begin{rmk}
Since we are free to chose the genus of the open book decomposition in Theorem~\ref{thm: main}, the simpler case of $g(\overline{S })>1$ would have sufficed for the main result of the paper. However we also treated the case of $g(\overline S)=1$ for completeness.
\end{rmk}

\begin{lemma}[Long Cylinder Lemma, Version 2]\label{long cylinder lemma v2}
For all $\eta >0$ there exists $\eta' >0$
such that, for all $R>0$, if
$$w : [-R, R] \times S^1 \to \overline{W}_-$$
is a $\overline{J}_-$-holomorphic such that 
$w|_{\{ -R \} \times S^1}$ and $w|_{\{ R \} \times S^1}$ are $\eta'$-close in the $C^1$-metric to constant maps to points $p_-$ and $p_+$, respectively, 
then $w$ is $\eta$-close to a constant map in the $C^1$-metric.
\end{lemma}

\begin{proof}
First we prove that, for $\eta'$ sufficiently small (and at least smaller than $\eta/2$), the map $\overline{\pi}_{B_-} \circ w$ is $\eta$-close to a constant map. We embed  $[-R, R] \times S^1$ holomorphically in $S^2$.  If $\eta'$ is small enough, we can extend $w$ to a smooth map $\widetilde{w} : S^2 \to \overline{W}_-$ and moreover we can assume that the extension occurs inside $\eta'$-neighborhoods of $p_+$ and $p_-$.

The composition $\overline{\pi}_{B_-} \circ w : [-R, R] \times S^1 \to B_-$ is holomorphic and its extension $\overline{\pi}_{B_-} \circ \widetilde{w} : S^2 \to B_-$ has degree zero.  This implies that the image of $\overline{\pi}_{B_-} \circ w$ is contained in the union of the balls of radius $\eta'$ centered at $\overline{\pi}_{B_-}(p_+)$ and $\overline{\pi}_{B_-}(p_-)$. In fact, assume by contradiction that this
is not the case. Then there is a regular value $q \in B_- - \overline{\pi}_{B_-}(p_+) -\overline{\pi}_{B_-}(p_-)$ of $\overline{\pi}_{B_-} \circ \widetilde{w}$ such that $(\overline{\pi}_{B_-} \circ \widetilde{w})^{-1}(q)$ is contained in an open subset of $S^2$ on which $\overline{\pi}_{B_-} \circ \widetilde{w}$ coincides with $\overline{\pi}_{B_-} \circ w$, and therefore is holomorphic.  Hence have a contradiction because all preimages of $q$ contribute positively to the degree of $\overline{\pi}_{B_-} \circ \widetilde{w}$. The balls of radius $\eta'$ centered at $\overline{\pi}_{B_-}(p_+)$ and $\overline{\pi}_{B_-}(p_-)$ have a nonempty intersection because $[-R, R]\times S^1$ is connected, and therefore the image of $\overline{\pi}_{B_-} \circ w$ is contained in a ball or radius $2 \eta' < \eta$ around some point $y \in B_-$.

Let $\overline{S}_y=\overline{\pi}_{B_-}^{-1}(y)$ be the fiber over $y$ and $N_\eta(\overline{S}_y)$ the $\eta$-neighborhood of $\overline{S}_y$.
We denote $p:N(\overline{S}_y)\to\overline{S}_y$ the map obtained by projecting out the $s$- and $t$-directions. Since  the image of $w$ is contained in $N_\eta(\overline{S}_y)$, the map $p \circ \widetilde{w} : S^2 \to \overline{S}_y$
has degree $0$ if $\eta'$ is small enough and $E_{\overline{\omega}} \le \delta$ for some $\delta$ which depends only on $\overline{\omega}$.\footnote{In fact, $p \circ \widetilde{w}$ has degree zero because $\overline{S} \ne S^2$. We use this unnecessary $\overline{\omega}$-area argument so that the same proof can be applied to Lemma~\ref{long strip lemma v2}.} The bound on $E_{\overline{\omega}}$ follows from Lemma~\ref{uppsala}.

The map $p$ is not holomorphic, but its differential $dp(x)$ at any point $x\in N_\eta(\overline{S}_y)$ is the composition of the $\C$-linear projection
$T_x\overline{W}_- \to T_x \overline{S}_{\overline{\pi}_{B_-}(x)}$ with the differential of the orientation-preserving diffeomorphism $\overline{S}_{\overline{\pi}_{B_-}(x)} \to \overline{S}_y$ induced by parallel transport. This implies that all the preimages of a regular value of $p \circ w$ come with positive sign for the purpose of computing the index. Hence we can then repeat for $p \circ w$ the same argument we used for $\overline{\pi}_{B_-} \circ w$ and this proves the lemma.
\end{proof}

A similar lemma holds for holomorphic strips. Its proof is left to the reader.

\begin{lemma}[Long Strip Lemma, Version 2]\label{long strip lemma v2}
For all $\eta >0$ there exists $\eta' >0$ 
such that for all $R>0$ the following holds: If
$$w: [-R,R]\times [0,1]\to \overline{W}_-$$
is a $\overline{J}_-$-holomorphic map which satisfies:
\begin{enumerate}
\item $w$ maps $[-R,R]\times \{1\}$ to $L^-_{\overline{a}_i}$ and $[-R,R]\times\{0\}$ to $L^-_{\overline{a}_{i'}}$,
\item $w|_{\{R\}\times [0,1]}$ and $w|_{\{-R\}\times [0,1]}$ are $\eta'$-close in the $C^1$-norm to constant maps to $p_+$ and $p_-$, respectively, 
\end{enumerate}
then $w$ is $\eta$-close to a constant map in the $C^1$-metric.
\end{lemma}

\subsubsection{Proof of the compactness theorem}

We follow the lines of the proof of Proposition \ref{prop: SFT compactness for W plus}. First we state the analog of Lemma \ref{lemma: bound on F for W plus} and sketch its proof.

\begin{lemma} \label{lemma: bound on F for W bar minus}
Let $\overline{u}_i: (\dot F_i,j_i)\to (\overline{W}_-, \overline{J}_-)$, $i\in \N$, be a sequence of $\overline{W}_-$-curves from $\boldsymbol{\gamma}$ to $\mathbf{y}$ with index $I_{\overline{W}_-}(\overline{u}_i) =n$ for some integer $n$. Then there is a subsequence such that all the $\dot F_i$ are diffeomorphic to a fixed $\dot F$.
\end{lemma}

\begin{proof}
The $\overline{\omega}$-area bound for the $\overline{W}_-$ case is similar to that of the $W_+$ case. We take the difference of two $\overline{W}_-$-curves $\overline u_1$ and $\overline u_2$ from $\boldsymbol{\gamma}$ to ${\bf y}$ to obtain $Z\in H_2(\overline{W}_-)$. Since they both intersect $\sigma_\infty^-$ once, $Z$ can be represented by a surface in $W_-$, and hence can be viewed as a class in $H_2(N)$. The zero flux condition implies that the $\overline{\omega}$-area of $Z$ is zero.

We then use the Gromov-Taubes compactness theorem as before to extract a subsequence $\overline u_i$ which converges in the sense of currents to a holomorphic building. After passing to a subsequence, we may assume that the homology class $[\overline{u}_i]\in H_2(\check{\overline{W}}_-,Z_{\boldsymbol{\gamma},{\bf y}})$ is fixed for all $i$. Once the homology class $[\overline{u}_i]$ is fixed, we use the relative adjunction formula (Lemma~\ref{lemma: relative adjunction for W plus}) in the same way as in Lemma~\ref{lemma: bound on F for W plus} to obtain a bound on $\chi(\dot F_i)$.
\end{proof}

The geometric setup for the $\overline{W}_-$-curves deviates from the standard
SFT setup because the Lagrangian submanifold $L_{\overline{\bf a}}$ is a union of finitely many immersed non-compact Lagrangians with boundary, with the property that each is
an embedding when restricted to the interior, and the boundary components all lie
in a properly embedded noncompact one-manifold along which the various branches of $L_{\overline{\bf a}}$ have a clean intersection. The noncompactness of the
singular set means that the resulting ``Legendrian'' boundary condition at the negative end is singular. For this reason we will sketch the proof of Proposition~\ref{prop: SFT compactness for W minus}.

\begin{proof}[Proof of Proposition~\ref{prop: SFT compactness for W minus}]
By Lemma~\ref{lemma: bound on F for W bar minus} we may assume that $\dot F_i = \dot F$ as smooth surfaces. We recall the main steps of the proof of SFT compactness.
\begin{enumerate}
\item gradient bound,
\item degeneration of the conformal structure,
\item convergence on the thick part, and
\item convergence on the thin part.
\end{enumerate}

\s\n
{\em Gradient bound.}
Let $h$ be a Riemannian metric on $\overline{W}_-$ which is compatible with $\overline{J}_-$ and is cylindrical at both ends.  By \cite[Lemma~10.7]{BEHWZ}, after passing to a subsequence we can add finite sets of marked points $Z_i$ to $\dot F$, all of the same cardinality, so that the following holds.  Let $g_i$ be a complete hyperbolic metric on $\dot F - Z_i$ which is compatible with $j_i$ and which has geodesic boundary and cusps at the boundary/interior punctures. Then we have a uniform bound
\begin{equation} \label{eq: gradient bound}
\rho_i \| \nabla \overline{u}_i\|_{C^0} \le C,
\end{equation}
the so-called  ``gradient bound'', where $\rho_i$ is the injectivity radius of  $(\dot F -Z_i, g_i)$ doubled along the boundary, and the norm $\| \nabla \overline{u}_i\|_{C^0}$ is computed with respect to the metrics $g_i$ and $h$.

\s\n {\em Degeneration of the conformal structure.}
 If $g_i$ degenerates as $i\to \infty$, then after passing to a subsequence there is a finite
collection of homotopy classes of properly embedded arcs and closed
curves on $\dot F- Z_i$, whose geodesic representatives are mutually
disjoint, and which are pinched as $i\to \infty$, i.e., the lengths
of the geodesic representatives go to zero.  Let $\dot F_{\infty}$
be the nodal surface obtained by pinching all these geodesics. The complex
structures $j_i$ converge in the $C^{\infty}_{loc}$ topology outside the
pinching geodesics to a complex structure $j_\infty$ on $\dot F_{\infty}$. Then
in the limit we obtain a nodal punctured Riemann surface with boundary
$(\dot F_{\infty}, j_{\infty})$. Let $F_\infty'$ be $\dot F_\infty$ with the nodes removed.

For any $\varepsilon >0$ we define $\op{Thick}_\varepsilon(\dot F_i, g_i)$ as the set
of points $x \in \dot  F_i$ with injectivity radius $\rho_i(x) \ge \varepsilon$ and
$\op{Thin}_\varepsilon(\dot F_i, g_i)$ as the closure of the set of points $x \in \dot  F_i$ with injectivity radius $\rho_i(x) < \varepsilon$. If $i$ is sufficiently large and
$\varepsilon$ is sufficiently small, then $\op{Thin}_\varepsilon(\dot F_i, g_i)$ is a disjoint union of cusps and tubular neighborhoods of pinching geodesics. In this case,
let $\upsilon$ be any pinching geodesic; we denote  $\op{Thin}_\varepsilon(\dot F_i, g_i, \upsilon)$ the connected component of $\op{Thin}_\varepsilon(\dot F_i, g_i)$ containing $\upsilon$.

\s\n
{\em Convergence on the thick part.}
Fix $\varepsilon >0$. Then the gradient bound (Equation~\eqref{eq: gradient
bound}) implies that the sequence $\overline{u}_i$ converges  in the
$C^\infty_{loc}$ topology on the $\varepsilon$-thick part, after possibly passing to a subsequence and translating in the $s$-direction.  In fact an elliptic bootstrapping argument propagates the gradient bound to bounds on the derivatives of any order and the Arzel\`a-Ascoli theorem produces a convergent subsequence. Now we take a sequence $\varepsilon_i \to 0$ and use a diagonal argument to find a limit
finite energy holomorphic map $\overline{u}_\infty'$ defined on $F_{\infty}'$ (i.e., $\dot F_\infty$ minus the double points). Depending on the connected component of $F_\infty'$, the restriction of $\overline{u}_\infty'$ can take values in $\overline{W}_-$, $\overline{W}$ or $\overline{W'}$. We observe that, although the maps $\overline{u}_i$ have boundary conditions in $L^-_{\widehat{\mathbf{a}}}$, the levels of the limit $\overline{u}_\infty'$ can
have boundary conditions in $L^-_{\overline{\mathbf{a}}}$ or in $L_{\overline{\mathbf{a}}} \cup L_{\overline{\hh}(\overline{\mathbf{a}})}$, depending on the connected component of $F_\infty'$.

We analyze the convergence of $\overline{u}_\infty'$ near the punctures in detail because the singularity in $L^-_{\overline{\mathbf{a}}}$ is an unusual situation. We describe only the case of components mapping to $\overline{W}_-$; the behavior of components mapping to $\overline{W}$ or $\overline{W'}$ is analogous. The first step is to replace  $\overline{\mathbf{a}}$ with $\vec{\mathbf{a}}$ (this is temporary notation for the union of $\overline{a}_i$ which have been extended by adding the $\vec{a}_{i,j}$). This replacement has no practical effect on the curve $\overline{u}_\infty'$, but makes the geometric setting slightly less exotic: in fact the Lagrangian $L^-_{\vec{\mathbf{a}}}$ has a {\em clean self-intersection} along $\partial B_- \times \{z_\infty\}$. We recall that a clean intersection is an intersection between two submanifolds which are nontransverse in a uniform way. Clean intersections in Lagrangian Floer homology --- which represent the Morse-Bott case in that theory --- have been treated in \cite{Po} and \cite{Schm}.

There are four possibilities, depending whether the puncture is in the interior or on the boundary of $F_\infty'$ and whether $\overline{u}_\infty'$ is bounded or unbounded near the puncture. Interior punctures are treated in the standard way: if $\overline{u}_\infty'$ is bounded near an interior puncture, then the singularity is removable (i.e., $\overline{u}_\infty'$ can be extended as a holomorphic map across the puncture) by \cite[Theorem~4.1.2]{MS}, and if $\overline{u}_\infty'$ is unbounded, then it is asymptotic to a closed
orbit in the positive end of $\overline{W}_-$ by \cite[Theorem~1.2]{HWZ1}. (Note that this orbit can either be an orbit in $N$ or a multiple cover of the extra orbit $\delta_0$ over the point $z_\infty$.)

Next we consider boundary punctures of $\overline{u}'_\infty$.
We restrict attention to the neighborhood $Z_- = (-\infty, 0] \times [0,1]\subset \dot F_\infty$ of a boundary puncture and prove the following lemma.

\begin{lemma}\label{lemma: boundary puncture}
Let $w : Z_- \to \overline{W}_-$ be a finite energy $\overline{J}_-$-holomorphic map which maps $(-\infty,0]\times\{0,1\}$ to $L_{\widehat{\mathbf{a}}}^-$. Then there exists either
\begin{enumerate}
\item a point $p \in L_{\overline{\mathbf{a}}}^-$ such that $\lim_{s \to -\infty}w(s,t) = p$ (uniformly in $t$), or
\item an intersection point $y \in \overline{\mathbf{a}} \cap\overline{\hh}(\overline{\mathbf{a}})$ such that $w$ is asymptotic to the trivial strip over $y$ for $s \to -\infty$.
\end{enumerate}
\end{lemma}

The removal of boundary singularities in the presence of clean intersections between Lagrangian submanifolds has been treated in \cite[Section~3.7]{Frau} and \cite[Section~3]{Schm}. For completeness we reprove it in a weak form (i.e., without reproving exponential convergence estimates) which suffices for our purposes, using ideas from \cite{Ho1}.

\begin{proof}
Consider the holomorphic map $f  = \overline{\pi}_{B_-} \circ w : Z_- \to B_-$. Since $f$ has finite energy, it either converges to a point $q \in \partial B_-$ (uniformly in $t$) as $s \to -\infty$ or for any $C\gg 0$ there exists $s_0\gg 0$ such that $f|_{\{s\geq s_0\}}$ has image in the negative end $(- \infty, -C] \times [0,1] \subset B_-$. Note that, in the latter case, the two components of $\partial Z_-|_{\{s\geq s_0\}}$ are mapped to the two boundary components of the end.

Fix a sequence $s_n \to +\infty$ and define $w_n : Z_- \to \overline{W}_-$ as $w_n(s,t) = w(s-s_n,t)$.  The finiteness of the energy $E(w)$ implies the finiteness of the $\overline{\omega}$-area $E_{\overline{\omega}}(w)$. Therefore the sequence $w_n$
satisfies
\begin{itemize}
\item $E(w_n) < E(w)$, and
\item $\lim \limits_{n \to \infty} E_{\overline{\omega}}(w_n) =0$. 
\end{itemize}

\s\n
(1) If $f$ converges to a point $q \in \partial B_-$, then the image of $w$ is contained in a compact subset of $\overline{W}_-$.
Then by Lemma \ref{lemma: simple L-infty bound} there is a constant $C$ such that
$\| \nabla w_n \|_{C^0} \le C$ for all $n$ and therefore we can extract a subsequence
$w_{k_n}$ which converges in $C^{\infty}_{loc}$ to a $\overline{J}_-$-holomorphic map
$w_{\infty} : Z_- \to \overline{W}_-$. 
The $\overline{\Omega}_-$-area
$$E_{\overline{\Omega}_-}(w)= \int_{Z_-} w^* \overline{\Omega}_-$$
is finite because the energy of $w$ is finite and the image of $w$ is contained in a
compact subset of $\overline{W}_-$. This implies that
$$E_{\overline{\Omega}_-}(w_\infty) = \lim_{n \to \infty} E_{\overline{\Omega}_-}(w_{k_n})=0$$
and thus $w_{\infty}$ is constant.  Hence, for any sequence $s_n \to + \infty$, there is a subsequence $s_{k_n}$ and a point $p \in L_{\overline{\mathbf{a}}}^-$ such that
$$\lim_{n \to \infty}w(s_{k_n},t) = p$$
and the limit is uniform in $t$. Moreover, $\nabla w(s_{k_n},t)$ also converges to $0$ uniformly in $t$.

A different sequence $s_n' \to + \infty$ might a priori have a subsequence $s_{l_n}'$ such that $w(s_{l_n},t) $ converges to a different $p' \in L_{\hat{\mathbf{a}}}^-$. We are then in Case (1) if we prove that $p=p'$.  Up to extracting further subsequences we can assume that $s_{k_n} < s_{l_n}'$. We fix $\varepsilon$ which is strictly smaller than the distance between $p$ and $p'$. Then the Long Strip Lemma, Version 2 (Lemma~\ref{long strip lemma v2}) implies that, for $n$ sufficiently large, $w|_{[s_{k_n}, s_{l_n}'] \times [0,1]}$ is $\varepsilon$-close to a constant map, which is a contradiction.

\s\n
(2) If $f = \overline{\pi}_{B_-} \circ w$ has image in the negative end of $B_-$ for $s$ sufficiently large, the sequence $w_n$ converges, after a translation of the image, to a trivial strip in $\overline{W}$ over some intersection point $y \in \overline{\mathbf{a}} \cap \overline{\hh}(\overline{\mathbf{a}})$. The uniqueness of the limit is proved as in the previous case using the Long Strip Lemma, Version 1 (Lemma~\ref{long strip lemma v1}).
\end{proof}

\s\n
{\em Convergence on the thin part.}
The map $\overline{u}_\infty'$ defined above is not the SFT limit of the sequence $\overline{u}_i$ in general. In fact the $\overline \omega$-area of $\overline{u}_\infty'$ can be smaller than the $\overline \omega$-area of $\overline{u}_i$ and, when this happens, we have to look for the missing area in the thin part.

We analyze the behavior of the limit near a node of $\dot F_\infty$ and leave the analogous case of a cusp to the reader. Let $\upsilon$ be the pinching geodesics corresponding to a node $n$. We  slightly change perspectives by identifying the neighborhoods $\op{Thin}_\varepsilon(\dot F_i, g_i, \upsilon)$ with standard Euclidean cylinders $[-R_i^\varepsilon,R_i^\varepsilon] \times S^1$ or strips $[-R_i^\varepsilon,R_i^\varepsilon] \times S^1$.

Suppose that $n$ is an interior node. For $i$ large and $\varepsilon$ small, $u_i(\{ \pm R^\varepsilon_i \} \times S^1)$ is close to closed orbits $\gamma_\pm$ or points $p_\pm$ by the asymptotic properties of $u_\infty'$. The first observation is that $u_i([- R^\varepsilon_i, R^\varepsilon_i] \times S^1)$ cannot be close to a closed orbit on one side and to a point on the other side because
there is no homotopically trivial closed Hamiltonian orbit in $N$. Similarly, if $n$ is a boundary node, then $u_i([- R^\varepsilon_i, R^\varepsilon_i] \times [0,1])$ cannot be close to a chord on one side and to a point on the other side because all chords define nontrivial classes in $H_1(\overline{W}_-, L^-_{\overline{\mathbf{a}}})$.

\begin{lemma} \label{vallecchia}
Let $n \in \dot F_\infty$ be a node and let $e_\pm : [0, + \infty) \times S^1 \to F'_\infty$ or $e_\pm : [0, + \infty) \times [0,1]\to F'_\infty$ be the two ends corresponding to $n$. If
$$\lim \limits_{s \to + \infty} (\overline{u}_\infty' \circ e_\pm)(s,t) = p_\pm\in \overline{W}_-,$$
then $p_+=p_-$.
\end{lemma}

\begin{proof}
Suppose that $n$ is an interior node; the proof for a boundary node is completely analogous.  Arguing by contradiction, suppose that $p_- \ne p_+$.
We choose $\eta < \frac 12 d(p_-, p_+)$, where $d(p_-, p_+)$ is the distance between $p_-$ and $p_+$. Let $\eta'>0$ be the constant which depends on $\eta$ and appears in  the Long Cylinder Lemma, Version 2 and let $\overline{s}$ be a constant such that, for all $s> \overline{s}$, the distance from $p_\pm$ to $ (\overline{u}_\infty' \circ e_\pm)(s,t)$ is less than $\eta'/2$.
Let $\upsilon$ be the pinching geodesic that gives rise to $n$ and let $\op{Thin}_\varepsilon(F_\infty', g_\infty, \upsilon)$ be the union of the two connected components of $\op{Thin}_\varepsilon(F_\infty', g_\infty)$ corresponding to the node $n$.  Then there is a constant $\overline{\varepsilon} >0$ such that, for all $\varepsilon < \overline{\varepsilon}$, the portions of the two ends of $F_\infty'$ corresponding to $n$ which are parametrized by $[0, \overline{s}] \times S^1$ are contained in $\op{Thick}_\varepsilon(F_\infty', g_\infty, \upsilon)$. We then identify $\op{Thin}_\varepsilon(\dot F_i, g_i, \upsilon)\cong [-R^\varepsilon_i, R^\varepsilon_i] \times S^1$ with its standard complex structure. Then the $C^{\infty}_{loc}$-convergence of $\overline{u}_i$ to $\overline{u}_\infty'$ implies that, for $\varepsilon < \overline{\varepsilon}$ and $i\gg 0$, $\overline{u}_i(\{ \pm R^\varepsilon_i \} \times S^1)$ are contained in $\eta'$-neighborhoods of $p_\pm$.  The Long Cylinder Lemma, Version 2 then implies that $p_+=p_-$, a contradiction.
\end{proof}

\begin{lemma}\label{holomorphic sausage}
Let $n \in \dot F_\infty$ be a node arising from pinching a geodesic in the homotopy class $\upsilon$.  Assume that the two ends of $F_\infty'$ corresponding to $n$ are
asymptotic to closed orbits $\gamma_+$ and $\gamma_-$ or to chords over intersection points in $\overline a_i \cap \overline{\hh}(\overline a_j)$. Then, for all $\varepsilon$ sufficiently small, the restrictions  $\overline{u}_i|_{\op{Thin}_\varepsilon(\dot F_i, g_i, \upsilon)}$ converge (in the SFT sense), as $i\to\infty$, to a stack of holomorphic cylinders or strips whose ends are asymptotic to closed orbits or chords, and whose successive components have matching ends, as in \cite[Figure~14]{BEHWZ}.
\end{lemma}

\begin{proof}
We consider the case of interior nodes first. For  $\varepsilon>0$ sufficiently small and $i$ sufficiently large (depending on $\varepsilon$), the maps $\overline{u}_i|_{\op{Thin}_\varepsilon(\dot F_i, g_i, \upsilon)}$ define a homology class $A \in H_2(\overline{N}, \gamma_-, \gamma_+)$. There are two cases:
\begin{enumerate}
\item $E_{\overline{\omega}}(A)=0$, or
\item $E_{\overline{\omega}}(A)>0$.
\end{enumerate}
We will identify $\op{Thin}_\varepsilon(\dot F_i, g_i, \upsilon)\cong [-R^\varepsilon_i, R^\varepsilon_i] \times S^1$.

\s\n
{\em Case (1).} We verify that $\overline{u}_i|_{\op{Thin}_\varepsilon(\dot F_i, g_i, \upsilon)}$ satisfies the conditions of Lemma~\ref{long cylinder lemma v1} for $i\gg 0$: Fix $\eta$ arbitrarily small, and consider $c_0$, $R_0$, $\eta'$ and $\delta$ as in Lemma~\ref{long cylinder lemma v1}. Let $\varepsilon>0$ be sufficiently small such that  $\op{Thin}_\varepsilon(F_\infty', g_\infty, \upsilon)$ is mapped by $\overline{u}_\infty'$ to $\frac{\eta'}{2}$-neighborhoods of trivial cylinders over $\gamma_-$ and $\gamma_+$.  Then there exists $c>c_0$ such that for all $i\gg 0$,
\begin{itemize}
\item $R^\varepsilon_{i}-c > R_0$,
\item $\overline{u}_{i}|_{[-R^\varepsilon_{i}, - R^\varepsilon_{i}+c] \times S^1}$ is $\eta'$-close to a portion of trivial cylinder over $\gamma_-$,
\item $\overline{u}_{i}|_{[R^\varepsilon_{i} -c, R^\varepsilon_{i}] \times S^1}$ is $\eta'$-close to a portion of trivial cylinder over $\gamma_+$, and
\item $E_{\overline{\omega}}(\overline{u}_{i}|_{[-R^\varepsilon_{i},  R^\varepsilon_{i}] \times S^1}) < \delta$.
\end{itemize}
This follows from observing that every compact subset $K \subset \op{Thin}_\varepsilon(\dot F_\infty', g_\infty, \upsilon)$ can be identified with some compact subset $K_i \subset \op{Thin}_\varepsilon(\dot F_i', g_i, \upsilon)$ provided $i\gg 0$, and the sequence $\overline{u}_i|_{K_i}$ converges uniformly to $\overline{u}_{\infty}'|_{K}$ with respect to all derivatives.

We then apply Lemma~\ref{long cylinder lemma v1} to $\overline{u}_{i}|_{[-R^\varepsilon_{i}, R^\varepsilon_{i}] \times S^1}$ and conclude that $\gamma_+=\gamma_-$, the parametrizations of $\gamma_-$ and $\gamma_+$ induced by the two ends of $\dot F'$ at the two sides of the node $n$ match, and $\overline{u}_{i}|_{[-R^\varepsilon_{i}, R^\varepsilon_{i}] \times S^1}$ is $C^1$-close to a portion of the trivial cylinder over $\gamma_+=\gamma_-$.

\s\n {\em Case (2).}
Fix a constant $h>0$ such that, for every $A \in H_2(\overline{N}, \gamma', \gamma'')$ with $\gamma'$ and $\gamma''$ of action less than $E = E(\overline{u}_i)$, we have
$\int_A \overline{\omega} > h$. Such a constant exists by \cite[Lemma 10.9]{BEHWZ}.

Fix $\varepsilon>0$ sufficiently small.  Since $\overline{u}_i([-R^\varepsilon_i, R^\varepsilon_i] \times S^1)$ is contained in the cylindrical end of $\overline{W}_-$, we may assume that the maps $\overline{u}_i$ take values in $\overline{W'}$. Using the argument from the proof of Lemma~\ref{long cylinder lemma v1}, if
the image of $\overline{u}_{i}|_{[-R^\varepsilon_{i}, R^\varepsilon_{i}] \times S^1}$ is contained in an $\eta$-neighborhood of the cylinder over $\gamma_-$, then $\overline{u}_{i}|_{[-R^\varepsilon_{i}, R^\varepsilon_{i}] \times S^1}$ is $\eta$-close in the $C^0$-metric to a trivial cylinder, which contradicts (2).  Hence there is a sequence of points $p_i\in [-R^\varepsilon_{i}, R^\varepsilon_{i}] \times S^1$ such that $\overline{u}_i(p_i)$ limits to a point $q_\infty$ which is a distance $>\eta$ from cylinders over $\gamma_\pm$, after suitable translations of $\overline{u}_i$ in the target.

Next consider the translations $\overline{u}_i^1(s,t)=\overline{u}_i(s-s(p_i),t)$.  The maps $\overline{u}_i^1$ converge in $C^\infty_{loc}$ to a $\overline{J'}$-holomorphic map $\overline{u}^1_\infty : \R \times S^1 \to \overline{W'}$. By construction, $E_{\overline{\omega}}(\overline{u}^1_\infty)>0$. We repeat this process on $\overline{u}_i$ defined on $[-R^\varepsilon_{i}, R^\varepsilon_{i}] \times S^1$ minus a suitable annulus $[K_i',K_i'']\times S^1$ until we obtain $\overline{J'}$-holomorphic maps $\overline{u}_\infty^1, \ldots, \overline{u}_\infty^l$ such that $E_{\overline{\omega}}(\overline{u}_\infty^1) + \ldots + E_{\overline{\omega}}(\overline{u}_\infty^l)=E_{\overline{\omega}}(A)$. An argument similar to that of Case (1) and based on Lemma~\ref{long cylinder lemma v1} shows that the ends of $\overline{u}_\infty', \overline{u}_\infty^1, \ldots, \overline{u}_\infty^l$ match up with those of their adjacent curves.  (Here we may need to reorder the $\overline{u}_\infty^j$ so they are adjacent.)

The case of boundary nodes is completely analogous: the only difference is that we have to replace
the Long Cylinder Lemma by the Long Strip Lemma.
\end{proof}

This completes the proof of Proposition~\ref{prop: SFT compactness for W minus}.
\end{proof}

\begin{rmk}
Proposition~\ref{prop: SFT compactness for W minus} is only a preliminary result.  We are still left with the task of analyzing the limit more precisely.
\end{rmk}

\subsection{Intersection numbers}
\label{subsection: intersection numbers}

In order to analyze the SFT limit $\overline{u}_\infty$, we use the intersection numbers
$n^*(\overline{u}_i)$ and $n^*(\overline{v}_j)$ to constrain the behavior of
holomorphic maps which are asymptotic to $\delta_0$ or to $[0,1] \times \{z_{\infty}\}$.

We briefly recall the definition of the intersection numbers $n^*(\overline{u})$ given in Section \ref{section: moduli spaces of multisections}.
Let $\rho_0>0$ be sufficiently small. Consider the torus $T_{\rho_0}=\{\rho=\rho_0\}
\subset \overline{N}$, oriented as the boundary of $\{\rho\leq \rho_0\}$. We take an
oriented identification $T_{\rho_0}\simeq \R^2/\Z^2$ such that the meridian has slope
$0$ and the closed orbits of $\overline{R}_0$ on $T_{\rho_0}$ have slope $m$.
We pick a closed orbit $\delta_0^{\dagger} \subset T_{\rho_0}$ and consider the parallel
sections $(\sigma_{\infty}^*)^{\dagger}$ determined by $\delta_0^{\dagger}$. We assume
we have chosen $\delta_0^{\dagger}$ so that $(\sigma_{\infty}^*)^{\dagger}$ is disjoint
from the relevant Lagrangian submanifold. Then $n^*(\overline{u})= \langle \overline{u},(\sigma_{\infty}^*)^{\dagger}\rangle$.

Since $n^-(\overline{u}_i)=m\gg 2g$ for an $\overline{W}_-$-curve $\overline{u}_i$ by Lemma~\ref{properties n-}, we have
\begin{equation} \label{eqn: m}
\sum_{j=-b}^a n^*(\overline{v}_j)=m.
\end{equation}

For each level $\overline{v}_j$ of $\overline{u}_\infty$ we have a decomposition
$$\overline{v}_j = \overline{v}_j' \cup \overline{v}_j''$$
as in Equation~\eqref{decomposition of overline u}, where $\overline{v}_j'$ is the
union of the irreducible components of $\overline{v}_j$ which branch cover the section at infinity $\sigma_\infty^*$ and $\overline{v}_j''$ is the union of all other irreducible components.

\begin{lemma}[Intersection with $\overline v_j$, $j>0$] \label{intersezione 1}
Suppose $j>0$. Then the following hold for $\rho_0>0$ sufficiently small:
\begin{enumerate}
\item If $\overline{v}''_j$ has a positive end ${\mathcal E}_+$ which converges to
$\delta_0^{p}$, then
\begin{equation} \label{eqn: m prime}
\langle \mathcal{E}_+, (\sigma_{\infty}')^{\dagger}\rangle \geq p.
\end{equation}
\item If $\overline{v}''_j$ has a negative end ${\mathcal E}_-$ which converges to
$\delta_0^{p}$, then
\begin{equation} \label{eqn: m minus m prime}
\langle \mathcal{E}_-, (\sigma_{\infty}')^{\dagger} \rangle \geq m-p.
\end{equation}
\end{enumerate}
If $\overline{v}''_j$ has multiple ends at covers of $\delta_0$, then their contributions
to $n'(\overline v_j)$ are summed.
\end{lemma}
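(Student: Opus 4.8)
\textbf{Plan of proof for Lemma~\ref{intersezione 1}.}

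The strategy is to localize near $\delta_0$ and compare the holomorphic curve $\overline{v}''_j$ with the explicit family of finite-energy cylinders $Z_{s_0,\phi_0}$ (from $\delta_0$ to $\gamma_{\phi_0}$) that foliate $\R\times(\overline{N}-N-\delta_0)$, exactly as in the proof of Lemma~\ref{lemma: value of widetilde Phi} and \cite[Lemma~10.2.2]{CGH1}. The reference section $(\sigma_\infty')^\dagger=\R\times\delta_0^\dagger$, where $\delta_0^\dagger\subset T_{\rho_0}$ is a closed orbit of slope $m$ on the torus $T_{\rho_0}=\{\rho=\rho_0\}\simeq\R^2/\Z^2$, is itself (a piece of) one such foliating cylinder with $\phi_0$ ranging over a single orbit wrapping $m$ times. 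The homology class $[\delta_0^\dagger]$ in $H_1(T_{\rho_0})$ is $m$ times the meridian plus once the longitude; the key arithmetic fact is that a positive end $\mathcal{E}_+$ asymptotic to $\delta_0^p$ meets the fiber $T_{\rho_0}$ (for $\rho_0$ small enough that $\mathcal{E}_+$ already looks like the cylinder over $\delta_0^p$) in a braid whose total homology class is $p$ times the meridian plus $(\text{nonnegative})$ longitudinal winding, since the end converges to a positive multiple cover of $\delta_0$ and positivity of intersections (in dimension four) forces the winding contributions to add up with a definite sign.

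First I would fix $\rho_0>0$ small enough that, for each of the finitely many ends of $\overline{v}''_j$ at covers of $\delta_0$, the end is $C^1$-close to the trivial cylinder over the relevant multiple of $\delta_0$ on the region $\{\rho\le\rho_0\}$, using the exponential convergence of ends of finite-energy holomorphic curves near nondegenerate (or perturbed Morse-Bott) orbits. Next, for a positive end $\mathcal{E}_+$ asymptotic to $\delta_0^p$, I would compute $\langle\mathcal{E}_+,(\sigma_\infty')^\dagger\rangle$ as an intersection number on a slice $\{s=s_0\}$ with $s_0$ large, realized as a linking number in the solid torus $\{\rho\le\rho_0\}$ between the braid $\mathcal{E}_+\cap\{s=s_0\}$ and the orbit $\delta_0^\dagger$. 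Because $\delta_0^\dagger$ has slope $m$ and the braid of $\mathcal{E}_+$ has total longitudinal class $p$ (it covers $\delta_0$ with total multiplicity $p$) with only nonnegative meridional crossings coming from positivity of intersections of $\overline{v}''_j$ with the foliating cylinders $Z_{s,\phi}$, this linking number is $\ge p$. The statement for a negative end $\mathcal{E}_-$ asymptotic to $\delta_0^q$ is the ``complementary'' computation: there the relevant constraint is that the total intersection of $\overline{v}''_j$ with $(\sigma_\infty')^\dagger$ seen from the other side picks up a contribution $m-q$ per such end — this is where the slope-$m$ choice of $\delta_0^\dagger$ enters, since $m\cdot(\text{longitude})-q$ from the meridional count flips to $m-q$; I would make this precise by the same positivity-of-intersections argument applied to the translates $Z_{s_0+s_1,\phi}$ and noting that, as $s_1\to+\infty$ (resp.\ $-\infty$), the intersection count stabilizes. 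Finally, when $\overline{v}''_j$ has several ends at covers of $\delta_0$, the image near $\{\rho=\rho_0\}$ is a disjoint union (up to the finitely many interior intersection points, which only contribute positively) so the intersection numbers simply add, giving the summation statement.

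The main obstacle I anticipate is the careful bookkeeping of signs and winding numbers in the linking-number computation on $T_{\rho_0}$, i.e.\ verifying that all contributions of $\overline{v}''_j$ to the intersection with $(\sigma_\infty')^\dagger$ are nonnegative and that the ``baseline'' contribution of an end asymptotic to $\delta_0^p$ is exactly $p$ (positive end) or $m-p$ (negative end). This requires a clean choice of trivialization of a neighborhood of $\delta_0$ compatible with the balanced coordinates on $V=\overline{N}-int(N)\cong D^2\times(\R/2\Z)$ from Section~\ref{subsubsection: overline W pm}, together with the positivity of intersections between the (somewhere injective, on each irreducible component) holomorphic curve $\overline{v}''_j$ and the holomorphic cylinders $Z_{s,\phi}$. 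Once the trivialization is pinned down and positivity is invoked, the inequalities \eqref{eqn: m prime} and \eqref{eqn: m minus m prime} follow from the homological class of the asymptotic braid together with the slope-$m$ condition on $\delta_0^\dagger$.
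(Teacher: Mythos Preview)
Your instinct to reduce everything to a homological computation on $T_{\rho_0}$ is exactly right, but the detour through the foliating cylinders $Z_{s,\phi}$ is both unnecessary and problematic. First, $(\sigma_\infty')^\dagger=\R\times\delta_0^\dagger$ is \emph{not} built from the $Z_{s,\phi}$: it is simply the trivial cylinder over a period-$m$ orbit on $T_{\rho_0}$, whereas each $Z_{s,\phi}$ is a nontrivial finite-energy cylinder from $\delta_0$ to $\gamma_\phi\subset\bdry N$. Second, trying to bound the meridional winding $q$ via positivity of intersections with $Z_{s,\phi}$ runs into the difficulty that both $\mathcal{E}_+$ and $Z_{s,\phi}$ have ends at $\delta_0$, so their intersection number is not well-defined without extra work. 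Third --- and this is the main point --- you never invoke the hypothesis $m\gg 2g\geq p$, which is precisely what makes the argument work.

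The paper's proof is a direct three-line computation that avoids all of this. Writing $[\pi_{\overline N}(\mathcal{E}_+)\cap T_{\rho_0}]=(q,p)\in H_1(T_{\rho_0})$ and $[\delta_0^\dagger]=(1,m)$, one has
\[
\langle\mathcal{E}_+,(\sigma_\infty')^\dagger\rangle=\det\bigl((1,m),(q,p)\bigr)=p-qm.
\]
Now apply positivity of intersections to $(\sigma_\infty')^\dagger$ \emph{itself}: since $\overline{v}''_j$ has no component in $(\sigma_\infty')^\dagger$, the left-hand side is strictly positive, forcing $q<p/m$. Because $q\in\Z$ and $m\gg p$, this gives $q\leq 0$, hence $p-qm\geq p$. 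For a negative end the class is $(q,-p)$, so $\langle\mathcal{E}_-,(\sigma_\infty')^\dagger\rangle=-p-qm>0$ forces $q\leq -1$, whence $-p-qm\geq m-p$. The summation statement for multiple ends is immediate since all contributions are nonnegative. No auxiliary holomorphic foliations are needed; the single use of positivity with $(\sigma_\infty')^\dagger$, combined with the integrality of $q$ and $m\gg p$, does everything.
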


\begin{proof}
Let ${\mathcal E}_+$ be a positive end which converges to $\delta_0^{p}$. Let $\pi_{\overline N}:\R\times \overline N \to \overline{N}$ be the projection onto the second factor.  Provided $\rho_0$ is sufficiently small, $\pi_{\overline N}(\mathcal{E}_+)\cap T_{\rho_0}$ determines a homology class $(q,p)\in H_1(T_{\rho_0})\simeq \R^2/\Z^2$. One can easily check that
$$\langle \mathcal{E}_+, (\sigma_{\infty}')^{\dagger}\rangle =  \det \left( \begin{matrix} 1 & q \\ m & p \end{matrix} \right )= p-qm.$$
Since $\langle \mathcal{E}_+, (\sigma_{\infty}')^{\dagger}\rangle >0$ by the positivity of
intersections in dimension four and $m\gg p$, we must have $q\leq 0$. We then obtain:
$$\langle \mathcal{E}_+,(\sigma_{\infty}')^{\dagger} \rangle  \ge p.$$

Let ${\mathcal E}_-$ be a negative end which converges to $\delta_0^{p}$.  As above,
$\pi_{\overline N}(\mathcal{E}_-) \cap T_{\rho_0}$ determines a homology class $(q,-p)\in
H_1(T_{\rho_0})\simeq \R^2/\Z^2$ such that
$$\langle \mathcal{E}_-, (\sigma_{\infty}')^{\dagger}\rangle = det \left( \begin{matrix} 1 & q \\ m & - p \end{matrix} \right )= -p-qm.$$
Since $\langle \mathcal{E}_-, (\sigma_{\infty}')^{\dagger}\rangle>0$ by the positivity of
intersections in dimension four and $m\gg p$, we must have $q\leq -1$.  We then obtain:
$$\langle \mathcal{E}_-, (\sigma_{\infty}')^{\dagger}\rangle \geq m-p.$$

Finally, if $\overline{v}''_j$ has multiple ends at covers of $\delta_0$, then the total
intersection of $\overline{v}''_j$ with $(\sigma_{\infty}')^{\dagger}$ is bounded below by
the sum of the contributions of each end.
\end{proof}

Next we consider $\overline{v}_j$ when $j<0$.  Let $\pi_{\overline S}: \R\times[0,1]\times \overline{S}\to \overline{S}$ be the projection along the Hamiltonian vector field $\bdry_t$. Also let ${\mathcal R}_{\phi_0}$ be the radial ray $\{\phi=\phi_0\}\subset D^2_\varepsilon=\{\rho\leq \varepsilon\}$, where $\varepsilon>0$ is small.  Under the projection $\pi_{\overline S}$, each positive end $\mathcal{E}_+$ of $\overline{v}''_j$ which limits to $[0,1]\times \{z_\infty\}$ maps to a sector $\mathfrak{S}(\mathcal{E}_+)$ of $D^2_\varepsilon=\{\rho\leq\varepsilon\}$ going in the counterclockwise direction from ${\mathcal R}_{\phi_1}\subset \overline{a}_{i_1}$ to ${\mathcal R}_{\phi_2}\subset \overline{\hh}(\overline{a}_{i_2})$. The sector $\mathfrak{S}(\mathcal{E}_+)$ is a {\em thin wedge} if $i_1=i_2$ and the angle is ${2\pi\over m}$. Similarly, each negative end $\mathcal{E}_-$ of $\overline{v}''_j$ which limits to $[0,1]\times\{z_\infty\}$ maps to a sector $\mathfrak{S}(\mathcal{E}_-)$ going counterclockwise from ${\mathcal R}_{\phi_1}\subset \overline{\hh}(\overline{a}_{i_1})$ to ${\mathcal R}_{\phi_2}\subset \overline{a}_{i_2}$.

\begin{lemma}[Intersection with $\overline v_j$, $j<0$] \label{intersezione 2}
Suppose $j<0$. Then the following hold for $\rho_0>0$ sufficiently small:
\begin{enumerate}
\item If $\overline{v}''_j$ has a positive end ${\mathcal E}_+$ which converges to
$[0,1]\times\{z_\infty\}$, then
\begin{equation} \label{two}
\langle \mathcal{E}_+, \sigma_{\infty}^{\dagger} \rangle \geq 1,
\end{equation}
and the relevant sector is a thin wedge if and only if equality holds. Moreover, if the
sector is not a thin wedge, then
$$\langle \mathcal{E}_+, \sigma_{\infty}^{\dagger} \rangle > 2g.$$
\item If $\overline{v}''_j$ has a negative end ${\mathcal E}_-$ which converges to
$[0,1]\times\{z_\infty\}$, then
\begin{equation} \label{one}
\langle \mathcal{E}_-, \sigma_{\infty}^{\dagger} \rangle >2g.
\end{equation}
\end{enumerate}
\end{lemma}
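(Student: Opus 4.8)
The plan is to prove Lemma~\ref{intersezione 2} in direct analogy with the proof of Lemma~\ref{intersezione 1}, but working at the HF end $\overline{W}$ rather than at the symplectization $\R\times\overline{N}$. The relevant ``section at infinity'' here is $\sigma_\infty^\dagger = \R\times[0,1]\times\mathcal{O}(z_\infty^\dagger)$, where $z_\infty^\dagger$ is close to $z_\infty$ and $\mathcal{O}(z_\infty^\dagger)$ has $m$ points; recall from Lemma~\ref{pisapia} that the thin wedges between $\overline{a}_{i,j}$ and $\overline{h}(\overline{a}_{i,j})$ each contain exactly one point of $\mathcal{O}(z_\infty^\dagger)$. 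First I would fix a small disk $D^2_\varepsilon = \{\rho\le\varepsilon\}\subset\overline{S}$ and project a neighborhood of a puncture $p$ of $\overline{v}_j''$ limiting to $[0,1]\times\{z_\infty\}$ to $D^2_\varepsilon$ via $\pi_{\overline S}$. Since $\overline{v}_j''$ does not branch cover $\sigma_\infty$, this projection is nonconstant and holomorphic, so by the openness of holomorphic maps it sweeps out, near $z_\infty$, the sector $\mathfrak{S}(\mathcal{E}_\pm)$ described in the statement.

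The computation of $\langle\mathcal{E}_+,\sigma_\infty^\dagger\rangle$ then reduces to counting how many points of $\mathcal{O}(z_\infty^\dagger)$ lie inside (more precisely, are covered by) the sector $\mathfrak{S}(\mathcal{E}_+)$, weighted by the local covering multiplicity of $\pi_{\overline S}\circ\overline{v}_j''$, which is $\ge 1$ by positivity of intersections in dimension four. For a positive end $\mathcal{E}_+$, the sector goes counterclockwise from a ray inside $\overline{a}_{i_1}$ to a ray inside $\overline{h}(\overline{a}_{i_2})$. The key combinatorial input is the positioning of the arcs $\overline{a}_i$ in $D^2$ arranged in Section~\ref{coconut}: the endpoints $y_1(m),\dots,y_{4g}(m)$ of the $\overline a_i$ are spread over $\bdry D^2$ with consecutive angular gaps that are positive integer multiples of ${2\pi\over m}$, with the smallest being $\ge k_0\gg 2g$ multiples of ${2\pi\over m}$, while the thin wedge from $\overline a_i$ to $\overline h(\overline a_i)$ has angle exactly ${2\pi\over m}$ (this uses Convention~\ref{convention for h} that $\phi_h\approx -{2\pi\over m}$). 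Thus: if $\mathfrak{S}(\mathcal{E}_+)$ is the thin wedge between $\overline a_{i_1}$ and $\overline h(\overline a_{i_1})$, it contains exactly one point of $\mathcal{O}(z_\infty^\dagger)$, giving $\langle\mathcal{E}_+,\sigma_\infty^\dagger\rangle\ge 1$ with equality iff the covering multiplicity is $1$ and no other strands pass through. If $\mathfrak{S}(\mathcal{E}_+)$ is \emph{not} a thin wedge, then the angular span of the sector is at least $k_0\cdot{2\pi\over m}$ with $k_0\gg 2g$, so it covers at least $k_0$ points of $\mathcal{O}(z_\infty^\dagger)$ (which is evenly or controllably distributed over $\bdry D^2$), whence $\langle\mathcal{E}_+,\sigma_\infty^\dagger\rangle\ge k_0>2g$. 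For a negative end $\mathcal{E}_-$, the sector goes counterclockwise from a ray inside $\overline h(\overline a_{i_1})$ to a ray inside $\overline a_{i_2}$; because of the way $h$ is arranged near $\bdry S$ (the map $(y,\theta)\mapsto(y,\theta-y)$, so $\overline h(\overline a_i)$ sits clockwise-adjacent to $\overline a_i$ at distance ${2\pi\over m}$), going counterclockwise from $\overline h(\overline a_{i_1})$ back around to any $\overline a_{i_2}$ must traverse almost all of $\bdry D^2$, i.e.\ an angular span $>2g\cdot{2\pi\over m}$ at the very least, and in fact $\ge(m-O(g))\cdot{2\pi\over m}$; so the sector covers more than $2g$ points of $\mathcal{O}(z_\infty^\dagger)$ and $\langle\mathcal{E}_-,\sigma_\infty^\dagger\rangle>2g$.

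Finally I would note that the independence of $n^*$ from the choice of $z_\infty^\dagger$ (Lemma~\ref{pisapia}(4), Lemma~\ref{properties n-}(3)) lets us pick $z_\infty^\dagger$ conveniently, e.g.\ with $\mathcal{O}(z_\infty^\dagger)$ distributed so that each of the $m$ equal ${2\pi\over m}$-arcs of $\bdry D^2_\varepsilon$ contains exactly one point. The main obstacle I anticipate is the bookkeeping of exactly which points of $\mathcal{O}(z_\infty^\dagger)$ fall inside $\mathfrak{S}(\mathcal{E}_\pm)$ when the sector's boundary rays pass very near points of $\mathcal{O}(z_\infty^\dagger)$ or near the arcs $\overline a_i$, $\overline h(\overline a_i)$: one must check that the strict inequalities are genuinely strict, which requires that the sector does not degenerate onto a radial ray carrying a strand of $\mathcal{O}(z_\infty^\dagger)$ — this is handled by the genericity in the choice of the angles $\phi(y_j(m))$ and of $\phi_h$ (the $\lim_{m\to+\infty}\phi^{j_1,j_2}(m)/\phi^{j_3,j_4}(m)\ne 1$ conditions and $\phi_h$ not on any thin wedge), and by the condition $m\gg0$, which ensures the ``non-thin-wedge'' sectors are angularly large enough that the count $>2g$ is robust. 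A secondary, purely routine, point is justifying the local intersection formula (that $\langle\mathcal{E}_\pm,\sigma_\infty^\dagger\rangle$ equals the weighted count of preimages of $z_\infty^\dagger$-translates under $\pi_{\overline S}\circ\overline{v}_j''$), which follows from positivity of intersections exactly as in the proof of Lemma~\ref{pisapia}, together with the fact that $\sigma_\infty^\dagger$ is a union of $\bdry_t$-orbits transverse to the relevant sectors.
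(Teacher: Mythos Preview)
Your proposal is correct and follows essentially the same approach as the paper: project the end via $\pi_{\overline S}$ to a sector of $D^2_\varepsilon$, then count how many of the $m$ evenly spaced arcs of $\sigma_\infty^\dagger|_{[0,1]\times\overline S}$ the sector contains, using that thin wedges have angle $2\pi/m$ while all other sectors of $D^2_\varepsilon-\cup_i\overline a_i-\cup_i\overline h(\overline a_i)$ have angle exceeding $2g\cdot 2\pi/m$. Two minor misattributions worth fixing: the thin wedge having angle exactly $2\pi/m$ comes from the definition of $\overline h_m$ on $D_{1/2}$ (namely $\nu_m(\rho)=2\pi/m$ for $\rho\le 1/2$) in Section~\ref{subsubsection: overline W pm}, not from Convention~\ref{convention for h}, which concerns the placement of the orbit $h$ on $\partial N$; and $\overline h(\overline a_i)$ sits \emph{counterclockwise}-adjacent to $\overline a_i$ (since $\overline h_m$ rotates by $+2\pi/m$ near $z_\infty$), not clockwise---though this sign slip does not affect your argument, since what matters is that the negative-end sector from $\overline h(\overline a_{i_1})$ counterclockwise to $\overline a_{i_2}$ always has angle at least $(k_0-1)\cdot 2\pi/m>2g\cdot 2\pi/m$.
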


\begin{proof}
Suppose $\rho_0<\varepsilon$. The restriction of $\delta_{\rho_0}$ to $[0,1]\times \overline{S}$ consists of $m$ Reeb arcs $[0,1]\times \{\phi_l\}$ on $\{\rho=\rho_0\}$, where $0\leq l<m$ and $\phi_l= \phi_0+l(2\pi/m)$. Let $\mathcal{E}_+$ (resp.\ $\mathcal{E}_-$) be a positive (resp.\ negative) end of $\overline{v}''_j$ which converges to $[0,1]\times\{z_\infty\}$ and let $\mathfrak{S}(\mathcal{E}_\pm)$ be the corresponding sector in $D^2_\varepsilon$.

By the assumptions on the $\overline{a}_i$ given in Section~\ref{coconut}, the angles of the thin wedges are ${2\pi\over m}$ and the other sectors of $D^2_\varepsilon-\cup_i \overline{a}_i-\cup_i \overline{\hh}(\overline{a}_i)$ have angles greater than ${2\pi (2g)\over m}$. This implies that:
\begin{itemize}
\item[(i)]  $\langle\mathcal{E}_+,\sigma_{\infty}^{\dagger}\rangle \geq 1$;
\item[(ii)] $\langle \mathcal{E}_+, \sigma_{\infty}^{\dagger} \rangle=1$ if and only if $\mathfrak{S}(\mathcal{E}_+)$ is a thin wedge; and
\item[(iii)] $\langle\mathcal{E}_-, \sigma_{\infty}^{\dagger} \rangle >2g$.
\end{itemize}
The lemma follows.
\end{proof}

Similarly, we have the following lemma, whose proof is the same as those of
Lemmas~\ref{intersezione 1} and \ref{intersezione 2} and will be omitted.

\begin{lemma}[Intersection with $\overline v_0$] \label{intersezione 3}
The following hold for $\rho_0>0$ sufficiently small:
\begin{enumerate}
\item If $\overline{v}_0''$ has a positive end ${\mathcal E}_+$ which converges to
$\delta^p$, then
\begin{equation} \label{four}
\langle \mathcal{E}_+, (\sigma^-_{\infty})^{\dagger} \rangle \geq p.
\end{equation}
\item If $\overline{v}_0''$ has a negative end ${\mathcal E}_-$ which converges to
$[0,1]\times\{z_\infty\}$, then
\begin{equation} \label{three}
\langle \mathcal{E}_-, (\sigma^-_{\infty})^{\dagger} \rangle >2g.
\end{equation}
\end{enumerate}
\end{lemma}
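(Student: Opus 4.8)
\textbf{Proof proposal for Lemma~\ref{intersezione 3}.} The plan is to mimic verbatim the arguments already carried out for Lemmas~\ref{intersezione 1} and \ref{intersezione 2}, with the only change being the bookkeeping that reflects the fact that $\overline{v}_0$ maps into the cobordism $\overline{W}_-$ rather than a symplectization, and hence is compared against $(\sigma^-_\infty)^\dagger = (\R\times\delta_0^\dagger)\cap \overline{W}_-$. First I would fix $\rho_0>0$ small and take the torus $T_{\rho_0}=\{\rho=\rho_0\}\subset\overline{N}$ with the oriented identification $T_{\rho_0}\simeq\R^2/\Z^2$ in which the meridian has slope $0$ and the closed $\overline{R}_0$-orbits have slope $m$, exactly as in Section~\ref{subsection: intersection numbers}; the section at infinity $(\sigma^-_\infty)^\dagger$ restricted near the positive end is then a copy of $\{s\}\times\delta_0^\dagger$ sitting on $T_{\rho_0}$ in the homology class $(1,m)$.

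For statement (1), suppose $\overline{v}_0''$ has a positive end $\mathcal{E}_+$ converging to $\delta_0^p$. This end lies in the symplectization portion $\R\times\overline{N}$ attached at $s=+\infty$, so the computation is identical to the positive-end case of Lemma~\ref{intersezione 1}: pushing the asymptotic torus $\pi_{\overline{N}}(\mathcal{E}_+)\cap T_{\rho_0}$ into a homology class $(q,p)\in H_1(T_{\rho_0})$, one gets $\langle\mathcal{E}_+,(\sigma^-_\infty)^\dagger\rangle=\det((1,m),(q,p))=p-qm$, and since this intersection number is positive by positivity of intersections in dimension four while $m\gg p$, we must have $q\le 0$, whence $\langle\mathcal{E}_+,(\sigma^-_\infty)^\dagger\rangle\ge p$. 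For statement (2), a negative end $\mathcal{E}_-$ of $\overline{v}_0''$ converging to $[0,1]\times\{z_\infty\}$ lives near the HF end $s=-\infty$ of $\overline{W}_-$, where $\overline{W}_-$ looks like $(-\infty,0]\times[0,1]\times\overline{S}$; here the argument is word-for-word that of the negative-end case of Lemma~\ref{intersezione 2}. Projecting $\delta_{\rho_0}$ to $[0,1]\times\overline{S}$ gives $m$ Reeb arcs $[0,1]\times\{\phi_l\}$ with $\phi_l=\phi_0+l(2\pi/m)$, and the sector $\mathfrak{S}(\mathcal{E}_-)$ swept out by $\pi_{\overline{S}}(\mathcal{E}_-)$ runs counterclockwise from a ray in $\overline{h}(\overline{a}_{i_1})$ to a ray in $\overline{a}_{i_2}$; by the genericity assumptions on the arcs $\overline{a}_i$ from Section~\ref{coconut}, a sector of this orientation type is never a thin wedge and always has angle $>2\pi(2g)/m$, hence $\langle\mathcal{E}_-,(\sigma^-_\infty)^\dagger\rangle>2g$.

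I do not anticipate any genuine obstacle here: the lemma is explicitly flagged in the excerpt as having ``the same proof as those of Lemmas~\ref{intersezione 1} and \ref{intersezione 2},'' and the only thing to check is that the local models at the two ends of $\overline{W}_-$ (a piece of $\R\times\overline{N}$ at $s=+\infty$, a piece of $\R\times[0,1]\times\overline{S}$ at $s=-\infty$) are precisely the models used in those two lemmas, so that the slope/determinant computation and the sector-angle estimate transfer unchanged. The one place to be slightly careful is the sign/orientation convention distinguishing the positive ECH end of $\overline{W}_-$ from the negative ECH end of the symplectization level $\overline{W}_j$ ($j>0$) treated in Lemma~\ref{intersezione 1}: since the end $\mathcal{E}_+$ of $\overline{v}_0''$ in statement (1) is a \emph{positive} end limiting to $\delta_0^p$, it is the positive-end bound $\langle\mathcal{E}_+,\cdot\rangle\ge p$ (not $\ge m-p$) that applies, which is consistent with the statement as written. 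Because each such end contributes positively and independently, if $\overline{v}_0''$ has several ends at covers of $\delta_0$ their contributions to $n^-(\overline{v}_0)$ simply add, and the lemma follows.
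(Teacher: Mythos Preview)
Your proposal is correct and follows exactly the approach the paper indicates: the paper explicitly states that the proof of Lemma~\ref{intersezione 3} ``is the same as those of Lemmas~\ref{intersezione 1} and \ref{intersezione 2} and will be omitted,'' and you have carried out precisely that transfer, correctly matching statement~(1) to the positive-end determinant computation of Lemma~\ref{intersezione 1} and statement~(2) to the negative-end sector-angle estimate of Lemma~\ref{intersezione 2}. Your care in distinguishing the positive-end bound $\geq p$ from the negative-end bound $\geq m-p$ is exactly the right point to check.
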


Consider $\overline{v}_j'':\dot F_j''\to \overline{W}_j$ for $j\leq 0$.
A point $p\in\bdry F''_j$ will be called a {\em boundary point at  $z_\infty$} if
$\overline{v}_j''(p) \in L_{\overline{\bf a}^-} - L_{\hat{\bf a}^-}$ (if $j=0$), or
$(L_{\overline{\hh}(\overline{\bf a})} - L_{\overline{\hh}(\hat{\bf a})}) \cup
(L_{\overline{\bf a}} - L_{\hat{\bf a}}) = \R \times \{0,1 \} \times \{z_\infty \}$ (if $j<0$).

\begin{lemma} \label{lemma: cherimoya2}
Consider $\overline{v}_j'':\dot F_j''\to \overline{W}_j$, $j\leq 0$. If $p\in\bdry F''_j$ is a  boundary point at $z_\infty$, then, for any sufficiently small neighborhood $\nu(p)\subset F_j''$ of $p$, there exists $\rho_0>0$ such that
$\langle \overline{v}_j''(\nu(p)), (\sigma_{\infty}^*)^{\dagger} \rangle \geq k_0-1 > 2g$.
\end{lemma}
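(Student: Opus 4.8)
The statement says: if $p\in\bdry F''_j$ is a boundary puncture of a non-infinity component $\overline{v}''_j$ which is removable at $z_\infty$ (i.e., of type (iii), so the extension sends $p$ into $L^-_{\overline{\bf a}}-L^-_{\widehat{\bf a}}$, meaning $\overline{v}''_j(p)$ lands on one of the singular radial rays $\vec a_{i,j}$ passing through $z_\infty$), then locally near $p$ the image of $\overline{v}''_j$ wraps around $z_\infty$ enough to force a large intersection number with the section at infinity $(\sigma^*_\infty)^\dagger$. The plan is to project by $\pi_{\overline S}$ (for $j<0$) or by the analogue along the symplectic connection (for $j=0$) to a small punctured disk $D^2_\varepsilon-\{z_\infty\}$ and analyze the holomorphic map $\pi_{\overline S}\circ\overline{v}''_j$ near $p$, exactly as in Lemmas~\ref{intersezione 2} and \ref{intersezione 3} but now for a \emph{boundary} puncture rather than an end at $[0,1]\times\{z_\infty\}$.

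First I would set up coordinates: take $(\rho,\phi)$ polar coordinates on $D^2\subset\overline S$ with $z_\infty=\{\rho=0\}$, fix $\varepsilon>0$ small so that all the geometry of Section~\ref{coconut} applies, and pick a strip-like/half-disk neighborhood $N(p)$ of the puncture. Since $p$ is removable at $z_\infty$, the extended map $\check{\overline v}''_j$ is continuous with $\check{\overline v}''_j(p)\in L^-_{\vec a_{i_0,j_0}}-L^-_{\widehat a_{i_0}}$ for some $(i_0,j_0)$; projecting, $\pi_{\overline S}\circ\overline{v}''_j$ is a holomorphic map from the half-disk (minus $p$) into $D^2_\varepsilon$ which extends continuously over $p$ with value $z_\infty$, is nonconstant (it is not a ghost component and does not branch cover $\sigma^*_\infty$), and sends the two boundary arcs emanating from $p$ into the radial Lagrangian rays prescribed by the data $\mathcal{C}$. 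The key local fact is then: a nonconstant holomorphic half-disk into $D^2_\varepsilon$ hitting $z_\infty$ only at the center $p$ and with boundary on two radial rays $\overline h(\overline a_{i_1,j_1})$, $\overline a_{i_2,j_2}$ must, near $p$, sweep out a sector ${\frak S}(\overline h(\overline a_{i_1,j_1}),\overline a_{i_2,j_2})$ of some positive angle $\theta$; and the intersection number of that local piece with a radial ray $\delta_0^\dagger$-lift of the section at infinity (equivalently, with the $m$ equally-spaced arcs $\{\phi=\phi_l\}$ of $\delta_{\rho_0}$ as in the proof of Lemma~\ref{intersezione 2}) counts roughly $\theta m/2\pi$ of those arcs. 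I would prove this by the argument in the proof of Lemma~\ref{lemma: no branch points on boundary}: Schwarz reflection across the boundary rays (after straightening the rays) turns the half-disk into a genuine disk with $\pi_{\overline S}\circ\overline v''_j$ of the form $w\mapsto w^\ell\cdot(\text{unit})$, and the reflected map winds $\ell$ times around $z_\infty$; combined with the fact that the total angular width available between two \emph{distinct} radial Lagrangian rays that are endpoints of arcs of $\{\overline a_i,\overline h(\overline a_i)\}$ is, by the hypotheses of Section~\ref{coconut}, either a thin wedge of angle $2\pi/m$ or a sector of angle $>2\pi(2g)/m$, but here the puncture is an interior boundary point of an \emph{arc}-component rather than an end, so no thin-wedge degeneration is available and one genuinely picks up $\ge k_0-1$ of the $m$ strands (here $k_0\gg 2g$ is the constant from Section~\ref{coconut} controlling the minimal gap $\phi^{j_1,j_2}(m)\cdot m/2\pi\ge k_0$). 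Then positivity of intersections in dimension four gives $\langle\overline v''_j(N(p)),(\sigma^*_\infty)^\dagger\rangle\ge k_0-1>2g$ as claimed, once $\rho_0$ is taken small enough that $\delta_0^\dagger\subset T_{\rho_0}$ lies inside $\pi_{\overline S}(N(p))$.

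The main obstacle I anticipate is the bookkeeping of exactly \emph{which} sector is swept and hence the precise lower bound: I need to rule out that $\pi_{\overline S}\circ\overline v''_j$ near $p$ covers only a thin wedge (angle $2\pi/m$), which would give a bound of just $1$ rather than $k_0-1$. The point is that a type-(iii) removable boundary puncture, unlike an end asymptotic to $[0,1]\times\{z_\infty\}$, has its two adjacent boundary arcs mapping into two Lagrangian rays that (by the convention in Section~\ref{subsection: novelty} that each component of $\bdry\dot F_j$ goes to a single $L^-_{\widehat a_k}$, extended to the arc-components via $\mathcal{C}$) are endpoints of the \emph{same} arc $\vec a_{i_0,j_0}$, i.e., the ray $\overline a_{i_0,j_0}$ and the ray $\overline h(\overline a_{i_0,j_0})$ — and these differ in angle by $\nu_m(\rho)$ which for $\rho\le 1/2$ equals $2\pi/m$, so without further input this would only give $1$. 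The resolution, which I would make precise, is that the hypothesis ``$p$ is removable at $z_\infty$'' together with $\overline v''_j$ being a \emph{non-infinity} component means the map must actually go around $z_\infty$ (if it merely covered the thin wedge with multiplicity one it would be part of a thin-strip/section-at-infinity configuration, contradicting that $\overline v''_j$ does not branch cover $\sigma^*_\infty$ and $p$ being genuinely a puncture of it); so the winding exponent $\ell\ge 1$ means the swept angle is $\ge 2\pi\ell - $ (gap) and in any case the map crosses at least $k_0-1$ of the $m$ radial arcs $\{\phi=\phi_l\}$ because consecutive zeros of the holonomy are separated by the full angular gaps of size $\ge 2\pi k_0/m$. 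I would model this last step on the proof of \cite[Lemma~10.2.2]{CGH1} and on the foliation-by-cylinders argument used in the proof of Lemma~\ref{lemma: value of widetilde Phi}, comparing $\overline v''_j$ near $p$ with the finite-energy cylinders $Z_{s_0,\phi_0}$ (equivalently the arcs $[0,1]\times\{\phi_l\}$) to extract the intersection count. Once this local model is nailed down the global conclusion is immediate by positivity of intersections, and the choice of $\rho_0$ (depending on $N(p)$) is the routine final adjustment.
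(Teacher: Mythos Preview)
Your proposal contains a genuine confusion about the boundary conditions near the removable puncture $p$, and this confusion manufactures the ``obstacle'' you then struggle to resolve.

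A removable boundary puncture of type (iii)/(iii$'$) maps to a point with a \emph{fixed} value of $t$: for $j<0$ it lies on $\R\times\{0\}\times\{z_\infty\}$ or on $\R\times\{1\}\times\{z_\infty\}$, and for $j=0$ it lies on $L^-_{\overline{\bf a}}-L^-_{\widehat{\bf a}}$. Hence a small neighborhood of $p$ in $\bdry\dot F''_j$ --- both boundary arcs adjacent to $p$ --- stays at that same value of $t$. If $t=1$, both arcs map into $\overline{\bf a}$; if $t=0$, both map into $\overline{h}(\overline{\bf a})$. They do \emph{not} go to one $\overline a$-ray and one $\overline h(\overline a)$-ray. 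You have conflated this with the situation at an \emph{end} asymptotic to the chord $[0,1]\times\{z_\infty\}$, where $t$ traverses $[0,1]$ and the two sides of the strip do lie on $\overline a$ and $\overline h(\overline a)$ respectively; that is the setting of Lemma~\ref{intersezione 2}, not the present one.

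Once this is straightened out the thin-wedge worry evaporates. The sector of $D^2_\varepsilon$ swept out by $\pi_{\overline S}\circ\overline v''_j$ near $p$ is bounded by rays from $\overline{\bf a}$ only (or from $\overline h(\overline{\bf a})$ only), and by the spacing condition of Section~\ref{coconut} every sector of $D^2_\varepsilon-\overline{\bf a}$ has angle at least $2\pi k_0/m$, hence contains at least $k_0-1$ of the $m$ points of $\mathcal O(z^\dagger_\infty)$. The paper's proof is then immediate: $\pi_{\overline S}\circ\overline v''_j$ is holomorphic on $(\pi_{\overline S}\circ\overline v''_j)^{-1}(D^2_\varepsilon)$ by the choice of almost complex structure, hence open, hence its image contains a full sector of $D^2_\varepsilon-\overline{\bf a}$, giving the bound. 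Your Schwarz-reflection/winding machinery and the proposed appeal to finite-energy cylinders are unnecessary, and your attempted resolution of the (nonexistent) thin-wedge obstacle does not work as stated.
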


Here the constant $k_0$ is as given in Section~\ref{coconut}.

\begin{proof}
We will prove the case $j<0$, leaving $j=0$ for the reader. Let $p$ be a boundary point at $z_\infty$ of $\overline{v}''_j$ which maps to a point on $\R\times \{1\}\times \{z_\infty\}$ (without loss of generality). We consider the projection $\pi_{\overline{S}}: \R\times [0,1] \times \overline{S}\to \overline{S}$. By Definition \ref{defn: almost complex structures on overline W}, the projection $\pi_{\overline{S}}$ is holomorphic when restricted to $\pi^{-1}_{\overline{S}}(D^2_\varepsilon)$, where $\varepsilon>0$ is sufficiently small. Hence $\pi_{\overline{S}}\circ \overline{v}''_j$ is holomorphic when restricted to $(\pi_{\overline{S}}\circ\overline{v}''_j)^{-1}(D^2_\varepsilon)$ and some sector of $D^2_\varepsilon-\overline{\bf a}$ must be contained in $\op{Im}(\pi_{\overline{S}}\circ \overline{v}''_j)$ because holomorphic maps are open. This in turn shows that $$\langle\overline{v}''_j(\nu(p)),  (\sigma_{\infty}^*)^{\dagger}\rangle\geq k_0-1 > 2g$$ by assumption, provided $\rho_0$ is sufficiently small.
\end{proof}

\subsection{Some restrictions on $\overline{u}_\infty$}
\label{some restrictions, first version}

We now present some lemmas in preparation for Theorems~\ref{thm: compactness for W minus I=3, first version} and \ref{thm: compactness for W minus I=2, first version}. Lemmas~\ref{no closed components}--\ref{lemma: preliminary restrictions part 2} give restrictions on $\overline{u}_\infty$, which arise from intersection number calculations from Section~\ref{subsection: intersection numbers}, and Lemma~\ref{claim in proof} gives a lower bound on the ECH index of the levels $\overline{v}_j$.

In what follows, ``component'' is shorthand for ``irreducible component''. We first discuss the ``fiber components'', i.e., components $\widetilde{v}: F\to \overline{W}_j$ of $\overline{v}_j$ which map to fibers of $\overline{W}_j$. There are three types of fiber components:
\begin{itemize}
\item[(i)] ghosts, i.e., $\widetilde{v}$ is constant;
\item[(ii)] ``closed fiber components'', i.e., $F$ is closed and $\widetilde{v}$ is nonconstant;
\item[(iii)] ``boundary fiber components'', i.e., $\bdry F\not=\varnothing$ and $\widetilde{v}$ is nonconstant.
\end{itemize}
If $\widetilde{v}$ is a closed fiber component, then $\widetilde{v}$ is a branched cover of a fiber of $\overline{W}_j$. Next let $\widetilde{v}:F\to \overline{W}_j$ be a boundary fiber component. If $j<0$ and $\widetilde{v}$ maps to $\overline{\pi}^{-1}_{B}(p)$, $p\in \bdry B$, then $\widetilde{v}(F)\supset \overline{\pi}^{-1}_B(p)-A$, where $A= L_{\overline{\bf a}}$ or $L_{\overline{\hh}(\overline{\bf a})}$. Similarly, if $j=0$ and $\widetilde{v}$ maps to $\overline{\pi}_{B_-}^{-1}(p)$, $p\in \bdry B_-$, then $\widetilde{v}(F)\supset \overline{\pi}^{-1}_{B_-}(p)-L^-_{\overline{\bf a}}$.

\begin{lemma}\label{no closed components}
The only possible non-ghost fiber component of $\overline u_\infty$ is a closed component $\overline\pi_{B_-}^{-1}(p)$ which passes through $\overline{\frak m}$. There is at most one such component.
\end{lemma}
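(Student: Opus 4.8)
The plan is to analyze the possible fiber components of $\overline{u}_\infty$ level by level and use the intersection-number constraints from Section~\ref{subsection: intersection numbers}, together with the constraint $\sum_{j=-b}^a n^*(\overline{v}_j) = m$ and the point constraint $\overline{\frak m}$, to rule out every fiber component except a single closed fiber over $\overline{\frak m}^b$. First I would dispose of the non-ghost fiber components which are not closed fibers of $\overline{W}_-$. A non-ghost closed fiber component of a level $\overline{W}_j$ with $j\ne 0$ is a branched cover of a fiber of $\R\times\overline{N}$ or of $\R\times[0,1]\times\overline{S}$; the fiber $\overline{S}$ has a nonempty intersection with $\sigma_\infty^*$ (namely $z_\infty$), so each such component contributes at least $m$ (by Lemmas~\ref{intersezione 1}, \ref{intersezione 2}, and the argument of Lemma~\ref{properties n+}(2)) to $\sum_j n^*(\overline{v}_j)$. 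Since the total is exactly $m$ and the rest of $\overline{u}_\infty$ must also carry at least $m$ worth of intersection (it is the limit of curves with $n^-(\overline{u}_i)=m$ and must connect to $\gamma$ and ${\bf y}$ in $N$), at most one closed fiber component can appear, and if it appears it absorbs all of the intersection with the section at infinity.

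Next I would rule out boundary fiber components. If $\widetilde{v}$ is a boundary fiber component of a level $\overline{W}_j$, then by the description in the paragraph preceding the lemma, its image contains $\overline\pi^{-1}_{B_j}(p) - A$ with $A = L_{\overline{\bf a}}$ or $L_{\overline{h}(\overline{\bf a})}$ (or $L^-_{\overline{\bf a}}$ when $j=0$); in particular $\widetilde{v}(F)$ contains a neighborhood of $z_\infty$ in the fiber minus a finite set of radial rays, so $\widetilde{v}$ has a puncture which is removable at $z_\infty$ or passes near $z_\infty$. By Lemma~\ref{lemma: cherimoya2} (or directly by the openness argument of Lemma~\ref{intersezione 2}), such a component contributes more than $2g$, in fact at least $k_0-1$, to $n^*$; but $k_0 \gg 2g$ and all of the remaining pieces of $\overline{u}_\infty$ must still connect to $\gamma\in\widehat{\mathcal O}_{2g}$ and ${\bf y}$, which themselves force the ``non-fiber part'' of $\overline{u}_\infty$ to have $n^* \ge 0$ with the total capped at $m$. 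The arithmetic $k_0-1 + (\text{rest}) > m$ is the contradiction — here I would invoke that $m\gg 0$ is chosen large but that the relevant curves still satisfy $n^-=m$, so the budget is tight. (The point is that a genuine closed fiber of $\overline{W}_-$ intersects $\sigma_\infty^*$ in exactly $m$ points with no slits, whereas a boundary fiber or a non-$\overline{W}_-$ fiber is forced to intersect it \emph{more}.)

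It then remains to show that the one surviving closed fiber component must live in the $j=0$ level $\overline{W}_-$ and must be $\overline\pi_{B_-}^{-1}(p)$ for $p$ forced to equal $\overline{\frak m}^b$. A closed fiber in a level $\overline{W}_j$ with $j\ne 0$ is a branched cover of a fiber of a symplectization $\R\times\overline{N}$ or $\R\times[0,1]\times\overline{S}$, hence by $s$-invariance it can be translated, which together with the $\omega$-area bound and stability of the limiting building shows it cannot arise as a component of an SFT limit of the $\overline{W}_-$-curves $\overline{u}_i$ unless it sits in the nontrivial cobordism level $\overline{W}_0 = \overline{W}_-$ (a fiber in a trivial-cylinder level would be absorbed into the elided trivial part). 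Finally, since $\overline{u}_i$ passes through $\overline{\frak m} = (\overline{\frak m}^b, z_\infty)$ for all $i$, and the only way a sequence can develop a closed fiber over $p\in B_-$ while still passing through a fixed point with fiber coordinate $z_\infty$ is for $p = \overline{\frak m}^b$, the closed fiber component is $\overline\pi_{B_-}^{-1}(\overline{\frak m}^b)$. That there is at most one such component follows from the $n^*$-budget argument above. I expect the main obstacle to be making the ``$n^*$-budget'' bookkeeping fully rigorous: one must carefully account for the contributions of \emph{all} ends (positive and negative, at $\delta_0$ and at $[0,1]\times\{z_\infty\}$) across all levels, handle the possibility of several ends limiting to the same multiple cover, and confirm that the non-fiber part of $\overline{u}_\infty$ indeed contributes a nonnegative amount that, added to $k_0-1$ or to $m$ from a stray fiber, strictly exceeds $m$. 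This is where the hypothesis $m\gg 0$ relative to $2g$ and $k_0$ is used in an essential way.
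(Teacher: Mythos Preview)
Your overall strategy---use the budget $\sum_j n^*(\overline{v}_j)=m$ to bound fiber components---is the same as the paper's, but two of your steps have genuine gaps and the paper's route is considerably cleaner.

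The main problem is your treatment of boundary fiber components. You invoke Lemma~\ref{lemma: cherimoya2} to get a contribution of at least $k_0-1$, and then assert ``$k_0-1 + (\text{rest}) > m$''. This does not work: $k_0$ is a fixed constant (chosen with $k_0\gg 2g$), while $m$ is arbitrarily large, so $k_0-1<m$ and the inequality fails unless you can show the ``rest'' contributes at least $m-k_0+2$, which you have not. Moreover, Lemma~\ref{lemma: cherimoya2} is about removable boundary punctures of \emph{non-fiber} components $\overline{v}_j''$, not about boundary fiber components, so the citation is misplaced. The correct observation---which the paper uses---is that a degree-$d$ boundary fiber component is a $d$-fold branched cover of the fiber $\overline{S}$ cut along $\overline{\bf a}$ (or $\overline{h}(\overline{\bf a})$); since all $m$ points of $\mathcal{O}(z_\infty^\dagger)$ lie off the Lagrangian arcs, such a component has $n^*=dm$, exactly as for closed fibers. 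Once you know this, $d=1$ and ``at most one fiber component'' are immediate from the budget.

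Your other arguments are also more complicated than necessary. You try to rule out closed fibers in levels $j\neq 0$ via an $s$-invariance/absorption argument, and you argue $p=\overline{\frak m}^b$ from the sequence passing through $\overline{\frak m}$. Neither of these is airtight as stated (fiber bubbles can occur in symplectization levels in SFT limits; and some \emph{other} component of $\overline{u}_\infty$ could in principle carry the point $\overline{\frak m}$). The paper instead argues: some component $\hat v$ of $\overline{u}_\infty$ passes through $\overline{\frak m}$; if $\hat v$ is not the fiber component $\widetilde v$, then either $\hat v$ covers $\sigma_\infty^-$ (forcing a negative $\delta_0^p$-end on some $\overline{v}_j''$, $j>0$, hence extra $n^*>0$) or $\hat v$ intersects $\sigma_\infty^-$ nontrivially (again extra $n^*>0$). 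Either way $n^*(\overline{u}_\infty)>m$, a contradiction, so $\widetilde v$ itself passes through $\overline{\frak m}$. This simultaneously pins the fiber to level $0$ and to the base point $\overline{\frak m}^b$, and eliminates boundary fibers in one stroke since those project to $\partial B_-\not\ni\overline{\frak m}^b$.
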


\begin{proof}
Let $\widetilde{v}$ be a non-ghost fiber component.
Then $\widetilde{v}$ is a $d$-fold branched cover of a fiber if it is a closed fiber
component and a $d$-fold branched cover of a fiber cut along $\overline{\bf a}$
(or $\overline{\hh}(\overline{\bf a})$) if it is a boundary fiber component.
In either case, $n^*(\widetilde{v}) = d\cdot m$. This implies that $d=1$.

We now prove that $\widetilde{v}$ is a closed fiber component passing through $\overline{\frak m}$.
Arguing by contradiction, if $\overline{\frak m} \not \in \op{Im}(\widetilde{v})$, then
there is a component $\hat{v}$ of $\overline{u}_{\infty}$ such that
$\overline{\frak m} \in \op{Im}(\hat{v})$.
If $\hat{v}$ is a cover of $\sigma^-_{\infty}$, then there must be some component of
$\overline{v}''_j$ for $j>0$, which has a negative end at some $\delta_0^p$. Therefore
$n^-(\overline{v}''_j) >0$ by Lemma~\ref{intersezione 1}(2). If $\hat{v}$ is not a
cover of $\sigma^-_{\infty}$, then $\hat{v}$ has a nonzero intersection with
$\sigma_\infty^-$ and hence $n^-(\hat{v})>0$.  In either case we have
$n^*(\overline{u}_{\infty})> m$, which is a contradiction. This proves that
$\overline{\frak m} \in \op{Im}(\widetilde{v})$.

Finally boundary fiber components are eliminated because they project to a point in
$\partial B_-$, and therefore cannot pass through $\overline{\mathfrak{m}}$.
\end{proof}

We recall the notation $\overline{v}_j = \overline{v}_j' \cup \overline{v}_j''$, where
$\overline{v}_j'$ denotes the union of all components of $\overline{v}_j$ which cover
the section at infinity $\sigma_{\infty}^*$ and $\overline{v}_j''$ denotes the union of all
other components of $\overline{v}_j$. The covering degree of  $\overline{v}_j'$ will
always be denoted by $p_j$. We also define $\overline{v}_j^\sharp$ to be the
union of the components of $\overline{v}_j''$ which are asymptotic to a multiple of
$\delta_0$ or $z_\infty$ at either end and are not trivial cylinders or strips, $\overline{v}_j^\flat$ to be the union of the
remaining non-fiber components of $\overline{v}_j''$, and $\overline{v}_j^f$ to be the union of the fiber components of $\overline{v}_j''$.
\nom[v]{$\overline{v}_j^\sharp$}{Union of irreducible components of $\overline{v}_j''$ asymptotic to a multiple of $\delta_0$ or $z_\infty$ at either end}
\nom[v]{$\overline{v}_j^\flat$}{Union of non-fiber components of $\overline{v}_j''$ not asymptotic to any multiple of $\delta_0$ or $z_\infty$}

\begin{lemma}
\label{lemma: preliminary restrictions part 1}
If $\overline{v}'_0=\varnothing$, then, with the exception of ghost components:
\begin{enumerate}
\item $\overline{v}'_j=\varnothing$ and $\overline{v}_j^\sharp=\varnothing$ for all $j$;
\item no $\overline{v}_j''$, $j\leq 0$, has a boundary point at $z_\infty$;
\item every level $\overline v_j$, $j\not=0$, has image inside $W'$ or $W$; and
\item $\overline v_0$ is a $\overline{W}_-$-curve or a {\em degenerate $\overline{W}_-$-curve}, i.e., $\overline{v}_0$ is the union of a $W_-$-curve and a fiber $\overline\pi_{B_-}^{-1}(p)$ which passes through $\overline{\frak m}$.
\end{enumerate}
\end{lemma}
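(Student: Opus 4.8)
The plan is to assume $\overline{v}'_0 = \varnothing$ and propagate this vanishing upward and downward through the building by a repeated application of the intersection-number estimates of Section~\ref{subsection: intersection numbers}, together with Equation~\eqref{eqn: m}. The key point is that the total intersection with $(\sigma_\infty^*)^\dagger$ is exactly $m$, and $m\gg 2g$, so the building cannot afford to have more than one ``unit'' of this intersection spread out among the levels in any way other than the single transverse intersection coming from $\overline{v}_0$ passing through a fiber, or (in the degenerate case) from a genuine closed fiber component at $\overline{\frak m}$.

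First I would prove (1) and (2) simultaneously. Suppose for contradiction that some $\overline{v}'_j \neq \varnothing$ or some $\overline{v}_j^\sharp \neq \varnothing$ or some $\overline{v}_j''$, $j\leq 0$, has a boundary puncture removable at $z_\infty$. Any cover of the section at infinity in a level $\overline{v}_j$, $j\neq 0$, must be matched — at the end where it meets $\overline{v}_0$, or at a breaking point — to a component of some $\overline{v}_k''$ that is asymptotic to a multiple of $\delta_0$ or $z_\infty$; such a component contributes a positive amount to $n^*$ by Lemmas~\ref{intersezione 1}, \ref{intersezione 2}, or \ref{intersezione 3}. More precisely: a positive end of $\overline{v}_j''$, $j>0$, at $\delta_0^p$ contributes $\geq p\geq 1$; a negative end at $\delta_0^p$ contributes $\geq m-p \geq m-2g > 2g$; ends of $\overline{v}_j''$, $j<0$, at $z_\infty$ contribute $\geq 1$ (positive, thin wedge) or $>2g$ (all other cases and all negative ends); and a boundary puncture removable at $z_\infty$ contributes $\geq k_0-1>2g$ by Lemma~\ref{lemma: cherimoya2}. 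Since $\overline{v}'_0 = \varnothing$, there is no transverse intersection of $\overline{v}_0$ with $\sigma_\infty^-$ of the cheap kind, so the only possible source of intersection is a closed fiber component through $\overline{\frak m}$ of degree exactly $1$ (Lemma~\ref{no closed components}), contributing exactly $m$. But then the budget in Equation~\eqref{eqn: m} is already exhausted, forcing every $\overline{v}_j^\sharp$, $\overline{v}'_j$, and removable-at-$z_\infty$ puncture to be absent; and if there is no such fiber component either, then again any of these features would force $n^*(\overline{u}_\infty) \geq 1$ distributed in a way inconsistent with a single unit — here one must observe that a $z_\infty$-end of a level $\overline{v}_j$, $j<0$, at the HF end of the building has to be matched all the way down and cannot be ``capped off'' by a thin wedge in an $I=3$, $n^*=m$ building unless it combines with further contributions exceeding $m$. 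This is the delicate bookkeeping step, and I expect it to be the main obstacle: one must carefully track how ends at $\delta_0$ and $z_\infty$ in adjacent levels are matched, and check that the ``thin wedge'' loophole (contribution exactly $1$) cannot produce an admissible building with $\overline{v}'_0 = \varnothing$ and the full intersection $m$ coming from elsewhere.

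Once (1) and (2) are established, (3) and (4) follow quickly. Given (1), each level $\overline{v}_j$, $j\neq 0$, consists only of $\overline{v}_j^\flat$ together with ghosts; a non-fiber component with no end at $\delta_0$ or $z_\infty$ and no puncture removable at $z_\infty$ has image disjoint from the section at infinity by positivity of intersections, hence lies in $W'$ (for $j>0$) or $W$ (for $j<0$) — this is exactly the content of Lemmas~\ref{pluto}, \ref{lemma: restriction of overline W plus curve}, \ref{lemma: restriction of overline W minus curve} applied componentwise, using $n^*=0$ on these levels. Ghost components map to points and can be absorbed. For (4): by Lemma~\ref{no closed components} the only non-ghost fiber component anywhere in $\overline{u}_\infty$ is a single closed fiber $\overline\pi_{B_-}^{-1}(p)$ through $\overline{\frak m}$, which if present must lie in the level $\overline{v}_0 = \overline{W}_-$ (the fiber over an interior point of $B_-$), and then the rest of $\overline{v}_0$ has image in $W_-$ by $n^-=m$ and Lemma~\ref{lemma: restriction of overline W minus curve} applied to $\overline{v}_0$ minus this fiber. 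If no such fiber component is present, then $\overline{v}_0$ itself has $n^-(\overline{v}_0)=m$ and is a genuine $\overline{W}_-$-curve. This gives precisely the dichotomy in (4). I would close by remarking that the hypotheses $I(\overline{u}_i)=3$ and $n^*(\overline{u}_i)=m$ with $m\gg 0$ are used only through Equation~\eqref{eqn: m} and the inequality $m-2g>2g$, so the argument is uniform in the relevant index range.
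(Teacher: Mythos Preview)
You are missing the single observation that makes this lemma immediate, and as a result you are attempting a bookkeeping argument that you yourself flag as incomplete.

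The point you overlook is that $\overline{\frak m}$ lies in the $\overline{W}_-$-level, so the level $\overline{v}_0$ itself must pass through $\overline{\frak m}$. Since $\overline{v}'_0=\varnothing$, the component of $\overline{v}_0=\overline{v}''_0$ through $\overline{\frak m}$ is not a cover of $\sigma_\infty^-$; it therefore has an honest isolated intersection with $\sigma_\infty^-$ at $\overline{\frak m}$, and Lemma~\ref{properties n-} gives $n^-(\overline{v}_0)\geq m$. Now Equation~\eqref{eqn: m} together with nonnegativity of $n^*$ forces $n^-(\overline{v}_0)=m$ and $n^*(\overline{v}_j)=0$ for every $j\neq 0$. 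From this, (1) follows at once (any $\overline{v}'_j$ or $\overline{v}_j^\sharp$ with $j\neq 0$ would make some $n^*(\overline{v}_j)>0$ by Lemmas~\ref{intersezione 1}--\ref{intersezione 3}), (2) follows from Lemma~\ref{lemma: cherimoya2}, and (3) from Lemmas~\ref{pippo} and~\ref{pluto}. No end-matching across levels is needed.

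Your sentence ``Since $\overline{v}'_0 = \varnothing$, there is no transverse intersection of $\overline{v}_0$ with $\sigma_\infty^-$ of the cheap kind'' is where the confusion enters: the hypothesis $\overline{v}'_0=\varnothing$ says only that no irreducible component of $\overline{v}_0$ \emph{covers} $\sigma_\infty^-$; it does not prevent a transverse intersection, and indeed such an intersection is exactly what occurs (and is what drives the whole argument). Your ``Case B'' then tries to rule out $\overline{v}_j^\sharp$ and $\overline{v}'_j$ by tracking how $\delta_0$- and $z_\infty$-ends match across levels; but without first pinning the entire $n^*$-budget on $\overline{v}_0$, you have no control over how the intersection number could be distributed, and the ``thin wedge loophole'' you worry about is a genuine obstruction to completing that route. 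Once you use the $\overline{\frak m}$-constraint on $\overline{v}_0$, the loophole disappears and the proof is two lines. Your treatment of (3) and (4) is fine once (1) and (2) are in place.
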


\begin{proof}
If $\overline{v}'_0=\varnothing$, then $\overline v_0=\overline{v}_0''$. Since $\overline{u}_i$ passes through $\overline{\frak m}$ for all $i$, the level $\overline{v}_0:\dot F_0\to \overline{W}_-$ must also pass through $\overline{\frak m}$. By Lemma~\ref{properties n-} and the proof of Lemma~\ref{properties n+}, $n^-(\overline v_0) \ge m$ because $\overline{\frak m} \in \sigma_{\infty}^-$. Equation~\eqref{eqn: m} and the nonnegativity of $n^*$ then imply that $n^-(\overline v_0) = m$ and $n^*(\overline{v}_j) =0$ for $j \neq 0$.

(1) If $\overline{v}'_j \not=\varnothing$ or $\overline{v}_j^\sharp\not=\varnothing$, then at least one of Equations~\eqref{eqn: m minus m prime}--\eqref{four} applies and $\sum_{j\not=0} n^*(\overline{v}_j^\sharp)> 0$.

(2) This follows from $n^-(\overline v_0) = m$ and Lemma~\ref{lemma: cherimoya2}.

(3) Since $n^*(\overline v_j)=0$ for $j\not=0$, (1), combined with Lemma~\ref{pluto}, implies that $\op{Im}(\overline{v}_j)\subset W'$ if $j>0$, whereas (1) and (2), combined with Lemma~\ref{pippo}, implies that $\op{Im}(\overline{v}_j)\subset W$ if $j<0$.

(4) Since $n^-(\overline v_0)=m$, $\overline{v}_0$ intersects $\sigma_\infty^-$ only at $\overline{\frak m}$ and the intersection is transverse by Lemma \ref{properties n-}(2).   By (1) and (2) and Lemmas~\ref{no closed components} and \ref{lemma: restriction of overline W minus curve}, if $\overline{v}_0$ is not a $\overline{W}_-$-curve, then it must be a degenerate $\overline{W}_-$-curve.

This completes the proof of the lemma.
\end{proof}

\begin{lemma}
\label{lemma: preliminary restrictions part 2}
If $\overline{v}'_0\not=\varnothing$, then, with the exception of ghost components:
\begin{enumerate}
\item there is only one negative end of $\overline{v}_j^\sharp$, $j>0$, (say $\overline{v}_{a'}^\sharp$) which is asymptotic to a multiple of $\delta_0$;
\item $\overline{u}_\infty$ has no fiber components;
\item no $\overline{v}_j''$, $j\leq 0$, has a boundary point at $z_\infty$;
\item $\overline v_j^\flat$ has image inside $W$, $W_-$, or $W'$;
\item $\overline v_j^\sharp$, $j<0$, is a union of thin strips from $z_\infty$ to some $x_i$ or $x_i'$;
\item $\overline{v}_0^\sharp$ has image inside $\overline{W}_--int(W_-)$ and has $\delta_0^{r_0}$, for some $r_0>0$, at the positive end and some subset of $\{x_1,\dots,x_{2g},x'_1,\dots,x'_{2g}\}$ of cardinality $r_0$ at the negative end;
\item $\overline{v}_j^\sharp$, $0<j\leq a'$, has image inside $\overline{W'}-int(W')$ and has $\delta_0^{r_j}$, for some $r_j>0$, at the positive end and $h^{r_j'}e^{r_j''}$ with
 $r_j'+r_j''=r_j$, at the negative end.
\end{enumerate}
\end{lemma}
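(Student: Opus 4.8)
The statement is a structural constraint on the SFT limit $\overline{u}_\infty=\overline{v}_{-b}\cup\dots\cup\overline{v}_a$ of a sequence of $(\overline{W}_-,\overline{J}_-)$-curves (or $(\overline{W}_-,\overline{J}_-^\Diamond)$-curves) from $\gamma$ to $\mathbf{y}$, in the case $\overline{v}_0'\neq\varnothing$, i.e.\ when part of the limiting level $\overline{v}_0$ branch covers $\sigma_\infty^-$. The proof will be a bookkeeping argument built entirely on the intersection-number lemmas of Section~\ref{subsection: intersection numbers} (Lemmas~\ref{intersezione 1}, \ref{intersezione 2}, \ref{intersezione 3}, \ref{lemma: cherimoya2}), on Equation~\eqref{eqn: m} $\sum_{j=-b}^a n^*(\overline{v}_j)=m$, and on the nonnegativity of all $n^*(\overline{v}_j)$, together with the automatic transversality/structure results (Lemma~\ref{lemma: regularity of curve at infinity}, Lemma~\ref{lemma: transversality thin strips}) and Lemma~\ref{no closed components}. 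The key quantitative input is that $n^-(\overline{u}_i)=m\gg 2g$, so the ``budget'' $m$ of intersection with $(\sigma_\infty^*)^\dagger$ is large, while each end of a $\overline{v}_j''$ asymptotic to a nontrivial multiple of $\delta_0$ or to a non-thin-wedge sector at $z_\infty$ costs at least $\min(k_0-1, m-p)$, which is $>2g$; and each boundary puncture removable at $z_\infty$ costs $\geq k_0-1>2g$.

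\textbf{Key steps, in order.} First, I will account for the covering degree: since $\overline{v}_0'\neq\varnothing$ covers $\sigma_\infty^-$ with some degree $p_0>0$, and $\sigma_\infty^-$ runs from $\delta_0$ (at the positive end, inside $\overline{N}$) down to $z_\infty$ (at the negative HF end), the positive end of $\overline{v}_0'$ is asymptotic to $\delta_0^{p_0}$ and must be capped off above. By Lemma~\ref{intersezione 1}(1) applied to whatever component of $\overline{v}_j''$, $j>0$, has the matching negative end at a multiple of $\delta_0$, each such end contributes $\geq m-p$ to $n'(\overline{v}_j)$; since the total budget is exactly $m$ and $m-p$ is large, there can be at most one such end overall, giving (1) and identifying the level $\overline{v}_{a'}^\sharp$. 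Second, using that the remaining budget after this one large contribution is $<2g$ (indeed, after subtracting the unavoidable $m-r_{a'}$ from $\overline{v}_{a'}^\sharp$ and the $p_0$ that $\overline{v}_0^\sharp$ must absorb at $z_\infty$, nothing is left to pay for any further ``expensive'' end), I rule out: closed fiber components (each costs $d\cdot m$, Lemma~\ref{no closed components}), hence (2); boundary punctures removable at $z_\infty$ on $\overline{v}_j''$, $j\leq 0$ (each costs $\geq k_0-1>2g$, Lemma~\ref{lemma: cherimoya2}), hence (3); and negative ends at $z_\infty$ and non-thin-wedge positive ends at $z_\infty$ on the $\overline{v}_j''$, $j<0$ (Lemma~\ref{intersezione 2}: cost $>2g$ in those cases), which forces every $z_\infty$-end of $\overline{v}_j^\sharp$, $j<0$, to be a thin wedge and hence (by Lemma~\ref{lemma: transversality thin strips} and the ECH-index computation of Lemma~\ref{lemma: ECH index of thin wedges}) a thin strip from $z_\infty$ to some $x_i$ or $x_i'$, giving (5). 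Third, with all expensive phenomena excluded, Lemmas~\ref{pluto}, \ref{pippo}, \ref{lemma: restriction of overline W plus curve}, \ref{lemma: restriction of overline W minus curve} force each $\overline{v}_j^\flat$ into $W$, $W_-$, or $W'$ according to the level, giving (4). Fourth, I assemble the pieces at the section-at-infinity end: the covers $\overline{v}_j'$ together with the $\overline{v}_j^\sharp$, $0\leq j\leq a'$, form a sub-building living in $\overline{W}_--\mathrm{int}(W_-)$ and $\overline{W'}-\mathrm{int}(W')$ (it is disjoint from the region where $\overline{J}_-$ was perturbed and lies over $\rho\leq\varepsilon$); tracking the asymptotics of $\overline{R}_0$ on $\overline{N}-N$ (the boundary Morse-Bott family splits, under perturbation, into $h$ and $e$, and the only orbit of $\overline{R}_0$ in $\overline{N}-N$ crossing $\overline{S}$ at most $2g$ times is $\delta_0$) yields that $\overline{v}_0^\sharp$ has $\delta_0^{r_0}$ on top and a cardinality-$p$ subset of $\{x_1,\dots,x_{2g},x_1',\dots,x_{2g}'\}$ below (item (6)), and each $\overline{v}_j^\sharp$, $0<j\leq a'$, has $\delta_0^{r_j}$ on top and $h^{r_j'}e^{r_j''}$ with $r_j'+r_j''=r_j$ below (item (7)); here I use the negativity of the Morse-Bott family $\mathcal{N}$ (so nothing is asymptotic to $h$ or $e$ at a positive end except trivial cylinders) to pin down the shape, and the intersection-number balancing $\sum r_j$-type identity to match degrees.

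\textbf{Main obstacle.} The genuinely delicate point is step two combined with the matching in step four: one must verify that the $m$-budget is \emph{exactly} exhausted by the one long $\delta_0$-tube plus the forced $p_0$-intersection that $\overline{v}_0^\sharp$ has with $(\sigma_\infty^-)^\dagger$ at $z_\infty$ (via the thin wedges/thin strips), with literally no slack left over; this requires simultaneously computing, for each level, the precise contribution of the $\sigma_\infty$-covers $\overline{v}_j'$ versus the nontrivial-orbit ends of $\overline{v}_j^\sharp$, and using $m\gg 0$ and $k_0\gg 2g$ (Convention from Section~\ref{coconut}) in just the right places. A secondary subtlety is that the asymptotic description in (6)--(7) really does require the balanced-coordinate picture on $\overline{N}-N=D^2\times(\R/2\Z)$ and the fact that $\overline{v}_0^\sharp$, $\overline{v}_j^\sharp$ avoid $\{\rho\leq\varepsilon\}$-dependent perturbations of $\overline{J}_-$, so that Lemmas~\ref{lemma: ECH index of thin wedges} and \ref{lemma: transversality thin strips} may be invoked verbatim; I expect this to go through with the same arguments used in Lemma~\ref{lemma: preliminary restrictions part 1}, mutatis mutandis, but the case $\overline{v}_0'\neq\varnothing$ has the extra complication that the cover lives on the level $\overline{v}_0$ itself rather than strictly above it, so the ``one end only'' count in (1) must be done carefully to avoid double-counting the positive end of $\overline{v}_0'$ against the negative end of the matching $\overline{v}_{a'}^\sharp$.
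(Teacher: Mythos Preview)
Your plan for items (1)--(5) is essentially the paper's: use the $n^*$-budget $\sum_j n^*(\overline{v}_j)=m$ together with the cost estimates of Lemmas~\ref{intersezione 1}--\ref{lemma: cherimoya2} to show that exactly one negative end at a multiple of $\delta_0$ (costing $\geq m-p$) plus the $p_0$ thin-wedge ends at $z_\infty$ (costing $1$ each) saturate the budget, so every other expensive phenomenon is forbidden. Two minor corrections: the $m-p$ estimate is Lemma~\ref{intersezione 1}(2), not (1); and Lemma~\ref{lemma: restriction of overline W plus curve} is irrelevant here. More substantively, you should make explicit the paper's two-sided squeeze: $\sum_{j\geq 0} n^*(\overline{v}_j^\sharp)\geq m-p_0$ and $\sum_{j<0} n^*(\overline{v}_j^\sharp)\geq p_0$, so the total is $\geq m$ and equality is forced in both. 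Without this, ``nothing is left to pay'' is an assertion, not a computation.

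There is, however, a genuine gap in your treatment of (6) and (7). You assert that $\overline{v}_0^\sharp$ and $\overline{v}_j^\sharp$, $j>0$, ``live in $\overline{W}_--\op{int}(W_-)$ and $\overline{W'}-\op{int}(W')$'', with the parenthetical justification that the sub-building ``lies over $\rho\leq\varepsilon$''. This is false as stated (the negative ends are at $x_i,x_i'$ or at orbits on $\partial N$, i.e.\ at $\rho=1$), and in any case the real content of (6) is precisely that $\overline{v}_0^\sharp$ \emph{cannot} escape into $\op{int}(W_-)$, which does not follow from asymptotics alone. The paper proves this by a blocking-lemma argument in the style of \cite{CGH1}: one tracks the homology class of $C_\rho=\pi_{\overline{N}}(\overline{v}_0^\sharp)\cap T_\rho$ as $\rho$ increases from a small $\rho_1$ through $1-\varepsilon$ to $1+\varepsilon$, using that the exact equality $n^-(\overline{v}_0^\sharp)=p_1-p_0$ pins down the meridional winding at $\rho_1$ to be zero, and then using positivity of intersections with the Hamiltonian flow on $T_{1+\varepsilon}$ to show the curve cannot cross $\partial N$. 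Your invocation of ``tracking the asymptotics of $\overline{R}_0$'' and the negativity of the Morse-Bott family $\mathcal{N}$ does not supply this argument; negativity of $\mathcal{N}$ only prevents nontrivial positive ends at $\mathcal{N}$, not excursions of the interior of the curve into $N$. Without this torus-intersection step you cannot conclude that the negative ends of $\overline{v}_0^\sharp$ lie in $\{x_1,\dots,x_{2g},x_1',\dots,x_{2g}'\}$, nor that $\overline{v}_j^\sharp$, $j>0$, lands on $h^{r_j'}e^{r_j''}$.
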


\begin{proof}
(1) By Lemma~\ref{intersezione 1}, each negative end of $\overline{v}_j^\sharp$, for
$j>0$, which is asymptotic to a multiple of $\delta_0$ contributes at least $m-2g$ to
$\sum_{j=1}^a n^*(\overline{v}_j)$, where $m\gg 2g$, because the total multiplicity of
$\delta_0$ is $\leq 2g$.  If there are at least two such negative ends of
$\overline{v}_j^\sharp$, then the total contribution to $\sum_{j=1}^a
n^*(\overline{v}_j)$ is at least $2(m-2g)>m$, which is a contradiction.  Suppose $\overline{v}_{a'}^\sharp$, $0<a'\leq a$, has a negative end at a multiple at $\delta_0$.

(2) By Lemma~\ref{no closed components}, a fiber component of $\overline{u}_\infty$ is a fiber $\overline{\pi}^{-1}_{B_-}(p)$ which passes through $\overline{\frak m}$ and additionally contributes $m$ to $\sum_{j=-b}^a n^*(\overline{v}_j)$, a contradiction.

(3) By Lemma~\ref{lemma: cherimoya2}, a boundary point at $z_\infty$ additionally contributes $\geq 2g$ to $\sum_{j=-b}^a n^*(\overline{v}_j)$. This
is again a contradiction.

(4) By (3), $\overline{v}_j^\flat$ does not have any boundary point at $z_\infty$.  Since the ends of $\overline{v}_j^\flat$ are contained in $N$ or $[0,1]\times S$ and $n^*(\overline{v}_j^\flat)=0$, we conclude that $\op{Im}(\overline{v}_j^\flat)\subset W$, $W_-$, or $W$ by Lemmas~\ref{pippo}, \ref{lemma: restriction of overline W minus curve} and \ref{pluto}.

(5), (6) Let $p_j= \deg(\overline{v}_j')$ be the covering degree of $\overline{v}_j'$ over $\sigma_{\infty}'$. Then $p_0\leq p_1\leq \dots \leq p_{a'-1}$ and $p_{a'}=0$.  This follows from (1) since a negative end of $\overline{v}_{j+1}^\sharp$ at (a multiple of) $\delta_0$ is required for $p_j> p_{j+1}$.

Let $p_{a'}^-$ be the multiplicity of $\delta_0$ at the negative end of $\overline{v}_{a'}^\sharp$. The negative end of $\overline{v}_{a'}^\sharp$ contributes at least $m-p_{a'}^-$ to $n^*$ by Equation~\eqref{eqn: m minus m prime} and the positive ends of $\overline{v}_j^\sharp$, for $j=0,\dots,a'-1$,  give a total contribution of at least $p_{a'}^--p_0$ to $n^*$ by Equations~\eqref{eqn: m prime} and \eqref{four}. Hence,
\begin{equation} \label{two equations}
\sum_{j=0}^a n^*(\overline{v}_j^\sharp) \geq m-p_0.
\end{equation}
On the other hand, by Equation~\eqref{two}, the contributions of the positive ends of $\overline{v}_j^\sharp$, $j=-b,\dots,-1$, add up to
\begin{equation}\label{trallalla}
\sum_{j=-b}^{-1}n^*(\overline{v}_j^\sharp) \geq p_0.
\end{equation}

Equations~\eqref{two equations} and \eqref{trallalla} give:
\begin{equation} \label{eqn: sum of intersections greater than m}
\sum_{j=-b}^{a} n^*(\overline{v}_j^\sharp) \geq m.
\end{equation}
Equality holds by Equation~\eqref{eqn: m}. This in turn implies that:
\begin{enumerate}
\item[(i)] equality holds in both Equations~\eqref{two equations} and \eqref{trallalla}; and
\item[(ii)] $\overline{v}_j^\sharp$, $j\leq 0$, has no negative end which limits to $z_\infty$.
\end{enumerate}
Since $p_0 \le 2g$, each $\overline{v}_j^\sharp$, $j<0$, must be a union of thin strips by Lemma~\ref{intersezione 2}. This gives (5). Moreover, $\overline{v}_0^\sharp$ has $\delta_0^{p_1-p_0}$ at the positive end and no negative ends at $z_\infty$ by (ii) and
\begin{equation} \label{nutria}
n^-(\overline{v}_0^\sharp) = p_1 - p_0
\end{equation}
by Lemma~\ref{intersezione 3} and (i).

The argument to prove (6) is similar to the proof of the blocking lemma in \cite{CGH2}, and the presence of the Lagrangian boundary condition for $\overline{v}_0^\sharp$ does not change the proof in any essential way. We consider $C_{\rho}=\pi_{\overline{N}}(\overline{v}_0^\sharp)\cap T_{\rho}$, where $0<\rho<1$, as an element of $H_1(T_{\rho}, \pi_{\overline{N}}(L^-_{\overline{\bf a}})\cap T_{\rho})$; we are viewing $C_{\rho}$ as the boundary of the portion of $\pi_{\overline{N}}(\overline{v}_0^\sharp)$ outside the solid torus of radius $\rho$. We recall that $\pi_{\overline{N}}(L^-_{\overline{\bf a}})\cap T_{\rho}$ consists of $2g$ parallel segments in $T_{\rho}$ which are tangent to the Hamiltonian flow.  Suppose $\rho_1>0$ is small. Let $\cup_i\mathcal{E}_i$ be the union of positive ends of $\overline{v}_0^\sharp$ that limit to multiples of $\delta_0$, and let $C_{\rho_1}'=\pi_{\overline{N}}(\cup_i\mathcal{E}_i)\cap T_{\rho_1}$.  We view $C'_{\rho_1}$ as an element of
$$H_1(T_{\rho_1}) \hookrightarrow  H_1(T_{\rho_1}, \pi_{\overline{N}}(L^-_{\overline{\bf a}})\cap T_{\rho_1}).$$
Using ``balanced coordinates'' on $V =\overline{N}-int(N)$ (cf.\ Section~\ref{subsubsection: overline W pm}), we obtain an identification $H_1(T_{\rho_1}) \cong \Z^2$ such that --- writing vectors as rows --- $(1, 0)$ corresponds to the homology class of the meridian (i.e., the closed curve which bounds a disk in $V$) and $(1,m)$ corresponds to the class of a closed Hamiltonian orbit.

With respect to this identification, $C'_{\rho_1}=(q, p_1-p_0)$ for some $q\leq 0$ and we have
$$p_1-p_0 -qm = \det \left ( \begin{matrix} 1 & q \\ m & p_1-p_0 \end{matrix} \right )  \ge p_1-p_0  \ge 0.$$
By Equation~\eqref{nutria}, we obtain $q=0$ and $C_{\rho_1}=C_{\rho_1}'$, since any intersections besides those coming from $\cup_i\mathcal{E}_i$ would give some extra contributions to $n^-(\overline{v}_0^\sharp)$.

Next consider $C_{\rho_2}$, where $\rho_2=1-\varepsilon$, $\varepsilon>0$ small. Since there are no ends of $\overline{v}_0^\sharp$ between $T_{\rho_1}$ and $T_{\rho_2}$, it follows that  $C_{\rho_2}$ is the image of $(0, p_1 - p_0) \in H_1(T_{\rho_2})$ in $H_1(T_{\rho_2},\pi_{\overline{N}}(L^-_{\overline{\bf a}})\cap T_{\rho_2})$. Negative ends can approach $x_i$ or $x_i'$ either from the interior of $V$ or from the exterior of $V$. A negative end of $\overline{v}_0^\sharp$ approaching $x_i$ or $x_i'$ from the interior traces a segment in $T_{\rho_2}$ with both endpoints in the same connected component of $\pi_{\overline{N}}(L^-_{\overline{\bf a}})\cap T_{\rho_2}$ and whose relative homology class is the image of the class  $(0,1) \in H_1(T_{\rho_2})$.
This means that  at most $p_1-p_0$ ends of $\overline{v}_0^\sharp$ approach $x_i$ or $x_i'$ from the interior.

Finally let $\rho_3=1+\varepsilon$, $\varepsilon>0$ small. The ends of $\overline{v}_0^\sharp$ between $T_{\rho_2}$ and $T_{\rho_3}$ are negative ends at $x_i$, $x_i'$ or positive ends at closed orbits on $T_1$. 
Let $C'_{\rho_3}=\pi_{\overline{N}}(\widetilde{v})\cap T_{\rho_3}$, where $\widetilde{v}$ is obtained from $\overline{v}_0^\sharp$ by truncating the negative ends that limit to $x_i$ or $x_i'$ from the exterior of $V$. There are no positive ends of $\overline{v}_0^\sharp$ that limit to a closed orbit on $T_1$; this follows from asymptotic winding considerations of the positive ends and a comparison with $C_{\rho_2}$. Since there are no negative ends of $\overline{v}_0^\sharp$ that limit to a closed orbit on $T_1$, it follows that $[C'_{\rho_3}]=0 \in H_1(T_{\rho_3},\pi_{\overline{N}}(L^-_{\overline{\bf a}})\cap T_{\rho_3})$.

Finally, we eliminate the possibility of ends which limit to $x_i$ or $x_i'$ from the outside of $V$ by observing that nothing else can contribute to $C_{\rho_2}$ but the ends at $x_i$ or $x_i'$. This implies that there are $p_1-p_0$ of them, each representing the image of the class $(0,1)\in  H_1(T_{\rho_2})$, and leaving no room for ends limiting to $x_i$ or $x_i'$ from the outside. Then $C_{\rho_3}$ is homologically trivial, and therefore the positivity of intersections implies that it is empty. Since $\rho_3$ was arbitrarily close to $1$, it follows that the image of $\overline{v}_0^\sharp$ cannot escape $\overline{W}_- - int(W_-)$, which implies (6).

(7) This is similar to (6) and is left to the reader.
\end{proof}
\begin{rmk} \label{calcolo di alcuni indici}
One can easily compute that
$I(\overline{v}_0^\sharp)=r_0$ and  $I(\overline{v}_j^\sharp)= r_j' + 2 r_j''$ when $j>0$.
\end{rmk}

\begin{lemma}\label{claim in proof}
The ECH index of each level $\overline{v}_j$, $j=-b,\dots,a$, is nonnegative if $\overline{J}_-\in \mathcal{J}_{\overline{W}_-}^{reg}$. Moreover, if $j \ne 0$,
the ECH index of $\overline{v}_j$ is strictly positive.
\end{lemma}

\begin{proof}
Recall that the restrictions $\overline{J}$ and $\overline{J'}$ are also regular by definition.
\s \n {\em Case $j>0$.} By \cite[Proposition~7.15]{HT1}, $I_{ECH}(\overline{v}_j)\geq 1$ for $j>0$ since $\overline{J'}$ is regular and no level $\overline{v}_j$ with $j >0$ consists uniquely of trivial cylinders.

\s \n {\em Case $j<0$.} We write $\overline{v}_j=\overline{v}_j'\cup \overline{v}_j^\sharp\cup\overline{v}_j^\flat$.

Suppose that $\overline{v}_0'=\varnothing$.  Then $\overline{v}_j'=\varnothing$ and $\overline{v}_j^\sharp=\varnothing$ for all $j<0$ by Lemma~\ref{lemma: preliminary restrictions part 1}, and we are left with $\overline{v}_j^\flat$, which is simply-covered and has  positive Fredholm index by regularity and translation invariance. By the index inequality (Theorem~\ref{thm: index inequality for HF}), $I(\overline{v}_j)=I(\overline{v}_j^\flat)\geq 1$.

Next suppose that $\overline{v}_0'\not=\varnothing$. Then $\overline{v}_j^\sharp$ is a union of thin strips from $z_\infty$ to some $x_i$ or $x_i'$ by Lemma~\ref{lemma: preliminary restrictions part 2} and each thin strip has ECH index $1$. We also have $I(\overline{v}_j')=0$ by Lemma~\ref{lemma: HF index of sections at infinity} and $I(\overline{v}_j^\flat)\geq 0$ by the previous paragraph.

We claim that
\begin{equation} \label{sum of three}
I(\overline{v}_j)=I(\overline{v}_j'\cup\overline{v}_j^\sharp\cup\overline{v}_j^\flat)=I(\overline{v}_j')+I(\overline{v}_j^\sharp)+I(\overline{v}_j^\flat).
\end{equation}
Note that, although $\overline{v}_j'$, $\overline{v}_j^\sharp$ and $\overline{v}_j^\flat$ are disjoint, $\overline{v}_j'$ and $\overline{v}_j^\sharp$ are both asymptotic to (a multiple of) $z_\infty$ at the positive end and the additivity of the ECH indices of $\overline{v}_j'$ and $\overline{v}_j^\sharp$ needs to be verified.  For that purpose, recall from Section~\ref{subsection: modified indices at z infty} that each $\overline{v}_j'$ comes equipped with data $\mathcal{D}'_j=((\mathcal{D}')^{to}_j,(\mathcal{D}')^{from}_j)$ at the positive and negative ends, since $\overline{u}_\infty$ is the limit of the sequence $\{\overline{u}_i\}$. The key observation here is that $(\mathcal{D}')^{to}_j=(\mathcal{D}')^{from}_j$, since all the components of $\overline{v}_{j'}^\sharp$, $j'<j$, are thin strips whose data $(\mathcal{D}^\sharp_+)_{j'}=((\mathcal{D}^\sharp_+)^{to}_{j'},(\mathcal{D}^\sharp_+)^{from}_{j'})$ at the positive end satisfies $(\mathcal{D}^\sharp_+)^{to}_{j'}=(\mathcal{D}^\sharp_+)^{from}_{j'}$. Hence we can choose a simultaneous grooming ${\frak c}^+=\{c_k^+\}$ for both $\overline{v}'_j$ and $\overline{v}^\sharp_j$ at the positive end $z_\infty$ such that $c_k^+$ has winding number $w(c_k^+)=0$ (see Equation~\eqref{eq: winding number}) and connects $\overline{\hh}(\overline{a}_{i_k,j_k})$ to $\overline{a}_{i_k,j_k}$. If we choose a groomed multivalued trivialization $\tau$ compatible with ${\frak c}^+$, then
$$I_\tau(\overline{v}_j'\cup \overline{v}_j^\sharp)= I_\tau(\overline{v}_j') + I_\tau(\overline{v}_j^\sharp),$$
which immediately implies Equation~\eqref{sum of three}.

\s\n {\em Case $j=0$.} Suppose that $\overline{v}_0'=\varnothing$. Then, $\overline{v}_0$ is a $\overline{W}_-$-curve or a degenerate $\overline{W}_-$-curve by Lemma~\ref{lemma: preliminary restrictions part 1}. If $\overline{v}_0$ is a $\overline{W}_-$-curve, then it is simply-covered and satisfies $I(\overline{v}_0)\geq \op{ind}(\overline{v}_0)\geq 0$. If $\overline{v}_0$ is a degenerate $\overline{W}_-$-curve, then $\overline{v}_0$ can be written as a union of a fiber $C$ and a $W_-$-curve $\overline{v}_0^\flat$. The Fredholm index of $\overline{v}_0^\flat$ is nonnegative since $\overline{v}_0^\flat$ is simply-covered and hence is regular. The Fredholm index of $C$ is given by:
\begin{align*}
\op{ind}(C) &= -\chi(C)+ 2\langle c_1(TW_-),C\rangle\\
&=(2g-2)+2(2-2g)=2-2g.
\end{align*}
The algebraic intersection number $\langle C,\overline{v}_0^\flat\rangle$ is equal to $2g$ and
\begin{align} \label{persimmon}
I(\overline{v}_0)&\geq \op{ind}(C) +\op{ind}(\overline{v}_0^\flat)+2\langle C,\overline{v}_0^\flat\rangle\\
\notag &\geq (2-2g)+0 + 2(2g)=2g+2,
\end{align}
by Theorem~\ref{thm: index inequality for W+ and W-}.

Next suppose that $\overline{v}_0'\not=\varnothing$. We have $I(\overline{v}_0')=0$ by Lemma~\ref{lemma: ECH index of sections at infinity}. Next, by Lemma~\ref{lemma: preliminary restrictions part 2}, if $\overline{v}_0^\sharp\not=\varnothing$, then $\op{Im}(\overline{v}_0^\sharp)\subset \overline{W}_--int(W_-)$ and has $\delta_0^{p_1-p_0}$, $p_1-p_0>0$, at the positive end and some $(p_1-p_0)$-element subset of $\{x_1,\dots,x_{2g},x_1',\dots,x_{2g}'\}$ at the negative end. Hence $I(\overline{v}_0^\sharp)=p_1-p_0$ by Lemma~\ref{lemma: ECH index of thin wedges}.  Finally, since $\op{Im}(\overline{v}_0^\flat)\subset W_-$ by Lemma~\ref{lemma: preliminary restrictions part 2}, $\overline{v}_0^\flat$ is simply-covered and $I(\overline{v}_0^\flat)\geq 0$. The ECH indices of $\overline{v}_0'$, $\overline{v}_0^\sharp$, and $\overline{v}_0^\flat$ are additive.
\s
This completes the proof of the lemma.
\end{proof}

\subsection{Compactness theorem}
\label{subsection: compactness theorem, first version}

Let $\overline{J}_-^\Diamond(\varepsilon,\delta,p)$ be a generic almost complex structure which is $(\varepsilon,U)$-close to $\overline{J}_-\in \mathcal{J}_{\overline{W}_-}^{reg}$, where $U$ and $K_{p,2\delta}$ are as in Convention~\ref{convention}. We write:
\begin{equation}\label{strange notation}
\mathcal{M}^i_{\overline{\frak m}}(\varepsilon,\delta,p) := \mathcal{M}^{I=i,n^*=m}_{\overline{J}_-^\Diamond(\varepsilon, \delta,p)}(\boldsymbol{\gamma}, {\bf y}; \overline{\frak m}).
\end{equation}
As before, $\overline{u}_\infty \in \bdry \mathcal{M}_{\overline{\frak m}}^3(\varepsilon,\delta,p)$ is written as $\overline{u}_\infty=\overline{v}_{-b}\cup\dots\cup \overline{v}_a$ and each $\overline{v}_j$ is written as $\overline{v}_j=\overline{v}_j'\cup\overline{v}_j''=\overline{v}'_j\cup\overline{v}_j^\sharp\cup \overline{v}_j^\flat\cup\overline{v}_j^f$, where $\overline{v}_j^f=\varnothing$ for $j\not=0$. There are no ghost components by an index argument as in Lemma~\ref{lemma: no ghosts}.

We now prove the following compactness theorem, which is basically a consequence of two constraints: $I(\overline{u}_i)=3$ and $n^*(\overline{u}_i)=m$. The list of possibilities should be viewed as a preliminary list, since we will subsequently eliminate Cases (2)--(6) in Theorem~\ref{thm: complement}.

\begin{thm} \label{thm: compactness for W minus I=3, first version}
Let $\overline{J}_-^\Diamond$ be $(\varepsilon,U)$-close to $\overline{J}_-\in \mathcal{J}_{\overline{W}_-}^{reg}$ and $K_{p,2\delta}$-regular with respect to $\overline{\frak m}$, and let $\overline{u}_\infty \in \bdry \mathcal{M}_{\overline{\frak m}}^3(\varepsilon,\delta,p)$. If $\overline{v}_0'\not=\varnothing$, then $\overline{u}_\infty$ contains one of the following subbuildings:
\begin{enumerate}
\item A $3$-level building consisting of $\overline{v}_1$ with $I=2$ from $\boldsymbol{\gamma}$ to $\delta_0\boldsymbol{\gamma}'$; $\overline{v}_0'=\sigma_\infty^-$; and $\overline{v}_{-1}^\sharp$ which is a thin strip from $z_\infty$ to $x_i$ or $x_i'$.
\item A $3$-level building consisting of $\overline{v}_1$ with $I=1$ from some $\boldsymbol{\gamma}''$ to $\delta_0\boldsymbol{\gamma}'$; $\overline{v}_0'=\sigma_\infty^-$; and $\overline{v}_{-1}^\sharp$ which is a thin strip.
\item A $4$-level building consisting of $\overline{v}_1$ with $I=1$ from $\boldsymbol{\gamma}$ to $\delta^2_0\boldsymbol{\gamma}'$; $\overline{v}_0'$ which is a branched double cover of $\sigma_\infty^-$; $\overline{v}_{-1}'=\sigma_\infty$; and $\overline{v}_{-1}^\sharp$ and $\overline{v}_{-2}^\sharp$ which are both thin strips.
\item A $3$-level building consisting of $\overline{v}_1$ with $I=1$ from $\boldsymbol{\gamma}$ to $\delta^2_0\boldsymbol{\gamma}'$; $\overline{v}_0'$ which is a branched double cover of $\sigma_\infty^-$; and $\overline{v}_{-1}^\sharp$ which is the union of two thin strips.
\item A $3$-level building consisting of $\overline{v}_1$ with $I=1$ from $\boldsymbol{\gamma}$ to $\delta^2_0\boldsymbol{\gamma}'$; $\overline{v}_0'=\sigma_\infty^-$; $\overline{v}_0^\sharp$ with $I=1$ from $\delta_0$ to $x_i$ or $x_i'$; and $\overline{v}_{-1}^\sharp$ which is a thin strip.
\item A $4$-level building consisting of $\overline{v}_2$ with $I=1$ from $\boldsymbol{\gamma}$ to $\delta^2_0\boldsymbol{\gamma}'$; $\overline{v}_1'=\sigma'_\infty$; $\overline{v}_1^\sharp$ with $I=1$ which is a cylinder from $\delta_0$ to $h$; $\overline{v}_0'=\sigma_\infty^-$; and $\overline{v}_{-1}^\sharp$ which is a thin strip.
\end{enumerate}
Here we are omitting levels which are connectors.
\end{thm}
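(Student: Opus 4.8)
The plan is to combine the SFT compactness theorem (Proposition~\ref{prop: SFT compactness for W minus}) with the accumulated structural restrictions on the limit building from Section~\ref{some restrictions, first version}, together with the additivity of the ECH index, to enumerate all possibilities. So fix a sequence $\overline{u}_i\in\mathcal{M}^3_{\overline{\frak m}}(\varepsilon,\delta,p)$ and let $\overline{u}_\infty=\overline{v}_{-b}\cup\dots\cup\overline{v}_a$ be its SFT limit, and assume $\overline{v}_0'\neq\varnothing$. First I would invoke Lemma~\ref{lemma: preliminary restrictions part 2}: since $\overline{v}_0'\neq\varnothing$, this lemma already tells us that there are no non-ghost fiber components, that no $\overline{v}_j''$ with $j\le 0$ has a puncture removable at $z_\infty$, that $\overline{v}_j^\flat$ maps into $W$, $W_-$ or $W'$, that $\overline{v}_j^\sharp$ for $j<0$ is a union of thin strips from $z_\infty$ to some $x_i$ or $x_i'$, and that $\overline{v}_0^\sharp$ and the $\overline{v}_j^\sharp$ for $0<j\le a'$ have the stated shapes (with $\overline{v}_0^\sharp$ carrying $\delta_0^{r_0}$ at the top and $p$ of the $x_i,x_i'$ at the bottom, and $\overline{v}_j^\sharp$ carrying $\delta_0^{r_j}$ at the top and $h^{r_j'}e^{r_j''}$ at the bottom). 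Remark~\ref{calcolo di alcuni indici} supplies the ECH indices $I(\overline{v}_0^\sharp)=r_0$ and $I(\overline{v}_j^\sharp)=r_j'+2r_j''$.

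Next I would set up the bookkeeping of the ECH index. By the additivity of $I$ (the analog of Lemma~\ref{lemma: additivity part 2}, together with Lemma~\ref{lemma: HF index of sections at infinity}, Lemma~\ref{lemma: ECH index of sections at infinity} and Lemma~\ref{lemma: ECH index of thin wedges} applied to the branched covers of $\sigma_\infty^*$ and the thin strips), every level and every sub-piece contributes a nonnegative amount to $I(\overline{u}_i)=3$, where nonnegativity of $I(\overline{v}_j^\flat)$ and $I(\overline{v}_j)$ for $j\ge 0$ comes from Lemma~\ref{claim in proof}. So we must distribute a total ECH index of $3$ among: the HF-end curves $\overline{v}_j$, $j<0$ (each $\overline{v}_j^\flat$ with $I\ge 0$, and each thin strip contributing exactly $1$); the curve-at-infinity pieces $\overline{v}_j'$ (each with $I=0$); $\overline{v}_0^\sharp$ and $\overline{v}_0^\flat$; and the ECH-end curves $\overline{v}_j$, $j>0$. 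Since the marked point $\overline{\frak m}$ lies on $\sigma_\infty^-$ and $\overline{v}_0'\neq\varnothing$ accounts for it, the point constraint has already been used and does not further reduce the count; but the genericity/regularity of $\overline{J}_-$, $\overline{J}$, $\overline{J'}$ (and the $K_{p,2\delta}$-regularity of $\overline{J}_-^\Diamond$) forces each non-trivial $\overline{v}_j^\flat$ and each $\overline{v}_j$, $j>0$, to have strictly positive ECH index unless it is a union of trivial cylinders/strips (which are elided). I would then note that the covering degree $p_j=\deg(\overline{v}_j')$ satisfies $p_0\le p_1\le\dots\le p_{a'-1}$ and $p_{a'}=0$, and that $\sum_{j<0} n^*(\overline{v}_j^\sharp)=p_0$ from the proof of Lemma~\ref{lemma: preliminary restrictions part 2}; this pins down the number of thin strips below in terms of $p_0$. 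Case by case: $p_0=0$ gives Cases (1) and (2) (one level of ECH index $2$, resp.\ $1$, feeding $\delta_0\gamma'$, $\overline{v}_0'=\sigma_\infty^-$, one thin strip below, the missing index absorbed by the $I=1$ thin strip and possibly an $I=1$ curve $\overline{v}_1$ from $\gamma''$ to $\delta_0\gamma'$); $p_0=0$, $p_1=2$ with $\overline{v}_0^\sharp=\varnothing$ gives Cases (3) and (4) (double cover of $\sigma_\infty^-$, two thin strips below, with an extra $\overline{v}_{-1}'=\sigma_\infty$ level present or not); $p_0=0$, $p_1=2$ with $\overline{v}_0^\sharp\neq\varnothing$ of index $1$ gives Case (5); and a non-trivial $\overline{v}_1^\sharp$ which is an $I=1$ cylinder from $\delta_0$ to $h$ gives Case (6). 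In each case the remaining index budget forces the ECH-end curve to have the stated index, because the three $I$-units are consumed as (thin strip: $1$) $+$ (thin strip or $\overline{v}_0^\sharp$ or $\overline{v}_1^\sharp$: $1$) $+$ (ECH-end curve: $1$ or $2$), and everything with $I=0$ must be a branched cover of a section at infinity.

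The main obstacle I anticipate is the clean combinatorial enumeration, specifically ruling out configurations that are \emph{a priori} consistent with the index and intersection-number constraints but which do not occur: e.g.\ buildings with three separate thin strips, or with two non-trivial ECH-end levels, or with $\overline{v}_0^\flat\neq\varnothing$. Ruling these out requires carefully combining (i) the equality case of Equation~\eqref{eqn: sum of intersections greater than m} (which forces $\sum_{j<0}n^*(\overline{v}_j^\sharp)=p_0\le 2g$ and hence caps the number of thin strips by the relevant covering degrees), (ii) the fact that any non-trivial $\overline{v}_j^\flat$ or $\overline{v}_j$, $j>0$, has strictly positive index (so at most three such pieces total, and the index-$2$ possibility only when there is exactly one thin strip and nothing else), and (iii) that the monodromy $\overline{h}_m$ with $m\gg 0$ forces $\overline{v}_0^\sharp$, when nonempty, to have exactly $\delta_0^{p_1-p_0}$ at the top by the blocking argument in the proof of Lemma~\ref{lemma: preliminary restrictions part 2}. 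I would organize this as a short finite check: enumerate the value of $p_0\in\{0,1,2,\dots\}$ (bounded since $p_0\le 2g$ and the thin strips each cost $1$ in ECH index, so $p_0\le 3$ in fact $p_0\le 2$), then for each $p_0$ enumerate where the remaining index lives, discarding any configuration that violates regularity (a level of index $0$ that is not a section at infinity) or the intersection-number equalities. This bookkeeping, while elementary, is where essentially all the work of the proof lies; the analytic inputs (SFT compactness, the intersection lemmas, the index lemmas) are all already established earlier in the section.
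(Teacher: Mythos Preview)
Your overall strategy is correct and matches the paper's: invoke SFT compactness, apply Lemma~\ref{lemma: preliminary restrictions part 2} and Lemma~\ref{claim in proof} to get the structural constraints and nonnegativity of $I(\overline{v}_j)$, then distribute the total ECH index $3$ among the pieces using additivity. The paper's proof uses exactly these ingredients.

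However, your case enumeration contains a bookkeeping error. You write ``$p_0=0$ gives Cases (1) and (2)'' and ``$p_0=0$, $p_1=2$ \dots gives Cases (3) and (4)'', but by hypothesis $\overline{v}_0'\neq\varnothing$, so $p_0=\deg(\overline{v}_0')\geq 1$ always. In fact $p_0=1$ in Cases (1), (2), (5), (6) (where $\overline{v}_0'=\sigma_\infty^-$) and $p_0=2$ in Cases (3), (4) (where $\overline{v}_0'$ is a branched double cover). Since you correctly quote that the number of thin strips below equals $p_0$, your own framework is internally inconsistent with the values you then assign. The paper avoids this by organizing the case split not just by $p_0$ but by the pair $(p_{a'}^-,p_0)$, where $p_{a'}^-$ is the multiplicity of $\delta_0$ at the unique negative end of $\overline{v}_{a'}^\sharp$; the key observation is that $p_{a'}^-\leq 2$ since each nontrivial curve with a positive end at a cover of $\delta_0$ has $I\geq 1$, and $p_0\leq p_{a'}^-$. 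The three cases $(p_{a'}^-,p_0)=(1,1)$, $(2,1)$, $(2,2)$ then yield Cases (1)--(2), (5)--(6), and (3)--(4) respectively. Once you correct the values of $p_0$ and add $p_{a'}^-$ as a second case variable, your argument goes through and is essentially the paper's.
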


See Figure~\ref{figure: graphs1bis}.

\begin{figure}[ht]
\begin{center}
\psfragscanon
\psfrag{A}{\tiny $\R\times\overline{N}$}
\psfrag{B}{\tiny $\overline{W}_-$}
\psfrag{C}{\tiny $\overline{W}$}
\psfrag{1}{\tiny $1$}
\psfrag{2}{\tiny $2$}
\psfrag{0}{\tiny $0$}
\psfrag{a}{\small (1)}
\psfrag{b}{\small (2)}
\psfrag{c}{\small (3)}
\psfrag{d}{\small (4)}
\psfrag{e}{\small (5)}
\psfrag{f}{\small (6)}
\includegraphics[width=11cm]{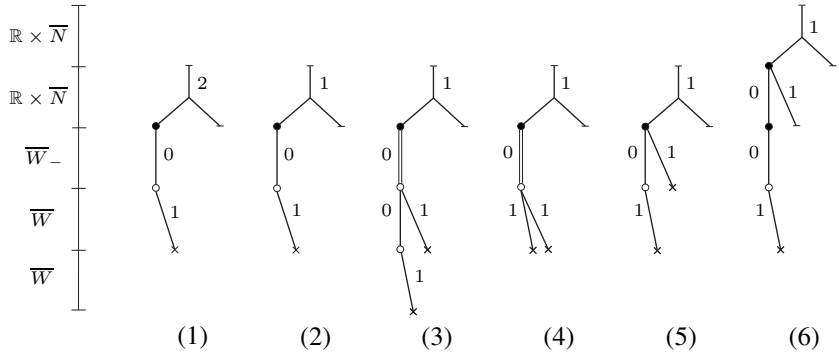}
\end{center}
\vskip.2in
\caption{Schematic diagrams for the possible types of degenerations. Here $\bullet$ represents $\delta_0$, $\circ$ represents $z_\infty$, and $\times$ represents some $x_i$ or $x_i'$. A vertical line indicates a trivial cylinder or a restriction of a trivial cylinder, and a double vertical line indicates a branched double cover of a trivial cylinder or a restriction of a trivial cylinder. The labels on the graphs are ECH indices of each component.} \label{figure: graphs1bis}
\end{figure}

\begin{proof}
By Proposition~\ref{prop: SFT compactness for W minus}, $\overline{u}_i$ converges in the sense of SFT to a holomorphic building $\overline u_\infty= \overline v_a\cup\dots\cup \overline v_{-b}$. We have three constraints:
\begin{enumerate}
\item[(i)] $\sum_{j=-b}^a I(\overline{v}_j)=3$,
\item[(ii)] $I(\overline{v}_j)\geq 0$ for all $j$, and
\item[(iii)] $\sum_{j=-b}^a n^*(\overline{v}_j)=m$.
\end{enumerate}
(i) comes from the additivity of ECH indices, (ii) comes from Lemma~\ref{claim in proof}, and (iii) comes from Equation~\eqref{eqn: m}.

Suppose that $\overline{v}'_0\not=\varnothing$, i.e., we are in the situation of Lemma~\ref{lemma: preliminary restrictions part 2}.  We have the following restrictions:
\begin{itemize}
\item the top level $\overline{v}_a$ is nontrivial and satisfies $I(\overline{v}_a)\geq 1$;
\item $\cup_{j<0}\overline{v}_j^\sharp$ consists of $p_0 \geq 1$ thin strips and contributes $\sum_{j<0} I(\overline{v}_j^\sharp)=p_0 \geq 1$ to the total ECH index;
\item $\cup_{j=-b}^a \overline{v}_j^\sharp$ consists of $p_{a'}^-$ components and each component has ECH index $\geq 1$, with exception of  $\overline{v}_0^\sharp$, which has ECH index $0$, where $a'$ and $p_{a'}^-$ are as in Lemma~\ref{lemma: preliminary restrictions part 2}.
\end{itemize}
This immediately implies $p_{a'}^- \le 2$ because $p_{a'}^-$ is also the number of
nontrivial curves with a positive end at $\delta_0$ and each of them has ECH index
$I \ge 1$ by Lemma~\ref{lemma: preliminary restrictions part 2}. We also have $p_0
\le p_{a'}^-$, and therefore we can divide the proof into three cases:
\begin{itemize}
\item Case I: \  $p_{a'}^- = p_0 =1$.
\item Case II': \  $p_{a'}^- = 2$ and $p_0=1$.
\item Case II'': \ $p_{a'}^- = 2$ and $p_0=2$.
\end{itemize}

\s\n
{\em Case I.} In this case $\overline{v}_0' = \sigma_{\infty}^-$ and
$\overline{v}_{-1}^\sharp$ is a thin strip. This leaves two possibilities for
$\overline{v}_1$: either $I(\overline{v}_1)=2$ and we are in Case (1), or
$I(\overline{v}_1)=1$ and we are in Case (2).

\s\n
{\em Case II'.}
In this case we have $\overline{v}_{j_0}^\sharp \ne \emptyset$ and with a positive end at $\delta_0$ for some $j_0 \ge 0$. Since $I(\overline{v}_{j_0}^\sharp)=1$  by Remark~\ref{calcolo di alcuni indici},  $\overline{v}_{-1}$ consists of a single thin strip and there are no other levels with $j<0$. If $j_0=0$ we are in Case (5), and if $j_0>0$ we are in Case (6). In Case (6), $\overline{v}_{1}^\sharp$ is a cylinder connecting $\delta_0$ with $h$ by Lemma~\ref{lemma: preliminary restrictions part 2} and
Remark~\ref{calcolo di alcuni indici}.

\s\n
{\em Case II''.}
In this case $\cup_{j<0} \overline{v}_j$ consists of two thin strips. If they are on the same level we are in Case (4) and if they are on different levels we are in Case (3).

\s
This completes the proof of Theorem~\ref{thm: compactness for W minus I=3, first
version}.
\end{proof}

\begin{rmk} \label{rmk: number of branch points}
In Cases (3)--(6), the total number of branch points of $\cup_{j=-b}^a \overline{v}'_j$ is one, where we are not ignoring connector components that cover $\sigma_\infty^*$: Assume without loss of generality that the only nontrivial $\overline{v}'_j$ is $\overline{v}'_0$ and that $\overline{v}'_0$ double covers $\sigma_\infty^-$. Let ${\frak b}$ be the number of branch points of $\overline{v}'_0$. Then $\op{ind}(\overline{v}'_0) = {\frak b}-1$ by Proposition \ref{prop: Fredholm index for W minus}, the Riemann-Hurwitz formula, and the proof of Lemma~\ref{lemma: regularity of curve at infinity}. The index inequality, the additivity of the indices, and the condition $I(\overline{u}_i)=3$ force ${\frak b}=1$.
\end{rmk}

The proof of the following theorem is similar and will be omitted.

\begin{thm}  \label{thm: compactness for W minus I=2, first version}
Let $\overline{J}_-^\Diamond$ be $(\varepsilon,U)$-close to $\overline{J}_-\in \mathcal{J}_{\overline{W}_-}^{reg}$ and $K_{p,2\delta}$-regular with respect to $\overline{\frak m}$, and let $\overline{u}_\infty \in \bdry \mathcal{M}_{\overline{\frak m}}^2(\varepsilon,\delta,p)$. If $\overline{v}_0'\not=\varnothing$, then $\overline{u}_\infty$ contains a $3$-level subbuilding consisting of $\overline{v}_1$ with $I=1$ from $\boldsymbol{\gamma}$ to $\delta_0\boldsymbol{\gamma}'$; $\overline{v}_0'=\sigma_\infty^-$; and $\overline{v}_{-1}^\sharp$ which is a thin strip. Here it is possible to have $I_{ECH}=0$ connectors in between.
\end{thm}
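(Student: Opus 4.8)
The plan is to mirror the proof of Theorem~\ref{thm: compactness for W minus I=3, first version}, simply running the same three constraints with the numerical input $\sum_j I(\overline{v}_j)=2$ instead of $3$. First I would invoke Proposition~\ref{prop: SFT compactness for W minus} to obtain the SFT limit $\overline{u}_\infty=\overline{v}_{-b}\cup\dots\cup\overline{v}_a$ of the sequence $\overline{u}_i\in\mathcal{M}^2_{\overline{\frak m}}(\varepsilon,\delta,p)$, and note that all the structural lemmas of Sections~\ref{subsection: intersection numbers}--\ref{some restrictions, first version} apply verbatim since they depend only on the constraint $n^*(\overline{u}_i)=m$ (together with the decomposition $\overline{v}_j=\overline{v}_j'\cup\overline{v}_j^\sharp\cup\overline{v}_j^\flat$) and not on the particular value of $I(\overline{u}_i)$. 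In particular, assuming $\overline{v}_0'\not=\varnothing$, we are in the setting of Lemma~\ref{lemma: preliminary restrictions part 2}, so $\cup_{j<0}\overline{v}_j^\sharp$ is a union of $p_0\ge 1$ thin strips contributing $\sum_{j<0}I(\overline{v}_j^\sharp)=p_0$ to the total ECH index, and the number $p_{a'}^-$ of nontrivial curves with a positive end at $\delta_0$ satisfies $p_0\le p_{a'}^-$ and, since each such curve has ECH index $\ge 1$ by Lemma~\ref{lemma: preliminary restrictions part 2} and Remark~\ref{calcolo di alcuni indici}, also $p_{a'}^-\le I(\overline{u}_i)=2$.

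Next I would run the case analysis, now constrained by total ECH index $2$. As in the previous proof, $\overline{v}_a$ is nontrivial with $I(\overline{v}_a)\ge 1$, and the $p_0$ thin strips below contribute $p_0\ge 1$. Since every nontrivial curve with a positive $\delta_0$-end has $I\ge 1$, the case $p_{a'}^-=2$ would force at least two such curves, hence ECH index $\ge 2$ from those alone, leaving nothing for the $p_0\ge 1$ thin strips or for $\overline{v}_0^\sharp$; this contradicts total index $2$ unless all of $p_0$, the thin strips, and the extra curves somehow collapse, which Lemma~\ref{lemma: preliminary restrictions part 2} forbids (a thin strip has $I=1$, not $0$). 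Therefore $p_{a'}^-=p_0=1$, i.e.\ we are in Case~I of the earlier proof: $\overline{v}_0'=\sigma_\infty^-$ and $\overline{v}_{-1}^\sharp$ is a single thin strip of ECH index $1$. Since the thin strip already uses one unit of index, the remaining index available for the part of $\overline{u}_\infty$ above $\overline{v}_0$ is exactly $1$, and by Lemma~\ref{lemma: preliminary restrictions part 2}(1) this must be concentrated in a single level $\overline{v}_1$ with a negative end at $\delta_0$, giving $I(\overline{v}_1)=1$ from $\gamma$ to $\delta_0\gamma'$; there is no room for a nontrivial $\overline{v}_0^\sharp$ (which would also cost $\ge 1$) nor for any extra non-fiber levels besides connectors. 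This yields precisely the claimed $3$-level subbuilding, with $I_{ECH}=0$ connectors permitted in between as in the earlier statement.

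I would also record, as in Remark~\ref{rmk: number of branch points}, that $\overline{v}_0'$ must be an \emph{unbranched} cover of $\sigma_\infty^-$, hence genuinely equals $\sigma_\infty^-$ (degree one): a branch point would raise the Fredholm index of $\overline{v}_0'$ by Proposition~\ref{prop: Fredholm index for W minus} and the Riemann--Hurwitz formula, and the index inequality together with the total constraint $I(\overline{u}_i)=2$ rules this out. The argument that boundary punctures removable at $z_\infty$ and fiber components other than the one through $\overline{\frak m}$ cannot occur is exactly Lemma~\ref{lemma: preliminary restrictions part 2}(2),(3) (or Lemma~\ref{lemma: preliminary restrictions part 1} if $\overline{v}_0'=\varnothing$, which is the excluded case here). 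The only point requiring a little care — the main obstacle, such as it is — is making sure the bookkeeping of connector levels and of the decomposition $\overline{v}_j=\overline{v}_j'\cup\overline{v}_j^\sharp\cup\overline{v}_j^\flat$ does not hide an alternative configuration; but this is handled just as in Theorem~\ref{thm: compactness for W minus I=3, first version}, since the numerics with total index $2$ are strictly more restrictive than with total index $3$ and simply eliminate all of Cases~(2)--(6) there, leaving only the analog of Case~(1) with $I(\overline{v}_1)$ dropped from $2$ to $1$.
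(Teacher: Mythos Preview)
Your proof is correct and follows exactly the approach the paper indicates: it explicitly says the proof is similar to that of Theorem~\ref{thm: compactness for W minus I=3, first version} and omits it, and you have carried out precisely that case analysis with the total ECH index reduced from $3$ to $2$, correctly forcing $p_{a'}^-=p_0=1$ and hence the unique Case~I configuration. The only superfluous remark is the unbranched-cover paragraph, since $p_0=1$ already gives $\overline{v}_0'=\sigma_\infty^-$ directly; otherwise the argument is exactly what the paper intends.
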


\subsection{Asymptotic eigenfunctions}
\label{subsection: asymptotic eigenfunctions}

We now collect some facts about the asymptotic operator and asymptotic eigenfunctions, referring the reader to \cite{HWZ1} and \cite{HT2}.

\subsubsection{The asymptotic operator}

We study the local behavior of a holomorphic half-cylinder which converges to a degree $l \ge 1$ multiple cover of $\delta_0$, denoted by $\delta_0^l$. Let us denote $D^2_{\rho_0}=\{\rho\leq \rho_0\}\subset D^2$ with $\rho_0>0$ small (in particular $\leq {1\over 2}$) and $K= [C, + \infty) \times (\R/2l\Z)$ with $l\in\N$. We will be using balanced coordinates on $(\R/2\Z)\times D^2_{\rho_0}$; see
Section~\ref{subsubsection: overline W pm}.  A holomorphic half-cylinder which is asymptotic to $\delta_0^l$ at the positive end restricts to a holomorphic map
$$\overline{u}: K \to \R\times (\R/2\Z)\times D^2_{\rho_0},$$ which can be written as:
$$(s,t)\mapsto (s,t,z(s,t)),$$
where $\lim \limits_{s \to + \infty} z(s,t) = 0$.
\begin{lemma}
The function $z : K \to D^2_{\rho_0}$ satisfies the equation
\begin{equation}\label{eqn: holomorphic on V}
\bdry_s z + i\bdry_t z +\varepsilon z=0,
\end{equation}
where $\varepsilon={\pi\over m}$.
\end{lemma}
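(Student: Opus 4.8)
The plan is to derive the stated equation by a direct local computation, using the explicit form of the stable Hamiltonian structure in balanced coordinates near $\delta_0$. Recall from Section~\ref{subsubsection: overline W pm} that on $V=\overline N - \op{int}(N) = D^2\times(\R/2\Z)$ the Hamiltonian vector field $\overline R_0$, written in balanced coordinates, equals $\bdry_t + \tfrac{\nu(\rho)}{2}\bdry_\phi$, and that for $\rho\le \tfrac12$ we have $\nu(\rho)=\tfrac{2\pi}{m}$ (using $\overline h = \overline h_m$). So in the region $D_{\rho_0}\times(\R/2\Z)$ with $\rho_0\le\tfrac12$, the Reeb/Hamiltonian direction is $\bdry_t + \tfrac{\pi}{m}\bdry_\phi$, i.e. a constant-coefficient rotational vector field; the $2$-plane field $\ker\overline\alpha_0 = T\overline S$ is spanned by $\bdry_\rho$ and $\bdry_\phi$ (equivalently, by $\bdry_x,\bdry_y$ if we write $z=x+iy=\rho e^{i\phi}$), and the associated almost complex structure $\overline J'$ on $\R\times\overline N$ is, by Definition~\ref{defn: almost complex structures on R times overline N} (condition (1)), the standard complex structure on the disk factor in this region: $\overline J'(\bdry_s)=\overline R_0$, $\overline J'|_{T\overline S}=j_{D^2}$.

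First I would write out the Cauchy--Riemann equation $d\overline u\circ j = \overline J'\circ d\overline u$ for $\overline u(s,t)=(s,t,z(s,t))$ on the strip $K=[C,\infty)\times(\R/2l\Z)$, where $j$ is the standard complex structure $\bdry_s\mapsto\bdry_t$ on $K$. Applying $d\overline u$ to $\bdry_s$ gives $\bdry_s + \bdry_s z$ (the last term a vector in $T\overline S$), and applying it to $\bdry_t$ gives $\bdry_t + \bdry_t z$. The equation $d\overline u(\bdry_t) = \overline J' \, d\overline u(\bdry_s)$ then reads $\bdry_t + \bdry_t z = \overline J'(\bdry_s + \bdry_s z) = \overline R_0 + j_{D^2}(\bdry_s z) = \bdry_t + \tfrac{\pi}{m}\bdry_\phi + i\,\bdry_s z$. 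Cancelling $\bdry_t$ and comparing the $T\overline S$-components yields $\bdry_t z = \tfrac{\pi}{m}\bdry_\phi + i\,\bdry_s z$. The only point requiring a small amount of care is the identification of the vector field $\tfrac{\pi}{m}\bdry_\phi$ at the point $z=\rho e^{i\phi}$ with the complex number $i\tfrac{\pi}{m}z$: indeed $\bdry_\phi = \tfrac{\bdry}{\bdry\phi}(\rho e^{i\phi}) = i\rho e^{i\phi} = iz$, so $\tfrac{\pi}{m}\bdry_\phi \leftrightarrow \tfrac{\pi}{m}\, iz$. Substituting and rearranging gives $i\,\bdry_s z = \bdry_t z - i\tfrac{\pi}{m}z$, i.e. $\bdry_s z = -i\,\bdry_t z + \tfrac{\pi}{m}z \cdot(-i)\cdot i$; multiplying through by $i$ carefully one obtains $\bdry_s z + i\,\bdry_t z + \tfrac{\pi}{m} z = 0$, which is Equation~\eqref{eqn: holomorphic on V} with $\varepsilon=\tfrac{\pi}{m}$.

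The main (mild) obstacle is bookkeeping the signs and the various identifications — between the balanced coordinates $(\phi,t)$ and the Reeb coordinates, between $\bdry_\phi$ and multiplication by $iz$, and the orientation conventions baked into $\overline J'$ — so that the coefficient comes out exactly $+\tfrac{\pi}{m}$ rather than $\pm$ something. I would double-check this against the limit condition $\lim_{s\to+\infty} z(s,t)=0$: the solutions of $\bdry_s z + i\bdry_t z + \varepsilon z = 0$ that decay as $s\to+\infty$ are spanned by $e^{-\lambda s} f(t)$ with $f$ an eigenfunction of the asymptotic operator $-i\bdry_t - \varepsilon$ of eigenvalue $-\lambda<0$ (so that the exponential decays), which is consistent with $\delta_0$ being a positive asymptotic limit; this sanity check pins down the sign. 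No deep input is needed beyond the explicit formulas already set up in Sections~\ref{subsubsection: overline W pm} and the definition of $\overline{\mathcal{J}'}$; the whole argument is a one-paragraph local computation once those conventions are unwound.
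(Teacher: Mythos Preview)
Your proposal is correct and follows essentially the same approach as the paper: both compute $\overline{J'}(\bdry_s\overline u)=\overline R_0 + i\,\bdry_s z = \bdry_t + \varepsilon\,\bdry_\phi + i\,\bdry_s z$, identify $\bdry_\phi$ with $iz$, and equate with $\bdry_t\overline u$ to obtain $\bdry_t z = i\varepsilon z + i\,\bdry_s z$, which rearranges to the claimed equation. The paper's write-up is terser but the computation is identical; your extra sanity check on the sign via decay as $s\to+\infty$ is a harmless bonus.
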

\begin{proof}
The partial derivatives of $\overline{u}$ are: $\bdry_s\overline{u}=(1,0,\bdry_s z)$ and $\bdry_t\overline{u}=(0,1,\bdry_t z)$. Then $J (\bdry_s\overline{u})= R+ (0,0,i\bdry_s z)$, where $R$ is the Hamiltonian vector field. We compute that $R=\bdry_t + \varepsilon \bdry_\phi$. Hence $J (\bdry_s\overline{u})= (0,1,i\bdry_s z + i\varepsilon z)$.  This gives Equation~\eqref{eqn: holomorphic on V}.
\end{proof}

\begin{defn}
We define the {\em asymptotic operator}
$$A_l: L^2_1(\R/2l\Z,\C) \to L^2(\R/2l\Z,\C),$$
$$f\mapsto -i\bdry_t f-\varepsilon f.$$
\end{defn}
We remark that the asymptotic operator which appears in \cite{HWZ1} is $A_l$, whereas the asymptotic operator in \cite{HT2} is $-A_l$. The eigenfunctions of $A_l$ are the {\em asymptotic eigenfunctions}, and are given by $ce^{\pi int/l}$, $c\in \C$, $n\in \Z$, with corresponding eigenvalues ${\pi n\over l}-\varepsilon$.  An asymptotic eigenfunction $ce^{\pi int/l}$ is said to be {\em normalized} if $|c|=1$.  Let $E_{\pi n/l -\varepsilon}$ be the eigenspace of $A_l$ corresponding to the eigenvalue ${\pi n\over l}-\varepsilon$.

For a strip-like end asymptotic to the intersection point $z_{\infty}$, the asymptotic operator is still $f\mapsto -i\bdry_t f-\varepsilon f$, but now acting on functions $f: [0,1] \to \C$ with boundary conditions $f(0) \in e^{i (c_0+\varepsilon)} \R$ and $f(1) \in e^{ic_0}\R$ for some real constant $c_0$. The eigenfunctions are $ce^{(\pi n - \varepsilon)it + i(c_0+\varepsilon)}$, $c \in \R$, $n \in \Z$, with corresponding eigenvalues $\pi n-2\varepsilon$. As before we say that an eigenfunction is {\em normalized} if $|c|=1$.

\subsubsection{The asymptotic eigenfunction at an end}
\label{subsubsection: asymptotic eigenfunction at an end}

Let $\boldsymbol{\gamma}\in \widehat{\mathcal{O}}_{2g}$ and $\boldsymbol{\gamma}'\in \widehat{\mathcal{O}}_{2g-l}$.  As before, the modifier $*$ is placed as in $\mathcal{M}_{\overline{J}_-}^{*}(\boldsymbol{\gamma},\delta_0^l\boldsymbol{\gamma}')$ to denote the subset of $\mathcal{M}_{\overline{J}_-}(\boldsymbol{\gamma}_0,\delta_0^l\boldsymbol{\gamma}')$ satisfying $*$. The modifier $*=(l_1,\dots,l_\lambda)$ means $(l_1,\dots,l_\lambda)$ is a partition of $l$ and we restrict to curves with $\lambda$ ends at $\delta_0$ with covering multiplicities $l_1,\dots,l_\lambda$.
\nom[2l]{$*=(l_1,\dots,l_\lambda)$}{Modifier ``curve with $\lambda$ ends at $\delta_0$ with covering multiplicities $l_1,\dots,l_\lambda$''}

We consider the asymptotics of $\overline{u}\in \mathcal{M}_{\overline{J'}}^{(l_1,\dots,l_\lambda)}(\boldsymbol{\gamma},\delta_0^l\boldsymbol{\gamma}')$ near the negative end $\delta_0^{l_j}$. Let
$$\overline{\pi}_{D^2_{\rho_0}}: \R\times (\R/2\Z)\times D^2_{\rho_0}\to D^2_{\rho_0}$$
be the projection to $D^2_{\rho_0}$ with respect to balanced coordinates; we also write $\overline\pi$ for $\overline{\pi}_{D^2_{\rho_0}}$. Also let
$$z_j=\overline{\pi}_{D^2_{\rho_0}}\circ \overline{u}:(-\infty,s_0] \times (\R/2l_j\Z)\to D^2_{\rho_0}$$
be the projection of the negative end of $\overline{u}$ which corresponds to $\delta_0^{l_j}$.
\nom[1p$\pi$9]{$\overline{\pi}_{D^2_{\rho_0}}: \R\times (\R/2\Z)\times D^2_{\rho_0}\to D^2_{\rho_0}$, also $\overline{\pi}$}{Projection with respect to balanced coordinates}
The following asymptotic result is due to Hofer-Wysocki-Zehnder~\cite{HWZ1}:

\begin{lemma} \label{lemma: asymptotics}
There exist constants $C_0, C_1>0$ such that the following holds: For any
$\overline{u} \in \mathcal{M}_{\overline{J'}}^{(l_1,\dots,l_\lambda)} (\boldsymbol{\gamma},\delta_0^l\boldsymbol{\gamma}')$ with a negative end at $\delta_0^{l_j}$,
there exists an asymptotic eigenfunction $f_j:\R/2l_j\Z\to \C$ given by $f_j(t)=ce^{\pi it/l_j}$ such that
\begin{align} \label{eqn: asymptotic condition}
|z_j(s,t)-e^{(\pi/l_j-\varepsilon)s} f_j(t)| & < C_0 e^{(\pi/l_j-\varepsilon+C_1)(s-s_0)},\\
 \label{eqn: asymptotic condition C^1}
|\partial_t z_j(s,t)-e^{(\pi/l_j-\varepsilon)s} f_j'(t)| & < C_0 e^{(\pi/l_j-\varepsilon+C_1)(s-s_0)}.
\end{align}
(We allow $c=0$.)
\end{lemma}
Note that, because of the particularly simple form of the Hamiltonian flow around $\delta_0$, Lemma~\ref{lemma: asymptotics} could also be proved directly by
standard Fourier series arguments.
\begin{defn}
The function $f_j(t)$ satisfying Equation~\eqref{eqn: asymptotic condition} is called {\em the asymptotic eigenfunction for $\overline{u}$ at the negative end $\delta_0^{l_j}$.}\footnote{This is the terminology from \cite{HT2}, which is slightly different from that of the Hofer school.}
\end{defn}

The following is due to Wendl~\cite{We4} and Hutchings-Taubes~\cite[Prop.~3.2]{HT2}.  (The proof of \cite[Prop.~3.2]{HT2} essentially proves the following lemma, but is not quite stated in the same way.)

\begin{lemma} \label{lemma: nonzero asymptotic eigenfunction}
There exists an arbitrarily small perturbation $\widetilde{J'}$ of $\overline{J'}$ in $\mathcal{J}_{\overline{W'}}^{reg}$, which is supported on $\R\times \{\rho_1<\rho<\rho_2\}\subset \R\times\overline{N}$ for $0<\rho_0<\rho_1<\rho_2$ arbitrarily small, such that the following holds:
\begin{enumerate}
\item[($\star$)] for all $\boldsymbol{\gamma}$ and $\boldsymbol{\gamma}'$, partitions $(l_1,\dots,l_\lambda)$ of $l$, $j\in\{1,\dots,\lambda\}$ and $r\in \N\cup\{0\}$, the set of elements of $\mathcal{M}_{\widetilde{J'}}^{\op{ind}=r,(l_1,\dots,l_\lambda)}(\boldsymbol{\gamma},\delta_0^l\boldsymbol{\gamma}')/\R$ with vanishing asymptotic eigenfunction at $\delta_0^{l_j}$ is a real codimension $2$ submanifold.
\end{enumerate}
\end{lemma}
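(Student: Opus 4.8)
\textbf{Proof proposal for Lemma~\ref{lemma: nonzero asymptotic eigenfunction}.}

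The plan is to set this up as a standard Sard--Smale transversality argument, but applied to an auxiliary evaluation map that records the asymptotic eigenfunction at a chosen negative end $\delta_0^{l_j}$, rather than to the $\overline{\bdry}$-operator itself. First I would fix $\gamma$, $\gamma'$, a partition $(l_1,\dots,l_\lambda)$ of $l$, an index $j$, and an integer $r$, and consider the universal moduli space $\widetilde{\mathcal{M}} \to \mathcal{J}'_{loc}$ of pairs $(\overline{u}, \widetilde{J'})$, where $\widetilde{J'}$ ranges over $(\alpha_0,\omega)$-adapted almost complex structures agreeing with $\overline{J'}$ outside the annular region $\R\times\{\rho_1<\rho<\rho_2\}$, and $\overline{u} \in \mathcal{M}_{\widetilde{J'}}^{\op{ind}=r,(l_1,\dots,l_\lambda)}(\gamma,\delta_0^l\gamma')$. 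By the same argument as in the proof of Lemma~\ref{lemma: PFH transversality} (using \cite[Lemma~9.12(b)]{Hu1}), the relevant curves are somewhere injective away from fiber components, and fibers cannot occur here; the perturbation region is nonempty and intersects every such curve nontrivially, since every end asymptotic to $\delta_0^{l_j}$ must pass through $\{\rho_1<\rho<\rho_2\}$ by the exponential convergence of Lemma~\ref{lemma: asymptotics}. Hence $\widetilde{\mathcal{M}}$ is a smooth Banach manifold, and the projection to the space of $\widetilde{J'}$ is Fredholm.

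The main new ingredient is the \emph{asymptotic evaluation map}. For each $(\overline{u},\widetilde{J'}) \in \widetilde{\mathcal{M}}$, Lemma~\ref{lemma: asymptotics} assigns a well-defined asymptotic eigenfunction $f_j(t) = c\, e^{\pi it/l_j}$ at the negative end $\delta_0^{l_j}$, depending smoothly on $(\overline{u},\widetilde{J'})$; this defines a smooth map $\op{ev}_j : \widetilde{\mathcal{M}} \to E_{\pi/l_j-\varepsilon} \cong \C$. I would show that $\op{ev}_j$ is a submersion: perturbing $\widetilde{J'}$ in the region $\{\rho_1<\rho<\rho_2\}$ produces, via the linearized operator, a cokernel pairing argument analogous to \cite[Prop.~3.2]{HT2}, showing that the leading Fourier coefficient $c$ of the asymptotic expansion can be varied arbitrarily in $\C$. (Concretely, this is where one needs that the annular perturbation region is disjoint from both $\delta_0$ and the exterior, so that perturbing there genuinely feeds into the exponentially decaying tail without disturbing the orbit sets; the asymptotic analysis of Hofer--Wysocki--Zehnder guarantees that the coefficient $c$ is the pairing of $z_j$ against the corresponding eigenfunction, which is a surjective linear functional on the relevant Sobolev tangent space.) Granting this submersivity, $\op{ev}_j^{-1}(0)$ is a smooth Banach submanifold of $\widetilde{\mathcal{M}}$ of real codimension $2$, and the projection $\op{ev}_j^{-1}(0) \to \{\widetilde{J'}\}$ is still Fredholm of index $r-2$.

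Finally I would apply the Sard--Smale theorem to this projection to produce a comeager set of $\widetilde{J'}$ for which $\op{ev}_j^{-1}(0)$ maps to a manifold of the expected dimension; combined with the regularity already required in $(\overline{\mathcal{J}'})^{reg}$, intersecting over the countably many choices of $(\gamma,\gamma',(l_1,\dots,l_\lambda),j,r)$ yields a single $\widetilde{J'} \in (\overline{\mathcal{J}'})^{reg}$, arbitrarily $C^\infty$-close to $\overline{J'}$ and supported in the prescribed region, for which $(\star)$ holds: the locus of curves in $\mathcal{M}_{\widetilde{J'}}^{\op{ind}=r,(l_1,\dots,l_\lambda)}(\gamma,\delta_0^l\gamma')/\R$ with vanishing asymptotic eigenfunction at $\delta_0^{l_j}$ is cut out transversely and hence is a codimension $2$ submanifold. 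The main obstacle I anticipate is verifying the submersivity of $\op{ev}_j$ cleanly --- i.e., translating the perturbation of $\widetilde{J'}$ into a controlled change of the leading Fourier mode of the end --- since this requires care with exponentially weighted Sobolev spaces and the precise form of the asymptotic operator $A_{l_j}$; but this is exactly the content carried out in \cite{We4} and in \cite[Prop.~3.2]{HT2}, which we may invoke, so the remaining work is bookkeeping rather than genuinely new analysis.
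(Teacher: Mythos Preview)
Your proposal is correct and matches the paper's approach: the paper does not give a self-contained proof but simply attributes the lemma to Wendl~\cite{We4} and Hutchings--Taubes~\cite[Prop.~3.2]{HT2}, remarking only that the codimension~$2$ comes from $\dim_\R E_{\pi/l_j-\varepsilon}=2$. Your sketch of the Sard--Smale argument via an asymptotic evaluation map is exactly the mechanism behind those references, so you have in fact supplied more detail than the paper itself.
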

The real codimension $2$ condition is due to the fact that $\dim_\R E_{\pi/l_i +\varepsilon}=
2$. Let $\mathcal{J}_{\overline{W'}}^\star\subset \mathcal{J}_{\overline{W'}}^{reg}$ be the (dense) subset of almost complex structures which satisfy ($\star$).

\subsubsection{Definition of $\widetilde{U}_{m-1}$} \label{subsubsection: melon d'eau}

Let $\boldsymbol{\gamma}\in \widehat{\mathcal{O}}_{2g}$, $\boldsymbol{\gamma}'\in \widehat{\mathcal{O}}_{2g-1}$, and $\widetilde{J'}\in \mathcal{J}_{\overline{W'}}^\star$. Let $\overline{u} \in \mathcal{M}_{\widetilde{J'}}^{I=\op{ind}=2,n^*=m-1}(\boldsymbol{\gamma}, \delta_0\boldsymbol{\gamma}')$. Since $\widetilde{J'}$ satisfies ($\star$), the asymptotic eigenfunction of $\overline{u}$ at $\delta_0$ is nonzero. Hence we can associate a normalized asymptotic eigenfunction at $\delta_0$ to $\overline{u}$.

Now let
$$\langle \widetilde{U}_{m-1}(\boldsymbol{\gamma}),\delta_0^p\boldsymbol{\gamma}'\rangle= \left\{
\begin{array}{ll}
 0, & \text{ if } p\not=1; \\
\#\left(\mathcal{M}_{\widetilde{J'}}^{I=\op{ind}=2,n^*=m-1,f_{\delta_0}}(\boldsymbol{\gamma},\delta_0\boldsymbol{\gamma}')/\R\right),  & \text{ if } p=1,
\end{array}
\right. $$
for a generic normalized asymptotic eigenfunction $f_{\delta_0}$. The modifier $f_{\delta_0}$ stands for ``the normalized asymptotic eigenfunction at the negative end $\delta_0$ is $f_{\delta_0}$''.

\subsubsection{The limit $m\to \infty$} \label{limit m to infty}

Suppose $m\gg 0$. Let $\overline{\hh}_m: \overline{S}\stackrel\sim\to \overline{S}$ be a smooth extension of $\hh:S\stackrel\sim\to S$ via $\nu_m$, as defined in Section~\ref{subsubsection: overline W pm}, and let $\overline{\hh}_\infty:\overline{S}\stackrel\sim\to \overline{S}$ be the smooth extension of $\hh$ via $\nu_\infty$. We recall that $\overline{\hh}_\infty$ is the identity on
the disk $D^2_{1/2} \subset \overline{S}$ of radius $\frac 12$.

We will denote $\overline{N}_m$ (including $m= \infty$) the mapping torus of $\overline{\hh}_m$ and define $\overline{W'}_m= \R \times \overline{N}_m$. Of course
$\overline{W'}_m$ is diffeomorphic to $\overline{W'}$ for all $m$ (including $m= \infty$); we will write $\overline{W'}_m$ instead of $\overline{W'}$ when it is necessary to keep track of $m$. This will happen especially when $m=\infty$.
The Hamiltonian structure on  $\overline{N}_m$ will be written as $(\alpha_{0,m},\omega)$.
 We then define $\mathcal{J}_{\overline{W'}_m}$ as in Definition~\ref{defn: almost complex structures on R times overline N}, with $\alpha_0$ replaced by $\alpha_{0,m}$. Similarly, we will use the notation $\overline{W}_{-,m}$ and $\mathcal{J}_{\overline{W}_{-,m}}$ instead of
 $\overline{W}_-$ and $\mathcal{J}_{\overline{W}_-}$ when we need to specify $m$.

Although the orbit $\delta_0$ is degenerate when $m= \infty$, we still define the set $\overline{\mathcal{P}}=\widehat{\mathcal{P}}\cup\{\delta_0\}$ of orbits, the set $\widehat{\mathcal{O}}_k$ (resp.\ $\overline{\mathcal{O}}_k$) of orbit sets constructed from $\widehat{\mathcal{P}}$ (resp.\ $\overline{\mathcal{P}}$) which intersect $\overline{S}\times\{0\}$ exactly $k$ times, as in Section~\ref{acorn2}.

Let $\overline{J'}_\infty\in \mathcal{J}_{\overline{W'}_\infty}$.  In analogy with Definition~\ref{Arianna overline N}, let $\mathcal{M}_{\overline{J'}_{\infty}}(\boldsymbol{\gamma},\delta_0^l\boldsymbol{\gamma}')$ be the moduli space of $\overline{J'}_\infty$-holomorphic maps
in $\overline{W'}$ from $\boldsymbol{\gamma}$ to $\delta_0^l\boldsymbol{\gamma}'$ without fiber components.  (We recall that $\boldsymbol{\gamma}$ and $\boldsymbol{\gamma}'$ consist of orbits in $N$.)

The Hamiltonian vector field of $(\alpha_{0, \infty}, \omega)$ is degenerate, but all degenerate orbits are contained in the interior of $V$, and therefore transversality for the moduli spaces 
$\mathcal{M}_{\overline{J'}_{\infty}}(\boldsymbol{\gamma}, \delta_0^l \boldsymbol{\gamma}')$ works in the usual way. An almost complex structure $\overline{J'}_\infty\in \mathcal{J}_{\overline{W'}_\infty}$ is {\em regular} if $\mathcal{M}^s_{\overline{J'}_{\infty}}(\boldsymbol{\gamma},\delta_0^l\boldsymbol{\gamma}')$ is transversely cut out for all $0<k\leq 2g$, $l\geq 0$, $\boldsymbol{\gamma}\in \widehat{\mathcal{O}}_k$, and $\boldsymbol{\gamma}'\in \widehat{\mathcal{O}}_{k-l}$.

\begin{lemma}\label{lemma: Morse-Bott regularity uffa}
A generic $\overline{J'}_\infty\in \mathcal{J}_{\overline{W'}_\infty}$ is regular and satisfies ($\star$).
\end{lemma}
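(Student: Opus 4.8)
The statement to be proved is Lemma~\ref{lemma: Morse-Bott regularity uffa}: a generic $\overline{J'}_\infty\in\overline{\mathcal{J}'}_\infty$ is regular and satisfies $(\star)$. The plan is to run the standard Sachs--Uhlenbeck / Taubes-type transversality argument (as in \cite[Theorem~3.1.5]{MS} and, for the specific symplectization setting with multiple covers, \cite[Lemma~9.12(b)]{Hu1}), being careful about the two nonstandard features: the orbit $\delta_0$ is degenerate (hence the exponential weights $e^{-\varepsilon's}$ at the $\delta_0$-ends), and we additionally need the vanishing-asymptotic-eigenfunction loci to be codimension $2$, which is the content of $(\star)$. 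I would organize the proof in the following steps.

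\emph{Step 1 (Somewhere injectivity and the universal moduli space).} First I would recall that every curve $\overline{u}\in\mathcal{M}^s_{\overline{J'}_\infty}(\gamma,\delta_0^l\gamma')$ is somewhere injective, so there is a dense open set of $\pi_N$-injective points in the domain, exactly as in the proof of Proposition~\ref{prop: J+ and J- regular}. I would then set up the universal moduli space $\mathcal{M}^{univ}=\{(\overline{u},\overline{J'}_\infty) : \overline{u}\in\mathcal{M}^s_{\overline{J'}_\infty}(\gamma,\delta_0^l\gamma')\}$ over the Banach manifold of $C^k$ (or $C^\infty_\varepsilon$) almost complex structures in $\overline{\mathcal{J}'}_\infty$, with the perturbations supported on a region disjoint from $\{\rho\le\varepsilon\}$ (so that the product structure near $D^2$ is preserved) but still containing a $\pi_N$-injective point of each relevant curve; this is possible because such curves must exit $\{\rho\le\varepsilon\}$, by the positivity-of-intersections argument used in Lemmas~\ref{pluto} and \ref{pippo}. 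The linearized operator with the chosen exponential weights at the $\delta_0$-ends is Fredholm (the weight $\varepsilon'$ is chosen small and non-resonant, i.e., not in the spectrum of the asymptotic operator $A_l$ shifted appropriately), and the usual argument shows that the vertical differential of the section defining $\mathcal{M}^{univ}$ is surjective, so $\mathcal{M}^{univ}$ is a Banach manifold. The Sard--Smale theorem then gives that a generic $\overline{J'}_\infty$ is regular; a Taubes trick upgrades $C^k$-genericity to $C^\infty$-genericity.

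\emph{Step 2 (The condition $(\star)$).} For the second half I would argue that, over the already-regular stratum, the map sending $\overline{u}$ (with a chosen negative $\delta_0^{l_j}$-end) to the coefficient $c\in\C$ of its leading asymptotic eigenfunction $ce^{\pi it/l_j}$ in the eigenspace $E_{\pi/l_j-\varepsilon}$ is, after a further generic perturbation of $\overline{J'}_\infty$, transverse to $0\in\C$ as a section of a rank-two real bundle over $\mathcal{M}_{\overline{J'}_\infty}^{\op{ind}=r,(l_1,\dots,l_\lambda)}(\gamma,\delta_0^l\gamma')/\R$. This is exactly the mechanism of Lemma~\ref{lemma: nonzero asymptotic eigenfunction} and \cite[Prop.~3.2]{HT2}: the asymptotic expansion of a $\overline{J'}_\infty$-holomorphic half-cylinder with leading eigenfunction zero forces the next eigenfunction to be nonzero (unique continuation), and one can realize any prescribed variation of the leading coefficient by a compactly-supported perturbation of $\overline{J'}_\infty$ on an annulus $\R\times\{\rho_1<\rho<\rho_2\}$ near $\delta_0$; hence the vanishing locus is cut out transversely and has real codimension $\dim_\R E_{\pi/l_j-\varepsilon}=2$. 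Applying Sard--Smale again (and intersecting the countably many generic sets over all choices of $\gamma,\gamma'$, partitions, $j$, and $r$) yields a dense $(\overline{\mathcal{J}'})^{reg}_\star$, i.e., $(\star)$ holds generically.

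\emph{Main obstacle.} The routine parts are the Fredholm theory and the Sard--Smale machinery; the genuinely delicate point is Step 2 in the presence of the \emph{degenerate} orbit $\delta_0$. With $\delta_0$ degenerate, the ``leading asymptotic eigenfunction'' must be extracted using the weighted Fredholm setup of Section~\ref{limit m to infty} rather than the standard nondegenerate asymptotics of \cite{HWZ1}, and one must check that the perturbations used to achieve transversality of the eigenfunction-coefficient map can be chosen in the allowed class $\overline{\mathcal{J}'}_\infty$ (supported away from $\{\rho\le\varepsilon\}$, preserving the product structure on $D^2$, and $C^k$-small), while still being able to move the leading coefficient arbitrarily. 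This is precisely where I would invoke \cite{We4} and the method of \cite[Prop.~3.2]{HT2}, adapting their annular perturbation to our $\overline{N}$; the key input is that the holomorphic equation \eqref{eqn: holomorphic on V} near $\delta_0$ is, to leading order, the model $\bdry_s z+i\bdry_t z+\varepsilon z=0$, whose solution space is governed by $A_l$, so the eigenfunction analysis is literally the same as in \cite{HT2} once the weights are in place. Everything else then follows by the now-standard arguments cited above.
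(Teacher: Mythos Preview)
Your proposal is correct and takes essentially the same approach as the paper: the paper's proof consists of two sentences, noting that the Fredholm theory uses weighted Sobolev spaces at the degenerate $\delta_0$-ends and citing Wendl \cite{We4} for the regularity of simply-covered moduli spaces in this Morse--Bott setting. Your write-up is a faithful and considerably more detailed unpacking of that citation, together with the \cite[Prop.~3.2]{HT2} mechanism for $(\star)$, which the paper leaves implicit.
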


\begin{proof}
The Fredholm theory for holomorphic curves with Morse-Bott asymptotics uses Sobolev spaces with exponential weights. The regularity of simply-covered moduli spaces in this setting is treated in Wendl~\cite{We4}.
\end{proof}

As before, we write $\mathcal{J}_{\overline{W'}_{\infty}}^{reg}\subset \mathcal{J}_{\overline{W'}_{\infty}}$ for the dense subset of regular almost complex structures and $\mathcal{J}_{\overline{W'}_{\infty}}^\star\subset \mathcal{J}_{\overline{W'}_{\infty}}^{reg}$ for the dense subset of almost complex structures which satisfy ($\star$).  Note that, if $\overline{J'}_{\infty}\in  \mathcal{J}_{\overline{W'}_{\infty}}^\star$, then nearby almost complex structures $\overline{J'}_m\in \mathcal{J}_{\overline{W'}_m}$ for $m\gg 0$ are in $\mathcal{J}_{\overline{W'}_m}^\star$.

On the other hand, compactness is more subtle. In fact the Hamiltonian vector field of $(\alpha_{0, \infty}, \omega)$ has a Morse-Bott family of orbits corresponding to $int(D^2_{1/2})$ and a circle of degenerate orbits corresponding to $\partial D^2_{1/2}$, and the standard asymptotic convergence theorems do not work for degenerate orbits.
For this reason, we will need extra work to be able to apply the SFT compactness theorem; namely we will need to show that a family of degenerating $\overline{J'}_m$-holomorphic curves satisfying the appropriate topological constraints
cannot approach an orbit of $D^2_{1/2}$ besides a multiple of $\delta_0$.

Given a $\overline{J'}_\infty$-holomorphic map $u : \dot F \to \overline{W'}$, we say that an end of $\dot F$ {\em accumulates} to a Hamiltonian orbit $\gamma$ if, after parametrizing the end by $(s_0, + \infty) \times S^1$ or $(-\infty, -s_0)\times S^1$, there is a sequence $s_k \to + \infty$ such that $\lim \limits_{k \to \infty} u(\pm s_k, t) = \gamma(t)$ uniformly in $t$.

\begin{lemma} \label{insonnia}
Let $u \colon \dot F \to \overline{W'}$ be a finite energy $\overline{J'}_\infty$-holomorphic map. Then every end of $\dot F$ accumulates to a Hamiltonian orbit $\gamma$. If an end of
$\dot{F}$ does not accumulate to any degenerate orbit, than it is asymptotic to a (Morse-Bott) nondegenerate orbit.
\end{lemma}

\begin{proof}
The accumulation to a Hamiltonian orbit follows from \cite{Ho1}. If an end does not accumulate
to any degenerate orbit, then there is a neighborhood of the degenerate orbits which does not intersect the image of the end, as the set of degenerate orbits is compact. Then we can repeat the arguments of \cite{HWZ1, HWZ4} in the complement of that neighborhood.
\end{proof}

Let $\gamma_z$ be the orbit corresponding to $z\in D^2_{1/2}$ for the stable Hamiltonian structure $(\alpha_{0, \infty}, \omega)$ on $\overline{N}$. We will use intersection arguments to constrain the behavior of a sequence of $\overline{J'}_m$-holomorphic maps that accumulate to orbits $\gamma_z$ for some $z \in D^2_{1/2}$. (Recall that all other closed orbits in $int(V)$ intersect a fiber more than $2g$ times for $m \gg 0$, and therefore are automatically excluded.)

\begin{lemma} \label{referendum}
Let $F$ be a compact Riemann surface with boundary and let
$$\partial F = B^+_1 \sqcup \ldots \sqcup B^+_{l_+} \sqcup  B^-_1 \sqcup \ldots \sqcup B^-_{l_-}, \quad l_\pm \geq 1$$
be a decomposition of $\bdry F$ into connected components. We parametrize the components $B^-_j$, $j=1,\dots, l_\pm$, by $\beta_j : S^1 \to B^-_j$ in an orientation-reversing manner. Let $\eta$ be a positive real number and $v : F \to \overline{W'}$ a $\overline{J'}_m$-holomorphic map with $m \gg 0$, such that
\begin{itemize}
\item[(i)] each $\pi_{\overline{N}}\circ v \circ \beta_j$ is $\eta$-close to a cover of an orbit $\gamma_{z_j}$ with $z_j \in D^2_{1/2}$ and $\eta < \frac{|z_j|}{2}$, and
\item[(ii)] each $\pi_{\overline{N}}\circ v(B_j^+)$ is contained in a small neighborhood of $N= \overline{N} - int(V)$.
\end{itemize}
Then $n'(v) \ge m$.
\end{lemma}

\begin{proof}
Consider subsets $V_{\eta} = D^2_\eta \times S^1$ and $V_{1/2+ \eta} = D^2_{1/2+\eta} \times S^1$ of $\overline{N}$. Both $\partial V_\eta$ and $\partial V_{1/2+ \eta}$ are foliated by Hamiltonian orbits. We choose coordinates on $\partial V_\eta$ and $\partial V_{1/2+ \eta}$ such that the meridian has slope $(1,0)$, the orbits on $\partial V_\eta$ have slope $(0,1)$ and the orbits on $\partial V_{1/2+ \eta}$ have slope $(1,k)$ with $k \gg 0$ (here we are assuming that $\eta$ has been perturbed so that the foliation on $\bdry V_{1/2+\eta}$ has rational slope).

Perturb $\eta$ if necessary so that $\pi_{\overline{N}}\circ v$ is transverse to $\partial V_{\eta}$ and $\partial V_{1/2+ \eta}$ and let
$$C_-= \pi_{\overline{N}}\circ v(F) \cap \partial V_\eta ~~\mbox{ and }~~ C_+= \pi_{\overline{N}}\circ v(F) \cap \partial V_{1/2+ \eta},$$
where $C_-$ and $C_+$ are oriented as the boundaries of $\op{Im}(\pi_{\overline{N}}\circ v)\cap {V_\eta}$ and $\op{Im}(\pi_{\overline{N}}\circ v)\cap {V_{1/2+\eta}}$.
Since $\pi_{\overline{N}}\circ v$ maps no $B^+_j$ inside $V_{1/2 + \eta}$ and $C_+$ is positively transverse to the orbits on $\bdry V_{1/2+\eta}$,  it follows that $C_+$ represents some homology class $(q,p)$ with $p, q  \ge 1$. 
Then a simple homology computation implies that $C_-$ represents the homology class $(q, 0)$. Hence $n'(v) = qm \ge m$.
\end{proof}

\begin{lemma}\label{dentist}
Let $\overline{J'}_{m_i} \in \mathcal{J}_{\overline{W'}_{m_i}}^\star$ be a sequence such that $\overline{J'}_{m_i} \to \overline{J'}_\infty \in \mathcal{J}_{\overline{W'}_\infty}^\star$ as $i \to \infty$. Given a partition $(l)$ of $l$, a sequence of curves
$$\overline{u}_i \in \mathcal{M}_{\overline{J'}_{m_i}}^{I=1, (l),n'=m_i-l}(\boldsymbol{\gamma},\delta_0^l \boldsymbol{\gamma}')$$
with $i\to\infty$ converges (up to extracting a subsequence) to a curve
$$\overline{u}_\infty \in \mathcal{M}_{\overline{J'}_{\infty}}^{I=1,(l),\widetilde{n}=0} (\boldsymbol{\gamma},\delta_0^l\boldsymbol{\gamma}'),$$
where $\widetilde{n}(\overline{u}_\infty)$ is the intersection number of $\overline{u}_\infty$ with the section at infinity.
\end{lemma}

 \begin{proof}
The first steps of the compactness theorem are unchanged by the presence of degenerate orbits. Let $\dot F_i$ be the domains of the maps $\overline{u}_i$. We can remove finite sets of points $Z_i \subset \dot F_i$ so that the sequence $\overline{u}_i|_{\dot F_i - Z_i}$ satisfies the uniform gradient bound (Equation~\eqref{eq: gradient bound}).

The sequence of punctured Riemann surfaces $\dot F_i - Z_i$ converges to a punctured nodal surface $\dot F_\infty$ in the Deligne-Mumford moduli space. We denote by $F_\infty'$ the punctured Riemann surface obtained by removing the nodes from $\dot F_\infty$. The gradient bound~\eqref{eq: gradient bound} implies that we can extract a subsequence from $\overline{u}_i|_{\dot F_i - Z_i}$ which converges in $C^{\infty}_{loc}$ to a $\overline{J'}_\infty$-holomorphic map $\overline{u}_\infty' : F_\infty' \to \overline{W'}$. (The holomorphic map denoted by $\overline{u}_\infty'$ in this proof should not be confused with the holomorphic map $\overline{u}_\infty'$
introduced in Section~\ref{subsubsection: multisections}). We assume that no component of $\overline{u}_\infty'$ is a connector over $\delta_0$. The modifications required by the presence of a connector over $\delta_0$ are easy and left to the reader.

Fix $\varepsilon>0$ small and suppose that $i\gg 0$. Consider the connected component of $\op{Thin}_\varepsilon(\dot F_i - Z_i)$ with a puncture that limits to $\delta_0^l$; we identify it with $(- \infty, s_i) \times S^1$.   We claim that there is no $s<s_i$ such that $\pi_{\overline{N}}\circ \overline{u}_i|_{\{ s \} \times S^1}$ is sufficiently close to an orbit $\gamma_z$ with $z \in D^2_{1/2} - \{0 \}$.  Arguing by contradiction, if there is such an $s$, then we consider $C_0:=(-\infty,s]\times S^1$ and $C_1:=\dot F_i- (-\infty,s]\times S^1$.  Since $C_0$ limits to $\delta_0^l$, we have $n'(\overline{u}_i|_{C_0})\geq m_i-l$.  On the other hand, there exists a compact surface with boundary $F \subset C_0$ such that $\overline{u}_i|_F$ satisfies the conditions of Lemma~\ref{referendum} and $n'(\overline{u}_i|_F)\geq m_i$,  a contradiction.  We can similarly argue that, for any other component $(s_-,s_+)\times S^1$ of $\op{Thin}_\varepsilon(\dot F_i - Z_i)$, there is no $s\in (s_-,s_+)$ such that $\pi_{\overline{N}}\circ \overline{u}_i|_{\{ s \} \times S^1}$ is sufficiently close to an orbit $\gamma_z$ with $z \in D^2_{1/2} - \{0 \}$; here $s_-=-\infty$ or $s_+=+\infty$ are allowed.

We now apply the usual compactness argument (in view of Lemma~\ref{insonnia}) to obtain a limit $\overline{J'}_\infty$-holomorphic building $\overline{u}_\infty$ such that no level of the building has an end that accumulates to an orbit $\gamma_z$ for $z \in D^2_{1/2} - \{ 0 \}$. Finally, since $\overline{J'}_\infty \in \mathcal{J}_{\overline{W'}_\infty}^\star$, every level of $\overline{u}_{\infty}$ has $\op{ind}\geq 1$.  On the other hand, since $\overline{u}_\infty$ has total Fredholm index $1$, it is a single-level building.
\end{proof}

Let $\overline{J'}_{\infty}\in \mathcal{J}_{\overline{W'}_{\infty}}^\star$. By the compactness and regularity of moduli spaces, there are only finitely many curves $\mathcal{C}_1,\dots,\mathcal{C}_r$, modulo $\R$-translation, such that 
$$\mathcal{C}_i\in\mathcal{M}_{\overline{J'}_{\infty}}^{I=1,\widetilde{n}=0,(l_{i})}(\boldsymbol{\gamma}_i,\delta_0^{l_i}\boldsymbol{\gamma}'_i)$$
for some orbit sets $\boldsymbol{\gamma}_i,\boldsymbol{\gamma}'_i\in \widehat{\mathcal{O}}_*$ and partition $(l_{i})$ of $l_i$. Here if $\overline{u}$ is a curve with an end at $\delta_0^p$, then its ECH index is computed using the Conley-Zehnder index of $\delta_0^p$ with respect to $\overline{\hh}_m$, $m\gg 0$, and $\widetilde{n}(\overline{u})$ is defined as the intersection number of $\overline{u}$ and the section at infinity. Let
$$f_{i}: \R/2 l_{i}\Z\to \C, \quad t\mapsto c_{i}e^{\pi it/l_{i}},$$ be the asymptotic eigenfunction corresponding the end $\delta_0^{l_{i}}$ of ${\mathcal C}_i$. The condition $c_{i}\not=0$ follows from Lemma~\ref{lemma: Morse-Bott regularity uffa}. We may therefore assume without loss of generality that all the $f_{i}$ are normalized.

\subsubsection{Radial rays}  \label{subsubsection: radial rays}

\begin{defn}\label{defn: radial rays}
A {\em bad radial ray} is a radial ray ${\mathcal R}_{\phi_0}=\{\phi=\phi_0,\rho\geq 0\}$
in $\C$ which passes through a point in
$$\{f_{i}(t)~|~ i=1,\dots,r; 0< t < 2l_{i}; t \equiv {3/2}
\mbox{ mod } 2\}.$$
A radial ray ${\mathcal R}_{\phi_0}$ which is not bad is said to be {\em good}.
\nom[R]{$\mathcal{R}_{\phi_0}$}{Radial ray $\{\phi=\phi_0,\rho\geq 0\}\subset \C$}
\end{defn}

A good radial ray must exist and, as it is explained in the following remark, we can assume it is ${\mathcal R}_\pi$.

\begin{rmk}\label{rmk: good ray is real}
The set of bad radial rays is determined by $(\overline{W'}_\infty, \overline{J'}_\infty)$. Strictly speaking, we should choose the set of endpoints $E\subset \bdry D^2$ as in Section~\ref{coconut} such that $\mathcal{R}_{\phi_0+\pi}$ is a good radial ray and $\phi_0 < \phi(y_j(m))<\phi_0+c(m)$ where $c(m)\to 0$ as $m\to \infty$. After a rotational coordinate change of $D^2$, we may assume that $\mathcal{R}_\pi$ is a good radial ray and $0<\phi(y_j(m))<c(m)$.
\end{rmk}

\subsection{The rescaled function} \label{subsection: rescaling}

In this subsection we will use limiting arguments in which $m\to \infty$ and $\overline{\hh}_m\to \overline{\hh}_\infty$; see Section~\ref{limit m to infty}.  Hence many of the almost complex structures and moduli spaces will have an additional subscript $m$, where $m=\infty$ is also a possibility. 
Let $\overline{J'}_{\infty}\in \mathcal{J}_{\overline{W'}_{\infty}}^\star$ and let $\overline{J'}_{m}\in \mathcal{J}_{\overline{W'}_m}^\star$ be a nearby almost complex structure with respect to the integer $m\gg 0$. Let $\overline{J}_{-,m}\in \mathcal{J}_{\overline{W}_{-,m}}^{reg}$ be an almost complex structure which restricts to $\overline{J'}_{m}$ and let $\overline{J}_{-,m}^\Diamond$ be $(\varepsilon,U)$-close to $\overline{J}_{-,m}$.

Let $m_i$, $i\in \N$, and $\overline{u}_{ij}$, $i,j\in\N$, be sequences satisfying the following properties:
\begin{itemize}
\item[(S1)] $\displaystyle\lim_{i\to\infty} m_i=\infty$;
\item[(S2)] $\overline{u}_{ij}\in \mathcal{M}^{I=3,n^*=m_i}_{\overline{J}_{-,m_i}^\Diamond}(\boldsymbol{\gamma},{\bf y};\overline{\frak m})$, where ${\bf y}=\{x_l\}\cup{\bf y'}$ or $\{x_l'\}\cup {\bf y'}$ for some $l$; and
\item[(S3)] for all $i\in \N$ and $\kappa,\nu> 0$, there exists $j_{i,\kappa,\nu}$ such that, if $j\geq j_{i,\kappa,\nu}$, then $\overline{u}_{ij}$ is $(\kappa,\nu)$-close (cf.\ Definition~\ref{defn: kappa nu close buildings}) to a $\overline{J}_{-,m_i}^\Diamond$-holomorphic building $\overline{u}_{i\infty}=\cup_l \overline{v}_{l,i}$ with $\overline{v}_{0,i}' \ne \varnothing$.
\end{itemize}
Moreover, assume that one of the following holds:
\begin{itemize}
\item[(S4')] the first negative end at $\delta_0$ in the building $\overline{u}_{i\infty}$ has multiplicity one, or
\item[(S4'')] the first negative end at $\delta_0$ in the building $\overline{u}_{i\infty}$ has multiplicity two.
\end{itemize}

Property (S3) is a consequence of the fact that the sequence $\overline{u}_{ij}$ converges in the SFT sense to the building $\overline{u}_{i\infty}$ for each fixed $i$. Property (S4') corresponds to Cases (1) and (2) in Theorem \ref{thm: compactness for W minus I=3, first version} and Property (S4'') corresponds to the remaining cases.

\subsubsection{Truncation}

Recall the projection $\overline\pi=\overline\pi_{D^2_{\rho_0}}$ using balanced coordinates.

\begin{lemma} \label{kkprime}
Let $\overline{v}$ be a $\overline{J'}_m$-holomorphic curve in 
$\overline{W'}_m$ with ECH index $I\leq 2$ for $\overline{J'}_m \in  \mathcal{J}_{\overline{W'}_m}^\star$ and let $\widetilde{v} : (- \infty, s_0] \to \overline{W'}$ be a negative end of $\overline{v}$ which is asymptotic to a multiple cover of $\delta_0$. Then for every $\eta, \kappa >0$ there exist
positive constants $\overline{R}$ and $\kappa' < \eta$ such that ${\kappa'\over \kappa}>  \frac 1 \eta$ and
\begin{align*}
\left| \overline{\pi}\circ\widetilde{v}(-\overline{R}, t)-\kappa' f(t) \right| & \leq \frac \kappa 2,\\
\left| \partial_t (\overline{\pi}\circ\widetilde{v})(-\overline{R}, t)-\kappa'  f'(t) \right| & \leq  \frac \kappa 2
\end{align*}
for all $t$, where $f$ is the normalized asymptotic eigenfunction for $\widetilde{v}$.
\end{lemma}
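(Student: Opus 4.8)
The plan is to extract the desired estimate directly from the asymptotic description of holomorphic ends near $\delta_0$ provided by Lemma~\ref{lemma: asymptotics} (the Hofer-Wysocki-Zehnder asymptotic analysis), together with the fact that for $\overline{J'}_m\in(\overline{\mathcal{J}'}_m)^{reg}_\star$ the asymptotic eigenfunction $f$ of $\widetilde v$ at a $\delta_0$-end of a curve with $I\le 2$ is nonvanishing (this is exactly condition $(\star)$ from Lemma~\ref{lemma: nonzero asymptotic eigenfunction}). First I would set up coordinates: write $\widetilde v$ on $(-\infty,s_0]\times(\R/2l\Z)$ in balanced coordinates as $(s,t)\mapsto(s,t,z(s,t))$, so that $z$ satisfies Equation~\eqref{eqn: holomorphic on V} with $\varepsilon_0={\pi\over m}$ (renaming to avoid clashing with the $\varepsilon$ in the statement). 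The negative-end version of Lemma~\ref{lemma: asymptotics} then gives constants $C_0,C_1>0$, independent of the particular curve in the relevant (compact, by Gromov-Taubes and SFT compactness) moduli space, such that
$$\left| z(s,t) - e^{(\lambda)s} f(t)\right| < C_0 e^{(\lambda + C_1)(s-s_0)}\quad\text{for } s\le s_0,$$
where $\lambda = \pi/l - \varepsilon_0 > 0$ for $m$ large and $f$ is the normalized asymptotic eigenfunction (here I use $z$ for the negative end; the exponent is positive in $s$ as $s\to-\infty$ because the end decays as $s\to-\infty$ — care with signs, but the content is standard).

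Next I would simply choose the scales. Set $\kappa' = e^{\lambda(-\overline R)}$ where $\overline R>0$ is to be determined, and $\kappa = 2C_0 e^{(\lambda+C_1)(-\overline R - s_0)}$, evaluated at $s=-\overline R$. Then the asymptotic estimate gives precisely
$$\left| \pi\circ\widetilde v(-\overline R, t) - \kappa' f(t)\right| = \left|z(-\overline R,t) - \kappa' f(t)\right| < \tfrac{\kappa}{2}\cdot\tfrac{1}{2}\cdot 2 = \tfrac{\kappa}{2},$$
for all $t$ (adjusting the constant so the inequality is $\le\kappa/2$ rather than strict, which is harmless). The ratio is $\kappa'/\kappa = e^{\lambda(-\overline R)}/\big(2C_0 e^{(\lambda+C_1)(-\overline R-s_0)}\big) = \tfrac{1}{2C_0}e^{C_1 \overline R + (\lambda+C_1)s_0}$, which tends to $+\infty$ as $\overline R\to+\infty$ since $C_1>0$; hence for $\overline R$ sufficiently large we get $\kappa'/\kappa > 1/\varepsilon$. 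Likewise, both $\kappa$ and $\kappa'$ tend to $0$ as $\overline R\to+\infty$ (using $\lambda>0$ and $\lambda + C_1 > 0$), so by taking $\overline R$ even larger we arrange $\kappa,\kappa' < \varepsilon$. All three conditions are simultaneously satisfiable because enlarging $\overline R$ drives $\kappa,\kappa'\to 0$ while the ratio grows; choosing $\overline R = \overline R(\varepsilon, l, C_0, C_1, s_0, m)$ accordingly completes the argument.

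The main obstacle — and the only genuinely substantive point — is ensuring the constants $C_0, C_1, s_0$, and the eigenvalue $\lambda$, can be chosen uniformly and, in particular, that $f\ne 0$. The nonvanishing of $f$ is the hypothesis $I\le 2$ combined with $\overline{J'}_m\in(\overline{\mathcal{J}'}_m)^{reg}_\star$: by Lemma~\ref{lemma: nonzero asymptotic eigenfunction}, curves with vanishing asymptotic eigenfunction at a $\delta_0$-end form a codimension-$2$ subset, so for $I\le 2$ (and the relevant Fredholm index, after quotienting by $\R$) a transversality/dimension count rules out zero eigenfunctions — exactly the mechanism used in Section~\ref{subsubsection: melon d'eau} to define $\widetilde U_{m-1}$. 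Given $f\ne 0$, the normalization $|c|=1$ makes $\|f\|$ a fixed positive number, and the estimate above is then clean. The uniformity of $C_0,C_1$ over the (finitely many, up to $\R$) curves of interest follows from the compactness of the relevant moduli spaces and the local nature of the HWZ estimate; this is the same kind of uniformity already invoked implicitly in Lemma~\ref{lemma: asymptotics} and in Section~\ref{limit m to infty}, so I would cite \cite{HWZ1} and \cite[Prop.~3.2]{HT2} rather than reprove it.
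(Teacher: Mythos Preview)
Your approach is exactly the paper's: the proof there is the single sentence ``This is a rephrasing of Lemma~\ref{lemma: asymptotics}, in view of Lemma~\ref{lemma: nonzero asymptotic eigenfunction},'' and you have simply unpacked that sentence by choosing $\overline R$ large so that the HWZ remainder $C_0 e^{(\lambda+C_1)(-\overline R - s_0)}$ is dominated by the leading term $|c_0|e^{-\lambda\overline R}$, with $(\star)$ guaranteeing $c_0\ne 0$. One small slip: since $f$ is \emph{normalized}, the leading coefficient should be $\kappa' = |c_0|\,e^{-\lambda\overline R}$ rather than $e^{-\lambda\overline R}$, but this only changes the ratio $\kappa'/\kappa$ by the fixed positive factor $|c_0|$ and does not affect the argument; also the uniformity discussion in your last paragraph is not needed for the statement as written (which concerns a single curve $\overline v$).
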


\begin{proof}
This is a slightly weaker rephrasing of Lemma~\ref{lemma: asymptotics}, in view of Lemma \ref{lemma: nonzero asymptotic eigenfunction}.
\end{proof}

Let $F$ be a Riemann surface and $\overline{u}: F\to (\overline{W}_-,\overline{J}_-)$ a holomorphic map.  We denote by $p: F \to B_-$ the map $\overline{\pi}_{B_-}\circ \overline{u}$ and by $s: F \to \R$,  $t: F \to S^1$ the functions obtained by composing $p$ with the coordinates $(s,t)$ on $B_-$. The functions $(s,t)$ give local coordinates on $F$ outside the critical points of $p$.

Let $\overline{g}$ be the restriction of an $s$-invariant Riemannian metric on $\overline{W'}$ to $\overline{W}_-$ and let $d$ be the distance induced by $\overline{g}$
on $\overline{W}_-$.

\begin{defn}[Truncation]\label{defn: truncation}
A {\em truncation} $(\widetilde{u},\widetilde{F},  \{R^{(l)}\}_{l=1}^{l_0},\widetilde{\frak m},{\frak e}^\pm,\kappa')$ of a holomorphic map $\overline{u}: F\to (\overline{W}_-,\overline{J}_-)$ is a tuple where:
\begin{itemize}
\item $\widetilde{F}$ is a subsurface of $F$ and $\widetilde{u}$ is the restriction of $\overline{u}$ to $\widetilde{F}$;
 \item $p(\widetilde{F})= B_-\cap \{R^{(1)}\leq s\leq R^{(l_0)}\}$;
\item for each $l=1,\dots,l_0-1$, the restriction
$$p:\widetilde{F}\cap p^{-1}(\{R^{(l)}\leq s\leq R^{(l+1)}\}) \to B_-\cap \{R^{(l)}\leq s\leq R^{(l+1)}\}$$
is a branched cover;
\item $\sup_{x\in \widetilde{u}(\widetilde{F})}d(x,\sigma_\infty^-)\leq 2\kappa'$;
\item  if $d(\overline{u}(y),\sigma_\infty^-)\leq {\kappa'\over 2}$, then $y\in \widetilde{F}$;
\item $\widetilde{\frak m}\in \widetilde{F}$ is the unique point such that $\widetilde{u}(\widetilde{\frak m})=\overline{\frak m}$;
\item $\mathfrak{e}^+$ (resp.\ $\mathfrak{e}^-$) is the union of components of $\bdry\widetilde{F}- p^{-1}(\partial B_-)$ for which $ds({\frak n})>0$ (resp.\ $<0$), where ${\frak n}$ is the outward normal vector field along $\bdry \widetilde{F}$.
\end{itemize}
\end{defn}
The map $p$ is a branched cover only restricted to portions of $\widetilde{F}$, and not globally, because there can be portions of $\partial \widetilde{F}$ which are mapped to the interior of $B_- \cap \{R^{(1)}\leq s\leq R^{(l_0)}\}$.
\begin{defn}
A {\em good truncation}
$$(\widetilde{u},\widetilde{F}, \{R^{(l)}\}_{l=1}^{l_0},\widetilde{\frak m},{\frak e}^\pm,\kappa,\kappa')$$
is a truncation $(\widetilde{u},\widetilde{F}, \{R^{(l)}\}_{l=1}^{l_0},\widetilde{\frak m},{\frak e}^\pm,\kappa')$, together with a constant $\kappa>0$, such that ${\kappa'\over \kappa}>2$ and:
\begin{enumerate}
\item[(G)]  $\displaystyle\left \| \overline{\pi}\circ\widetilde{u}|_{\mathfrak{e}^\pm} - f\right \|_{C^1} \leq \kappa$, where $f$ is an asymptotic eigenfunction of $\delta_0$ or $z_\infty$ (as appropriate) on each component of ${\frak e}^\pm$ and $\| f \|_{C^0}\geq \kappa'$.
\end{enumerate}
\end{defn}

\begin{lemma}\label{lemma: pre-riscaling estimates}
Let $m_i$ and $\overline{u}_{ij}$ be sequences satisfying (S1)--(S3) and either (S4') or (S4'').
Then there is a sequence $j(i)$ such that the following hold for all $j \ge j(i)$:
\begin{enumerate}
\item $\overline{u}_{ij}$ admits a good truncation
$$(\widetilde{u}_{ij},\widetilde{F}_{ij},\{R^{(l)}_{ij}\}_{l=1}^{l_0},\widetilde{\frak m}_{ij},{\frak e}_{ij}^\pm,\kappa_i,\kappa'_i);$$
\item $\lim \limits_{i \to \infty} \kappa_i' = \lim \limits_{i \to \infty} \kappa_i =0$ and $\lim \limits_{i \to \infty} \frac{\kappa_i'}{\kappa_i} = + \infty$;
\item $\lim\limits_{i \to \infty} R^{(l_0)}_{ij(i)}=+\infty$ and $\lim\limits_{i \to \infty} R^{(1)}_{ij(i)}=-\infty$.
\end{enumerate}
\end{lemma}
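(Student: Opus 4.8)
\textbf{Proof plan for Lemma~\ref{lemma: pre-riscaling estimates}.} The plan is to produce the truncations by combining the SFT-convergence hypothesis (S3) with the asymptotic control on negative ends at $\delta_0$ coming from Lemma~\ref{kkprime}. First I would fix $i$ and work with the building $\overline{u}_{i\infty}=\cup_l\overline{v}_{l,i}$ of (S3), which by Theorem~\ref{thm: compactness for W minus I=3, first version} (specifically Cases (1)--(6), distinguished by (S4') versus (S4'')) has $\overline{v}_{0,i}'$ a (possibly branched) cover of $\sigma_\infty^-$ sitting below a level $\overline{v}_{a',i}$ that has a negative end $\widetilde v_i$ asymptotic to $\delta_0^{p^-_{a'}}$ with $p^-_{a'}\in\{1,2\}$. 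Applying Lemma~\ref{kkprime} to $\widetilde v_i$ with a decreasing sequence $\varepsilon_i\to 0$ produces constants $\overline R_i$ and $\kappa'_i,\kappa_i<\varepsilon_i$ with $\kappa'_i/\kappa_i>1/\varepsilon_i\to\infty$ such that $|\pi\circ\widetilde v_i(-\overline R_i,t)-\kappa'_if_i(t)|\le\kappa_i/2$ for the normalized asymptotic eigenfunction $f_i$ of $\widetilde v_i$. This immediately gives part (2), since $\kappa'_i,\kappa_i\to 0$ and $\kappa'_i/\kappa_i\to+\infty$.

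Next I would transfer this control from the limiting building $\overline{u}_{i\infty}$ to the actual curves $\overline{u}_{ij}$ for $j$ large. By (S3), for any $\kappa,\nu>0$ there is $j_{i,\kappa,\nu}$ so that $\overline{u}_{ij}$ is $(\kappa,\nu)$-close to $\overline{u}_{i\infty}$; choosing $\kappa=\kappa(i)$ small compared to $\kappa_i$ and $\nu$ small, I can pick $j(i)\ge j_{i,\kappa(i),\nu(i)}$ so that on the relevant neck region $\overline{u}_{ij}$ differs from the corresponding piece of $\overline{u}_{i\infty}$ by at most (say) $\kappa_i/4$ in the metric $d$. Then I would \emph{define} $\widetilde F_{ij}\subset F_{ij}$ to be the preimage under $\overline{u}_{ij}$ of the $2\kappa'_i$-neighborhood of $\sigma_\infty^-$, arranged (by choosing the cutoff levels $R^{(l)}_{ij}$ among the neck parameters in Definition~\ref{defn: kappa nu close buildings}) so that $p=\overline\pi_{B_-}\circ\overline{u}_{ij}$ restricts to a branched cover over each band $\{R^{(l)}\le s\le R^{(l+1)}\}$; this uses Lemma~\ref{lemma: levels v sub j W plus}-type statements (no branch points on $\partial B_-$) and the fact that the levels $\overline v_{l,i}$ with $l\ge a'$ are degree $\le 2g$ multisections. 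The point $\widetilde{\mathfrak m}_{ij}$ is the unique preimage of $\overline{\mathfrak m}$, which lies in $\widetilde F_{ij}$ because $\overline{\mathfrak m}\in\sigma_\infty^-$; and $\mathfrak e^\pm_{ij}$ are the boundary components of $\widetilde F_{ij}$ where $s$ is increasing/decreasing, which — by construction of $\widetilde F_{ij}$ as a sublevel set of $d(\overline{u}_{ij}(\cdot),\sigma_\infty^-)$ — occur exactly at $\{d=2\kappa'_i\}$ near the ends of the truncated covers of $\sigma_\infty^-$.

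Finally I would verify condition (G): on each component of $\mathfrak e^\pm_{ij}$ the map $\pi\circ\widetilde u_{ij}$ is within $\kappa_i$ of an asymptotic eigenfunction $f$ of $\delta_0$ (for $\mathfrak e^-$, the ends feeding into $\overline v_{a',i}$) or of $z_\infty$ (for $\mathfrak e^+$, the ends of the thin strips $\overline v_{l,i}^\sharp$, $l<0$, by Lemma~\ref{lemma: preliminary restrictions part 2}(5) and Lemma~\ref{lemma: transversality thin strips}), with $|f|\ge\kappa'_i$: this combines the eigenfunction estimate from Lemma~\ref{kkprime}/Lemma~\ref{lemma: asymptotics} on $\overline{u}_{i\infty}$ with the $(\kappa(i),\nu(i))$-closeness of $\overline{u}_{ij}$ to $\overline{u}_{i\infty}$, after possibly shrinking $\kappa(i)$ again. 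I expect the main obstacle to be the bookkeeping that makes the truncation self-consistent: choosing the cutoff heights $R^{(l)}_{ij}$ simultaneously so that (a) $p$ restricts to a branched cover on each band, (b) the boundary $\{d(\overline{u}_{ij}(\cdot),\sigma_\infty^-)=2\kappa'_i\}$ is transverse and has its components correctly split into $\mathfrak e^+$ and $\mathfrak e^-$, and (c) the eigenfunction estimate (G) holds on these components with the \emph{same} $\kappa_i$ used in (2) — all while $\kappa'_i/\kappa_i\to\infty$. The delicate point is that the radius $2\kappa'_i$ of the tube around $\sigma_\infty^-$ must be chosen large enough to contain all boundary points with $d\le\kappa'_i/2$ (so $\widetilde F_{ij}$ captures the whole piece near infinity) yet small enough that inside the tube $\pi\circ\widetilde u_{ij}$ is still governed by the leading eigenfunction term; Lemma~\ref{kkprime} is precisely engineered to provide a gap between these two scales, so the argument should go through once the neck parameters are matched carefully.
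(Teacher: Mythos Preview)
Your approach is essentially the same as the paper's: apply Lemma~\ref{kkprime} to the negative end $\widetilde v_i$ of the limit building with a sequence $\varepsilon_i\to 0$ to produce $\kappa_i',\kappa_i,\overline R_i$ with $\kappa_i'/\kappa_i\to\infty$, then use the $(\kappa,\nu)$-closeness from (S3) to transfer the eigenfunction estimate to $\overline u_{ij}$ for $j\ge j(i)$, and conclude by the triangle inequality. The paper carries this out explicitly in Case (S4') (where $(s,t)$ are global coordinates on $\widetilde F_{ij}$ and $a=2$), choosing $j(i)$ so that $|\pi\circ\widetilde u_{ij}(R'_{ij}-\overline R_i,t)-\pi\circ\widetilde v_i(-\overline R_i,t)|\le\kappa_i/2$ and setting $R^{(1)}_{ij}=R'_{ij}-\overline R_i$; then $|\pi\circ\widetilde u_{ij}(R^{(1)}_{ij},t)-\kappa_i'f_i(t)|\le\kappa_i/2+\kappa_i/2=\kappa_i$, which is exactly condition (G) at that end; the negative truncated end is handled identically.

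The one place where you make life harder than necessary is in the construction of $\widetilde F_{ij}$: you define it as the preimage of the $2\kappa_i'$-tube around $\sigma_\infty^-$, which forces $\mathfrak e^\pm_{ij}$ to be level sets of $|\pi\circ\overline u_{ij}|$ rather than constant-$s$ loci, and then you correctly flag the resulting bookkeeping as the main obstacle. The paper sidesteps this entirely by truncating at the specific $s$-levels $R^{(l)}_{ij}$ produced above; at those levels $|\pi\circ\widetilde u_{ij}|$ is automatically within $\kappa_i$ of $\kappa_i'$, so the sandwich condition $\kappa_i'/2\le d\le 2\kappa_i'$ in Definition~\ref{defn: truncation} holds and (G) is immediate. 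Also a minor slip: you have $\mathfrak e^+$ and $\mathfrak e^-$ swapped --- $\mathfrak e^+$ (outward normal with $ds>0$) is the top boundary near the $\delta_0$-end feeding into $\overline v_{a',i}$, and $\mathfrak e^-$ is the bottom boundary near the $z_\infty$-end of the thin strips.
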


\begin{proof}
We prove the lemma in Case (S4'), where the notation is simpler, i.e., we can use $(s,t)$ as global coordinates on $\widetilde{F}_{ij}$. In this case $l_0=2$.  Case (S4'') is conceptually the same.

Fix a sequence $\eta_i \to 0$. Then, by Lemma \ref{kkprime}, for each $i$ there exist $\kappa_i'$, $\kappa_i$ and $\overline{R}_i$ which satisfy:
\begin{itemize}
\item $\kappa_i',\kappa_i < \eta_i$ and $\frac{\kappa_i'}{\kappa_i} > \frac{1}{\eta_i}$; and
\item $\left| \overline{\pi}\circ\widetilde{v}_{i}(-\overline{R}_i, t)-\kappa_i' f_i(t) \right|\leq \frac{\kappa_i}{2}$ for all $t$, where $f_i$ is the normalized asymptotic eigenfunction for a negative end $\widetilde{v}_{i}$ of a $\overline{J'}_{m_i}$-holomorphic curve $\widehat{v}_i$ of ECH index $I\leq 2$ that limits to $\delta_0$. The curve $\widehat{v}_i$ is a component of the SFT limit of the sequence $\{\overline{u}_{ij} \}_{j}$.
\end{itemize}

On the other hand, by (S3) and Definition~\ref{defn: kappa nu close buildings}, for all $i$ there exists $j(i)$ such that if $j>j(i)$ then there exists $R_{ij}'$ such that
$$\left| \overline{\pi}\circ \overline{u}_{ij} (R_{ij}'- \overline{R}_i, t) - \overline{\pi}\circ \widetilde{v}_{i} (-\overline{R}_i, t) \right | \le {\kappa_i\over 2}$$ for all $t$.
Setting $R_{ij}^{(1)}=R'_{ij(i)}-\overline{R}_i$, we obtain:
\begin{align*}
\left| \overline{\pi}\circ\overline{u}_{ij}(R_{ij}^{(1)}, t)-\kappa_i' f_i(t) \right| \le & \left| \overline{\pi}\circ \widetilde{u}_{ij} (R_{ij}^{(1)},t) - \overline{\pi}\circ \widetilde{v}_{i} (-\overline{R}_i,t) \right | \\
& + \left| \overline{\pi}\circ\widetilde{v}_{i}(-\overline{R}_i,t) -\kappa_i' f_i(t) \right| \le \kappa_i.
\end{align*}

Similar considerations also hold for the $t$-derivative and at the negative truncated end.
\end{proof}

\begin{rmk}\label{limit of normalized eigenfunctions}
As $i\to \infty$, the curves $\widehat{v}_{i}$ approach one of the
${\mathcal C}_{i_0}$ from Section~\ref{limit m to infty}, modulo $\R$-translation,
by Lemma \ref{dentist}. Hence the normalized asymptotic eigenfunctions $f_i$ limit
to the normalized eigenfunction $f_{i_0j_0}$ for ${\mathcal C}_{i_0}$ at the negative end as $i \to \infty$.
\end{rmk}

\subsubsection{Ansatz}

We define $\widetilde{z}_i =  \overline{\pi}\circ \widetilde{u}_{ij(i)}$ and, to simplify computations, we will make the ansatz
\begin{equation} \label{eqn: ansatz}
\widetilde{z}_i= e^{-\varepsilon_i s}\widetilde{w}_i,
\end{equation}
where $\varepsilon_i = \frac{\pi}{m_i}$.

\begin{lemma} \label{making holomorphic}
The functions $\widetilde{w}_i : \widetilde{F}_i \to \C$ defined by Equation~\eqref{eqn: ansatz} are holomorphic with respect to the standard complex structure on $\C$.
\end{lemma}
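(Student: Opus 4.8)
The plan is to compute the Cauchy--Riemann type equation satisfied by $\widetilde{z}_i = \pi\circ\widetilde{u}_{ij(i)}$ and then check that the rescaling by $e^{-\varepsilon_i s}$ exactly cancels the zeroth-order term. First I would recall from the proof of Lemma~\ref{kkprime} (via Lemma~\ref{lemma: asymptotics} and the computation in Equation~\eqref{eqn: holomorphic on V}) that, in the balanced coordinates on $(\R/2\Z)\times D_{\rho_0}$ and with respect to $\overline{J'}_{m_i}$, the projected curve satisfies
\begin{equation*}
\bdry_s \widetilde{z}_i + i\,\bdry_t \widetilde{z}_i + \varepsilon_i\,\widetilde{z}_i = 0,
\end{equation*}
where $\varepsilon_i = \pi/m_i$. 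This is the same equation as Equation~\eqref{eqn: holomorphic on V}: near the section at infinity the projection $\pi=\pi_{D_{\rho_0}}$ is holomorphic by Definition~\ref{defn: almost complex structures on R times overline N}(1), so $\pi\circ\widetilde{u}_{ij(i)}$ inherits that perturbed holomorphicity. One subtlety to address is that $\widetilde{F}_i$ is a branched cover of its image in $B_-$ rather than a strip, so strictly speaking the equation holds in the local $(s,t)$-coordinates away from the finitely many branch points of $p=\overline\pi_{B_-}\circ\overline{u}$; by removability of singularities the conclusion extends across those points, so this causes no real difficulty.

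Next I would substitute the ansatz $\widetilde{z}_i = e^{-\varepsilon_i s}\widetilde{w}_i$ into the equation. Since $\varepsilon_i$ is a real constant and $\bdry_t(e^{-\varepsilon_i s}) = 0$, we get
\begin{equation*}
\bdry_s\widetilde{z}_i = e^{-\varepsilon_i s}\bdry_s\widetilde{w}_i - \varepsilon_i e^{-\varepsilon_i s}\widetilde{w}_i, \qquad i\,\bdry_t\widetilde{z}_i = e^{-\varepsilon_i s}\, i\,\bdry_t\widetilde{w}_i,
\end{equation*}
so that
\begin{equation*}
0 = \bdry_s\widetilde{z}_i + i\,\bdry_t\widetilde{z}_i + \varepsilon_i\widetilde{z}_i = e^{-\varepsilon_i s}\left(\bdry_s\widetilde{w}_i + i\,\bdry_t\widetilde{w}_i - \varepsilon_i\widetilde{w}_i + \varepsilon_i\widetilde{w}_i\right) = e^{-\varepsilon_i s}\left(\bdry_s\widetilde{w}_i + i\,\bdry_t\widetilde{w}_i\right).
\end{equation*}
As $e^{-\varepsilon_i s}$ is nowhere zero, this gives $\bdry_s\widetilde{w}_i + i\,\bdry_t\widetilde{w}_i = 0$, i.e.\ $\widetilde{w}_i$ is holomorphic with respect to the standard complex structure on $\C$ and the $(s,t)$-complex structure on $\widetilde{F}_i$ coming from $p$.

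The main (and really only) obstacle is bookkeeping rather than mathematical: making sure the coordinate $s$ used in the ansatz is genuinely the pullback of the $B_-$-coordinate $s$ (so that $\bdry_t s = 0$ in the local frame, which is what makes the cancellation work), and confirming that the equation written above is literally the equation from Equation~\eqref{eqn: holomorphic on V} transported to $\widetilde{F}_i$, now with $\varepsilon_i = \pi/m_i$ instead of the fixed $\varepsilon = \pi/m$. Once those identifications are in place the computation is the three lines above. I would close by noting that holomorphicity is a local statement and the branch points of $p$ form a discrete set, so the Riemann removable singularity theorem upgrades ``holomorphic off the branch locus'' to ``holomorphic everywhere'' on $\widetilde{F}_i$.
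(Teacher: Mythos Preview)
Your proposal is correct and follows essentially the same route as the paper's proof: both verify that $\widetilde{z}_i$ satisfies the perturbed Cauchy--Riemann equation $\partial_s\widetilde{z}_i + i\partial_t\widetilde{z}_i + \varepsilon_i\widetilde{z}_i = 0$ in the pulled-back $(s,t)$-coordinates, substitute the ansatz to cancel the zeroth-order term, and then invoke removal of singularities at the branch points. The only cosmetic difference is that the paper phrases things for Case (S4'') first, but the computation is line-for-line the same.
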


\begin{proof}
We give the proof for Case (S4");  Case (S4') is similar but simpler. The functions $(s,t)$ give local conformal coordinates on $\widetilde{F}_i$ outside the branch locus because $p_{i} : \widetilde{F}_i \to B_-$ is holomorphic. Then $\widetilde{z}_i : \widetilde{F}_i \to \C$ satisfies Equation~\eqref{eqn: holomorphic on V}:
$$ \partial_s \widetilde{z}_i +i \partial_t \widetilde{z}_i + \varepsilon_i \widetilde{z}_i =0.$$
If we plug in Equation~\eqref{eqn: ansatz} into Equation~\eqref{eqn: holomorphic
on V} we obtain:
$$-\varepsilon_i e^{-\varepsilon_i s}\widetilde{w}_i + e^{-\varepsilon_i s} \bdry_s
\widetilde{w}_i + ie^{-\varepsilon_i s}\bdry_t \widetilde{w}_i +\varepsilon_i
e^{-\varepsilon_i s} \widetilde{w}_i=0.$$
Hence $\bdry_s \widetilde{w}+ i\bdry_t \widetilde{w}_i=0$, so $\widetilde{w}_i$ is a
holomorphic map to the standard complex line $(\C,i)$ in the complement of the
branch point. Then it is holomorphic everywhere by the removal of singularities.
\end{proof}

\subsubsection{Rescaling}

Consider the diagonal subsequence $\overline{u}_i=\overline{u}_{ij(i)}$, $i=1,2,\dots$. We abbreviate $\widetilde{F}_i=\widetilde{F}_{ij(i)}$, $p_i = p_{ij(i)}|_{\widetilde{F}_{ij(i)}}$, $\widetilde{\frak m}_i=\widetilde{\frak m}_{ij(i)}$, etc.  Fix $R_0',R_0>0$ and let $K_i$ be the connected component of $p_i^{-1}(B_- \cap \{ - R_0' \le s \le R_0 \})$ containing $\widetilde{\mathfrak{m}}_i$.
By passing to a subsequence we may assume that the topologies of $K_i$ and $\widetilde{F}_i-K_i$ are independent of $i$. In order to simplify
the exposition, we will assume that this is the case for all $i$.

\begin{defn} \label{defn: definiton of w_i}
We define $C_i = \sup_{z\in K_i} |\widetilde{w}_i(z)|$ and $w_i = \widetilde{w}_i/C_i$.
\end{defn}

The holomorphic maps $w_i : \widetilde{F}_i \to \C$ satisfy the following properties:
\begin{enumerate}
\item $\sup_{z \in K_i} |w_i(z)|=1$;
\item there is a unique point $\widetilde{\frak m}_i\in \widetilde{F}_i$ such that $w_i(\widetilde{\frak m}_i)=0$; the zero $\widetilde{\frak m}_i$ is a simple zero;
\item if $x \in \partial \widetilde{F}_i$ and $p_i(x)=(s,t) \in \partial B_-$,  then $w_i(x) \in e^{i (\varepsilon_i t  + \phi_i)}\R^+$ with $\varepsilon_i={\pi\over m_i}$ and $\lim \limits_{i \to \infty}\phi_i =0$  for some $\phi_i$ depending on the boundary component of $\partial \widetilde{F}_i$ containing $x$;
\item for every truncated end  $\mathfrak{c}$ (i.e., a connected component of $\mathfrak{e}^+$ or $\mathfrak{e}^-$) there are constants $\widetilde{\kappa}_i$  and $\widetilde{\kappa}_i'$ such that the inequalities
\begin{equation} \label{eqn: approx}
\displaystyle\left \|w_i|_{\mathfrak{c}} - f_i \right \|_{C^1} \leq \widetilde{\kappa}_i, \quad \|f_i\|_{C^0}\geq \widetilde\kappa_i'
\end{equation}
holds, where $f_i$ is an asymptotic eigenfunction of $\delta$ or $z_\infty$ (as appropriated) on $\mathfrak{c}$, and $\widetilde{\kappa}_i$ and the constants $\widetilde{\kappa}_i'$ satisfy $\widetilde{\kappa}_i'/\widetilde{\kappa}_i \to \infty$ as $i \to \infty$.
\end{enumerate}
\begin{rmk}\label{34}
The loops of $w_i|_{\mathfrak{e}_i^\pm}$ have winding number $1$ around the origin when $i$ is sufficiently large: in fact $\widetilde{\kappa}_i' > \widetilde{\kappa}_i$, so the linear homotopy between $w_i|_{\mathfrak{e}_i^\pm}$ and $f_i(t)$ is contained in $\C^\times$.
\end{rmk}

\begin{lemma} \label{lemma: convergence on fixed compact set}
After passing to a subsequence, $w_i|_{K_i}$ converges to a nonconstant holomorphic function $w_\infty|_{K_\infty}$.
\end{lemma}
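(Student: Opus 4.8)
\textbf{Proof proposal for Lemma~\ref{lemma: convergence on fixed compact set}.}
The plan is to apply a standard normal-families argument to the sequence of holomorphic functions $w_i|_{K_i}$, using the uniform bound $\sup_{K_i}|w_i|=1$ from property (1) together with an appropriate elliptic estimate, and then to rule out the constant limit using the normalization that forces a zero at $\widetilde{\frak m}_i$ and the winding-number information at the truncated ends. First I would fix the reference compact set: since by the previous paragraph we have arranged (after passing to a subsequence) that the topological type of $K_i$ is independent of $i$, I would identify all the $K_i$ with a fixed model surface $K_0$ via diffeomorphisms compatible with the branched covering maps $p_i$, so that the $w_i$ become a sequence of holomorphic functions on a fixed domain with respect to complex structures $j_i$ converging to a limit $j_\infty$ (this uses that $p_i\colon K_i\to B_-\cap\{-R_0'\le s\le R_0\}$ is a branched cover of a fixed base, and the branch points vary in a precompact set, so one may extract a convergent subsequence of the $j_i$). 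On $K_0$, the functions $w_i$ are holomorphic for $j_i$, bounded by $1$ in sup norm, and satisfy the boundary condition of property (3): $w_i(x)\in e^{i(\varepsilon_i t+\phi_i)}\R^+$ with $\varepsilon_i\to 0$, $\phi_i\to 0$.

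Next I would invoke interior and boundary elliptic regularity for the $\bar\partial_{j_i}$-equation with these totally real (indeed nearly-constant-line) boundary conditions to upgrade the $C^0$-bound to a $C^\infty_{loc}$-bound on slightly shrunken subdomains; the boundary condition is uniformly of the appropriate totally real type because the angles $\varepsilon_i t+\phi_i$ stay in a small interval, so Schwarz-reflection-type or standard boundary Schauder estimates apply uniformly. By Arzel\`a--Ascoli and a diagonal argument over an exhaustion of $K_0$ by such subdomains, a subsequence of $w_i$ converges in $C^\infty_{loc}$ to a holomorphic function $w_\infty$ on $K_0$ (holomorphic for $j_\infty$), which satisfies the limiting boundary condition $w_\infty(\partial K_0)\subset\R$ along each boundary arc mapping to $\partial B_-$ (the limiting angles being $0$). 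At this point $|w_\infty|\le 1$ on $K_0$ and $w_\infty(\widetilde{\frak m}_\infty)=0$ where $\widetilde{\frak m}_\infty=\lim\widetilde{\frak m}_i$ (property (2) and the fact that $\widetilde{\frak m}_i$ is a fixed marked point in $K_0$).

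To see $w_\infty$ is nonconstant, I would argue by contradiction: if $w_\infty\equiv c$, then $c=0$ by the zero at $\widetilde{\frak m}_\infty$, so $w_i\to 0$ uniformly on $K_0$. But property (1) says $\sup_{K_i}|w_i|=1$, and this supremum is attained at some point; if that point stays in a compact subset of $K_0$ we contradict uniform convergence to $0$, while if it escapes toward the truncated ends $\mathfrak{e}_i^\pm$ then property (4)/Remark~\ref{34} forces $w_i|_{\mathfrak{e}_i^\pm}$ to be within $\widetilde\kappa_i$ of an asymptotic eigenfunction $f_i$ of modulus $\ge\widetilde\kappa_i'$ with winding number $1$; rescaling so that $\sup=1$ then pins the size of $w_i$ near the ends to a definite nonzero scale comparable to where the max lives, and a more careful bookkeeping (comparing $C_i$ with $\sup$ over $K_i$ versus the boundary-value scale) shows the normalization $\sup_{K_i}|w_i|=1$ cannot be consistent with $w_i\to 0$ on any fixed compact set through which the curve passes — in particular the marked point $\widetilde{\frak m}_i$, where we know the curve meets $\sigma_\infty^-$, keeps $w_i$ at a controlled scale. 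I expect the main obstacle to be precisely this last point: ensuring that the ``mass'' of $w_i$ does not entirely escape into the truncated ends as $i\to\infty$, i.e.\ that the renormalization constant $C_i=\sup_{K_i}|\widetilde w_i|$ is genuinely comparable to quantities measured on the fixed compact set $K_0$ rather than being dominated by boundary behavior near $\mathfrak{e}_i^\pm$; this is where the good-truncation estimates of Lemma~\ref{lemma: pre-riscaling estimates} and the asymptotic control of Lemma~\ref{kkprime} (and Remark~\ref{limit of normalized eigenfunctions}) must be combined to control the profile of $w_i$ on the transition regions $\{R^{(l)}_i\le s\le R^{(l+1)}_i\}$ uniformly, so that a nonconstant limit survives.
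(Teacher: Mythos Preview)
Your approach is essentially the same as the paper's: a normal-families argument (the paper just says ``Montel's theorem'') using the uniform bound $\sup_{K_i}|w_i|=1$, after passing to a subsequence so that the domains $K_i$ converge to a fixed $K_0$.

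However, your nonconstancy argument is overcomplicated because you have misread the geometry. The set $K_i$ is the connected component of $p_i^{-1}\bigl(B_-\cap\{-R_0'\le s\le R_0\}\bigr)$ containing $\widetilde{\frak m}_i$, where $R_0,R_0'>0$ are \emph{fixed}. In particular $K_i$ is compact and lies well away from the truncated ends ${\frak e}_i^\pm$ (which sit at $s=R_i^{(l)}\to\pm\infty$). So the supremum $\sup_{K_i}|w_i|=1$ is attained at some $z_i\in K_i$, and after a further subsequence $z_i\to z_\infty\in K_0$; uniform convergence then gives $|w_\infty(z_\infty)|=1$. Since also $w_\infty(\widetilde{\frak m}_\infty)=0$ (from property~(2) and $\widetilde{\frak m}_i\in K_i$), the limit is nonconstant. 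There is no ``escape to the truncated ends'' case to worry about, and none of the good-truncation estimates or asymptotic control from Lemmas~\ref{lemma: pre-riscaling estimates} and~\ref{kkprime} are needed here; those enter only later when analyzing the behavior of $w_\infty$ near the punctures (Theorem~\ref{capodanno}). The paper itself is terse on this point and simply invokes Montel, leaving the nonconstancy implicit in the normalization.
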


\begin{proof}
After passing to a subsequence we may assume that $K_i$ converges to a Riemann surface $K_\infty$ with boundary. Since $w_i|_{K_i}$ is uniformly bounded, the lemma follows from Montel's theorem.
\end{proof}

From now on we assume that we have passed to a subsequence so that Lemma \ref{lemma: convergence on fixed compact set} holds. In the following subsections we analyze the convergence of $w_i$, not just on the uniformly bounded part $K_i$.  This involves techniques of SFT compactness.

\subsubsection{Energy bound}

Following Hofer~\cite{Ho1}, we define an {\em energy} for holomorphic functions on Riemann surfaces with boundary and punctures.

\begin{defn}[Energy] \label{energy for w_i}
Let $\mathcal{C}$ be the set of smooth functions $\varphi : [0,\infty) \to [0,1]$ such that $\varphi(0)=0$ and $\varphi'(r) \ge 0$ for all $r \in [0,\infty)$. If $F$ is a Riemann surface and $u : F \to \C$ a holomorphic function, we define the {\em energy of $u$} as
$$E(u)=\sup \limits_{\varphi \in \mathcal{C}} \int_{F} u^* d (\varphi(r) d \theta),$$
where $(r, \theta)$ are polar coordinates on $\C$.
\end{defn}

\begin{rmk}
If we identify $\C^\times\stackrel\sim\to \R\times S^1$ using the log map, then $$\sup \limits_{\varphi \in \mathcal{C}} \int_{F-u^{-1}(0)} u^* d (\varphi(r) d \theta)$$ agrees with the usual expression for the Hofer energy of
$$\log \circ u:F-u^{-1}(0)\to \R\times S^1.$$
\end{rmk}

\begin{lemma} \label{sconforto}
The sequence $w_i$ has uniformly bounded energy.
\end{lemma}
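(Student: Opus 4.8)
The plan is to reduce the claimed uniform energy bound for the rescaled holomorphic functions $w_i:\widetilde F_i\to\C$ to a homological/topological quantity that is manifestly bounded, using the identification of the energy $E(w_i)$ with the Hofer energy of $\log\circ w_i$ on $\widetilde F_i - w_i^{-1}(0)$ (the remark immediately preceding the statement). First I would recall that, by the remark, $E(w_i)=\sup_{\varphi\in\mathcal C}\int_{\widetilde F_i-w_i^{-1}(0)} (\log\circ w_i)^*d(\varphi(r)d\theta)$, and that since $w_i$ is holomorphic the integrand $(\log\circ w_i)^*(d\varphi\wedge d\theta)$ is pointwise nonnegative for $\varphi\in\mathcal C$ (this is the standard Hofer-energy positivity: pull back by a holomorphic map of the exact form $d(\varphi\,d\theta)$ on the cylinder). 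Thus $E(w_i)\le \int_{\widetilde F_i}(\log\circ w_i)^*(dr\wedge d\theta)/r$-type bound is replaced by the cohomological computation: because $d(\varphi(r)d\theta)$ is exact with primitive $\varphi(r)d\theta$ bounded in sup-norm by $1$ (here $|d\theta|$ is measured against the cylinder's natural metric, and $\varphi\le 1$), Stokes' theorem gives
\begin{equation}
\int_{\widetilde F_i} w_i^* d(\varphi(r)d\theta) = \int_{\bdry \widetilde F_i} w_i^*(\varphi(r)d\theta),
\end{equation}
where $\bdry\widetilde F_i$ includes the Lagrangian boundary $p_i^{-1}(\bdry B_-)$, the truncated ends $\mathfrak e_i^\pm$, and a small loop around the zero $\widetilde{\mathfrak m}_i$.

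Next I would bound each boundary contribution. On $p_i^{-1}(\bdry B_-)$, property (3) of $w_i$ says $w_i(x)\in e^{i(\varepsilon_i t+\phi_i)}\R^+$, i.e.\ $\arg w_i$ is essentially constant ($\varepsilon_i\to 0$, $\phi_i\to 0$), so $d\theta$ pulls back to a form of total variation $O(\varepsilon_i)\cdot(\text{length in }t)$; since each such boundary arc has bounded $t$-length (the base $B_-$ has compact boundary), this contributes a uniformly bounded amount. Around the simple zero $\widetilde{\mathfrak m}_i$ (property (2)), the loop integral of $w_i^*(\varphi(r)d\theta)$ is $2\pi$ times the local degree, which is $1$, times $\sup\varphi\le 1$: bounded by $2\pi$. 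On the truncated ends $\mathfrak e_i^\pm$, property (4)/Remark~\ref{34} tells us $w_i|_{\mathfrak e_i^\pm}$ is $\widetilde\kappa_i$-close to an asymptotic eigenfunction $f_i$ with $|f_i|\ge\widetilde\kappa_i'\gg\widetilde\kappa_i$, so $w_i|_{\mathfrak e_i^\pm}$ is a loop in $\C^\times$ homotopic to $f_i$, which has winding number exactly $1$ (eigenfunctions of the asymptotic operator are of the form $ce^{\pi i n t/l}$, and the eigenfunctions occurring as limits of the $\overline v_i$ — the $f_{ab}$ of Section~\ref{limit m to infty}, via Remark~\ref{limit of normalized eigenfunctions} — all have the minimal winding $n=1$ because they arise from $I\le 2$ curves at $\delta_0$ whose Conley-Zehnder data pins down the winding). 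Hence each end contributes $\le 2\pi\cdot(\#\text{ends})\cdot\sup\varphi$, and the number of ends of $\widetilde F_i$ is uniformly bounded since the topological type of $\widetilde F_i$ is fixed along the subsequence (we arranged this just before Definition~\ref{defn: definiton of w_i}). Summing these contributions and taking the supremum over $\varphi\in\mathcal C$ yields a bound on $E(w_i)$ independent of $i$.

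I expect the main obstacle to be making the truncated-end estimate airtight: one must show that the $C^0$-closeness in Equation~\eqref{eqn: approx} genuinely forces the winding number to be $1$ and not merely finite, and that the ``tail'' of the integral $\int_{\mathfrak e_i^\pm} w_i^*(\varphi(r)d\theta)$ beyond the truncation — which is not present since we truncated — does not reappear through a mismatch between the truncated surface $\widetilde F_i$ and the ``true'' asymptotic behavior. The cleanest route is: (a) use the fact that on $\mathfrak e_i^\pm$ the function $w_i$ lands in an annular neighborhood of the circle $\{|z|=|c_i|\}$ where $c_i$ is the eigenfunction coefficient, so $\arg w_i$ is monotone up to the small error, giving winding number exactly $1$; (b) bound $\int w_i^*(\varphi(r)d\theta)$ over each end-circle by $2\pi$ directly using that $\varphi\le 1$ and $\int d\theta$ over a winding-$1$ loop is $2\pi$. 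An alternative, perhaps more robust approach is to bound $E(w_i)$ by $E(\widetilde z_i)$ up to the known area/$\omega$-energy bounds on $\overline u_i$ (which are uniform by the compactness setup, since all $\overline u_i$ lie in moduli spaces $\mathcal M^{I=3,n^*=m_i}$ with fixed relative homology data and the flux vanishes), using that the ansatz $\widetilde z_i=e^{-\varepsilon_i s}\widetilde w_i$ only rescales fiberwise by a factor $e^{-\varepsilon_i s}$ which is $\varepsilon_i$-close to $1$ on the compact-in-$s$ truncated region; this converts the statement into the already-established energy bounds for the original $\overline{W}_-$-curves. Either way, once the boundary integrals are controlled, the uniform energy bound follows, which in turn is what powers the SFT-type compactness argument for $w_i$ in the subsequent subsections.
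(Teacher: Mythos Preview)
Your approach via Stokes' theorem and bounding each boundary contribution is correct and is exactly what the paper does. Two small corrections: the loop around the simple zero $\widetilde{\frak m}_i$ contributes $0$ (not $2\pi$) since $\varphi(0)=0$, so it need not appear; and $\bdry B_-$ is not compact---the point is rather that on the two infinite rays $\{t=0\}$ and $\{t=1\}$ of $\bdry B_-$ the argument of $w_i$ is constant (so $w_i^*d\theta=0$ there), while on the finite curved arc $\{1<t<2\}$ the contribution is $O(\varepsilon_i)$, which the paper observes is in fact negative.
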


\begin{proof}
By Stokes' theorem,
$$\int_{\widetilde{F}_i} w_i^*(d\varphi(r)d\theta) = \int_{\bdry \widetilde{F}_i} w_i^*(\varphi(r)d\theta).$$
The boundary $\bdry \widetilde{F}_i$ is the union of ${\frak e}_i^\pm$, ${\frak f}_{ik}= p_i^{-1}(\{t=k\})\cap \bdry\widetilde{F}_i$, $k=0,1$, and ${\frak f}_{i2}= p_i^{-1}(\{1< t< 2\})\cap \bdry \widetilde{F}_i$, all oriented using the boundary orientation. 
If ${\frak c}$ is a component of ${\frak e}_i^\pm$, then $\int_{{\frak c}} w_i^*d\theta = 2\pi\deg(w_i|_{\frak c})$ if ${\frak c}$ is a circle and $\int_{\frak c}  w_i^*d\theta \approx {\pi\over m}$ if ${\frak c}$ is an arc because $w_i|_{\frak c}$ is $C^l$-close to an asymptotic eigenfunction with $l\geq 1$ (compared to the $C^0$ norm of the eigenfunction). This follows from the exponential decay estimates of \cite{HWZ1}; also see \cite[Lemma~2.3]{HT2}. Moreover, for $k=0,1$,  $\int_{{\frak f}_{ik}}w_i^*(\varphi(r)d\theta)=0$ since $w|_{{\frak f}_{ik}}$, $k=0,1$, projects to a radial ray. Finally, $\int_{{\frak f}_{i2}}w_i^*(\varphi(r)d\theta)<0$ since $w|_{{\frak f}_{i2}}$ always has a component in the negative $\theta$-direction; here it is important to remember that we are projecting using balanced coordinates. This proves the lemma.
\end{proof}

\begin{rmk}
Observe that $E(cu)=E(u)$ where $c\in \C^\times$.
\end{rmk}

\subsubsection{Bubbling}

The goal of this subsection is to eliminate certain types of bubbling.

Let $(\widetilde{F}^i,\widetilde{g}_i)$, $i\in \N$, be a sequence of Riemannian surfaces which are compatible with the complex structures and have injectivity radii which are uniformly bounded below. For example, one could obtain the metrics $\widetilde{g}_i$ by scaling the compatible hyperbolic metrics by a conformal factor so that the thick parts remain hyperbolic, while the thin parts become flat cylinders. We will use these metrics to compute the pointwise norm of the differential of the functions $w_i$ which are denoted by $|w_i'|$.

\begin{lemma}\label{natale}
For every compact set $K \subset B_-$, there is a constant $C_K>0$ such that $|w_i(z)|<C_K$ and $|w'_i(z)|<C_K$ for all $z \in p_i^{-1}(K)$ and all $i\in \N$.
\end{lemma}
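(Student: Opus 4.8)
The plan is to prove Lemma~\ref{natale} by a standard bubbling-off argument for holomorphic maps, adapted to the situation at hand where the domains carry boundary and the functions $w_i$ are normalized rescalings. First I would recall the key features established so far: the $w_i:\widetilde F_i\to\C$ are holomorphic (Lemma~\ref{making holomorphic}), their restrictions to the fixed compact set $K_i$ converge (Lemma~\ref{lemma: convergence on fixed compact set}), and they have uniformly bounded energy (Lemma~\ref{sconforto}). It is this uniform energy bound, together with the $\C^\times$-valued asymptotics at the truncated ends (property (4) of the $w_i$ and Remark~\ref{34}), that should prevent energy from escaping to the ends and forbid bubbling.

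The core step is the gradient bound on $p_i^{-1}(K)$ for fixed compact $K\subset B_-$. I would argue by contradiction: if no such bound $C_K$ exists, then there is a sequence of points $\zeta_i\in p_i^{-1}(K)$ with $|w_i'(\zeta_i)|\to\infty$ (with respect to the metrics $\widetilde g_i$, which have injectivity radius bounded below). Apply the Hofer lemma / rescaling trick: after modifying $\zeta_i$ slightly and rescaling the domain coordinate near $\zeta_i$ by $R_i:=|w_i'(\zeta_i)|^{-1}\to 0$, one obtains a sequence of holomorphic maps on larger and larger balls (or half-balls, if $\zeta_i$ is near $\partial\widetilde F_i$ — but the boundary condition (3), that $w_i$ takes values in a line rotated by $\varepsilon_i t+\phi_i\to 0$, is preserved in the limit and gives $\R^+$-valued boundary values, i.e.\ a totally real boundary condition) with bounded gradient and gradient $1$ at the center. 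By Gromov–Schwarz / elliptic estimates these converge in $C^\infty_{loc}$ to a nonconstant entire holomorphic map $\C\to\C$ (interior case) or a nonconstant holomorphic map $(\H,\partial\H)\to(\C,\R^+)$ with finite energy; by the Schwarz reflection principle the latter extends to a nonconstant entire map $\C\to\C$ as well. Either way, the limit is a nonconstant holomorphic function on $\C$ with finite $d(\varphi(r)d\theta)$-energy, which is absorbed into the sequence $w_i$ and hence would force $\lim_i E(w_i)$ to exceed any bound coming from the uniform energy — more precisely, a nonconstant holomorphic map $\C\to\C$ with our notion of finite energy must have positive energy (compute $\int_{\C} w^* d(\varphi(r)d\theta)$ over a large disk and let $\varphi$ approach the characteristic function of a small neighborhood of $0$), and this quantum of energy is ``lost'' near $\zeta_i$, contradicting Lemma~\ref{sconforto} unless the gradient stays bounded. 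This yields $|w_i'|<C_K$.

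Once the gradient bound is in hand, the pointwise bound $|w_i|<C_K$ on (a possibly slightly smaller but still exhausting) compact set follows by integrating the gradient bound along paths from $p_i^{-1}(K)$ into the region where $w_i$ is already controlled: on $K_i$ we have $\sup|w_i|=1$ by property (1), and the gradient bound propagates this control outward over any fixed compact $K\subset B_-$ since such $K$ is reached from $K_i$ within a bounded $\widetilde g_i$-distance (using again the lower bound on injectivity radius and the fact that $p_i$ is a branched cover of bounded degree, so the $\widetilde g_i$-diameter of $p_i^{-1}(K')$ for $K'$ slightly larger than $K$ and containing the base of $K_i$ is bounded). A minor technical point I would address is that the diameter estimate must be uniform in $i$; this uses that the topology of $K_i$ and of $\widetilde F_i - K_i$ has been fixed by passing to a subsequence (as arranged just before Definition~\ref{defn: definiton of w_i}), so the relevant portion of $\widetilde F_i$ has bounded geometry.

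The main obstacle I anticipate is the boundary behavior in the rescaling argument: one must rule out that the bubble forms a holomorphic disk with boundary on the (rotating, but asymptotically real) line and that no energy is concentrated at the truncated ends ${\frak e}_i^\pm$. The first is handled by Schwarz reflection as indicated, converting a boundary bubble into an interior bubble without losing the finite-energy property; the second is handled by property (4) and Remark~\ref{34}, which say that near the truncated ends $w_i$ is $C^l$-close (not merely $C^0$-close, by the exponential decay estimates of \cite{HWZ1}) to a single asymptotic eigenfunction with winding number $1$, so there is no room for gradient blow-up or escaping energy there — the ends contribute a fixed, bounded amount to $E(w_i)$ as computed in the proof of Lemma~\ref{sconforto}. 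Assembling these pieces gives the stated uniform bounds $|w_i(z)|<C_K$ and $|w_i'(z)|<C_K$ on $p_i^{-1}(K)$.
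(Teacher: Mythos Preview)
Your bubbling outline has the right architecture, but the core step does not close. You rescale and produce a nonconstant finite energy holomorphic map $\C\to\C$ (or a half-plane which doubles to such a map), and then assert that its positive energy ``contradicts Lemma~\ref{sconforto}.'' It does not: Lemma~\ref{sconforto} only gives a uniform upper bound $E(w_i)\le C$, and a bubble carrying a fixed quantum of energy is perfectly compatible with that bound. Moreover, nonconstant finite energy planes in $\C$ with this energy \emph{do} exist (e.g.\ $u(z)=z$ has energy $2\pi$), so there is no contradiction from the mere existence of the bubble. What you need is a target that admits no nonconstant finite energy planes at all.

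The paper supplies exactly this missing idea. On $K_i$ the bound is already known from Lemma~\ref{lemma: convergence on fixed compact set}. On $K_i^c:=\widetilde F_i-\operatorname{int}(K_i)$ the map $w_i$ is nowhere zero (the unique simple zero $\widetilde{\frak m}_i$ lies in $K_i$), so one may set $\zeta_i=\log\circ w_i|_{K_i^c}:K_i^c\to\R\times(\R/2\pi\Z)$. The $\zeta_i$ inherit the uniform energy bound, and if $|\zeta_i'|$ blew up, the usual rescaling would yield a nonconstant finite energy plane (or, after Schwarz reflection, plane) in $\R\times(\R/2\pi\Z)$ --- and by Hofer's \cite[Lemma~28]{Ho1} no such plane exists. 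That is the actual contradiction. Once $|\zeta_i'|$ is uniformly bounded, integrating from $\partial K_i$ (where $|w_i|$ is controlled) gives a bound on $|\zeta_i|$, hence on $|w_i|=e^{\operatorname{Re}\zeta_i}$ and on $|w_i'|=|w_i|\cdot|\zeta_i'|$. Your second half (propagating a gradient bound to a pointwise bound) is fine in spirit, but the key trick you are missing is to pass to the logarithm on the zero-free region so that the bubbling target becomes $\R\times S^1$ rather than $\C$.
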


\begin{proof}
The lemma holds for $K\cap K_i$ by Lemma~\ref{lemma: convergence on fixed compact set}. Hence it suffices to consider $K\cap K_i^c$, where $K_i^c:=\widetilde{F}_i-int(K_i)$. Since $w_i(K_i^c) \subset \C^\times$, we compose with $\log : \C^\times \to \R \times (\R/2 \pi \Z)$
and consider $\zeta_i = \log \circ w_i|_{K_i^c}$. The sequence $\zeta_i$ has uniformly bounded energy by Lemma~\ref{sconforto}. If $|{\zeta_i}'|$, $i\in\N$, is not uniformly bounded on $K_i^c$, then the usual bubbling analysis yields a nonconstant finite energy plane $\widetilde{v}^+_\infty: \C\to \R \times (\R/2 \pi \Z)$ or half-plane $\widetilde{v}^+_\infty: \H\to\R \times (\R/2 \pi \Z)$; see the proof of \cite[Theorem~31]{Ho1}. If $\widetilde{v}^+_{\infty}$ is a half-plane, then $\widetilde{v}^+_{\infty}(\partial \H)$ is contained in a line $\R\times\{pt\}\subset \R\times(\R/2 \pi \Z)$ as a consequence of the boundary conditions for the maps $w_i$, and we can double $\widetilde{v}^+_{\infty}$ to obtain a nonconstant finite energy plane by the Schwarz reflection principle. On the other hand, by \cite[Lemma~28]{Ho1}, there are no nonconstant finite energy planes in $\R \times (\R/2 \pi \Z)$, a contradiction. Hence $|{\zeta_i}'|$, $i\in \N$, is uniformly bounded on $K_i^c$. This in turn implies uniform bounds on $|w_i|$ and $|w_i'|$ on $K\cap K_i^c$.
\end{proof}

\subsubsection{Case (S4')}

{\em In Cases (S4') and (S4'') we often pass to a subsequence without explicit mention.}

Suppose we are in Case (S4'). Recall the compactification $cl(B_-)$ of $B_-$ defined in Section~\ref{acorn}, obtained by adjoining the points at infinity  $\mathfrak{p}_+, \mathfrak{p}_-$. Here  $cl(B_-)$ is isomorphic to the closed unit disk, $\mathfrak{p}_+$ is
a marked point in the interior and $\mathfrak{p}_-$ is a marked point on the boundary under this identification.

\begin{thm}\label{capodanno}
The sequence $(w_i)_{i\in\N}$ converges uniformly on compact subsets to a holomorphic map $w_{\infty} : B_- \to \C$ such that:
\begin{enumerate}
\item $w_{\infty}(\partial B_-) \subset \R^+$ and $w_\infty(\overline{\frak m}^b)=0$;
\item $\lim \limits_{s \to + \infty} |w_{\infty}(s,t)| = + \infty$ and $\lim \limits_{s \to + \infty} \dfrac{w_{\infty}(s,t)}{|w_{\infty}(s,t)|} = f_{i_0j_0}(t)$, for some $i_0, j_0$;
\item $\lim \limits_{s \to - \infty} w_{\infty}(s,t) = c \in \R^+$;
\item $w_\infty|_{int(B_-)}$ is a biholomorphism onto its image. In particular $\overline{\mathfrak{m}}^b$ is the unique zero of $w_\infty|_{cl(B_-)}$ and is simple;
\item $w_\infty$ extends to a holomorphic map $cl(B_-)\to \C\P^1$, still called $w_\infty$, such that $w_\infty(\bdry cl(B_-))=[a_1,a_2]\subset \R^+$ and $w_\infty(\mathfrak{p}_{+})=\infty$.
\end{enumerate}
\end{thm}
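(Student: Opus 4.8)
The plan is to extract $w_\infty$ from the rescaled sequence $w_i$ of Definition~\ref{defn: definiton of w_i} by a combination of the uniform derivative bounds of Lemma~\ref{natale}, the uniform energy bound of Lemma~\ref{sconforto}, and SFT-style compactness, and then to identify its asymptotics and prove injectivity via positivity of intersections. First I would observe that in Case (S4') the domains $\widetilde F_i$ can be taken to be $B_-$ itself (with $(s,t)$ as global conformal coordinates away from branch points, which in this case are absent by Remark~\ref{rmk: number of branch points}), so that the $w_i$ are honest holomorphic maps $B_-\to\C$. By Lemma~\ref{natale}, $|w_i|$ and $|w_i'|$ are uniformly bounded on $p_i^{-1}(K)$ for every compact $K\subset B_-$; combined with the Arzel\`a--Ascoli/Montel argument already used in Lemma~\ref{lemma: convergence on fixed compact set}, a subsequence converges uniformly on compact subsets to a holomorphic $w_\infty:B_-\to\C$, which is nonconstant because $w_\infty|_{K_0}$ is nonconstant by Lemma~\ref{lemma: convergence on fixed compact set}. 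The boundary condition $w_i(x)\in e^{i(\varepsilon_i t+\phi_i)}\R^+$ with $\varepsilon_i,\phi_i\to 0$ (property (3) of the $w_i$) passes to the limit to give $w_\infty(\bdry B_-)\subset\R^+$, and the normalization at $\widetilde{\frak m}_i$ (property (2)) gives $w_\infty(\overline{\frak m}^b)=0$; this is item (1).

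Next I would establish the asymptotic statements (2) and (3). At the positive puncture $\mathfrak p_+$: property (4) of the $w_i$, i.e.\ inequality~\eqref{eqn: approx}, says that along $\mathfrak e_i^+$ the map $w_i$ is $\widetilde\kappa_i$-close to an asymptotic eigenfunction $f_i$ with $|f_i|\ge\widetilde\kappa_i'$ and $\widetilde\kappa_i'/\widetilde\kappa_i\to\infty$; by Remark~\ref{limit of normalized eigenfunctions} the normalized eigenfunctions limit to $f_{ab}$. Feeding this into the limit (and using that $\log\circ w_i$ on the truncated ends is $C^l$-close, not just $C^0$-close, to an eigenfunction, as in the proof of Lemma~\ref{sconforto} via \cite{HWZ1}), one gets that $w_\infty$ blows up at $\mathfrak p_+$ with $w_\infty/|w_\infty|\to f_{ab}$; together with the removal-of-singularities/asymptotic analysis near a puncture this is item (2). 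At $\mathfrak p_-$ the negative end of the original building is, in Case (S4'), asymptotic to a single $\delta_0$ with eigenfunction governed by $\overline{v}_{-1}^\sharp$ being a thin strip; here the rescaled limit has a \emph{removable} puncture with bounded nonzero limit, giving $w_\infty\to c\in\R^+$ (item (3)) — the positivity of $c$ coming from the boundary condition $w_\infty(\bdry B_-)\subset\R^+$ and the fact that $w_\infty$ is nonconstant so $c\ne 0$. Having $w_\infty\to\infty$ at $\mathfrak p_+$ and a finite limit at $\mathfrak p_-$ lets $w_\infty$ extend to a holomorphic map $cl(B_-)\to\C\P^1$ with $w_\infty(\mathfrak p_+)=\infty$ and $w_\infty(\bdry\, cl(B_-))=[a_1,a_2]\subset\R^+$, which is item (5).

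Finally, item (4): $w_\infty|_{int(B_-)}$ is a biholomorphism onto its image. I would argue this by degree/positivity of intersections. Since $w_\infty:cl(B_-)\to\C\P^1$ is holomorphic with $w_\infty(\mathfrak p_+)=\infty$ and the image of the boundary lies on the segment $[a_1,a_2]\subset\R^+$, the map has a well-defined mapping degree onto its image and the preimage of a generic point counts with positive multiplicity; the constraint $w_\infty(\overline{\frak m}^b)=0$ shows $0$ is in the image and is attained at an interior point. One computes, using the winding-number data of Remark~\ref{34} (the loops $w_i|_{\mathfrak e_i^\pm}$ have winding number $1$ around the origin) and the boundary behavior, that $w_\infty$ takes every value in its image exactly once, i.e.\ the degree is $1$; hence $\overline{\frak m}^b$ is a simple zero and $w_\infty|_{int(B_-)}$ is injective. \textbf{The hard part} will be this last step: controlling the global degree of $w_\infty$ — in particular ruling out that rescaling has lost mass to bubbling at interior or boundary points of $B_-$ (which is why Lemma~\ref{natale}, excluding finite-energy planes and half-planes in $\R\times(\R/2\pi\Z)$, is doing the essential work) and ruling out that $w_\infty$ wraps multiple times. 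This requires combining the energy bound of Lemma~\ref{sconforto}, the no-bubbling lemma, the precise winding numbers at the ends, and positivity of intersections with radial rays (using that $\mathcal R_\pi$ is a good radial ray, cf.\ Remark~\ref{rmk: good ray is real}) to pin the degree to exactly $1$. Case (S4'') would then follow by the same scheme, with the extra bookkeeping of the level $R^{(1)}$ and the branch point of Remark~\ref{rmk: number of branch points} handled as indicated in the proof of Lemma~\ref{making holomorphic}.
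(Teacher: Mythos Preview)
Your overall strategy is the paper's strategy, and items (1), (5), and the setup for convergence are fine. The genuine gap is in your argument for (2) (and by the same mechanism (3)). You know that $w_i$ is $\widetilde\kappa_i$-close to an eigenfunction $f_i$ \emph{only along the truncated end} $\mathfrak e_i^+ = \{s=R_i^{(2)}\}$, and $R_i^{(2)}\to+\infty$. Uniform convergence on compact subsets of $B_-$ gives you nothing directly about $w_\infty$ near $\mathfrak p_+$, and the $C^l$-closeness you invoke from the proof of Lemma~\ref{sconforto} is again only at $s=R_i^{(2)}$; it does not by itself propagate to a statement about $w_\infty(s,t)$ as $s\to+\infty$. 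What is missing is a mechanism that converts control at the moving truncation level into control of the \emph{Fourier coefficients} of $w_\infty$. The paper does exactly this: expand $w_i(s,t)=\sum_n a_n^i e^{\pi n(s+it)}$ on the positive end, observe that $a_n^i\to a_n^\infty$ by uniform convergence on $\{s=R_0\}$, and then integrate $|w_i(R_i^{(2)},t)-c_ie^{\pi it}|\le\widetilde\kappa_i$ against $e^{-\pi int}$ to get $|a_n^i|e^{\pi nR_i^{(2)}}\le\widetilde\kappa_i$ for $n\ne1$ and $|a_1^ie^{\pi R_i^{(2)}}-c_i|\le\widetilde\kappa_i$. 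The first inequality with $n\ge2$, together with $\widetilde\kappa_i'/\widetilde\kappa_i\to\infty$ and $|c_i|\ge\widetilde\kappa_i'$, forces $a_n^\infty=0$ for $n\ge2$; a winding number argument then forces $a_1^\infty\ne0$, and dividing the second inequality by $|c_i|$ gives $a_1^\infty/|a_1^\infty|=\lim c_i/|c_i|$, hence the phase $f_{ab}$. Without this (or an equivalent) argument, nothing in your sketch excludes, e.g., $a_2^\infty\ne0$.

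Relatedly, you have misidentified the hard part. Once (2) is established rigorously, it says that $w_\infty$ has a \emph{simple} pole at $\mathfrak p_+$ and no other pole; since $w_\infty(\partial\,cl(B_-))\subset\R^+$ one has a well-defined degree for $(cl(B_-),\partial\,cl(B_-))\to(\C\P^1,[a_1,a_2])$, and $\deg w_\infty=\#w_\infty^{-1}(\infty)=1$, which is (4) immediately. There is no need to invoke positivity of intersections with radial rays or the good ray $\mathcal R_\pi$ of Remark~\ref{rmk: good ray is real}; that ray plays no role in this theorem (it enters only later, in the involution arguments of Section~\ref{subsection: theorem complement}). The ``no bubbling'' of Lemma~\ref{natale} is indeed essential, but its role is to guarantee uniform $C^1$-bounds so that Montel and Fourier coefficients make sense, not to control degree directly.
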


Recall that $f_{i_0j_0}(t)$ is a normalized asymptotic eigenfunction of the curve $\mathcal{C}_{i_0}$ with a negative end asymptotic to a multiple of $\delta_0$.

\begin{proof}
The uniform convergence on compact subsets is a consequence of the bounds from Lemma~\ref{natale}. (1) is immediate from the convergence, together with the observation that the angles between the arcs in $\overline{\bf a}$ tend to $0$ as $m_i\to\infty$.

(2) We can either use SFT compactness and analyze the levels, or argue directly as follows: Let $K_i^+$ (resp.\ $K_i^-$) be the component of $\widetilde{F}_i-int(K_i)$ with positive (resp.\ negative) $s$-coordinates. We expand $w_i$ and $w_{\infty}$ in Fourier series on $K_i^+$:
$$w_i(s,t) = \sum \limits_{n=- \infty}^{+ \infty} a_n^i e^{\pi n (s+it)}, \quad w_{\infty}(s,t) = \sum \limits_{n=- \infty}^{+ \infty} a_n^{\infty} e^{\pi n (s+it)}.$$
By the uniform convergence of $w_i$ to $w_{\infty}$, $\lim \limits_{i\to \infty} a_n^i = a_n^{\infty}$ for all $n$. Then (2) is equivalent to the conditions:
\begin{itemize}
\item[(i)] $\lim \limits_{i \to \infty} \frac{a_1^i}{|a_1^i|} e^{i \pi t} = f_{i_0j_0}(t)$; and
\item[(ii)] $\lim \limits_{i \to \infty} a_n^i = 0$ when $n \ge 2$.
\end{itemize}

Since we are in Case (S4'),  $\{ R_i^{(l)}\}_{l=1}^{l_0}=\{R_i^{(1)},R_i^{(2)}\}$, i.e., $l_0=2$.  By Equation~\eqref{eqn: approx},
$$\left | \frac 12 \int_0^2 \left(w_i(R_i^{(2)},t) - f_i(t)\right) e^{- \pi i n t}dt \right |\le \widetilde{\kappa}_i,$$
for all $n\in \Z$. On the other hand, if we write $f_i(t) = c_i e^{\pi i t}$, where $f_i(t)$ is not necessarily normalized, then
$$ \frac 12 \int_0^2 \left(w_i(R_i^{(2)},t) - f_i(t)\right) e^{- \pi i n t}dt =a_n^i e^{\pi n R_i^{(2)}} - c_i \delta_{1n},$$
where $\delta_{1n}$ is the Kronecker delta. Hence
\begin{enumerate}
\item[(a)] $|a_n^i|\cdot e^{\pi n R_i^{(2)}} \le \widetilde{\kappa}_i$ for all $n \ne 1$; and
\item[(b)] $|a_ 1^i e^{\pi R_i^{(2)}} - c_i | \le \widetilde{\kappa}_i$.
\end{enumerate}

We prove (ii). Arguing by contradiction, suppose that $\lim \limits_{i \to \infty} |a_n^i| \ne 0$ for some $n \ge 2$. By (a), there exists $C>0$ such that $Ce^{2\pi R_i^{(2)}}<\widetilde{\kappa}_i$ for all $i$. Since $R_i^{(2)}\to \infty$ and $\lim\limits_{i\to\infty}a^i_1$ exists, we obtain $|a_1^i e^{\pi R_i^{(2)}}|<\widetilde{\kappa}_i$ for all $i$. On the other hand, $|c_i|\geq \widetilde{\kappa}_i'$ and $\lim\limits_{i\to\infty}{\widetilde{\kappa}_i'\over \widetilde{\kappa}_i}=+\infty$, which contradicts (b).

Next we prove (i). We claim that $\lim \limits_{i \to \infty} a_1^i \ne 0$ for topological reasons. Indeed, if
$\lim \limits_{i \to \infty} a_1^i = 0$, then $\lim \limits_{i\to\infty} a_n^i=0$ for all $n\geq 1$ by (ii) and the curve $w_i|_{s=R_i^{(2)}}$ has nonpositive winding number around $0$ when $i\gg 0$, a contradiction. This proves the claim.  Finally, $\lim \limits_{i \to \infty} \left | a_1^i \cdot\frac{e^{\pi R_i^{(2)}}}
{|c_i|} - {c_i\over |c_i|} \right | =0$, since $\lim\limits_{i\to\infty}{\widetilde{\kappa}_i\over \widetilde{\kappa}_i'}=\lim\limits_{i\to\infty}{\widetilde{\kappa}_i\over |c_i|}= 0$. Hence
$$\lim \limits_{i \to \infty} \frac{a_1^i}{|a_1^i|} e^{\pi i t} =   f_{i_0j_0}(t),$$
which proves (i).

(3) This is similar to (2). We expand $w_i$ in Fourier series on $K_i^-$:
$$w_i(s,t) = \sum \limits_{- \infty}^{+ \infty} a_n^i e^{\varepsilon_i i}e^{(\pi n - \varepsilon_i)(s+it)}.$$
By the uniform convergence, $\lim \limits_{s \to - \infty} a_n^i = a_n^{\infty}$ and we can similarly prove that $a_n^{\infty} = 0$ for all $n<0$. This implies (3) because the normalized eigenfunctions converge to a constant as $i \to +\infty$.

(4) Since $H_2(cl(B_-), \partial cl(B_-)) \cong H_2(\C \P^1, [a_1, a_2]) \cong \Z,$
we have a well-defined notion of degree for $w_{\infty}$. Moreover, as in the closed case,
the degree is equal to the cardinality of the inverse image of a regular
value in $\C \P^1 - [a_1, a_2]$. Hence $\deg w_\infty|_{int(B_-)}=1$ because $w_{\infty}^{-1}(\infty) = \{ 0 \}$ and $0$ is a simple pole. This implies that $w_\infty: int(B_-)\to\C - [a_1, a_2]$ is a biholomorphism.

(5) follows from (2) and (3).
\end{proof}

\subsubsection{Case (S4'')}\label{subsection: Case (S4'')}

We give a brisk treatment of the construction of the limit, mostly pointing out the differences with (S4'). The main difference is that the holomorphic maps $\overline{u}_i$ admit good
truncations with a component which
is a branched double cover over its image with a single branch point $b_i \in B_-$, and we must analyze different cases depending on the behavior of the branch point as $i\to\infty$.

There are five cases:
\begin{itemize}
\item[(a)]  $\lim \limits_{i \to \infty} b_i = b_{\infty} \in int(B_-)$,
\item[(b)]  $\lim \limits_{i \to \infty} b_i = b_{\infty} \in \partial B_-$,
\item[(c)] $\lim \limits_{i \to \infty} s(b_i) = + \infty$,
\item[(d')] $\lim \limits_{i \to \infty} s(b_i) = -\infty$ and $d(b_i,\partial B_-)\geq C$, where $C>0$ is a constant, or
\item[(d'')]  $\lim \limits_{i \to \infty} s(b_i) = -\infty$ and $d(b_i,\partial B_-)\to 0$.
\end{itemize}

The sequence $p_i:\widetilde{F}_i\to B_-$ converges in the SFT sense to a $1$- or $2$-level holomorphic building $p_\infty=p_\infty^0$, $p_\infty^0\cup p_\infty^1$, or $p_\infty^{-1}\cup p_\infty^0$, where $p_\infty^{-1}$, $p_\infty^0$, $p_\infty^1$ are branched covers of $B$, $B_-$, $B'$, respectively. We have a $1$-level building in Cases (a) and (b) and a $2$-level building in Cases (c), (d'), and (d''). Let $p_\infty^{\frak m}: \widetilde{F}_\infty^{\frak m}\to B_-$ be the restriction of $p_\infty^0$ to the component containing the limit $\widetilde{\frak m}_\infty$ of $\widetilde{\frak m}_i$ and let $p_\infty^b: \widetilde{F}_\infty^b\to B$ or $B'$ be the restriction of $p_\infty^{-1}$, $p_\infty^0$, or $p_\infty^1$ to the component containing the limit $b_\infty$ of $b_i$. If $p_\infty^{\frak m}=p_\infty^b$ (i.e., in Cases (a) and (b)) we drop the superscripts.

Near all the punctures of $\widetilde{F}_{\infty}^\star$, $\star\in \{{\frak m},b\}$, we use cylindrical or rectangular coordinates $(s,t)$ induced by the coordinates $(s,t)$ on $B_-$ by pullback, after a possible translation in the $s$-direction.

For $*\in\{\varnothing,',-\}$, we denote the compactification of $B^*$, obtained by adjoining the points at infinity ${\frak p}^*_\pm$, by $cl(B^*)$. Similarly, let $cl(\widetilde{F}_\infty^\star)$, $\star\in \{{\frak m},b\}$, be the compactification of $\widetilde{F}_\infty^\star$, obtained by adjoining ${\frak q}^\star_{\pm,j}$, where $j\in\{1\}$ or $\{1,2\}$, depending on the number of ends. (If there is only one end, we suppress the index.) The maps $p_{\infty}^\star: \widetilde{F}_\infty^\star\to B^*$, can be compactified to $p_{\infty}^\star: cl(\widetilde{F}_\infty^\star) \to cl(B^*)$, where ${\frak q}^\star_{\pm,j}$ is mapped to ${\frak p}^*_\pm$.

In Case (a), $\widetilde{F}_{\infty}$ is an annulus with a puncture ${\frak q}_+$ in the interior and a puncture ${\frak q}_{-,j}$ on each boundary component. In Case (b), $\widetilde{F}_{\infty}$ is a disk with one puncture ${\frak q}_+$ in the interior, two punctures ${\frak q}_{-,j}$, $j=1,2$, on the boundary, and two boundary points $b_{\infty,j}$, $j=1,2$, which are glued together to give $b_\infty$.  The points ${\frak q}_{-,j}$ and $b_{\infty,j}$ alternate along the boundary.  In Case (c), $\widetilde{F}_{\infty} ^{\frak m}$ is a disk with a puncture ${\frak q}_+^{\frak m}$ in the interior and a puncture ${\frak q}_-^{\frak m}$ on the boundary, and $\widetilde{F}_{\infty}^b$ is a sphere with three punctures ${\frak q}_+^b$, ${\frak q}_{-,j}^b$, $j=1,2$. In Case (d'), $\widetilde{F}_{\infty}^{\frak m}$ is a disk with a puncture ${\frak q}_+^{\frak m}$ in the interior and two punctures ${\frak q}_{-,j}^{\frak m}$ on the boundary and $\widetilde{F}_{\infty}^b$ is a disk with four punctures ${\frak q}_{\pm,j}^b$ on the boundary. In Case (d''), $\widetilde{F}_{\infty}^{\frak m}$ is a disk with a puncture ${\frak q}_+^{\frak m}$ in the interior and two punctures ${\frak q}_{-,j}^{\frak m}$ on the boundary and $\widetilde{F}_{\infty}^b$ is a disk with four punctures ${\frak q}_{\pm,j}^b$ on the boundary and two boundary points $b_{\infty,j}$, $j=1,2$, identified.

Cases (a) and (b) are similar to Case (S4'), while the situation in Cases (c), (d') and (d'') is complicated by the fact that the limit is a $2$-level holomorphic building.

\s\n {\em Cases (a) and (b).}

\begin{thm} \label{capodanno2ab}
Suppose $\lim \limits_{i \to \infty} b_i = b_{\infty} \in B_-$. Then $(w_i)_{i\in\N}$ converges to a holomorphic map $w_{\infty}: \widetilde{F}_{\infty} \to \C$ such that:
\begin{enumerate}
\item $w_{\infty}(\partial \widetilde{F}_{\infty}) \subset \R^+$;
\item at the positive puncture ${\frak q}_+$,
$$\lim \limits_{s \to + \infty} |w_{\infty}(s,t)| = + \infty, \quad \lim \limits_{s \to + \infty} \dfrac{w_{\infty}(s,t)}{|w_{\infty}(s,t)|} = f_{i_0j_0}(t);$$
\item at the negative punctures ${\frak q}_{-,j}$, $j=1,2$, $\lim \limits_{s \to - \infty} w_{\infty}(s,t) = c_j \in \R^+$;
\item[(4a)] in Case (a), $w_{\infty}$ extends to a holomorphic map $w_{\infty}: cl(\widetilde{F}_{\infty})\to \C \P^1$ such that $w_{\infty}:int(cl(\widetilde{F}_{\infty}))\to\C \P^1 - ([a_1, a_2] \sqcup [a_3, a_4])$ is a biholomorphism;
\item[(4b)] in Case (b), $w_{\infty}$ extends to a holomorphic map $w_{\infty}: cl(\widetilde{F}_{\infty})\to \C \P^1$ such that $w_{\infty}:int(cl(\widetilde{F}_{\infty}))\to \C \P^1 - [a_1, a_2]$ is a biholomorphism;
\item[(5)] $\widetilde{\mathfrak{m}}_\infty$ is the unique zero of $w_{\infty}|_{int(cl(\widetilde{F}_{\infty}))}$ and is simple; $p_\infty(\widetilde{\mathfrak{m}}_\infty) = \overline{\mathfrak{m}}^b$.
\end{enumerate}
\end{thm}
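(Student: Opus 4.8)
The plan is to adapt the proof of Theorem~\ref{capodanno} (Case (S4')) to the degenerate setting of Cases (a) and (b), where the branch point $b_i$ stays in a compact region of $B_-$. First I would record the structural consequences of $\lim_{i\to\infty} b_i = b_\infty\in B_-$: by the SFT compactness for branched covers, $p_i:\widetilde F_i\to B_-$ converges to a single-level branched double cover $p_\infty:\widetilde F_\infty\to B_-$, whose domain is an annulus (Case (a)) or a disk with two boundary punctures at the negative end and a hidden branch point along $\partial B_-$ (Case (b)), exactly as described just before the theorem statement. The uniform bounds on $|w_i|$ and $|w_i'|$ over preimages of compact subsets of $B_-$ are supplied verbatim by Lemma~\ref{natale}; together with Lemma~\ref{lemma: convergence on fixed compact set} and a diagonal/Montel argument, these give uniform convergence of $w_i$ to a holomorphic $w_\infty:\widetilde F_\infty\to\C$ on compact subsets. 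Then (1) follows because, by Remark~\ref{rmk: good ray is real} and the hypotheses on the arcs $\overline a_i$ in Section~\ref{coconut}, the phase defects $\phi_i$ in property (3) of $w_i$ tend to $0$ as $m_i\to\infty$, so the boundary values converge into $\R^+$.

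Next I would prove the asymptotic statements (2) and (3) by the Fourier-series method of Theorem~\ref{capodanno}. On the positive cylindrical end $\mathcal{E}^+_i$ (coordinates $(s,t)$ with $t\in\R/2\Z$ in Case (b), $t\in\R/4\Z$ after passing to the double cover in Case (a) — but the relevant Fourier mode is the one matching the asymptotic eigenfunction $f_{ab}$), expand $w_i(s,t)=\sum_n a_n^i e^{\pi n(s+it)}$ and $w_\infty(s,t)=\sum_n a_n^\infty e^{\pi n(s+it)}$. From Equation~\eqref{eqn: approx}, i.e.\ $|w_i|_{\mathfrak{e}^\pm_i}-f_i|\le\widetilde\kappa_i$ with $f_i(t)=c_i e^{\pi it}$, $|c_i|\ge\widetilde\kappa_i'$, and $\widetilde\kappa_i'/\widetilde\kappa_i\to\infty$, I extract exactly as in Theorem~\ref{capodanno}: $|a_n^i|e^{\pi n R_i^{(2)}}\le\widetilde\kappa_i$ for $n\neq 1$ and $|a_1^i e^{\pi R_i^{(2)}}-c_i|\le\widetilde\kappa_i$. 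The vanishing of the higher modes ($a_n^\infty=0$ for $n\ge 2$) is forced by the contradiction argument (if some $|a_n^i|\not\to 0$ with $n\ge 2$, then $|a_1^i e^{\pi R_i^{(2)}}|<\widetilde\kappa_i$, contradicting $|c_i|\ge\widetilde\kappa_i'$); the nonvanishing $a_1^\infty\neq 0$ is the topological winding argument (using Remark~\ref{34}); and $\lim a_1^i/|a_1^i|\,e^{\pi it}=f_{ab}(t)$ follows since $\widetilde\kappa_i/|c_i|\to 0$, using Remark~\ref{limit of normalized eigenfunctions} to identify the limiting normalized eigenfunction with $f_{ab}$. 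The two negative punctures ${\frak q}_{-,j}$ are handled identically, using the exponentially weighted Fourier expansion and the fact that normalized eigenfunctions at $\delta_0$ (or $z_\infty$) limit to constants as $i\to\infty$; this gives $w_\infty\to c_j\in\R^+$ at each ${\frak q}_{-,j}$.

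Finally I would establish (4a)/(4b) and (5) by the degree argument. Compactify $w_\infty$ to a holomorphic map $cl(\widetilde F_\infty)\to\C\P^1$ sending the positive puncture to $\infty$ (a simple pole, by (2) and the fact that the winding number is $1$) and the boundary into $\R^+$, so that $w_\infty$ extends across $\partial cl(\widetilde F_\infty)$ by Schwarz reflection to a degree-well-defined map of the doubled surface. In Case (a) the doubled target is $\C\P^1$ with two slits $[a_1,a_2]\sqcup[a_3,a_4]$ removed (two boundary circles), in Case (b) a single slit $[a_1,a_2]$ (one boundary circle after the hidden-branch-point identification). Since $w_\infty^{-1}(\infty)=\{$positive puncture$\}$ is a single simple pole, the degree is $1$, hence $w_\infty$ restricted to the interior is a biholomorphism onto $\C\P^1$ minus the slit(s). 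This forces the unique zero of $w_\infty$ in the interior to be simple, and by construction of the truncation (the marked point $\widetilde{\frak m}_i$ is the unique preimage of $\overline{\frak m}$) its limit $\widetilde{\frak m}_\infty$ satisfies $p_\infty(\widetilde{\frak m}_\infty)=\overline{\frak m}^b$ and $w_\infty(\widetilde{\frak m}_\infty)=0$, giving (5). The main obstacle I anticipate is bookkeeping the branched double-cover geometry in Case (b): correctly identifying the combinatorics of the hidden boundary branch point $b_\infty$ (the two preimages $b_{\infty,j}$ glued along $\partial$), verifying that after reflection the doubled domain is genuinely a disk with the claimed puncture pattern, and checking that no spurious component or energy is lost in the limit — all of the analytic input (energy bounds from Lemma~\ref{sconforto}, gradient bounds from Lemma~\ref{natale}, exclusion of bubbling) is already in place, but the topological identification of $\widetilde F_\infty$ and the slit structure of the target must be done carefully.
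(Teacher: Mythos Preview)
Your approach is essentially the paper's: carry over the proof of Theorem~\ref{capodanno} verbatim for (1)--(3), then run a degree-one argument (unique simple pole at ${\frak q}_+$) for (4a)/(4b) and deduce (5). Two small points are worth cleaning up. First, the Schwarz-reflection packaging of the degree argument is garbled---the ``doubled target'' is not $\C\P^1$ with slits removed; the paper instead works directly with the degree of a map of pairs $(cl(\widetilde F_\infty),\partial cl(\widetilde F_\infty))\to(\C\P^1,\R^+)$, which is cleaner and avoids having to identify any doubled surface. Second, the reason the pole at ${\frak q}_+$ is \emph{simple} deserves one more sentence: since $f_{ab}$ is the eigenfunction for the smallest positive eigenvalue of the asymptotic operator at $\delta_0^2$ (Lemma~\ref{lemma: nonzero asymptotic eigenfunction}), its winding number is~$1$, and this---not any distinction between $t\in\R/2\Z$ versus $\R/4\Z$ in Cases (a) and (b)---is what forces the pole order to be~$1$ and hence the degree to be~$1$.
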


\begin{proof}
The proof of Theorem~\ref{capodanno} goes through without modification to give (1)--(3).

(4a) As in the proof of Theorem~\ref{capodanno}(4), we can define the degree for maps of pairs $(cl(\widetilde{F}_{\infty}), \partial cl(\widetilde{F}_{\infty})) \to (\C \P^1, \R^+)$. The degree of $w_{\infty}$ is $1$ because it has a unique pole of order $1$. The order of the pole at the positive puncture can be computed from the winding number of $f_{i_0j_0}$, which is the smallest one for a positive eigenvalue by Lemma~\ref{lemma: nonzero asymptotic eigenfunction}. Then $w_{\infty}$ can have no branch points in the interior of $cl(\widetilde{F}_{\infty})$. (4b) is similar. (5) follows from (4a) and (4b).
\end{proof}

\s\n
{\em Cases (c), (d') and (d'').} When the sequence $\{ b_i \}$ is unbounded, there are surfaces $\widetilde{F}_i^\star$, $\star\in \{{\frak m},b\}$, with embeddings $\iota_i^\star : \widetilde{F}_i^\star \to \widetilde{F}_i$, such that $\widetilde{F}_i^\star$ converges to  $\widetilde{F}_{\infty}^\star$. Let $p_i^{\frak m}:\widetilde{F}_i^{\frak m}\to B_-$ be the restriction of $p_i$ and let $p_i^b: \widetilde{F}_i^b\to B^*$, $*=\varnothing$ or $'$, be the composition of an $s$-translation and $p_i|_{\widetilde{F}_i^b}$ so that $s$-coordinate of $p_i^b(b_i)$ is zero.

Let $w_i$ be as in Definition~\ref{defn: definiton of w_i}. Let
$$K_i^b = (p_i \circ \iota_i^b)^{-1} (B_-\cap \{ s(b_i)-1 \le s \le s(b_i)+1 \})\subset \widetilde{F}_i^b$$
and $C_i^b = \sup_{z \in K_i^b} |w_i(\iota_i^b(z))|$.  Then we set $w_i^{\mathfrak{m}} = w_i \circ \iota_i^{\mathfrak{m}}$ and $w_i^b = (w_i \circ \iota_i^b)/C_i^b.$

\begin{lemma} \label{natale2}
For every compact set $K \subset B^*$, there is a constant $C_K>0$ such that $|w_i^\star(z)|<C_K$,  and $|(w^\star_i)'(z)|<C_K$ for all $z \in (p_i^\star)^{-1}(K)$ and all $i\in \N$. Here $\star\in \{{\frak m},b\}$ and $*\in\{\varnothing,',-\}$, as appropriate.
\end{lemma}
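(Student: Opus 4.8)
The plan is to follow the proof of Lemma~\ref{natale} essentially verbatim, adapting the bookkeeping to the two‑level limit. First I would dispose of the cores by convergence: for $\star=\mathfrak{m}$ the piece $K_i$ is exactly the set used to normalize $w_i$, so $\sup_{K_i}|w_i^{\mathfrak{m}}|=1$, and for $\star=b$ we have $\sup_{K_i^b}|w_i^b|=1$ by the very definition of $C_i^b$; in both cases Montel's theorem gives, after passing to a subsequence, uniform convergence of $w_i^\star$ on the core to a holomorphic limit (as in Lemma~\ref{lemma: convergence on fixed compact set}), hence uniform bounds on $|w_i^\star|$ and $|(w_i^\star)'|$ over $K\cap(p_i^\star)^{-1}(\text{core})$. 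It then suffices to bound $|w_i^\star|$ and $|(w_i^\star)'|$ over $(p_i^\star)^{-1}(K)$ minus the core, on which $w_i^\star$ takes values in $\C^{\times}$ for $i$ large: for $\star=\mathfrak{m}$ the single simple zero $\widetilde{\mathfrak{m}}_i$ of $w_i$ lies in $K_i$, and for $\star=b$ the image $\iota_i^b(\widetilde F_i^b)$ is disjoint from $\widetilde{\mathfrak{m}}_i$ once the two levels have split off.

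Next I would record the energy bound. Since $w_i^{\mathfrak{m}}=w_i\circ\iota_i^{\mathfrak{m}}$ is a restriction of $w_i$, and $w_i^b=(w_i\circ\iota_i^b)/C_i^b$ is a restriction composed with multiplication by a nonzero constant, while $E(cu)=E(u)$ for $c\in\C^{\times}$ and the Hofer energy $E$ integrates a nonnegative density (so restriction cannot increase it), one gets $E(w_i^\star)\le E(w_i)$, and the right-hand side is uniformly bounded by the proof of Lemma~\ref{sconforto}. With a uniform energy bound in hand I would compose $\log\colon\C^{\times}\to\R\times(\R/2\pi\Z)$ with $w_i^\star$ away from the core and run the standard Hofer bubbling-off argument as in the proof of Lemma~\ref{natale}: if $|(w_i^\star)'|$ were not uniformly bounded there, rescaling would produce a nonconstant finite-energy plane $\widetilde v_\infty^{+}\colon\C\to\R\times(\R/2\pi\Z)$, or a finite-energy half-plane whose boundary lies on a line $\R\times\{pt\}$ (by the boundary conditions for the $w_i^\star$), which can be doubled by Schwarz reflection to a nonconstant finite-energy plane; either outcome contradicts \cite[Lemma~28]{Ho1}. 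Hence $|(w_i^\star)'|$, and therefore $|w_i^\star|$, is uniformly bounded on $(p_i^\star)^{-1}(K)$.

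The one point that needs genuine care is running this uniformly through the five subcases (a)--(d'') of Section~\ref{subsection: Case (S4'')}, and in particular identifying the correct "core" in each: in Cases (a) and (b) the branch point stays bounded, so $\widetilde F_i^{\mathfrak{m}}=\widetilde F_i^b$ and the single core $K_i$ suffices; in Cases (c), (d'), (d'') the branch point drifts to $s=+\infty$ or $s=-\infty$, the limit is a genuine $2$-level building, and one must treat the two cores $K_i$ (for the $\mathfrak{m}$-piece over $B_-$) and $K_i^b$ (the $s(b_i)$-recentered neighborhood of the branch point for the $b$-piece over $B$ or $B'$) separately, checking that $\iota_i^{\mathfrak{m}}(\widetilde F_i^{\mathfrak{m}})$ and $\iota_i^b(\widetilde F_i^b)$ together cover the portion of $\widetilde F_i$ lying over any fixed compact $K$. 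I expect no new analytic difficulty beyond this case analysis, since the remaining ingredients (monotonicity of energy, Montel, Hofer bubbling, nonexistence of finite-energy planes in $\R\times(\R/2\pi\Z)$) are already those used for Lemma~\ref{natale}.
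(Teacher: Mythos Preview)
Your proposal is correct and follows exactly the approach the paper has in mind: the paper's own proof is simply ``Similar to Lemma~\ref{natale},'' and you have spelled out precisely that adaptation, including the normalization on the cores, the energy bound via restriction and scaling invariance, and the Hofer bubbling argument on the complement. The case analysis you flag in the last paragraph is the only bookkeeping the paper leaves implicit.
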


\begin{proof}
Similar to Lemma~\ref{natale}.
\end{proof}

Lemma~\ref{natale2} implies that the limits $w_{\infty}^{\mathfrak{m}}$ and $w_{\infty}^b$ exist. The following lemma gives the behavior of $w_{\infty}^{\mathfrak{m}}$ and $w_{\infty}^b$ near the punctures.

\begin{lemma} \label{finalmente1} $\mbox{}$
\begin{enumerate}
\item Let $u : \R^+ \times (\R/ \pi \Z) \to \C^\times$ be a finite energy holomorphic map. If the map $t \mapsto u(s,t)$ has degree one for some (and therefore all) $s \in \R^+$, then $\lim \limits_{s \to + \infty} |u_{\infty}(s,t)| = + \infty$ and $\lim \limits_{s \to + \infty} \dfrac{u_{\infty}(s,t)}{|u_{\infty} (s,t)|} = c e^{\pi i t}$ with $c \ne 0$.
\item Let $u : \R^- \times (\R/ \pi \Z) \to \C^\times$ be a finite energy holomorphic map. If the map $t \mapsto u(s,t)$ has degree one for some (and therefore all) $s \in \R^-$, then $\lim \limits_{s \to - \infty} |u_{\infty}(s,t)| = 0.$
\end{enumerate}
\end{lemma}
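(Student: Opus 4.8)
The plan is to reduce both assertions to the asymptotic analysis of finite–energy pseudoholomorphic half–cylinders in the symplectization $\R\times S^1$ of the circle, exactly as in the proof of Lemma~\ref{natale}. Since $u$ takes values in $\C^\times$, the composite $\zeta:=\log\circ u$ is a well–defined holomorphic map $\R^{\pm}\times(\R/\pi\Z)\to\R\times(\R/2\pi\Z)$, and by the remark following Definition~\ref{energy for w_i} the finiteness of $E(u)$ is equivalent to the finiteness of the Hofer energy of $\zeta$. Thus $\zeta$ is a finite–energy holomorphic half–cylinder, and near its puncture at $s=\pm\infty$ the statement becomes a statement about the asymptotics of $\zeta$, to which the asymptotic theorem of Hofer–Wysocki–Zehnder~\cite{HWZ1} (equivalently Hofer~\cite{Ho1}) applies.

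First I would treat part (1). The half–cylinder $\zeta$ has a positive puncture at $s=+\infty$. I claim this puncture is not removable: otherwise $\zeta$, hence $u$, would extend continuously over it, and the loop $t\mapsto u(s,t)$ would be nullhomotopic in $\C^\times$ for $s\gg0$, contradicting the hypothesis that it has degree one. By \cite{HWZ1}, $\zeta$ therefore converges exponentially fast, as $s\to+\infty$, to a trivial cylinder over a closed orbit $\gamma^k$ of the Reeb flow on $S^1$, for some integer $k\ge1$; the requirement that the loops $t\mapsto\zeta(s,t)$ converge to the loop traced by $\gamma^k$ forces $k$ to equal the winding number of $t\mapsto u(s,t)$ around $0$, i.e.\ $k=1$. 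Finally, at a positive puncture and for the adapted almost complex structure in play, the $\R$–coordinate of the limiting trivial cylinder tends to $+\infty$. Re–exponentiating, $|u(s,t)|=e^{\operatorname{Re}\zeta(s,t)}\to+\infty$ and $u(s,t)/|u(s,t)|=e^{i\operatorname{Im}\zeta(s,t)}\to c\,e^{\pi i t}$ with $c\ne0$; here $c\,e^{\pi i t}$ is precisely the leading eigenfunction of the corresponding asymptotic operator picked out by the multiplicity–one condition, in the sense of Lemma~\ref{lemma: asymptotics} and Lemma~\ref{lemma: nonzero asymptotic eigenfunction}, and $c\ne0$ because a trivial cylinder has nonvanishing asymptotic eigenfunction. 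An alternative, more computational route is to write $\zeta(s,t)=2(s+it)+g(s,t)$ with $g$ holomorphic and $\pi$–periodic in $t$, expand $g$ in a Fourier series, and use boundedness of $g$ (which is what finiteness of the Hofer energy buys here) to kill all positive Fourier modes; this reproduces the same conclusion but essentially re–runs the proof of the asymptotic theorem, so I would prefer to quote \cite{HWZ1} directly.

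Part (2) is identical in structure, applied to the negative puncture at $s=-\infty$: the puncture is non–removable because the loop has nonzero degree, the limiting trivial cylinder is over $\gamma^{-1}$ because the degree is one, and at a negative puncture the $\R$–coordinate of the limiting trivial cylinder tends to $-\infty$; hence $|u(s,t)|=e^{\operatorname{Re}\zeta(s,t)}\to0$.

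The only genuinely delicate point in this plan is the sign of the $\R$–coordinate asymptotics — that a positive puncture forces $\operatorname{Re}\zeta\to+\infty$ and a negative one $\operatorname{Re}\zeta\to-\infty$ — since this is exactly where finiteness of the Hofer energy, and not merely holomorphicity of $u$, is used (holomorphicity alone allows $\operatorname{Re}\zeta$ to blow up with either sign). Everything else — non–removability from the degree hypothesis, multiplicity one from the degree being one, and the exponential convergence — is standard once the half–cylinder reduction is set up, and can be imported essentially verbatim from \cite{HWZ1,Ho1}, as was already done for Lemma~\ref{natale}.
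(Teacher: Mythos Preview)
Your approach is correct and follows the same reduction as the paper: compose with $\log$ to land in $\R\times S^1$ and analyze a finite-energy half-cylinder there. The difference is in how the asymptotics are extracted. You invoke the full HWZ asymptotic theorem, whereas the paper gives a more bare-hands argument: finite energy gives bounded derivative (exactly the bubbling argument you already cite from Lemma~\ref{natale}), so the translates $\zeta(\cdot+k_n,\cdot)$ subconverge to a finite-energy holomorphic cylinder $\R\times(\R/\pi\Z)\to\R\times(\R/\pi\Z)$, and any such map is necessarily of the form $(s,t)\mapsto(s+a,t+b)$. The conclusions of (1) and (2) are then read off from this explicit limit.

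Two small remarks. First, \cite{HWZ1} is formulated for symplectizations of contact manifolds (dimension $\geq 4$), so it does not literally apply to the $2$-dimensional target $\R\times S^1$; the classification of finite-energy cylinders here is much more elementary and is what the paper uses directly. Second, your worry about the sign of the $\R$-coordinate is exactly right, and in your framework it is not resolved by declaring the puncture ``positive'' (that is the conclusion, not a hypothesis). What forces the sign is the degree-one hypothesis together with Cauchy--Riemann: writing $\zeta(s,t)=2(s+it)+g(s,t)$ with $g$ $\pi$-periodic, finite energy kills the growing Fourier modes of $g$, so $\operatorname{Re}\zeta\sim 2s$. This is precisely the ``alternative, more computational route'' you dismiss, and it is essentially what the paper's classification step encodes; in this very low-dimensional setting it is cleaner than routing through \cite{HWZ1}.
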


\begin{proof}
(1) Let us view $u$ as a map $\R^+ \times (\R/ \pi \Z) \to \R \times (\R/ \pi \Z)$. As in the proof of Lemma~\ref{natale}, since $u$ has finite energy, it has bounded derivative. Let $u_n(s,t) = u(s+k_n,t)$, where $k_n\in \R^+$ and $\lim \limits_{n \to + \infty} k_n =
+ \infty$. The sequence $u_n$ has uniformly bounded derivative and converges to a finite energy holomorphic map
$$u_{\infty}:  \R \times (\R/ \pi \Z) \to \R \times (\R/ \pi \Z).$$
Such a holomorphic map is of the form $u_\infty(s,t)=(s+a,t+b)$, where $a,b$ are constants. This implies (1). (2) is similar and is left to the reader.
\end{proof}

\s\n {\em Case (c).}

\begin{thm}\label{capodanno2c}
Suppose $\lim \limits_{i \to \infty} s(b_i)= +\infty$. Then $(w_i^{\mathfrak{m}})_{i\in\N}$ converges to a holomorphic map $w_{\infty}^{\mathfrak{m}} : \widetilde{F}_{\infty}^{\mathfrak{m}} \to \C$ such that:
\begin{enumerate}
\item $w_{\infty}^{\mathfrak{m}}(\partial \widetilde{F}_{\infty}^{\frak m}) \subset \R^+$;
\item at the positive puncture ${\frak q}^{\frak m}_+$,
$$\lim \limits_{s \to + \infty} |w_{\infty}^{\mathfrak{m}}(s,t)| = + \infty, \quad \lim \limits_{s \to + \infty} \dfrac{w_{\infty}^{\mathfrak{m}}(s,t)}{|w_{\infty}^{\mathfrak{m}} (s,t)|} = f(t),$$ where $f$ is a normalized eigenfunction of the asymptotic operator at $\delta_0$ with winding number one;
\item at the negative puncture ${\frak q}^{\frak m}_-$, $\lim \limits_{s \to - \infty} w_{\infty}^{\mathfrak{m}}(s,t) = c \in \R^+$;
\item $w_\infty^{\frak m}$ extends to a holomorphic map $w_\infty^{\frak m}: cl(\widetilde{F}_\infty^{\frak m}) \to \C \P^1$ such that $w_\infty^{\frak m}:int(cl(\widetilde{F}_\infty^{\frak m}))\to\C \P^1 - [a_1, a_2]$ is a biholomorphism; and
\item $\overline{\mathfrak{m}}^b$ is the unique zero of $w_\infty^{\mathfrak{m}}|_{int(cl(\widetilde{F}_\infty^{\frak m}))}$ and is simple.
\end{enumerate}
Also $(w_i^b)_{i\in \N}$ converges to a holomorphic map $w_{\infty}^b : \widetilde{F}_{\infty}^b \to \C$ such that:
\begin{enumerate}
\item[(6)] at the positive puncture ${\frak q}^b_+$,
$$\lim \limits_{s \to + \infty} |w_{\infty}^b(s,t)| = + \infty, \quad \lim \limits_{s \to + \infty} \dfrac{w_{\infty}^b(s,t)}{|w_{\infty}^b(s,t)|} = f_{i_0j_0}(t);$$
\item[(7)] at the negative puncture ${\frak q}^b_{-,1}$ that connects to ${\frak q}^{\frak m}_+$, $\lim \limits_{s \to - \infty} w_{\infty}^b(s,t) = 0$;
\item[(8)] at the other negative puncture ${\frak q}^b_{-,2}$,  $\lim \limits_{s \to - \infty} w_{\infty}^b(s,t) =c \in \R^+$;
\item[(9)] at the punctures ${\frak q}^{\mathfrak{m}}_+$ and ${\frak q}^b_{-,1}$, $\lim \limits_{s \to + \infty} \dfrac{w_{\infty}^{\mathfrak{m}}(s,t)}{|w_{\infty}^{\mathfrak{m}} (s,t)|} = \lim \limits_{s \to  - \infty} \dfrac{w_{\infty}^b(s,t)}{|w_{\infty}^b(s,t)|}$;
\item[(10)] $w_\infty^b$ extends to a biholomorphism $w_\infty^b:\C\P^1 \to \C \P^1$; and
\item[(11)] ${\frak q}^b_{-,1}$ is the unique zero of $w_\infty^b$ and is simple.
\end{enumerate}
\end{thm}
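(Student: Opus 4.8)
The plan is to run the same machine as in the proofs of Theorems~\ref{capodanno} and~\ref{capodanno2ab}, the one genuinely new feature being that the SFT limit of $p_i\colon\widetilde F_i\to B_-$ is now a two-level building: the branch point $b_i$ escapes into the upper level, so on the lower level $p_\infty^{\frak m}$ is an \emph{unbranched}, hence degree-one, cover of $B_-$, while $p_\infty^b$ is the pair-of-pants component of the upper double branched cover of $B'=\R\times S^1$ carrying $b_\infty$. One therefore tracks the two rescaled limits $w_\infty^{\frak m}$, $w_\infty^b$ and checks that they glue along the neck ${\frak q}^{\frak m}_+\leftrightarrow{\frak q}^b_{-,1}$. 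First I would extract the limits: Lemma~\ref{natale2} gives uniform $C^1$-bounds for $w_i^{\frak m}$ and $w_i^b$ over preimages of compacts, so by Montel's theorem (as in Lemma~\ref{lemma: convergence on fixed compact set}), after passing to a subsequence, $w_i^{\frak m}\to w_\infty^{\frak m}$ and $w_i^b\to w_\infty^b$ uniformly on compact sets. Neither limit is identically zero because $\sup_{K_i}|w_i^{\frak m}|=1$ and $\sup_{K_i^b}|w_i^b|=1$ pass to the limit with the supremum attained; this gives the simple zeros in~(5) and~(11), and~(1) follows from the convergence together with the fact that the angles between the arcs of $\overline{\mathbf a}$ tend to $0$ as $m_i\to\infty$ (Remark~\ref{rmk: good ray is real}).

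Next, asymptotics at the punctures. Near each puncture the domain is a half-cylinder $\R^\pm\times(\R/\pi\Z)$, and off the isolated zeros of the limit $\log\circ w_\infty^\star$ is a finite-energy holomorphic map into $\R\times(\R/\pi\Z)$ (the energy bound being Lemma~\ref{sconforto}), so Lemma~\ref{finalmente1} applies. At ${\frak q}^{\frak m}_-$ and ${\frak q}^b_{-,2}$ the negative Fourier modes die and the normalized eigenfunctions at $\delta_0$ tend to constants in $\R^+$ as $\varepsilon_i=\pi/m_i\to0$, yielding~(3) and~(8). At ${\frak q}^b_+$ the truncated positive end ${\frak e}_i^+$ sits on the upper level, so Remark~\ref{limit of normalized eigenfunctions} identifies its normalized asymptotic eigenfunction as $f_{ab}$, and the Fourier argument of Theorem~\ref{capodanno}(2) gives $|w_\infty^b|\to\infty$, i.e.~(6). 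At the neck, $|w_\infty^{\frak m}|\to\infty$ at ${\frak q}^{\frak m}_+$ because $\widetilde z_i=e^{-\varepsilon_i s}\widetilde w_i$ stays small there while $e^{\varepsilon_i s}$ blows up, and $|w_\infty^b|\to0$ at ${\frak q}^b_{-,1}$ by Lemma~\ref{finalmente1}(2); the direction at ${\frak q}^{\frak m}_+$ is a normalized eigenfunction $f$ at $\delta_0$ of winding number $1$, which I would pin down by the topological argument of Theorem~\ref{capodanno}(2) (the circles $w_i|_{s=R_0}$ have winding number $1$, and larger values are excluded since the slowest positive mode $e^{\pi it/l}$ dominates the neck and has winding number $1$, cf.\ Lemma~\ref{lemma: nonzero asymptotic eigenfunction}). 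This gives~(2) and~(7), and the matching~(9) follows because near the neck both sides are the $S^1$-valued angular part of the single function $w_i$, independent of whether one divides by $C_i$ or by $C_i^b$.

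Finally, degree and (bi)holomorphy. By the previous step $w_\infty^{\frak m}$ has a single simple pole, at ${\frak q}^{\frak m}_+$, so it extends to a map of pairs $(cl(\widetilde F_\infty^{\frak m}),\partial)\to(\C\P^1,\R^+)$ whose degree, computed by counting preimages of a regular value in $\C\P^1-[a_1,a_2]$ (with $[a_1,a_2]=w_\infty^{\frak m}(\partial cl(\widetilde F_\infty^{\frak m}))$, a compact interval in $\R^+$ by~(1) and~(3)), equals $1$; hence $w_\infty^{\frak m}|_{\mathrm{int}}$ is a biholomorphism onto $\C\P^1-[a_1,a_2]$, which is~(4), and the unique simple preimage of the regular value $0$ is the point over $\overline{\frak m}^b$, namely the limit $\widetilde{\frak m}_\infty$ of $\widetilde{\frak m}_i$, since $p_i(\widetilde{\frak m}_i)=\overline{\frak m}^b$ and $\widetilde u_i(\widetilde{\frak m}_i)=\overline{\frak m}\in\sigma_\infty^-$ force $w_\infty^{\frak m}(\widetilde{\frak m}_\infty)=0$, which is~(5). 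Likewise $cl(\widetilde F_\infty^b)\cong\C\P^1$ and $w_\infty^b\colon\C\P^1\to\C\P^1$ has a single pole at ${\frak q}^b_+$, of order equal to the winding number of $f_{ab}$, which is $1$ by Lemma~\ref{lemma: nonzero asymptotic eigenfunction}, and no other poles by~(7)--(8); hence $w_\infty^b$ is a degree-one Möbius transformation, which is~(10), with unique simple zero ${\frak q}^b_{-,1}$ by~(7), which is~(11). The main obstacle is the neck analysis in the second step: showing that the breaking at ${\frak q}^{\frak m}_+\leftrightarrow{\frak q}^b_{-,1}$ happens at the first positive eigenvalue of the asymptotic operator at $\delta_0$ (so the winding number there is exactly $1$) and that no energy escapes into the neck, so that the two rescalings $C_i$ and $C_i^b$ produce limits that truly match as in~(9); this is where the finite-energy bound (Lemma~\ref{sconforto}), the exclusion of bubbling into $\R\times(\R/2\pi\Z)$ (Lemma~\ref{natale}, adapted to the two-level picture), and the hypothesis $\overline{J'}_\infty\in(\overline{\mathcal{J}'}_{\infty})^{reg}_\star$ (via Lemma~\ref{lemma: nonzero asymptotic eigenfunction}) are all genuinely needed.
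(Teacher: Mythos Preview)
Your overall strategy matches the paper's: run the machinery of Theorem~\ref{capodanno}, invoke Lemma~\ref{finalmente1} for the half-cylinder asymptotics at the new interior punctures, and finish with degree arguments. Most of your steps are correct.

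However, your treatment of item~(8) has a genuine gap. You argue for~(8) in the same breath as~(3), claiming both follow because ``the negative Fourier modes die and the normalized eigenfunctions at $\delta_0$ tend to constants in $\R^+$ as $\varepsilon_i\to 0$.'' This reasoning works for~(3) because ${\frak q}^{\frak m}_-$ is a \emph{boundary} puncture of $\widetilde F_\infty^{\frak m}$: the Lagrangian boundary condition on $\partial\widetilde F_\infty^{\frak m}$ forces $w_\infty^{\frak m}(\partial\widetilde F_\infty^{\frak m})\subset\R^+$, and the limit at the boundary puncture inherits this. But ${\frak q}^b_{-,2}$ is an \emph{interior} puncture of the closed sphere $\widetilde F_\infty^b$; there is no Lagrangian condition there, so nothing in your Fourier argument forces the constant $c$ to be real, let alone positive. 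The paper singles out precisely this point: its proof remarks that~(8) ``is a consequence of Convention~\ref{convention for h} and the proof technique of Lemma~\ref{lemma: value of widetilde Phi}.'' The geometric input is that the second sheet attached at ${\frak q}^b_{-,2}$ corresponds (in Cases~(5) and~(6)) to a curve from $\delta_0$ to $h$ or to $x_i,x_i'$, and the foliation-by-cylinders argument together with the specific placement of $h$ at angle $\phi_h\approx -2\pi/m$ constrains the projection of that sheet to lie in an angular sector that collapses to $\R^+$ as $m\to\infty$.

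This is not a cosmetic omission: the conclusion $c\in\R^+$ (in particular $c\neq 0$) is exactly the hypothesis $(f^b)^{-1}(\infty)\subset\R^{\geq 0}$ needed to apply the Involution Lemma~\ref{lemma: effect 2} in the elimination of Cases~(5) and~(6), so without it the downstream argument breaks.
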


\begin{proof}
The proof of Theorem~\ref{capodanno} goes through with little modification, in view of Lemma~\ref{finalmente1}.  We remark that (8) is a consequence of Convention~\ref{convention for h} and the proof technique of Lemma~\ref{lemma: value of widetilde Phi}.
\end{proof}

\s\n {\em Cases (d') and (d'').}

\begin{thm} \label{capodanno2d}
Suppose $\lim \limits_{i \to \infty} s(b_i) = - \infty$. Then $(w_i^{\mathfrak{m}})_{i\in\N}$ converges to a holomorphic map $w_{\infty}^{\mathfrak{m}} : \widetilde{F}_{\infty}^{\mathfrak{m}} \to \C$ such that:
\begin{enumerate}
\item $w_{\infty}^{\mathfrak{m}}(\partial \widetilde{F}_{\infty}^{\mathfrak{m}}) \subset \R^+$;
\item at the positive puncture ${\frak q}^{\frak m}_+$,
$$\lim \limits_{s \to + \infty} |w_{\infty}^{\mathfrak{m}}(s,t)| = + \infty, \quad \lim \limits_{s \to + \infty} \dfrac{w_{\infty}^{\mathfrak{m}}(s,t)} {|w_{\infty}^{\mathfrak{m}} (s,t)|} = f_{i_0j_0}(t);$$
\item at the negative punctures ${\frak q}^{\frak m}_{-,j}$, $j=1,2$, $\lim \limits_{s \to - \infty} w_{\infty}^{\mathfrak{m}}(s,t) = c_j \in \R^+$;
\item $w_\infty^{\frak m}$ extends to a holomorphic map $w_\infty^{\frak m}:cl(\widetilde{F}^{\frak m}_\infty) \to \C \P^1$ such that $w_\infty^{\frak m}: int(cl(\widetilde{F}^{\frak m}_\infty))\to \C \P^1 - [a_1, a_2]$ is a biholomorphism;
\item $\widetilde{\frak m}_\infty$ is the unique zero of $w_\infty^{\mathfrak{m}}|_{int(cl(\widetilde{F}^{\frak m}_\infty))}$ and is simple.
\end{enumerate}
Also $(w_i^b)_{i\in\N}$ converges to a constant map $w_{\infty}^b : \widetilde{F}_{\infty}^b \to \C$.
\end{thm}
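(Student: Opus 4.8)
The plan is to run the same scheme as in the proofs of Theorems~\ref{capodanno}, \ref{capodanno2ab}, and \ref{capodanno2c}, adapted to the situation where the branch point $b_i$ escapes to the negative end, so that the SFT limit of $p_i\colon\widetilde F_i\to B_-$ is a two-level building $p_\infty^{-1}\cup p_\infty^0$ with the branching living on the lower level. First I would record the convergence statement: Lemma~\ref{natale2} provides uniform $C^0$- and $C^1$-bounds for $w_i^{\mathfrak m}$ over $(p_i^{\mathfrak m})^{-1}(K)$, $K\subset B_-$ compact, and for $w_i^b$ over $(p_i^b)^{-1}(K)$, $K\subset B$ compact; combined with the uniform energy bound (Lemma~\ref{sconforto}) and Montel's theorem, this yields, after passing to a subsequence, holomorphic limits $w_\infty^{\mathfrak m}\colon\widetilde F_\infty^{\mathfrak m}\to\C$ and $w_\infty^b\colon\widetilde F_\infty^b\to\C$ with convergence uniform on compact subsets of the respective bases.

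For $w_\infty^{\mathfrak m}$ I would prove (1)--(5) exactly as in the proof of Theorem~\ref{capodanno}; the only structural change is that $\widetilde F_\infty^{\mathfrak m}$ now has two negative boundary punctures ${\mathfrak q}^{\mathfrak m}_{-,1},{\mathfrak q}^{\mathfrak m}_{-,2}$ rather than one. Property~(1) is immediate from the boundary conditions $w_i(x)\in e^{i(\varepsilon_i t+\phi_i)}\R^+$ together with $\varepsilon_i=\pi/m_i\to 0$ and $\phi_i\to0$. Property~(2) is the Fourier-coefficient argument at the positive interior puncture ${\mathfrak q}^{\mathfrak m}_+$: using Equation~\eqref{eqn: approx} at the truncated end, the coefficients of frequency $\ge2$ vanish in the limit while the frequency-$1$ coefficient is forced to survive for the winding-number reason used in Theorem~\ref{capodanno}(2), giving the $f_{ab}$-asymptotics. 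Property~(3) is the analogous Fourier argument at each of ${\mathfrak q}^{\mathfrak m}_{-,j}$: the negative-frequency coefficients vanish, and since the normalized eigenfunctions at the negative end converge to a constant as $m_i\to\infty$, the limit at ${\mathfrak q}^{\mathfrak m}_{-,j}$ is a positive real constant $c_j$. Properties~(4) and~(5) then follow from the degree count for the pair map $(cl(\widetilde F^{\mathfrak m}_\infty),\partial cl(\widetilde F^{\mathfrak m}_\infty))\to(\C\P^1,\R^+)$: the unique pole has order equal to the winding number of $f_{ab}$, minimal among the positive eigenvalues by Lemma~\ref{lemma: nonzero asymptotic eigenfunction}, so the degree is $1$; hence $w_\infty^{\mathfrak m}$ has no interior branch points, is a biholomorphism onto $\C\P^1-[a_1,a_2]$, and $\widetilde{\mathfrak m}_\infty$ (the limit of the simple zeros $\widetilde{\mathfrak m}_i$, lying over $\overline{\mathfrak m}^b$) is its unique, simple, zero.

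It remains to show that $w_\infty^b$ is constant. Here I would argue directly: $w_\infty^b$ is bounded by Lemma~\ref{natale2}, so it has removable behavior at each of its boundary punctures. At the positive punctures ${\mathfrak q}^b_{+,j}$, which are glued to ${\mathfrak q}^{\mathfrak m}_{-,j}$, the matching of asymptotic markers in SFT convergence forces $w_i^b$ to limit to $c_j/C_i^b\in\R^+$ with winding $0$ (compare Lemma~\ref{finalmente1}(1) and Remark~\ref{34}), since $C_i$ and $C_i^b$ are positive reals and $w_\infty^{\mathfrak m}$ has winding $0$ there; at the negative punctures ${\mathfrak q}^b_{-,j}$, Convention~\ref{convention for h} together with the projection and positivity-of-intersections argument of Lemma~\ref{lemma: value of widetilde Phi} shows $w_i^b$ limits to a positive real constant, again with winding $0$. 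Thus $w_\infty^b$ extends to a holomorphic map $cl(\widetilde F^b_\infty)\to\C$ whose boundary values---on the arcs, from the degeneration of the Lagrangian conditions, and at the limit points of the four punctures---all lie in $\R^+\subset\R$; since a holomorphic function on the closed disk with real boundary values is constant, $w_\infty^b$ is constant. In Case~(d'') the node identifying $b_{\infty,1}$ with $b_{\infty,2}$ is automatically respected.

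The step I expect to be the main obstacle is the bookkeeping of asymptotic markers and winding numbers across the node joining $\widetilde F^{\mathfrak m}_\infty$ to $\widetilde F^b_\infty$, in tandem with running the Fourier analysis at boundary punctures whose Lagrangian boundary conditions $w_i(x)\in e^{i(\varepsilon_i t+\phi_i)}\R^+$ are themselves degenerating with $m_i$: one must check that ``winding $0$'' really passes to the limit on both sides of the node and that no finite-energy plane bubbles off in the rescaling (excluded by the bound of Lemma~\ref{natale2}, which ultimately rests on the nonexistence of nonconstant finite-energy planes in $\R\times(\R/2\pi\Z)$).
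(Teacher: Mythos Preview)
Your approach for $w_\infty^{\mathfrak m}$ parts (1)--(5) is correct and is exactly what the paper intends (the paper gives no explicit proof of Theorem~\ref{capodanno2d}, leaving it to be read off from Theorems~\ref{capodanno}, \ref{capodanno2ab}, \ref{capodanno2c}).

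For the claim that $w_\infty^b$ is constant, your overall strategy (extend to the closed disk, observe real boundary values, conclude constancy via the maximum principle for $\operatorname{Im} w_\infty^b$) is sound, but one of your citations is misplaced. Convention~\ref{convention for h} and the positivity-of-intersections argument of Lemma~\ref{lemma: value of widetilde Phi} concern the \emph{ECH} side: they govern the placement of the orbit $h$ in the Morse--Bott family $\mathcal N$ and the foliation of $\overline N - N - \delta_0$ by the cylinders $Z_{s,\phi}$. That argument was invoked in Case~(c), where $\widetilde F_\infty^b$ maps to $B'=\R\times S^1$ and the negative puncture $\mathfrak q^b_{-,2}$ is an \emph{interior} puncture linking to an orbit in $\mathcal N$. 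In Cases~(d') and~(d''), however, $\widetilde F_\infty^b$ maps to $B=\R\times[0,1]$, all four punctures $\mathfrak q^b_{\pm,j}$ are \emph{boundary} punctures, and the negative ones correspond to the truncated ends $\mathfrak e_i^-$ of the thin strips. The correct justification at $\mathfrak q^b_{-,j}$ is therefore the Fourier-series argument of Theorem~\ref{capodanno}(3), using estimate~(G) at $\mathfrak e_i^-$: the negative-frequency coefficients of $w_i^b$ vanish in the limit, giving a finite real limit. Similarly, Lemma~\ref{finalmente1} is stated for cylindrical ends with winding one and does not directly apply to the strip-like positive punctures $\mathfrak q^b_{+,j}$; there you should again run the Fourier argument (now over the neck connecting to the $\mathfrak m$-piece) and use the winding-zero information already established for $w_\infty^{\mathfrak m}$ at $\mathfrak q^{\mathfrak m}_{-,j}$ via Remark~\ref{34}. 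With boundedness at all four punctures thus established, your concluding step is valid.
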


\subsection{Involution lemmas}
\label{subsection: involutions}

In this subsection we collect some lemmas on holomorphic maps between Riemann surfaces with anti-holomorphic involutions.  These lemmas, collectively referred to as the {\em involution lemmas}, will play an important role in Section~\ref{subsection: theorem complement} and in the sequel \cite{CGH-II}.

Our starting point is the following observation, whose proof is straightforward.

\begin{obs} \label{obs: involution}
Let $\Sigma_1,\Sigma_2$ be Riemann surfaces with anti-holomorphic involutions
$\iota_1,\iota_2$, respectively.  If $f:\Sigma_1\to\Sigma_2$ is a holomorphic map,
then $\widetilde f:= \iota_2\circ f\circ \iota_1$ is also holomorphic. Moreover, if
$f=\widetilde{f}$, then $f(\op{Fix}(\iota_1))\subset \op{Fix}(\iota_2)$, where
$\op{Fix}(\iota_i)$ is the fixed point set of $\iota_i$.
\end{obs}

There are four versions of the involution lemma; the first two will be used in this paper and the last two only in the sequel \cite{CGH-II}. We start by introducing some common notation which will be used in all four versions: For $i=1,2$, the Riemann surface $\Sigma_i$ is an open subset of $\C \P^1$ which is invariant under complex conjugation and has finitely generated fundamental group; moreover no component of $\C\P^1-\Sigma_i$ is a single point. The complex conjugation on $\C \P^1$ restricts to an anti-holomorphic involution $\iota_i : \Sigma_i \to \Sigma_i$.
On each $\Sigma_i$ we fix ``radial rays''
$${\mathcal R}_i = \Sigma_i \cap (\R^{\le 0} \cup \{ \infty \}).$$
The {\em asymptotic marker}  $\dot{\mathcal R}_i(0)$ is the connected component of $T_0\R \P^1-\{0\}$ (i.e., a tangent half-line) consisting of vectors with negative $\bdry_x$-component; similarly, the asymptotic marker $\dot{\mathcal{R}}_i(\infty)$ is the component of $T_\infty \R\P^1-\{0\}$ that is mapped to $\dot{\mathcal{R}}_i(0)$ under the inversion $z\mapsto {1\over z}$. The radial rays ${\mathcal R}_i$ and their related asymptotic markers are invariant under the involution $\iota_i$.  In this section we will use the notation $\D$ for the open unit disk in $\C$, considered as a Riemann surface.

\begin{lemma} \label{lemma: compactification of Sigma_i}
Given $\Sigma_i$ as above, there is a compact Riemann surface with boundary
$\overline{\Sigma}_i$ with a biholomorphism $\Sigma_i \stackrel\sim\to int(\overline{\Sigma}_i)$.
Moreover there is an anti-holomorphic involution $\iota_i : \overline{\Sigma}_i \to
\overline{\Sigma}_i$ such that the diagram
$$\xymatrix{
\Sigma_i \ar@{^{(}->}[r]  \ar[d]_{\iota_i} & \overline{\Sigma}_i \ar[d]^{\iota_i} \\
\Sigma_i \ar@{^{(}->}[r] & \overline{\Sigma}_i
}$$
commutes.
\end{lemma}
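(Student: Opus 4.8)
<br>

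The proof will proceed by the uniformization theorem applied to $\Sigma_i$, together with a careful treatment of how the involution interacts with the boundary circles that get added in the compactification. First I would observe that $\Sigma_i$ is a planar Riemann surface (an open subset of $\C\P^1$) with finitely generated fundamental group and no puncture-like ends, since no component of $\C\P^1 - \Sigma_i$ is a single point. Thus each component of $\C\P^1-\Sigma_i$ is a nondegenerate continuum, and $\Sigma_i$ is conformally equivalent to a domain bounded by finitely many disjoint analytic Jordan curves, or more directly: $\Sigma_i$ already sits inside $\C\P^1$, and each boundary continuum $K$ of $\Sigma_i$ can be ``doubled out'' — the Schottky double, or a Carath\'eodory-type prime end compactification, attaches a circle to $\Sigma_i$ along $K$. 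The upshot is a compact bordered Riemann surface $\overline\Sigma_i$ together with a biholomorphism $\Sigma_i \stackrel\sim\to \mathrm{int}(\overline\Sigma_i)$; each boundary circle of $\overline\Sigma_i$ corresponds to one component of $\C\P^1-\Sigma_i$.

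The key point is to promote the conjugation involution. Since $\Sigma_i$ is invariant under complex conjugation $c(z)=\bar z$ on $\C\P^1$, the set $\C\P^1-\Sigma_i$ is also $c$-invariant, so $c$ permutes its components. The compactification $\overline\Sigma_i$ is canonically determined by the conformal structure on $\Sigma_i$ together with the configuration of complementary continua (it is, up to canonical isomorphism, the prime-end or ideal-boundary completion), hence $c|_{\Sigma_i}=\iota_i$ extends continuously and uniquely to an anti-holomorphic involution $\overline\iota_i\colon\overline\Sigma_i\to\overline\Sigma_i$ compatible with the inclusion. Concretely, on a collar neighborhood of a boundary circle $C$ of $\overline\Sigma_i$ corresponding to a $c$-invariant continuum $K$, one models the collar by an annulus $\{r<|z|\le 1\}$ in which $\Sigma_i$ maps to $\{r<|z|<1\}$ and $c$ acts by $z\mapsto\bar z$, so the extension over $|z|=1$ is automatic; if $K$ is swapped with a distinct complementary continuum $K'$, then $\overline\iota_i$ interchanges the corresponding boundary circles. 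In either case the diagram in the statement commutes by construction, and $\overline\iota_i\circ\overline\iota_i=\mathrm{id}$ since this already holds on the dense open $\mathrm{int}(\overline\Sigma_i)$.

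I expect the main obstacle to be making the ``canonical compactification'' argument clean enough that the extension of $\iota_i$ is genuinely forced and unique, rather than merely plausible. Two technical issues arise: first, one must know that $\Sigma_i$ has no ends that are punctures or ``degenerate'' (this is exactly the hypothesis that no complementary component is a point, which rules out removable-puncture ends) so that the ideal boundary is a finite union of circles and the completion is a genuine compact bordered surface; second, one must argue that a boundary circle together with the conformal structure on a one-sided collar determines a unique real-analytic structure on the circle making the conjugation extend, which is the standard Schwarz-reflection / doubling argument. I would phrase this via the Schottky double $\widehat\Sigma_i$ of $\overline\Sigma_i$: the conjugation $c$ on the $c$-invariant planar domain extends to an anti-holomorphic involution of the double $\widehat\Sigma_i = \overline\Sigma_i\cup_{\bdry}\overline{\overline\Sigma_i}$ by naturality of the double, and restricting back to $\overline\Sigma_i\subset\widehat\Sigma_i$ (which is $c$-invariant because $\bdry\overline\Sigma_i$ is) yields $\overline\iota_i$. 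Uniqueness then follows from the identity theorem for anti-holomorphic maps. The remaining verifications — continuity of the extension, the commuting square, and $\overline\iota_i^2=\mathrm{id}$ — are then routine and I would only sketch them.
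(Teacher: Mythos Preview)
Your proposal is correct, but the paper takes a different technical route for the first statement. The paper uniformizes the universal cover of $\Sigma_i$ by the upper half-plane $\H$, writes $\Sigma_i=\H/G$ for a finitely generated Fuchsian group $G\subset PSL(2,\R)$, and then sets $\overline\Sigma_i=(\overline\H-L)/G$ where $L\subset\bdry\H$ is the limit set of $G$; the fact that $(\bdry\H-L)/G$ is a finite union of circles is a standard property of finitely generated Fuchsian groups of the second kind. Your approach via the ideal boundary / prime-end compactification is more hands-on for planar domains and exploits the ambient $\C\P^1$ directly. The trade-off: the Fuchsian-group argument is a one-line invocation once the relevant theory is in place and works verbatim for non-planar surfaces, whereas your argument stays closer to the geometry and, importantly, makes the extension of $\iota_i$ to $\overline\Sigma_i$ essentially automatic by the canonicity of the ideal-boundary construction---a point the paper's sketch does not address at all. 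Your treatment of the involution via naturality (and the Schottky-double backup) is in fact more complete than what the paper provides.
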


\begin{proof}[Sketch of proof.]
We outline the proof of the first statement. Use the uniformization theorem to identify the universal cover of $\Sigma=\Sigma_1$ or $\Sigma_2$ with the open upper half space $\H$.  Let $G\subset PSL(2,\R)$ be the deck transformation group of $\H$ such that $\H/G=\Sigma$. If $G$ is finitely generated, then $(\bdry \H - L) /G$ is a collection of boundary circles of $\Sigma$, where $L\subset \bdry \H$ is the limit set of $G$.  Hence $\overline\Sigma=(\overline{\H}-L)/G$.
\end{proof}

Note that $\overline{\Sigma}_i$ will not necessarily be the
closure of $\Sigma_i$ in $\C \P^1$. However, when referring to points in the interior of
$\overline{\Sigma}_i$, we denote them by the corresponding
point in $\Sigma_i$. In the same way we view the radial rays ${\mathcal R}_i$ as
subsets of $\overline{\Sigma}_i$, and the asymptotic markers as tangent half-lines to
$\overline{\Sigma}_i$ (provided they make sense). Lemma \ref{lemma: compactification of Sigma_i} implies that
they are invariant by the involution on $ \overline{\Sigma}_i$.

The image of an asymptotic marker by a holomorphic function is defined by the differential at the regular points. At the singular points the local behavior of a
holomorphic map still allows us to define the image of a tangent ray.

For the first version of the involution lemma, let
$$\Sigma_1= \C\P^1-([a_1,a_2]\cup\dots\cup[a_{2p-1},a_{2p}]),$$
where $a_i\in\R^+$ and $a_1<\dots<a_{2p}$, and let $\Sigma_2= \D$. We write $\bdry\overline\Sigma_1
=\bdry_1\overline\Sigma_1\sqcup\dots \sqcup\bdry_p\overline\Sigma_1$, where
$\bdry_i\overline\Sigma_1$, $i=1,\dots,p$, corresponds to the slit $[a_{2i-1},a_{2i}]$.

\begin{lemma}[Involution Lemma, Version 1] \label{lemma: effect of involutions}
Let  $f:\overline\Sigma_1\to\overline\Sigma_2$ be a holomorphic map which is a
$q$-fold branched cover with $q\geq p$, such that:
\begin{itemize}
\item[(i)] $f(\bdry_i\overline\Sigma_1)=\bdry \overline\Sigma_2$, $i=1,\dots,p$;
\item[(ii)] $f^{-1}(0)=\{\infty\}$;  and
\item[(iii)] $f(0)\in {\mathcal R}_2$.
\end{itemize}
Then $f$ maps $\op{Fix}(\iota_1)$ to $\op{Fix}(\iota_2)$ and $\dot{\mathcal R}_{1}(\infty)$ to $\dot{\mathcal R}_{2}(0)$.
\end{lemma}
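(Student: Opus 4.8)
The plan is to show first that $f$ intertwines the involutions, i.e.\ that $f=\iota_2\circ f\circ\iota_1$, and then to deduce both assertions of the lemma from this symmetry.

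First I would set $\widetilde f:=\iota_2\circ f\circ\iota_1$, which by Observation~\ref{obs: involution} (applied with the involutions supplied by Lemma~\ref{lemma: compactification of Sigma_i}) is again a holomorphic map $\overline\Sigma_1\to\overline\Sigma_2$. In the chart $w=1/z$ at $\infty$, where $\iota_1$ acts by $w\mapsto\bar w$ since $\infty\in\op{Fix}(\iota_1)$, one writes $f=c_qw^q+c_{q+1}w^{q+1}+\cdots$ with $c_q\neq 0$; here $q$ is the ramification order of $f$ at $\infty$, and it equals the covering degree because $\infty$ is the unique point lying over $0$ by hypothesis (ii). A short computation gives $\widetilde f=\bar c_qw^q+\cdots$, so $f$ and $\widetilde f$ vanish exactly at $\infty$, to the same order $q$, and neither vanishes on $\partial\overline\Sigma_1$ because $f(\partial\overline\Sigma_1)\subset\partial\D\not\ni 0$ by (i); hence $f/\widetilde f$ is holomorphic and nowhere zero on the whole of $\overline\Sigma_1$. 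Since $f$ and $\widetilde f$ map each boundary circle onto $\partial\D$, Schwarz reflection makes them holomorphic up to $\partial\overline\Sigma_1$, where $|f|=|\widetilde f|=1$, so $|f/\widetilde f|\equiv 1$ there. Applying the maximum modulus principle to $f/\widetilde f$ and to $\widetilde f/f$ on the connected surface $\overline\Sigma_1$ forces $f/\widetilde f$ to be a unimodular constant $c$. Evaluating at $0$ and using $\widetilde f(0)=\iota_2(f(\iota_1(0)))=\overline{f(0)}=f(0)$ (because $f(0)\in\mathcal R_2\subset\R$ by (iii)) gives $f(0)=c\,f(0)$; since $0\notin f^{-1}(0)=\{\infty\}$ we have $f(0)\neq 0$, whence $c=1$ and $f=\iota_2\circ f\circ\iota_1$.

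Given this, the inclusion $f(\op{Fix}(\iota_1))\subset\op{Fix}(\iota_2)$ is immediate from the last sentence of Observation~\ref{obs: involution}. For the marker statement I would argue as follows. Since $\mathcal R_1\subset\op{Fix}(\iota_1)$, the map $f$ carries the segment $(-\infty,0]\subset\mathcal R_1$ into $\op{Fix}(\iota_2)\cap\D=(-1,1)$; on this segment $f$ is real-valued, nonvanishing (as $f^{-1}(0)=\{\infty\}$), tends to $f(\infty)=0$ as $z\to-\infty$, and satisfies $f(0)\in\mathcal R_2=(-1,0]$ with $f(0)\neq 0$, hence $f(0)<0$, so by continuity $f<0$ on all of $(-\infty,0]$. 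Going back to the expansion $f=c_qw^q+\cdots$ at $\infty$: the relation $f=\iota_2\circ f\circ\iota_1$ forces every $c_j$ to be real, and $f(-1/t)=c_q(-1)^qt^{-q}(1+O(1/t))<0$ for $t\gg 0$ forces $\op{sign}(c_q)=(-1)^{q+1}$. Because the inversion $z\mapsto 1/z$ is the identity in the charts $w=1/z$ near $\infty$ and $z$ near $0$, the asymptotic marker $\dot{\mathcal R}_1(\infty)$ is the negative-real half-line at $w=0$, and the branched model $w\mapsto c_qw^q+\cdots$ sends it to the half-line at $0\in\D$ in the direction $\op{sign}(c_q)(-1)^q=-1$, which is exactly $\dot{\mathcal R}_2(0)$. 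This gives the second conclusion.

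The step I expect to be the main obstacle is the sign bookkeeping in the last paragraph: both hypotheses (ii) and (iii) enter here — (iii) to force $f(0)\le 0$, (ii) to force $f(0)\neq 0$ — and one must propagate the resulting sign along $(-\infty,0]$ and then into the local model $w\mapsto c_qw^q$ at the branch point $\infty$ in order to identify the image of $\dot{\mathcal R}_1(\infty)$ with $\dot{\mathcal R}_2(0)$. By comparison, establishing $f=\iota_2\circ f\circ\iota_1$ is routine once one checks that $f/\widetilde f$ is holomorphic up to the boundary so that the maximum modulus principle applies on $\overline\Sigma_1$.
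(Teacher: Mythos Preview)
Your proof is correct and follows the same overall strategy as the paper: set $\widetilde f=\iota_2\circ f\circ\iota_1$, show $Q=f/\widetilde f$ is a constant by a maximum-modulus argument, then use (iii) and $f(0)\neq 0$ to pin down the constant as $1$. The one genuine variation is in how you show $Q$ is constant: you apply the maximum modulus principle to both $Q$ and $1/Q$ (using that $Q$ is holomorphic and nowhere zero) to force $|Q|\equiv 1$, hence $Q$ constant; the paper instead observes that $\deg(Q|_{\partial_i\overline\Sigma_1})=0$ as a map to $S^1$ for each $i$, and argues that a nonconstant $Q$ with image in $\overline\D$ would then need a boundary branch point, which is impossible. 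Your route is a bit more elementary and avoids the degree bookkeeping. You also spell out the marker statement carefully --- propagating the sign of $f$ along $(-\infty,0]$ via (ii), (iii), and connectedness, then reading off the image of $\dot{\mathcal R}_1(\infty)$ from the sign of the leading coefficient $c_q$ --- whereas the paper's proof compresses this entire step into the phrase ``by (iii)''. (One notational wobble: your ``$f(-1/t)$'' should be read as evaluation at $w=-1/t$ in the $w$-chart, i.e.\ at $z=-t$; the computation itself is fine.)
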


\begin{figure}[ht]
\s
\begin{overpic}[width=8cm]{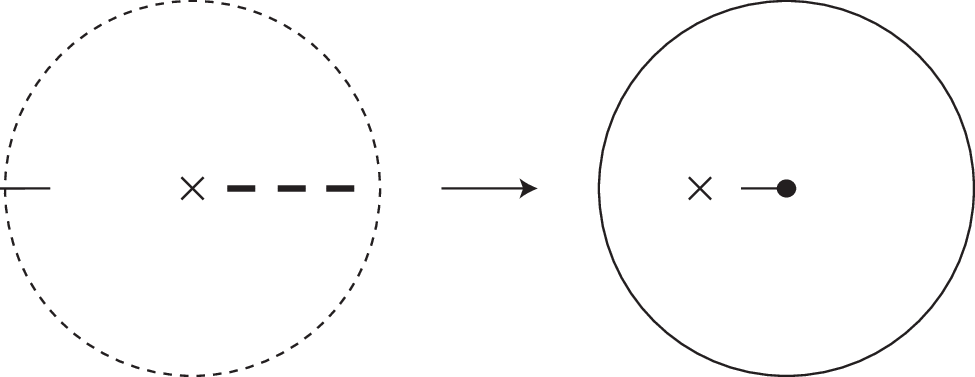}
\end{overpic}
\caption{The map $f|_{\Sigma_1}$ in Lemma~\ref{lemma: effect of involutions}. The left-hand side is $\Sigma_1$, where the point at infinity is  represented by the dotted circle, and the right-hand side is $\Sigma_2$.  The thicker lines on left-hand side are the slits $[a_{2i-1},a_{2i}]$ and the thinner lines are the asymptotic markers. The cross on the left is the origin and the cross on the right is $f(0)\in \mathcal{R}_2$.  Lemma~\ref{lemma: effect of involutions} states that $f$ maps the asymptotic marker $\dot{\mathcal R}_{1}(\infty)$ at $\infty\in\Sigma_1$ to the asymptotic marker $\dot{\mathcal R}_{2}(0)$ at $0\in \Sigma_2$.}
\label{figure: involution}
\end{figure}

\begin{proof}
We claim that $f=\widetilde{f}= \iota_2\circ f \circ \iota_1$. To compare $f$ and $\widetilde{f}$, we consider their quotient $Q=f/\widetilde{f}$. We observe three facts:
\begin{enumerate}
\item $Q$ has no poles, since $f$ and $\widetilde{f}$ have zeros only at $\infty$ and their orders agree.
\item $|Q(z)|=1$ for all $z \in \partial \overline{\Sigma}_1$,
  so the maximum modulus theorem implies that $Q(\overline{\Sigma}_1) \subset \overline{\D}=\{|z|\leq 1\}$.
\item The degree of $Q|_{\bdry_i\overline\Sigma_1}$, viewed as a map to $S^1=\partial \D$, is zero, since $$\op{deg}(f|_{\bdry_i\overline\Sigma_i})= \op{deg}(\widetilde{f}|_{\bdry_i\overline\Sigma_i}).$$
\end{enumerate}
If $Q$ is not constant, then by (3) there must be a branch point of $Q$ along $\bdry_i\overline\Sigma_1$.  In particular, $Q(\overline\Sigma_1) \not \subset \D$, which contradicts (2). Hence $Q$ is a constant map and $f=c\widetilde{f}$ for some $c\in \C-\{0\}$. Now (iii) implies that $c=1$, and we have $f=\widetilde{f}$.

Finally we apply Observation~\ref{obs: involution} to conclude that $f$ maps $\op{Fix}(\iota_1)$ to $\op{Fix}(\iota_2)$ and, by (iii), maps $\dot{\mathcal R}_1(\infty)$ to $\dot{\mathcal R}_2(0)$.
\end{proof}

The proofs of the other versions of the involution lemma are similar, and will be omitted.

\begin{lemma}[Involution Lemma, Version 2]\label{lemma: effect 2}
Let $\Sigma_1=\Sigma_2=\C\P^1$ and $a_i\in\R^{\geq 0}$ with $a_1=0<a_2<\dots<a_{p}$. Assume $f:\Sigma_1\to \Sigma_2$ is a holomorphic map which is a $q$-fold branched cover with $q\geq p$, such that:
\begin{itemize}
\item[(i)] $f^{-1}(\infty)=\{a_1,\dots,a_p\}$;
\item[(ii)] $f^{-1}(0)=\{\infty\}$; and
\item[(iii)] $f$ maps $\dot{\mathcal R}_{1}(0)$ to $\dot{\mathcal R}_{2}(\infty)$.
\end{itemize}
Then $f$ maps $\op{Fix}(\iota_1)$ to $\op{Fix}(\iota_2)$ and $\dot{\mathcal R}_{1}(\infty)$ to $\dot{\mathcal R}_{2}(0)$.
\end{lemma}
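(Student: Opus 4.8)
The plan is to mimic the proof of Version 1 (Lemma~\ref{lemma: effect of involutions}), adjusting for the different source and target geometry. As in that proof, the central claim is that $f = \widetilde f := \iota_2 \circ f \circ \iota_1$, after which Observation~\ref{obs: involution} immediately yields that $f$ maps $\op{Fix}(\iota_1)$ to $\op{Fix}(\iota_2)$; the statement about asymptotic markers then follows from hypothesis (iii) together with the local behavior of $f$ near $\infty \in \Sigma_1$ (where $f$ has a simple zero by (ii), noting $f^{-1}(0) = \{\infty\}$ and the branched-cover count), since the image of a tangent ray at a regular point is determined by the differential. So the real content is establishing $f = \widetilde f$.

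To establish this, first observe that $\widetilde f$ is holomorphic by Observation~\ref{obs: involution}, and that it satisfies the same hypotheses as $f$: indeed $\iota_1$ permutes $\{a_1,\dots,a_p\}$ (each $a_i \in \R^{\geq 0}$ is fixed by conjugation) and fixes $\infty$, so $\widetilde f^{-1}(\infty) = \{a_1,\dots,a_p\}$ and $\widetilde f^{-1}(0) = \{\infty\}$; and $\widetilde f$ maps $\dot{\mathcal R}_1(0)$ to $\dot{\mathcal R}_2(\infty)$ because all the radial rays and asymptotic markers are $\iota_i$-invariant and $\iota_2$ reverses orientation along $\R\P^1$ in a way compatible with $\dot{\mathcal R}_2$. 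Now form the quotient $Q = f/\widetilde f$, a meromorphic function on $\C\P^1 = \Sigma_1$. The key points are: (1) $Q$ has no poles, since the only zeros of $f$ and $\widetilde f$ are at $\infty$ and they have the same order (both equal to $q$ minus the total ramification, which is symmetric under $\iota_1$) — more carefully, $\op{ord}_\infty f = \op{ord}_\infty \widetilde f$ because $\iota_1$ fixes $\infty$ and $f, \widetilde f$ have the same multiplicity there; (2) $Q$ has no zeros either, since the poles of $f$ and $\widetilde f$ both occur exactly at $\{a_1,\dots,a_p\}$ with matching orders. A meromorphic function on $\C\P^1$ with neither zeros nor poles is a nonzero constant, so $f = c\,\widetilde f$ for some $c \in \C^\times$.

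Finally, pin down $c = 1$ using the asymptotic-marker normalization (iii). Near $z = 0 \in \Sigma_1$, write $f(z) = b z^k + O(z^{k+1})$ for the leading term (with $k \geq 1$ the ramification order of $f$ at $0$ over $\infty$, reading $f$ through the chart $1/w$ at $\infty \in \Sigma_2$). The hypothesis that $f$ maps $\dot{\mathcal R}_1(0)$ to $\dot{\mathcal R}_2(\infty)$ forces $b$ to be real and of a definite sign; the same computation for $\widetilde f = \iota_2 \circ f \circ \iota_1$ shows its leading coefficient equals $\overline{b} = b$, so comparing $f = c\widetilde f$ gives $c = 1$. Hence $f = \widetilde f$, and the conclusion follows from Observation~\ref{obs: involution} as above. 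I expect the main obstacle to be the careful bookkeeping of orders of zeros/poles and the precise compatibility of $\iota_i$ with the asymptotic markers $\dot{\mathcal R}_i(0), \dot{\mathcal R}_i(\infty)$ — in particular verifying that step (1) and step (2) of the argument about $Q$ really do use only that $\iota_1$ permutes the relevant point sets while preserving multiplicities, and that the normalization in (iii) is exactly what is needed to kill the constant $c$ (rather than leaving an ambiguity by a root of unity, which is the subtle point in all four versions of the involution lemma).
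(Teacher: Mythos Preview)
Your approach is correct and is precisely the adaptation of Version~1 that the paper intends: with $\Sigma_1 = \C\P^1$ closed, the maximum-modulus argument on the boundary is replaced by the observation that a meromorphic function on $\C\P^1$ with neither zeros nor poles is constant. The verification that $Q = f/\widetilde f$ has matching zero and pole orders at each $a_i$ and at $\infty$ (because $\iota_1$ fixes each of these real points) is exactly right, and the use of (iii) to pin down $c=1$ via the leading coefficient near $0$ is sound.

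There is one slip in the final paragraph: the zero of $f$ at $\infty$ has order $q$, not $1$ --- the branched-cover count gives exactly this, since $f^{-1}(0) = \{\infty\}$ and $\deg f = q$. So for $q>1$ (which is already the case in the paper's application, where $q=2$) the point $\infty$ is a critical point and you cannot simply invoke the differential. To fix this, run a continuity argument along $\mathcal{R}_1 = \R^{\le 0}\cup\{\infty\}$: once $f = \widetilde f$ you have $f(\R\P^1)\subset\R\P^1$; on the open arc $(-\infty,0)$ the map $f$ is real, finite (all $a_i\ge 0$), and nonzero (the only zero is at $\infty$), hence of constant sign; hypothesis (iii) forces $f(z)\to -\infty$ as $z\to 0^-$, so $f<0$ on $(-\infty,0)$; therefore $f(z)\to 0^-$ as $z\to -\infty$, which is exactly the statement that $f$ sends $\dot{\mathcal R}_1(\infty)$ to $\dot{\mathcal R}_2(0)$. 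Equivalently, you may appeal to the paper's remark that the image of a tangent half-line is still well-defined at singular points via the local model $z\mapsto z^q$, together with the sign of the real leading coefficient determined by this same continuity argument.
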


\begin{lemma}[Involution Lemma, Version 3]\label{lemma: effect 3}
Let $\Sigma_1=\Sigma_2=\D$ and $a_i\in\R^{\geq 0}$ with $a_1=0<a_2<\dots<a_{p}<1$. Assume  $f:\overline{\Sigma}_1\to \overline{\Sigma}_2$ is a holomorphic map which is a $q$-fold branched cover with $q\geq p$, such that:
\begin{itemize}
\item[(i)] $f^{-1}(0)=\{a_1,\dots,a_p\}$;
\item[(ii)] $f^{-1}(\bdry \overline{\Sigma}_1)=\bdry \overline{\Sigma}_2$; and
\item[(iii)] $f$ maps $\dot{\mathcal R}_{1}(0)$ to $\dot{\mathcal R}_{2}(0)$.
\end{itemize}
Then $f$ maps $\op{Fix}(\iota_1)$ to $\op{Fix}(\iota_2)$. Moreover $f(-1)=-1$.
\end{lemma}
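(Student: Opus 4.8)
The plan is to imitate the proof of Version 1 (Lemma~\ref{lemma: effect of involutions}) almost verbatim, using the quotient trick, and then extract the extra conclusion $f(-1)=-1$ from the boundary behavior. First I would form $\widetilde f = \iota_2\circ f\circ \iota_1$, which is holomorphic by Observation~\ref{obs: involution}, and consider the meromorphic quotient $Q = f/\widetilde f$ on $\overline\Sigma_1 = \overline\D$. The three observations needed are: (1) $Q$ has no poles, because the only zeros of $f$ and of $\widetilde f$ are at $\{a_1,\dots,a_p\}\subset\op{Fix}(\iota_1)$, which is $\iota_1$-invariant, and at each such point the two maps have the same branching order (since $\widetilde f$ is just $f$ precomposed and postcomposed with conjugations, which preserve multiplicity); (2) $|Q|=1$ on $\bdry\overline\D$, because by (ii) $f$ maps $\bdry\overline\Sigma_1$ to $\bdry\overline\Sigma_2=\bdry\D$, so $|f|=1$ there and likewise $|\widetilde f|=1$ there, hence by the maximum modulus principle $Q(\overline\D)\subset\overline\D$; and (3) $\deg(Q|_{\bdry\overline\D})=0$ as a map to $S^1$, because $\deg(f|_{\bdry\overline\D})=\deg(\widetilde f|_{\bdry\overline\D})$ (conjugation on the boundary circle is an orientation-reversing homeomorphism applied on both sides). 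As in Version~1, if $Q$ were nonconstant then (3) would force a critical point of $Q$ on $\bdry\overline\D$, violating (2); hence $Q\equiv c$ is constant, and $f = c\widetilde f$.

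Next I would pin down $c=1$ using hypothesis (iii). Near the point $0\in\Sigma_1$, which lies in $f^{-1}(0)$ by (i), the asymptotic marker $\dot{\mathcal R}_1(0)$ is a tangent half-line, and $f$ maps it to $\dot{\mathcal R}_2(0)$ by (iii). Writing $f(z) = c\,\overline{f(\bar z)}$ and comparing the images of $\dot{\mathcal R}_1(0)$ under $f$ and $\widetilde f$ (both markers lie along $\R^{\le 0}$ and are fixed by conjugation), the equality $f = c\widetilde f$ combined with $f(\dot{\mathcal R}_1(0)) = \widetilde f(\dot{\mathcal R}_1(0)) = \dot{\mathcal R}_2(0)$ forces $c$ to be real and positive; and since $c$ has modulus $1$ by observation (2) applied at a boundary point (or directly from $|Q|=1$), we get $c=1$, i.e., $f=\widetilde f$. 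Then Observation~\ref{obs: involution} immediately gives $f(\op{Fix}(\iota_1))\subset\op{Fix}(\iota_2)$, i.e., $f$ maps $\op{Fix}(\iota_1) = \overline\Sigma_1\cap\R$ into $\op{Fix}(\iota_2) = \overline\Sigma_2\cap\R = [-1,1]$.

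Finally, for $f(-1)=-1$: the point $-1\in\bdry\overline\Sigma_1$ lies on $\op{Fix}(\iota_1)$, so $f(-1)\in\op{Fix}(\iota_2)\cap\bdry\overline\Sigma_2 = \{-1,1\}$ by the conclusion just obtained together with (ii). To rule out $f(-1)=1$, I would trace the boundary circle: $\op{Fix}(\iota_1)\cap\overline\Sigma_1$ is the segment from $-1$ to $1$ along the real axis (passing through $a_1=0$), and its endpoints $-1$ and $1$ are the two points where $\op{Fix}(\iota_1)$ meets $\bdry\overline\D$. Since $f$ is $\iota$-equivariant and maps $\bdry\overline\Sigma_1$ onto $\bdry\overline\Sigma_2$ preserving orientation (as a branched cover it is a positively-oriented cover on the boundary), and it carries the real segment into the real segment with $f(0)=\infty$-side data fixed by (i) forcing the real axis near $0$ to map to the real axis near $0$ with the marker convention (iii), the component of $\op{Fix}(\iota_1)\cap\overline\Sigma_1$ emanating from $0$ toward $-1$ must map to the component of $\op{Fix}(\iota_2)\cap\overline\Sigma_2$ emanating from $0$ toward $-1$; hence $f(-1)=-1$. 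The main obstacle I anticipate is making this last orientation/endpoint-tracking argument fully rigorous — in particular verifying that the equivariance plus the marker hypothesis (iii) genuinely forces the $-1$ endpoint to go to the $-1$ endpoint rather than wrapping around; this requires a careful local analysis of $f$ along the real axis near $0$ (where $f$ may be branched) and a monodromy/continuity argument along the two real arcs of $\op{Fix}(\iota_1)$.
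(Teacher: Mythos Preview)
Your approach is exactly the one the paper intends: the paper omits the proof and says it is ``similar'' to Version~1, and your quotient argument $Q=f/\widetilde f$ with the three observations is precisely that argument transported to the disk. The one place you hedge unnecessarily is the endpoint step $f(-1)=-1$: once $f=\widetilde f$, the restriction $f|_{[-1,0]}$ is a continuous real-valued function with $f(0)=0$ (not $\infty$ --- that was a slip), hypothesis~(iii) forces $f<0$ on $(-\epsilon,0)$, hypothesis~(i) gives $f^{-1}(0)\cap(-1,0)=\varnothing$, so $f<0$ on all of $(-1,0)$ and hence $f(-1)\le 0$; since $f(-1)\in\{-1,1\}$ you are done, with no monodromy or orientation tracking needed.
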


For the fourth version of the involution lemma, we consider $\Sigma_1=\D-([a_1,a_2]\cup\dots\cup[a_{2p-1},a_{2p}])$, where $a_i\in(0,1)$ with
$a_1<\dots<a_{2p}$, and $\Sigma_2=\{R < |z| < 1\}$, where $0<R<1$.  We write $\bdry\overline{\Sigma}_1= \bdry_0\overline\Sigma_1\sqcup\dots
\sqcup \bdry_p \overline\Sigma_1$, where $\bdry_0\overline\Sigma_1$ is the boundary component which can identified with $\{|z|=1\}$ and $\bdry_i\overline\Sigma_1$, $i=1,\dots,p$, corresponds to the slit $[a_{2i-1},a_{2i}]$.

\begin{lemma}[Involution Lemma, Version 4] \label{lemma: effect 4}
Let  $f:\overline\Sigma_1\to \overline{\Sigma}_2$ be a holomorphic map which is a
$q$-fold branched cover with $q\geq p$, and $\mathcal{I}\subset \{1,\dots,p\}$, such that:
\begin{itemize}
\item[(i)] $f(\bdry_i\overline\Sigma_1)=\{|z|=1\}$ when $i\in \mathcal{I}$;
\item[(ii)] $f(\bdry_i\overline\Sigma_1)=\{|z|=R\}$ when $i=0$ or $i\not\in \mathcal{I}$; and
\item[(iii)] $f(0)\in{\mathcal R}_2$.
\end{itemize}
Then $f$ maps  $\op{Fix}(\iota_1)$ to $\op{Fix}(\iota_2)$. Moreover $f(-1)=-R$.
\end{lemma}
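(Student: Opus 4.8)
The plan is to follow verbatim the structure of the proof of Lemma~\ref{lemma: effect of involutions} (Version~1). I would set $\widetilde f:=\iota_2\circ f\circ\iota_1$, which is holomorphic by Observation~\ref{obs: involution}, and study the quotient $Q:=f/\widetilde f$ on $\overline\Sigma_1$; the goal is to prove $Q\equiv 1$, so that $f=\widetilde f$ and Observation~\ref{obs: involution} forces $f(\op{Fix}(\iota_1))\subset\op{Fix}(\iota_2)$. The only genuinely new feature compared to Version~1 is that the target $\Sigma_2$ is an annulus rather than a disk: this replaces the single maximum-modulus step by a two-sided max/min-modulus argument on $\C^\times$, and it requires one extra step at the end to pin down the precise value $f(-1)=-R$ from the behaviour of $f$ along the real locus.

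Here are the steps in order. First, since $\overline\Sigma_2\subset\{R\le|z|\le 1\}$ with $R>0$, neither $f$ nor $\widetilde f$ ever takes the value $0$ or $\infty$, so $Q:\overline\Sigma_1\to\C^\times$ is holomorphic and nowhere vanishing (the branched-cover hypotheses are really only used to make conditions (i)--(ii) consistent and to know $f$ is onto the annulus). Second, I would check that $\iota_1$ preserves each boundary component $\bdry_i\overline\Sigma_1$ setwise (the outer circle $\{|z|=1\}$ and each slit circle lie over conjugation-invariant subsets of $\R$) and that $\iota_2$ preserves each circle $\{|z|=1\}$ and $\{|z|=R\}$; hence on each $\bdry_i\overline\Sigma_1$ the maps $f$ and $\widetilde f$ land in the same circle, so $|Q|\equiv 1$ on $\bdry\overline\Sigma_1$. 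Third, applying the maximum modulus principle to $Q$ gives $|Q|\le 1$, and to $1/Q$ gives $|Q|\ge 1$, so $|Q|\equiv 1$ on the connected surface $\overline\Sigma_1$ and $Q$ is a constant $c$ with $|c|=1$. Fourth, since $0\in\op{Fix}(\iota_1)$ we have $\widetilde f(0)=\iota_2(f(0))=\overline{f(0)}$, and by (iii) $f(0)\in\mathcal R_2=(-1,-R)\subset\R$, whence $Q(0)=f(0)/\overline{f(0)}=1$; therefore $c=1$ and $f=\widetilde f$. Fifth, from $\iota_2\circ f=f\circ\iota_1$ on $\overline\Sigma_1$ (both sides continuous, agreeing on the dense set $\Sigma_1$) it follows exactly as in Observation~\ref{obs: involution} that $f$ maps $\op{Fix}(\iota_1)$ into $\op{Fix}(\iota_2)$.

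For the last assertion, I would use the radial ray $\mathcal R_1=\Sigma_1\cap(\R^{\le 0}\cup\{\infty\})=(-1,0]$, whose closure in $\overline\Sigma_1$ is $[-1,0]$, lies in $\op{Fix}(\iota_1)$, and has $-1\in\bdry_0\overline\Sigma_1$. Its image $f([-1,0])$ is a connected subset of $\op{Fix}(\iota_2)$; in $\overline\Sigma_2=\{R\le|z|\le 1\}$ one has $\op{Fix}(\iota_2)=[-1,-R]\sqcup[R,1]$, a disjoint union of two arcs, and $f(0)\in(-1,-R)$, so $f([-1,0])\subset[-1,-R]$ and in particular $f(-1)\in[-1,-R]$. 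On the other hand $-1\in\bdry_0\overline\Sigma_1$ forces $f(-1)\in\{|z|=R\}$ by (ii), so $f(-1)=-R$. I do not expect a real obstacle: the only points requiring care are the two-sided use of the maximum principle (legitimate precisely because the annular target keeps $f,\widetilde f$ away from $0$ and $\infty$) and the routine facts that $\iota_1$ fixes each boundary circle setwise and that a holomorphic branched cover of compact bordered surfaces sends interior to interior and boundary to boundary, so that (iii) genuinely places $f(0)$ in the open annulus $\Sigma_2$.
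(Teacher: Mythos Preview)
Your proposal is correct and follows the approach the paper indicates (it omits the proof, saying it is ``similar'' to Version~1). Your adaptation is the natural one: since the target annulus keeps $f$ and $\widetilde f$ bounded away from $0$ and $\infty$, the quotient $Q=f/\widetilde f$ is holomorphic and nowhere vanishing, so the two-sided maximum modulus argument forces $|Q|\equiv 1$ and hence $Q$ is constant---this replaces (and is slightly cleaner than) the degree argument used in Version~1, where $f$ does vanish. Your connectedness argument for $f(-1)=-R$ is exactly what is needed and is correct as written.
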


See Figure~\ref{figure: involution2}.

\begin{figure}[ht]
\s
\begin{overpic}[width=8cm]{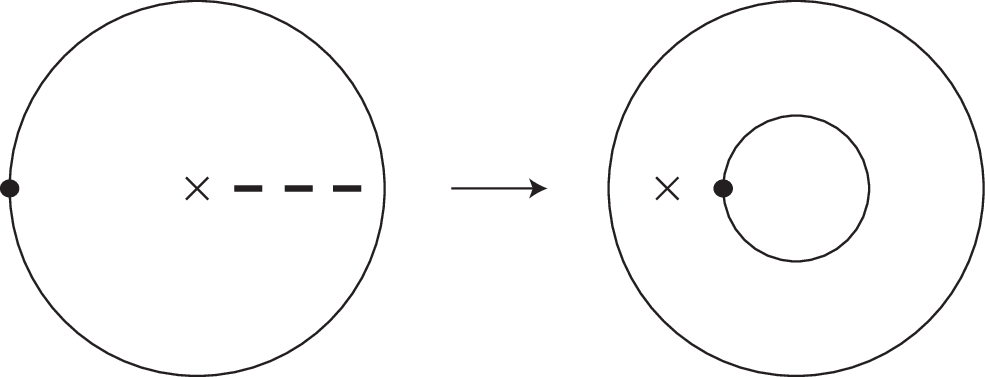}
\end{overpic}
\caption{The map $f|_{\Sigma_1}$ in Lemma~\ref{lemma: effect 4}. The left-hand side is $\Sigma_1$ and the right-hand side is $\Sigma_2$.  The thick lines on the left-hand side are the slits $[a_{2i-1},a_{2i}]$.  The cross on the left is the origin and the cross on the right is $f(0)\in \mathcal{R}_2$. The dots represent $-1\in \bdry\Sigma_1$ and $f(-1)\in \bdry\Sigma_2$. Lemma~\ref{lemma: effect 4} states that $f(-1)=-R$.}
\label{figure: involution2}
\end{figure}

\subsection{Elimination of some cases}
\label{subsection: theorem complement}

We are now in a position to eliminate some of possibilities that appear in
Theorem~\ref{thm: compactness for W minus I=3, first version}:

\begin{thm} \label{thm: complement}
Suppose $m\gg 0$, $\varepsilon,\delta>0$ are sufficiently small, $\overline{u}_\infty\in \bdry \mathcal{M}^i_{\overline{\frak m}}(\varepsilon,\delta,p)$, and $\overline{v}'_0\not=\varnothing$.
\begin{enumerate}
\item[(i)] If $i=2$, then the $3$-level subbuilding described in Theorem~\ref{thm: compactness for W minus I=2, first version} does not occur.
\item[(ii)] If $i=3$, then Cases (2)--(6) of Theorem~\ref{thm: compactness for W minus I=3, first version} do not occur.
\end{enumerate}
\end{thm}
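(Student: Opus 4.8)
The plan is to argue by contradiction, combining the SFT compactness and rescaling analysis of Section~\ref{subsection: rescaling} with the involution lemmas of Section~\ref{subsection: involutions}. Suppose one of the excluded configurations occurs in $\bdry\mathcal{M}^i_{\overline{\frak m}}(\varepsilon,\delta,p)$ for arbitrarily large $m$ (and arbitrarily small $\varepsilon,\delta$). For each such $m$ choose an honest curve in $\mathcal{M}^{I=i,n^*=m}_{\overline{J}_-^\Diamond}(\gamma,{\bf y};\overline{\frak m})$ which SFT-converges to a building of the prescribed type with $\overline{v}'_0\neq\varnothing$; a diagonal extraction then yields sequences $m_i\to\infty$ and $\overline{u}_{ij}$ satisfying properties (S1)--(S3) of Section~\ref{subsection: rescaling}. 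In each excluded configuration the first negative end at $\delta_0$ has multiplicity one (Case (2) and the $i=2$ subbuilding, giving (S4')) or two (Cases (3)--(6), giving (S4'')). In all of them the building contains a distinguished level of ECH index one carrying the $\delta_0$-asymptotics; since an ECH-index-one curve cannot break into several nontrivial levels, as $m_i\to\infty$ this level limits to one of the finitely many curves $\mathcal{C}_a$ of Section~\ref{limit m to infty} (Lemma~\ref{dentist}), and by Remark~\ref{limit of normalized eigenfunctions} its normalized asymptotic eigenfunction at $\delta_0$ converges to the corresponding $f_{ab}$ --- precisely one of the eigenfunctions that the good radial ray $\mathcal{R}_\pi$ was chosen to avoid (Definition~\ref{defn: radial rays}, Remark~\ref{rmk: good ray is real}).

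Next I would run the truncation-and-rescaling machinery. By Lemma~\ref{lemma: pre-riscaling estimates} each $\overline{u}_{ij}$ admits a good truncation near $\sigma_\infty^-$; renormalizing by $C_i=\sup_{K_i}|\widetilde{w}_i|$ as in Definition~\ref{defn: definiton of w_i} and passing to a subsequence produces the holomorphic limit map $w_\infty$ (in Case (S4'), Theorem~\ref{capodanno}) or the pair $w_\infty^{\frak m},w_\infty^b$ (in Case (S4''), one of Theorems~\ref{capodanno2ab}, \ref{capodanno2c}, \ref{capodanno2d}, according to the behavior (a)--(d'') of the single branch point of $p_i:\widetilde{F}_i\to B_-$). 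In every case the point constraint forces the relevant limit map to have a unique, simple zero over $\overline{\frak m}^b=(0,\frac{3}{2})\in B_-$, and $\overline{\frak m}^b$ lies on the fixed locus of the anti-holomorphic involution $\iota$ on $B_-$; after uniformizing $cl(B_-)\cong\overline{\D}$ so that (as in Remark~\ref{rmk: good ray is real}) $\op{Fix}(\iota)$ corresponds to the radial ray $\mathcal{R}_2$ and $\mathcal{R}_\pi$ is the good radial ray, we are in exactly the setting of the involution lemmas.

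Then I would apply the appropriate involution lemma to the composite branched cover obtained by pairing the limiting projection to the base with the inverse of the limiting ``vertical'' map: in Case (S4') this is $p_\infty\circ w_\infty^{-1}:\C\P^1-[a_1,a_2]\to cl(B_-)$, which by Theorem~\ref{capodanno} is a biholomorphism sending the slit boundary to $\bdry cl(B_-)$, with preimage of $0$ equal to $\{\infty\}$ and value $\overline{\frak m}^b\in\mathcal{R}_2$ at $0$, so Lemma~\ref{lemma: effect of involutions} (Version~1) applies; in Case (S4'') one uses $p_\infty\circ w_\infty^{-1}$ on the marked-point component (a $q$-fold branched cover with $q$ at least the number of real slits, by Theorems~\ref{capodanno2ab}, \ref{capodanno2c}, \ref{capodanno2d}), together with Version~2 (Lemma~\ref{lemma: effect 2}) applied to $p_\infty^b\circ(w_\infty^b)^{-1}$ on the extra spherical component in sub-case (c) where the branch point escapes to $s=+\infty$. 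In each instance the involution lemma forces the map to send $\dot{\mathcal{R}}_1(\infty)$ to $\dot{\mathcal{R}}_2(0)$; dually, $w_\infty$ sends the asymptotic marker of $\mathcal{R}_2$ at $\mathfrak{p}_+$ to the real marker at $\infty$. Reading this off through the asymptotic formula for $w_\infty$ near the positive puncture (Theorem~\ref{capodanno}(2) and its analogues) gives $f_{ab}(t_0)\in\mathcal{R}_\pi$ for some $t_0\equiv\frac{3}{2}\pmod2$, contradicting the goodness of $\mathcal{R}_\pi$. This contradiction rules out Cases (2)--(6) when $i=3$ and rules out the unique candidate subbuilding when $i=2$, which is (i); in particular the source of boundary in $\mathcal{M}^2_{\overline{\frak m}}$ with $\overline{v}'_0\neq\varnothing$ is empty.

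I expect the main obstacle to be the case analysis inside Case (S4''): one must treat each of the five branch-point behaviors (a)--(d'') separately, identifying in each the component that carries the point constraint and the component(s) that carry the $\delta_0$-asymptotics, determining which version of the involution lemma applies to the corresponding composite, and verifying its hypotheses --- in particular that the relevant zero is simple and unique and that the target and source Riemann surfaces have the claimed slit/boundary structure --- before piecing together the asymptotic-marker conclusions across the one or two levels. A second point requiring care is the verification that the distinguished ECH-index-one level limits to a genuine $\mathcal{C}_a$ rather than to a trivial cylinder or a broken configuration, so that the eigenfunction appearing in the contradiction is really one of the $f_{ij}$; this is where index additivity, the ECH index inequality, and the fact that $\overline{v}'_0\neq\varnothing$ forces a branch point (Remark~\ref{rmk: number of branch points}) are used.
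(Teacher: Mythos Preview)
Your proposal is correct and follows essentially the same approach as the paper: argue by contradiction with $m\to\infty$, extract the rescaled limit via Theorems~\ref{capodanno}--\ref{capodanno2d}, and feed the composite $p_\infty\circ w_\infty^{-1}$ (together with the branch-point-level map in subcase~(c)) into the appropriate involution lemma to force $f_{ab}(\tfrac{3}{2})\in\mathcal{R}_\pi$, contradicting goodness. Your anticipated obstacles are exactly the places where the paper spends its effort, and your identification of which involution lemma (Version~1 versus Version~2) applies to which component matches the paper's case-by-case treatment; the only addendum is that in Cases~(5) and~(6) the presence of a connector over $\delta_0$ forces the branch point into subcase~(c), so those two cases reduce directly to the argument you give for Case~(4c).
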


We briefly sketch the idea of the proof. In all the cases that are eliminated by Theorem~\ref{thm: complement}, the (unique) component of $\cup_{j=1}^a \overline{v}_j^\sharp\subset \overline{u}_\infty$ which is asymptotic to a multiple of $\delta_0$ at the negative end has ECH index $I=1$.

Suppose that, for $m\gg 0$, there is a sequence of holomorphic curves which converges to a configuration $\overline{u}_\infty$ that we want to exclude. In Section \ref{subsection: rescaling}, we applied a rescaling argument to construct a holomorphic building which keeps track of how the limit $\overline{u}_\infty$ is approached; this is similar to the layer structures of Ionel-Parker~\cite[Section 7]{IP1}. In the simplest case, this building is a holomorphic map $w_{\infty} : B_- \to \C$ which satisfies the asymptotic condition
$$\lim \limits_{s \to + \infty} \dfrac{w_{\infty}(s,t)}{|w_{\infty}(s,t)|} = f_{i_0j_0}(t),$$
where $f_{i_0j_0}(t)$ is a normalized asymptotic eigenfunction of an $I=1$ curve with a negative end asymptotic to $\delta_0$. The condition $I=1$ is used as follows: since there are only finitely many $I=1$ curves with negative ends asymptotic to $\delta_0$, we may assume that $-1 \not \in \{ f_{i_0j_0}(\frac 32) \}$ as explained in Section~\ref{subsubsection: radial rays} and Remark~\ref{rmk: good ray is real}.

\begin{rmk}\label{rmk: good identification}
In this subsection we identify $cl(B_-)\simeq \overline{\mathbb D}$ so that $\mathfrak{p}_+$ corresponds to $0$ and $\mathfrak{p}_-$ corresponds to $1$. There is an anti-holomorphic involution $\iota$ on $B_-$ that fixes the half-line $\{t= \frac 32 \}$, and $\{ t = \frac 32 \}$ corresponds to the radial ray ${\mathcal R} = \overline{\mathbb D} \cap \R^{\le 0}$ by Observation \ref{obs: involution}. In particular, $\overline{\mathfrak{m}}^b$ is mapped to a point on ${\mathcal R}$.

Similarly we identify $cl(B')\simeq \C\P^1$ so that ${\frak p}_+$ corresponds to $0$, ${\frak p}_-$ corresponds to $\infty$, and $\{t={3\over 2}\}$ corresponds to the radial ray $\R^{\leq 0}$,
\end{rmk}

We now use the involutions lemmas from Section~\ref{subsection: involutions} to obtain a contradiction.
By the involution lemmas and the symmetric placement of the basepoint $\overline{\mathfrak{m}}^b$, we obtain that $w_{\infty}\circ \iota = i\circ w_\infty$. Hence $\lim \limits_{s \to + \infty} \dfrac{w_{\infty}(s,{3\over 2})}{|w_{\infty}(s,{3\over 2})|} =-1$, which contradicts $-1 \not \in \{ f_{i_0j_0}(\frac 32) \}$.

\subsubsection{Proof of Theorem~\ref{thm: complement}}

In this subsection we use limiting arguments in which $m\to \infty$ and $\overline{\hh}_m\to \overline{\hh}_\infty$; see Section~\ref{limit m to infty}.  Hence many of the almost complex structures and moduli spaces will have an additional subscript $m$, where $m=\infty$ is also a possibility. For example, we will use the notation $\mathcal{J}'_{\overline{W'}_m}$ and $\mathcal{J}_{\overline{W}_{-,m}}$ introduced in Section~\ref{limit m to infty} to refer to $\mathcal{J}_{\overline{W'}}$ and $\mathcal{J}_{\overline{W}_-}$ with respect to $m$.

Let $\overline{J'}_{\infty}\in \mathcal{J}_{\overline{W'}_{\infty}}^\star$ and let $\overline{J'}_{m}\in \mathcal{J}_{\overline{W'}_m}^\star$ be a nearby almost complex structure with respect to the integer $m\gg 0$. Let $\overline{J}_{-,m}\in \mathcal{J}_{\overline{W}_{-,m}}^{reg}$ be an almost complex structure which restricts to $\overline{J'}_{m}$ and let $\overline{J}_{-,m}^\Diamond$ be $(\varepsilon,U)$-close to $\overline{J}_{-,m}$.

We will treat Theorem~\ref{thm: compactness for W minus
I=3, first version} in detail and leave Theorem~\ref{thm: compactness for W minus
I=2, first version} to the reader. Suppose that,
for a sequence $m_i\to \infty$, there sequences  $\overline{u}_{ij}$ of
$\overline{J}_{-,m_i}^\Diamond$-holomorphic curves which converge to a
$\overline{J}_{-,m_i}^\Diamond$-holomorphic building $\overline{u}_{i \infty}$ falling into
one of Cases (2)--(6).

\s\n
{\em Elimination of Case (2).}
Suppose for each $i$ the sequence $\overline{u}_{ij}$ converges to a building $\overline{u}_{i \infty}$ satisfying Case (2). By Theorem \ref{capodanno}, we obtain a holomorphic map $w_{\infty}: cl(B_-) \to \C\P^1$, whose restriction to $int(cl(B_-))$ is a biholomorphism onto its image.

We apply the Involution Lemma~\ref{lemma: effect of involutions} to obtain a contradiction:  Let  $\overline{\Sigma}_1$ be the compactification of $\Sigma_1= \C\P^1 - [a_1,a_2]$ and let $\overline{\Sigma}_2 = cl(B_-)$ be identified with $\overline{\D}$ as in Remark
\ref{rmk: good identification}. Let $f: \overline{\Sigma}_1 \to \overline\Sigma_2$ be the extension of $(w_{\infty}|_{int(cl(B_-))})^{-1}$. Such an extension exists because $\Sigma_1$ is biholomorphic to the open unit disk and biholomorphisms of the open unit disk extend continuously to the boundary.

By Lemma~\ref{lemma: effect of involutions}, $f$ maps $\dot{\mathcal{R}}_1(\infty)$ to $\dot{\mathcal{R}}_2(0)$ and, conversely, $w_{\infty}$ maps $\dot{\mathcal{R}}_2(0)$ to $\dot{\mathcal{R}}_1(\infty)$. Since the asymptotic marker $\dot{\mathcal{R}}_2(0)$ in $\overline\Sigma_2$ corresponds to the asymptotic marker $\{t= \frac 32\}$ for ${\frak p}_+\in cl(B_-)$ by Remark \ref{rmk: good identification},  $\dot{\mathcal{R}}_1(\infty)$ is a bad radial ray (in the sense of Definition \ref{defn: radial rays}) by Theorem~\ref{capodanno}(2). This contradicts Remark~\ref{rmk: good ray is real}, so we have eliminated Case (2).

\s\n
{\em Elimination of Cases (3) and (4).}
We will treat Case (4); Case (3) is almost identical. Suppose for each $i$ the sequence $\overline{u}_{ij}$ converges to a building $\overline{u}_{i \infty}$ satisfying Case (4).  By Remark~\ref{rmk: number of branch points}, for each $i$, the total number of branched points of $\cup_{j=-b}^a\overline{v}'_{j,i}$ is one. If we exercise some care in choosing the diagonal sequence in Lemma \ref{lemma: pre-riscaling estimates}, we can divide the argument for Case (4) further into Subcases (a), (b), (c), (d') and (d'') as in Section~\ref{subsection: Case (S4'')}, depending on the behavior of the branch points of the maps $\overline{\pi}_{B_-} \circ \overline{v}'_{0,i}$.

\s\n
{\em Subcases (a) and (b).}
By Theorem~\ref{capodanno2ab}, we obtain holomorphic maps
$$w_{\infty}: cl(\widetilde{F}_{\infty}) \to \C\P^1, \quad p_\infty: \widetilde{F}_{\infty} \to B_-,$$
where $w_\infty|_{int(cl(\widetilde{F}_\infty))}$ is a biholomorphism onto its image $\Sigma_1 = \C \P^1 - ([a_1, a_2] \cup [a_3, a_4])$ and $p_\infty$ is a branched double cover with one branch point. Let $\overline{\Sigma}_1$
be the compactification of $\Sigma_1$ as in Lemma \ref{lemma: compactification of
Sigma_i}, and let $\overline{\Sigma}_2 = cl(B_-)$ be identified with $\overline{\D}$ as in Remark \ref{rmk: good identification}. We define $f : \overline{\Sigma}_1 \to \overline\Sigma_2$ as the extension of $p_{\infty} \circ (w_{\infty}|_{int(cl(\widetilde{F}_\infty))})^{-1}$.
Such an extension exists because $\Sigma_1$ is biholomorphic to an open annulus, and
biholomorphisms of the annulus always extend to the boundary. At this point we apply Lemma~\ref{lemma: effect of
involutions} as in Case (2) to obtain a contradiction. Case (b) is completely analogous
and can be excluded in the same way.

\s\n
{\em Subcase (c).}
By Theorem \ref{capodanno2c}, we obtain pairs of holomorphic maps
\begin{align*}
w_{\infty}^b  : cl(\widetilde{F}_{\infty}^b) \to \C \P^1, & \quad p_\infty^b :  \widetilde{F}^b_{\infty} \to B', \\
w_{\infty}^\mathfrak{m}  : cl(\widetilde{F}_{\infty}^\mathfrak{m}) \to \C \P^1, & \quad p_\infty^\mathfrak{m} : \widetilde{F}^\mathfrak{m}_{\infty} \to B_.
\end{align*}
 The map $w_{\infty}^{\mathfrak{m}}$ restricts to a biholomorphism of $int(cl(\widetilde{F}_{\infty}^{\mathfrak{m}}))$ with $\Sigma_1^{\mathfrak{m}} = \C \P^1 - [a_1, a_2]$ and the map $p_\infty^{\frak m}$ is a biholomorphism because it has degree $1$.
Let $\overline{\Sigma}_1^{\mathfrak{m}}$ be the compactification of $\Sigma_1^{\mathfrak{m}}$. Identify $\overline{\Sigma}_2^{\mathfrak{m}}=cl(B_-)$ with $\overline{\D}$ as in Remark \ref{rmk: good identification}. Let $f^{\mathfrak{m}}: \overline{\Sigma}_1^{\mathfrak{m}} \to
\overline\Sigma_2^{\mathfrak{m}}$ be the extension of $p_{\infty}^{\mathfrak{m}} \circ
(w_{\infty}^{\mathfrak{m}}|_{int(cl(\widetilde{F}_{\infty}^{\frak m}))})^{-1}$. As in Case (2), Lemma~\ref{lemma: effect of involutions} implies that $f^{\mathfrak{m}}$ maps
$\dot{\mathcal{R}}_1^{\mathfrak{m}}(\infty)$ to $\dot{\mathcal{R}}^{\mathfrak{m}}_2(0)$ and, conversely, $w_{\infty}^{\mathfrak{m}}$ maps $\dot{\mathcal{R}}_2^{\mathfrak{m}}(0)$ to $\dot{\mathcal{R}}_1^{\mathfrak{m}}(\infty)$.

Next we consider the ``upper level'' $(w_\infty^b, p_\infty^b)$. The map $w_\infty^b: cl(\widetilde{F}^b_\infty)\to \C\P^1$ is a biholomorphism and the map $p_\infty^b: \widetilde{F}^b_\infty\to B'$ is a branched double cover with one branch point.  We define $f^b=p_\infty^b\circ (w^b_\infty)^{-1}: \C\P^1 \to \C\P^1$, using the identification $cl(B')\simeq \C\P^1$ from Remark~\ref{rmk: good identification}. By Theorem~\ref{capodanno2c}(9) and the previous paragraph, $f^b$ maps $\dot{\mathcal{R}}_1(0)$ to $\dot{\mathcal{R}}_2(\infty)$. Then, by
Lemma~\ref{lemma: effect 2} and Theorem~\ref{capodanno2c}(7),(8), $f^b$ maps $\dot{\mathcal{R}}_1(\infty)$ to $\dot{\mathcal{R}}_2(0)$. As in Case (2), this
is a contradiction because $\dot{\mathcal{R}}_1(\infty)$ is a good radial ray.

\s\n
{\em Subcases (d') and (d'').}
By Theorem \ref{capodanno2d}, we obtain a holomorphic map $w_{\infty}^{\mathfrak{m}}
: cl(\widetilde{F}_{\infty}^{\mathfrak{m}}) \to \C \P^1$ which restricts to a biholomorphism of
$int(cl(\widetilde{F}_{\infty}^{\mathfrak{m}}))$ with $\C \P^1 - [a_1, a_2]$. Then the proof
proceeds as in Case (2).

\s\n
{\em Elimination of Cases (5) and (6).}
The limit configurations of Cases (5) and (6) must contain a connector over $\delta_0$.
This implies that Theorem~\ref{capodanno2c} applies, and we obtain pairs of holomorphic
maps
\begin{align*}
w_{\infty}^b  : cl(\widetilde{F}_{\infty}^b) \to \C \P^1, & \quad p_\infty^b :  \widetilde{F}^b_{\infty} \to B', \\
w_{\infty}^\mathfrak{m}  : cl(\widetilde{F}_{\infty}^\mathfrak{m}) \to \C \P^1, & \quad p_\infty^\mathfrak{m} : \widetilde{F}^\mathfrak{m}_{\infty} \to B_.
\end{align*}
Then the argument of Case (4c) applies.

\s
This completes the proof of Theorem~\ref{thm: complement}.

\subsection{Proof of Lemma~\ref{claim 1}} \label{proof of lemma}

We begin with the following corollary of Theorem~\ref{thm: complement}. Recall the notation from Convention~\ref{convention}, Step 2 of Section~\ref{subsection: outline of proof} and Equation~\eqref{strange notation}.

\begin{cor}\label{alternative}
Suppose $m\gg 0$ and $\varepsilon,\delta>0$ are sufficiently small constants. If $\overline{u}\in \mathcal{M}_{\overline{\frak m}}^2(\varepsilon,\delta,p)$ or  $\mathcal{M}_{\overline{\frak m}}^{3,(r_0)}(\varepsilon,\delta,p)$, then $\op{Im}(\overline{u}) \cap K_{p,2\delta}\not=\varnothing$. In the latter case, $r_0\gg 0$ and $\varepsilon,\delta>0$ are sufficiently small constants which depend on $r_0$.
\end{cor}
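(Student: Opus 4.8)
The statement to prove is Corollary~\ref{alternative}: for $m\gg0$ and $\varepsilon,\delta>0$ small, any $\overline{u}$ in $\mathcal{M}^2_{\overline{\frak m}}(\varepsilon,\delta,p)$ or $\mathcal{M}^{3,(r_0)}_{\overline{\frak m}}(\varepsilon,\delta,p)$ must have image meeting the compact set $K_{p,2\delta}=\overline\pi^{-1}_{B_-}(p)-\{\rho<2\delta\}$.

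The plan is to argue by contradiction. Suppose there is a sequence $m_i\to\infty$, corresponding small constants $\varepsilon_i,\delta_i\to 0$ (and, in the second case, $r_0=r_0(i)\to\infty$), and curves $\overline{u}_i\in\mathcal{M}^2_{\overline{\frak m}}(\varepsilon_i,\delta_i,p)$ or $\mathcal{M}^{3,(r_0(i))}_{\overline{\frak m}}(\varepsilon_i,\delta_i,p)$ whose images are disjoint from $K_{p,2\delta_i}$. Recall (Definition~\ref{defn: almost multisection}) that an almost multisection with respect to $\overline{J}_-^\Diamond(\varepsilon,\delta,p)$ is a genuine multisection over $B_- - V$ where $V=\overline\pi_{B_-}(U)$ is a small neighborhood of $p\in B_-$; the constraint ``$\op{Im}(\overline{u}_i)\cap K_{p,2\delta_i}=\varnothing$'' says precisely that $\overline{u}_i$ does not enter the region $\overline\pi^{-1}_{B_-}(p)-\{\rho<2\delta_i\}$, i.e., near the fiber over $p$ it stays inside $\{\rho<2\delta_i\}$, a shrinking neighborhood of the section at infinity $\sigma^-_\infty=\R\times\delta_0$. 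First I would observe that in this situation the $\overline{J}_-^\Diamond(\varepsilon_i,\delta_i,p)$ are $(\varepsilon_i,U_i)$-close to genuine regular $\overline{J}_-\in\overline{\mathcal{J}}_-^{reg}$, and by items (ii)--(iii) following Definition~\ref{defn: almost multisection} the moduli spaces converge (with all boundary strata) to those for $\overline{J}_-$. Hence, after passing to a subsequence and applying Proposition~\ref{prop: SFT compactness for W minus}, the $\overline{u}_i$ converge in the SFT sense to a holomorphic building $\overline{u}_\infty=\overline v_{-b}\cup\dots\cup\overline v_a$ for a genuine $\overline{J}_-\in\overline{\mathcal{J}}_-^{reg}$.

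Next I would use the marked-point constraint together with the ``escape'' hypothesis. Since each $\overline{u}_i$ passes through $\overline{\frak m}=((0,\tfrac32),z_\infty)\in\sigma^-_\infty$, the limit $\overline{u}_\infty$ also passes through $\overline{\frak m}$ (or has a component through it). By the reasoning in the proof of Lemma~\ref{lemma: preliminary restrictions part 1}, passing through $\overline{\frak m}\in\sigma^-_\infty$ forces $n^-$ of the relevant level to be at least $m$, and by Equation~\eqref{eqn: m} combined with nonnegativity of $n^*$ we get $n^-(\overline v_0)=m$ and $n^*(\overline v_j)=0$ for $j\neq 0$. Two cases arise: $\overline v_0'=\varnothing$ or $\overline v_0'\neq\varnothing$. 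If $\overline v_0'\neq\varnothing$, then Theorem~\ref{thm: compactness for W minus I=2, first version} (resp.\ Theorem~\ref{thm: compactness for W minus I=3, first version}) together with Theorem~\ref{thm: complement} gives the precise list of allowable subbuildings; in all of those cases the component $\overline v_0$ that passes through $\overline{\frak m}$ is exactly $\sigma^-_\infty$ (or a branched cover of it), which is a single fiber's worth of data lying entirely in $\overline\pi^{-1}_{B_-}(\overline{\frak m}^b)\subset\{$fiber over $(0,\tfrac32)\}$, not over $p\neq\overline{\frak m}^b$. If $\overline v_0'=\varnothing$, then by Lemma~\ref{lemma: preliminary restrictions part 1}(4), $\overline v_0$ is a $\overline{W}_-$-curve or a degenerate $\overline{W}_-$-curve; in the latter case it contains a fiber $\overline\pi^{-1}_{B_-}(\overline{\frak m}^b)$, again over $\overline{\frak m}^b\neq p$; in the former case it is a genuine $\overline{W}_-$-curve with $n^-=m$, and by Lemma~\ref{properties n-}(2) it intersects $\sigma^-_\infty$ transversely exactly once (at $\overline{\frak m}$), so elsewhere over the fiber $\overline\pi^{-1}_{B_-}(p)$ it is disjoint from $\sigma^-_\infty$, hence must enter $\{\rho\ge 2\delta_i\}$ over $p$ for $i\gg0$ — i.e., $\op{Im}(\overline{u}_i)\cap K_{p,2\delta_i}\neq\varnothing$ for large $i$, contradicting the hypothesis.

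The main obstacle I anticipate is making the last implication genuinely quantitative and uniform in $i$: the limiting curve $\overline v_0$ over the fixed fiber $\overline\pi^{-1}_{B_-}(p)$ is disjoint from $\sigma^-_\infty$ and hence has image with $\rho$-coordinate bounded below by some fixed $\rho_p>0$ on $\overline\pi^{-1}_{B_-}(p)$; one then needs the $C^0$-convergence of $\overline{u}_i$ to $\overline v_0$ (away from the breaking necks, which do not occur over the interior point $p$ since the constraint $n^*=0$ off level $0$ prevents any level from having ends over $p$) to conclude that, for $i$ large, $\overline{u}_i$ also has $\rho\ge\rho_p/2>2\delta_i$ over $p$, forcing $\op{Im}(\overline{u}_i)\cap K_{p,2\delta_i}\ne\varnothing$. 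This requires checking that $p$, being a fixed interior point of $B_-$ distinct from $\overline{\frak m}^b$ and from the images of the breaking levels, lies over a region where the SFT convergence is $C^\infty_{loc}$; this follows from standard elliptic regularity for the converging sequence away from nodes and necks, together with the fact (from the case analysis above) that every level other than $\overline v_0$ has image in $W$, $W_-$, or $W'$ and hence contributes nothing over $p$ once we have localized near the fiber over $p$. Once this uniformity is in place, the contradiction is immediate and the corollary follows. I would also remark that in the truncated case $\mathcal{M}^{3,(r_0)}_{\overline{\frak m}}$, the subtraction of $G(\mathfrak{P}_{(r_0)})$ only removes curves close to the broken configurations of $A_1$ (which by Lemma~\ref{lemma alt} are among the listed degenerations), so the same case analysis and the same SFT-limit argument apply verbatim.
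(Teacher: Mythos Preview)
Your argument has a genuine error in the case $\overline{v}_0'\neq\varnothing$. You assert that $\sigma_\infty^-$ (or its branched cover) ``is a single fiber's worth of data lying entirely in $\overline\pi^{-1}_{B_-}(\overline{\frak m}^b)$''. This is false: $\sigma_\infty^-$ is the \emph{section} at infinity $(s,t)\mapsto(s,t,z_\infty)$, which lies over all of $B_-$, not just over the point $\overline{\frak m}^b$. In particular, over the fiber at $p$ the component $\overline{v}_0'$ contributes the point $z_\infty$, so the image of $\overline{v}_0$ over $p$ is \emph{not} bounded away from $z_\infty$ in the way you need. Because of this confusion you never actually derive a contradiction in the $\overline{v}_0'\neq\varnothing$, $I=3$ case; your argument simply stops after the misidentification.

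The paper's route is shorter and sidesteps this. Your correct handling of the $\overline{v}_0'=\varnothing$ case (the limit over $p$ has all $2g$ points in $S$, hence in $K_{p,2\delta_i}$ for large $i$) is precisely the contrapositive of the paper's one-line observation: the hypothesis $\op{Im}(\overline u_i)\cap K_{p,2\delta_i}=\varnothing$ \emph{forces} $\overline{v}_0'\neq\varnothing$ in the limit. With this in hand, Theorems~\ref{thm: compactness for W minus I=2, first version}, \ref{thm: compactness for W minus I=3, first version} and~\ref{thm: complement} leave no survivors at all for $I=2$ (immediate contradiction), and only Case~(1) for $I=3$. The contradiction for $I=3$ then comes directly from the truncation you mention only in passing: being close to a Case~(1) building means $\overline u_i\in G(\mathfrak{P}_{(r_0)})$ for $i\gg0$, which is exactly what has been excised from $\mathcal{M}^{3,(r_0)}_{\overline{\frak m}}$. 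Your geometric approach could in principle be repaired here (note that even in Case~(1) the component $\overline{v}_0^\flat$ has image in $W_-$ and hence contributes $2g-1\geq 1$ points in $S$ over $p$), but as written it does not go through.
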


\begin{proof}
Arguing by contradiction, suppose there are sequences $\varepsilon_i,\delta_i\to 0$ and $\overline{u}_{i}\in \mathcal{M}_{\overline{\frak m}}^{3,(r_0)}(\varepsilon_i,\delta_i,p)$ such that $\op{Im}(\overline{u}_i)\cap K_{p,2\delta_i}=\varnothing$. Then the limit $\overline{u}_{\infty}\in \bdry\mathcal{M}_{\overline{\frak m}}^3(0,0,p)$ of $\overline{u}_{i}$ has a nontrivial $\overline{v}_0'$ component.  By Theorems~\ref{thm: compactness for W minus I=3, first version} and \ref{thm: complement}(ii), $\overline{u}_\infty$ satisfies Case (1) of Theorem~\ref{thm: compactness for W minus I=3, first version}. Hence, for $i\gg 0$, $\overline{u}_i\in G({\frak P}_{(r_0)})$, which is a contradiction. The case of $\overline{u}\in \mathcal{M}_{\overline{\frak m}}^2(\varepsilon,\delta,p)$ is easier and is a consequence of Theorems~\ref{thm: compactness for W minus I=2, first version} and \ref{thm: complement}(i).
\end{proof}

\begin{proof}[Proof of Lemma~\ref{claim 1}]
  Suppose that $\overline{u}_\infty\in \bdry_1\mathcal{M}^{3,(r_0)}_{\overline{\frak m}}$.  We are in the situation of Lemma~\ref{lemma: preliminary restrictions part 1}. If $\overline{v}_0$ is a degenerate $\overline{W}_-$-curve, then $I(\overline{v}_0)\geq 4$ by Equation~\eqref{persimmon} since $g\geq 1$. Therefore degenerate $\overline{W}_-$-curves are ruled out by Constraint (i) in the proof of Theorem~\ref{thm: compactness for W minus I=3, first version}. Hence $\overline{v}_0$ is a $\overline{W}_-$-curve and the other levels $\overline{v}_j$, $j\not=0$, are multisections of $W'$ or $W$. In this case, the curve $\overline{v}_0$ is simply-covered. Corollary~\ref{alternative} implies that the component through $\overline{\frak m}$ also intersects $K_{p,2\delta}$. Hence passing through $\overline{\frak m}$ is a generic condition and we must have $\op{ind}(\overline{v}_0)\geq 2$ and $I(\overline{v}_0)\geq 2$. This gives us two options for $\overline{u}_\infty = \overline{v}_{-b} \cup \ldots \cup \overline{v}_a$: either
\begin{enumerate}
\item[($\alpha$)] $b=0$, $I(\overline{v}_a)=1$, and $I(\overline{v}_j)=0$, $j=1,\dots,a-1$; or
\item[($\beta$)] $a=0$, $b=1$, and $I(\overline{v}_{-1})=1$.
\end{enumerate}
Ghost components are not possible since each ghost component takes up $\op{ind}\geq 2$ by Lemma~\ref{lemma: no ghosts}. Hence $\overline{u}_\infty\in A_1\cup A_2$.
\end{proof}

\subsection{Gluing} \label{subsection: gluing for psi}

The goal of this subsection is to prove Theorem~\ref{thm: transversality of ev map}.

\subsubsection{The moduli space $\mathcal{N}$} \label{moduli space N}

Let $\varepsilon={\pi\over m}$, where $m$ is a sufficiently large integer and let $\eta_\varepsilon: [-\pi,\pi]\to \R$ be a smooth function such that:
\begin{itemize}
\item $\eta_\varepsilon(\theta)=\varepsilon$ for $-\pi\leq \theta\leq \theta_1$;
\item $\eta_\varepsilon(\theta)=0$ for $\theta_2\leq \theta \leq \pi$; and
\item $\eta_\varepsilon$ is monotonically decreasing for $\theta_1\leq \theta\leq \theta_2$;
\end{itemize}
for some $-\pi<\theta_1<\theta_2<\pi$.

We use two coordinate systems on $B_-$: (i) the usual coordinates $(s,t) \in \R \times [0,2]/ (0 \sim 2)$ and (ii) complex coordinates via an identification of $cl(B_-)$ with $\overline{\mathbb D}$ such that $\mathfrak{p}_+$ is mapped to $0$ and $\mathfrak{p}_-$ is mapped to $-1$. Then $\bdry B_-$ is parametrized in an orientation-preserving manner by a coordinate $e^{i\theta}$, $\theta \in (- \pi, \pi)$.

\begin{defn}\label{definition of N}
Let $\mathcal{N}=\mathcal{N}_{\eta_\varepsilon}$ be the space of holomorphic maps $w: B_- \to \C$ such that the following properties hold:
\begin{enumerate}
\item[(N$_1$)] $w(e^{i\theta})\in \R^+ \cdot e^{i\eta_\varepsilon(\theta)}$ for all $\theta\in(-\pi,\pi)$;
\item[(N$_2$)] $\displaystyle \lim_{s \to -\infty} \left| w(s,t)-c_1 e^{-\varepsilon(s + i t - i)} \right| < \infty$ for some $c_1\in \R^+$; and
\item[(N$_3$)] $\displaystyle \lim_{s \to +\infty} \left| w(s,t)-c_2 e^{\pi(s+it)} \right | < \infty$ for some $c_2\in \C^\times$.
\end{enumerate}
\end{defn}

In particular:
\begin{enumerate}
\item[(N$_4$)] $\deg(w)=1$ away from the sector $\{0\leq \phi\leq \varepsilon\} \subset \C\P^1$; and
\item[(N$_5$)] after composing with the chosen identification $cl(B_-) \cong \overline{\mathbb D}$, $w$ extends continuously to $\overline\D$ so that $w(0)=w(-1)=\infty$.
\end{enumerate}
Multiplication by a real constant gives an $\R^+$-action on $\mathcal{N}$.

Even though ${\mathcal N}$ is the space we are interested in, it will be convenient for technical reasons to regard ${\mathcal N}$ as an open subset of a vector space $\widetilde{\mathcal N}$ obtained by relaxing properties (N$_1$)--(N$_3$).

\begin{defn}\label{definition of N tilde}
Let $\widetilde{\mathcal N}=\widetilde{\mathcal N}_{\eta_\varepsilon}$ be the space of
holomorphic maps $w: B_- \to \C$
such that the following properties hold:
\begin{enumerate}
\item[($\widetilde{\text N}_1$)] $w(e^{i\theta})\in \R \cdot e^{i\eta_\varepsilon(\theta)}$ for all $\theta\in(-\pi,\pi)$;
\item[($\widetilde{\text N}_2$)] $\displaystyle \lim_{s \to - \infty} \left| w(s,t)-c_1 e^{- \varepsilon(s+it-i)} \right| < \infty$ for some $c_1\in \R$; and
\item[($\widetilde{\text N}_3$)] $\displaystyle \lim_{s \to +\infty} \left| w(s,t)-c_2 e^{\pi(s+it)} \right | < \infty$ for some $c_2\in \C$.
\end{enumerate}
\end{defn}
In order to compute the dimension of $\widetilde{\mathcal N}$, we identify $B_-$ with $\overline{\D} - \{ -1,0 \}$ and $\widetilde{\mathcal N}$ with the space of the holomorphic sections of a holomorphic line bundle $E \to \overline{\mathbb D}$ with values in a real rank one subbundle $F$ along $\partial \overline{\mathbb D} - \{ -1 \}$.

We construct the bundles $E$ and $F$ as follows. Consider a cover of $\overline{\mathbb D}$ by three open sets
$$U_0= \overline{\mathbb D} - \{ 0,1 \},~U_1 = \{ z \in {\mathbb D} ~|~ |z| < 1/3 \},~U_2 = \{ z \in \overline{\mathbb D} ~|~ |z+1| < 1/3 \}.$$
Over each open set we take a trivial line bundle $E_i = \C \times U_i \to U_i$ and define the bundle $E$ by gluing the bundles $E_i$ via the transition maps
\begin{align*}
\psi_1 & : E_0|_{U_0 \cap U_1} \to E_1|_{U_0 \cap U_1}, \quad \psi_1(z,v) = (z, zv), \\
\psi_2 & : E_0|_{U_0 \cap U_2} \to E_2|_{U_0 \cap U_2}, \quad \psi_2(z,v)= \left ( z, i \left (
\frac{z+1}{-z+1} \right )v \right ).
\end{align*}
Let $\pi_{E_i}: E_i=\C\times U_i\to \C$ be the projections corresponding to the trivializations.
If we parametrize $\partial \overline{\mathbb D} - \{ -1 \}$ by $\theta \in (-\pi, \pi)$, the subbundle $F$ is given, as a subbundle of $E_0$, by $F(\theta)= \R \cdot e^{i \eta_\varepsilon(\theta)}$.

It is convenient to view $\Sigma = \overline{\mathbb D} - \{ -1,0\}$ as a surface with a negative strip-like end and a positive cylindrical end and $E$ as a line bundle over $\Sigma$. Let $(- \infty, -R) \times [0,1]$ be a strip with coordinates $(s,t)$.  We identify the strip-like end $Z$ of $\Sigma$ with $(- \infty, -R) \times [0,1]$ via the map
$$\phi : (- \infty, -R) \times [0,1] \to \Sigma, \quad \phi(s,t) = \frac{e^{\pi(s+it)}-i}{e^{\pi(s+it)}+i}.$$
Here the fractional linear transformation $B(\zeta)={\zeta-i\over \zeta+i}$ maps the upper half plane $\H$ to the unit disk $\D$ and $0$ to $-1$. Observe that $A(\zeta)= i({\zeta+1\over -\zeta+1})$ which appears in the definition of $\psi_2$ is the inverse of $B(\zeta)$. Hence the gluing map $\psi_2$ becomes
$$\psi_2((s,t), v)= ((s,t), e^{\pi(s+it)}),$$
with respect to coordinates $(s,t)$.

The linear Cauchy-Riemann operator
$$D: W^{1,p}(E,F) \to L^p(T^{0,1}\Sigma \otimes_{\C}E)$$
is Fredholm for $p>2$ and its kernel consists of smooth holomorphic functions. We denote by $H^0(E,F)$ its kernel and by $H^1(E,F)$ its cokernel.

\begin{lemma} \label{N tilde and E}
There is an identification $H^0(E, F)\cong \widetilde{\mathcal N}$ for every choice of $\eta_{\varepsilon}$.
\end{lemma}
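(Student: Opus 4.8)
The statement to be proved is Lemma~\ref{N tilde and E}: there is an identification $H^0(E,F)\cong\widetilde{\mathcal N}$. The plan is to unwind the two descriptions of $\widetilde{\mathcal N}$ and of $H^0(E,F)$ and exhibit a natural linear isomorphism between them, showing that it is well-defined and has an inverse.

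First I would recall the setup: $\Sigma=\overline{\mathbb D}-\{-1,0\}$ is covered by $U_0,U_1,U_2$, with $E$ glued from trivial bundles $E_i=\mathbb C\times U_i$ via $\psi_1(z,v)=(z,zv)$ and $\psi_2(z,v)=(z,i\frac{z+1}{-z+1}v)$, and $F(\theta)=\mathbb R\cdot e^{i\eta_\varepsilon(\theta)}\subset E_0$ along $\partial\overline{\mathbb D}-\{-1\}$. An element of $H^0(E,F)$ is a holomorphic section $\sigma$ of $E$ with $\sigma|_{\partial\overline{\mathbb D}-\{-1\}}$ valued in $F$; by definition of a section of a glued bundle this is equivalently a triple of holomorphic functions $w_i:U_i\to\mathbb C$ (namely $w_i=\pi_{E_i}\circ\sigma|_{U_i}$) satisfying $w_1=z\,w_0$ on $U_0\cap U_1$ and $w_2=i\frac{z+1}{-z+1}w_0$ on $U_0\cap U_2$, with $w_0(e^{i\theta})\in\mathbb R\cdot e^{i\eta_\varepsilon(\theta)}$. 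The plan is to send such a triple to $w:=w_0:U_0=\overline{\mathbb D}-\{0,1\}\supset B_-\to\mathbb C$. (Here I use the identification $B_-\cong\overline{\mathbb D}-\{-1,0\}$, recalling that the earlier text actually writes $U_0=\overline{\mathbb D}-\{0,1\}$ while $\partial$-data is based at $-1$; I would state explicitly which identification of $cl(B_-)$ with $\overline{\mathbb D}$ is used, matching the one fixed just before Definition~\ref{definition of N}.)

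Next I would check that $w=w_0$ lands in $\widetilde{\mathcal N}$. Condition ($\widetilde{\text N}_1$) is immediate from the $F$-boundary condition. For ($\widetilde{\text N}_2$): near $z=-1$, holomorphicity of $w_2$ on $U_2$ means $w_2$ is bounded (in fact holomorphic) at $-1$, and $w_0=\frac{-z+1}{i(z+1)}w_2$, so $w_0$ has at worst a simple pole at $-1$; translating this through the biholomorphism $\phi(s,t)=\frac{e^{\pi(s+it)}-i}{e^{\pi(s+it)}+i}$, which carries the negative strip-like end to a neighborhood of $-1$, a simple pole in $z$ corresponds precisely to the asymptotic $w\sim c_1 e^{-\varepsilon(s+it-i)}$ up to the exponential weight absorbed into the trivialization via $\psi_2$ (the factor $e^{\pi(s+it)}$ appearing in $\psi_2$ in strip coordinates). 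For ($\widetilde{\text N}_3$): holomorphicity of $w_1$ on $U_1$ near $z=0$ plus $w_1=z\,w_0$ forces $w_0$ to have at worst a simple pole at $0$; under the positive cylindrical end coordinate this is exactly $w\sim c_2 e^{\pi(s+it)}$. Conversely, given $w\in\widetilde{\mathcal N}$, I would define $w_0=w$, $w_1=z\,w_0$ (holomorphic on $U_1$ because the simple pole of $w$ at $0$ is cancelled), and $w_2=i\frac{z+1}{-z+1}w_0$ (holomorphic on $U_2$ because the simple pole at $-1$ is cancelled), obtaining a genuine holomorphic section of $E$ valued in $F$ along the boundary; this is the inverse map. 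Linearity in both directions is clear, so this is the desired identification. I should also note, for completeness, that elements of $H^0(E,F)$ are automatically smooth up to the boundary by elliptic regularity (already stated in the excerpt: the kernel consists of smooth holomorphic functions), so there is no loss in working with honest holomorphic functions.

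The main obstacle, and the step I would spend the most care on, is the bookkeeping of the transition function $\psi_2$ in the strip-like end: one must verify that the factor $i\frac{z+1}{-z+1}$, when expressed in the coordinate $(s,t)$ via $\phi$, produces precisely the exponential weight $e^{-\varepsilon(\,\cdot\,)}$ prefactor in ($\widetilde{\text N}_2$)/($\widetilde{\text N}_3$) rather than some other rate — this is where the specific choice of $\phi$ and the matching of $A=\psi_2$-factor with $B^{-1}$ (noted in the excerpt) is essential, and where a sign or a factor-of-$\pi$ error would be easy to make. Everything else (the boundary condition, the pole-order analysis at $0$ and $-1$, linearity, constructing the inverse) is routine once the coordinate dictionary is pinned down. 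I would therefore organize the write-up as: (1) fix notation and the $\overline{\mathbb D}$-identification; (2) the map $H^0(E,F)\to\widetilde{\mathcal N}$, $\sigma\mapsto w_0$; (3) verification of ($\widetilde{\text N}_1$)--($\widetilde{\text N}_3$), with the pole analysis and the $\phi$-coordinate computation; (4) the inverse map and conclusion.
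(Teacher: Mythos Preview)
Your approach is essentially the same as the paper's: both send a section $\xi\in H^0(E,F)$ to its $E_0$-component $w_0=\pi_{E_0}\circ\xi$, verify (\~N$_1$)--(\~N$_3$) by analyzing the transition functions at the two punctures, and (implicitly or explicitly) invert by reconstructing $w_1,w_2$ from $w_0$. One small refinement: the paper handles the negative strip-like end by directly writing the Fourier expansion $\pi_{E_2}\circ\xi(s,t)=\sum_{n\ge1}c_n e^{(n\pi-\varepsilon)(s+it)+i\varepsilon}$ (the $W^{1,p}$ condition plus the boundary condition kill the $n\le0$ modes) and then applying $\psi_2^{-1}$, which immediately gives the leading term $c_1e^{-\varepsilon(s+it-i)}$; this is cleaner than the ``simple pole at $-1$'' phrasing, since $-1$ is a boundary puncture rather than an interior point, and it makes transparent exactly where the $-\varepsilon$ shift and the phase $e^{i\varepsilon}$ come from---precisely the bookkeeping you flagged as the main obstacle.
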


\begin{proof}
The isomorphism $H^0(E,F) \cong \widetilde{\mathcal N}$ associates to a holomorphic section $\xi \in H^0(E,F)$ the holomorphic function $\pi_{E_0}\circ \xi: \Sigma \to \C$, i.e., $\pi_{E_0}\circ \xi$ is obtained by writing $\xi|_{U_0}$ with respect to the trivialization of $E_0$. On the negative end $Z$ we can write
$$\pi_{E_2}\circ\xi(s,t) = \sum \limits_{n \ge 1} c_n e^{(n \pi - \varepsilon)(s+it)+i \varepsilon},$$
since $\pi_{E_2}\circ\xi$ is holomorphic. By applying the transition function $\psi_2^{-1}$, we obtain that the leading term of $\pi_{E_0}\circ \xi$ on $Z$ is $e^{- \varepsilon (s + it) + i \varepsilon}$, which is condition ($\widetilde{\text N}_2$) in Definition \ref{definition of N tilde}. For a similar reason $\pi_{E_0}\circ \xi$ has a pole at $0$ of order at most $1$.
\end{proof}

We will consider also the compactified surface $\check{\Sigma}$ obtained by adding the ``segment at infinity'' to the strip-like end $Z$. Alternatively, $\check{\Sigma}$ admits an identification with the truncated surface $\Sigma - Z$. Let $\check{E} \to \check{\Sigma}$ be the line bundle obtained by extending $E \to \Sigma$.

\begin{lemma}
$\op{ind} D =3$ for every choice of $\eta_{\varepsilon}$.
\end{lemma}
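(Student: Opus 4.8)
The statement to prove is $\operatorname{ind} D = 3$ for the Cauchy–Riemann operator $D : W^{1,p}(E,F) \to L^p(T^{0,1}\check\Sigma \otimes_{\C} E)$, where $\check\Sigma$ is the once-punctured disk $\overline{\D} - \{-1\}$ (equivalently, $\Sigma$ with the segment at infinity adjoined to the strip-like end) and $E \to \check\Sigma$, $F \to \partial\check\Sigma - \{-1\}$ are the line bundle and totally real subbundle constructed just above. The plan is to compute the index by the Riemann–Roch formula for Cauchy–Riemann operators on surfaces with boundary and one puncture (the strip-like end at $-1$, equivalently the cylindrical end coming from $0$ before compactification), using the fact that the only puncture is asymptotic to the eigenfunction data encoded by $\eta_\varepsilon$ and by condition $(\widetilde{\mathrm N}_2)$.

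First I would record the topological input: $\check\Sigma$ is a disk with one boundary puncture at $-1$, so $\chi(\check\Sigma) = 1$ and the number of boundary punctures is $1$. The bundle $E$ has a prescribed boundary Maslov-type index along $\partial\check\Sigma - \{-1\}$ coming from the clutching functions $\psi_1$ (a pole of order $1$ at $0$, i.e. a ``$+1$'' worth of interior twisting which, after compactifying the end, contributes to the relative Chern class) and from the boundary condition $F(\theta) = \R\cdot e^{i\eta_\varepsilon(\theta)}$ together with the transition $\psi_2$ on the end near $-1$. The cleanest route is: choose a trivialization $\tau$ of $E$ over $\check\Sigma$ away from the puncture, compute $c_1(\check E,\tau)$ (the relative first Chern number) and the boundary Maslov index $\mu(E,F;\tau)$ in that trivialization, and apply the index formula
\begin{equation}
\operatorname{ind} D = \chi(\check\Sigma) + 2 c_1(\check E,\tau) + \mu(E,F;\tau) - \#\{\text{punctures}\} \cdot (\text{adjustment}),
\end{equation}
in the boundary-puncture form of Riemann–Roch (as in the doubling conventions of Section~\ref{subsection: the Fredholm index W plus W minus} and \cite{Dr}, or Wendl's conventions). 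Concretely, the asymptotic operator at $-1$ is the one described in Section~\ref{subsection: asymptotic eigenfunctions} for a strip-like end converging to $z_\infty$, with eigenvalue data ($\pi n - 2\varepsilon$), and the boundary conditions $(\widetilde{\mathrm N}_2)$ and $(\widetilde{\mathrm N}_3)$ pin down the relevant eigenvalue and hence the Conley–Zehnder contribution. I expect the three summands to be: $\chi = 1$; a boundary/puncture contribution of $+1$ coming from the order-one pole allowed at $0$ (the ``degree $1$ away from the thin sector'' condition (N$_4$)); and a contribution of $+1$ from the $i(\frac{z+1}{-z+1})$-twist in $\psi_2$ interacting with the boundary subbundle $F$ near the puncture — summing to $3$.

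Alternatively — and this is probably the most economical proof to write — I would use the doubling technique of Theorem~\ref{HLS} exactly as in Lemma~\ref{lemma: regularity of curve at infinity}: double $(\check\Sigma, E, F)$ along $\partial\check\Sigma - \{-1\}$ to get a closed Riemann surface $2\check\Sigma$ (a sphere with two punctures coming from $\{-1\}$ and its mirror, or a once-punctured sphere if $-1$ doubles to a single interior puncture) carrying a doubled line bundle $2E$, compute $\operatorname{ind}(2D)$ by the ordinary closed-surface Riemann–Roch, and conclude $\operatorname{ind} D = \tfrac12\operatorname{ind}(2D)$. One then just needs $c_1(2E)$ over $2\check\Sigma \cong \C\P^1$ with one or two punctures, which is read off from the clutching data: the pole at $0$ doubles to two interior points, contributing, and the asymptotic winding at the puncture(s) is governed by the eigenvalue $-2\varepsilon$ of the asymptotic operator (winding number $0$, since $0 < 2\varepsilon < \pi$ for $m \gg 0$). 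The arithmetic should give $\operatorname{ind}(2D) = 6$, hence $\operatorname{ind} D = 3$, with the "$3$" appearing as $\tfrac12(2 + 2\cdot 1 + 0 \cdot(\text{punctures}) + \dots)$ once the normalizations are fixed.

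The main obstacle is bookkeeping of conventions: getting the precise normalization of the boundary-puncture Riemann–Roch formula right (the correction term per boundary puncture, and the sign/placement of the exponential weight implicit in $(\widetilde{\mathrm N}_2)$ with its factor $e^{-\varepsilon(\cdot)}$), and correctly extracting the relative Chern number of $\check E$ from the two clutching functions $\psi_1,\psi_2$ — in particular being careful that the pole allowed at $0$ is \emph{interior} data while the twist in $\psi_2$ is \emph{end} data asymptotic to the puncture, so they enter the formula differently. I would resolve this by fixing once and for all the same conventions used in the proof of Proposition~\ref{prop: Fredholm index for W minus} and Lemma~\ref{lemma: regularity of curve at infinity} (where $\op{ind}(\sigma_\infty^-)=0$ was computed by an entirely parallel doubling argument), since $\widetilde{\mathcal N}$ is essentially the space of infinitesimal deformations of a section-at-infinity-type curve with the marked-point constraint relaxed; the index $3$ is then forced to be consistent with the index count for $\mathcal{M}^{I=2}_{\overline{J}_-^\Diamond}(\gamma,\mathbf{y};\overline{\mathfrak m})$ used in Definition~\ref{defn: Psi map}.
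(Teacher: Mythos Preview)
Your approach is essentially the paper's: apply the boundary Riemann--Roch formula via doubling (Theorem~\ref{HLS}, Lemma~\ref{doubling chords}, and \cite[Formula~2.1]{We3}). But what you have written is a plan, not a proof --- you never actually compute the terms, you are unsure whether the doubled puncture is one or two punctures (it is one, by Lemma~\ref{doubling chords}), and your heuristic breakdown of the ``$3$'' as $\chi + (\text{pole contribution}) + (\text{twist contribution}) = 1 + 1 + 1$ does not match how the index actually decomposes in any of the standard conventions.

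The paper carries out the computation explicitly. One compactifies $\Sigma$ to $\check\Sigma$ by adding the segment at infinity to the strip-like end, fixes an explicit trivialization $\tau$ of $\check E|_{\partial\check\Sigma}$ (namely $\tau(\theta) = e^{i\eta_\varepsilon(\theta)}$ in the $E_0$-trivialization along $cl(\partial\Sigma)$, and $\tau(\theta') = -e^{i(\pi+\varepsilon)\theta'}$ in the $E_2$-trivialization along the segment at infinity, parametrized by $\theta'\in[0,1]$), and extends $F$ to $\check F$ across the segment at infinity by minimal counterclockwise rotation. With this $\tau$ one reads off $\mu_\tau(\check F) = -1$ and $c_1(\check E,\tau) = 2$, and the formula
\[
\op{ind} D = \chi(\D) + \mu_\tau(\check F) + 2 c_1(\check E,\tau) - 1 = 1 + (-1) + 4 - 1 = 3
\]
gives the result; the trailing ``$-1$'' is the boundary-puncture correction coming from doubling. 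Note that the clutching data at $0$ and at $-1$ together feed into $c_1(\check E,\tau) = 2$, not into two separate $+1$ terms as you guessed; your intuition that each transition function contributes one unit is right in spirit, but you need to actually fix $\tau$ and do the count rather than argue by expectation.
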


\begin{proof}
We decompose $\partial \check{\Sigma} = cl(\partial \Sigma) \cup (\partial \check{\Sigma} -  \partial \Sigma)$, where $\partial \check{\Sigma} - \partial \Sigma$ corresponds to the ``segment at infinity'' of the negative strip-like end $Z$ and $cl(\bdry\Sigma)$ is the closure of $\bdry\Sigma$ in $\bdry\check\Sigma$.  We parametrize $cl(\partial \Sigma)$ by $\theta \in [- \pi, \pi]$ and $\partial \check{\Sigma} - \partial \Sigma$ by $\theta' \in [0,1]$ in a manner compatible with the orientation of $\check{\Sigma}$ induced by the complex structure on $\Sigma$. In particular, $\theta = - \pi$ is identified with $\theta'=1$ and $\theta = \pi$ is identified with $\theta'=0$.

We define a trivialization $\tau$ of $\check{E}|_{\partial \check{\Sigma}}$ by:
$$ \left \{
\begin{array}{ll}
\tau(\theta) = e^{i \eta_\varepsilon (\theta)} & \text{along }  cl(\partial \Sigma), \text{ with respect to the trivialization of } E_0, \\
\tau(\theta') = - e^{i(\pi+ \varepsilon) \theta'}  & \text{along }   \partial \check{\Sigma} - \partial \Sigma, \text{ with respect to the trivialization of } E_2.
\end{array}
\right. $$
We also define a Lagrangian subbundle $\check{F} \subset \check{E}|_{\partial
\check{\Sigma}}$ by $\check{F}|_{\partial \Sigma} = F$ and rotating it in the
counterclockwise direction by the minimal amount on $\partial \check{\Sigma} -
\partial \Sigma$. This means:
$$ \left \{
\begin{array}{ll}
F(\theta) = \R \cdot e^{i \eta_\varepsilon (\theta)} & \text{along }  cl(\partial \Sigma), \text{ with respect to the
trivialization of } E_0, \\
F(\theta') = \R \cdot e^{i \varepsilon\theta'}  & \text{along }   \partial \check{\Sigma} -
\partial \Sigma, \text{ with respect to the trivialization of } E_2.
\end{array}
\right. $$
By the doubling argument of Theorem \ref{HLS}, Lemma \ref{doubling chords} (or, rather, its proof) and the formula for the index of the Cauchy--Riemann operator on line bundles over punctured surfaces (see for example \cite[Formula 2.1]{We3}), the index of $D$ is
$$\op{ind} D =\chi(\D)+ \mu_{\tau}(\check{F})+ 2 c_1(\check{E}, \tau)-1.$$
From the explicit definitions of $\tau$ and $\check{F}$ one computes that $\mu_{\tau}(\check{F})=-1$ and $c_1(\check{E}, \tau)=2$. Hence $\op{ind} D=3$.
\end{proof}

\begin{lemma}
The operator $D$ is surjective for every choice of $\eta_{\varepsilon}$.
\end{lemma}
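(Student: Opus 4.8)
The plan is to deduce surjectivity of $D$ from the index computation together with a vanishing result for the cokernel, exactly as one does for automatic transversality of index-low curves. Since $\op{ind} D = 3$ and $D$ is Fredholm, it suffices to show $\dim_\R H^0(E,F) = 3$, which by Lemma~\ref{N tilde and E} is the same as $\dim_\R \widetilde{\mathcal N} = 3$; then $\dim H^1(E,F) = \dim H^0(E,F) - \op{ind} D = 0$, so $D$ is surjective.

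First I would describe $\widetilde{\mathcal N}$ explicitly. An element $w \in \widetilde{\mathcal N}$ is a holomorphic function on $B_- \cong \overline{\mathbb D} - \{-1, 0\}$, taking values in the (rotating) real line bundle $F$ along $\partial \overline{\mathbb D} - \{-1\}$, with a simple pole allowed at $0$ and the prescribed exponential behavior near $-1$ (the strip-like end). Using the identification of $\Sigma$ with a surface with one negative strip-like end and one positive cylindrical (puncture) end, condition $(\widetilde{\text N}_3)$ says $w$ has at worst a simple pole at $0$ with leading coefficient $c_2 \in \C$, and $(\widetilde{\text N}_2)$ together with the transition function $\psi_2$ says the section $\pi_{E_2}\circ\xi$ extends holomorphically across the point at infinity of the strip-like end with leading term a real multiple of $e^{-\varepsilon(s+it)+i\varepsilon}$. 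In other words, passing to $\check E \to \check\Sigma$, the space $\widetilde{\mathcal N}$ is identified with the space of holomorphic sections of $\check E$ over the compact disk $\check\Sigma \cong \overline{\mathbb D}$ satisfying the totally real boundary condition $\check F$ constructed in the index computation.

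Next I would compute $\dim_\R H^0(\check E, \check F)$ directly. Because $\check\Sigma$ is a disk and $\check F$ is a totally real line subbundle of a complex line bundle, the boundary value problem is the standard one on the disk, and the dimension of the space of holomorphic sections depends only on the boundary Maslov index $\mu_\tau(\check F)$ and $c_1(\check E,\tau)$, which were already found to be $-1$ and $2$. One way to finish is to invoke the well-known fact (e.g.\ via the Riemann--Hilbert correspondence on the disk, or by the doubling argument of Theorem~\ref{HLS}: the double $2D$ is the Cauchy--Riemann operator on a genus-zero surface and is surjective once $\op{ind}(2D) = 2\op{ind}(D) = 6 \ge 2g + \#\Gamma_0 - 1$ by Wendl's automatic transversality, so $D$ is surjective and $\dim H^0 = \op{ind} D = 3$). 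Alternatively, one can write down the three sections explicitly: after a biholomorphism of $\overline{\mathbb D}$, the boundary condition becomes essentially constant away from the rotating sector near $-1$, and one exhibits a basis of $\widetilde{\mathcal N}$ consisting of (the analogue of) the constant, linear, and quadratic monomials adapted to the pole at $0$ and the boundary twisting.

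The main obstacle will be carefully matching the boundary condition $F(\theta) = \R\cdot e^{i\eta_\varepsilon(\theta)}$ near $\theta = -\pi$ with the exponential asymptotics $(\widetilde{\text N}_2)$ across the strip-like end, i.e.\ verifying that the totally real subbundle $\check F$ defined in the index lemma is exactly the right one so that $H^0(\check E,\check F) = \widetilde{\mathcal N}$ as a real vector space (not merely up to finite correction). This is the same compatibility already used, but here it must be established as an equality of vector spaces. Once that is in place, surjectivity of $D$ follows formally from $\dim H^0 = \op{ind} D$, hence $H^1 = 0$. I would therefore organize the write-up as: (1) identify $\widetilde{\mathcal N} \cong H^0(\check E,\check F)$ via $\check\Sigma$; (2) invoke the doubling/automatic-transversality argument to get surjectivity of the doubled operator and conclude $\dim_\R \widetilde{\mathcal N} = 3$; (3) conclude $H^1(E,F) = \op{coker} D = 0$.
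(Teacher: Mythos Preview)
Your proposal contains the paper's argument, but wrapped in an unnecessary and logically inverted framework. The paper's proof is simply: double $D$ using Theorem~\ref{HLS}, then apply Wendl's line bundle criteria (specifically \cite[Formula~2.5 and Proposition~2.2(2)]{We3}) to the doubled operator $2D$ on the thrice-punctured sphere to conclude that $2D$ is surjective; surjectivity of $D$ then follows from Theorem~\ref{HLS}. You have exactly this argument in your parenthetical remark, and once you invoke it you are done---there is no need to first reduce to computing $\dim_\R H^0(E,F)$ and then deduce surjectivity from that. In fact the paper runs the logic the other way: Corollary~\ref{lemma: dimension of mathcal N} (that $\dim_\R\widetilde{\mathcal N}=3$) is a \emph{consequence} of the surjectivity lemma, not an input to it. So your step~(1) and the ``main obstacle'' you identify (matching $\widetilde{\mathcal N}$ with $H^0(\check E,\check F)$ on the compactified disk) are not needed.

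One small caution: the inequality you quote, $\op{ind}(2D)\ge 2g+\#\Gamma_0-1$, is Wendl's criterion for the geometric linearized operator of an immersed curve (as used earlier in Lemma~\ref{lemma: regularity of curve at infinity}). Here $2D$ is an abstract Cauchy--Riemann operator on a line bundle over a punctured sphere, and the paper accordingly cites the line bundle results \cite[Formula~2.5 and Proposition~2.2(2)]{We3} rather than the geometric theorem. In practice these are closely related (the geometric criterion is proved by reducing to the normal line bundle), but you should cite the line bundle statement directly.
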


\begin{proof}
In view of Theorem \ref{HLS}, the surjectivity of $D$ follows from \cite[Formula 2.5]{We3} and \cite[Proposition 2.2(2)]{We3} applied to the double of $D$.
\end{proof}

\begin{cor} \label{lemma: dimension of mathcal N}
The real dimension of $\widetilde{\mathcal N}$ is $3$ for every choice of $\eta_{\varepsilon}$.
\end{cor}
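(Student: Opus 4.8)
The final statement is Corollary~\ref{lemma: dimension of mathcal N}, which asserts that $\dim_{\R}\widetilde{\mathcal N}=3$ for every choice of $\eta_{\varepsilon}$. Given the preceding lemmas, this is essentially immediate, and the plan is simply to assemble them.

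First I would invoke Lemma~\ref{N tilde and E}, which identifies $\widetilde{\mathcal N}$ with $H^0(E,F)=\ker D$. Next I would observe that the operator $D$ has index $3$ by the index computation lemma immediately above (using the doubling argument of Theorem~\ref{HLS}, the proof of Lemma~\ref{doubling chords}, and the Cauchy--Riemann index formula on punctured surfaces, yielding $\mu_{\tau}(\check F)=-1$, $c_1(\check E,\tau)=2$, and hence $\op{ind}D=\chi(\D)+\mu_\tau(\check F)+2c_1(\check E,\tau)-1=3$). Then I would use the surjectivity lemma, which says $\op{coker}D=H^1(E,F)=0$. Combining these, $\dim_{\R}\ker D=\op{ind}D+\dim_{\R}\op{coker}D=3+0=3$, so $\dim_{\R}\widetilde{\mathcal N}=3$. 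Since all three input lemmas hold for every admissible $\eta_{\varepsilon}$, the conclusion is uniform in $\eta_{\varepsilon}$.

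The only point requiring a word of care is that $\ker D$ a priori consists of $W^{1,p}$-sections, whereas $\widetilde{\mathcal N}$ was defined as a space of honest holomorphic maps; but elliptic regularity for the Cauchy--Riemann operator guarantees that elements of $\ker D$ are smooth (this was already noted in the paragraph introducing $D$, where it is stated that the kernel consists of smooth holomorphic functions), so the identification of Lemma~\ref{N tilde and E} is between genuine holomorphic objects. There is no real obstacle here: the corollary is a bookkeeping consequence of the index and surjectivity statements, and the substantive analytic content was already carried out in those lemmas (in particular in the doubling reduction to a closed-curve Cauchy--Riemann operator and the explicit Maslov/Chern number computation). I would present the proof in two or three sentences.

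\begin{proof}
By Lemma~\ref{N tilde and E} there is an identification $\widetilde{\mathcal N}\cong H^0(E,F)=\ker D$, and by elliptic regularity this kernel consists of smooth holomorphic sections. Since $\op{ind}D=3$ and $D$ is surjective, we have $H^1(E,F)=0$, so $\dim_{\R}\ker D=\op{ind}D=3$. As the index computation and the surjectivity of $D$ hold for every choice of $\eta_{\varepsilon}$, we conclude $\dim_{\R}\widetilde{\mathcal N}=3$ for every $\eta_{\varepsilon}$.
\end{proof}
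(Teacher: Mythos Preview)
Your proposal is correct and follows exactly the approach the paper intends: the corollary is stated without proof because it is the immediate combination of the three preceding lemmas (the identification $\widetilde{\mathcal N}\cong H^0(E,F)$, the index computation $\op{ind}D=3$, and the surjectivity of $D$), and your argument assembles them in precisely this way.
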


\subsubsection{The maps ${\frak E}$ and $\mathfrak{F}$}
\label{subsubsection: the maps E and F}

We define an $\R$-linear map:
$${\frak E} : \widetilde{\mathcal N}_{\eta_\varepsilon}\to \R \times \C,$$
$$w\mapsto (c_1,c_2),$$
where $c_1,c_2$ are the coefficients from ($\widetilde{\text N}_1$) and ($\widetilde{\rm N}_2$) of Definition \ref{definition of N tilde}.

\begin{lemma} \label{map frak E}
The map ${\frak E}$ is an isomorphism.
\end{lemma}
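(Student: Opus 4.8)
The plan is to identify $\widetilde{\mathcal N}_{\eta_\varepsilon}$ with the kernel $H^0(E,F)$ of the surjective Cauchy--Riemann operator $D$ via Lemma~\ref{N tilde and E}, so that $\dim_\R \widetilde{\mathcal N}_{\eta_\varepsilon}=\op{ind}D=3$ by Corollary~\ref{lemma: dimension of mathcal N}. Since $\R\times\C$ also has real dimension $3$ and $\mathfrak E$ is visibly $\R$-linear, it suffices to prove that $\mathfrak E$ is injective; surjectivity then follows automatically by dimension count.

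First I would show $\ker\mathfrak E=0$. Suppose $w\in\widetilde{\mathcal N}_{\eta_\varepsilon}$ with $\mathfrak E(w)=(0,0)$, i.e., $c_1=c_2=0$. The condition $c_2=0$ forces the leading Fourier coefficient of $w$ at the positive cylindrical end to vanish, so by the exponential asymptotics the function $w$ extends holomorphically across the puncture $0\in\overline{\mathbb D}$ with $w(0)=0$ (recall from (N$_5$) that a nonzero $c_2$ corresponds to a simple pole at $0$; killing $c_2$ removes the pole and in fact produces a zero, since on the strip-like end $Z$ the transition function $\psi_2$ multiplies by $e^{\pi(s+it)}$, which is where the pole came from). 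Similarly, $c_1=0$ forces the leading exponential term of $w$ at the negative strip-like end $Z$ to vanish, so near $-1\in\bdry\overline{\mathbb D}$ the section $w$ extends continuously with $w(-1)=0$ as well, and all further boundary Fourier modes on $Z$ decay faster. Thus $\pi_{E_0}\circ w$, viewed via Lemma~\ref{N tilde and E} as a holomorphic section of $\check E\to\check\Sigma$ lying in $\check F$ along $\bdry\check\Sigma$ and vanishing at the two marked boundary/interior points, has strictly negative boundary Maslov-type winding after accounting for those vanishing orders. Concretely, the Maslov index $\mu_\tau(\check F)=-1$ computed in the index lemma already forces any nonzero element of $H^0(\check E,\check F)$ to have a pole or a prescribed leading term at the ends; removing both leading contributions drops the relevant count below zero, which by the standard argument (a nonzero holomorphic section of a bundle with negative total Maslov/Chern contribution over a disk-type surface cannot exist — this is the vanishing half of Riemann--Roch for Cauchy--Riemann operators on surfaces with boundary, as in \cite{We3}) gives $w\equiv 0$.

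Once injectivity is established, $\mathfrak E:\widetilde{\mathcal N}_{\eta_\varepsilon}\to\R\times\C$ is an injective $\R$-linear map between $3$-dimensional real vector spaces, hence an isomorphism. This is exactly the assertion of Lemma~\ref{map frak E}.

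The main obstacle is making precise the claim that vanishing of $c_1$ and $c_2$ forces $w\equiv 0$: one must argue carefully that killing the two leading asymptotic coefficients is equivalent to passing to a Cauchy--Riemann problem on $\check\Sigma$ whose index drops to $3-1-2=0$ but whose kernel, because the remaining boundary condition still has negative winding at the filled-in ends, is actually trivial. The cleanest route is to reuse the doubling technique of Theorem~\ref{HLS} exactly as in the proof of $\op{ind}D=3$: double $(\check E,\check F,w)$ across $\bdry\check\Sigma-\bdry\Sigma$ (or rather the appropriate part of the boundary), obtaining a holomorphic section of a line bundle on a sphere with punctures whose relative Chern number, after imposing the two vanishing conditions, is $-1$; a holomorphic section of a negative-degree line bundle on $\C\P^1$ (with the given asymptotic decay at punctures) must vanish identically, and then the restriction back to $\check\Sigma$ is zero. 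I would also double-check that the asymptotic decay rates in ($\widetilde{\mathrm N}_2$) and ($\widetilde{\mathrm N}_3$) — built from $\varepsilon=\pi/m$ and the eigenvalues $\pi n-2\varepsilon$, $\pi n-\varepsilon$ of the relevant asymptotic operators — line up so that ``next order term'' really is a genuine eigenvalue gap, ensuring the extension/vanishing step is valid; this is where the explicit form of $\eta_\varepsilon$ and the fact that $\op{ind}D=3$ independently of $\eta_\varepsilon$ are reassuring.
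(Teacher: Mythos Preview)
Your overall strategy matches the paper's: both sides have real dimension $3$, $\mathfrak{E}$ is $\R$-linear, so it suffices to show $\ker\mathfrak{E}=0$. The paper then invokes the winding-number argument of \cite[Lemma~11.5]{Se2}: an element of $\ker\mathfrak{E}$ is a holomorphic function $u:\overline{\mathbb D}\to\C$ with $u(-1)=0$ and $u(e^{i\theta})\in\R\cdot e^{i\eta_\varepsilon(\theta)}$, and a direct zero-count versus Maslov-index comparison gives $\sum_{int}\nu(z)+\tfrac12\sum_{\partial}\nu(z)<0$, a contradiction.

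Your write-up has one concrete slip and one soft spot. The slip: $c_2=0$ does \emph{not} force $w(0)=0$; it only removes the simple pole at $0$ (the transition function there is $\psi_1(z,v)=(z,zv)$, not $\psi_2$---you have swapped the cylindrical end at $0$ with the strip-like end $Z$ at $-1$). What you actually get from $c_1=c_2=0$ is exactly the paper's description: $w$ extends holomorphically over $0$ (no zero claimed) and $w(-1)=0$. The soft spot: your concluding step via doubling to a negative-degree bundle on $\C\P^1$ is morally equivalent to the winding-number inequality, but as written it is only a sketch, and your index bookkeeping ``$3-1-2=0$'' is off because you have over-counted the constraint at $0$. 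The paper's route via Seidel's argument is cleaner: one inequality, no doubling, and it already packages the Maslov contribution of $\check F$ at the filled-in puncture.
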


Hence $\mathfrak{E}({\mathcal N})$ is an open positive cone contained in $\R^+ \times \C^{\times}$.

\begin{proof}
Since $\mathfrak{E}$ is a linear map, it suffices to check that $\mathfrak{E}$ is injective.  First observe that $\ker \mathfrak{E}$ consists of continuous functions $u : \overline{\mathbb D} \to \C$ such that $u(-1)=0$, $u(e^{i \theta}) \in \R \cdot e^{\eta_{\varepsilon}(\theta)}$ for all $\theta\in (-\pi,\pi)$, and $u$ is holomorphic on $\mathbb{D}$.

Suppose we have $u \in \ker \mathfrak{E}$ which is not identically zero. In particular, $u$ vanishes at finitely many points. The winding of the restriction of $u$ to $\partial \D$ is related to the number of zeroes of $u$ in $\overline{\D}$. This would be straightforward if $u$ had no zeroes on $\partial \overline{\D}$; the presence of zeroes on $\partial \D$ makes the argument only slightly more complicated. We leave the details to the reader or, alternatively, we refer to \cite[Lemma~11.5]{Se2}.

The winding number of $u$ along $\partial \D$ is nonnegative because the lines $\R \cdot e^{i \eta_\varepsilon (\theta)}$ turn clockwise. Since the zeros of a holomorphic function are always positive, we have a contradiction.
\end{proof}

Next we prove that ${\mathcal N}$ is nonempty for any choice of $\eta_{\varepsilon}$.

\begin{lemma}
Let $u_0 = \mathfrak{E}^{-1}(1,0)$. Then $u_0 : \overline{\D}-\{-1\} \to \C$ is holomorphic, $u_0(e^{i \theta}) \in \R^+\cdot e^{i \eta_{\varepsilon}(\theta)}$, and $\lim \limits_{s \to - \infty}
e^{\varepsilon (s+it-i)}u_0(s,t) =1$ for coordinates $(s,t) \in (- \infty, 0) \times [0,1]$ on a negative strip-like end around $-1$.
\end{lemma}

\begin{proof}
The only nontrivial part of the statement is that $u_0$ has no zeros along the boundary. This follows from a winding number argument as in Lemma~\ref{map frak E}. The easy details are left to the reader.
\end{proof}

\begin{cor}
${\mathcal N}$ is nonempty for any choice of $\eta_{\varepsilon}$.
\end{cor}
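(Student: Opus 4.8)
The plan is to produce an element of $\mathcal{N}_{\eta_\varepsilon}$ by perturbing the meromorphic function $u_0=\mathfrak{E}^{-1}(1,0)$ supplied by the preceding lemma. That lemma already gives the two conditions of Definition~\ref{definition of N} that are hardest to arrange by hand: the boundary positivity $u_0(e^{i\theta})\in\R^+\cdot e^{i\eta_\varepsilon(\theta)}$ of (N$_1$), together with the fact that $u_0$ has no zeros, and the positivity of the strip-like coefficient $c_1=1\in\R^+$ of (N$_2$). The only failure is (N$_3$): since $\mathfrak{E}(u_0)=(1,0)$, the coefficient $c_2$ occurring in (N$_3$) equals $0\notin\C^\times$, so $u_0$ itself does not lie in $\mathcal{N}_{\eta_\varepsilon}$. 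Accordingly, I would set $w_{c_2}:=u_0+\mathfrak{E}^{-1}(0,c_2)$ for $c_2\in\C^\times$ of small modulus and show $w_{c_2}\in\mathcal{N}_{\eta_\varepsilon}$ once $|c_2|$ is small enough.

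First, the algebraic step: since $\mathfrak{E}$ is an isomorphism by Lemma~\ref{map frak E}, $v_{c_2}:=\mathfrak{E}^{-1}(0,c_2)$ is a well-defined element of $\widetilde{\mathcal N}_{\eta_\varepsilon}$, and by $\R$-linearity of $\mathfrak{E}$ we have $\mathfrak{E}(w_{c_2})=(1,c_2)$. Thus $w_{c_2}\in\widetilde{\mathcal N}_{\eta_\varepsilon}$ (so ($\widetilde{\text N}_1$)--($\widetilde{\text N}_3$) hold), its coefficients in (N$_2$) and (N$_3$) are $1\in\R^+$ and $c_2\in\C^\times$, and hence everything reduces to verifying (N$_1$) for $w_{c_2}$. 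Using ($\widetilde{\text N}_1$) I would write $u_0(e^{i\theta})=r_0(\theta)e^{i\eta_\varepsilon(\theta)}$ and $v_{c_2}(e^{i\theta})=\rho_{c_2}(\theta)e^{i\eta_\varepsilon(\theta)}$ with $r_0,\rho_{c_2}$ real-valued, so that (N$_1$) for $w_{c_2}$ becomes the pointwise inequality $r_0(\theta)+\rho_{c_2}(\theta)>0$ on $(-\pi,\pi)$, where $r_0>0$ is already known.

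The step I expect to be the main obstacle is proving this inequality uniformly in $\theta$: near the boundary puncture $-1$ the function $u_0$ has a pole and $r_0$ is unbounded, so one cannot simply appeal to compactness of $\partial\overline{\mathbb D}$. The point I would exploit is that the perturbation is tame there: because the first coordinate of $\mathfrak{E}(v_{c_2})$ vanishes, condition ($\widetilde{\text N}_2$) with $c_1=0$ forces $v_{c_2}$ to be bounded near $-1$, hence to extend continuously to $\overline{\mathbb D}$, so $\sup_{\partial\overline{\mathbb D}}|v_{c_2}|=\sup_\theta|\rho_{c_2}(\theta)|$ is finite; moreover $v_{c_2}=(\operatorname{Re}c_2)\,\mathfrak{E}^{-1}(0,1)+(\operatorname{Im}c_2)\,\mathfrak{E}^{-1}(0,i)$ depends linearly on $c_2$ in the finite-dimensional space $\widetilde{\mathcal N}_{\eta_\varepsilon}$ (Corollary~\ref{lemma: dimension of mathcal N}), so this supremum tends to $0$ as $c_2\to0$. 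On the other hand, since $u_0$ has a pole at $-1$, we have $r_0(\theta)=|u_0(e^{i\theta})|\to+\infty$ as $\theta\to\pm\pi$, so the continuous strictly positive function $r_0$ on $(-\pi,\pi)$ attains a positive infimum $\rho_0:=\inf_{(-\pi,\pi)}r_0>0$. Choosing $c_2\in\C^\times$ with $\sup_\theta|\rho_{c_2}(\theta)|<\rho_0$ then gives $r_0(\theta)+\rho_{c_2}(\theta)\ge\rho_0-\sup_\theta|\rho_{c_2}|>0$ for all $\theta$, which is (N$_1$) for $w_{c_2}$. Hence $w_{c_2}\in\mathcal{N}_{\eta_\varepsilon}$ and $\mathcal{N}_{\eta_\varepsilon}\ne\varnothing$, completing the proof of the corollary.
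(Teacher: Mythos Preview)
Your proof is correct and follows essentially the same idea as the paper's: the paper takes any $u\in\widetilde{\mathcal N}$ with $c_2\neq 0$ and shows $u+cu_0\in\mathcal N$ for $c\gg 0$, which after rescaling by $1/c$ is exactly your construction $u_0+\mathfrak{E}^{-1}(0,c_2)$ with $|c_2|$ small. One minor slip: since $c_2\neq 0$, your $v_{c_2}$ has a simple pole at $0\in\overline{\mathbb D}$ and so only extends continuously to $\overline{\mathbb D}\setminus\{0\}$, not to all of $\overline{\mathbb D}$; but this is harmless, as you only use boundedness on $\partial\overline{\mathbb D}$.
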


\begin{proof}
Take any $u \in \widetilde{\mathcal N}$ with $c_2 \ne 0$. Then $u+cu_0 \in \mathcal{N}$ for any $c$ sufficiently large.
\end{proof}

\begin{lemma}
If $u\in \mathcal{N}$, then $u$ has a unique (simple) zero in $\D - \{ 0 \}$.
\end{lemma}
\begin{proof}
The lemma follows from a straightforward winding number argument, since $u$ maps $\bdry B_-$ to the sector $\{0\leq \theta\leq \varepsilon, \rho>0\}$ and has a pole at $0$.
\end{proof}

We denote $\P {\mathcal N} = {\mathcal N}/ \R^+$, where $\R^+$ acts on ${\mathcal N}$ by multiplication and we define the maps
$$\widehat{\mathfrak{F}}:  {\mathcal N} \to \D - \{ 0 \}, \qquad \mathfrak{F}:  \P {\mathcal N} \to \D - \{ 0 \}$$
by $\widehat{\mathfrak{F}}(w) = w^{-1}(0)$ and $\mathfrak{F}([w]) = \widehat{\mathfrak{F}}(w)$.

\begin{lemma} \label{F is a diffeo}
The map $\mathfrak{F}: \P {\mathcal N} \to \D - \{0 \}$ is a diffeomorphism.
\end{lemma}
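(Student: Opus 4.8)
The strategy is to construct an explicit smooth inverse to $\mathfrak{F}$, using the identification $\mathfrak{E}: \widetilde{\mathcal{N}}_{\eta_\varepsilon} \stackrel\sim\to \R \times \C$ from Lemma~\ref{map frak E} and the fact (already proved) that $\widetilde{\mathcal{N}}_{\eta_\varepsilon}$ is the space $H^0(E,F)$ of holomorphic sections of a fixed line bundle. First I would note that $\mathfrak{F}$ is well-defined because elements of $\mathcal{N}$ have a unique zero in $\D - \{0\}$, which is in the interior (both facts are in the preceding lemmas), and that $\mathfrak{F}$ is smooth: the zero of a holomorphic function depends smoothly on the function, since $\widehat{\mathfrak{F}}(w)$ can be recovered by a contour integral $\frac{1}{2\pi i}\oint z \frac{w'(z)}{w(z)}\,dz$ over a fixed small loop, which varies smoothly with $w \in \mathcal{N}$ in the $W^{1,p}$-topology, and $\mathfrak{F}$ is $\R^+$-invariant. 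Since $\P\mathcal{N}$ and $\D - \{0\}$ are both real $2$-dimensional manifolds (the former by Corollary~\ref{lemma: dimension of mathcal N}, which gives $\dim_\R \widetilde{\mathcal{N}} = 3$, hence $\dim_\R \mathcal{N} = 3$ as an open subset, hence $\dim_\R \P\mathcal{N} = 2$), it suffices to show $\mathfrak{F}$ is a bijection and a local diffeomorphism, or simply to produce a two-sided inverse.

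The key step is surjectivity together with injectivity of $\mathfrak{F}$. For this I would fix a target point $z_0 \in \D - \{0\}$ and seek $w \in \mathcal{N}$, unique up to $\R^+$, with $w^{-1}(0) = \{z_0\}$. Consider the evaluation-type map $\mathfrak{G}: \mathcal{N} \to \C$, $w \mapsto w(z_0)$; more precisely work on $\widetilde{\mathcal{N}}_{\eta_\varepsilon} \cong \R \times \C$ via $\mathfrak{E}$ and analyze the affine slice where $w$ vanishes at $z_0$. The condition $w(z_0) = 0$ is one complex linear condition on the $3$-real-dimensional space $\widetilde{\mathcal{N}}$, so its solution set is generically a real line; I would show that for $w$ in this line, $w \in \mathcal{N}$ (i.e. the boundary values lie in the \emph{positive} rays $\R^+ e^{i\eta_\varepsilon(\theta)}$, not merely in $\R e^{i\eta_\varepsilon(\theta)}$, and the asymptotic coefficients $c_1, c_2$ are in $\R^+$ resp. $\C^\times$) using the winding-number computation already invoked in Lemma~\ref{map frak E} and in the two lemmas immediately preceding: when $w$ has exactly one interior zero $z_0$ and no boundary zeros, the total count $\sum_{z\in int} \nu(z) + \frac12\sum_{z\in \partial}\nu(z) = 1$ is saturated, which forces the boundary values to wind monotonically and hence stay in a single component $\R^+ e^{i\eta_\varepsilon(\theta)}$ of each real line, and forces $c_1 \neq 0$, $c_2 \neq 0$ with the correct signs after scaling. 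This shows $\mathfrak{F}$ is onto, and that the fiber over $z_0$ is a single $\R^+$-orbit, giving injectivity of $\mathfrak{F}$ on $\P\mathcal{N}$.

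To upgrade the set-theoretic bijection to a diffeomorphism I would check that $d\mathfrak{F}$ is everywhere invertible. This amounts to: if $w \in \mathcal{N}$ and $\xi \in T_w\mathcal{N} = \widetilde{\mathcal{N}}_{\eta_\varepsilon}$ is a tangent vector with $d\widehat{\mathfrak{F}}_w(\xi) = 0$, then $\xi \in \R w$ (the tangent to the $\R^+$-orbit). Now $d\widehat{\mathfrak{F}}_w(\xi) = 0$ means, by differentiating $w(z_0) = 0$ along the path, that $\xi(z_0) = 0$ as well (to first order the zero does not move), so $\xi$ lies in the same affine slice analyzed above; but that slice is $1$-real-dimensional and contains $w$, so $\xi \in \R w$. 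Hence $d\mathfrak{F}$ is injective between $2$-dimensional spaces, so an isomorphism, and $\mathfrak{F}$ is a local diffeomorphism; combined with bijectivity it is a diffeomorphism. The main obstacle I anticipate is the sign/positivity bookkeeping in the surjectivity step — verifying that the real line of solutions to $w(z_0)=0$ actually meets $\mathcal{N}$ and not just $\widetilde{\mathcal{N}}$, i.e. that after rescaling by a real constant one genuinely lands in the positive cone $\mathfrak{E}(\mathcal{N}) \subset \R^+ \times \C^\times$ with the boundary condition (N$_1$) rather than only $(\widetilde{\text N}_1)$; this is exactly the kind of winding-number argument appearing in \cite[Lemma~11.5]{Se2} and in the lemmas just above, so it should go through, but it requires care in tracking the contribution of the pole at $0$, the boundary puncture at $-1$, and the slit geometry encoded by $\eta_\varepsilon$.
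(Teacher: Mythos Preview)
Your approach is correct but takes a genuinely different route from the paper's. The paper proves injectivity by the ratio trick: if $w_0,w_1\in\mathcal{N}$ have the same zero then $w_0/w_1$ is holomorphic on $\overline{\D}$ with real boundary values, hence constant. For surjectivity the paper constructs an explicit inverse: fixing $w_0\in\mathcal{N}$ with $w_0(z_0)=0$, for any $z\in\D-\{0,z_0\}$ it builds a meromorphic function $\omega_z:\overline{\D}\to\C\P^1$ with $\omega_z(z)=0$, $\omega_z(z_0)=\infty$, and $\omega_z(\partial\overline{\D})\subset\R^+$ (via a composition of fractional linear transformations and a squaring), so that $\omega_z w_0\in\mathcal{N}$ has its zero at $z$. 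Smoothness of the inverse $\Xi(z)=\omega_z w_0$ is then immediate since $\omega_z$ is rational with smoothly varying coefficients, and injectivity of $d\Xi$ follows by implicit differentiation of $\Xi(z)(z)=0$.

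Your linear-algebraic approach---analyzing the one-dimensional subspace $V_{z_0}=\{w\in\widetilde{\mathcal N}:w(z_0)=0\}$ and checking that its nonzero elements lie in $\mathcal N$ up to sign---is cleaner in structure and handles injectivity, surjectivity and the local-diffeomorphism property uniformly. The positivity step you flag as the main obstacle does go through: $c_2=0$ is ruled out since then $w\in\R u_0$ and $u_0$ has no zeros; $c_1=0$ is ruled out because then $w$ acquires an additional zero at the boundary puncture $-1$ (the $n\ge2$ asymptotics at the strip end give $w\to0$ there), overloading the winding count which allows total weight $1$; and once $c_1,c_2\ne0$ the winding count forces no boundary zeros, so boundary positivity (and hence $c_1>0$) follows by continuity after a global sign. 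The trade-off is that the paper's explicit construction makes the inverse and its smoothness visible by inspection, while yours requires tracking the asymptotic contribution at $-1$ in the degenerate case $c_1=0$, which the paper's lemmas just above do not treat directly.
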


\begin{proof}
We first prove the injectivity of $\mathfrak{F}$. Let $w_0$ and $w_1$ be maps in ${\mathcal N}$ such that
$w_0^{-1}(0)=w_1^{-1}(0)$. Then $\omega= \frac{w_0}{w_1}$ is a holomorphic map on
$\overline{\mathbb D}$ such that $\omega(\partial  \overline{\mathbb D}) \subset \R$,
so it is constant. Then $w_0$ and $w_1$ represent the same element in
$\P {\mathcal N}$.

Next we prove surjectivity. Fix an element $w_0 \in {\mathcal N}$ and let $z_0 \in \mathbb{D}$ such that $w_0(z_0) =0$. For any $z_1 \in \mathbb{D}-\{0, z_0\}$ we look for a holomorphic function $\omega_{z_1} : \overline{\mathbb{D}} \to \C\P^1$ such that $\omega_{z_1}(z_1)=0$, $\omega_{z_1}(z_0) = \infty$,
$\omega_{z_1}(\partial  \overline{\mathbb D}) \subset \R^+$, and $\omega_{z_1}|_{\mathbb{D}}$ is a biholomorphism onto its image. By the argument of the previous paragraph, if such $\omega_{z_1}$ exists, it is unique up to multiplication by a positive real constant. If we set $w= \omega_{z_1} w_0$, then $w \in {\mathcal N}$, $w({z_1})=0$, and $\mathfrak{F}(w)=z_1$.

The function $\omega_{z_1}$ with the desired properties is given by
$$\omega_{z_1}(\zeta)= f(g(\zeta)^2),$$
where $g: \overline{\D} \stackrel\sim \to \overline{\H}$ is a fractional linear transformation such that $g(z_0)=i$, $\op{Re}(g({z_1}))=0$, and $0<\op{Im}(g({z_1})) <1$, and $f:\C\P^1\stackrel\sim\to \C\P^1$ is an element of $PSL(2,\R)$ (i.e., $f$ fixes the real axis) such that $f((g({z_1}))^2)=0$, $f((g(z_0))^2)=f(-1)=\infty$, and $f(0)=1$. Note that $f$ and $g$ are uniquely determined by the above conditions. This proves the surjectivity of $\mathfrak{F}$.

Finally, we claim that the $d\mathfrak{F}([w])$ is an isomorphism for all $[w]\in \P\mathcal{N}$. To this end, let us define $\Xi(z) = \omega_z w_0 \in {\mathcal N}$.  Then $\mathfrak{F}^{-1}(z)$ is the class of $\Xi(z)$ in $\P {\mathcal N}$. The map $\Xi$ is smooth because the maps $f$, $g$, and hence $\omega_z$ are rational maps whose coefficients depend smoothly on $z$. In order to prove the claim it suffices to prove that $d\Xi(z)$ is injective for any $z \in {\mathbb D} - \{ 0 \}$.  Here, in order to distinguish the differential of $\Xi$ at $z \in {\mathbb D} -
\{ 0 \}$ from the differential of the function $\Xi(z) : \overline{\mathbb
D}- \{ 0,-1 \} \to \C$, we will use the notation $d  \Xi(z)$ for the
former and $\frac{\partial \Xi(z)}{\partial \zeta}$ for the latter. We
define the map
$$F : {\mathcal N} \times (\mathbb D - \{ 0 \}) \to \C, \quad F(w, z)=w(z).$$
By differentiating the identity $F(\Xi(z), z)=0$ we obtain
$$\frac{\partial F}{\partial w} d  \Xi(z) + \frac{\partial F}{\partial z}=0.$$
We observe that $\left. \frac{\partial F}{\partial z} \right |_{(\Xi(z), z)}=
\left. \frac{\partial \Xi(z)}{\partial \zeta} \right |_{\zeta=z}$, which is
invertible because $z$ is a simple zero of $\Xi(z)$.  Hence $d  \Xi(z)$ is injective.
 \end{proof}

\subsubsection{Proof of Theorem~\ref{thm: transversality of ev map}} \label{subsection: proof of gluing}

We refer to Section~\ref{subsection: outline of proof} for the definition of the moduli spaces $\mathcal{M}_1$, $\mathcal{M}_0$, and $\mathcal{M}_{-1}$, and of the gluing parameter space ${\frak P}$. In order to simplify the exposition we will assume (without loss of generality) that all the multiplicities of $\boldsymbol{\gamma}'$ are $1$ and that each of $\mathcal{M}_1/\R$, $\mathcal{M}_0$, and $\mathcal{M}_{-1}/\R$ is connected; in particular, both $\mathcal{M}_0$ and $\mathcal{M}_{-1}/\R$ are single points.

We choose a smooth slice $\widetilde{\mathcal M}_1$ of the $\R$-action on $\mathcal{M}_1$ such that the following hold for some $\kappa_0', \kappa_0>0$, $K_0> \pi-\varepsilon$, and for all $\overline{v}_1 \in \widetilde{\mathcal M}_1$:
\begin{itemize}
\item each component of $\overline{v}_1|_{s\leq 0}$ is $(\kappa_0'+\kappa_0,0)$-close to a cylinder over a component of $\delta_0\boldsymbol{\gamma}'$; and
\item the component $\widetilde{v}_1$ of $\overline{v}_1|_{s\leq 0}$ which is close to $\sigma_\infty'$ satisfies
\begin{equation} \label{asymp 1}
\left|\overline\pi\circ \widetilde{v}_1 - \kappa_0' e^{(\pi-\varepsilon)s}(ce^{\pi it})\right|_{C^0} \leq \kappa_0 e^{K_0s},
\end{equation}
\end{itemize}
where $ce^{\pi it}$ is the normalized asymptotic eigenfunction corresponding to the negative end $\delta_0$ of $\overline{v}_1$ and $\overline\pi=\overline\pi_{D^2_{\rho_0}}$ is the projection to $D^2_{\rho_0}$ with respect to balanced coordinates.  By a slight abuse of notation, we will refer to $\kappa_0' e^{(\pi-\varepsilon)s}(ce^{\pi it})|_{s=0}=\kappa_0'ce^{\pi it}$ as {\em the} asymptotic eigenfunction of $\overline{v}_1$ at $\delta_0$.

Similarly, we choose a smooth slice $\widetilde{\mathcal M}_{-1}$ of the $\R$-action on
${\mathcal M}_{-1}$ such that the following hold for some $\kappa_1', \kappa_1>0$, $K_1>2\varepsilon$:
\begin{itemize}
\item each component of $\overline{v}_{-1}|_{s\geq 0}$ is $(\kappa'_1+\kappa_1,0)$-close to a strip over a component of $\{z_\infty\}\cup {\bf y'}$; and
\item the component $\widetilde{v}_{-1}$ of $\overline{v}_{-1}|_{s\geq 0}$ which is close to $\sigma_\infty$ satisfies
\begin{equation} \label{asymp 2}
\left|\overline\pi\circ \widetilde{v}_{-1} - \kappa'_1 e^{-2\varepsilon s}(de^{-\varepsilon it})\right|_{C^0} \leq \kappa_1 e^{-K_1s},
\end{equation}
\end{itemize}
where $de^{-\varepsilon it}$ is a normalized asymptotic eigenfunction corresponding to the positive end $z_\infty$ of $\overline{v}_{-1}$. Note that $d$ is completely determined by the data $\{(i,j)\to (i,j)\}$ at the positive end $z_\infty$. Without loss of generality, we assume that $\overline{a}_{i,j}=\R^+$, $\overline{\hh}(\overline{a}_{i,j})=\R^+ \cdot e^{i\varepsilon}$, so that $d=e^{i\varepsilon}$. Similarly, we refer to $\kappa'_1 e^{-2\varepsilon s}de^{-\varepsilon it}|_{s=0}=\kappa'_1de^{-\varepsilon it}$ as {\em the} asymptotic eigenfunction of $\overline{v}_{-1}$ at $z_\infty$.

\s
{\em In the rest of this section, when we write $\overline{v}_i$, $i=1,-1$, we will assume that $\overline{v}_i$ is in the slice $\widetilde{\mathcal M}_i$.} Also, for $T \in \R$ and $i = \pm 1$, let $\overline{v}_{i,T}$  be the $T$-translates of $\overline{v}_i$ in the $\R$-direction; i.e., if $s : \overline{W}^* \to \R$, $* = \varnothing, '$, is the $\R$-coordinate, then $s \circ \overline{v}_{i,T} = s \circ \overline{v}_i+T$. Let $f(t)=\beta_0 e^{\pi it}$ be the asymptotic eigenfunction of $\overline{v}_1$ at $\delta_0$ with eigenvalue $\pi - \varepsilon$. Then the asymptotic eigenfunction of $\overline{v}_{1,T}$ at $\delta_0$ is $e^{-(\pi- \varepsilon)T}f$. Similarly, let $g(t)=\alpha_0 e^{i\varepsilon(1-t)}$ be the asymptotic eigenfunction of $\overline{v}_{-1}$ at $z_\infty$ with eigenvalue $2 \varepsilon$. Then the asymptotic eigenfunction of $\overline{v}_{-1,T}$ at $z_\infty$ is $e^{2 \varepsilon T}g$.

Also, for the rest of this section, we fix $\boldsymbol{\gamma} \in \widetilde{\mathcal O}_{2g}$ and $\mathbf{y} \in {\mathcal S}_{\mathbf{a}, \hh(\mathbf{a})}$ once and for all. Since there is no risk of confusion, we will write
$${\mathcal M} =  \mathcal{M}_{\overline{J}_-}^{I=3,n^-=m}(\boldsymbol{\gamma},{\bf y}), \quad {\mathcal M}^{ext}=   \mathcal{M}_{\overline{J}_-}^{I=3,n^-=m,ext}(\boldsymbol{\gamma},{\bf y}).$$
After identifying the quotient ${\mathcal M}_i / \R$ with the slice $\widetilde{\mathcal M}_i$ for $i = \pm 1$ we write
$$\mathfrak{P}= (5r,\infty)^2 \times \widetilde{\mathcal M}_1 \times \mathcal M_0 \times \widetilde{\mathcal M}_{-1};$$
compare with Equation~\eqref{gluing parameter space}.

\s
We describe the gluing map
$$G: {\frak P}\to \mathcal{M}^{ext},\quad {\frak d}=(T_\pm,\overline{v}_1,\overline{v}_0,\overline{v}_{-1})\mapsto\overline{u}({\frak d}),$$
defined in a manner similar to that of Section~\ref{subsubsection: reviw of HT}. First we form the preglued curve $\overline{u}^{\#}({\frak d})$ from the data ${\frak d}=(T_\pm,\overline{v}_1,\overline{v}_0,\overline{v}_{-1})$ by patching together
$$(\overline{v}_{1,2T_+})|_{s\geq T_+},\quad \overline{v}_0|_{-T_-\leq s\leq T_+},\quad (\overline{v}_{-1,-2T_-})|_{s\leq -T_-}.$$
Then we choose cutoff functions $\widetilde{\beta}_i$, $i=-1,0,1$, on the domains of $\overline{v}_i$. If $\psi_{-1}$, $\psi_0$, and $\psi_1$ are deformations of $\overline{v}_{-1,-2T_-}$, $\overline{v}_0$, and $\overline{v}_{1,2T_+}$, the condition for the deformation $\widetilde\beta_{-1} \psi_{-1} + \widetilde\beta_0 \psi_0 + \widetilde\beta_1 \psi_1$ of $\overline{u}^{\#}({\frak d})$ to be holomorphic is given by the following equation:
\begin{equation} \label{eqn: for deformation to be J holom 2}
\widetilde\beta_{-1} \Theta_{-1} (\psi_{-1} ,\psi_0)+\widetilde\beta_0 \Theta_0 (\psi_{-1},\psi_0,\psi_1) +\widetilde\beta_1 \Theta_1 (\psi_0,\psi_1)=0,
\end{equation}
Equation~\eqref{eqn: for deformation to be J holom 2} is analogous to Equation~\eqref{eqn: for deformation to be J holom} and is solved as in Step 3 of Section~\ref{subsubsection: reviw of HT}.  The situation considered here is simpler than that of Section~\ref{subsubsection: reviw of HT}: in fact there is no obstruction bundle because $\overline{v}_0$ is regular. For $r$ sufficiently large,  $G$ is a homeomorphism onto its image.

Given $\delta>0$ small, let $D_{\delta}(\overline{\mathfrak{m}}^b)$ be the closed ball of radius $\delta$ and center $\overline{\mathfrak{m}}^b$ in $B_-$ with respect to some fixed metric. Then let $\mathcal{M}^\delta_{(\kappa,\nu)}\subset \mathcal{M}^{ext}$ be the subset of curves $\overline{u}$ that pass through $D_\delta(\overline{\mathfrak{m}}^b) \times\{z_\infty\}\subset \overline{W}_-$ and are $(\kappa,\nu)$-close to breaking.

\begin{claim}
$\mathcal{M}^\delta_{(\kappa,\nu)}\subset \mathcal{M}$.
\end{claim}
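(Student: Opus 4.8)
The plan is to show that passing through $D_\delta(\overline{\mathfrak{m}}^b)\times\{z_\infty\}$ together with the constraint $n^-=m$ forbids a curve in $\mathcal{M}^{ext}$ from genuinely using the extended Lagrangian arc, so that it automatically lies in $\mathcal{M}=\mathcal{M}_{\overline{J}_-}^{I=3,n^-=m}(\gamma,{\bf y})$. The mechanism is that the marked point pins $\overline{u}$ to $\sigma_\infty^-$ at a single \emph{interior} point, and the $n^-$‑constraint then rules out any second meeting with $\sigma_\infty^-$; but genuine use of the extension would necessarily produce a \emph{boundary} meeting with $\sigma_\infty^-$.

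First I would record the consequence of the intersection‑number constraint. Since $\delta>0$ is small (Convention~\ref{convention}) we have $D_\delta(\overline{\mathfrak{m}}^b)\subset int(B_-)$, and $D_\delta(\overline{\mathfrak{m}}^b)\times\{z_\infty\}\subset\sigma_\infty^-$, so $\overline{u}$ meets $\sigma_\infty^-$ at an interior point and $\langle\overline{u},\sigma_\infty^-\rangle>0$. As $\overline{u}\in\mathcal{M}^{ext}$ satisfies $n^-(\overline{u})=m$, Lemma~\ref{properties n-} (the $\overline{W}_-$‑analogue of Lemma~\ref{properties n+}) then forces $\op{Im}(\overline{u})$ to meet $\sigma_\infty^-$ at exactly one point, which is therefore this interior point, with $B_-$‑coordinate inside $D_\delta(\overline{\mathfrak{m}}^b)$.

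The key step is to show that genuine use of the extension would force a boundary point lying over $z_\infty$. By definition of $\mathcal{M}^{ext}$, the distinguished boundary component $C$ of $\overline{u}$ is mapped to some $L^-_{\overline{a}_{i_0}\cup\vec a_{i_0,j_0}}$. On one hand, by the structure of $\overline{W}_-$‑curves (Definition~\ref{def: W bar minus curves}) every component of $\bdry\dot F$ carries a boundary puncture asymptotic to a chord $[0,1]\times\{y_l\}$, and every such $y_l$ lies in $S\subset\overline S$ (the intersection points of $\mathbf a\cap h(\mathbf a)$ lie in $S_0$, cf.\ Section~\ref{subsubsection: Heegaard diagram compatible with S h}); hence the fiber‑image of $\overline{u}(C)$ accumulates at a point of $S$. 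On the other hand, if $\overline{u}$ genuinely uses the extension, i.e.\ $\op{Im}(\overline{u})$ is not disjoint from $L^-_{\vec a_{i_0,j_0}}-L^-_{\overline a_{i_0,j_0}}$, then the fiber‑image of $\overline{u}(C)$ also contains a point of the open antipodal segment $\vec a_{i_0,j_0}-\overline a_{i_0,j_0}\subset int(D^2)-\{z_\infty\}$ (recall $\vec a_{i,j}=\{-1<\rho\le 1,\ \phi=\phi_{i,j}\}$ from Section~\ref{coconut}). Since $C$ is connected its fiber‑image is contained in the graph $\overline a_{i_0}\cup\vec a_{i_0,j_0}$, and removing $z_\infty$ disconnects this graph, separating the open antipodal segment from the part that meets $S$; therefore there is a boundary point $q_0\in C$ whose fiber‑coordinate is $z_\infty$, i.e.\ $\overline{u}(q_0)\in\sigma_\infty^-$. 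But $q_0\in\bdry\dot F$ and $\overline{u}$ maps $\bdry\dot F$ into the vertical boundary $\overline\pi_{B_-}^{-1}(\bdry B_-)$ (all the Lagrangians $L^-_{\widehat{\mathbf a}}$, $L^-_{\overline{\mathbf a}}$, $L^-_{\vec a_{i,j}}$, being parallel transports along $\bdry B_-$, lie over $\bdry B_-$), so $\overline\pi_{B_-}(\overline{u}(q_0))\in\bdry B_-$; this is a point of $\op{Im}(\overline{u})\cap\sigma_\infty^-$ distinct from the interior one found above, contradicting the previous paragraph. Hence $\overline{u}$ does not use the extension, so $\overline{u}(\bdry\dot F)\subset L^-_{\widehat{\mathbf a}}$ with the components of $\bdry\dot F$ on distinct $L^-_{\widehat a_i}$, and $\overline{u}\in\mathcal{M}$.

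The main obstacle will be pinning down two "soft" ingredients rigorously. First, one must be sure that a \emph{boundary} meeting of $\op{Im}(\overline{u})$ with $\sigma_\infty^-$ is genuinely incompatible with $n^-(\overline{u})=m$; this should follow by adapting the proof of Lemma~\ref{properties n-}(2) (via positivity of intersections and openness of $\pi_{D^2}\circ\overline{u}$, now applied at a boundary point), but it needs to be stated carefully. Second, one needs the precise description of $\overline a_{i_0}\cup\vec a_{i_0,j_0}$ and of the positions of the chord punctures on $C$, so that the connectedness argument really produces a boundary point over $z_\infty$. I would also remark that the hypothesis of being $(\kappa,\nu)$‑close to breaking is not actually needed for this claim: the argument shows the stronger statement that \emph{every} curve in $\mathcal{M}^{ext}$ passing through $D_\delta(\overline{\mathfrak{m}}^b)\times\{z_\infty\}$ lies in $\mathcal{M}$.
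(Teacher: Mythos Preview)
Your proposal is correct and follows essentially the same route as the paper: an interior intersection of $\overline{u}$ with $\sigma_\infty^-$ already accounts for the full $n^-(\overline{u})=m$, while a curve genuinely in $\mathcal{M}^{ext}-\mathcal{M}$ would also meet $\sigma_\infty^-$ at a boundary point and thereby overcount. Your connectedness argument for producing such a boundary point (that $z_\infty$ separates $\overline{a}_{i_0}\cup\vec a_{i_0,j_0}$ into the piece containing $a_{i_0}\subset S$ and the antipodal spike) makes explicit what the paper merely asserts, and your remark that the $(\kappa,\nu)$-closeness hypothesis is unnecessary matches the paper's own observation that ``something stronger is actually true.''
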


\begin{proof}
Something stronger is actually true: if the image of $\overline{u} \in \mathcal{M}^{ext}$ intersects the section at infinity $\sigma_{\infty}^-$ at a point in the interior of $\sigma_{\infty}^-$, then $\overline{u} \in {\mathcal M}$. This is a consequence of the intersection theory developed in Section~\ref{subsection: intersection numbers}: in fact $n^-(\overline{u})=m$ and the intersection point with $\sigma_{\infty}^-$ at the interior already contributes $m$ to  $n^-(\overline{u})$. However, if $\overline{u} \in {\mathcal M}^{ext} - {\mathcal M}$, then the image of $\overline{u}$ also intersects $\sigma^-_{\infty}$ at some boundary points, which give some extra contribution to $n^-(\overline{u})$. This is a contradiction.
\end{proof}

Let ${\frak d}=(T_\pm,\overline{v}_1,\overline{v}_0,\overline{v}_{-1})\in {\frak P}$. Writing $\overline{u}^{\#}=\overline{u}^{\#}({\frak d})$ and   $\overline{u}=\overline{u}({\frak d})$, we define $w^{\#}=w^{\#}({\frak d})$ and $w=w({\frak d})$ as follows:
\begin{equation} \label{dubya}
w^{\#}= e^{\varepsilon s}\cdot ( \overline\pi\circ \overline{u}^{\#}|_{-2T_-\leq s\leq 2T_+})\ \mbox{ and } \
w=e^{\varepsilon s}\cdot ( \overline\pi\circ \overline{u}|_{-2T_-\leq s\leq 2T_+}).
\end{equation}
The map $w$ is holomorphic by Lemma~\ref{making holomorphic}.

\begin{lemma} \label{munich}
Let $\mathfrak{d}_i = (T_{\pm, i}, \overline{v}_{1,i},  \overline{v}_{0},  \overline{v}_{-1}) \in \mathfrak{P}$ be a sequence such that $\lim \limits_{i \to \infty} T_{\pm, i} = + \infty$ and $\lim\limits_{i\to \infty} \overline{v}_{1,i}=\overline{v}_{1,\infty}\in \widetilde{\mathcal{M}}_1$.  Then the sequence $\overline{u}_i = \overline{u}(\mathfrak{d}_i)$ has a subsequence which converges to $(\overline{v}_{1,\infty}, \overline{v}_{0}, \overline{v}_{-1})$.
\end{lemma}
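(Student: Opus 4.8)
The plan is to extract an SFT limit of the sequence $\overline{u}_i = G(\mathfrak{d}_i)$ and then identify it with $(\overline{v}_{1,\infty},\overline{v}_0,\overline{v}_{-1})$, exploiting the fact that $G$ is built by pregluing-and-correcting: as the gluing parameters $T_{\pm,i}$ tend to $+\infty$ (with $r$ fixed), the $\overline{u}_i$ become arbitrarily close to breaking into this building, and the ECH index bookkeeping then precludes any further degeneration. To begin, the curves $\overline{u}_i$ lie in $\mathcal{M}^{ext}=\mathcal{M}_{\overline{J}_-}^{I=3,n^-=m,ext}(\gamma,\mathbf{y})$: they have fixed asymptotics $\gamma$ and $\mathbf{y}$, fixed ECH index $3$, and $n^-=m$. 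Proposition~\ref{prop: SFT compactness for W minus} (whose proof applies to the extended moduli spaces, since the singular Lagrangian $L^-_{\overline{\mathbf{a}}}$ causes no additional difficulty) produces a subsequence converging in the SFT sense to a holomorphic building $\overline{u}_\infty=\overline{v}_{-b}\cup\dots\cup\overline{v}_a$. By additivity of the ECH index, $\sum_j I(\overline{v}_j)=3$, and by Lemma~\ref{claim in proof} each level satisfies $I(\overline{v}_j)\geq 0$.

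Next I would show that $\overline{u}_\infty$ contains the prescribed building as a subbuilding; this is the heart of the matter. By construction (Section~\ref{subsection: proof of gluing}), the preglued curve $\overline{u}^{\#}(\mathfrak{d}_i)$ coincides with suitable $\R$-translates of $\overline{v}_{1,i}$, $\overline{v}_0$, and $\overline{v}_{-1}$ away from the two gluing regions near $s\simeq T_{+,i}$ and $s\simeq -T_{-,i}$, whose widths tend to $\infty$; hence $\overline{u}^{\#}(\mathfrak{d}_i)$ is $(\kappa_i,\nu_i)$-close to breaking into $(\overline{v}_{1,i},\overline{v}_0,\overline{v}_{-1})$ in the sense of Definition~\ref{defn: kappa nu close buildings}, with $\kappa_i,\nu_i\to 0$. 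The genuine curve $\overline{u}_i$ is obtained from $\overline{u}^{\#}(\mathfrak{d}_i)$ by the correction solving Equation~\eqref{eqn: for deformation to be J holom 2}, whose size tends to $0$ as $T_{\pm,i}\to\infty$ by the gluing estimates; therefore $\overline{u}_i$ too is $(\kappa_i',\nu_i')$-close to breaking into $(\overline{v}_{1,i},\overline{v}_0,\overline{v}_{-1})$ with $\kappa_i',\nu_i'\to 0$. Passing to the SFT limit along this closeness, and re-centering each non-central level, one obtains that $\overline{u}_\infty$ contains as a subbuilding the level $\overline{v}_{1,\infty}=\lim_i\overline{v}_{1,i}$ over $\R\times\overline{N}$, the level $\overline{v}_0$ over $\overline{W}_-$, and the level $\overline{v}_{-1}$ over $\overline{W}$, joined by levels that are (possibly branched) covers of trivial cylinders over $\delta_0$ and $z_\infty$.

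Finally I would rule out any additional nontrivial levels. Since $\overline{v}_{1,\infty}\in\mathcal{M}_1$ has $I=2$, $\overline{v}_0\in\mathcal{M}_0$ has $I=0$, and $\overline{v}_{-1}\in\mathcal{M}_{-1}$ has $I=1$, these three levels already account for the full ECH index $3$. Combined with $I(\overline{v}_j)\geq 0$ for all $j$ and the fact that the only level with $I_{ECH}=0$ compatible with the partition conditions is a union of trivial cylinders/strips (as in the proof of Theorem~\ref{thm: compactness for W plus}), it follows that $\overline{u}_\infty$ has no further nontrivial levels. Thus $\overline{u}_\infty=(\overline{v}_{1,\infty},\overline{v}_0,\overline{v}_{-1})$, with possible $I_{ECH}=0$ connectors in between, which is the claimed limit. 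Since every subsequence of $\overline{u}_i$ admits a further subsequence with this same limit, the whole sequence $\overline{u}_i$ converges to $(\overline{v}_{1,\infty},\overline{v}_0,\overline{v}_{-1})$.

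The main obstacle is the middle step: reconciling $(\kappa,\nu)$-closeness to breaking, as formulated in Definition~\ref{defn: kappa nu close buildings}, with honest SFT convergence. This entails matching the exhausting family of compact sets defining the SFT topology with the regions $-R^{(0)}\leq s\leq R^{(1)}$, etc., appearing in that definition, carefully tracking the $\R$-translations needed to re-center the levels $\overline{v}_{1,i}$ and $\overline{v}_{-1}$, and invoking the gluing estimates to see that the correction term in Equation~\eqref{eqn: for deformation to be J holom 2} does not destroy this closeness. The index bookkeeping of the last step is routine given Lemma~\ref{claim in proof}.
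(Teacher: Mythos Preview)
Your argument is correct, but it is considerably more elaborate than the paper's proof, which is essentially a one-line citation. The paper simply invokes the standard gluing estimate (cf.\ \cite[Theorem~7.3(a)]{HT2}): given $(\kappa,\nu)$, there exists $r\gg 0$ such that $G(\mathfrak{d})$ is $(\kappa,\nu)$-close to $\mathfrak{d}$ for every $\mathfrak{d}\in\mathfrak{P}=(5r,\infty)^2\times\widetilde{\mathcal{M}}_1\times\mathcal{M}_0\times\widetilde{\mathcal{M}}_{-1}$. Since $T_{\pm,i}\to+\infty$, this immediately gives that $\overline{u}_i=G(\mathfrak{d}_i)$ is $(\kappa_i,\nu_i)$-close to $\mathfrak{d}_i$ with $\kappa_i,\nu_i\to 0$, which is SFT convergence to $(\overline{v}_{1,\infty},\overline{v}_0,\overline{v}_{-1})$.

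You arrive at the same core estimate---that the glued curve is close to breaking into the pieces because the preglued curve is and the correction is small---but you then embed this into a longer argument: first extract an abstract SFT limit via Proposition~\ref{prop: SFT compactness for W minus}, then use the closeness to identify a subbuilding, and finally use ECH index bookkeeping (Lemma~\ref{claim in proof}) to rule out extra levels. These additional steps are not wrong, but they are redundant: once you know $\overline{u}_i$ is $(\kappa_i',\nu_i')$-close to breaking into $(\overline{v}_{1,i},\overline{v}_0,\overline{v}_{-1})$ with $\kappa_i',\nu_i'\to 0$, convergence is immediate and there is nothing further to exclude. Your route has the virtue of being more self-contained (you don't need to locate the relevant estimate in \cite{HT2}), while the paper's route is cleaner and avoids the somewhat delicate reconciliation of $(\kappa,\nu)$-closeness with SFT convergence that you yourself flag as the main obstacle. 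Note also that the standing assumption in Section~\ref{subsection: proof of gluing} that all multiplicities of $\gamma'$ are $1$ means there are no connectors to worry about, so your index argument in the last step is doing even less work than it appears.
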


\begin{proof}
The lemma is a consequence of the following standard gluing result (cf.\ \cite[Theorem 7.3(a)]{HT2}): Given $(\kappa,\nu)$, there exists $r\gg 0$ such that $\overline{u}({\frak d})$ is $(\kappa,\nu)$-close to $(\overline{v}_{1}, \overline{v}_{0}, \overline{v}_{-1})$ for each ${\frak d}= (T_\pm,\overline{v}_{1}, \overline{v}_{0}, \overline{v}_{-1})\in {\frak P}=(5r,\infty)^2\times \widetilde{\mathcal M}_1 \times \mathcal M_0 \times \widetilde{\mathcal M}_{-1}$.
\end{proof}

Recall that $f(t)= \beta_0 e^{\pi i t}$ is the asymptotic eigenfunction of $\overline{v}_{1}$ at the negative end $\delta_0$ and $g(t)=\alpha_0 e^{i\varepsilon(1-t)}$ is the asymptotic eigenfunction of $\overline{v}_{-1}$ at the positive end $z_\infty$.  We also define the function
$$\Pi: \frak{P}\to \R^+\times \C^\times,$$
$$\Pi(T_\pm,\overline{v}_1,\overline{v}_0,\overline{v}_{-1})= (\alpha, \beta),$$
where $\alpha e^{i \varepsilon(1-t)}$ is the asymptotic eigenfunction of $\overline{v}_{-1,-2T_-}$ at $z_{\infty}$ and $\beta e^{i \pi t}$ is the asymptotic eigenfunction of $\overline{v}_{1,2T_+}$ at $\delta_0$.

\begin{lemma}\label{rescaling last}
Let $\mathfrak{d}_i = ( T_{\pm,i}, \overline{v}_{1,i}, \overline{v}_{0},  \overline{v}_{-1})\in \mathfrak{P}$ be a sequence such that  $\lim \limits_{i \to +\infty} T_{\pm, i}= + \infty$. Let $\overline{\mathfrak{m}}_i = \overline{u}(\mathfrak{d}_i)^{-1}( \sigma_{\infty}^-)$ and let $f_i(t)=\beta_i e^{\pi i t}$ be the asymptotic eigenfunction of $\overline{v}_{1,i}$ at the negative end $\delta_0$. Then, after passing to a subsequence, $\lim\limits_{i\to +\infty} \beta_i=\beta_\infty$, and, after rescaling by positive real numbers in addition, the sequence $w_i = w(\mathfrak{d}_i)$ of holomorphic functions defined as in Equation~\eqref{dubya} converges in the $C^{\infty}_{loc}$-topology to a holomorphic map $w_{\infty} \in \widetilde{\mathcal N}$ which satisfies the following:
\begin{enumerate}
\item $\mathfrak{E}(w_{\infty})=(\lambda_1\alpha_0,\lambda_2\beta_\infty)$ for some $\lambda_1,\lambda_2\geq 0$, $\lambda_1,\lambda_2$ not both zero;
\item $\Pi({\frak d}_i)$, after rescaling by positive constants, limits to $\mathfrak{E}(w_\infty)$;
\item if $\lim \limits_{i \to + \infty}  \overline{\mathfrak{m}}_i  = \overline{\mathfrak{m}}_{\infty} \in int(B_-)$, then $w_{\infty}(\overline{\mathfrak{m}}_{\infty})=0$ and $w_{\infty} \in \mathcal{N}$.
\end{enumerate}
\end{lemma}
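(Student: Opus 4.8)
The plan is to run the rescaling/bubbling analysis of Sections~\ref{subsection: rescaling}--\ref{subsection: theorem complement} for the glued family $\overline{u}(\mathfrak{d}_n)=G(\mathfrak{d}_n)$ as the neck lengths $T_{\pm,n}\to+\infty$, with $\overline{v}_0$, $\overline{v}_{-1}$, and $m$ (hence $\varepsilon$) held fixed. Since $\overline{v}_0'=\sigma_\infty^-$ has degree one and the negative end of $\overline{v}_1$ at $\delta_0$ is simple, the component of $\overline{u}(\mathfrak{d}_n)$ which is close to the section at infinity lies over all of $B_-$; applying the ansatz $w=e^{\varepsilon s}\,(\pi_{D_{\rho_0}}\circ\overline{u})$, which is holomorphic by Lemma~\ref{making holomorphic}, we obtain holomorphic functions $\widetilde{w}_n$ defined on the truncations $\{-2T_{-,n}\le s\le 2T_{+,n}\}$ of $B_-$, which exhaust $B_-$ as $n\to\infty$. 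I would fix a compact $K\subset B_-$ (say $\{-R_0'\le s\le R_0\}$), set $C_n:=\sup|\widetilde{w}_n|$ over the preimage of $K$, and put $w_n=\widetilde{w}_n/C_n$; then $C_n>0$ since $\overline{u}(\mathfrak{d}_n)$ is not contained in $\sigma_\infty^-$, and $\sup_K|w_n|=1$.

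Next I would establish uniform $C^1$-bounds for $w_n$ on compact subsets of $B_-$, uniformly in $n$ as the neck stretches. The energies $E(w_n)$ (Definition~\ref{energy for w_i}) are uniformly bounded: by Stokes' theorem $E(w_n)$ reduces to boundary integrals, controlled on the Lagrangian pieces by the total variation of $\eta_\varepsilon$ and near the punctures $\mathfrak{p}_\pm$ by the winding numbers of the relevant asymptotic eigenfunctions ($+1$ at $\mathfrak{p}_+$ from the $e^{\pi it}$-term, $+1$ at $\mathfrak{p}_-$ from the $e^{-\varepsilon it}$-type term); here one uses that, by the standard gluing estimate as in Lemma~\ref{munich}, $\overline{u}(\mathfrak{d}_n)$ is $(\kappa_n,\nu_n)$-close to breaking into $(\overline{v}_{1,n},\overline{v}_0,\overline{v}_{-1})$ with $\kappa_n,\nu_n\to0$, so $w_n$ is $C^1$-close to honest asymptotic eigenfunctions on the ends. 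Given the energy bound, the bubbling argument of Lemma~\ref{natale} (compose with $\log$ on $\C^\times$ and use the nonexistence of nonconstant finite-energy planes and half-planes in $\R\times S^1$) rules out gradient blow-up, so a subsequence of $w_n$ converges in $C^\infty_{loc}$ to a holomorphic $w_\infty\colon B_-\to\C$ with $\sup_K|w_\infty|=1$; in particular $w_\infty\not\equiv0$. \emph{This is the step I expect to be the main obstacle}: obtaining the energy and gradient bounds uniformly in $n$, i.e.\ ruling out that energy escapes into the lengthening neck, is exactly where the quantitative gluing estimates (exponential decay of the deformations, $(\kappa,\nu)$-closeness) must be invoked.

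It remains to identify $w_\infty$. Condition $(\widetilde{\text N}_1)$ passes to the limit from the Lagrangian boundary conditions of $\overline{u}(\mathfrak{d}_n)$ near $z_\infty$ (the arcs $L^-_{\overline{a}_i}\cap\{\rho\le\rho_0\}$ are radial with angle profile $\eta_\varepsilon$, and scaling by the positive constants $C_n^{-1}$ and $e^{\varepsilon s}$ preserves radiality). Conditions $(\widetilde{\text N}_2)$ and $(\widetilde{\text N}_3)$ follow from a Fourier-series analysis at the two ends, exactly as in the proof of Theorem~\ref{capodanno}(2),(3): at $\mathfrak{p}_+$, writing $w_n=\sum_k a^n_k e^{\pi k(s+it)}$, the stretching $T_{+,n}\to\infty$ exponentially suppresses all $a^n_k$ with $k\ge2$, $a^n_1$ is nonzero for topological reasons, and after normalization $a^n_1\to\lambda_2\beta_\infty$ with $\lambda_2=\lim_n C_n^{-1}e^{-(\pi-\varepsilon)T_{+,n}}\kappa_0'\ge0$; similarly the $s\to-\infty$ leading coefficient tends to $\lambda_1\alpha_0$ with $\lambda_1\ge0$, coming from the asymptotic eigenfunction of $\overline{v}_{-1}$ at $z_\infty$. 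Thus $w_\infty\in\widetilde{\mathcal N}$ and $\mathfrak{E}(w_\infty)=(\lambda_1\alpha_0,\lambda_2\beta_\infty)$; since $w_\infty\not\equiv0$ and $\mathfrak{E}$ is an isomorphism by Lemma~\ref{map frak E}, $(\lambda_1,\lambda_2)\ne(0,0)$, which is (1)(i). For (1)(ii), the components of $\Pi(\mathfrak{d}_n)$ are by construction the two leading asymptotic coefficients entering the pregluing, and these differ from the corresponding leading coefficients of $\overline{u}(\mathfrak{d}_n)$ only by terms exponentially small in $T_{\pm,n}$ (the deformations $\psi_i$ in the gluing are exponentially small), so $C_n^{-1}\Pi(\mathfrak{d}_n)\to\mathfrak{E}(w_\infty)$, i.e.\ after rescaling by the positive constants $C_n^{-1}$ the sequence $\Pi(\mathfrak{d}_n)$ limits to $\mathfrak{E}(w_\infty)$.

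Finally, for (2): the point $\overline{\mathfrak{m}}_n=\overline{u}(\mathfrak{d}_n)^{-1}(\sigma_\infty^-)$ is precisely a zero of $\pi_{D_{\rho_0}}\circ\overline{u}(\mathfrak{d}_n)$, hence of $w_n$ (dividing by $C_n>0$ does not move zeros), so $C^\infty_{loc}$-convergence together with $\overline{\mathfrak{m}}_n\to\overline{\mathfrak{m}}_\infty$ gives $w_\infty(\overline{\mathfrak{m}}_\infty)=0$; since $\overline{\mathfrak{m}}_\infty\in int(B_-)$, this is an interior zero. The winding-number/zero count used to study $\mathcal{N}$ (the inequality around Equation~\eqref{winding}) then forces $c_1=\lambda_1\alpha_0$ and $c_2=\lambda_2\beta_\infty$ both to be nonzero --- an interior zero requires $\sum_{z\in int}\nu(z)+{1\over2}\sum_{z\in\partial}\nu(z)\ge1$, which is compatible with the Maslov bookkeeping only when both leading coefficients survive and there are no boundary zeros --- whence $\lambda_1,\lambda_2>0$, $c_1\in\R^+$, $c_2\in\C^\times$, and $w_\infty(\partial B_-)\subset\R^+e^{i\eta_\varepsilon}$. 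Therefore $w_\infty$ satisfies (N$_1$)--(N$_3$), i.e.\ $w_\infty\in\mathcal{N}$.
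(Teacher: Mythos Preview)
Your proposal is correct and follows essentially the same route as the paper's own proof: the paper simply invokes Lemma~\ref{munich} to obtain SFT convergence of $\overline{u}(\mathfrak{d}_n)$ to the building $(\overline{v}_{1,\infty},\overline{v}_0,\overline{v}_{-1})$, observes that this yields good truncations as in Lemma~\ref{lemma: pre-riscaling estimates}, and then says that the proof of Theorem~\ref{capodanno} goes through essentially unchanged. You have spelled out exactly what that means --- the ansatz of Lemma~\ref{making holomorphic}, the normalization of Definition~\ref{defn: definiton of w_i}, the energy and gradient bounds of Lemmas~\ref{sconforto} and~\ref{natale}, and the Fourier analysis at the ends --- and correctly noted the one adjustment needed, namely that with $m$ (and hence $\varepsilon$) fixed the limiting boundary condition is $\R^+\cdot e^{i\eta_\varepsilon(\theta)}$ rather than $\R^+$, which is precisely how the moduli space $\widetilde{\mathcal N}$ is set up.
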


\begin{proof}
By Lemma \ref{munich} we can extract a subsequence (which we still call $\mathfrak{d}_i$) such that $\overline{u}(\mathfrak{d}_i)$ converges to $\overline{u}_{\infty} = (\overline{v}_{1,\infty}, \overline{v}_{0}, \overline{v}_{-1})$. By the SFT convergence of $\overline{u}(\mathfrak{d}_i)$ to $\overline{u}_{\infty}$ we obtain a sequence of good truncations satisfying the estimates of Lemma~\ref{lemma: pre-riscaling estimates}. The proof of Theorem~\ref{capodanno} then goes through essentially unchanged.
\end{proof}

Let $w_0\in{\mathcal N}$ be a map such that $w_0(\overline{\mathfrak{m}}^b)=0$ and let ${\mathcal U}_{\delta}\subset \mathcal{N}$ be the subset consisting of maps $w$ such that $w(\overline{\mathfrak{m}})=0$ for some $\overline{\mathfrak{m}} \in D_{\delta}(\overline{\mathfrak{m}}^b)$. Note that ${\mathcal U}_{\delta}$ is an $\R^+$-invariant open neighborhood of $w_0$. We now have the following diagram:
$$\xymatrix{
\mbox{ } {\frak P} \mbox{ } \ar[d]_\Pi \ar[r]^-G & \mathcal{M}^{ext} \mbox{ } \supset \mbox{ }\mathcal{M}^{\delta}_{(\kappa,\nu)} \\
\R^+\times \C^\times & \mathcal{N} \ar[l]_-{\frak E} \mbox{ } \supset \mbox{ }\mathcal{U}_\delta.
}$$
Let us write ${\frak P}_\delta:= \Pi^{-1}({\frak E}(\mathcal{U}_\delta))$. Then ${\frak P}_\delta\subset \mathfrak{P}$ is an open set since $\mathfrak{E}$ is a diffeomorphism.

\begin{lemma} \label{caltech}
If $r$ is sufficiently large, then the following hold:
\begin{enumerate}
\item $\mathcal{M}^{\delta/3}_{(\kappa,\nu)}\cap \op{Im}(G) \subset G({\frak P}_{\delta/2}).$
\item $G({\frak P}_{2\delta/3})\subset int( \mathcal{M}^\delta_{(\kappa,\nu)})$.
\end{enumerate}
\end{lemma}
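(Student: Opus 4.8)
The plan is to compare the two "detection" mechanisms for passing through the marked point: on the cobordism side, a curve $\overline u \in \mathcal M^{ext}$ with $\overline u^{-1}(\sigma_\infty^-) \cap D_\delta(\overline{\frak m}^b) \times \{z_\infty\} \ne \varnothing$; and on the model side, a map $w \in \mathcal N$ with $\widehat{\frak F}(w) = w^{-1}(0) \in D_\delta(\overline{\frak m}^b)$. The link between the two is furnished by the rescaling analysis of Lemma~\ref{rescaling last}: if a sequence $\overline u(\frak d_n)$ with $T_{\pm,n}\to\infty$ has $\overline{\frak m}_n := \overline u(\frak d_n)^{-1}(\sigma_\infty^-) \to \overline{\frak m}_\infty \in int(B_-)$, then the rescaled holomorphic functions $w(\frak d_n)$ converge (after rescaling by positive reals and passing to a subsequence) to some $w_\infty \in \mathcal N$ with $w_\infty(\overline{\frak m}_\infty)=0$, and moreover $\Pi(\frak d_n)$ (rescaled) limits to $\frak E(w_\infty)$. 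This says that, in the limit $r\to\infty$, the position $\widehat{\frak F}(w(\frak d))$ of the zero of the rescaled function is controlled by, and controls, the position of $\overline{\frak m}(\frak d) := \overline u(\frak d)^{-1}(\sigma_\infty^-)$.

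First I would argue (1) by contradiction. Suppose it fails: then there are sequences $r_n\to\infty$ and $\frak d_n = (T_{\pm,n},\overline v_{1,n},\overline v_0,\overline v_{-1}) \in \frak P$ with $T_{\pm,n} > 5r_n$ (hence $T_{\pm,n}\to\infty$), with $G(\frak d_n) \in \mathcal M^{\delta/3}_{(\kappa,\nu)}$, but $\frak d_n \notin G^{-1}(G(\frak P_{\delta/2})) = \frak P_{\delta/2}$, i.e.\ $\Pi(\frak d_n) \notin \frak E(\mathcal U_{\delta/2})$. Since $G(\frak d_n) \in \mathcal M^{\delta/3}_{(\kappa,\nu)}$, the intersection $\overline{\frak m}_n := G(\frak d_n)^{-1}(\sigma_\infty^-)$ lies in $D_{\delta/3}(\overline{\frak m}^b) \times \{z_\infty\}$; after passing to a subsequence, $\overline{\frak m}_n \to \overline{\frak m}_\infty \in D_{\delta/3}(\overline{\frak m}^b) \subset int(B_-)$ (using $\delta$ small and $\overline{\frak m}^b = (0,\tfrac32) \in int(B_-)$). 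Also, after rescaling the slice representatives, we may pass to a further subsequence with $\overline v_{1,n} \to \overline v_{1,\infty} \in \widetilde{\mathcal M}_1$ and the asymptotic-eigenfunction coefficient $\beta_n \to \beta_\infty$ (using compactness of the normalized eigenfunctions). By Lemma~\ref{rescaling last}(1)(ii) and (2), the rescaled $w(\frak d_n)$ converge to $w_\infty \in \mathcal N$ with $w_\infty(\overline{\frak m}_\infty)=0$, so $w_\infty \in \mathcal U_{\delta/3} \subset \mathcal U_{\delta/2}$; and $\Pi(\frak d_n)$, after rescaling by positive constants, limits to $\frak E(w_\infty) \in \frak E(\mathcal U_{\delta/2})$. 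Since $\frak E(\mathcal U_{\delta/2})$ is an $\R^+$-invariant open set (as $\mathcal U_{\delta/2}$ is $\R^+$-invariant and $\frak E$ is a linear isomorphism by Lemma~\ref{map frak E}) and $\Pi(\frak d_n)$ itself is, up to positive rescaling, converging into it, we get $\Pi(\frak d_n) \in \frak E(\mathcal U_{\delta/2})$ for $n\gg 0$ — contradicting $\frak d_n \notin \frak P_{\delta/2}$. This proves (1).

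For (2), I would run essentially the same scheme in reverse. To show $G(\frak P_{2\delta/3}) \subset int(\mathcal M^\delta_{(\kappa,\nu)})$ it suffices, given that $G$ is a homeomorphism onto its image and $\frak P_{2\delta/3}$ is open, to show that for $r$ large every $\overline u = G(\frak d)$ with $\frak d \in \frak P_{2\delta/3}$ lies in $\mathcal M^\delta_{(\kappa,\nu)}$, i.e.\ is $(\kappa,\nu)$-close to breaking (automatic for $r$ large by the gluing estimate, cf.\ Lemma~\ref{munich}) and satisfies $\overline u^{-1}(\sigma_\infty^-) \in D_\delta(\overline{\frak m}^b) \times \{z_\infty\}$. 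Again argue by contradiction: if not, there is $r_n\to\infty$ and $\frak d_n \in \frak P_{2\delta/3}$ with $T_{\pm,n}\to\infty$ but $\overline{\frak m}(G(\frak d_n)) \notin D_\delta(\overline{\frak m}^b)$. Passing to a subsequence, $\overline{\frak m}(G(\frak d_n)) \to \overline{\frak m}_\infty$ with $\overline{\frak m}_\infty \notin int(D_\delta(\overline{\frak m}^b))$, and by Lemma~\ref{rescaling last} the rescaled $w(\frak d_n) \to w_\infty \in \mathcal N$ with $\widehat{\frak F}(w_\infty) = w_\infty^{-1}(0) = \overline{\frak m}_\infty$. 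On the other hand $\frak d_n \in \frak P_{2\delta/3} = \Pi^{-1}(\frak E(\mathcal U_{2\delta/3}))$, and by Lemma~\ref{rescaling last}(1)(ii) the rescaled $\Pi(\frak d_n) \to \frak E(w_\infty)$, so $\frak E(w_\infty) \in \overline{\frak E(\mathcal U_{2\delta/3})}$, whence $w_\infty \in \overline{\mathcal U_{2\delta/3}}$ — so $\widehat{\frak F}(w_\infty) \in \overline{D_{2\delta/3}(\overline{\frak m}^b)} \subset int(D_\delta(\overline{\frak m}^b))$, a contradiction.

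The main obstacle is making the rescaling comparison genuinely two-sided: Lemma~\ref{rescaling last} is phrased as a convergence statement for a given sequence, and to exploit it one must keep careful track of the three positive rescaling ambiguities (one in the target $\mathcal N$ / $\widetilde{\mathcal N} \cong \R^+\times\C^\times$ via $\frak E$, and one each hidden in the $\R$-slices $\widetilde{\mathcal M}_{\pm 1}$ via the $T_\pm$-translation factors $e^{-(\pi-\varepsilon)T_+}$, $e^{2\varepsilon T_-}$ applied to the eigenfunctions). The identification $\mathfrak F \colon \mathbb P\mathcal N \xrightarrow{\sim} \mathbb D - \{0\}$ of Lemma~\ref{F is a diffeo} is what guarantees that the zero location is a faithful coordinate on $\mathbb P\mathcal N$, so that nearness of $\widehat{\frak F}(w)$ to $\overline{\frak m}^b$ is equivalent to nearness of $[w]$ to $[w_0]$, hence (via $\frak E$, which descends to $\mathbb P\mathcal N$) to $\Pi(\frak d)$ lying in the cone $\frak E(\mathcal U_\delta)$; checking that all these open-neighborhood inclusions nest correctly as $\delta/3 \subset \delta/2 \subset 2\delta/3 \subset \delta$ for $r$ large is the bookkeeping heart of the argument. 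Everything else is a routine application of the compactness (Proposition~\ref{prop: SFT compactness for W minus}), the gluing homeomorphism property of $G$, and the rescaling theorem.
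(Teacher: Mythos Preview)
Your proposal is correct and follows essentially the same route as the paper: both parts are proved by contradiction, extracting sequences with $r_n\to\infty$, applying Lemma~\ref{rescaling last} to produce a limit $w_\infty\in\mathcal{N}$, and then deriving a contradiction by comparing the location of $w_\infty^{-1}(0)$ (governed by $\widehat{\frak F}$) with the cone condition $\Pi(\frak d_n)\in\frak E(\mathcal{U}_*)$ (governed by $\frak E$). Your closing paragraph on the rescaling ambiguities and the role of Lemma~\ref{F is a diffeo} accurately identifies the bookkeeping that makes the nesting $\delta/3\subset\delta/2\subset 2\delta/3\subset\delta$ work.
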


\begin{proof}
(1) We show that, for $r$ sufficiently large, if
$$\overline{u}(\mathfrak{d})^{-1}(\sigma_{\infty}^- ) \in D_{\delta/3}(\overline{\mathfrak{m}}^b),$$
then $\mathfrak{d} \in \mathfrak{P}_{\delta/2}$.  Arguing by contradiction, suppose there exist sequences $r_i \to \infty$, $\mathfrak{d}_i = (T_{\pm,i}, \overline{v}_{1,i}, \overline{v}_{0}, \overline{v}_{-1}) \not \in \mathfrak{P}_{\delta/2}$, and $\overline{\mathfrak{m}}_i \in D_{\delta/3}(\overline{\mathfrak{m}}^b)$ such that $T_{\pm,i} > 5 r_i$ and $\overline{u}(\mathfrak{d}_i)^{-1}(\sigma_{\infty}^-)=\overline{\mathfrak{m}}_i$. By passing to a subsequence, we may assume that $\lim \limits_{i \to \infty} \overline{\mathfrak{m}}_i = \overline{\mathfrak{m}}_{\infty} \in D_{5 \delta/12} (\overline{\mathfrak{m}}^b)$ and that the asymptotic eigenfunctions of $\overline{v}_{1,i}$ converge to $\beta_\infty e^{\pi i t}$ for $i \to \infty$. We apply Lemma~\ref{rescaling last} to $\overline{u}({\frak d}_i)$ to obtain a holomorphic map $w_{\infty} \in {\mathcal N}$ such that $w_{\infty}(\overline{\mathfrak{m}}_{\infty})=0$ and Lemma~\ref{rescaling last}(1) and (2) hold. Since $\mathfrak{d}_i \not \in \mathfrak{P}_{\delta/2}$ for all $i$, (ii) implies that $(\lambda_1\alpha_0,\lambda_2\beta_\infty) \not \in \mathfrak{E}({\mathcal U}_{\delta/2})$.  On the other hand, $w_{\infty}$ has a zero in $D_{5 \delta/12}(\overline{\mathfrak{m}}^b)$, a contradiction.

(2) is similar to (1).  Suppose there exist sequences $r_i \to \infty$ and ${\frak d}_i  = (T_{\pm,i},$ $\overline{v}_{1,i}, \overline{v}_{0}, \overline{v}_{-1}) \in {\frak P}_{2\delta/3}$ such that $T_{\pm, i} > 5r_i$ and $\overline{u}(\mathfrak{d}_i) \not\in int(\mathcal{M}^{\delta}_{(\kappa,\nu)})$. By passing to a subsequence and applying Lemma~\ref{rescaling last} to $\overline{u}({\frak d}_i)$, we obtain a holomorphic map $w_{\infty} \in \widetilde{\mathcal N}$ such that $\mathfrak{E}(w_{\infty}) \in \mathfrak{E}({\mathcal U}_{2 \delta/3})$ by Lemma~\ref{rescaling last}(2). On the other hand, $w^{-1}_\infty(0) \not \in D_{\delta}(\overline{\mathfrak{m}}^b)$ since
$\overline{u}(\mathfrak{d}_i)^{-1}( \sigma_{\infty}^-) \not \in D_{\delta}(\overline{\mathfrak{m}}^b)$ by Lemma~\ref{rescaling last}(3). This is a contradiction.
\end{proof}

Fix $\delta>0$ and take $r_0 > 5r$ sufficiently large. We write
$$\bdry {\frak P}_{2\delta/3, (r_0)}: = {\frak P}_{2\delta/3} \cap \bdry {\frak P}_{(r_0)},$$
where ${\frak P_{(r_0)}}=\{T_+\geq r_0\}\subset {\frak P}$ and $\bdry {\frak P_{(r_0)}}= \{T_+=r_0\}$. By Lemma~\ref{caltech}(2), we can define
\begin{equation} \label{upsilon 1}
\Upsilon': \bdry {\frak P}_{2\delta/3, (r_0)}\to D_\delta(\overline{\mathfrak{m}}^b),
\end{equation}
which maps ${\frak d}$ to $G({\frak d})^{-1}(\sigma_{\infty}^-)$. Since $\bdry {\frak P}_{2\delta/3, (r_0)}$ is compact, the map $\Upsilon'$ is proper and the local degree is well-defined.

Also define
\begin{equation} \label{upsilon 2}
\Upsilon''= \widehat{{\frak F}} \circ{\frak E}^{-1} \circ \Pi: \bdry{\frak P}_{2\delta/3, (r_0)}\to D_\delta(\overline{\mathfrak{m}}^b).
\end{equation}
The map $\Upsilon''$ is well-defined by the definitions of ${\frak P}_{2\delta/3}$ and ${\mathcal U}_{\delta}$. It is also proper and the local degree is well-defined.

The maps $\Upsilon'$ and $\Upsilon''$ are sufficiently close in the following sense.

\begin{lemma}
For any $k$, $\Upsilon'$ and $\Upsilon''$ can be made arbitrarily $C^0$-close by choosing $r_0$ sufficiently large.
\end{lemma}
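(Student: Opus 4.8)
The statement asserts that the two maps
$$\Upsilon',\ \Upsilon'': \bdry{\frak P}_{2\delta/3,(r_0)}\to D_\delta(\overline{\frak m}^b)$$
become arbitrarily $C^0$-close (in fact arbitrarily $C^k$-close) as $r_0\to\infty$. The idea is to argue by contradiction using the rescaling machinery developed in Section~\ref{subsection: rescaling}: if the conclusion failed, we would produce two sequences of gluing data whose images under $\Upsilon'$ and under $\Upsilon''$ stay a fixed distance apart, extract a rescaled limit, and observe that \emph{both} the ``geometric'' zero locus $G({\frak d})^{-1}(\sigma_\infty^-)$ and the ``algebraic'' zero locus $\widehat{\frak F}\circ{\frak E}^{-1}\circ\Pi({\frak d})$ must converge to the same zero of the same limiting map $w_\infty\in{\mathcal N}$, a contradiction.

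\emph{Step 1 (setup of the contradiction).} Suppose not: there exist $\varepsilon_0>0$, a sequence $r_{0,i}\to\infty$, and data
${\frak d}_i=(T_{\pm,i},\overline{v}_{1,i},\overline{v}_0,\overline{v}_{-1})\in\bdry{\frak P}_{2\delta/3,(r_{0,i})}$ with $T_{+,i}=r_{0,i}\to\infty$ (and hence $T_{-,i}>5r\to\infty$ as well, after passing to a subsequence — or we argue that $T_{-,i}$ is bounded, in which case the rescaling is even simpler and one compares directly using Lemma~\ref{rescaling last}), such that
$$d\big(\Upsilon'({\frak d}_i),\ \Upsilon''({\frak d}_i)\big)\ \ge\ \varepsilon_0\quad\text{for all }i.$$
After passing to a further subsequence, we may assume $\overline{v}_{1,i}\to\overline{v}_{1,\infty}\in\widetilde{\mathcal M}_1$, that the normalized asymptotic eigenfunctions $f_i(t)=\beta_i e^{\pi i t}$ of $\overline{v}_{1,i}$ at $\delta_0$ converge to $\beta_\infty e^{\pi i t}$, and that both $\Upsilon'({\frak d}_i)$ and $\Upsilon''({\frak d}_i)$ converge in the compact set $D_\delta(\overline{\frak m}^b)$, say to $\overline{\frak m}'_\infty$ and $\overline{\frak m}''_\infty$ respectively, with $d(\overline{\frak m}'_\infty,\overline{\frak m}''_\infty)\ge\varepsilon_0$.

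\emph{Step 2 (rescaled limit and identification of the two zeros).} Apply Lemma~\ref{rescaling last} to the sequence $\overline{u}({\frak d}_i)=G({\frak d}_i)$: after rescaling the holomorphic functions $w_i=w({\frak d}_i)$ of Equation~\eqref{dubya} by positive real constants and passing to a subsequence, $w_i$ converges in $C^\infty_{loc}$ to a holomorphic map $w_\infty\in\widetilde{\mathcal N}$. By Lemma~\ref{rescaling last}(2), since $\Upsilon'({\frak d}_i)=\overline{u}({\frak d}_i)^{-1}(\sigma_\infty^-)\to\overline{\frak m}'_\infty\in int(B_-)$ (for $\delta$ small this limit lies in the interior), we get $w_\infty(\overline{\frak m}'_\infty)=0$ and $w_\infty\in{\mathcal N}$. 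Since maps in ${\mathcal N}$ have a unique zero in the interior (established in Section~\ref{subsubsection: the maps E and F}), $\overline{\frak m}'_\infty=\widehat{\frak F}(w_\infty)$. On the other hand, by Lemma~\ref{rescaling last}(1)(ii), the pair $\Pi({\frak d}_i)$, after rescaling by positive constants, converges to ${\frak E}(w_\infty)$; since ${\frak E}$ is an isomorphism (Lemma~\ref{map frak E}) and $\widehat{\frak F}$ is continuous (indeed ${\frak F}$ is a diffeomorphism by Lemma~\ref{F is a diffeo}), and since $\widehat{\frak F}$ and the rescaling action are compatible ($\widehat{\frak F}$ descends to ${\frak F}$ on $\P{\mathcal N}$), it follows that $\Upsilon''({\frak d}_i)=\widehat{\frak F}\circ{\frak E}^{-1}\circ\Pi({\frak d}_i)\to\widehat{\frak F}(w_\infty)=\overline{\frak m}'_\infty$. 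Hence $\overline{\frak m}''_\infty=\overline{\frak m}'_\infty$, contradicting $d(\overline{\frak m}'_\infty,\overline{\frak m}''_\infty)\ge\varepsilon_0$. This proves $C^0$-closeness; the $C^k$-closeness for any $k$ follows by the same argument applied to derivatives, using the $C^\infty_{loc}$-convergence in Lemma~\ref{rescaling last} and the exponential decay estimates of \cite{HWZ1} which upgrade the convergence near the truncated ends.

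\emph{Main obstacle.} The delicate point is \textbf{Step 2}, specifically ensuring that the \emph{same} rescaling constant simultaneously makes $w_i$ converge to a nonzero limit $w_\infty$ \emph{and} makes $\Pi({\frak d}_i)$ converge to ${\frak E}(w_\infty)$ rather than to $0$ or $\infty$. This is exactly the content of Lemma~\ref{rescaling last}(1): one must track how the asymptotic eigenfunction coefficients $\kappa_0' e^{-(\pi-\varepsilon)T_{+,i}}\beta_i$ at the $\delta_0$ end and $\kappa_1' e^{2\varepsilon T_{-,i}}\alpha_0$ at the $z_\infty$ end scale relative to $\sup_{K_i}|\widetilde w_i|$, and conclude that after normalization they limit to the coefficients $(\lambda_1\alpha_0,\lambda_2\beta_\infty)$ appearing in ${\frak E}(w_\infty)$, with $(\lambda_1,\lambda_2)\ne(0,0)$ — the non-degeneracy of the limit being guaranteed because $w_\infty$ is nonconstant (it takes the value $\infty$ with multiplicity one) and hence ${\frak E}(w_\infty)\ne 0$ by injectivity of ${\frak E}$. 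Once this bookkeeping is in place, the comparison of $\Upsilon'$ and $\Upsilon''$ is purely formal.
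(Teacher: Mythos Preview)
Your proposal is correct and takes essentially the same approach as the paper: the paper's proof is the single line ``Follows from Lemma~\ref{rescaling last},'' and your argument by contradiction is precisely the unpacking of that reference --- extract a rescaled limit $w_\infty\in\mathcal{N}$, use part~(2) of Lemma~\ref{rescaling last} to identify $\lim\Upsilon'(\mathfrak{d}_i)$ with $\widehat{\mathfrak{F}}(w_\infty)$, and use part~(1)(ii) together with the $\R^+$-invariance of $\widehat{\mathfrak{F}}$ to identify $\lim\Upsilon''(\mathfrak{d}_i)$ with the same point. The only loose end is your parenthetical handling of the case where $T_{-,i}$ stays bounded (Lemma~\ref{rescaling last} literally assumes $T_{\pm,n}\to+\infty$); this case does require a separate (easier) argument, but you are right that it causes no trouble.
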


\begin{proof}
It follows from Lemma~\ref{rescaling last}.
\end{proof}

In particular, the local degrees of $\Upsilon'$ and $\Upsilon''$ near $\overline{\mathfrak{m}}^b$ agree. The local degree of $\Upsilon'$ is equal to the left-hand side of Equation~\eqref{two degrees}, while the local degree of $\Upsilon''$ is equal to the right-hand side of Equation~\eqref{two degrees}. (We were assuming that each of $\mathcal{M}_1/\R$, $\mathcal{M}_0$, and $\mathcal{M}_{-1}/\R$ is connected.) This completes the proof of Theorem~\ref{thm: transversality of ev map}.

\vskip0.5cm

\n {\em Acknowledgements.}
We are indebted to Michael Hutchings for many helpful conversations and for our previous collaboration which was a catalyst for the present work. We also thank Denis Auroux, Tobias Ekholm, Dusa McDuff, Ivan Smith and Jean-Yves Welschinger for illuminating exchanges.  Part of this work was done while KH and PG visited MSRI during the academic year 2009--2010. We are extremely grateful to MSRI and the organizers of the ``Symplectic and Contact Geometry and Topology'' and the ``Homology Theories of Knots and Links'' programs for their hospitality; this work probably would never have seen the light of day without the large amount of free time which was made possible by the visit to MSRI.  KH also thanks the Simons Center for Geometry and Physics for their hospitality during his visit in May 2011.

\newpage
\printnomenclature[7em]
\end{document}